\documentclass[a4paper, 10pt, dvipsnames, svgnames]{article}


\usepackage[utf8]{inputenc}
\usepackage[top=1.0in, bottom=1.0in, left=1.1in, right=1.1in]{geometry}
\usepackage[all]{xy}
\usepackage{amsfonts}
\usepackage{amsmath}
\usepackage{amssymb}
\usepackage{amsthm}
\usepackage{calrsfs}
\usepackage{enumerate}
\usepackage{enumitem}
\usepackage{xparse, etoolbox}
\usepackage{ifthen}
\usepackage{mathtools, nccmath, textcomp}
\usepackage{relsize}
\usepackage{rsfso}
\usepackage{stmaryrd}
\usepackage{sectsty}
\usepackage{setspace}
\usepackage[nottoc]{tocbibind}
\usepackage{tensor}
\usepackage{tikz}
\usepackage{tikz-cd}
\usepackage[sf, bf]{titlesec}	
\usepackage[titles]{tocloft}
\usepackage{xcolor}
\usepackage[hyperref, backend=biber, style=alphabetic, sorting=nyt, doi=false, url=false, maxbibnames=99, maxalphanames=99]{biblatex}
\usepackage[bbgreekl]{mathbbol}

\addbibresource{references.bib}


\usepackage[T1]{fontenc}
\usepackage{lmodern}


\definecolor{bondiblue}{rgb}{0.1, 0.58, 0.71}
\usepackage{hyperref}
\hypersetup{
	colorlinks=true,
	linkcolor=blue,
	citecolor=blue,
	urlcolor=black,
}



\usepackage{imakeidx}
\makeindex

\usepackage{fancyhdr}

\pagestyle{fancy}
\fancyhf{}
\fancyhead[C]{Prismatic $F\textrm{-crystals}$ and Wach modules}
\fancyhead[R]{\thepage}
\pagestyle{fancy}

\setlength{\headheight}{20pt}


\setcounter{tocdepth}{2}


\newcommand{\addperiod}[1]{#1.}
\titleformat{\section}[block]{\scshape\Large\filcenter}{\thesection.}{1em}{}
\titleformat{\subsection}[runin]{\normalfont\large\bfseries}{\thesubsection.}{1em}{\addperiod}
\titleformat{\subsubsection}[runin]{\normalfont\bfseries}{\thesubsubsection.}{1em}{\addperiod}


\numberwithin{equation}{section}

\DeclareSymbolFontAlphabet{\mathbbl}{bbold}
\newcommand{\prism}{{\mathlarger{\mathbbl{\Delta}}}}

\makeatletter
\def\keywords{\xdef\@thefnmark{}\@footnotetext}
\makeatother



\theoremstyle{plain}
\newtheorem{thm}{Theorem}[section]

\newtheorem{lem}[thm]{Lemma}
\newtheorem{prop}[thm]{Proposition}

\theoremstyle{definition}
\newtheorem{defi}[thm]{Definition}

\newtheorem{const}[thm]{Construction}
\newtheorem{assum}[thm]{Assumption}
\newtheorem{exam}[thm]{Example}
\newtheorem{rem}[thm]{Remark}

\AtEndEnvironment{const}{\qed}

\theoremstyle{remark}

\newtheorem*{nota}{Notation}

\raggedbottom

\newenvironment{enumarabicup}
{\begin{enumerate}[font=\upshape, labelindent=\parindent, label=(\arabic*)]}
{\end{enumerate}}

\DeclareMathOperator*{\colim}{\text{\scalebox{0.95}{$\textup{colim}$}}}

\DeclareMathAlphabet{\pazocal}{OMS}{zplm}{m}{n}
\DeclareMathAlphabet{\dutchcal}{U}{dutchcal}{m}{n}

\newcommand{\isomorphic}{\xrightarrow{\hspace{0.5mm} \sim \hspace{0.5mm}}}
\newcommand{\lisomorphic}{\xleftarrow{\hspace{0.5mm} \sim \hspace{0.5mm}}}

\newcommand{\pazc}{\pazocal{C}}

\newcommand{\paze}{\pazocal{E}}
\newcommand{\pazf}{\pazocal{F}}

\newcommand{\pazi}{\pazocal{I}}

\newcommand{\pazo}{\pazocal{O}}
\newcommand{\pazp}{\pazocal{P}}

\def\FF{\mathbb{F}}

\def\LL{\mathbb{L}}
\def\NN{\mathbb{N}}
\def\QQ{\mathbb{Q}}

\def\ZZ{\mathbb{Z}}

\newcommand{\mbfd}{\mathbf{D}}

\newcommand{\mbfn}{\mathbf{N}}

\newcommand{\smbfi}{\mathbf{i}}

\newcommand{\smbfk}{\mathbf{k}}

\newcommand{\frakS}{\mathfrak{S}}

\newcommand{\frakm}{\mathfrak{m}}

\newcommand{\algebra}{\textrm{-algebra}}
\newcommand{\algebras}{\textrm{-algebras}}
\newcommand{\subalgebra}{\textrm{-subalgebra}}
\newcommand{\an}{\textup{an}}

\newcommand{\Ainf}{A_{\inf}}

\newcommand{\Acrys}{A_{\textup{cris}}}
\newcommand{\OAcrys}{\pazo A_{\textup{cris}}}

\newcommand{\AF}{A_F}

\newcommand{\AR}{A_R}
\newcommand{\BR}{B_R}

\newcommand{\ARpd}{A^{\PD}}

\newcommand{\action}{\textrm{-action}}

\newcommand{\category}{\textrm{-category}}

\newcommand{\cont}{\textup{cont}}
\newcommand{\converge}{\textup{conv}}

\newcommand{\complete}{\textrm{-complete}}
\newcommand{\completely}{\textrm{-completely}}

\newcommand{\CR}{\textup{CR}}
\newcommand{\CRhat}{\textup{CR}^{\wedge}}
\newcommand{\CRhatphi}{\textup{CR}^{\wedge, \varphi}}

\newcommand{\crys}{\textup{cris}}

\newcommand{\CRYS}{\textup{CRIS}}
\newcommand{\crystal}{\textrm{-crystal}}
\newcommand{\crystals}{\textrm{-crystals}}

\newcommand{\dlog}{\hspace{1mm}d\textup{\hspace{0.5mm}log\hspace{0.3mm}}}

\newcommand{\equivariant}{\textrm{-equivariant}}
\newcommand{\eval}{\textup{ev}}
\newcommand{\etale}{\textup{\'et}}

\newcommand{\Fr}{\textup{Frac}}

\newcommand{\Gal}{\textup{Gal}}

\newcommand{\Hom}{\textup{Hom}}

\newcommand{\Abar}{\overline{A}}
\newcommand{\Bbar}{\overline{B}}
\newcommand{\Cbar}{\overline{C}}
\newcommand{\Dbar}{\overline{D}}
\newcommand{\Ebar}{\overline{E}}
\newcommand{\Czerobar}{\overline{C}_0}
\newcommand{\Cnbar}{\overline{C}_n}
\newcommand{\Dzerobar}{\overline{D}_0}
\newcommand{\Dnbar}{\overline{D}_n}
\newcommand{\Ezerobar}{\overline{E}_0}
\newcommand{\Yizerobar}{\overline{Y}_{i,0}}
\newcommand{\Yinbar}{\overline{Y}_{i,n}}
\newcommand{\Yimbar}{\overline{Y}_{i,m}}
\newcommand{\Yibar}{\overline{Y}_i}
\newcommand{\yibar}{\overline{y}_i}

\newcommand{\kert}{\textup{Ker }}

\newcommand{\LambdaFplus}{\Lambda_{F,+}}
\newcommand{\LambdaRplus}{\Lambda_{R,+}}
\newcommand{\LambdaRminus}{\Lambda_{R,-}}
\newcommand{\Lambdatilde}{\tilde{\Lambda}}
\newcommand{\LambdatildeR}{\tilde{\Lambda}_R}
\newcommand{\LambdatildeRo}{\tilde{\Lambda}_{R, 0}}
\newcommand{\LambdatildeRplus}{\tilde{\Lambda}_{R,+}}
\newcommand{\LambdatildeRminus}{\tilde{\Lambda}_{R,-}}

\newcommand{\Loc}{\textup{Loc}}
\newcommand{\linear}{\textrm{-linear}}

\newcommand{\module}{\textrm{-module}}
\newcommand{\modules}{\textrm{-modules}}
\newcommand{\mubar}{\overline{\mu}}

\newcommand{\MIC}{\textup{MIC}}

\newcommand{\Mod}{\textup{-Mod}}

\newcommand{\MLambda}{M_{\Lambda}}
\newcommand{\MLambdao}{M_{\Lambda, 0}}
\newcommand{\MLambdaplus}{M_{\Lambda, +}}
\newcommand{\MLambdaminus}{M_{\Lambda, -}}
\newcommand{\MLambdatilde}{M_{\tilde{\Lambda}}}
\newcommand{\MLambdatildeo}{M_{\tilde{\Lambda}, 0}}
\newcommand{\MLambdatildeplus}{M_{\tilde{\Lambda}, +}}
\newcommand{\MLambdatildeminus}{M_{\tilde{\Lambda}, -}}

\newcommand{\Nbar}{\overline{N}}

\newcommand{\wotimes}{\widehat{\otimes}}
\newcommand{\Finfty}{F_{\infty}}

\newcommand{\Aframe}{A_{\square}}

\newcommand{\Rframe}{R^{\square}}

\newcommand{\adic}{\textrm{-adic}}
\newcommand{\padic}{p\textrm{-adic}}
\newcommand{\adically}{\textrm{-adically}}
\newcommand{\pqheight}{[p]_q\textrm{-height}}

\newcommand{\PD}{\textup{PD}}

\newcommand{\ptilde}{\tilde{p}}
\newcommand{\stilde}{\tilde{s}}
\newcommand{\tautilde}{\tilde{\tau}}
\newcommand{\rank}{\textup{rk}}
\newcommand{\Rbar}{\overline{R}}
\newcommand{\Rep}{\textup{Rep}}

\newcommand{\Rinfty}{R_{\infty}}

\newcommand{\ring}{\textrm{-ring}}
\newcommand{\Rings}{\textup{Rings}}

\newcommand{\Shv}{\textup{Shv}}
\newcommand{\Spec}{\textup{Spec}\hspace{0.5mm}}
\newcommand{\Spf}{\textup{Spf }}

\newcommand{\qconnection}{q\textrm{-connection}}
\newcommand{\qconnections}{q\textrm{-connections}}

\newcommand{\Strat}{\textup{Strat}}

\newcommand{\textmod}{\textup{ mod }}

\newcommand{\torsion}{\textrm{-torsion}}
\newcommand{\torsionfree}{\textrm{-torsion-free}}

\newcommand{\Vect}{\textup{Vect}}


\title{\textsc{Prismatic $F\crystals$ and Wach modules}}
\author{\textsc{Abhinandan} \\ \footnotesize{IMJ-PRG, Sorbonne Universit\'e, 4 Place Jussieu, Paris, France} \\ \footnotesize{E-mail: \href{abhinandan@imj-prg.fr}{abhinandan@imj-prg.fr}}}

\newcommand{\Addresses}{{
  \footnotesize

  \rule{2cm}{0.4pt}\vspace{2mm}

  \textsc{Abhinandan}\par\nopagebreak
  \textsc{IMJ-PRG, Sorbonne Universit\'e, 4 Place Jussieu, Paris, France}\par\nopagebreak\vspace{-0.7mm}
  \textit{E-mail}: \footnotesize{\href{abhinandan@imj-prg.fr}{abhinandan@imj-prg.fr}}, \textit{Web}: \footnotesize{\href{https://abhinandan.perso.math.cnrs.fr/}{https://abhinandan.perso.math.cnrs.fr/}}
}}

\date{}

\begin{document}

\fontdimen2\font=0.3em
\pagenumbering{arabic}

\keywords{\textit{Keywords}: $\padic$ Hodge theory, prismatic cohomology, $(\varphi, \Gamma)\textrm{-modules}$}
\keywords{\textit{2020 Mathematics Subject Classification}: 14F20, 14F30, 14F40.}

\maketitle
{
	\vspace{-3mm}
	\textsc{Abstract.} We show that the category of analytic/completed prismatic $F\crystals$ on the absolute prismatic site of a small (unramified at $p$) base ring is naturally equivalent to the category of relative Wach modules from the theory of $(\varphi, \Gamma)\modules$.
	The result is obtained by showing that the data of the Galois action on a Wach module is equivalent to the data of a prismatic stratification on the underlying $\varphi\module$.
	Along the way, we obtain new descent results for relative Wach modules.
}

\sloppy


\section{Introduction}

Recent groundbreaking advances in integral $\padic$ Hodge theory have been brought on by the seminal works of Bhatt, Morrow and Scholze on $\Ainf\textrm{-cohomology}$ in \cite{bhatt-morrow-scholze-1, bhatt-morrow-scholze-2}, and of Bhatt and Scholze on prismatic cohomology in \cite{bhatt-scholze-prisms}.
In the latter theory, the study of prismatic $F\crystals$ has led to exciting applications towards the classification of $p\textrm{-divisible}$ groups over a quasisyntomic base \cite{anschutz-lebras}, and more generally, of all $p\adic$ crystalline local systems over smooth $\padic$ formal schemes \cite{bhatt-scholze-crystals, du-liu-moon-shimizu, guo-reinecke}.
However, similar to the crystals appearing in the theory of crystalline cohomology of Grothendieck and Berthelot, the prismatic $F\crystals$ are mysterious objects.

To unravel these objects, a common idea is to describe them in terms of certain equivalent and computable data, for example, in the crystalline cohomology theory, one usually replaces crystals with modules equipped with a flat connection as in \cite{berthelot-cohomologie-cristalline}.
In the prismatic theory, several successful attempts have been made to understand prismatic crystals in terms of more explicit data in various settings, for example, crystals on the relative prismatic site in terms of generalised representations, $q\textrm{-de Rham complexes}$ and $q\textrm{-Higgs fields}$ in \cite{morrow-tsuji, tsuji-qhiggs-field}, Hodge-Tate crystals in terms of Higgs fields in \cite{tian}, Laurent prismatic $F\crystals$ in terms of \'etale $(\varphi, \Gamma)\modules$ and related objects in \cite{wu, du-liu, min-wang}, and crystals on the relative/absolute prismatic site in terms of twisted/absolute differential calculus in \cite{gros-lestum-quiros-1, gros-lestum-quiros-2}.

The aim of this article is to describe analytic/completed prismatic $F\crystals$ of \cite{du-liu-moon-shimizu, guo-reinecke} in terms of a more explicit data.
More precisely, let $R$ denote a $p\textrm{-completely}$ \'etale algebra over a framed $\ZZ_p\textrm{-algebra}$ (see Section \ref{subsec:setup_nota} for notation), and let $R_{\prism}$ denote its absolute prismatic site (see Section \ref{subsec:prismatic_site_crystals} for definitions).
Then, we show the following (see Theorem \ref{intro_thm:fcrystals_wachmod} for a more precise statement):
\begin{thm}\label{intro_thm:analyticfcrys_wachmod_equiv}
	There exists a natural equivalence of categories:
	\begin{equation*}
		\{\textrm{Analytic prismatic } F\textrm{-crystals on } R_{\prism}\} \isomorphic \{\textrm{Wach modules for } R\}.
	\end{equation*}
\end{thm}
Here the category of Wach modules for $R$ denotes a certain full subcategory of \'etale $(\varphi, \Gamma)\modules$ for $R$, and was introduced in \cite{abhinandan-relative-wach-ii} (see Definition \ref{defi:wach_mods_relative}) to study $\ZZ_p\textrm{-lattices}$ inside $\padic$ crystalline representations of the \'etale fundamental group of $R[1/p]$ (see Section \ref{subsec:relation_previous_works}).
Theorem \ref{intro_thm:analyticfcrys_wachmod_equiv} is obtained by showing that the data of the Galois action (i.e.\ $\Gamma\action$) on a Wach module is \textit{equivalent} to the data of a prismatic stratification on the underlying $\varphi\module$ (see Theorem \ref{intro_thm:strat_wach_comp_relative}).
Obtaining such an equivalence is highly non-trivial and constitutes the heart of this article (see Remarks \ref{intro_rem:gammaact_prismstrat} and \ref{rem:3stepdescent_explain}).
Moreover, our approach is different and independent of all previous methods and results in \cite{abhinandan-relative-wach-ii}, \cite{bhatt-scholze-crystals, guo-reinecke, du-liu-moon-shimizu, du-liu, wu, min-wang}.

Let us note that in the case that $R$ is a complete discrete valuation ring with perfect residue field, Wach modules were studied in \cite{fontaine-phigamma}, \cite{wach-free, wach-torsion}, \cite{colmez-hauteur} and \cite{berger-limites}.
In this case, using our methods, we also show that the classical Wach modules from \cite{wach-free, berger-limites} descend to a smaller ring, beyond the Fontaine-Laffaille case treated in \cite{wach-torsion} (see Theorem \ref{intro_thm:wachmod_fpx_descent} and Remark \ref{intro_rem:wachmod_fpx_descent}).
Furthermore, let us note that the theory of Wach modules (in the context of $(\varphi, \Gamma)\modules$) and its relationship with crystalline representations in different settings was studied in \cite{abhinandan-relative-wach-i, abhinandan-imperfect-wach, abhinandan-relative-wach-ii}.

Besides being an explicit description of prismatic $F\crystals$, the usefulness of Wach modules stems from its applications towards the computation of $\padic$ vanishing cycles via syntomic complexes.
Indeed, in order to generalise the computation of $\padic$ vanishing cycles via crystalline syntomic complexes of \cite{colmez-niziol}, to the case of crystalline coefficients, in \cite{abhinandan-syntomic} we used the theory of Wach modules from \cite{abhinandan-relative-wach-i} as an important ingredient.
However, the results obtained in \cite{abhinandan-relative-wach-i} and \cite{abhinandan-syntomic} were restrictive.
Furthermore, note that in \cite{bhatt-morrow-scholze-2}, the authors defined a prismatic syntomic complex for smooth $\padic$ formal schemes, and compared it to the complex of $\padic$ vanishing cycles integrally.
Beyond the smooth case, similar comparison results have also been obtained in \cite{antieau-mathew-morrow-nikolaus} and \cite{bhatt-mathew}, where the latter uses the theory of prismatic cohomology.
The preceding results were obtained for the case of constant coefficients, and it is natural to ask the following: is it possible to generalise \cite[Theorem 10.1]{bhatt-morrow-scholze-2} to arbitrary crystalline coefficients, i.e.\ can one define a prismatic syntomic complex with coefficients in a prismatic $F\crystal$, and compare it to the complex of $\padic$ vanishing cycles?
In our approach to providing an answer to the preceding question, the prismatic interpretation of Wach modules from the current paper and the relationship between Wach modules and crystalline representations from \cite{abhinandan-relative-wach-ii} will serve as crucial inputs.

In the rest of this section, we will make Theorem \ref{intro_thm:analyticfcrys_wachmod_equiv} more precise (see Theorem \ref{intro_thm:fcrystals_wachmod}), and discuss the key ingredients required for its proof, i.e.\ Theorem \ref{intro_thm:strat_wach_comp_relative}, Theorem \ref{intro_thm:integral_comp_relative} and Theorem \ref{intro_thm:wachmod_fpx_descent}.

\subsection{The categorical equivalence}

Let $p$ be a fixed prime, $\kappa$ a perfect field of characteristic $p$ and $O_F \coloneq W(\kappa)$, the ring of $p\textrm{-typical}$ Witt vectors with coefficients in $\kappa$ and equipped with the natural Witt vector Frobenius endomorphism.
Let $R$ denote the $\padic$ completion of an \'etale algebra over the $p\adically$ completed Laurent polynomial ring $O_F\langle X_1^{\pm 1}, \ldots, X_d^{\pm 1}\rangle$ such that its special fibre $\Spec(R/pR)$ is connected.
We take $X \coloneq \Spf R$ to be an affine $\padic$ formal scheme, and consider its absolute prismatic ringed site $(X_{\prism}, \pazo_{\prism})$ in the sense of \cite{bhatt-scholze-prisms} (see Section \ref{subsec:prismatic_site_crystals}).
Let $\Vect^{\an, \varphi}(X_{\prism})$ denote the category of analytic prismatic $F\crystals$ from \cite{guo-reinecke} (note that one could also work with the equivalent notion of completed prismatic $F\crystals$ from \cite{du-liu-moon-shimizu}, see Section \ref{subsec:prismatic_fcrystals} for definitions and explanations).

Let $\Rinfty$ denote the $R\algebra$ obtained by adjoining to $R$ all $p\textrm{-power}$ roots of unity and $p\textrm{-power}$ roots of $X_i$, for all $1 \leqslant i \leqslant d$.
Set $\Gamma_R \coloneq \Gal(\Rinfty[1/p]/R[1/p]) \isomorphic \ZZ_p(1)^d \rtimes \ZZ_p^{\times}$.
Let $\Ainf(\Rinfty) \coloneq W(\Rinfty^{\flat})$, where $\Rinfty^{\flat}$ denotes the tilt of $\Rinfty$ (see Section \ref{subsec:setup_nota}).
Note that $\Ainf(\Rinfty)$ is equipped with a Witt vector Frobenius endomorphism $\varphi$ and a continuous action of $\Gamma_R$ (see Section \ref{subsec:important_rings}).
Moreover, we have a noetherian $O_F\textrm{-subalgebra}$ $\AR \subset \Ainf(\Rinfty)$, which is complete for the induced topology and stable under the action of $(\varphi, \Gamma_R)$ on $\Ainf(\Rinfty)$ (see Section \ref{subsubsec:ring_ar+}); we equip $\AR$ with the induced structures.
Let $\varepsilon \coloneq (1, \zeta_p, \ldots)$ denote a compatible system of $p\textrm{-power}$ roots of unity in $\Rinfty^{\flat}$ and let $q \coloneq [\varepsilon]$ denote its Teichm\"uller lift in $\Ainf(\Rinfty)$; set $\mu \coloneq q-1$ and $[p]_q \coloneq (q^p-1)/(q-1)$ in $\Ainf(\Rinfty)$.
We denote the category of Wach modules over $\AR$ (see Definition \ref{defi:wach_mods_relative}) by $(\varphi, \Gamma_R)\Mod_{\AR}^{[p]_q}$.

Now, note that the pair $(\AR, [p]_q)$ is a prism, an object of $X_{\prism}$ (see Lemma \ref{lem:ar+_pq_prism}) and a cover of the final object of the topos $\Shv(X_{\prism})$ (see Lemma \ref{lem:ar+_cover_fo}).
Moreover, the action of $\Gamma_R$ on $\AR$ induces automorphisms of $(\AR, [p]_q)$ in $X_{\prism}$ (see Lemma \ref{lem:gammar_act_prisaut}).
Evaluating an analytic prismatic $F\crystal$ on the prism $(\AR, [p]_q)$ gives a Wach module (see Proposition \ref{prop:afcrystal_eval_ar+}), and thus, a well-defined evaluation functor
\begin{equation}\label{intro_eq:afcrystal_eval_ar+}
	\begin{aligned}
		\eval_{\AR}^{\prism} \colon \Vect^{\an, \varphi}(X_{\prism}) &\longrightarrow (\varphi, \Gamma_R)\Mod_{\AR}^{[p]_q}\\
				\pazf &\longmapsto \pazf(\AR, [p]_q).
	\end{aligned}
\end{equation}
\begin{thm}[{Theorem \ref{thm:fcrystals_wachmod_relative}}]\label{intro_thm:fcrystals_wachmod}
	The evaluation functor in \eqref{intro_eq:afcrystal_eval_ar+} induces a natural equivalence of categories $\eval_{\AR}^{\prism} \colon \Vect^{\an, \varphi}(X_{\prism}) \isomorphic (\varphi, \Gamma_R)\Mod_{\AR}^{[p]_q}$.
\end{thm}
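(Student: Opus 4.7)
The plan is to proceed by descent along the cover $(\AR, [p]_q)$ of the final object of $\Shv(X_{\prism})$ provided by Lemma \ref{lem:ar+_cover_fo}. An analytic prismatic $F$-crystal $\pazf$ is determined by its value on this cover together with a descent datum on the Čech nerve, i.e.\ a prismatic stratification. Since Proposition \ref{prop:afcrystal_eval_ar+} already guarantees that $\pazf(\AR, [p]_q)$ is a Wach module, the functor $\eval_{\AR}^{\prism}$ is well-defined, and the content of the theorem is that the stratification data can be exchanged bijectively and functorially with the $\Gamma_R$-action that a Wach module carries by definition.

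The heart of the proof is therefore the auxiliary equivalence announced as Theorem \ref{intro_thm:strat_wach_comp_relative}: for a finite projective $\varphi$-module $N$ over $\AR$ of bounded $[p]_q$-height, a continuous semilinear $\Gamma_R$-action on $N$ trivial modulo $\mu$ is the same datum as a prismatic stratification on $N$. One direction is essentially forced on us by Lemma \ref{lem:gammar_act_prisaut}: each $\gamma \in \Gamma_R$ is an automorphism of the prism $(\AR, [p]_q)$, so it defines a section of the two projections out of the self-product in $X_{\prism}$, and specializing a stratification along such a section recovers the action of $\gamma$. For the opposite direction, the plan is to give an explicit description of the Čech self-product $(\AR, [p]_q) \otimes_{\prism} (\AR, [p]_q)$ as a suitable completion of $\AR \wotimes \AR$, in which the $\Gamma_R$-orbit of the diagonal sits as a topologically dense family of sections. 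The Wach condition (triviality mod $\mu$) then allows one to convergently exponentiate the infinitesimal $\Gamma_R$-data back to a full stratification, with the cocycle condition on the triple self-product matching the group law of $\Gamma_R \isomorphic \ZZ_p(1)^d \rtimes \ZZ_p^{\times}$.

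The main obstacle will be reconciling the two distinct finiteness/continuity constraints. On one side, analyticity of the prismatic $F$-crystal amounts to a rigid-analytic condition on the stratification on the locus away from $V([p]_q)$; on the other, the Wach condition demands a continuous $(\mu)$-adic $\Gamma_R$-action trivial modulo $\mu$. The bulk of the technical work will lie in identifying the precise topology on the prismatic Čech nerve in which such an exponential series converges and in which the $\Gamma_R$-action extends uniquely to a stratification, and then in verifying that the cocycle condition on the triple product exactly encodes the semidirect-product group law on $\Gamma_R$. Once Theorem \ref{intro_thm:strat_wach_comp_relative} is in place, full faithfulness of $\eval_{\AR}^{\prism}$ (a crystal morphism is a map on evaluations compatible with stratifications, which by the bijection is a $\Gamma_R$-equivariant map) and essential surjectivity (any Wach module produces a stratification and hence descends to an analytic prismatic $F$-crystal) are formal consequences of descent.
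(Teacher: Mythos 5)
Your overall strategy — reduce to descent along the cover $(\AR, [p]_q)$, exchange crystals for stratifications, and then prove that the data of a stratification is equivalent to the data of a $\Gamma_R$-action on a Wach module — is exactly the paper's frame, and the easy direction (Lemma \ref{lem:gammar_act_prisaut}, pulling back the stratification along $\Delta \circ (g,1)$) is correct.

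The gap is in the hard direction: constructing a stratification from the $\Gamma_R$-action. Your proposal to ``convergently exponentiate the infinitesimal $\Gamma_R$-data'' would, at best, only reach the pro-$p$ part $\Gamma_R' \rtimes \Gamma_0 \subset \Gamma_R$. The torsion subgroup $\Gamma_{\textrm{tor}}$ (isomorphic to $\FF_p^{\times}$ for $p\geq 3$, and to $\{\pm 1\}$ for $p=2$) is a non-trivial finite group; there is no infinitesimal data to exponentiate and no relevant convergence question. The paper handles it by a completely different mechanism, namely the idempotent decomposition $M = \bigoplus_i \epsilon_i(M)$ from Iwasawa theory (Appendix \ref{app_subsec:module_zpx_action}), which has no analogue in an exponential construction. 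So your plan omits a whole third of the argument, and the part omitted is precisely the one that cannot be recast in the framework you describe. I would also flag the assertion that the $\Gamma_R$-orbit of the diagonal section is ``topologically dense'' in the \v{C}ech nerve: this is neither asserted nor used in the paper, and $\AR(1)$ is an infinitely generated prismatic envelope in which such a density statement is far from clear, so it is not a safe foundation to build on.

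What the paper actually does is a descent argument rather than an exponentiation argument. The stratification on a Wach module $N$ is built by first proving the integral comparison isomorphism (Theorem \ref{intro_thm:integral_comp_relative}), which identifies $(\AR(1)/p_1(\mu) \otimes_{p_2, \AR} N)^{\Gamma_R}$ with $N/\mu N$ and shows that base-changing it back along $p_1$ recovers $\AR(1)/p_1(\mu) \otimes_{p_2, \AR} N$; this is then propagated modulo $p_1(\mu)^n$ for all $n$ and passed to the limit (Propositions \ref{prop:deltan_modmun_relative} and \ref{prop:diagonal_m1_relative}) to get $\Delta_N : (\AR(1) \otimes_{p_2, \AR} N)^{1\times\Gamma_R} \isomorphic N$, whose inverse $\AR(1)$-linearly extends to the desired stratification $\varepsilon$. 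Exponential formulas do appear, but only as an internal tool for the $\Gamma_R'$ and $\Gamma_0$ steps (passing between $q$-connections and genuine connections, Propositions \ref{prop:n1modp1mu_connection} and \ref{prop:mlambda0_connection}), not as the master construction. Finally, verifying the cocycle condition is not ``just the group law'' either: it requires the injectivity of $\Delta_N$ on $(\AR(2) \otimes_{r_3, \AR} N)^{1\times\Gamma_R\times\Gamma_R}$ (Proposition \ref{prop:diagonal_m2_relative}), which in turn rests on the same descent machinery over $\AR(2)$. In short, the student-level picture of ``topology of the \v{C}ech nerve $+$ exponential series $+$ group law'' is missing the explicit structure theory of $\AR(1)/p_1(\mu)$ (Proposition \ref{prop:a1modp1mu_pdring}) and the three-step descent — geometric, torsion, free arithmetic — with the torsion step being the one your approach cannot see.
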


Our proof of Theorem \ref{intro_thm:fcrystals_wachmod} is direct, i.e.\ we do not take additional inputs, namely, the equivalence between crystalline $\ZZ_p\textrm{-representations}$ and Wach modules over $\AR$ from \cite{abhinandan-relative-wach-ii}, or the categorical equivalence results from \cite{bhatt-scholze-crystals, guo-reinecke, du-liu-moon-shimizu}.
As mentioned earlier, we show that the $\Gamma_R\action$ on a Wach module is equivalent to a prismatic stratification on the underlying $\varphi\module$.

\begin{rem}\label{intro_rem:gammaact_prismstrat}
	The equivalence between the $\Gamma_R\action$ and a prismatic stratification is not at all obvious: the quotient of $\AR$ by $\Gamma_R$ (in a stack-theoretic sense) is not isomorphic to the Cartier-Witt stack of $R$ (see \cite[Section 3.8]{bhatt-lurie-apc} for $R = \ZZ_p$).
	So, instead we employ an approach inspired by the work of \cite{morrow-tsuji}, where the authors studied coefficients for relative prismatic cohomology.
	However, the results obtained and the techniques employed in our proofs are vastly different from loc.\ cit.
	This stems largely from the fact that our respective settings are quite different: we work over the absolute prismatic site of $R$ and consider analytic prismatic $F\crystals$, whereas in loc.\ cit., the authors worked over the relative prismatic site and considered prismatic $F\crystals$ in vector bundles.
	Moreover, in this article the group $\Gamma_R$ is non-commutative and has an \textit{arithmetic} part (see $\Gamma_F$ in Section \ref{subsec:setup_nota}), in contrast with the commutative and \textit{geometric} $\Gamma$ considered in \cite{morrow-tsuji}.
	As a consequence, the techniques of loc.\ cit.\ cannot be adapted to the present setting (also see Remark \ref{rem:descent_to_strat_explain}).
\end{rem}

\subsection{Crystals as modules with stratification}

In order to prove Theorem \ref{intro_thm:fcrystals_wachmod}, our first course of action is to bring the source of the functor $\eval_{\AR}^{\prism}$ in \eqref{intro_eq:afcrystal_eval_ar+} on a somewhat equal footing with the target of that functor.
This is achieved by interpreting analytic prismatic $F\crystals$ as certain modules with stratification.
More precisely, let us note that $(\AR, [p]_q)$ is a cover of the final object of the topos $\Shv(X_{\prism})$ (see Lemma \ref{lem:ar+_cover_fo}).
We set $\AR(\bullet)$ to be the cosimplicial ring obtained by taking the prismatic \v{C}ech nerve $(\AR(\bullet), I(\bullet))$ of $(\AR, [p]_q)$ in $X_{\prism}$.
Then, it is possible to describe the $n^{\textrm{th}}\textrm{-term}$ of $\AR(\bullet)$ in the site $X_{\prism}$ (see Construction \ref{const:cech_nerve}), and in the case $n = 1, 2$, we may describe $\AR(n)$ very explicitly (see the discussion after Construction \ref{const:cech_nerve}).
Now, let $\Strat^{\an, \varphi}(\AR(\bullet))$ denote the category of pairs $(N, \varepsilon)$, with $N$ being an ``analytic'' and finitely generated $\varphi\module$ over $\AR$ equipped with a stratification $\varepsilon$ with respect to $\AR(\bullet)$ (see Definition \ref{defi:ar+mod_strat}).
Then, by the general theory of crystals, evaluation of analytic $F\crystals$ on the simplicial object $(\AR(\bullet), I(\bullet))$ induces the following natural equivalence of categories (see Construction \ref{const:afcrystal_eval_ar+strat} and Proposition \ref{prop:afcrystal_eval_ar+strat}):
\begin{equation}\label{intro_eq:afcrystal_eval_ar+strat}
	\eval_{\AR(\bullet)}^{\prism} \colon \Vect^{\an, \varphi}(X_{\prism}, \pazo_{\prism}) \isomorphic \Strat^{\an, \varphi}(\AR(\bullet)).
\end{equation}

\subsection{Prismatic stratifications and Galois action on Wach modules}

Our next course of action is to relate the category of $\varphi\modules$ over $\AR$ equipped with a stratification to the category of Wach modules over $\AR$.
So, let $(N, \varepsilon)$ denote an object of the category $\Strat^{\an, \varphi}(\AR(\bullet))$.
Moreover, let us note that the action of $\Gamma_R$ on $(\AR, [p]_q)$ induces a natural action of $\Gamma_R^{\times (n+1)}$ on $\AR(n)$, for each $n \in \NN$.
Then, by using the action of $\Gamma_R^2$ on $\AR(1)$ and the stratification $\varepsilon$, we equip $N$ with a continuous action of $\Gamma_R$ satisfying the properties of a Wach module over $\AR$ from Definition \ref{defi:wach_mods_relative} (see Construction \ref{const:strat_eval_ar+}).
In particular, we have a well-defined natural functor
\begin{equation}\label{intro_eq:strat_eval_ar+}
	\eval_{\AR}^{\Strat} \colon \Strat^{\an, \varphi}(\AR(\bullet)) \longrightarrow (\varphi, \Gamma_R)\Mod_{\AR}^{[p]_q},
\end{equation}
and we show that it agrees with the functors in \eqref{intro_eq:afcrystal_eval_ar+} and \eqref{intro_eq:afcrystal_eval_ar+strat}, i.e.\ the obvious diagram of functors naturally commutes (see Proposition \ref{prop:prisfcrys_strat_wach_relative}).
Consequently, in order to show that \eqref{intro_eq:afcrystal_eval_ar+} induces a categorical equivalence, it is enough to show that \eqref{intro_eq:strat_eval_ar+} induces a categorical equivalence.
Therefore, we show the following:
\begin{thm}[{Theorem \ref{thm:strat_wach_comp_relative}}]\label{intro_thm:strat_wach_comp_relative}
	The functor in \eqref{intro_eq:strat_eval_ar+} induces a natural equivalence of categories.
\end{thm}

In order to prove Theorem \ref{intro_thm:strat_wach_comp_relative}, we define a quasi-inverse to the functor in \eqref{intro_eq:strat_eval_ar+}.
More precisely, given a Wach module $N$ over $\AR$, we use the action of $\Gamma_R$ on $N$ to build a stratification on $N$ with respect to $\AR(\bullet)$, and obtain a natural quasi-inverse to the functor in \eqref{intro_eq:strat_eval_ar+}:
\begin{equation}\label{intro_eq:strat_from_wach_relative}
	\Strat_{\AR(\bullet)} \colon (\varphi, \Gamma_R)\Mod_{\AR}^{[p]_q} \longrightarrow \Strat^{\an, \varphi}(\AR(\bullet)).
\end{equation}
The construction of a stratification on a Wach module, using the action of $\Gamma_R$ on it, is the \textit{main technical heart} of the paper.
This is given via a \textit{3-step argument}.
The three steps correspond to the three subgroups of different nature making up the Galois group $\Gamma_R$ (see Remark \ref{rem:3stepdescent_explain}).
Also note that the case $p=2$ is different from the cases $p \geqslant 3$, and requires entirely different and quite technical arguments.
We refer the reader to Section \ref{subsec:wach_and_strat} for precise details on the construction of the functor in \eqref{intro_eq:strat_from_wach_relative}.

\begin{rem}\label{rem:3stepdescent_explain}
	Note that we have $\Gamma_R \isomorphic \ZZ_p(1)^d \rtimes \ZZ_p^{\times}$.
	Furthermore, $\ZZ_p^{\times}$ fits into the following exact sequence:
	\begin{equation*}
		1 \longrightarrow \Gamma_0 \longrightarrow \ZZ_p^{\times} \longrightarrow \Gamma_{\textrm{tor}} \longrightarrow 1,
	\end{equation*}
	where, for $p \geqslant 3$ we have $\Gamma_0 \isomorphic 1 + p\ZZ_p$, and for $p=2$ we have $\Gamma_0 \isomorphic 1 + 4\ZZ_2$.
	Moreover, for $p \geqslant 3$ we have $\Gamma_{\textrm{tor}} \isomorphic \FF_p^{\times}$, and for $p = 2$ we have $\Gamma_{\textrm{tor}} \isomorphic \{\pm 1\}$ as groups.
	Now, the first step in the proof of Theorem \ref{intro_thm:strat_wach_comp_relative} corresponds to the descent and Galois cohomology computations for the action of $\ZZ_p(1)^d$ (see Section \ref{subsubsec:geo_gamma_action}, \ref{subsec:geometric_descent} and \ref{subsubsec:geo_gamma_action_n1}).
	For $p \geqslant 3$, the second step corresponds to computations for the action of $\FF_p^{\times}$ (see Sections \ref{subsubsec:fpx_action}, \ref{subsubsec:fpx_descent} and \ref{subsubsec:fpx_action_n1}), and the third step corresponds to the action of $1 + p\ZZ_p$ (see Sections \ref{subsubsec:gamma0_action}, \ref{subsubsec:gamma0_descent} and \ref{subsubsec:gamma0_action_n1}).
	Additionally, for $p = 2$, due to the difference in the structure of $\ZZ_p^{\times}$, the computations in steps 2 and 3 mentioned above are completely different (see Sections \ref{subsubsec:p=2_gammaF_action}, \ref{subsubsec:a1_galact_p=2} and \ref{subsubsec:p=2_gammaF_action_n1}).

	Note that it is \textit{absolutely necessary} to carry out the computations in the order mentioned above because the more natural ordering, i.e.\ carrying out computations for the $\ZZ_p(1)^d\textrm{-action}$, followed by the $\Gamma_0\action$ and the $\Gamma_{\textrm{tor}}\action$ at the end, does not give the desired results.
	Furthermore, let us remark that while the proofs involving $\ZZ_p(1)^d$ and $\Gamma_0$ are ``prismatic/crystalline'' in nature and feature complexes similar to $q\textrm{-de Rham}$ complexes, the arguments involving $\Gamma_{\textrm{tor}}$ feature some techniques commonly used in Iwasawa theory (see Appendix \ref{app_subsec:module_zpx_action}).
\end{rem}

\subsection{Some descent results for Wach modules}

The most important input for the construction of the functor in \eqref{intro_eq:strat_from_wach_relative} is the following comparison result (notation are explained after the statement):

\begin{thm}[{Theorem \ref{thm:integral_comp_relative}}]\label{intro_thm:integral_comp_relative}
	Let $N$ be a Wach module over $\AR$, and let us consider the $R\module$ $M \coloneq (\AR(1)/p_1(\mu) \otimes_{p_2, \AR} N)^{\Gamma_R}$, equipped with the tensor product Frobenius.
	Then, $M$ is a finitely generated and $p\torsionfree$ $R\module$, and there exists a natural $(\varphi, \Gamma_R)\equivariant$ isomorphism
	\begin{equation*}
		\AR(1)/(p_1(\mu)) \otimes_{p_1, R} M \isomorphic \AR(1)/(p_1(\mu)) \otimes_{p_2, \AR} N.
	\end{equation*}
	Moreover, the multiplication map $\Delta \colon \AR(1) \rightarrow \AR$ induces a natural $\varphi\equivariant$ isomorphism of $R\modules$ $M \isomorphic N/\mu N$.
\end{thm}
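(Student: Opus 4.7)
The strategy is to produce, using the Wach-module structure on $N$, an explicit $(\varphi, \Gamma_R)$-equivariant $\AR(1)/p_1(\mu)$-linear isomorphism
\begin{equation*}
	\AR(1)/p_1(\mu) \otimes_{p_1, R} (N/\mu N) \isomorphic \AR(1)/p_1(\mu) \otimes_{p_2, \AR} N,
\end{equation*}
where $\Gamma_R$ acts trivially on $N/\mu N$ (by the Wach-module congruence $(\gamma - 1) N \subseteq \mu N$) and diagonally on everything else; the theorem will then follow by applying $(-)^{\Gamma_R}$.

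The first task is to make $\AR(1)/p_1(\mu)$ explicit using Construction \ref{const:cech_nerve} and the discussion after Lemma \ref{lem:selfprod_prismenv}: via $p_1$ it is a completed $R$-algebra (since $p_1$ factors through $R = \AR/\mu$), while via $p_2$ it should be viewed as a prismatic-envelope-type completion of $\AR$ in which $p_2(\mu)$ plays the role of the deformation parameter transverse to the diagonal. The isomorphism above is then built via a Taylor-expansion formula encoding the $q$-connection on $N$ induced by the Wach-module action of $\Gamma_R$, with convergence ensured by the $\mu$-adic / $(p, p_2(\mu))$-adic completeness of $\AR(1)/p_1(\mu)$ and bijectivity checked by reduction to an associated graded piece for the $\mu$-adic filtration (or by reducing modulo $p$).

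With the isomorphism in hand, taking $\Gamma_R$-invariants of the right-hand side gives $M$ by definition. On the left-hand side, after inverting $p$ one uses the computation $(\AR(1)/p_1(\mu))^{\Gamma_R}[1/p] = R[1/p]$ together with vanishing (up to a bounded power of $p$) of the higher continuous cohomology of $\AR(1)/p_1(\mu)$, which lets one commute invariants past $\otimes_R (N/\mu N)$ rationally. This yields the $\varphi$-equivariant isomorphism $M[1/p] \isomorphic (N/\mu N)[1/p]$; finite generation of $M$ then follows by realising it as an $R$-lattice inside a finitely generated $R[1/p]$-module, while $p$-torsion freeness descends from that of $N$ through the flatness properties of $p_2$.

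The principal obstacle is precisely the Galois-cohomology computation of $\AR(1)/p_1(\mu)$ invoked in the last step. This will require the three-step analysis foreshadowed in the introduction, corresponding to the decomposition $\Gamma_R \cong \ZZ_p(1)^d \rtimes \ZZ_p^{\times}$: a $q$-de Rham calculation for the geometric factor $\ZZ_p(1)^d$, an Iwasawa-theoretic argument (see Appendix \ref{app_subsec:module_zpx_action}) for the torsion subgroup $\Gamma_{\textup{tor}}$, and a procyclic computation for $\Gamma_0$. The fact that one can only control the integral discrepancy between $(\AR(1)/p_1(\mu))^{\Gamma_R}$ and $R$ up to a bounded $p$-power is what forces the final comparison $M \isomorphic N/\mu N$ to hold only after inverting $p$, and the case $p = 2$ will demand separately delicate arguments.
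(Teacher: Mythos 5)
Your strategy of directly writing down a Taylor-expansion isomorphism $\AR(1)/p_1(\mu) \otimes_{p_1, R} (N/\mu N) \isomorphic \AR(1)/p_1(\mu) \otimes_{p_2, \AR} N$ and then taking $\Gamma_R$-invariants contains two genuine gaps, one of logic and one of reading.

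The logical gap: a closed-form Taylor/exponential formula for the stratification can only encode the action of a \emph{procyclic} (or $\ZZ_p$-free) group via $\gamma = \exp(t\nabla^{\log})$, and so is available for $\Gamma_R'$ and $\Gamma_0$; it cannot be written for the finite torsion factor $\Gamma_{\textup{tor}}$, for which there is no logarithm. The paper's three-step decomposition is therefore not (as your last paragraph suggests) merely a device for computing the Galois cohomology of the base ring $\AR(1)/p_1(\mu)$ after the isomorphism is constructed; it is constitutive of the construction itself. Concretely: the geometric descent (Proposition \ref{prop:abarone_comp}) and the $\Gamma_0$-descent (Propositions \ref{prop:mlambda0_connection}--\ref{prop:lambdar0_comp}) do use precisely the crystalline/Taylor picture you describe, but the $\Gamma_{\textup{tor}}$-step proceeds by the idempotent $\FF_p^{\times}$-decomposition (Propositions \ref{prop:wachmod_fpx_descent}, \ref{prop:mlambda_fpx_descent}) for $p \geq 3$, and by a genuinely different $\pm$-decomposition analysis (Propositions \ref{prop:mlambda_pm_descent}, \ref{lem:mlambda+_gamma0_modnu}) for $p = 2$. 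Without this, your single Taylor formula cannot exist and the ``bijectivity by passing to the associated graded'' check has nothing to act on.

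The reading gap: the theorem asserts an \emph{integral} isomorphism of $R$-modules $M \isomorphic N/\mu N$; only the $\varphi$-equivariance of this map requires inverting $p$. Your proposal concludes only $M[1/p] \isomorphic (N/\mu N)[1/p]$, which is strictly weaker. The underlying reason you give --- that $(\AR(1)/p_1(\mu))^{\Gamma_R}$ and $R$ can only be compared up to a bounded $p$-power --- is not correct: Lemma \ref{lem:a1modp1mun} (for $n=1$) shows that $p_1 : R \isomorphic (\AR(1)/p_1(\mu))^{\Gamma_R}$ is an integral isomorphism, established via the exact sequences in Lemmas \ref{lem:a1_geometric_exact}, \ref{lem:a1_fpx_exact}, \ref{lem:a1_gamma0_exact}. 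Relatedly, ``finite generation of $M$ by realising it as an $R$-lattice inside a finitely generated $R[1/p]$-module'' is not automatic over the base ring $R$ (which is higher-dimensional and not a PID): an $R$-submodule $M$ with $M[1/p]$ finitely generated over $R[1/p]$ need not be finitely generated over $R$ unless one also knows boundedness. In the paper, finite generation is obtained step-by-step from the fact that the $F$-crystal arising at each descent stage is finitely generated and that its evaluations are controlled by the regularity hypotheses in Definition \ref{defi:wach_mods_relative} via Lemma \ref{lem:nr1_pctf}; the identification $M \isomorphic N/\mu N$ is then read off integrally in Proposition \ref{prop:deltan_modmu_relative}.
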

In Theorem \ref{intro_thm:integral_comp_relative}, note that $(\AR(1), [p]_q)$ denotes the self product of $(\AR, [p]_q)$ in $X_{\prism}$, and $p_1, p_2 \colon \AR \rightarrow \AR(1)$ are the two projection maps to the two components (see Construction \ref{const:cech_nerve}).

Note that Theorem \ref{intro_thm:integral_comp_relative} is a descent statement for the action of $\Gamma_R$ and involves the ring $\AR(1)/p_1(\mu)$.
Therefore, in order to prove the statement, one requires an understanding of the action of $\Gamma_R$ on the underlying objects, and an explicit description of the ring $\AR(1)/p_1(\mu)$.
The latter is achieved in Proposition \ref{prop:a1modp1mu_pdring}, using some computations of Tsuji on prismatic envelopes and divided power rings (see Appendix \ref{app_sec:delta_rings_pd_alg}).
For the former, as mentioned in Remark \ref{rem:3stepdescent_explain}, our proof follows a \textit{3-step descent} argument corresponding to the 3 different subgroups making up $\Gamma_R$, i.e.\ we first show descent for the $\Gamma_R'\action$, followed by descent for the $\Gamma_{\textrm{tor}}\action$ and finally descent for the $\Gamma_0\action$.
Furthermore, note that the case $p=2$ is different from the cases $p \geqslant 3$, and requires entirely different arguments.
We refer the reader to Sections \ref{subsec:geometric_descent} and \ref{subsec:arithmetic_descent} for more details.

\begin{rem}\label{rem:descent_to_strat_explain}
	Theorem \ref{intro_thm:integral_comp_relative} plays a key role in obtaining a stratification on the Wach module $N$ (see Proposition \ref{prop:diagonal_m1_relative}).
	Let us note that Theorem \ref{intro_thm:integral_comp_relative} and Proposition \ref{prop:diagonal_m1_relative} are new, and do not follow from any previous works.
	More specifically, our statements are analogous to some statements in \cite[Sections 3.1 \& 3.2]{morrow-tsuji}, however, the results of loc.\ cit.\ do not adapt to our setting.
	This is due to the fact that in contrast with loc.\ cit.\ the group $\Gamma_R$ has an \textit{arithmetic} part, i.e.\ $\Gamma_F$, due to which entirely different computations are required for the descent isomorphism (compare loc.\ cit\ with Sections \ref{subsec:arithmetic_descent} and \ref{subsec:wach_and_strat}).
	Moreover, the computations of loc.\ cit.\ were carried out using the $q\textrm{-PD}$ formalism of \cite[Section 16]{bhatt-scholze-prisms} which made some of the proofs straightforward, however, the same does not apply to our current setting, and we resolve this via the \textit{3-step proofs} mentioned above.
\end{rem}

Now, let us assume that $p \geqslant 3$, and note that in the course of the proof of Theorem \ref{intro_thm:integral_comp_relative} (namely, the descent step for the action of $\Gamma_{\textrm{tor}} = \FF_p^{\times}$), we obtain a \textit{new} descent statement for (classical) Wach modules (see Remark \ref{intro_rem:wachmod_fpx_descent}).
In other words, Theorem \ref{intro_thm:wachmod_fpx_descent} below is an important input in the proof of Theorem \ref{intro_thm:integral_comp_relative}.
To state the result, note that we have $\AR \isomorphic R\llbracket\mu\rrbracket$ as rings (see Section \ref{subsubsec:ring_ar+}), and by transport of structure, we equip the target with a Frobenius endomorphism and an $R\linear$ action of $\Gamma_F$.
Let $N$ be a Wach module over $\AR$, i.e.\ a finitely generated module over $R\llbracket\mu\rrbracket$, equipped with an $R\llbracket\mu\rrbracket\linear$ Frobenius isomorphism $\varphi^*(N)[1/[p]_q] \isomorphic N[1/[p]_q]$, and an $R\linear$ and continuous action of $\Gamma_F$ commuting with Frobenius, and such that the action of $\Gamma_F$ is trivial on $N/\mu N$.
Let us set
\begin{equation*}
	\mu_0 \coloneq -p + \textstyle \sum_{a \in \FF_p} (1+\mu)^{[a]} \hspace{3mm} \textrm{ and } \hspace{3mm} \ptilde \coloneq \mu_0+p,
\end{equation*}
as elements of $R\llbracket\mu\rrbracket^{\FF_p^{\times}}$.
Then, from Lemma \ref{lem:rmu0_fpxinv}, we have a $(\varphi, \Gamma_0)\equivariant$ isomorphism of rings $R\llbracket\mu_0\rrbracket \isomorphic R\llbracket\mu\rrbracket^{\FF_p^{\times}}$, and we show the following:
\begin{thm}[{Proposition \ref{prop:wachmod_fpx_descent}}]\label{intro_thm:wachmod_fpx_descent}
	Let $N$ be a Wach module over $\AR$.
	Then, $N_0 \coloneq N^{\FF_p^{\times}}$ is a finitely generated $R\llbracket\mu_0\rrbracket\module$, equipped with a continuous and semilinear action of $\Gamma_0$, such that the action of $\Gamma_0$ is trivial on $N_0/\mu_0 N_0 \isomorphic N/\mu N$, and we have a natural $(\varphi, \Gamma_F)\equivariant$ isomorphism of $R\llbracket\mu\rrbracket\modules$ $R\llbracket\mu\rrbracket \otimes_{R\llbracket\mu_0\rrbracket} N_0 \isomorphic N$.
	Moreover, the sequences $\{p, \mu_0\}$ and $\{\mu_0, p\}$ are regular on $N_0$, and it is equipped with an $R\llbracket\mu_0\rrbracket\linear$ isomorphism $\varphi^*(N_0)[1/\ptilde] \isomorphic N_0[1/\ptilde]$, compatible with the respective natural actions of $\Gamma_0$.
\end{thm}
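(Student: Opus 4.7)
The strategy is Galois descent for the torsion subgroup $\FF_p^{\times} \subset \Gamma_F$, which is available because $p-1$ is a unit in $R$ for $p \geq 3$ and because of the ring identification $R\llbracket\mu\rrbracket^{\FF_p^{\times}} \isomorphic R\llbracket\mu_0\rrbracket$ from Lemma \ref{lem:rmu0_fpxinv}. The hypotheses on $N$ that enter the argument are that $N$ is $\mu$-torsion free and that the $\Gamma_F$-action on $N/\mu N$ is trivial.

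First, I would define the averaging projector $e_0 := \tfrac{1}{p-1}\sum_{a \in \FF_p^{\times}} \sigma_a$ acting on $N$, where $\sigma_a$ denotes the action of $a$ through $\FF_p^{\times} \subset \Gamma_F$, and set $N_0 := e_0(N) = N^{\FF_p^{\times}}$. Since $\Gamma_F$ acts trivially on $N/\mu N$, any $x \in N$ satisfies $e_0(x) \equiv x \pmod{\mu N}$, so the composition $N_0 \hookrightarrow N \twoheadrightarrow N/\mu N$ is surjective. From Lemma \ref{lem:rmu0_fpxinv}, together with a Chevalley--Shephard--Todd-type argument applied to the cyclic pseudo-reflection group $\FF_p^{\times}$ acting on the complete regular local ring $R\llbracket\mu\rrbracket$, I would deduce that $R\llbracket\mu\rrbracket$ is a free $R\llbracket\mu_0\rrbracket\module$ of rank $p-1$. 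Consequently $N$ is finitely generated over $R\llbracket\mu_0\rrbracket$, so its submodule $N_0$ is also finitely generated by Noetherianity. Since $\ZZ_p^{\times} \isomorphic \FF_p^{\times} \times \Gamma_0$ for $p \geq 3$, the subgroup $\Gamma_0$ commutes with $\FF_p^{\times}$, so the $\Gamma_F$-action on $N$ restricts to a continuous $R\linear$ action of $\Gamma_0$ on $N_0$.

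Next, I would establish the descent isomorphism $f : R\llbracket\mu\rrbracket \otimes_{R\llbracket\mu_0\rrbracket} N_0 \longrightarrow N$. Modulo $\mu$, using $\mu_0 \in \mu R\llbracket\mu\rrbracket$, the map $f$ identifies with the surjection $N_0/\mu_0 N_0 \twoheadrightarrow N/\mu N$ from the previous paragraph, and topological Nakayama over the $\mu\adically$ complete ring $R\llbracket\mu\rrbracket$ then yields surjectivity of $f$. For injectivity, after inverting $\mu$ the extension $R\llbracket\mu_0\rrbracket[1/\mu_0] \hookrightarrow R\llbracket\mu\rrbracket[1/\mu]$ becomes \'etale $\FF_p^{\times}$-Galois (the ramification being concentrated at $\mu = 0$), so classical Galois descent for semi-linear representations gives that $f[1/\mu]$ is an isomorphism; combined with the flatness of $R\llbracket\mu\rrbracket$ over $R\llbracket\mu_0\rrbracket$ and the $\mu$-torsion-freeness of $N_0$, the source of $f$ is $\mu$-torsion free, so $f$ is injective. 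Reducing $f$ modulo $\mu$ then recovers the asserted isomorphism $N_0/\mu_0 N_0 \isomorphic N/\mu N$, on which $\Gamma_0$ acts trivially by the triviality of the $\Gamma_F$-action on $N/\mu N$.

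Finally, the Frobenius $\varphi_N$ on $N$ commutes with $\Gamma_F$, hence with $\FF_p^{\times}$, so it restricts to a map $\varphi_{N_0} : \varphi^*(N_0) \to N_0$. The key observation for the integral Frobenius structure is that $[p]_q$ and $\ptilde$ generate the same ideal in $R\llbracket\mu\rrbracket$, differing by a unit: a direct computation using $[p]_q = 1 + q + \cdots + q^{p-1}$ and $\ptilde = \sum_{a \in \FF_p} q^{[a]}$ shows that $[p]_q/\ptilde \in R\llbracket\mu\rrbracket^{\times}$ (for instance, when $p = 3$ one verifies $[p]_q = (1+\mu)\ptilde$). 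Taking $\FF_p^{\times}$-invariants of the original Frobenius isomorphism $\varphi^*(N)[1/[p]_q] \isomorphic N[1/[p]_q]$ and combining with the descent isomorphism then yields the claimed $\varphi^*(N_0)[1/\ptilde] \isomorphic N_0[1/\ptilde]$, automatically compatible with $\Gamma_0$. I expect the main obstacle to be the integral injectivity of $f$, i.e.\ propagating \'etale Galois descent from the generic fiber to the integral level, which rests essentially on the freeness of $R\llbracket\mu\rrbracket$ over $R\llbracket\mu_0\rrbracket$ and the $\mu$-torsion-freeness encoded in the Wach module structure.
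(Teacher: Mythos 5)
Your proposal is essentially correct and lands on the same conclusion, but your injectivity argument takes a genuinely different route than the paper. Where the paper shows the map $N_0/\mu_0 N_0 \to N/\mu N$ is injective by an explicit filtration argument — one observes that $\FF_p^{\times}$ acts on the graded piece $\mu^k N/\mu^{k+1}N$ through the nontrivial character $\omega^k$ for $1 \leq k \leq p-2$, forcing $\mu N \cap N_0 \subset \mu^2 N$ and iterating to reach $\mu N \cap N_0 = \mu_0 N \cap N_0 = \mu_0 N_0$ — you instead invoke classical Galois descent for the finite \'etale cover $R\llbracket\mu_0\rrbracket[1/\mu_0] \hookrightarrow R\llbracket\mu\rrbracket[1/\mu]$, and then deduce integral injectivity from $\mu$-torsion-freeness of both sides. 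Both strategies ultimately rest on the same two inputs (invertibility of $p-1$ and the unit relation $\mu_0 = w\mu^{p-1}$); the paper's argument is more elementary in that it never leaves the integral world and never needs to establish \'etaleness, whereas your argument is shorter modulo the verification that the cover is genuinely \'etale $\FF_p^{\times}$-Galois after inverting $\mu$, which you assert but do not prove. (That claim is true: $d\mu_0/d\mu = \mu^{p-2}\cdot(\text{unit})$ by the identity $\mu_0 = w\mu^{p-1}$, so the relative differentials die after inverting $\mu$, and freeness of rank $p-1$ plus the identification $R\llbracket\mu\rrbracket^{\FF_p^{\times}} = R\llbracket\mu_0\rrbracket$ then give the Galois-cover structure.) One small inaccuracy: you justify freeness of $R\llbracket\mu\rrbracket$ over $R\llbracket\mu_0\rrbracket$ via Chevalley--Shephard--Todd applied to "the complete regular local ring $R\llbracket\mu\rrbracket$," but for $d > 0$ this ring is not local, so that theorem does not directly apply; the paper instead deduces freeness (with explicit basis $1, \mu, \ldots, \mu^{p-2}$) directly from $\mu_0 = w\mu^{p-1}$ in Lemma \ref{lem:mu0_mup1_unit} and Remark \ref{rem:mu0_mu_ff}, which is the cleaner route here. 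Your surjectivity step, the derivation $N_0/\mu_0 N_0 \isomorphic N/\mu N$, and the Frobenius argument via the unit $[p]_q/\ptilde$ all match the paper's proof.
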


\begin{rem}\label{intro_rem:wachmod_fpx_descent}
	In Theorem \ref{intro_thm:wachmod_fpx_descent}, setting $R = O_F$ shows that classical Wach modules over $\AF = O_F\llbracket\mu\rrbracket$ canonically descend to Wach modules over $O_F\llbracket\mu_0\rrbracket \isomorphic \AF^{\FF_p^{\times}}$.
	Using different arguments, the claim for $R = O_F$ has also been proven in \cite[Theorem 1.6]{abhinandan-crystalline-galcoh}.
\end{rem}

\subsection{Relation to previous works}\label{subsec:relation_previous_works}

The theory of prismatic $F\crystals$ was introduced in \cite{bhatt-scholze-crystals}, and its generalisation to analytic/completed prismatic $F\crystals$ was studied in \cite{du-liu-moon-shimizu, guo-reinecke}.
The theory of classical Wach modules was introduced and developed in \cite{fontaine-phigamma, wach-free, colmez-hauteur, berger-limites}, and its generalisation to the relative case was introduced and developed in \cite{abhinandan-relative-wach-i, abhinandan-imperfect-wach, abhinandan-relative-wach-ii}.
Now, let us consider the following diagram (notation explained immediately afterwards):
\begin{equation}\label{eq:compatibility_diagram}
	\begin{tikzcd}
		\Vect^{\an, \varphi}(X_{\prism}) \arrow[rrr, hookrightarrow, "\paze \mapsto \paze{[}1/\pazi_{\prism}{]} \otimes_{\pazo{[}1/\pazi_{\prism}{]}} \pazo_{\prism}{[}1/\pazi_{\prism}{]}_p^{\wedge}"] \arrow[rd, "T_{\etale}^{\an}", "\sim"'] \arrow[dd, "\eval_{\AR}^{\prism} \eqref{intro_eq:afcrystal_eval_ar+}"', "\wr"] & & & \Vect^{\varphi}(X_{\prism}, \pazo_{\prism}[1/\pazi_{\prism}]_p^{\wedge}) \arrow[dd, "\eval_{\AR}^{\prism}", "\wr"'] \arrow[ld, "T_{\etale}"', "\sim"]\\
		& \Rep_{\ZZ_p}^{\crys}(G_R) \arrow[r, hookrightarrow] \arrow[ld, "\sim"', "\mbfn_R"] & \Rep_{\ZZ_p}(G_R) \arrow[rd, "\sim", "\mbfd_R"']\\
		(\varphi, \Gamma_R)\Mod_{\AR}^{[p]_q} \arrow[rrr, hookrightarrow, "N \mapsto N \otimes_{\AR} \BR"] & & & (\varphi, \Gamma_R)\Mod_{\BR}^{\etale}.
	\end{tikzcd}
\end{equation}
In the top row of the diagram \eqref{eq:compatibility_diagram}, the top right corner denotes the category of Laurent $F\crystals$ over $X_{\prism}$ (see \cite{bhatt-scholze-crystals} or Definition \ref{eq:compatibility_diagram}); in the middle row, $G_R$ denotes the \'etale fundamental group of $R[1/p]$ (see Section \ref{subsec:setup_nota}), the right-hand term denotes the category of $\ZZ_p\textrm{-representations}$ of $G_R$, and the left-hand term denotes the category of lattices inside $\padic$ crystalline representations of $G_R$ (see \cite{brinon-relatif}); in the bottom row, we have $B_R \coloneq \AR[1/\mu]_p^{\wedge}$ equipped with an induced action of $(\varphi, \Gamma_R)$, and the bottom right corner denotes the category of \'etale $(\varphi, \Gamma_R)\modules$ over $\BR$ (see \cite{andreatta-phigamma}).

The top horizontal arrow is fully faithful by \cite{guo-reinecke}, the middle horizontal arrow is easily seen to be a natural inclusion, and the bottom horizontal arrow is fully faithful by \cite[Proposition 3.12]{abhinandan-relative-wach-ii}.
From the first to the second row, the slanted arrow $T_{\etale}$ is the natural \'etale realisation functor and an equivalence by \cite{bhatt-scholze-crystals} (see Lemma \ref{lem:laurent_fcrystal_local_system}), and the slanted arrow $T_{\etale}^{\an}$ is an equivalence by \cite{guo-reinecke} (see Definition \ref{defi:etale_functor} and Remark \ref{rem:analytic_completed_crystalline_equiv}); the upper square commutes using loc.\ cit.
From the middle row to the bottom row, the slanted arrow $\mbfd_R$ is the natural \'etale $(\varphi, \Gamma)\module$ functor and an equivalence by \cite{andreatta-phigamma}, and the slanted arrow $\mbfn_R$ is the natural Wach module functor and an equivalence by \cite[Theorem 1.5]{abhinandan-relative-wach-ii}; commutativity of the bottom square follows from the compatibility between the results of \cite{abhinandan-relative-wach-ii} and \cite{andreatta-phigamma}.
Next, in the diagram \eqref{eq:compatibility_diagram}, the leftmost and the rightmost vertical arrows are evaluation functors, i.e.\ evaluation of an analytic (resp.\ Laurent) prismatic $F\crystal$ over the prism $(\AR, [p]_q)$; the left vertical arrow is an equivalence by Theorem \ref{intro_thm:fcrystals_wachmod}.
Note that the left-hand-side triangle commutes by comparing the explicit formulas for the slanted arrow $T_{\etale}^{\an}$ in Definition \ref{defi:etale_functor}, and the composition of the left vertical arrow $\eval_{\AR}^{\prism}$ in \eqref{intro_eq:afcrystal_eval_ar+} with the quasi-inverse of the slanted arrow $\mbfn_R$ described in \cite[Theorem 1.5]{abhinandan-relative-wach-ii}, and similarly for the right-hand-side triangle.
In particular, we obtain that the right vertical arrow is an equivalence (also see \cite{wu, min-wang}), and the outer square commutes by the definition of the arrows.
Hence, it follows that the results of this article are compatible with previous constructions.

As mentioned earlier, in the proof of Theorem \ref{intro_thm:fcrystals_wachmod} we do not assume the equivalence between crystalline $\ZZ_p\textrm{-representations}$ and Wach modules over $\AR$ from \cite{abhinandan-relative-wach-ii}, or the categorical equivalence results from \cite{bhatt-scholze-crystals, du-liu, du-liu-moon-shimizu, guo-reinecke} and \cite{wu, min-wang}.
Furthermore, our approach is inspired by the work of \cite{morrow-tsuji}, however, as mentioned earlier, the results obtained and the techniques employed in our proofs are different.

\subsection{Setup and notation}\label{subsec:setup_nota}

In this section, we will describe the setup for this paper and fix some notation which are essentially the same as in \cite[Section 1.4]{abhinandan-relative-wach-ii}.
We shall work under the convention that $0 \in \NN$, the set of natural numbers.

Let $p$ be a fixed prime number and $\kappa$ a perfect field of characteristic $p$, and set $O_F \coloneq W(\kappa)$ as the ring of $p\textrm{-typical}$ Witt vectors with coefficients in $\kappa$, and $F \coloneq O_F[1/p]$ the fraction field of $O_F$.
In particular, $F$ is an unramified extension of $\QQ_p$ with ring of integers $O_F$.
Let $\overline{F}$ denote a fixed algebraic closure of $F$ so that its residue field, denoted as $\overline{\kappa}$, is an algebraic closure of $\kappa$.
Set $G_F \coloneq \Gal(\overline{F}/F)$ as the absolute Galois group of $F$.

Next, set $\Finfty \coloneq F(\mu_{p^{\infty}})$, where $\mu_{p^{\infty}}$ denotes the set of all $p\textrm{-power}$ roots of unity in $\overline{F}$.
Then, we have Galois groups $H_F \coloneq \Gal(\overline{F}/F_{\infty})$ and $\Gamma_F \coloneq \Gal(\Finfty/F) \isomorphic \ZZ_p^{\times}$.
Note that the isomorphism $\chi \colon \Gamma_F \isomorphic \ZZ_p^{\times}$ is given via the $\padic$ cyclotomic character, and therefore, $\Gamma_F$ fits into the following exact sequence:
\begin{equation}\label{eq:gammaf_es}
	1 \longrightarrow \Gamma_0 \longrightarrow \Gamma_F \longrightarrow \Gamma_{\textrm{tor}} \longrightarrow 1,
\end{equation}
where, for $p \geqslant 3$, we have $\Gamma_0 \isomorphic 1 + p\ZZ_p$, and for $p=2$, we have $\Gamma_0 \isomorphic 1 + 4\ZZ_2$.
Moreover, for $p \geqslant 3$, we have $\Gamma_{\textrm{tor}} \isomorphic \FF_p^{\times}$, and the projection map in \eqref{eq:gammaf_es} admits a section $\Gamma_{\textrm{tor}} \isomorphic \FF_p^{\times} \rightarrow \ZZ_p^{\times} \lisomorphic \Gamma_F$, where the second map is given as $a \mapsto [a]$, the Teichm\"uller lift of $a$.
Furthermore, note that for $p = 2$, we have $\Gamma_{\textrm{tor}} \isomorphic \{\pm 1\}$ as groups.

\begin{nota}
	Let $s \geqslant 0$ be an integer and $\smbfk = (k_1, \ldots, k_s)$ in $\NN^s$ denote a multi-index, and we shall write $\smbfk \rightarrow +\infty$ to denote $\sum k_i \rightarrow +\infty$.
	For a topological algebra $S$ and indeterminate $Z_1, \ldots, Z_s$, define
	\begin{equation*}
		S\langle Z_1, \ldots, Z_s\rangle \coloneq \big\{\textstyle\sum_{\smbfk \in \NN^s} a_{\smbfk} Z_1^{k_1} \cdots Z_s^{k_s}, \hspace{1mm} \textrm{where} \hspace{1mm} a_{\smbfk} \in S \hspace{1mm} \textrm{and} \hspace{1mm} a_{\smbfk} \rightarrow 0 \hspace{1mm} \textrm{as} \hspace{1mm} \smbfk \rightarrow +\infty\big\}.
	\end{equation*}
\end{nota}

Fix an integer $d \geqslant 0$, and let $X_1, X_2, \ldots, X_d$ be some indeterminate.
Let $R$ denote the $\padic$ completion of an \'etale algebra over $\Rframe = O_F\langle X_1^{\pm 1}, \ldots, X_d^{\pm 1}\rangle$ with non-empty and connected special fibre; the map $\Rframe \rightarrow R$ is often referred to as a framing on $R$.

Let us fix an algebraic closure $\overline{\Fr(R)}$ of $\Fr(R)$ containing $\overline{F}$.
Let $\overline{R}$ denote the union of finite $R\textrm{-subalgebras}$ $R' \subset \overline{\Fr(R)}$, such that $R'[1/p]$ is \'etale over $R[1/p]$.
Let $\overline{\eta}$ denote the fixed geometric point of the generic fibre $\Spec R[1/p]$ (defined by $\overline{\Fr(R)}$), and let $G_R$ denote the \'etale fundamental group $\pi_1^{\etale}\big(\Spec R[1/p], \overline{\eta}\big)$.
We may write this étale fundamental group as a Galois group, i.e.\
\begin{equation*}
	G_R = \pi_1^{\etale}(\Spec(R[1/p]), \overline{\eta}) = \Gal(\overline{R}[1/p] / R[1/p]).
\end{equation*}

Next, let us recall some constructions that depend on the framing $\Rframe \rightarrow R$.
Set $\Rinfty \coloneq \cup_{i=1}^d R[\mu_{p^{\infty}}, X_i^{1/p^{\infty}}] \subset \Rbar$, and note that we have the following Galois groups (see \cite[Sections 2 \& 3]{abhinandan-relative-wach-i} and \cite[Section 2]{abhinandan-imperfect-wach}):
\begin{align*}
	& H_R \coloneq \Gal(\Rbar[1/p]/\Rinfty[1/p]),\\
	& \Gamma_R \coloneq G_R/H_R = \Gal(\Rinfty[1/p]/R[1/p]) \isomorphic \ZZ_p(1)^d \rtimes \ZZ_p^{\times},\\
	& \Gamma'_R \coloneq \Gal(R_{\infty}[1/p]/R(\mu_{p^{\infty}})[1/p]) \isomorphic \ZZ_p(1)^d,\\
	& \Gal\big(R(\mu_{p^{\infty}})[1/p]/R[1/p]) = \Gamma_R/\Gamma'_R \isomorphic \Gamma_F.
\end{align*}

Let $\varphi \colon \Rframe \rightarrow \Rframe$ denote a morphism extending the natural Witt vector Frobenius on $O_F$ by setting $\varphi(X_i) = X_i^p$, for all $1 \leqslant i \leqslant d$.
The endomorphism $\varphi$ of $\Rframe$ is flat by \cite[Lemma 7.1.5]{brinon-relatif}, and faithfully flat since $\varphi(\frakm) \subset \frakm$ for any maximal ideal $\frakm \subset \Rframe$.
Moreover, it is finite of degree $p^d$ using Nakayama Lemma and the fact that $\varphi$ modulo $p$ is evidently of degree $p^d$.
As the $O_F\algebra$ $R$ is given as the $\padic$ completion of an \'etale algebra $\Rframe$, therefore, the Frobenius endomorphism $\varphi$ on $\Rframe$ admits a unique extension $\varphi \colon R \rightarrow R$ such that the induced map $\varphi \colon R/p \rightarrow R/p$ is the absolute Frobenius $x \mapsto x^p$ (see \cite[Proposition 2.1]{colmez-niziol}).
Similar to above, we again note that the endomorphism $\varphi \colon R \rightarrow R$ is finite and faithfully flat of degree $p^d$.
Next, for $k \in \NN$, let $\Omega^k_R$ denote the $\padic$ completion of module of $k\textrm{-differentials}$ of $R$ relative to $\ZZ$.
Then, we have that $\Omega^1_R = \oplus_{i=1}^d R \dlog X_i$ and $\Omega^k_R = \wedge_R^k \Omega^1_R$.

Now, let $A  = \Rinfty$ or $\Rbar$.
Then, recall that the tilt of $A$ is given as $A^{\flat} = \lim_{\varphi} A/p$ (see \cite[Chapitre V, Section 1.4]{fontaine-pdivisibles} and \cite[Section 3]{bhatt-morrow-scholze-1}).
Finally, let $B$ be a $\ZZ_p\algebra$ equipped with a Frobenius endomorphism $\varphi$ lifting the absolute Frobenius on $B/p$, then for any $B\module$ $M$, we set $\varphi^*(M) \coloneq B \otimes_{\varphi, B} M$.

\subsection{Outline of the paper}

In Section \ref{sec:prisms} we recall some basic definitions and results from the prismatic theory.
In particular, in Section \ref{subsec:prismatic_site_crystals}, we recall the definition of prismatic site and prismatic crystals from \cite{bhatt-scholze-prisms, bhatt-scholze-crystals}, and in Section \ref{subsec:prismatic_fcrystals} we recall the notion of prismatic $F\crystals$ and its variations from \cite{bhatt-scholze-crystals, du-liu-moon-shimizu, guo-reinecke}, as well as, describe the \'etale realisation functors.
Section \ref{sec:theprism_ar+} is devoted to the study of the prism $(\AR, [p]_q)$ in detail.
In Section \ref{subsec:important_rings}, we describe the ring $\AR$, and study the structural properties of some of its subrings.
In Section \ref{subsec:theprism_ar+}, we show that $(\AR, [p]_q)$ is an object of $(\Spf R)_{\prism}$, explicitly compute the first few terms of its prismatic \v{C}ech nerve $(\AR(\bullet), I(\bullet))$ in $(\Spf R)_{\prism}$, and then study the action of $\Gamma_R^{\times (n+1)}$ on $\AR(n)$.
Then, in Section \ref{subsec:ar1p1mu}, we provide an explicit description of the ring $\AR(1)/p_1(\mu)$, where $p_1 \colon A \rightarrow A(1)$ is the first projection map, and study the action of $1 \times \Gamma_R \subset \Gamma_R^2$ on it.
Finally, in Section \ref{subsec:gamma_act_ar1}, we study the action of $\Gamma_R^2$ on $\AR(1)$, and carry out the \textit{3-step proof} for the ring $\AR$ -- described after Theorem \ref{intro_thm:strat_wach_comp_relative} -- in order to study the structure of several rings that will appear at various stages of the proof of Theorem \ref{intro_thm:strat_wach_comp_relative} in Section \ref{sec:prismatic_wach}.
The goal of Section \ref{sec:integral_comp} is to state and prove Theorem \ref{intro_thm:integral_comp_relative}.
We begin by recalling the definition of Wach modules and some of its properties from \cite{abhinandan-relative-wach-ii}, and in Section \ref{intro_thm:integral_comp_relative}, we describe the $\qconnection$ on a Wach module and its scalar extensions originating from the natural action of $\Gamma_R$.
Then, in Section \ref{subsec:geometric_descent}, we carry out the first step of the proof of Theorem \ref{intro_thm:integral_comp_relative}, i.e.\ the descent for the action of $\Gamma_R'$, and in Section \ref{subsec:arithmetic_descent}, we show the second and third steps, i.e.\ the descent for the action of $\Gamma_F \isomorphic \Gamma_0 \rtimes \Gamma_{\textrm{tor}}$.
Note that the Section \ref{subsec:arithmetic_descent} is divided into two parts: the first part deals with the case $p \geqslant 3$ where we also prove Theorem \ref{intro_thm:wachmod_fpx_descent}, and the second part deals with $p=2$ which requires different arguments.
Finally, in Section \ref{subsec:proof_integral_comp_relative}, we put everything together to prove Theorem \ref{intro_thm:integral_comp_relative}.
In Section \ref{sec:prismatic_wach}, we state and prove Theorem \ref{intro_thm:fcrystals_wachmod}.
We begin by describing the functor $\eval^{\prism}_{\AR}$, and then in Section \ref{subsec:modules_with_strat} we describe the relation between analytic/completed prismatic $F\crystals$ and modules with stratification, introduce the functor $\eval^{\Strat}_{\AR}$, and show that it is compatible with $\eval_{\prism}^{\AR}$.
Section \ref{subsec:wach_and_strat} is dedicated to the construction of stratification on Wach modules using the action of $\Gamma_R$ via a \textit{3-step argument}.
This completes the proof of Theorem \ref{intro_thm:fcrystals_wachmod}.

In Appendix \ref{app_subsec:basic_defi}, we recall some standard definitions that have been used throughout in the text.
Then, in Appendix \ref{app_subsec:module_zpx_action}, we describe the structure of modules admitting a continuous action of $\ZZ_p^{\times}$ by recalling some standard constructions of Iwasawa \cite{iwasawa}.
Appendix \ref{app_sec:delta_rings_pd_alg} has been adapted from some notes of Tsuji, and in that section we study the structure of certain $\delta\textrm{-rings}$ and their reduction modulo $\mu$, which is a crucial input for determining the structure of $\AR(1)/p_1(\mu)$ is Section \ref{subsec:ar1p1mu}.

\vspace{3mm}

\noindent \textbf{Acknowledgements.} 
I would like to sincerely thank Takeshi Tsuji for various discussions, and for generously sharing his ideas during the course of this project and some of his computations which have been included in Appendix \ref{app_sec:delta_rings_pd_alg}.
I would also like to thank Michel Gros for some helpful remarks.
Finally, many thanks to the referee for their careful reading of the previous version of the article, pointing out errors and providing many helpful remarks and suggestions for improvement.
This research was supported by JSPS KAKENHI Grant numbers 22F22711 and 22KF0094.

\section{Prismatic site and Prismatic \texorpdfstring{$F\textrm{-crystals}$}{-}}\label{sec:prisms}

In this section, we will recall some fundamental definitions and results on prismatic site and prismatic $F\crystals$ from \cite{bhatt-scholze-prisms, bhatt-scholze-crystals}, and analytic/completed prismatic $F\textrm{-crystals}$ from \cite{du-liu-moon-shimizu} and \cite{guo-reinecke}.
For some standard definitions used in this section, we refer the reader to Appendix \ref{app_subsec:basic_defi}.
We start with the following:
\begin{defi}\label{defi:delta_ring}
	A $\delta\ring$ is a pair $(A, \delta)$ where $A$ is a commutative ring, and $\delta \colon A \rightarrow A$ is a map of sets with $\delta(0) = \delta(1) = 0$ and satisfying:
	\begin{equation}\label{eq:delta_sum_prod}
		\begin{aligned}
			\delta(xy) &= x^p \delta(y) + y^p \delta(x) + p\delta(x)\delta(y), \\
			\delta(x+y) &= \delta(x) + \delta(y) + \frac{x^p + y^p - (x+y)^p}{p}.
		\end{aligned}
	\end{equation}
\end{defi}

\begin{rem}\label{rem:frob_delta_ring}
	Given a $\delta\ring$ $(A, \delta)$ define $\varphi \colon A \rightarrow A$ by the formula $\varphi(x) = x^p + p\delta(x)$, for any $x$ in $A$.
	This determines a lift of the absolute Frobenius on $A/pA$.
	Conversely, if $A$ is $p\textrm{-torsion-free}$ then any lift $\varphi \colon A \rightarrow A$ of the absolute Frobenius on $A/pA$ determines a unique $\delta\textrm{-structure}$ on $A$.
\end{rem}

\begin{defi}
	An element $d$ of a $\delta\ring$ $A$ is called distinguished if $\delta(d)$ is a unit.
\end{defi}

\subsection{Prismatic site and crystals}\label{subsec:prismatic_site_crystals}

Let $A$ be a $\delta\ring$, $I \subset A$ an ideal; we shall refer to $(A, I)$ as a $\delta\textrm{-pair}$.

\begin{defi}[Prism, {\cite[Definition 3.2]{bhatt-scholze-prisms}}]\label{defi:prism}
	A $\delta\textrm{-pair}$ $(A, I)$ is called a \textit{prism} if $I \subset A$ defines a Cartier divisor on $\Spec(A)$ such that $A$ is derived $(p, I)\complete$ and $p$ is in $I + \varphi(I) A$.
	The category of all prisms is the corresponding full subcategory of all $\delta\textrm{-pairs}$.
	The prism $(A, I)$ is called perfect if $A$ is a perfect $\delta\ring$, i.e.\ $\varphi \colon A \rightarrow A$ is bijective.
	Finally, $(A, I)$ is bounded if $A/I$ has bounded $p^{\infty}\torsion$.
	A map $(A, I) \rightarrow (B, J)$ of prisms is (faithfully) flat if the map $A \rightarrow B$ is $(p, I)\textrm{-completely}$ (faithfully) flat.
\end{defi}

\begin{rem}
	If $(A, I)$ is a bounded prism then $A$ is classically $(p, I)\complete$ (see \cite[Lemma 3.7]{bhatt-scholze-prisms}).
	Any morphism of prisms $(A, I) \rightarrow (B, J)$ induces an isomorphism $I \otimes_A B \isomorphic J$, in particular, we have that $IB = J$ (see \cite[Lemma 3.5]{bhatt-scholze-prisms}).
\end{rem}

\begin{defi}[Absolute prismatic site, {\cite[Corollary 3.12]{bhatt-scholze-prisms}}]
	Let $\Spf(\ZZ_p)_{\prism}$ denote the category opposite to that of the category of all bounded prisms $(A, I)$, and endow it with a topology for which covers are determined by faithfully flat maps of prisms.
	Then, $\Spf(\ZZ_p)_{\prism}$ forms a site.
	Moreover, the functor $\pazo_{\prism} \colon \Spf(\ZZ_p)_{\prism} \rightarrow \Rings$ (resp.\ $\pazi_{\prism} \colon \Spf(\ZZ_p)_{\prism} \rightarrow \textrm{Abelian Groups}$) defined via $(A, I) \mapsto A$ (resp.\ $(A, I) \mapsto I$) forms a sheaf for this topology with vanishing higher \v{C}ech cohomology.
\end{defi}

\begin{defi}[Absolute prismatic site of $X$, {\cite[Definition 2.3]{bhatt-scholze-crystals}}]\label{defi:abs_prismatic_site}
	Let $X$ be a $\padic$ formal scheme.
	Define the \textit{absolute prismatic site} of $X$, denoted as $X_{\prism}$, to be the category opposite to that of bounded prisms $(A, I)$ which are equipped with a map $\Spf(A/I) \rightarrow X$, and endow $X_{\prism}$ with the topology induced by the flat topology on prisms.
	We shall write $\pazo_{\prism}$ for the structure sheaf, and denote the ideal sheaf of the Hodge-Tate divisor by $\pazi_{\prism} \subset \pazo_{\prism}$.
	Denote by $\Shv(X_{\prism})$ the $\infty\category$ of sheaves on $X_{\prism}$.
\end{defi}

\begin{prop}[{\cite[Proposition 2.7]{bhatt-scholze-crystals}}]\label{prop:prism_crystal_vb}
	Let $X$ be a $\padic$ formal scheme, and let $\Vect(X_{\prism}, \pazo_{\prism})$ denote the category of vector bundles of $\pazo_{\prism}\modules$.
	Then, there is a natural equivalence
	\begin{equation*}
		\Vect(X_{\prism}, \pazo_{\prism}) \isomorphic \lim_{(A, I) \in X_{\prism}} \Vect(A).
	\end{equation*}
	Moreover, a similar statement holds after replacing $\pazo_{\prism}$ with $\pazo_{\prism}[1/\pazi_{\prism}]_p^{\wedge}$.
\end{prop}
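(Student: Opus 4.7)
The plan is to construct the equivalence directly by exhibiting a pair of mutually quasi-inverse functors. First, I would define the evaluation functor $\Vect(X_{\prism}, \pazo_{\prism}) \to \lim_{(A, I)} \Vect(A)$. Given a vector bundle $\pazf$ of $\pazo_{\prism}\modules$, its value $\pazf(A, I)$ on any object $(A, I) \in X_{\prism}$ is locally on the site a finite projective module over $A$; by unravelling the definition of a vector bundle on a ringed site together with the fact that $(A,I)$ covers the final object of the sliced topos $\Shv(X_{\prism})_{/(A,I)}$, one deduces that $\pazf(A, I)$ is already a finite projective $A\module$. The transition maps $(A, I) \to (B, J)$ in $X_{\prism}$ yield canonical isomorphisms $B \otimes_A \pazf(A, I) \isomorphic \pazf(B, J)$, so $\pazf$ assembles into an object of the limit category.

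For the quasi-inverse, given a compatible system $M = \{M_{(A,I)}\}$ in the limit category, one defines a presheaf $\pazf_M$ on $X_{\prism}$ by $(A, I) \mapsto M_{(A,I)}$. The compatibility data built into $M$ ensures that $\pazf_M$ is a sheaf for the flat topology: one must verify exactness of the \v{C}ech complex
\begin{equation*}
	0 \longrightarrow M_{(A,I)} \longrightarrow M_{(B,J)} \longrightarrow M_{(B \otimes_A^{L, \wedge} B, -)} \longrightarrow \cdots
\end{equation*}
for every faithfully flat cover $(A, I) \to (B, J)$ in $X_{\prism}$. This reduces to faithfully flat descent for finite projective modules in the $(p, I)\completely$ faithfully flat setting: since $A$ and $B$ are classically $(p, I)\complete$ (as the prisms involved are bounded), and since $A \to B$ is $(p, I)\completely$ faithfully flat, the statement follows from the classical fpqc descent of finite projective modules after reduction modulo $(p, I)^n$ and passage to the inverse limit over $n$.

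The main obstacle is establishing this faithfully flat descent of vector bundles over bounded prisms in a clean way, and in particular checking that the sheafification does not enlarge the category (i.e.\ that every local-on-$X_{\prism}$ vector bundle in $\pazo_{\prism}\modules$ has all its values in $\Vect(A)$). This follows from a standard derived $(p, I)\textrm{-complete}$ descent argument combined with the observation that $(p, I)\completely$ faithfully flat morphisms of bounded prisms restrict to honest faithfully flat morphisms modulo $(p, I)^n$ for every $n$; once this is in place the two functors evaluate-and-assemble are mutually quasi-inverse essentially by construction.

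Finally, for the second assertion concerning $\pazo_{\prism}[1/\pazi_{\prism}]_p^{\wedge}$, I would run the same strategy. A vector bundle in this setting should correspond to a compatible system of finite projective $A[1/I]_p^{\wedge}\modules$ indexed by $(A, I) \in X_{\prism}$. The only additional input needed is that $(p, I)\completely$ faithful flatness of $A \to B$ descends to faithful flatness of the $p\adically$ completed localizations $A[1/I]_p^{\wedge} \to B[1/J]_p^{\wedge}$, which is immediate from $IB = J$. With this, the descent-based argument for $\pazo_{\prism}$ transfers verbatim to give the second equivalence.
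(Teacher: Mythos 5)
Your treatment of the first assertion is essentially correct and matches the standard argument (the paper itself gives no proof, only a citation to \cite[Proposition 2.7]{bhatt-scholze-crystals}). The key reduction — classical fpqc descent of finite projective modules over $A/(p,I)^n \to B/(p,I)^n$, then passage to the limit using classical $(p,I)$-completeness of bounded prisms — is sound: $(p,I)$-complete flatness of $A\to B$ does give honest flatness of $B/(p,I)^n$ over $A/(p,I)^n$ for each $n$, by the local criterion for flatness along a nilpotent ideal. One minor wording issue: saying "$(A,I)$ covers the final object of the sliced topos $\Shv(X_{\prism})_{/(A,I)}$" is tautological ($(A,I)$ \emph{is} that final object); what you actually use is that \emph{some} cover $(B,J)\to(A,I)$ trivializes $\pazf$ by definition of a vector bundle, and you then descend.

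The second assertion is where there is a genuine gap. The claim that $(p,I)$-complete faithful flatness of $A\to B$ "descends to faithful flatness of $A[1/I]_p^{\wedge} \to B[1/J]_p^{\wedge}$, which is immediate from $IB=J$" is not justified, and the strategy you propose for the first part does not transfer. Concretely: the reduction-mod-$(p,I)^n$ device collapses here, because a generator $d$ of $I$ is nilpotent modulo $(p,I)^n$, so $(A/(p,I)^n)[1/d] = 0$; you would instead need to reduce modulo $p^n$, i.e.\ work with $(A/p^n)[1/d] \to (B/p^n)[1/d]$, and $(p,I)$-complete flatness of $A\to B$ gives only $d$-complete flatness of $B/p$ over $A/p$, which a priori says nothing about flatness of the localization $(B/p)[1/d]$ over $(A/p)[1/d]$. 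Without a noetherian hypothesis (not available here) the local criterion for flatness does not produce the required statement, and the equality $IB=J$ only records a compatibility of the ideals being inverted, not flatness. The actual route in Bhatt--Scholze establishes separately that $\pazo_{\prism}[1/\pazi_{\prism}]_p^{\wedge}$ is a sheaf with vanishing higher \v{C}ech cohomology (by localizing and $p$-completing the exact \v{C}ech complex of $\pazo_{\prism}$), and handles effectivity of descent data with an additional input — for instance by passing to perfect prisms via the quasi-syntomic site, where $A[1/I]_p^{\wedge} = W((R[1/\pi])^{\flat})$ and the flatness can be seen via Witt vectors of tilts. You should make this extra input explicit rather than asserting the flatness claim as immediate.
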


Let $X$ be a bounded $\padic$ formal scheme, and let $X_{\eta}$ denote the generic fibre of $X$, regarded as a presheaf on perfectoid spaces over $\QQ_p$.

\begin{defi}[Laurent $F\crystals$, {\cite[Definition 3.2]{bhatt-scholze-crystals}}]\label{defi:laurent_fcrystal}
	Define the category of Laurent $F\crystals$ of vector bundles on $(X_{\prism}, \pazo_{\prism}[1/\pazi_{\prism}]_p^{\wedge})$ as
	\begin{equation*}
		\Vect(X_{\prism}, \pazo_{\prism}[1/\pazi_{\prism}]_p^{\wedge})^{\varphi=1} \coloneq \lim_{(A, I) \in X_{\prism}} \Vect(A[1/I]_p^{\wedge})^{\varphi=1},
	\end{equation*}
	i.e.\ any $\paze$ in $\Vect(X_{\prism}, \pazo_{\prism}[1/\pazi_{\prism}]_p^{\wedge})$ is equipped with an isomorphism $\varphi_{\paze} \colon \varphi^* \paze \isomorphic \paze$.
\end{defi}

\begin{lem}[{\cite[Corollary 3.8]{bhatt-scholze-crystals}}]\label{lem:laurent_fcrystal_local_system}
	Let $\Loc(X_{\eta}, \ZZ_p)$ denote the category of \'etale $\ZZ_p\textrm{-local}$ systems on $X_{\eta}$.
	Then, there is a natural equivalence
	\begin{equation*}
		T_{\etale} \colon \Vect(X_{\prism}, \pazo_{\prism}[1/\pazi_{\prism}]_p^{\wedge})^{\varphi=1} \isomorphic \Loc_{\ZZ_p}(X_{\eta}).
	\end{equation*}
	Passing to the associated isogeny categories, we obtain that
	\begin{equation*}
		\Vect(X_{\prism}, \pazo_{\prism}[1/\pazi_{\prism}]_p^{\wedge})^{\varphi=1} \otimes_{\ZZ_p} \QQ_p \isomorphic \Loc_{\ZZ_p}(X_{\eta}) \otimes_{\ZZ_p} \QQ_p.
	\end{equation*}
\end{lem}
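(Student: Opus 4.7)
The plan is to construct $T_{\etale}$ explicitly using evaluation and then prove it is an equivalence by quasisyntomic descent, reducing to the case where $X$ is perfectoid affine.

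First, I would construct the functor. Given $\paze \in \Vect(X_{\prism}, \pazo_{\prism}[1/\pazi_{\prism}]_p^{\wedge})^{\varphi=1}$, for every perfectoid $\ZZ_p$-algebra $S$ with a map $\Spf(S) \to X$, one obtains the perfect prism $(\Ainf(S), \ker\theta)$ in $X_{\prism}$, and evaluating $\paze$ yields a finite projective module $\paze(S)$ over $\Ainf(S)[1/\ker\theta]_p^{\wedge} = W(S^{\flat})[1/[p^{\flat}]]_p^{\wedge}$ with $\varphi$-isomorphism. Taking $\varphi$-invariants $\paze(S)^{\varphi = 1}$ defines a finite projective $\ZZ_p$-module functorial in $S$. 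By arc-descent of perfectoid rings (as in \cite{bhatt-scholze-prisms}) together with the identification of $X_{\eta}$-local systems with sheaves on the small pro-étale site, this sheafifies to an object of $\Loc_{\ZZ_p}(X_{\eta})$, defining $T_{\etale}(\paze)$.

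The core step is showing that for a perfectoid ring $S$, evaluation $\paze \mapsto \paze(S)$ and taking $\varphi$-invariants yields an equivalence
\[
	\Vect\bigl(W(S^{\flat})[1/[p^{\flat}]]_p^{\wedge}\bigr)^{\varphi = 1} \isomorphic \Loc_{\ZZ_p}\bigl(\Spa(S[1/p], S)\bigr).
\]
This is essentially an integral, relative version of the classical equivalence between étale $(\varphi, \Gamma)$-modules over $W(S^{\flat})[1/[p^{\flat}]]_p^{\wedge}$ and pro-étale $\ZZ_p$-local systems. The quasi-inverse sends a local system $\LL$ to $(\LL \otimes_{\ZZ_p} W(\widehat{\overline{S}}^{\flat})[1/[p^{\flat}]]_p^{\wedge})^{G_S}$; that this is finite projective of the right rank is checked by a faithfully flat base change to a large enough perfectoid cover that trivialises $\LL$, using that $(W(T^{\flat})[1/[p^{\flat}]]_p^{\wedge})^{\varphi = 1} = \ZZ_p$ for any perfectoid $T$ (a consequence of the fact that $\varphi - 1$ is surjective on $W(T^{\flat})/p^n$ for perfect $T^{\flat}$).

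Next, I would globalise via descent. Both the category of Laurent $F$-crystals on $X_{\prism}$ and the category of étale $\ZZ_p$-local systems on $X_{\eta}$ are sheaves of categories for the quasisyntomic topology on $X$: the left-hand side by Lemma \ref{lem:prism_crystal_qs} (applied to the sheaf $\pazo_{\prism}[1/\pazi_{\prism}]_p^{\wedge}$), and the right-hand side by standard pro-étale descent for $\ZZ_p$-local systems on the generic fiber. Thus both sides are determined by their values on the quasiregular semiperfectoid cover $X_{\qrsp} \subset X_{\qsyn}$, where the initial prism is automatically perfect. Comparing the functors on each such cover using the previous step, and checking compatibility with the transition maps, yields the desired equivalence for general bounded $\padic$ formal $X$.

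The main obstacle is the construction and verification of the quasi-inverse in the affine perfectoid case: one has to show that taking Galois invariants of $\LL \otimes W(\widehat{\overline{S}}^{\flat})[1/[p^{\flat}]]_p^{\wedge}$ lands in finite projective modules and that the natural base change map is an isomorphism, which ultimately reduces to the vanishing of higher continuous cohomology of $G_S$ acting on $W(\widehat{\overline{S}}^{\flat})[1/[p^{\flat}]]_p^{\wedge}$ modulo $p$, a perfectoid Artin--Schreier style computation. The final assertion about isogeny categories follows formally by inverting $p$, since both functors are $\ZZ_p$-linear exact equivalences.
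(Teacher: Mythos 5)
This lemma is cited from \cite[Corollary 3.8]{bhatt-scholze-crystals} and is not proved in the paper, so there is no internal argument to compare against; the comparison must be with the source. Your overall strategy --- construct $T_{\etale}$ by evaluation on perfectoid prisms, establish the equivalence over a perfectoid base via a Kedlaya--Liu type theorem, then globalise by descent --- does track the proof in loc.\ cit. However, your descent step has a genuine gap: you claim that both sides are determined by their values on $X_{\qrsp}$, ``where the initial prism is automatically perfect.'' That is false. Perfect prisms correspond exactly to perfectoid rings, so for $R$ quasiregular semiperfectoid but not perfectoid, $\Prism_R$ is \emph{not} a perfect prism (the Frobenius on the prismatic envelope is not bijective), and you cannot directly invoke the perfectoid case there. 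The reduction to perfect prisms in \cite{bhatt-scholze-crystals} is a substantive intermediate lemma: one shows that Laurent $F$-crystals are insensitive to replacing a prism $(A,I)$ by its perfection $(A_{\perf}, IA_{\perf})$, an argument that uses the Frobenius structure on $\pazo_{\prism}[1/\pazi_{\prism}]_p^{\wedge}$ in an essential way (passage to $A_{\perf}$ is neither flat nor a quasisyntomic cover). Without this input your globalisation does not close.

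Two smaller imprecisions. The identity $(W(T^{\flat})[1/[p^{\flat}]]_p^{\wedge})^{\varphi=1} = \ZZ_p$ holds only when $\Spec T^{\flat}$ is connected; in general the $\varphi$-invariants are the locally constant $\ZZ_p$-valued functions on $\pi_0$, which is what the rank comparison actually needs. And quasisyntomic descent by itself covers only quasisyntomic $X$, whereas the lemma is asserted for all bounded $p$-adic formal schemes; loc.\ cit.\ reaches the extra generality via the $p$-complete arc topology. The second point is harmless for the present paper (here $X = \Spf R$ is quasisyntomic) and the first is a small fix, but the missing reduction to perfect prisms is the one step that needs to be supplied explicitly.
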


\subsection{Prismatic \texorpdfstring{$F\crystals$}{-}}\label{subsec:prismatic_fcrystals}

In this section, we will recall the definition of prismatic $F\crystals$ and its analytic/completed variants. 

\subsubsection{Vector bundles}

Let $X$ be a $\padic$ formal scheme, and let $(A, I)$ denote a prism with $\varphi$ its Frobenius endomorphism.

\begin{defi}[Prismatic $F\crystals$, {\cite[Definition 4.1]{bhatt-scholze-crystals}}]\label{defi:prism_fcrystal}
	Define the category $\Vect^{\varphi}(A)$ of prismatic $F\crystals$ of vector bundles on $A$ as follows: 
	an object is a pair $(M, \varphi_M)$ with $M$ a finite projective $A\module$ equipped with an $A\linear$ isomorphism $\varphi_M \colon (\varphi^* M)[1/I] \isomorphic M[1/I]$.
	Morphisms between two objects are given as $A\linear$ maps compatible with $\varphi_M$.
	Say that $(M, \varphi_M)$ is effective if $\varphi_M$ carries $\varphi^* M$ into $M$.

	Define the category $\Vect^{\varphi}(X_{\prism}, \pazo_{\prism})$ of prismatic $F\crystals$ of vector bundles on $X_{\prism}$ as follows: an object is a pair $(\paze, \varphi_{\paze})$ with $\paze$ a vector bundle on $(X_{\prism}, \pazo_{\prism})$ equipped with an isomorphism $\varphi_{\paze} \colon (\varphi^* \paze)[1/\pazi_{\prism}] \isomorphic \paze[1/\pazi_{\prism}]$.
	Morphisms between two objects are maps of vector bundles compatible with $\varphi_{\paze}$.
	Say that $(\paze, \varphi_{\paze})$ is effective if $\varphi_{\paze}$ carries $\varphi^* \paze$ into $\paze$.
\end{defi}

\begin{rem}
	From Proposition \ref{prop:prism_crystal_vb} we have an equivalence $\Vect^{\varphi}(X_{\prism}, \pazo_{\prism}) \isomorphic \lim_{(A, I) \in X_{\prism}} \Vect^{\varphi}(A)$.
\end{rem}

\subsubsection{Completed \texorpdfstring{$F\crystals$}{-}}

Let $O_K$ be a finite ramified extension of $O_F$, and take $\pi$ in $O_K$ to be a uniformiser and $E \coloneq E(u)$ in $O_F[u]$ to be its minimal polynomial.

\begin{defi}[Completed prismatic crystals, {\cite[Definition 3.11]{du-liu-moon-shimizu}}]\label{defi:completed_crystal}
	Let $X$ be a smooth $\padic$ formal scheme over $O_K$.
	Define the category $\CRhat(X_{\prism})$ of \textit{finitely generated completed crystals of $\pazo_{\prism}\modules$} on $X_{\prism}$ as follows:
	an object is a sheaf $\paze$ of $\pazo_{\prism}\modules$ on $X_{\prism}$ satisfying the following:
	\begin{enumarabicup}
	\item For each $(A, I)$ in $X_{\prism}$, the evaluation of $\paze$ on $(A, I)$, i.e.\ $\paze(A) \coloneq \paze(A, I)$, is a finitely generated and $(p, I)\complete$ $A\module$.

	\item For every morphism $(A, I) \rightarrow (B, IB)$ in $X_{\prism}$, the natural map $B \widehat{\otimes}_A \paze(A) \rightarrow \paze(B)$ is an isomorphism, where we set $B \widehat{\otimes}_A \paze(A) \coloneq \lim_n (B \otimes_A \paze(A))/(p, I)^n$.
	\end{enumarabicup}

	A morphism $f \colon \paze \rightarrow \pazf$ between two completed crystals is a morphism of $\pazo_{\prism}\modules$ such that for any $(A, I)$ in $X_{\prism}$, the evaluation $f_A \colon \paze(A) \rightarrow \pazf(A)$ is a morphism of finitely generated $(p, I)\complete$ $A\textrm{-modules}$, and for any $(A, I) \rightarrow (B, IB)$ in $X_{\prism}$ the following diagram naturally commutes:
	\begin{center}
		\begin{tikzcd}
			B \widehat{\otimes}_A \paze(A) \arrow[r, "1 \otimes f_A"] \arrow[d, "\wr"] & B \widehat{\otimes}_A \pazf(A) \arrow[d, "\wr"]\\
			\paze(B) \arrow[r, "f_B"] & \pazf(B).
		\end{tikzcd}
	\end{center}
\end{defi}

\begin{rem}
	The category of completed crystals on $X_{\prism}$, denoted as $\CRhat(X_{\prism})$, satisfies descent for Zariski and \'etale topologies on $X$ (see \cite[Lemma 3.39, Remark 3.40]{du-liu-moon-shimizu}).
\end{rem}

A $p\textrm{-adically}$ completed $O_K\algebra$ $S$ is called \textit{small} if it is $p\adically$ completed \'etale over a $p\adically$ complete Laurent polynomial $O_K\algebra$ $O_K\langle X_1^{\pm 1}, \ldots, X_d^{\pm 1}\rangle$, for some $d \in \NN$.
For such an $S$ there exists a unique $O_F\algebra$ $R$ such that $O_K \otimes_{O_F} R \isomorphic S$, and $R$ admits a lifting of the absolute Frobenius modulo $p$, which we take to be the one extending the Witt vector Frobenius on $O_F$ and such that $\varphi(X_i) = X_i^p$; denote the lifting of Frobenius on $R$ as $\varphi \colon R \rightarrow R$.
Let $\frakS = R\llbracket u \rrbracket$ equipped with a Frobenius endomorphism $\varphi$ which extends the Frobenius on $R$ by setting $\varphi(u) = u^p$.
Then $(\frakS, E)$ is an object of $(\Spf S)_{\prism}$ and called the \textit{Breuil-Kisin} prism (see \cite[Example 3.4]{du-liu-moon-shimizu}).

\begin{defi}[Completed prismatic $F\crystals$, {\cite[Definition 3.16, Definition 3.42]{du-liu-moon-shimizu}}]\label{defi:completed_fcrystal}
	Let $X \coloneq \Spf S$ for $S$ a small $O_K\algebra$.
	Define the category $\CRhatphi(X_{\prism})$ of \textit{completed $F\crystals$ of $\pazo_{\prism}\modules$} on $X_{\prism}$ as follows:
	an object is a pair $(\paze, \varphi_{\paze})$ with $\paze$ a completed prismatic crystal on $X_{\prism}$ equipped with an $\pazo_{\prism}\linear$ isomorphism $\varphi_{\paze} \colon \varphi^*\paze[1/\pazi_{\prism}] \isomorphic \paze[1/\pazi_{\prism}]$.
	Moreover, for the Breuil-Kisin prism $(\frakS, E)$ in $X_{\prism}$, assume that the $\frakS\module$ $\paze(\frakS)$ is torsion-free, $\paze(\frakS)[1/p]$ (resp.\ $\paze(\frakS)[1/E]$) is finite projective over $\frakS[1/p]$ (resp.\ $\frakS[1/E]$) and $\paze(\frakS) = \paze(\frakS)[1/p] \cap \paze(\frakS)[1/E] \subset \paze(\frakS)[1/p, 1/E]$.
	Morphisms between two objects are given as maps of $\pazo_{\prism}\modules$ compatible with $\varphi_{\paze}$.
	Say that $(\paze, \varphi_{\paze})$ is effective if $\varphi_{\paze}$ carries $\varphi^*\paze$ into ${\paze}$.

	Let $X$ be a smooth $\padic$ formal scheme over $O_K$.
	Define the category $\CRhatphi(X_{\prism})$ of \textit{completed $F\crystals$ of $\pazo_{\prism}\modules$} on $X_{\prism}$ as follows:
	an object is a pair $(\paze, \varphi_{\paze})$ with $\paze$ a completed prismatic crystal on $X_{\prism}$ equipped with an $\pazo_{\prism}\linear$ isomorphism $\varphi_{\paze} \colon \varphi^*\paze[1/\pazi_{\prism}] \isomorphic \paze[1/\pazi_{\prism}]$.
	Moreover, there exists an affine open covering $X = \cup_{\lambda \in \Lambda} U_{\lambda}$ by affine $\padic$ formal schemes, where $U_{\lambda} = \Spf(S_{\lambda})$ is connected and small over $O_K$, and such that $(\paze, \varphi_{\paze})|_{U_{\lambda, \prism}}$ is an object of $\CRhatphi(U_{\lambda, \prism})$.
	Morphisms between two objects are given as maps of $\pazo_{\prism}\modules$ compatible with $\varphi_{\paze}$.
	Say that $(\paze, \varphi_{\paze})$ is effective if $\varphi_{\paze}$ carries $\varphi^*\paze$ into ${\paze}$.
\end{defi}

\begin{rem}
	Note that Definition \ref{defi:completed_fcrystal} is slightly more general than loc.\ cit.\ in the sense that we do not restrict ourselves to effective completed $F\crystals$.
\end{rem}

\subsubsection{Analytic \texorpdfstring{$F\crystals$}{-}}

Let $X$ be a $\padic$ formal scheme.

\begin{defi}[Analytic prismatic crystals, {\cite[Definition 3.1]{guo-reinecke}}]\label{defi:analytic_crystal}
	Define the category of \textit{analytic prismatic crystals of vector bundles} over $X$ as 
	\begin{equation*}
		\Vect^{\an}(X_{\prism}) \coloneq \lim_{(A, I) \in X_{\prism}} \Vect(\Spec(A) \setminus V(p, I)).
	\end{equation*}
\end{defi}

Let $(A, I)$ be a prism, and let $\varphi$ denote the Frobenius endomorphism on $A$.
Then, we have that $\varphi(I) \subset (p, I)$, since $\varphi(x) = x^p + p\delta(x)$ is in $(p, I)$, for each $x$ in $I$.
In particular, the Frobenius endomorphism $\varphi$ on $A$ preserves the subscheme $\Spec(A) \setminus V(p, I)$, and we denote the induced endomorphism on $\Spec(A) \setminus V(p, I)$ again by $\varphi$.

\begin{defi}[Analytic prismatic $F\crystals$, {\cite[Definition 3.6]{guo-reinecke}}]\label{defi:analytic_fcrystal}
	Define the category $\Vect^{\an, \varphi}(A)$ as follows: an object is a pair $(M, \varphi_M)$ with $M$ a vector bundle on $\Spec(A) \setminus V(p, I)$ equipped with an $A\linear$ isomorphism $\varphi_M \colon (\varphi^* M)[1/I] \isomorphic M[1/I]$.
	Morphisms between two objects are given as maps of vector bundles compatible with $\varphi_M$.
	Say that $(M, \varphi_M)$ is effective if $\varphi_M$ carries $\varphi^* M$ into $M$.

	Define the category of analytic prismatic $F\crystals$ on $X_{\prism}$ as follows:
	\begin{equation*}
		\Vect^{\an, \varphi}(X_{\prism}) \coloneq \lim_{(A, I) \in X_{\prism}} \Vect^{\varphi}(\Spec(A) \setminus V(p, I)).
	\end{equation*}
	Say that $(\paze, \varphi_{\paze})$ is effective if $\varphi_{\paze}$ carries $\varphi^* \paze$ into $\paze$.
\end{defi}

\begin{rem}
	Let $X = \Spf(O_F)$, then from \cite[Proposition 3.8]{guo-reinecke} note that the functor induced by restriction to open subsets induces an equivalence of categories $\Vect^{\varphi}(X_{\prism}, \pazo_{\prism}) \isomorphic \Vect^{\an, \varphi}(X_{\prism})$.
\end{rem}

\subsubsection{\'Etale realisation functors}

Let us now introduce the \'etale realisation functors from the category of (completed/analytic) prismatic $F\crystals$ using Lemma \ref{lem:laurent_fcrystal_local_system}.
Let $X$ be a bounded $\padic$ formal scheme over $\Spf(O_K)$, and $(\paze, \varphi_{\paze})$ an object of $\Vect^{\an, \varphi}(X_{\prism})$.
Consider the open embedding $\Spec(A[1/I]) \subset \Spec(A) \setminus V(p, I)$, for any prism $(A, I)$ in $X_{\prism}$, and let $\paze[1/\pazi_{\prism}]$ denote the pullback of $\paze$ along this embedding, i.e.\ it is an object of $\Vect^{\varphi}(X_{\prism}, \pazo_{\prism}[1/\pazi_{\prism}])$.
Using the equivalence $\Vect^{\varphi}(X_{\prism}, \pazo_{\prism}[1/\pazi_{\prism}]_p^{\wedge}) \isomorphic \Loc_{\ZZ_p}(X_{\eta})$ from Lemma \ref{lem:laurent_fcrystal_local_system}, we have the following:
\begin{defi}\label{defi:etale_functor}
	Define the \'etale realisation functor for completed $F\crystals$ as
	\begin{align*}
		T^{\wedge}_{\etale} \colon \CRhatphi(X_{\prism}) &\longrightarrow \Loc_{\ZZ_p}(X_{\eta})\\
		(\paze, \varphi_{\paze}) &\longmapsto \big(\paze[1/\pazi_{\prism}] \otimes_{\pazo[1/\pazi_{\prism}]} \pazo_{\prism}[1/\pazi_{\prism}]_p^{\wedge}\big)^{\varphi=1}.
	\end{align*}
	Similarly, define the \'etale realisation functor for analytic $F\crystals$ as
	\begin{align*}
		T^{\an}_{\etale} \colon \Vect^{\an, \varphi}(X_{\prism}) &\longrightarrow \Loc_{\ZZ_p}(X_{\eta})\\
		(\paze, \varphi_{\paze}) &\longmapsto \big(\paze[1/\pazi_{\prism}] \otimes_{\pazo[1/\pazi_{\prism}]} \pazo_{\prism}[1/\pazi_{\prism}]_p^{\wedge}\big)^{\varphi=1}.
	\end{align*}
\end{defi}

\begin{rem}
	The \'etale realisation functor from $\Vect^{\varphi}(X_{\prism}, \pazo_{\prism})$ in \cite[Construction 4.8]{bhatt-scholze-crystals} naturally factors through the functors $T^{\wedge}_{\etale}$ (see \cite[Proposition 3.43]{du-liu-moon-shimizu}) and $T^{\an}_{\etale}$ (see \cite[Construction 3.9]{guo-reinecke}).
\end{rem}

\begin{rem}\label{rem:analytic_completed_crystalline_equiv}
	Assume that $X$ is a smooth $\padic$ formal scheme over $\Spf(O_K)$.
	Then, the essential images of $T^{\wedge}_{\etale}$ and $T^{\an}_{\etale}$ coincide, and we have natural equivalences of categories (see \cite[Theorem 1.3]{du-liu-moon-shimizu} and \cite[Theorem A]{guo-reinecke}):
	\begin{equation*}
		\CRhatphi(X_{\prism}) \xrightarrow[\sim]{\hspace{2mm} T^{\wedge}_{\etale} \hspace{2mm}} \Loc_{\ZZ_p}^{\crys}(X_{\eta}) \xleftarrow[\sim]{\hspace{2mm} T^{\an}_{\etale} \hspace{2mm}} \Vect^{\an, \varphi}(X_{\prism}),
	\end{equation*}
	where $\Loc_{\ZZ_p}^{\crys}(X_{\eta})$ is the category of $\ZZ_p\textrm{-local systems}$ $\LL$ on $X_{\eta}$ such that $\LL \otimes_{\ZZ_p} \QQ_p$ is crystalline.
\end{rem}

\section{The prism \texorpdfstring{$(\AR, [p]_q)$}{-}}\label{sec:theprism_ar+}

In this section, we will define and study a prism $(\AR, [p]_q)$, which is of fundamental importance in stating and proving our main results in Sections \ref{sec:integral_comp} and \ref{sec:prismatic_wach}.
We will use the setup and notations from Section \ref{subsec:setup_nota}.

\subsection{Some important rings}\label{subsec:important_rings}
 
Let $\Rinfty^{\flat}$ and $\Rbar^{\flat}$ denote the tilt of $\Rinfty$ and $\Rbar$, respectively (see Section \ref{subsec:setup_nota}).
Set $\Ainf(\Rinfty) \coloneq W(\Rinfty^{\flat})$ and $\Ainf(\Rbar) \coloneq W(\Rbar^{\flat})$ to be the ring of $p\textrm{-typical}$ Witt vectors with coefficients in $\Rinfty^{\flat}$ and $\Rbar^{\flat}$, respectively, and equipped with the Frobenius endomorphism on Witt vectors and a continuous (for the weak topology) $\Gamma_R\action$ and $G_R\action$, respectively (see \cite[Proposition 7.2]{andreatta-phigamma}).
Moreover, from loc.\ cit., we have that $\Ainf(\Rinfty) = \Ainf(\Rbar)^{H_R}$.
We fix $\mubar \coloneq \varepsilon-1$, where $\varepsilon \coloneq (1, \zeta_p, \zeta_{p^2}, \ldots)$ is in $\Rinfty^{\flat}$, and let $q \coloneq [\varepsilon]$ in $\Ainf(\Rinfty)$ denote the Teichm\"uller lift of $\varepsilon$.
Let $\mu \coloneq q - 1$ and $\xi \coloneq \mu/\varphi^{-1}(\mu)$ be some fixed elements of $\Ainf(\Rinfty)$, and note that $\mu = \mubar \textrm{ mod } p$.
Then, from the description of the weak topology on $\Ainf(\Rinfty)$ and $\Ainf(\Rbar)$ in op.\ cit., it is easy to see that the weak topology on these rings is the same as the $(p, \mu)\adic$ topology.
Next, let us note that we have $\varphi(q) = q^p$, and for any $g$ in $G_R$ we have that $g(q) = q^{\chi(g)}$, where $\chi$ is the $\padic$ cyclotomic character, in particular, we see that $\varphi(\mu) = (1+\mu)^p - 1$ and $g(1+\mu) = (1+\mu)^{\chi(g)}$.
Furthermore, we have a $G_R\equivariant$ surjection $\theta \colon \Ainf(\Rbar) \rightarrow \widehat{\Rbar}$ and $\kert \theta = \xi \Ainf(\Rbar)$.
The map $\theta$ further induces a $\Gamma_R\equivariant$ surjection $\theta \colon \Ainf(\Rinfty) \rightarrow \widehat{R}_{\infty}$, with $\kert \theta = \xi \Ainf(\Rinfty)$.

For $1 \leqslant i \leqslant d$, we fix $X_i^{\flat} = (X_i, X_i^{1/p}, X_i^{1/p^2}, \ldots)$ in $\Rinfty^{\flat}$, and we fix topological generators of $\Gamma_R'$ as $\{\gamma_1, \ldots, \gamma_d\}$ and such that $\gamma_j(X_i^{\flat}) = \varepsilon X_i^{\flat}$, if $i=j$ and $0$ otherwise.
Moreover, we may view $\Gamma_F$ as a subgroup of $\Gamma_R$, and for any $g$ in $\Gamma_F$ we have $g \gamma_i g^{-1} = \gamma_i^{\chi(g)}$, for all $1 \leqslant i \leqslant d$.
Let us also fix Teichm\"uller lifts $[X_i^{\flat}]$ in $\Ainf(\Rinfty)$.

\subsubsection{The ring \texorpdfstring{$\AR$}{-}}\label{subsubsec:ring_ar+}

Let $\Aframe$ denote the $(p, \mu)\textrm{-adic}$ completion of the ring $O_F\llbracket \mu \rrbracket\big[[X_1^{\flat}]^{\pm 1}, \ldots, [X_d^{\flat}]^{\pm 1}\big]$.
Note that we have a natural injective homomorphism of rings $\Aframe \hookrightarrow \Ainf(\Rinfty)$ and $\Aframe$ is stable under the induced Frobenius endomorphism $\varphi$ and the action of $\Gamma_R$ (see \cite[Section 3]{abhinandan-relative-wach-i}); we equip $\Aframe$ with induced structures.
Moreover, we have an injective homomorphism $\iota \colon \Rframe \rightarrow \Aframe$, defined via the map $X_i \mapsto [X_i^{\flat}]$, for $1 \leqslant i \leqslant d$, and it extends to an isomorphism of rings $\Rframe\llbracket \mu \rrbracket \isomorphic \Aframe$.
Equip $\Rframe\llbracket \mu \rrbracket$ with a Frobenius endomorphism $\varphi$ extending the Frobenius on $\Rframe$ by setting $\varphi(\mu) = (1+\mu)^p-1$, and note that the map $\varphi$ is finite and faithfully flat of degree $p^{d+1}$.
Additionally, equip $\Rframe\llbracket \mu \rrbracket$ with an $\Rframe\linear$ continuous action of $\Gamma_F$ by setting $g(1+ \mu) = (1+\mu)^{\chi(g)}$, for any $g$ in $\Gamma_F$.
Then, the homomorphism $\iota$ and the isomorphism $\Rframe\llbracket \mu \rrbracket \isomorphic \Aframe^+$ are Frobenius and $\Gamma_F\equivariant$ (see \cite[Section 2.5]{abhinandan-relative-wach-ii}).

Let $\AR$ denote the $(p, \mu)\textrm{-adic}$ completion of the unique extension of the injective homomorphism $\Aframe^+ \hookrightarrow \Ainf(\Rinfty)$ along the $p\textrm{-adically}$ completed \'etale map $\Rframe \rightarrow R$ (see \cite[Section 3.3.2]{abhinandan-relative-wach-i}).
Then, we obtain a natural injective homomorphism $\AR \hookrightarrow \Ainf(\Rinfty)$ and note that $\AR$ is stable under the Frobenius and $\Gamma_R\action$ on the latter; we equip $\AR$ with induced structures.
Moreover, note that the homomorphism $\iota \colon \Rframe \hookrightarrow \Aframe^+ \hookrightarrow \AR$ and the isomorphism $\Rframe\llbracket \mu \rrbracket \isomorphic \Aframe^+ \hookrightarrow \AR$ extend respectively uniquely to an injective homomorphism $\iota \colon R \hookrightarrow \AR$ and an isomorphism of rings $R\llbracket \mu \rrbracket \isomorphic \AR$.
Equip $R\llbracket \mu \rrbracket$ with a Frobenius endomorphism $\varphi$ extending the Frobenius on $R$ by setting $\varphi(\mu) = (1+\mu)^p-1$ and note that the map $\varphi$ is finite and faithfully flat of degree $p^{d+1}$.
Additionally, equip $R\llbracket \mu \rrbracket$ with an $R\linear$ continuous action of $\Gamma_F$ by setting $g(1+ \mu) = (1+\mu)^{\chi(g)}$, for any $g$ in $\Gamma_F$.
Then, it is clear that the homomorphism $\iota$ and the isomorphism $R\llbracket \mu \rrbracket \isomorphic \AR$ are Frobenius and $\Gamma_F\equivariant$ (see \cite[Section 2.5]{abhinandan-relative-wach-ii}).
In particular, the Frobenius endomorphism $\varphi$ on $\AR$ is finite and faithfully flat of degree $p^{d+1}$.

\begin{rem}\label{rem:af+_nota}
	For $R = O_F$, we will denote the ring $\AR$ by $\AF$ which is equipped with a Frobenius endomorphism $\varphi$ and a continuous action of $\Gamma_F$.
\end{rem}

Next, let us fix the following elements inside $\AF$:
\begin{equation}\label{eq:pq_mu0_ptilde_defi}
	[p]_q \coloneq \tfrac{q^p-1}{q-1} = \tfrac{\varphi(\mu)}{\mu}, \hspace{3mm} \mu_0 \coloneq \textstyle\sum_{a \in \FF_p^{\times}} ((1+\mu)^{[a]}-1) = -p + \textstyle\sum_{a \in \FF_p} q^{[a]}, \hspace{3mm} \ptilde \coloneq \mu_0 + p = \textstyle\sum_{a \in \FF_p} q^{[a]}.
\end{equation}

\begin{rem}\label{rem:mu0_ptilde_nota}
	Note that for $p=2$, we have $\mu_0 = \mu$ and $[p]_q = \mu + 2 = \ptilde$.
	However, such equalities do not hold for $p \geqslant 3$.
	Let us also remark that we denote the sum $-p + \sum_{a \in \FF_p} (1+\mu)^{[a]}$ as $\mu_0$ following \cite[Section 5.2.5]{fontaine-corps-periodes}, where Fontaine denotes our $\mu$ by $\pi$ and $\mu_0$ by $\pi_0$.
	Moreover, the notation for $\ptilde = \mu_0 + p$ comes from \cite[Section 3.8]{bhatt-lurie-apc}.
\end{rem}

\begin{lem}\label{lem:pq_mu0p_unit}
	The element $\ptilde$ is the product of $[p]_q$ with a unit in $\AF$, in particular, both $\ptilde$ and $[p]_q$ generate the same ideal inside $\AF$.
\end{lem}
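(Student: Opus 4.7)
The plan is to split into two cases by the prime $p$, handling $p=2$ trivially and $p\geq 3$ by a direct power-series computation. For $p=2$, Remark \ref{rem:mu0_ptilde_nota} already records $\ptilde = \mu + 2 = q+1 = [p]_q$, so the two elements coincide and there is nothing to prove.

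For $p\geq 3$, the goal is to produce an explicit factorisation $\ptilde = [p]_q\,(1 + \mu h)$ with $h \in \AF$. The second factor will then automatically be a unit of $\AF$, since $\AF \isomorphic O_F\llbracket\mu\rrbracket$ and $1+\mu h$ has constant term $1 \in O_F^{\times}$. To obtain such an $h$, I would rewrite
\begin{equation*}
	\ptilde - [p]_q \;=\; \sum_{a \in \FF_p}\bigl((1+\mu)^{[a]} - (1+\mu)^a\bigr),
\end{equation*}
using the identity $[p]_q = \sum_{a=0}^{p-1}(1+\mu)^a$. For each $a$, I would write $[a] - a = p\,c_a$ with $c_a \in \ZZ_p$ (possible since $[a] \equiv a \bmod p$) and use $\varphi(1+\mu) = (1+\mu)^p$ to identify $(1+\mu)^{p c_a}$ with $(1 + \varphi(\mu))^{c_a}$. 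A binomial expansion then exhibits each summand $(1+\mu)^a\bigl((1+\varphi(\mu))^{c_a} - 1\bigr)$ as divisible by $\varphi(\mu)$ in $\AF$, hence so is the total sum; since $\varphi(\mu) = \mu\,[p]_q$, the desired factorisation drops out.

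The computation itself is routine; the only care needed is in interpreting $(1+\mu)^{[a]}$ as a well-defined element of $\AF$ via the binomial series in $\ZZ_p\llbracket\mu\rrbracket$, and in verifying that the rewriting $(1+\mu)^{p c_a} = (1+\varphi(\mu))^{c_a}$ is legitimate inside $\AF$. The ``same ideal'' conclusion is then immediate from $\ptilde$ and $[p]_q$ being associates.
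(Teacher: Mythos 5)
Your proof is correct, and it takes a genuinely different route from the paper's. The paper argues in two separate steps: first it shows $[p]_q \mid \ptilde$ by reducing modulo $[p]_q$ via the identification $\AF/[p]_q \isomorphic O_F[\zeta_p]$ and using $\sum_a \zeta_p^{[a]} = \sum_a \zeta_p^a = 0$; then it shows the resulting quotient $x$ is a unit by reducing modulo $\mu$ and using that both $\ptilde$ and $[p]_q$ equal $p$ modulo $\mu$ in the $p$-torsion-free ring $O_F \isomorphic \AF/\mu$. You instead exhibit the factorisation in one pass: you compute $\ptilde - [p]_q = \sum_{a}(1+\mu)^a\bigl((1+\varphi(\mu))^{c_a}-1\bigr)$ with $[a]-a = pc_a$, observe each summand is visibly divisible by $\varphi(\mu) = \mu[p]_q$, and read off $\ptilde = [p]_q(1+\mu h)$ directly. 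Your approach is more computational but also more self-contained --- it gives an explicit unit rather than inferring unitness from two congruences, and it avoids appealing to the structure of $\AF/[p]_q$. The $p=2$ case split is harmless but unnecessary, since your $p\geq 3$ computation degenerates correctly when $p=2$ (the sum $\ptilde - [p]_q$ is simply empty, as $[0]=0$ and $[1]=1$). One small point worth making explicit in a final write-up: the well-definedness of $(1+\varphi(\mu))^{c_a}$ for $c_a \in \ZZ_p$ and the integrality of the binomial coefficients $\binom{c_a}{k}$ rest on the $(p,\mu)$-adic completeness of $\AF$, which you correctly flag but should cite.
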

\begin{proof}
	Note that $\AF/([p]_q) = O_F[\zeta_p]$ and we have $[a] = a \textrm{ mod } p\ZZ_p$.
	Since $\zeta_p^p=1$ and $\ptilde = \sum_{a \in \FF_p} (1+\mu)^{[a]}$, we see that $\ptilde = \sum_{a \in \FF_p} \zeta_p^{[a]} = \sum_{a=0}^{p-1} \zeta_p^{a} = 0 \textrm{ mod } [p]_q$.
	In particular, $\ptilde = [p]_qx$, for some $x$ in $\AF$.
	Moreover, since $[p]_q = p \textrm{ mod } \mu$ and $\mu_0 = 0 \textrm{ mod } \mu$, we conclude that $x = 1 + \mu y$ for some $y$ in $\AF$, i.e.\ $x$ is a unit in $\AF$.
	This proves the claim.
\end{proof}

\subsubsection{The ring \texorpdfstring{$R\llbracket\mu_0\rrbracket$}{-}}\label{subsubsec:rmu0}

From Section \ref{subsubsec:ring_ar+}, recall that the ring $R\llbracket\mu\rrbracket$ is equipped with a continuous and $R\linear$ action of $\Gamma_F$.
Moreover, note that $\mu_0 = \sum_{a \in \FF_p} (1+\mu)^{[a]}$ is an element of $R\llbracket\mu\rrbracket$ and it is invariant under the action of the subgroup $\FF_p^{\times}$ of $\Gamma_F$ (see the discussion after \eqref{eq:gammaf_es} for notation).

Now, consider an injective morphism of $R\algebras$ $R\llbracket z \rrbracket \rightarrow R\llbracket \mu \rrbracket$ defined by sending $z \mapsto \mu_0$.
Denote the image inside $R\llbracket \mu \rrbracket$ by $R\llbracket\mu_0\rrbracket$ and we will view the latter as the ring of formal power series over $R$ in the variable $\mu_0$.
Note that the canonical injective homomorphism of $R\algebras$ $R\llbracket\mu_0\rrbracket \rightarrow R\llbracket\mu\rrbracket$ is continuous for the $(p, \mu_0)\adic$ topology on the former and $(p, \mu_0)\adic = (p, \mu)\adic$ topology on the latter.
Moreover, from the explicit description of $\mu_0$ in \eqref{eq:pq_mu0_ptilde_defi}, it follows that $R\llbracket\mu_0\rrbracket$ is stable under the Frobenius endomorphism and the continuous action of $\Gamma_0$ on $R\llbracket\mu\rrbracket$; we equip $R\llbracket\mu_0\rrbracket$ with the induced Frobenius endomorphism and continuous action of $\Gamma_0$.

\begin{lem}\label{lem:rmu0_fpxinv}
	Taking invariants of $R\llbracket\mu\rrbracket$ under the action of $\FF_p^{\times}$ induces a $(\varphi, \Gamma_0)\equivariant$ isomorphism of rings $R\llbracket\mu_0\rrbracket \isomorphic R\llbracket\mu\rrbracket^{\FF_p^{\times}}$.
	Similarly, we have a $(\varphi, \Gamma_0)\equivariant$ and $R\llbracket\mu_0\rrbracket\linear$ isomorphism $\mu_0 R\llbracket\mu_0\rrbracket \isomorphic (\mu R\llbracket\mu\rrbracket)^{\FF_p^{\times}}$.
\end{lem}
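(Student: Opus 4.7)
The plan is to exhibit an $R\llbracket\mu_0\rrbracket$-basis of $R\llbracket\mu\rrbracket$ on which $\FF_p^{\times}$ acts via the regular representation, and then read off the invariants directly. The key technical step is to show that $R\llbracket\mu\rrbracket$ is finite free of rank $p-1$ over $R\llbracket\mu_0\rrbracket$.

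For the freeness, I would first compute modulo $p$: writing $[a] = a + p\alpha_a$ with $\alpha_a \in \ZZ_p$ and using $(1+\mu)^p \equiv 1 + \mu^p \bmod p$, one has $(1+\mu)^{[a]} \equiv (1+\mu)^a \bmod (p, \mu^p)$, whence
\begin{equation*}
\ptilde = \sum_{a \in \FF_p}(1+\mu)^{[a]} \equiv \sum_{a=0}^{p-1}(1+\mu)^a \equiv \mu^{p-1} \bmod (p, \mu^p).
\end{equation*}
Consequently $\mu_0 = \ptilde - p$ equals $\mu^{p-1}$ times a unit in $R\llbracket\mu\rrbracket/p$, giving the ideal equality $(p, \mu_0) = (p, \mu^{p-1})$ in $R\llbracket\mu\rrbracket$; in particular, the $(p, \mu)$-adic and $(p, \mu_0)$-adic topologies on $R\llbracket\mu\rrbracket$ coincide. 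The $R\llbracket\mu_0\rrbracket$-linear map $\phi \colon R\llbracket\mu_0\rrbracket^{p-1} \to R\llbracket\mu\rrbracket$ sending the standard basis to $(1, \mu, \ldots, \mu^{p-2})$ reduces modulo $(p, \mu_0)$ to the obvious isomorphism $(R/p)^{p-1} \isomorphic (R/p)[\mu]/(\mu^{p-1})$. Surjectivity of $\phi$ then follows by topological Nakayama applied in the $(p, \mu_0)$-adic topology, and injectivity by $p$-adic devissage combined with a $\mu_0$-adic separation argument in $(R/p)\llbracket\mu\rrbracket$, using that $\mu_0$ is a non-zero-divisor there.

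I would then switch to the alternate basis $\{e_b := (1+\mu)^{[b]}\}_{b \in \FF_p^{\times}}$. Since $(1+\mu)^p \equiv 1 \bmod (p, \mu^{p-1})$, one has $e_b \equiv (1+\mu)^b \bmod (p, \mu_0)$, and the change-of-basis matrix from $\{1, \mu, \ldots, \mu^{p-2}\}$ to $\{(1+\mu)^b\}_{b \in \FF_p^{\times}}$ has determinant, up to a unit in $\FF_p$, equal to the Vandermonde $\prod_{1 \leq b < b' \leq p-1}(b' - b) \in \FF_p^{\times}$. Hence $\{e_b\}_b$ is also an $R\llbracket\mu_0\rrbracket$-basis of $R\llbracket\mu\rrbracket$. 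By multiplicativity of Teichm\"uller lifts, $[a][b] = [ab]$, the $\FF_p^{\times}$-action on this basis is the regular representation $\sigma_a(e_b) = e_{ab}$, whose invariants are generated over $R\llbracket\mu_0\rrbracket$ by the trace vector $\sum_b e_b = \mu_0 + (p-1)$. For $p \geq 3$, $p-1$ is a unit in $\ZZ_p \subseteq R$, so $\mu_0 + (p-1)$ is a unit in $R\llbracket\mu_0\rrbracket$, giving $R\llbracket\mu\rrbracket^{\FF_p^{\times}} = R\llbracket\mu_0\rrbracket$.

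Finally, $(\varphi, \Gamma_0)$-equivariance is automatic since $\Gamma_F$ is abelian (so $\FF_p^{\times}$ commutes with $\Gamma_0$) and $\varphi$ commutes with every element of $\Gamma_F$ (both send $1+\mu$ to powers with commuting exponents $p$ and $\chi(g)$). The ideal statement is then straightforward: $\mu_0 \in \mu R\llbracket\mu\rrbracket$ gives the inclusion $\mu_0 R\llbracket\mu_0\rrbracket \subseteq (\mu R\llbracket\mu\rrbracket)^{\FF_p^{\times}}$, and conversely any $f = \sum_n c_n \mu_0^n \in R\llbracket\mu_0\rrbracket$ lying in $\mu R\llbracket\mu\rrbracket$ satisfies $c_0 = f|_{\mu = 0} = 0$ and hence lies in $\mu_0 R\llbracket\mu_0\rrbracket$. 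The main technical obstacle is the freeness step, where the topological equivalence $(p, \mu_0) = (p, \mu^{p-1})$, secured by the mod-$p$ computation above, is the essential input.
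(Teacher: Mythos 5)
Your proof is correct, and it takes a genuinely different route from the paper's. The paper argues homologically: it takes $\FF_p^{\times}$-invariants of the short exact sequence $0 \to R\llbracket\mu\rrbracket \xrightarrow{\ptilde} R\llbracket\mu\rrbracket \to R[\zeta_p] \to 0$, using $H^1(\FF_p^{\times}, R\llbracket\mu\rrbracket) = 0$ and $R[\zeta_p]^{\FF_p^{\times}} = R$ to identify $R\llbracket\mu_0\rrbracket/\ptilde \isomorphic R\llbracket\mu\rrbracket^{\FF_p^{\times}}/\ptilde$, and then lifts along $\ptilde$-adic completeness and torsion-freeness; the claim for $\mu_0 R\llbracket\mu_0\rrbracket$ is a variant of the same argument with $R$ in place of $R[\zeta_p]$. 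You instead compute the invariants explicitly: you first show $R\llbracket\mu\rrbracket$ is finite free of rank $p-1$ over $R\llbracket\mu_0\rrbracket$ via the mod-$p$ identity $(p, \mu_0) = (p, \mu^{p-1})$, then pass to the basis $\{(1+\mu)^{[b]}\}_{b \in \FF_p^{\times}}$ on which $\FF_p^{\times}$ acts by the regular representation, and read off the invariants as the free line spanned by the trace $\ptilde - 1 = \mu_0 + p - 1$, which is a unit. Both routes rest on the invertibility of $p-1$ (as the averaging idempotent on your side, as $H^1$-vanishing on the paper's), but your argument is constructive and gives as a byproduct the finite freeness of $R\llbracket\mu\rrbracket$ over $R\llbracket\mu_0\rrbracket$, which the paper records only later (Remark \ref{rem:mu0_mu_ff}, downstream of Lemma \ref{lem:mu0_mup1_unit}, which in turn cites the present lemma — so you also sidestep an apparent forward dependence). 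The paper's approach is more economical and installs the $H^1$-vanishing pattern it reuses throughout (e.g.\ Lemma \ref{lem:a1_fpx_exact} and Lemma \ref{lem:n1_fpx_exact}). One small point: you should dispatch $p = 2$ explicitly, where $\mu_0 = \mu$ and the statement is trivial (see Remark \ref{rem:mu0_ptilde_nota}), since your freeness and Vandermonde steps implicitly assume $p \geq 3$.
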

\begin{proof}
	Note that the map in the claim is $(\varphi, \Gamma_0)\equivariant$ by definition.
	Moreover, by Remark \ref{rem:mu0_ptilde_nota}, the claim is trivial for $p=2$.
	So, assume that $p \geqslant 3$ and consider the following $(\varphi, \Gamma_F)\equivariant$ commutative diagram:
	\begin{equation}\label{eq:rmu0_mod_ptilde}
		\begin{tikzcd}
			0 \arrow[r] & R\llbracket\mu_0\rrbracket \arrow[r, "\ptilde"] \arrow[d] & R\llbracket\mu_0\rrbracket \arrow[r] \arrow[d] & R \arrow[r] \arrow[d] & 0\\
			0 \arrow[r] & R\llbracket\mu\rrbracket \arrow[r, "\ptilde"] & R\llbracket\mu\rrbracket \arrow[r] & R[\zeta_p] \arrow[r] & 0,
		\end{tikzcd}
	\end{equation}
	where the vertical arrows are natural inclusions and the bottom row is exact since we have $R\llbracket\mu\rrbracket/([p]_q) \isomorphic R[\zeta_p]$ and $[p]_qR\llbracket\mu\rrbracket = \ptilde R\llbracket\mu\rrbracket$ using Lemma \ref{lem:pq_mu0p_unit}.
	Now, note that the top row is $\FF_p^{\times}\textrm{-invariant}$ and for the bottom row, consider the associated long exact sequence for the cohomology of $\FF_p^{\times}\action$ and observe that $H^1(\FF_p^{\times}, R\llbracket\mu\rrbracket) = 0$, since $p-1$ is invertible in $\ZZ_p$, and $R[\zeta_p]^{\FF_p^{\times}} = R$.
	In particular, from the long exact sequence we obtain that $R\llbracket\mu_0\rrbracket/(\ptilde) \isomorphic R\llbracket\mu\rrbracket^{\FF_p^{\times}}/(\ptilde)$.
	Since, $R\llbracket\mu_0\rrbracket$ and $R\llbracket\mu\rrbracket^{\FF_p^{\times}}$ are $\ptilde\textrm{-adically}$ complete and $\ptilde\torsionfree$, it follows that $R\llbracket\mu_0\rrbracket \isomorphic R\llbracket\mu\rrbracket^{\FF_p^{\times}}$.

	Next, for the second claim, we consider the following $(\varphi, \Gamma_F)\equivariant$ commutative diagram with exact rows:
	\begin{center}
		\begin{tikzcd}
			0 \arrow[r] & \mu_0 R\llbracket\mu_0\rrbracket \arrow[r] \arrow[d] & R\llbracket\mu_0\rrbracket \arrow[r] \arrow[d] & R \arrow[r] \arrow[d, equal] & 0\\
			0 \arrow[r] & \mu R\llbracket\mu\rrbracket \arrow[r] & R\llbracket\mu\rrbracket \arrow[r] & R \arrow[r] & 0,
		\end{tikzcd}
	\end{center}
	where the vertical arrows are natural inclusions.
	Note that the top row is $\FF_p^{\times}\textrm{-invariant}$ and for the bottom row, consider the associated long exact sequence for the cohomology of $\FF_p^{\times}\action$ and observe that $H^1(\FF_p^{\times}, \mu R\llbracket\mu\rrbracket) = 0$, since $p-1$ is invertible in $\ZZ_p$ and we have $R^{\FF_p^{\times}} = R$ and $R\llbracket\mu_0\rrbracket \isomorphic R\llbracket\mu\rrbracket^{\FF_p^{\times}}$ from the first part.
	Hence, it follows that the left vertical arrow induces a $(\varphi, \Gamma_0)\equivariant$ isomorphism $\mu_0 R\llbracket\mu_0\rrbracket \isomorphic (\mu R\llbracket\mu\rrbracket)^{\FF_p^{\times}}$.
\end{proof}

\begin{rem}\label{rem:rmu0_fpx_decomp}
	Note that from \eqref{eq:fpx_decomp}, we have an $\FF_p^{\times}\textrm{-decomposition}$ as $R\llbracket\mu\rrbracket = \bigoplus_{i=0}^{p-2} R\llbracket\mu\rrbracket_i$.
	Moreover, from Lemma \ref{lem:rmu0_fpxinv}, it follows that we have $R\llbracket\mu_0\rrbracket \isomorphic R\llbracket\mu\rrbracket^{\FF_p^{\times}} = R\llbracket\mu\rrbracket_0$ and the preceding decomposition is $R\llbracket\mu_0\rrbracket\textrm{-linear}$.
	In particular, for each $0 \leqslant i \leqslant p-2$, we have that $R\llbracket\mu\rrbracket_i$ is a $(p, \mu_0)\textrm{-adically}$ complete module over $R\llbracket\mu_0\rrbracket$, equipped with a continuous action of $\Gamma_0$.
\end{rem}

\begin{lem}\label{lem:mu0_gamma0_act}
	Let $g$ be any element of $\Gamma_0$, then $(g-1)\mu_0$ is an element of $\ptilde \mu_0 O_F\llbracket\mu_0\rrbracket$.
\end{lem}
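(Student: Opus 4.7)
The plan is to show separately that $(g-1)\mu_0$ lies in each of the ideals $\mu_0 O_F\llbracket\mu_0\rrbracket$ and $\ptilde O_F\llbracket\mu_0\rrbracket$, and then to combine these via the observation that $(\mu_0, \ptilde)$ forms a regular sequence in $O_F\llbracket\mu_0\rrbracket \isomorphic O_F\llbracket z\rrbracket$ (with $z \leftrightarrow \mu_0$, so that $\ptilde \equiv p \pmod{\mu_0}$ is a non-zero-divisor in $O_F$). The regular sequence identity $\mu_0 O_F\llbracket\mu_0\rrbracket \cap \ptilde O_F\llbracket\mu_0\rrbracket = \mu_0 \ptilde \, O_F\llbracket\mu_0\rrbracket$ then delivers the claim.

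For the first containment, I note that the augmentation $\epsilon : O_F\llbracket\mu\rrbracket \to O_F$ sending $\mu \mapsto 0$ is $\Gamma_0\equivariant$ with trivial target action, since $g(\mu)|_{\mu = 0} = (1+0)^{\chi(g)} - 1 = 0$; its restriction to $O_F\llbracket\mu_0\rrbracket$ is the augmentation $\mu_0 \mapsto 0$. Applying it to $g(\mu_0) - \mu_0$ yields $0$, so $(g-1)\mu_0 \in \mu_0 O_F\llbracket\mu_0\rrbracket$. For the second containment, I would instead use the quotient map $\pi : O_F\llbracket\mu\rrbracket \to O_F[\zeta_p]$, $\mu \mapsto \zeta_p - 1$, whose kernel is $[p]_q O_F\llbracket\mu\rrbracket = \ptilde O_F\llbracket\mu\rrbracket$ by Lemma \ref{lem:pq_mu0p_unit}. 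This map is $\Gamma_F\equivariant$, and for $g \in \Gamma_0$ the induced action on $O_F[\zeta_p]$ is trivial: one has $g(\zeta_p) = \zeta_p^{\chi(g)} = \zeta_p$ since $\chi(g) \equiv 1 \pmod p$ for $p \geq 3$, while $\zeta_2 = -1 \in O_F$ is automatically fixed when $p = 2$. Since $\pi(\mu_0) = -p \in O_F$, applying $\pi$ to $g(\mu_0) - \mu_0$ gives $g(-p) - (-p) = 0$, hence $(g-1)\mu_0 \in O_F\llbracket\mu_0\rrbracket \cap \ptilde O_F\llbracket\mu\rrbracket$.

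It remains to identify the latter intersection with $\ptilde O_F\llbracket\mu_0\rrbracket$: from the diagram \eqref{eq:rmu0_mod_ptilde} (the case $p = 2$ being tautological since there $\mu_0 = \mu$) we have $O_F\llbracket\mu_0\rrbracket / \ptilde \isomorphic O_F$, which injects into $O_F\llbracket\mu\rrbracket / \ptilde \isomorphic O_F[\zeta_p]$, and a quick diagram chase then supplies the needed equality of ideals. Combining with the regular sequence observation yields $(g-1)\mu_0 \in \mu_0 \ptilde O_F\llbracket\mu_0\rrbracket$. The only conceptually nontrivial input is this intersection identity, which is supplied essentially for free by the $\FF_p^{\times}\textrm{-invariants}$ setup of Lemma \ref{lem:rmu0_fpxinv}; everything else is a direct application of $\Gamma_0$-equivariance of the two natural augmentations of $O_F\llbracket\mu\rrbracket$.
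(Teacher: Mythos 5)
Your proof is correct and rests on the same two observations as the paper's: $\Gamma_0$ acts trivially on the two residue rings $O_F\llbracket\mu_0\rrbracket/\mu_0 \isomorphic O_F$ and $O_F\llbracket\mu_0\rrbracket/\ptilde \isomorphic O_F$, giving the two divisibilities. The paper packages this as ``divide by $\mu_0$, then show the quotient $x$ dies mod $\ptilde$ because $O_F$ is $p$-torsion-free'' (which is precisely the regularity of $\mu_0 \equiv -p$ modulo $\ptilde$), whereas you establish the two divisibilities independently and then invoke the general intersection identity $\mu_0 O_F\llbracket\mu_0\rrbracket \cap \ptilde O_F\llbracket\mu_0\rrbracket = \mu_0\ptilde\, O_F\llbracket\mu_0\rrbracket$ for a regular sequence — the same mechanism, presented more symmetrically.
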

\begin{proof}
	Recall that $O_F\llbracket\mu_0\rrbracket \subset \AF$ is stable under the action of $\Gamma_0$.
	So if $g$ is any element of $\Gamma_0$, then $(g-1)\mu_0$ is an element of $(\mu \AF)^{\FF_p^{\times}} \lisomorphic \mu_0 O_F\llbracket\mu_0\rrbracket$ (see Lemma \ref{lem:rmu0_fpxinv}).
	So let us write $(g-1)\mu_0 = \mu_0 x$.
	Now, let us consider the diagram in \eqref{eq:rmu0_mod_ptilde} with $R = O_F$, where the projection map $O_F\llbracket\mu_0\rrbracket \rightarrow O_F$ sends $\mu_0$ to $-p$.
	In particular, we see that $(g-1)\mu_0 = -p x \textrm{ mod } \ptilde O_F\llbracket\mu_0\rrbracket$.
	Moreover, from the $\Gamma_F\textrm{-equivariance}$ of the diagram, it follows that $(g-1)\mu_0 = 0 \textrm{ mod } \ptilde O_F\llbracket\mu_0\rrbracket$.
	As $O_F$ is $p\torsionfree$, therefore, we conclude that $x$ must be an element of $\ptilde O_F\llbracket\mu_0\rrbracket$.
	Hence, $(g-1)\mu_0$ is an element of $\ptilde \mu_0 O_F\llbracket\mu_0\rrbracket$.
\end{proof}

\begin{lem}\label{lem:mu0_mup1_unit}
	The element $\mu_0$ is the product of $\mu^{p-1}$ with a unit in $\AF$, in particular, both $\mu_0$ and $\mu^{p-1}$ generate the same ideal inside $\AF$.
\end{lem}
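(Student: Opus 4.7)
For $p=2$ the claim is immediate from Remark \ref{rem:mu0_ptilde_nota} (there $\mu_0 = \mu$ and the unit is $1$), so I would focus on $p \geq 3$ and work inside the identification $\AF \isomorphic O_F\llbracket\mu\rrbracket$ recalled in Subsection \ref{subsubsec:ring_ar+}. The plan is to expand $\mu_0$ as a power series in $\mu$, show that its coefficient of $\mu^k$ vanishes for every $0 \leq k \leq p-2$, and show that its coefficient of $\mu^{p-1}$ is a unit in $O_F$. Once these are established, writing $\mu_0 = \mu^{p-1} v$ with $v \in O_F\llbracket\mu\rrbracket$ forces $v$ to have a unit constant term, hence $v$ is a unit in $\AF$, which yields both the factorisation and the equality of ideals $(\mu_0) = (\mu^{p-1})$.

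To extract the coefficients I would use the binomial series $(1+\mu)^{[a]} = \sum_{k \geq 0} \binom{[a]}{k}\mu^k$, whose coefficients lie in $\ZZ_p$ since $[a] \in \ZZ_p$. The constant term of $\mu_0 = -p + \sum_{a \in \FF_p}(1+\mu)^{[a]}$ is $-p + p = 0$, and for $k \geq 1$ the coefficient of $\mu^k$ equals $c_k := \sum_{a \in \FF_p}\binom{[a]}{k}$. Writing $\binom{[a]}{k} = \frac{1}{k!}\,[a]([a]-1)\cdots([a]-k+1)$ and expanding the falling factorial as a polynomial of degree $k$ in $[a]$ with no constant term, I reduce the computation of $k! \cdot c_k$ to a $\ZZ$-linear combination of the power sums $S_j := \sum_{a \in \FF_p}[a]^j$ for $1 \leq j \leq k$, with coefficient $1$ in front of $S_k$.

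The key input — and the step I would single out as the crux — is the classical fact that $\{[a] : a \in \FF_p^{\times}\}$ is exactly the group of $(p-1)$-th roots of unity in $O_F$, while $[0] = 0$; summing $j$-th powers gives $S_j = p-1$ when $(p-1) \mid j$ and $S_j = 0$ otherwise. For $1 \leq k \leq p-2$, every $j$ in the range satisfies $j < p-1$, so all $S_j$ vanish and therefore $c_k = 0$. For $k = p-1$ only $j = p-1$ contributes, giving $(p-1)! \cdot c_{p-1} = S_{p-1} = p-1$, so $c_{p-1} = 1/(p-2)!$, a unit in $\ZZ_p = O_F$ since $(p-2)!$ is coprime to $p$. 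Beyond this power-sum vanishing the argument is routine, so I do not anticipate any serious obstacle.
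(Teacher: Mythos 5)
Your proof is correct, and it takes a genuinely different route from the paper's. The paper (following Fontaine) introduces the auxiliary element $\mu_1 = \prod_{a \in \FF_p^{\times}}((1+\mu)^{[a]}-1)$, which by expanding the product is manifestly $\mu^{p-1}$ times a unit of $\AF$; it then observes that $\mu_1$ is $\FF_p^{\times}$-invariant, invokes Lemma \ref{lem:rmu0_fpxinv} to place $\mu_1$ in $O_F\llbracket\mu_0\rrbracket$, writes $\mu_1 = \mu_0(a_1 + a_2\mu_0 + \cdots)$ with $a_0 = 0$, and reduces modulo $p$ (using $\mu_0 \equiv \mu^{p-1} \bmod p\AF$) to see that $a_1$ is a unit. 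You instead compute the $\mu$-adic expansion of $\mu_0$ directly, using the standard fact that $\{[a] : a \in \FF_p^{\times}\}$ is precisely the group of $(p-1)$-th roots of unity in $O_F$, so that the power sums $S_j = \sum_{a\in\FF_p}[a]^j$ vanish for $1 \leq j \leq p-2$ and equal $p-1$ when $(p-1) \mid j$; this kills $c_1, \ldots, c_{p-2}$ and gives $c_{p-1} = 1/(p-2)! \in \ZZ_p^{\times} \subset O_F^{\times}$. Your argument is more elementary and self-contained: it avoids both the auxiliary element $\mu_1$ and the dependence on Lemma \ref{lem:rmu0_fpxinv} (so it could be placed earlier in the development). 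The paper's route is arguably more structural, as $\mu_1$ and its relation to $\mu_0$ have independent interest. One small slip: you write ``a unit in $\ZZ_p = O_F$'' — since $O_F = W(\kappa)$ for a general perfect $\kappa$ you should say $\ZZ_p \subset O_F$, but the conclusion that $c_{p-1}$ is a unit in $O_F$ is unaffected.
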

\begin{proof}
	The idea of the proof is motivated from \cite[Proposition 5.2.6]{fontaine-corps-periodes}.
	Let $\mu_1 = \prod_{a \in \FF_p^{\times}} ((1+\mu)^{[a]}-1)$ be an element of $\AF$.
	Then, by expanding the product, it is easy to see that $\mu_1 = \mu^{p-1} u$ for some unit $u$ in $\AF$.
	Moreover, $\mu_1$ is invariant under the action of $\FF_p^{\times}$ on $\AF$.
	Therefore, we get that $\mu_1$ is also an element of $O_F\llbracket\mu\rrbracket^{\FF_p^{\times}} \lisomorphic O_F\llbracket\mu_0\rrbracket$ (see Lemma \ref{lem:rmu0_fpxinv}).
	So, let us write $\mu_1 = \sum_{k \in \NN} a_k \mu_0^k$, where $a_k$ is in $O_F$ for each $k \in \NN$.
	But since $\mu_1 = 0 \textrm{ mod } \mu \AF$ and $\mu_0 = 0 \textrm{ mod } \mu \AF$, we conclude that $a_0$ must be $0$.
	In particular, inside $\AF$, we have 
	\begin{equation}\label{eq:mu0_mup1_unit}
		\mu^{p-1} u = \mu_1 = \mu_0(a_1 + \textstyle\sum_{k \geqslant 2} a_k \mu_0^{k-1}).
	\end{equation}
	Since $\AF$ is $\mu_0\adically$ complete, it is enough to show that in \eqref{eq:mu0_mup1_unit}, the element $a_1$ is a unit in $O_F$.
	Note that from the expression of $\mu_0$, it is easy to see that we have $\mu_0 = \mu^{p-1} \textrm{ mod } p \AF$.
	Therefore, reducing \eqref{eq:mu0_mup1_unit} modulo $p$, and using the preceding observation, we get that $\mu_0 u = \mu_0(a_1 + \sum_{k \geqslant 2} a_k \mu_0^{k-1}) \textrm{ mod } p \AF$.
	Now note that $\AF/p = \kappa\llbracket\mu\rrbracket$ is $\mu_0\torsionfree$.
	Therefore, from the preceding equation, we conclude that $u = a_1 + \sum_{k \geqslant 2} a_k \mu^{(p-1)(k-1)} \textrm{ mod } p \AF$.
	In particular, $a_1 = u - \sum_{k \geqslant 2} a_k \mu^{(p-1)(k-1)} \textrm{ mod } p \AF$, is a unit in $\AF/p = \kappa\llbracket\mu\rrbracket$.
	Since $a_1$ is an element of $O_F$, it follows that $a_1 \textrm{ mod } p$ is a unit in $\kappa$ and therefore $a_1$ is a unit in $O_F$.
	Hence, from \eqref{eq:mu0_mup1_unit} we obtain that $\mu^{p-1}$ is the product of $\mu_0$ with a unit in $\AF$.
\end{proof}

\begin{rem}\label{rem:mu0_mu_ff}
	From Lemma \ref{lem:mu0_mup1_unit} and its proof, it follows that $\AF$ is a free $O_F\llbracket\mu_0\rrbracket\module$ of rank $p-1$.
	In particular, the natural ring homomorphism $O_F\llbracket\mu_0\rrbracket \rightarrow \AF$ is faithfully flat and finite of degree $p-1$.
	Similarly, the natural ring homomorphism $R\llbracket\mu_0\rrbracket \rightarrow R\llbracket\mu\rrbracket$ is faithfully flat and finite of degree $p-1$.
\end{rem}

\begin{lem}\label{lem:mu_mu0_deltastable}
	Let us consider the ring $\AF$ equipped with Frobenius endomorphism $\varphi$ and its subring $O_F\llbracket\mu_0\rrbracket$ equipped with the induced Frobenius.
	Then, we have the following:
	\begin{enumarabicup}
	\item The ideal $\mu \AF \subset \AF$ is $\delta\textrm{-stable}$ in the sense of \cite[Example 2.10]{bhatt-scholze-prisms}.

	\item The ideal $\mu_0 O_F\llbracket\mu_0\rrbracket \subset O_F\llbracket\mu_0\rrbracket$ is $\delta\textrm{-stable}$.
	
	\item We may write $\varphi(\mu_0) = u \mu_0 \ptilde^{p-1}$, for some unit $u$ in $O_F\llbracket\mu_0\rrbracket$.
	\end{enumarabicup}
\end{lem}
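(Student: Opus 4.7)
I would prove (1), then (3), then (2), since (2) follows formally from (3).

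For (1), I would directly expand
\begin{equation*}
\delta(\mu) = \frac{(1+\mu)^p - 1 - \mu^p}{p} = \sum_{i=1}^{p-1}\frac{1}{p}\binom{p}{i}\mu^i.
\end{equation*}
Each coefficient $\binom{p}{i}/p = (p-1)!/(i!(p-i)!)$ is an integer and every monomial carries a factor of $\mu$, so $\delta(\mu) \in \mu\AF$. Since $(\mu)$ is principal, the identity $\delta(a\mu) = a^p\delta(\mu) + \mu^p\delta(a) + p\delta(a)\delta(\mu) \in \mu\AF$ immediately gives $\delta$-stability.

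For (3), the strategy is to bootstrap from the known relation $\varphi(\mu) = \mu[p]_q$. Using Lemma \ref{lem:mu0_mup1_unit} to write $\mu_0 = u'\mu^{p-1}$ with $u' \in \AF^\times$, and Lemma \ref{lem:pq_mu0p_unit} to write $[p]_q = v^{-1}\ptilde$ with $v \in \AF^\times$, I would compute
\begin{equation*}
\varphi(\mu_0) = \varphi(u')\,\varphi(\mu)^{p-1} = \varphi(u')\,\mu^{p-1}\,[p]_q^{p-1} = \frac{\varphi(u')}{u'\,v^{p-1}}\,\mu_0\,\ptilde^{p-1},
\end{equation*}
so that $\varphi(\mu_0) = u\mu_0\ptilde^{p-1}$ with $u := \varphi(u')/(u'v^{p-1}) \in \AF^\times$. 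The essential point is to verify that $u$ actually lives in $O_F\llbracket\mu_0\rrbracket$ and is a unit there. Since $\varphi(\mu_0)$, $\mu_0$, and $\ptilde$ are all $\FF_p^\times$-invariant and $\AF$ is a domain, their ratio $u$ is $\FF_p^\times$-invariant; Lemma \ref{lem:rmu0_fpxinv} then gives $u \in \AF^{\FF_p^\times} = O_F\llbracket\mu_0\rrbracket$. Finally, since $\mu_0 \in \mu\AF$ by Lemma \ref{lem:mu0_mup1_unit}, the inclusion $O_F\llbracket\mu_0\rrbracket \hookrightarrow \AF$ is a local homomorphism of local rings, and therefore any element of $O_F\llbracket\mu_0\rrbracket$ that is a unit in $\AF$ is already a unit in $O_F\llbracket\mu_0\rrbracket$.

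For (2), I would deduce it from (3). The formula $\varphi(\mu_0) = u\mu_0\ptilde^{p-1}$ immediately gives $\varphi(\mu_0) - \mu_0^p \in \mu_0 O_F\llbracket\mu_0\rrbracket$. Because $\varphi$ lifts Frobenius modulo $p$ and $\AF$ is $p$-torsion free, $\varphi(\mu_0) - \mu_0^p \in p\AF$; by the faithful flatness of $O_F\llbracket\mu_0\rrbracket \hookrightarrow \AF$ (Remark \ref{rem:mu0_mu_ff}), this improves to $\varphi(\mu_0) - \mu_0^p \in pO_F\llbracket\mu_0\rrbracket$. Combining, $p\delta(\mu_0) \in \mu_0 O_F\llbracket\mu_0\rrbracket \cap pO_F\llbracket\mu_0\rrbracket = p\mu_0 O_F\llbracket\mu_0\rrbracket$, using that $(p, \mu_0)$ is a regular sequence in the regular local ring $O_F\llbracket\mu_0\rrbracket$; $p$-torsion freeness yields $\delta(\mu_0) \in \mu_0 O_F\llbracket\mu_0\rrbracket$, hence $\delta$-stability of the principal ideal. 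The main obstacle is (3); once the explicit factorization of $\varphi(\mu_0)$ is pinned down, (2) is a formal manipulation and (1) is a quick binomial computation.
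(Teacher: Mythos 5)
Your proposal is correct. Parts (1) and (3) follow essentially the same route as the paper: for (1) the paper likewise observes $\delta(\mu)=(\varphi(\mu)-\mu^p)/p\in\mu\AF$ and invokes the product rule (it additionally runs an induction on $\delta^m$, which is redundant once stability under $\delta$ itself is known), and for (3) the paper performs exactly your computation $\varphi(\mu_0)=\varphi(w)\varphi(\mu)^{p-1}=u\mu_0\ptilde^{p-1}$ using Lemmas \ref{lem:pq_mu0p_unit} and \ref{lem:mu0_mup1_unit}, concluding that $u=\varphi(\mu_0)/(\mu_0\ptilde^{p-1})$ lies in $\AF^{\FF_p^{\times}}\lisomorphic O_F\llbracket\mu_0\rrbracket$ by Lemma \ref{lem:rmu0_fpxinv}; your remark that unit-hood descends along the local inclusion $O_F\llbracket\mu_0\rrbracket\hookrightarrow\AF$ fills in a step the paper leaves implicit (one can also just note that $u^{-1}$ is again $\FF_p^{\times}$-invariant).

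Where you genuinely diverge is (2). The paper does not use (3) at all: it notes that $\delta$ commutes with the $\Gamma_F$-action, so for $y\in O_F\llbracket\mu_0\rrbracket$ the element $\delta(\mu_0 y)$ lies both in $\mu\AF$ (by part (1)) and in $O_F\llbracket\mu_0\rrbracket$, hence in $O_F\llbracket\mu_0\rrbracket\cap\mu\AF=(\mu\AF)^{\FF_p^{\times}}\lisomorphic\mu_0 O_F\llbracket\mu_0\rrbracket$ by Lemma \ref{lem:rmu0_fpxinv}. That argument is a one-liner given (1), needs no information about $\varphi(\mu_0)$, and applies verbatim to any $\FF_p^{\times}$-invariant element of $\mu\AF$. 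Your route instead extracts $\delta(\mu_0)\in\mu_0 O_F\llbracket\mu_0\rrbracket$ from the factorization in (3), using $p\AF\cap O_F\llbracket\mu_0\rrbracket=pO_F\llbracket\mu_0\rrbracket$ (faithful flatness) and $(p)\cap(\mu_0)=(p\mu_0)$ in the regular local ring $O_F\llbracket\mu_0\rrbracket$, then closes with the product rule for principal ideals. This is valid and more explicit, at the cost of making (2) depend on (3) and on the two intersection computations; the paper's equivariance argument buys uniformity and brevity.
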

\begin{proof}
	To prove (1), note that we have $\delta(\mu) = \tfrac{\varphi(\mu)-\mu^p}{p}$ belongs to $\mu \AF$.
	Then, using the product formula for $\delta\textrm{-structures}$ from \eqref{eq:delta_sum_prod}, it follows that for any $x$ in $\AF$, we have $\delta(\mu x) = \mu^p \delta(x) + x^p \delta(\mu) + p\delta(\mu) \delta(x)$, i.e.\ $\delta(\mu x)$ belongs to $\mu \AF$.
	Next, by induction on $m \in \NN$ and using the preceding observation, it easily follows that $\delta^m(\mu x)$ also belongs to $\mu \AF$.

	To show (2), let us first note that since the action of $\Gamma_F$ on $\AF$ commutes with the Frobenius, therefore, the action of $\Gamma_F$ also commutes with the $\delta\textrm{-map}$, i.e.\ for any $x$ in $\AF$ and $g$ in $\Gamma_F$, we have $g(\delta(x)) = \delta(g(x))$.
	Now, by using (1), we have that for any $y$ in $O_F\llbracket\mu_0\rrbracket$, the element $\mu_0 y$ belongs to $\mu \AF$.
	Therefore, for any $m \in \NN$, we get that $\delta^m(\mu_0 y)$ belongs to $\mu \AF$.
	But $\delta^m(\mu_0 y)$ also belongs to $O_F\llbracket\mu_0\rrbracket$, so we get that $\delta^m(\mu_0 y)$ is an element of $O_F\llbracket\mu_0\rrbracket \cap \mu \AF = (\mu \AF)^{\FF_p^{\times}} \lisomorphic \mu_0 O_F\llbracket\mu_0\rrbracket$ (see Lemma \ref{lem:rmu0_fpxinv}).
	In particular, the ideal $\mu_0 O_F\llbracket\mu_0\rrbracket \subset O_F\llbracket\mu_0\rrbracket$ is $\delta\textrm{-stable}$.

	The proof of (3) is similar to that of \cite[Proposition 5.2.6]{fontaine-corps-periodes}.
	Using Lemma \ref{lem:pq_mu0p_unit} and Lemma \ref{lem:mu0_mup1_unit}, let us write $[p]_q = v \ptilde$ and $\mu_0 = w \mu^{p-1}$, for some units $v, w \in \AF$.
	Then, we have that
	\begin{equation*}
		\varphi(\mu_0) = \varphi(w \mu^{p-1}) = \varphi(w) \varphi(\mu)^{p-1} = \varphi(w) \mu^{p-1} [p]_q^{p-1} = \varphi(w) v^{p-1} \mu^{p-1} \ptilde^{p-1} = u \mu_0 \ptilde^{p-1},
	\end{equation*}
	where $u = \varphi(w) w^{-1} v^{p-1}$ is a unit in $\AF$.
	Since, $\mu_0$ and $\ptilde$ belong to $O_F\llbracket\mu_0\rrbracket$, it follows that $u = \varphi(\mu_0)/(\mu_0 \ptilde^{p-1})$ is an element of $O_F\llbracket\mu\rrbracket^{\FF_p^{\times}} \lisomorphic O_F\llbracket\mu_0\rrbracket$ (see Lemma \ref{lem:rmu0_fpxinv}).
\end{proof}

\subsection{The prism \texorpdfstring{$(\AR, [p]_q)$}{-}}\label{subsec:theprism_ar+}

In this section, we will show that $(\AR, [p]_q)$ is an object of the site $(\Spf R)_{\prism}$ and a covering of the final object of the topos $\Shv((\Spf R)_{\prism})$.
Moreover, we will compute the first few terms of its prismatic \v{C}ech nerve and study the induced action of $\Gamma_R$ on those terms.

\subsubsection{An object of \texorpdfstring{$(\textbf{Spf } R)_{\prism}$}{-}}

We begin with the following important observation:

\begin{lem}\label{lem:ar+_pq_prism}
	The pair $(\AR, [p]_q)$ is a prism and an object of $(\Spf R)_{\prism}$.
\end{lem}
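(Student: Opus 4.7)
The plan is to verify the conditions in Definition \ref{defi:prism} one by one, using the concrete description $\AR \isomorphic R\llbracket\mu\rrbracket$ from Subsection \ref{subsubsec:ring_ar+}, and then note that the quotient $\AR/[p]_q$ naturally receives a map from $R$.

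First I would observe that $\AR$ is $p$-torsion free, since it is a subring of $\Ainf(\Rinfty) = W(\Rinfty^{\flat})$, and that $\AR$ is stable under the Frobenius $\varphi$ of $\Ainf(\Rinfty)$. By Remark \ref{rem:frob_delta_ring}, this gives $\AR$ the structure of a $\delta$-ring. Next, via the isomorphism $\AR \isomorphic R\llbracket\mu\rrbracket$, the element $[p]_q = ((1+\mu)^p-1)/\mu$ is a distinguished polynomial of degree $p-1$ in $\mu$ with leading coefficient $1$ and constant term $p$; since $R$ is $p$-torsion free, $[p]_q$ is a non-zero-divisor in $R\llbracket\mu\rrbracket$, so $([p]_q) \subset \AR$ is a Cartier divisor.

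For the completeness condition, I would note that $\AR \isomorphic R\llbracket\mu\rrbracket$ is classically $(p,\mu)$-adically complete, hence derived $(p,\mu)$-complete. Because $[p]_q \equiv \mu^{p-1} \pmod{p}$ in $R\llbracket\mu\rrbracket$ and conversely $[p]_q \in (p,\mu)$, the ideals $(p,\mu)$ and $(p,[p]_q)$ have the same radical in $\AR$, so they define the same adic topology and $\AR$ is (classically, hence derived) $(p,[p]_q)$-complete. For the condition $p \in ([p]_q, \varphi([p]_q))$, I would use the formal identity $\varphi([p]_q) = 1 + q^p + q^{2p} + \cdots + q^{(p-1)p}$; reducing modulo $[p]_q$ we have $q^p \equiv 1$, whence $\varphi([p]_q) \equiv p \pmod{[p]_q}$, giving the required relation (this equivalently shows $[p]_q$ is distinguished).

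To confirm the prism is bounded and to produce the structure map to $\Spf R$, I would identify $\AR/[p]_q \isomorphic R[\zeta_p]$ under $1+\mu \mapsto \zeta_p$, using that $[p]_q$ is precisely the $p$-th cyclotomic polynomial evaluated at $1+\mu$. Since $R$ is $p$-torsion free and $R[\zeta_p]$ is finite free over $R[\zeta_p]^{\FF_p^\times}\supset R$, the quotient $\AR/[p]_q$ is $p$-torsion free, so its $p^\infty$-torsion is trivially bounded; thus $(\AR, [p]_q)$ is a bounded prism. Finally, the natural inclusion $R \hookrightarrow \AR/[p]_q \isomorphic R[\zeta_p]$ gives a map $\Spf(\AR/[p]_q) \to \Spf R$, making $(\AR, [p]_q)$ an object of $(\Spf R)_{\prism}$. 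None of these steps present real obstacles: the only mildly non-obvious computation is the congruence $\varphi([p]_q) \equiv p \pmod{[p]_q}$, but this is immediate from the geometric-series formula and is well known.
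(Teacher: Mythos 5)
Your verification of the prism axioms is correct and follows the same outline as the paper: $\delta$-structure from $p$-torsion-freeness plus Frobenius, completeness from $(p,[p]_q)=(p,\mu)$ up to radical, and $p\in([p]_q,\varphi([p]_q))$. Your computation of the last point, via $\varphi([p]_q)=1+q^p+\cdots+q^{(p-1)p}\equiv p \pmod{[p]_q}$, is cleaner than the paper's, which instead expands $\varphi([p]_q)=p+\binom{p}{2}[p]_q\mu+\cdots+([p]_q\mu)^{p-1}$ and solves for $p$; both are fine.

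The one place where you diverge in substance is the structure map $R\to \AR/[p]_q$. You take the composite $R\xrightarrow{\ \iota\ }\AR\to\AR/[p]_q\cong R[\zeta_p]$, i.e.\ $X_i\mapsto [X_i^\flat]\bmod [p]_q$. The paper instead identifies $\AR/[p]_q\cong R[\zeta_p,X_1^{1/p},\ldots,X_d^{1/p}]$ with $[X_i^\flat]\mapsto X_i^{1/p}$ and uses the structure map $X_i\mapsto X_i=([X_i^\flat])^p$. As a ring these quotients are the same (your $R[\zeta_p]$ is $R'[\zeta_p]$ for the copy $R'$ of $R$ generated by the $X_i^{1/p}$), and for the bare statement of the lemma any ring map $R\to\AR/[p]_q$ suffices, so your proof does establish the lemma. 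But the two choices give genuinely different objects of $(\Spf R)_{\prism}$, and only the paper's choice works downstream: it is the one compatible with $\tilde\theta:\Ainf(\Rinfty)/[p]_q\cong\widehat R_\infty$ (needed for Lemma \ref{lem:ar+_cover_fo}, where the map to $(\Ainf(\Rinfty),[p]_q)$ must live over $R$), and it is $\Gamma_R$-invariant, whereas under your map $\gamma_i$ sends $\iota(X_i)\bmod[p]_q$ to $\zeta_p\cdot(\iota(X_i)\bmod[p]_q)$, so $\Gamma_R$ would not act by automorphisms of the object in the site (Lemma \ref{lem:gammar_act_prisaut}), and the Čech nerve in Construction \ref{const:cech_nerve} (whose kernel involves $n_j([X_s^\flat]^p)-n_k([X_s^\flat]^p)$ rather than $n_j([X_s^\flat])-n_k([X_s^\flat])$) would come out differently. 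So: correct for this lemma, but you should adopt the paper's structure map if you intend to use the prism afterwards.
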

\begin{proof}
	Note that the ring $\AR$ is $p\torsionfree$ and equipped with a Frobenius endomorphism $\varphi$.
	So from Remark \ref{rem:frob_delta_ring}, we get that $\AR$ is a $\delta\ring$.
	Moreover, we have that $[p]_q = \mu^{p-1} + p\mu^{p-2} + \cdots + p$, so $\AR$ is $(p, [p]_q)\adically = (p, \mu)\adically$ complete.
	Now, since $\varphi(\mu) = [p]_q \mu$, therefore, we may write $p = \varphi([p]_q) - (\mu^{p-1}[p]_q^{p-1} + p\mu^{p-2}[p]_q^{p-2} + \cdots + p(p-1)\mu[p]_q/2)$, in particular, $p$ is an element of $[p]_q \AR + \varphi([p]_q)\AR$.

	Next, we have a $\Gamma_R\equivariant$ surjective map $\Aframe^+ \twoheadrightarrow \Aframe^+/([p]_q) \isomorphic \Rframe\big[\zeta_p, X_1^{1/p}, \ldots, X_d^{1/p}\big]$, where the isomorphism is obtained by sending $\mu \mapsto \zeta_p-1$ and $[X_i^{\flat}] \mapsto X_i^{1/p}$, for $1 \leqslant i \leqslant d$.
	The surjective map above extends uniquely along the $p\textrm{-adically}$ completed \'etale map $\Rframe \rightarrow R$, i.e.\ we have a $\Gamma_R\equivariant$ surjection
	\begin{equation}\label{eq:ar+_ralg}
		\AR \twoheadrightarrow \AR/([p]_q) \isomorphic R\big[\zeta_p, X_1^{1/p}, \ldots, X_d^{1/p}\big].
	\end{equation}
	Clearly, we have the structure map $R \rightarrow R\big[\zeta_p, X_1^{1/p}, \ldots, X_d^{1/p}\big]$ and $\AR/([p]_q)$ is $p\torsionfree$.
	Hence, $(\AR, [p]_q)$ is a prism and it satisfies the axioms of Definition \ref{defi:abs_prismatic_site}, therefore, it is an object of $(\Spf R)_{\prism}$.
\end{proof}

From Lemma \ref{lem:ar+_pq_prism} we see that $(\AR, [p]_q)$ may be regarded as an object of $(\Spf R)_{\prism}$ via $\AR \twoheadrightarrow \AR/([p]_q) \leftarrow R$.
Moreover, we have the following:
\begin{lem}\label{lem:ar+_cover_fo}
	The object $(\AR, [p]_q)$ is a cover of the final object of the topos $\Shv((\Spf R)_{\prism})$.
\end{lem}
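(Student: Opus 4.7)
The plan is as follows. To verify that $(\AR, [p]_q)$ covers the final object, I must show that for every bounded prism $(B, J)$ in $(\Spf R)_{\prism}$ there exists a faithfully flat map of prisms $(B, J) \to (C, JC)$ together with a prism morphism $(\AR, [p]_q) \to (C, JC)$ over $\Spf R$.

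My first move is to reduce to perfect prisms. The inclusion $\AR \subset \Ainf(\Rinfty)$ from Subsection \ref{subsec:important_rings} is a $\varphi$-equivariant $\delta$-ring map sending $[p]_q$ to $[p]_q$. Since $\Ainf(\Rinfty)$ is a perfect $\delta$-ring and $[p]_q = \varphi(\xi)$ is distinguished (with $\Ainf(\Rinfty)/[p]_q \isomorphic \widehat{R}_\infty$ via $\theta \circ \varphi^{-1}$, through which the structure map from $R$ factors in agreement with that of $\AR$), the pair $(\Ainf(\Rinfty), [p]_q)$ is a perfect prism in $(\Spf R)_{\prism}$ receiving a prism morphism from $(\AR, [p]_q)$ over $\Spf R$. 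It therefore suffices to show that $(\Ainf(\Rinfty), [p]_q)$ covers the final object of $\Shv((\Spf R)_{\prism})$.

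For the cover step, I invoke the basis for $(\Spf R)_{\prism}$ given by the quasiregular semiperfectoid site (see Lemma \ref{lem:prism_crystal_qs}): every bounded prism over $\Spf R$ admits a faithfully flat cover by a prism of the form $(\prism_S, I_{\prism_S})$ with $S$ a qrsp $R$-algebra. For such $S$, I form the $p$-completed base change $S_\infty := (S \widehat{\otimes}_R \Rinfty)$ (taking its integral perfectoid modification if needed), which is $p$-completely faithfully flat over $S$ and is an integral perfectoid $\Rinfty$-algebra. Functoriality of $\prism_{(-)}$ yields a faithfully flat prism map $(\prism_S, I_{\prism_S}) \to (\prism_{S_\infty}, I_{\prism_{S_\infty}})$, where the target coincides with the perfect prism $\Ainf(S_\infty)$ associated to $S_\infty$ by the equivalence of \cite[Theorem 3.10]{bhatt-scholze-prisms}. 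The map $\Rinfty \to S_\infty$ of integral perfectoid rings produces the required morphism of perfect prisms $(\Ainf(\Rinfty), [p]_q) \to (\Ainf(S_\infty), [p]_q)$ compatibly with the structure maps to $R$, and composing with $(\AR, [p]_q) \to (\Ainf(\Rinfty), [p]_q)$ delivers the desired prism morphism $(\AR, [p]_q) \to (\prism_{S_\infty}, I_{\prism_{S_\infty}})$ covering $(\prism_S, I_{\prism_S})$.

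The principal obstacle I anticipate is the bookkeeping around the distinguished element: one must verify that under the $\prism_{(-)}$ functor the prism ideal $I_{\prism_{S_\infty}}$ is indeed $[p]_q \Ainf(S_\infty)$ (and not the naive $\ker \theta_{S_\infty}$), and that the structure map of $(\prism_{S_\infty}, I_{\prism_{S_\infty}})$ over $\Spf R$ agrees through the Frobenius twist $\theta \circ \varphi^{-1}$ with the composition $R \to S \to S_\infty$. Once this compatibility is unravelled, the naturality of the Teichm\"uller lift and of the equivalence between integral perfectoid rings and perfect prisms give the faithful flatness of the cover, completing the argument.
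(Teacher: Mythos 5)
Your reduction to showing that $(\Ainf(\Rinfty), [p]_q)$ covers the final object is exactly the paper's first step, and the prism map $(\AR, [p]_q) \to (\Ainf(\Rinfty), [p]_q)$ is set up correctly (including the Frobenius-twist identification $\Ainf(\Rinfty)/[p]_q \isomorphic \widehat{R}_\infty$). The divergence, and the problem, is in the second step.

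The paper at this point simply verifies that $R \to \widehat{R}_\infty$ is a quasisyntomic faithfully flat cover with perfectoid target (using that $R$ is a $p$-complete smooth $\ZZ_p$-algebra, hence quasisyntomic, and that $\widehat{R}_\infty$ is perfectoid, hence quasisyntomic over $\ZZ_p$, so the cotangent complex $L_{\widehat{R}_\infty/R}$ has $p$-complete Tor amplitude in $[-1,0]$), and then directly invokes \cite[Proposition 7.11]{bhatt-scholze-prisms}, which says precisely that the perfect prism of such a cover is a cover of the final object of the prismatic topos. You instead try to re-derive this, and the pivot point of your argument — \emph{``every bounded prism over $\Spf R$ admits a faithfully flat cover by a prism of the form $(\prism_S, I_{\prism_S})$ with $S$ qrsp''} — is asserted but not justified. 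The reference you give (Lemma \ref{lem:prism_crystal_qs}) is an equivalence of categories of vector bundles $\Vect(X_\prism,\pazo_\prism) \isomorphic \lim_{S \in X_\qrsp} \Vect(\Prism_S)$; it is neither a statement about covers of arbitrary prisms nor one from which such a statement obviously drops out. In fact, a basis-type claim of this strength is itself essentially the content of the result you are trying to prove (it is a reformulation of the fact that the qrsp prisms cover the final object), so invoking it here is circular. A genuine proof along your lines would have to construct, for an \emph{arbitrary} prism $(B,J)$ over $\Spf R$, a faithfully flat map of prisms to one whose reduction is a qrsp $R$-algebra mapping from $\widehat{R}_\infty$ — and that construction is exactly what the proof of \cite[Proposition 7.11]{bhatt-scholze-prisms} provides; it is not a formality.

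A secondary, more minor slip: $S_\infty := S \widehat{\otimes}_R \Rinfty$ is not integral perfectoid in general when $S$ is only quasiregular semiperfectoid, so writing $\prism_{S_\infty} = \Ainf(S_\infty)$ and appealing to the perfect-prisms/perfectoid-rings equivalence is not quite right. What is true — and sufficient, if the first gap were filled — is that $S_\infty$ is qrsp (it is quasisyntomic over $R$ as a $p$-completely flat base change of the qrsp ring $S$, it receives a map from the perfectoid $\widehat{R}_\infty$, and its reduction mod $p$ is semiperfect), so $\Prism_{S_\infty}$ exists and receives a prism map from $\Ainf(\Rinfty) = \Prism_{\widehat{R}_\infty}$ by functoriality, without any appeal to perfectoidization.
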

\begin{proof}
	Note that $\widehat{R}_{\infty}$ is a perfectoid $R\algebra$ (see \cite[Definition 4.18]{bhatt-morrow-scholze-2}) and the $\delta\textrm{-pair}$ $(\Ainf(\Rinfty), [p]_q)$ is a prism and an object of $(\Spf R)_{\prism}$, since $\Ainf(\Rinfty)/([p]_q) \isomorphic \widehat{R}_{\infty}$.
	Moreover, the natural map $\AR \rightarrow \Ainf(\Rinfty)$ is compatible with the prism structure on both rings, in particular, we have a map $(\AR, [p]_q) \rightarrow (\Ainf(\Rinfty), [p]_q)$ of prisms in $(\Spf R)_{\prism}$.
	So, to show that $(\AR, [p]_q)$ is a cover of the final object of the topos $\Shv((\Spf R)_{\prism})$, it is enough to show that $(\Ainf(\Rinfty), [p]_q)$ is a cover of the final object of the topos $\Shv((\Spf R)_{\prism})$.
	Now, note that the natural map $R \rightarrow \widehat{R}_{\infty}$ is faithfully flat and the $p\complete$ Tor amplitude of the cotangent complex $L_{\widehat{R}_{\infty}/R}$ in $D(\widehat{R}_{\infty})$ (see Appendix \ref{app_subsec:basic_defi}), is in $[-1, 0]$ because $R$ is a $p\complete$ smooth $\ZZ_p\textrm{-algebra}$ and $\widehat{R}_{\infty}$ is a perfectoid $\ZZ_p\textrm{-algebra}$, in particular, $\widehat{R}_{\infty}$ is quasisyntomic and $L_{\widehat{R}_{\infty}/\ZZ_p}$ has $p\complete$ Tor amplitude concentrated in degree $-1$ (see \cite[Proposition 4.19]{bhatt-morrow-scholze-2}).
	Therefore, $R \rightarrow \widehat{R}_{\infty}$ is a quasisyntomic cover in the sense of \cite[Definition 4.10]{bhatt-morrow-scholze-2}, and from \cite[Proposition 7.11]{bhatt-scholze-prisms}, it follows that $(\Ainf(\Rinfty), [p]_q)$ is a cover of the final object of the topos $\Shv((\Spf R)_{\prism})$.
	This allows us to conclude.
\end{proof}

\begin{lem}\label{lem:gammar_act_prisaut}
	Let $g$ be an element of $\Gamma_R$.
	Then, the action of $g$ on $\AR$ induces an automorphism of $(\AR, [p]_q)$ in $(\Spf R)_{\prism}$.
	Moreover, we have that $(g-1)\AR \subset \mu \AR$.
\end{lem}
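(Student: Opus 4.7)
The plan is to verify the two assertions in turn. For the first, the ring automorphism $g$ of $\AR$ commutes with $\varphi$ because both the Frobenius and the $\Gamma_R$-action on $\AR$ are inherited from $\Ainf(\Rinfty) = W(\Rinfty^{\flat})$, on which the Witt vector Frobenius is tautologically Galois-equivariant. That $g$ preserves the ideal $[p]_q\AR$ and is compatible with the structure map $R \to \AR/[p]_q$ both follow from the $\Gamma_R$-equivariance of the isomorphism \eqref{eq:ar+_ralg} already recorded in Lemma \ref{lem:ar+_pq_prism}: the induced action on $\AR/[p]_q \cong R[\zeta_p, X_1^{1/p}, \ldots, X_d^{1/p}]$ is the Galois action of $\Gamma_R = \Gal(\Rinfty[1/p]/R[1/p])$, which fixes $R$ pointwise.

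For the second assertion, the plan is to produce a $\Gamma_R$-equivariant ring surjection $\AR \twoheadrightarrow R$ with kernel $\mu\AR$ and trivial target action, from which $(g-1)\AR \subset \mu\AR$ follows immediately. The isomorphism $\AR \cong R\llbracket\mu\rrbracket$ from Subsection \ref{subsubsec:ring_ar+} already supplies such a ring surjection by sending $\mu$ to zero, with kernel $\mu\AR$. To obtain the $\Gamma_R$-equivariance and the triviality of the target action essentially for free, I would identify this surjection with the restriction to $\AR$ of $\theta : \Ainf(\Rinfty) \twoheadrightarrow \widehat{R}_{\infty}$. Under $\theta$, the generators $\mu = [\varepsilon]-1$ and $[X_i^{\flat}]$ of $\AR$ go to $0$ and $X_i$ respectively, so $\theta|_{\AR}$ factors as $\AR \twoheadrightarrow \AR/\mu = R \hookrightarrow \widehat{R}_{\infty}$, where the last map is the natural embedding. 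Both $\theta$ and this embedding are $\Gamma_R$-equivariant and $\Gamma_R$ fixes $R$ pointwise inside $\widehat{R}_{\infty}$, so the intermediate surjection $\AR \twoheadrightarrow R$ is $\Gamma_R$-equivariant with trivial action on $R$; this forces $(g-1)\AR \subset \mu\AR$.

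The main point requiring careful verification is the factorization $\theta|_{\AR} : \AR \twoheadrightarrow R \hookrightarrow \widehat{R}_{\infty}$, which amounts to the equality $\ker(\theta|_{\AR}) = \mu\AR$ together with the injectivity of the embedding $R \hookrightarrow \widehat{R}_{\infty}$. Both are readable off from the explicit description of $\AR$, the values of $\theta$ on the generators $\mu$ and $[X_i^{\flat}]$, and the chain $R \hookrightarrow \Rinfty \hookrightarrow \widehat{R}_{\infty}$ (the second inclusion being injective because $\Rinfty$ is $p$-torsion free). Apart from this, the argument is a matter of assembling the already-established equivariance properties from Subsections \ref{subsec:setup_nota} and \ref{subsec:important_rings}, so no further real obstacle is expected.
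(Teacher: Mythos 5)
Your argument is correct, and it takes a genuinely different route from the paper's. The paper reduces, by continuity, to the topological generators $\gamma_1, \ldots, \gamma_d$ of $\Gamma_R'$ and to elements $g \in \Gamma_F$, then verifies $(\gamma_i-1)\AR \subset \mu\AR$, $\gamma_i([p]_q\AR) \subset [p]_q\AR$, $(g-1)\mu \in \mu\AR$ and $g([p]_q) \in [p]_q\AR$ by direct computation with the explicit descriptions of $\AR$ and $[p]_q = \varphi(\mu)/\mu$. You instead deduce both assertions structurally: for the first, by citing the $\Gamma_R$-equivariance of the surjection \eqref{eq:ar+_ralg} already recorded in Lemma \ref{lem:ar+_pq_prism}, whose kernel is $[p]_q\AR$, so the ideal is automatically preserved; for the second, by identifying the reduction map $\AR \twoheadrightarrow \AR/\mu \cong R$ with the restriction of the $\Gamma_R$-equivariant map $\theta$, thereby making the equivariance and the triviality of the target action come for free. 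The structural route is cleaner and avoids the generator-by-generator verification; the paper's computational route has the advantage of being self-contained and, in particular, of establishing the explicit formula $(g-1)\mu = \chi(g)\mu u - \mu$ for $g \in \Gamma_F$, which is reused later (e.g.\ in Lemma \ref{lem:gamma_deltam_pq}).

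One small caveat: your parenthetical justification that $\Rinfty \hookrightarrow \widehat{R}_\infty$ is injective \emph{because $\Rinfty$ is $p$-torsion free} is not by itself sufficient ($p$-torsion-free rings can fail to be $p$-adically separated — e.g.\ $\QQ$). The injectivity you actually need is $R \hookrightarrow \widehat{R}_\infty$; this holds because $R$ is a $p$-adically complete Noetherian ring and $\Rinfty$ is a filtered union of finite free $R$-modules, so $R \cap p^n\Rinfty = p^n R$ for all $n$, whence $\bigcap_n (R \cap p^n\Rinfty) = \bigcap_n p^n R = 0$. You flag this verification as the point requiring care, so this is a minor correction to the stated reason rather than a gap in the strategy.
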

\begin{proof}
	As the action of $\Gamma_R$ on $\AR$ is continuous, it is enough to check the claim for the topological generators $\{\gamma_1, \ldots, \gamma_d\}$ of $\Gamma_R'$ and each $g$ in $\Gamma_F$.
	Note that for $1 \leqslant i \leqslant d$, using the description of $\AR$ from Section \ref{subsubsec:ring_ar+} and explicit computations, it easily follows that we have $(\gamma_i-1)\AR \subset \mu\AR$ and $\gamma_i([p]_q\AR) \subset [p]_q\AR$.
	For any $g$ in $\Gamma_F$, note that we have $(g-1)\mu = (1+\mu)^{\chi(g)}-1-\mu = \chi(g) \mu u - \mu$, for some unit $u$ in $\AR$ depending on $g$, and therefore, it follows that $(g-1)\AR \subset \mu\AR$.
	Similarly, note that $g([p]_q) = g(\varphi(\mu)/\mu) = \varphi(\mu u)/(\mu u)$, which belongs to $[p]_q\AR$.
	This proves both the claims.
\end{proof}

\subsubsection{Prismatic \v{C}ech nerve of \texorpdfstring{$(\AR, [p]_q)$}{-}}\label{subsec:cech_nerve_ar+}

In this section, we will provide a description of the simplicial object $(\AR(\bullet), I(\bullet))$ in $(\Spf R)_{\prism}$.
Such a description would help in describing any prismatic $F\crystal$ over $(\Spf R)_{\prism}$ in terms of an $\AR\module$ with stratification.

\begin{const}\label{const:cech_nerve}
	Let $\AR(\bullet)$ denote the cosimplicial ring obtained by taking the prismatic \v{C}ech nerve $(\AR(\bullet), I(\bullet))$ of $(\AR, [p]_q)$ in $(\Spf R)_{\prism}$.
	Clearly, we have that $\AR(0) = \AR$.
	As products in $(\Spf R)_{\prism}$ are computed via prismatic envelopes (which exist in our case), we may describe $\AR(n)$ a bit more explicitly (see Lemma \ref{lem:selfprod_prismenv} below explaining that the following construction of $\AR(n)$ represents the $(n+1)\textrm{-fold}$ self product of $(\AR, [p]_q)$ in $(\Spf R)_{\prism}$).
	For $n \in \NN$, let $(\AR)^{\otimes (n+1)}$ denote the $(n+1)\textrm{-fold}$ tensor product of $\AR$ over $O_F$ and we consider the $\AR\algebra$ structure on $(\AR)^{\otimes (n+1)}$ via the first component, i.e.\ $a \mapsto a \otimes 1^{\otimes n}$.
	Let $(\AR)^{\widehat{\otimes} (n+1)}$ denote the $(p, [p]_q)\adic$ completion of $(\AR)^{\otimes (n+1)}$, which may be written as the $(p, [p]_q)\adic$ completion of an \'etale algebra over a polynomial ring in finitely many variables defined over a power-series $\AR\algebra$ in finitely many variables, in particular, we see that $(\AR)^{\widehat{\otimes} (n+1)}$ is $(p, [p]_q)\textrm{-completely}$ flat over $(\AR, [p]_q)$.
	In other words, for each $m \in \NN_{\geqslant 1}$, we have that the natural map $\AR/(p, [p]_q)^m \rightarrow (\AR)^{\widehat{\otimes} (n+1)}/(p, [p]_q \otimes 1)^m$ is flat.
	Since $\AR$ is noetherian, from \cite[\href{https://stacks.math.columbia.edu/tag/0912}{Tag 0912}]{stacks-project}, it follows that the map $\AR \rightarrow (\AR)^{\widehat{\otimes} (n+1)}$ is flat.
	Now, for each $n \in \NN$, we have a surjection 
	\begin{equation}\label{eq:ker_j}
		(\AR)^{\widehat{\otimes} (n+1)} \twoheadrightarrow R\big[\zeta_p, X_1^{1/p}, \ldots, X_d^{1/p}\big]^{\otimes (n+1)},
	\end{equation}
	where in the right hand term, the tensor product is over $R$.

	For $1 \leqslant i \leqslant n+1$, let $n_i \colon \AR \rightarrow (\AR)^{\widehat{\otimes} (n+1)}$ denote the map, sending $a \mapsto 1 \otimes \cdots \otimes a \otimes \cdots \otimes 1$, in the $i^{\textrm{th}}$ position.
	Then, the kernel of \eqref{eq:ker_j} is given by the ideal 
	\begin{equation*}
		J(n) = (n_i([p]_q), n_j([X_s^{\flat}]^p) - n_k([X_s^{\flat}]^p) \textrm{ for } 1 \leqslant s \leqslant d, 1 \leqslant i \leqslant n+1, 1 \leqslant j \leqslant n, k = j+1) \subset (\AR)^{\widehat{\otimes} (n+1)}.
	\end{equation*}
	Note that from the proof of Lemma \ref{lem:ar+_pq_prism}, we have that $\AR/([p]_q) \isomorphic R\big[\zeta_p, X_1^{1/p}, \ldots, X_d^{1/p}\big]$ (see \eqref{eq:ar+_ralg}).
	Therefore, it easily follows that the sequence $\{p, n_i([p]_q), n_j([X_s^{\flat}]^p) - n_k([X_s^{\flat}]^p) \textrm{ for } 1 \leqslant s \leqslant d, 1 \leqslant i \leqslant n+1, 1 \leqslant j \leqslant n, k = j+1\}$ is regular on $(\AR)^{\widehat{\otimes} (n+1)}$.
	So, from \cite[Proposition 3.13]{bhatt-scholze-prisms}, we set $(\AR(n), n_1([p]_q))$ to be the prismatic envelope of $((\AR)^{\widehat{\otimes} (n+1)}, J(n))$ over the bounded prism $(\AR, [p]_q)$.
	By the universal property of prismatic envelopes, the maps $n_i$ from above extend uniquely to $n_i \colon \AR \rightarrow \AR(n)$, for $1 \leqslant i \leqslant n+1$.

	Let us denote the cosimplicial ring $\AR(\bullet)$ by the usual diagram
	\begin{center}
		\begin{tikzcd}[column sep=8em]
			\AR(\bullet) \colon \hspace{-6em} & \AR(0) \arrow[r, bend left=40, "p_2"] \arrow[r, bend right=40, "p_1"'] & \AR(1) \arrow[l, "\Delta"'] \arrow[r, bend left=40, "p_{23}"] \arrow[r, "p_{13}"] \arrow[r, bend right=40, "p_{12}"'] & \AR(2) \arrow[l, bend left=20] \arrow[l, bend right=20] \cdots,
		\end{tikzcd}
	\end{center}
	where explicitly we write $p_1 = 1_1 \colon \AR(0) \rightarrow \AR(1)$ for the map $a \mapsto a \otimes 1$ and $p_2 = 1_2 \colon \AR(0) \rightarrow \AR(1)$ for the map $a \mapsto 1 \otimes a$.
	Similarly, we define maps $p_{12}$, $p_{13}$ and $p_{23}$.
	To avoid confusion, we shall write $r_j = 2_j \colon \AR \rightarrow \AR(2)$ for the map sending $a$ to the $j^{\textrm{th}}$ position for $j = 1, 2, 3$.
\end{const}

For later calculations it will be convenient to have an explicit presentation of $\AR(1)$ and $\AR(2)$.
Recall that the ideal $J(1) = ([p]_q \otimes 1, 1 \otimes [p]_q, [X_i^{\flat}]^p \otimes 1 - 1 \otimes [X_i^{\flat}]^p \textrm{ for } 1 \leqslant i \leqslant d) \subset \AR \widehat{\otimes}_{O_F} \AR$, is the kernel of the map in \eqref{eq:ker_j} for $n=1$.
Consider the $\delta\algebra$ $(\AR \widehat{\otimes}_{O_F} \AR) \Big\{\frac{1 \otimes [p]_q}{[p]_q \otimes 1}, \frac{[X_1^{\flat}]^p \otimes 1 - 1 \otimes [X_1^{\flat}]^p}{[p]_q \otimes 1}, \ldots, \frac{[X_d^{\flat}]^p \otimes 1 - 1 \otimes [X_d^{\flat}]^p}{[p]_q \otimes 1}\Big\}_{\delta}$ obtained by freely adjoining $\tfrac{J(1)}{[p]_q \otimes 1}$ to $\AR \widehat{\otimes}_{O_F} \AR$.
Then, from \cite[Proposition 3.13]{bhatt-scholze-prisms} we have that
\begin{equation}\label{eq:ar1_explicit}
	\AR(1) = (\AR \widehat{\otimes}_{O_F} \AR) \Big\{\tfrac{1 \otimes [p]_q}{[p]_q \otimes 1}, \tfrac{[X_1^{\flat}]^p \otimes 1 - 1 \otimes [X_1^{\flat}]^p}{[p]_q \otimes 1}, \ldots, \tfrac{[X_d^{\flat}]^p \otimes 1 - 1 \otimes [X_d^{\flat}]^p}{[p]_q \otimes 1}\Big\}_{(p, [p]_q \otimes 1)}^{\wedge},
\end{equation}
i.e.\ $\AR(1)$ is the $(p, [p]_q \otimes 1)\textrm{-adic}$ completion of the $\delta\algebra$ $(\AR \widehat{\otimes}_{O_F} \AR) \Big\{\frac{J(1)}{[p]_q \otimes 1}\Big\}_{\delta}$.
Similarly, using the ideal $J(2) \subset (\AR)^{\widehat{\otimes} 3}$, from \cite[Proposition 3.13]{bhatt-scholze-prisms} we have that
\begin{equation*}
	\AR(2) = (\AR)^{\widehat{\otimes} 3} \Big\{\tfrac{J(2)}{[p]_q \otimes 1}\Big\}_{(p, [p]_q \otimes 1)}^{\wedge},
\end{equation*}
i.e.\ $\AR(2)$ is the $(p, [p]_q \otimes 1)\textrm{-adic}$ completion of the $\delta\algebra$ $(\AR)^{\widehat{\otimes} 3} \Big\{\frac{J(2)}{[p]_q \otimes 1}\Big\}_{\delta}$.

\begin{lem}\label{lem:pi_rj_ff}
	The $\AR\linear$ morphisms $n_i \colon \AR \rightarrow \AR(n)$, for $1 \leqslant i \leqslant n+1$, are faithfully flat.
	In particular, the morphisms $p_i$ for $i = 1, 2$ and $r_j$ for $j = 1, 2, 3$ are faithfully flat.
	Moreover, the element $\frac{n_i([p]_q)}{n_1([p]_q)}$ is a unit in $\AR(n)$, for $1 \leqslant i \leqslant n+1$.
	In particular, $\frac{1 \otimes [p]_q}{[p]_q \otimes 1}$ is a unit in $\AR(1)$.
\end{lem}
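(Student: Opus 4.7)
My plan is to first establish the unit claim, then deduce flatness via a symmetry argument, and finally upgrade to faithful flatness by reducing modulo $(p, [p]_q)$.

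For the unit claim, Lemma \ref{lem:selfprod_prismenv} realises $(\AR(n), n_1([p]_q))$ as the $(n+1)$-fold self product of $(\AR, [p]_q)$ in $(\Spf R)_\prism$, so each structural map $n_i : \AR \to \AR(n)$ promotes to a morphism of prisms $(\AR, [p]_q) \to (\AR(n), n_1([p]_q))$. Applying \cite[Lemma 3.5]{bhatt-scholze-prisms} to this morphism gives $n_i([p]_q)\AR(n) = n_1([p]_q)\AR(n)$, so the element $n_i([p]_q)/n_1([p]_q)$, which lies in $\AR(n)$ by the $\delta$-envelope description in Construction \ref{const:cech_nerve}, is a unit. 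Specialising to $n = 1$ and $i = 2$ yields the final assertion about $\AR(1)$.

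For flatness, I would exploit the symmetry of the self product. By the universal property of Lemma \ref{lem:selfprod_prismenv} and the symmetry of the self-product functor in its arguments, for each $1 \leq i \leq n+1$ there exists a unique ring automorphism $\sigma_i$ of $\AR(n)$ interchanging the structural maps $n_1$ and $n_i$. The unit claim ensures $\sigma_i(n_1([p]_q))\AR(n) = n_i([p]_q)\AR(n) = n_1([p]_q)\AR(n)$, so $\sigma_i$ preserves the prism ideal. Since $n_1$ is $(p, n_1([p]_q))$-completely flat by Construction \ref{const:cech_nerve} and \cite[Proposition 3.13]{bhatt-scholze-prisms}, applying $\sigma_i$ transports this to $(p, n_1([p]_q))$-complete flatness of $n_i = \sigma_i \circ n_1$. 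Noetherianity of $\AR = R\llbracket\mu\rrbracket$ together with \cite[\href{https://stacks.math.columbia.edu/tag/0912}{Tag 0912}]{stacks-project} then upgrades complete flatness to classical flatness.

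For faithful flatness, I would pass to the reduction modulo $(p, [p]_q)$. Using \eqref{eq:ker_j} and the prismatic envelope description in Construction \ref{const:cech_nerve}, the quotient $\AR(n)/(p, n_1([p]_q))\AR(n)$ identifies with $(R[\zeta_p, X_1^{1/p}, \ldots, X_d^{1/p}]^{\otimes(n+1)})/p$, where the tensor product is taken over $R$. Since $R \to R[\zeta_p, X_1^{1/p}, \ldots, X_d^{1/p}]$ is finite free, this reduction is finite free, hence faithfully flat, over $\AR/(p, [p]_q)$ via the $i$th factor. Because $\AR$ is Noetherian and $(p, [p]_q)$-adically complete, $(p, [p]_q)$ lies in the Jacobson radical, so every maximal ideal of $\AR$ contains it; faithful flatness therefore passes from the reduction to $n_i$ itself. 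The most delicate step will be the symmetry argument above: one must use the representability of the self product (Lemma \ref{lem:selfprod_prismenv}) to exhibit $\sigma_i$ as an honest ring automorphism of $\AR(n)$ and then verify that it indeed carries the prismatic-envelope flatness of $n_1$ over to $n_i$.
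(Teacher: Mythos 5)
Your proof is correct, but each of the three claims is handled by a different route than the paper's. For the unit claim, you invoke the rigidity of prism morphisms (\cite[Lemma 3.5]{bhatt-scholze-prisms}) applied to $n_i : (\AR, [p]_q) \to (\AR(n), n_1([p]_q))$ to get $n_i([p]_q)\AR(n) = n_1([p]_q)\AR(n)$ and then cancel the nonzerodivisor $n_1([p]_q)$; the paper instead observes that $n_i([p]_q)$ is a distinguished element and applies \cite[Lemma 2.24]{bhatt-scholze-prisms} to the factorisation $n_i([p]_q) = n_1([p]_q)\cdot\tfrac{n_i([p]_q)}{n_1([p]_q)}$. Both are sound; yours presupposes Lemma \ref{lem:selfprod_prismenv}, which is available. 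For flatness, the paper simply reads off $(p,[p]_q)$-complete flatness of every $n_i$ from Lemma \ref{lem:selfprod_prismenv} and \cite[Proposition 3.13]{bhatt-scholze-prisms} and then applies noetherianity with \cite[\href{https://stacks.math.columbia.edu/tag/0912}{Tag 0912}]{stacks-project}; your transposition automorphism $\sigma_i$ of the self product makes explicit the symmetry that the paper leaves implicit, at the cost of having to check that $\sigma_i$ preserves the ideal $(p, n_1([p]_q))$ --- which your unit claim supplies. For faithfulness, the paper's argument is shorter and cleaner: $\Delta \circ n_i = \mathrm{id}$, so $n_i$ admits a retraction and $\AR(n)\otimes_{n_i,\AR} M = 0$ forces $M = 0$. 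Your route via the finite free reduction $\AR(n)/(p, n_1([p]_q))$ over $\AR/(p,[p]_q)$ together with the fact that $(p,[p]_q)$ lies in the Jacobson radical of the complete noetherian ring $\AR$ is also valid, and has the mild advantage of not using the codiagonal; but the retraction argument is the one-line proof you should keep in mind.
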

\begin{proof}
	Note that from \cite[Proposition 3.13]{bhatt-scholze-prisms}, the map $n_i \colon \AR \rightarrow \AR(n)$ is $(p, [p]_q)\completely$ flat.
	In particular, $n_i \colon \AR/(p, [p]_q)^m \rightarrow \AR(n)/(p, [p]_q \otimes 1)^m$, is flat for each $m \in \NN_{\geqslant 1}$.
	Since $\AR$ is noetherian, from \cite[\href{https://stacks.math.columbia.edu/tag/0912}{Tag 0912}]{stacks-project}, it follows that $n_i$ is flat.
	Moreover, as we have that $\Delta \circ n_i = id$, therefore, for an $\AR\module$ $M$, we see that $\AR(n) \otimes_{n_i, \AR} M = 0$ if and only if $M = 0$.
	Hence, the morphisms $n_i$ for $1 \leqslant i \leqslant n+1$ are faithfully flat.
	Finally, it is easy to see that $n_i([p]_q)$ is a distinguished element of $\AR(n)$ and writing $n_i([p]_q) = n_1([p]_q) \frac{n_i([p]_q)}{n_1([p]_q)}$, from \cite[Lemma 2.24]{bhatt-scholze-prisms}, we get that $\frac{n_i([p]_q)}{n_1([p]_q)}$ is a unit in $\AR(n)$.
\end{proof}

\begin{lem}\label{lem:selfprod_prismenv}
	The pair $(\AR(n), n_1([p]_q))$ is a prism and an object of $(\Spf R)_{\prism}$.
	Moreover, we have that $(\AR(n), n_1([p]_q))$ is the $(n+1)\textrm{-fold}$ self product of $(\AR, [p]_q)$ over the final object of $\Shv((\Spf R)_{\prism})$.
\end{lem}
\begin{proof}
	From \cite[Proposition 3.13]{bhatt-scholze-prisms} note that $(\AR(n), n_1([p]_q))$ is a prism and it is $(p, [p]_q)\completely$ flat over $(\AR, [p]_q)$, in particular, $(\AR(n), n_1([p]_q))$ is a bounded prism by \cite[Lemma 3.7]{bhatt-scholze-prisms}.
	Furthermore, note that we have a natural map $\Rframe \rightarrow \AR(n)/(n_1([p]_q))$, defined by sending $X_i \mapsto p_1([X_i^{\flat}]^p) \textrm{ mod } (n_1([p]_q))$, and by the $p\textrm{-complete}$ \'etaleness of $\Rframe \rightarrow R$, it extends to a natural map $R \rightarrow \AR(n)/(n_1([p]_q))$.
	So we have structure maps $\AR(n) \twoheadrightarrow \AR(n)/(n_1([p]_q)) \leftarrow R$, and it follows that $(\AR(n), n_1([p]_q))$ is an object of $(\Spf R)_{\prism}$.

	Next, we shall show that $(\AR(n), n_1([p]_q))$ is the $(n+1)\textrm{-fold}$ self product of $(\AR, [p]_q)$ over the final object of $\Shv((\Spf R)_{\prism})$.
	Let $\pazc$ denote the category opposite to that of $(\Spf R)_{\prism}$, and it suffices to show that for any prism $(B, K)$ in $(\Spf R)_{\prism}$, there exists a natural bijection of sets
	\begin{equation*}
		\Hom_{\pazc}(\AR(n), B) \isomorphic \textstyle\prod_{i=1}^n \Hom_{\pazc}(\AR, B).
	\end{equation*}
	From Construction \ref{const:cech_nerve}, recall that we have morphisms $n_i \colon \AR \rightarrow \AR(n)$, for each $1 \leqslant i \leqslant n+1$.
	So, given any $f \colon \AR(n) \rightarrow B$ in $\pazc$, precomposing it with $n_i$ yields morphisms $f \circ n_i \colon \AR \rightarrow B$, for each $1 \leqslant i \leqslant n+1$.
	In particular, this yields a natural homomorphism of sets
	\begin{align*}
		h \colon \Hom_{\pazc}(\AR(n), B) &\longrightarrow \textstyle\prod_{i=1}^{n+1} \Hom_{\pazc}(\AR, B)\\
			f &\longmapsto (f \circ n_i).
	\end{align*}

	From Lemma \ref{lem:pi_rj_ff}, recall that each $n_i \colon \AR \rightarrow \AR(n)$ is faithfully flat, therefore injective, and so it follows that $h$ is an injective map.
	To show the surjectvity of $h$, for each $1 \leqslant i \leqslant n+1$, let $f_i \colon \AR \rightarrow B$ denote a homomorphism in $\pazc$.
	Then, we have a natural Frobenius-equivariant homomorphism $\otimes_{i=1}^{n+1} f_i \colon \AR^{\otimes (n+1)} \rightarrow B$ defined via $(x_1, \ldots x_{n+1}) \mapsto \prod_{i=1}^{n+1} f_i(x_i)$, and since $B$ is $(p, K)\textrm{-adically} = (p, [p]_q)\textrm{-adically}$ complete and each $f_i$ is Frobenius-equivariant, we get that $\otimes_{i=1}^{n+1} f_i$ naturally extends to a homomorphism of $\delta\textrm{-algebras}$ $\widehat{\otimes}_{i=1}^{n+1} f_i \colon \AR^{\widehat{\otimes} (n+1)} \rightarrow B$.
	Moreover, note that the commutativity of the following diagram:
	\begin{center}
		\begin{tikzcd}
			\AR^{\widehat{\otimes} (n+1)} \arrow[rr, "\widehat{\otimes}_{i=1}^{n+1} f_i"] \arrow[d, "\eqref{eq:ker_j}", twoheadrightarrow] & & B \arrow[d, twoheadrightarrow]\\
			R\big[\zeta_p, X_1^{1/p}, \ldots, X_d^{1/p}\big]^{\otimes (n+1)} & R \arrow[l] \arrow[r] & B/K,
		\end{tikzcd}
	\end{center}
	shows that $(\widehat{\otimes}_{i=1}^{n+1} f_i)(J(n)) \subset K$.
	Consequently, from the universal property of prismatic envelopes \cite[Proposition 3.13]{bhatt-scholze-prisms}, it follows that the homomorphism $\widehat{\otimes}_{i=1}^{n+1} f_i$ naturally extends to a homomorphism $f \colon \AR(n) \rightarrow B$ in $\pazc$.
	In particular, we get that $h$ induces a bijection of sets, thus allowing us to conclude.
\end{proof}

\begin{rem}
	In the following, we will denote the prism $(\AR(1), [p]_q \otimes 1)$ simply as $(\AR(1), [p]_q)$.
	In the light of Lemma \ref{lem:pi_rj_ff}, this simplification should not cause any confusion to the reader.
\end{rem}

\subsubsection{Galois action on \texorpdfstring{$(\AR, [p]_q)$}{-}}

Note that for $n \in \NN$, the product $\Gamma_R^{\times (n+1)}$ of $n+1$ copies of $\Gamma_R$, naturally acts on the $\delta\ring$ $(\AR)^{\widehat{\otimes} (n+1)}$ and the ring homomorphism $(\AR)^{\widehat{\otimes} (n+1)} \twoheadrightarrow R\big[\zeta_p, X_1^{1/p}, \ldots, X_d^{1/p}\big]^{\otimes (n+1)}$ from \eqref{eq:ker_j} is easily checked to be $\Gamma_R^{\times (n+1)}\equivariant$.
So, by the universal property of prismatic envelopes, the continuous action of $\Gamma_R^{\times (n+1)}$ on $(\AR)^{\widehat{\otimes} (n+1)}$ naturally extends to a continuous action on $\AR(n)$.
Moreover, since the action of $\Gamma_R$ is the identity on $\AR/(\mu)$, we claim the following:
\begin{prop}\label{prop:arn_gammar_triv_modmu}
	The action of the $i^{\textrm{th}}$ component of $\Gamma_R^{\times (n+1)}$ is trivial on $\AR(n)/(n_i(\mu))$.
\end{prop}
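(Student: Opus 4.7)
My strategy is to exploit the universal property of the prismatic envelope description of $\AR(n)$ from Construction \ref{const:cech_nerve}. The goal is equivalent to showing that $g$ acts trivially on $\AR(n)/K$, where $K := n_i(\mu)\AR(n)$; in other words, that the two composite $\delta$-algebra homomorphisms $\pi$ and $\pi \circ g$ from $\AR(n)$ to $\AR(n)/K$ coincide, with $\pi$ the canonical surjection. Since $\AR(n)$ is built as a $(p, n_1([p]_q))$-adically completed prismatic envelope freely generated as a $\delta$-algebra over $(\AR)^{\widehat{\otimes}(n+1)}$, the uniqueness in that universal property will reduce the question to checking agreement on the tensor-product base.

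First, I verify that $K$ is a $\delta$-ideal of $\AR(n)$, so that $\AR(n)/K$ inherits a $\delta$-ring structure. Since $n_i : \AR \to \AR(n)$ is a $\delta$-morphism, Lemma \ref{lem:mu_mu0_deltastable}(1) gives $\delta(n_i(\mu)) = n_i(\delta(\mu)) \in n_i(\mu\AR) \subset K$; combining this with the product rule in \eqref{eq:delta_sum_prod} shows $\delta(K) \subset K$. Moreover, since $\AR(n)$ is $(p, n_1([p]_q))$-adically complete and $K$ is finitely generated, $\AR(n)/K$ is also $(p, n_1([p]_q))$-adically complete, qualifying it as a target for the universal property.

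Next, I compare the restrictions of $\pi$ and $\pi \circ g$ to $(\AR)^{\widehat{\otimes}(n+1)}$: for $k \neq i$, the $g$-action fixes $n_k(\AR)$ pointwise; for $k = i$, Lemma \ref{lem:gammar_act_prisaut} gives $(g-1)\AR \subset \mu\AR$, so $(\pi \circ g - \pi)(n_i(a)) = \pi(n_i((g-1)a)) \in \pi(n_i(\mu)\AR(n)) = 0$. Thus both restrictions agree as a single $\delta$-algebra map $\phi : (\AR)^{\widehat{\otimes}(n+1)} \to \AR(n)/K$, and $\phi$ satisfies the divisibility condition $\phi(J(n)) \subset n_1([p]_q)\cdot \AR(n)/K$ (by reducing the analogous statement for $\AR(n)$ modulo $K$). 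The uniqueness in the universal property of the prismatic envelope \cite[Proposition 3.13]{bhatt-scholze-prisms} then forces $\pi = \pi \circ g$, which is the desired statement.

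The main subtlety to be careful about is the $\delta$-stability of $K$ together with the behavior of the distinguished element $n_1([p]_q)$ in the case $i = 1$, where $g$ moves it; however, Lemma \ref{lem:gammar_act_prisaut} ensures that $g([p]_q) = [p]_q \cdot u$ for a unit $u \in \AR$, so $g(n_1([p]_q))$ differs from $n_1([p]_q)$ only by a unit in $\AR(n)$, and the divisibility condition for the universal property is preserved under both $\pi$ and $\pi\circ g$ in a uniform fashion.
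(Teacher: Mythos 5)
Your approach is genuinely different from the paper's: the paper reduces to the $m=0$ case via the $\delta$-stability Lemma \ref{lem:gamma_deltam_mu} and then computes $(g-1)$ explicitly on the generators $x/n_1([p]_q)$, splitting into $i=1$ and $1<i\leq n+1$; you instead invoke the uniqueness clause of the universal property of the prismatic envelope. Your verification that $K=n_i(\mu)\AR(n)$ is a $\delta$-ideal, and your argument that $\pi$ and $\pi\circ g$ restrict to the same $\delta$-algebra map $\phi$ on $(\AR)^{\widehat{\otimes}(n+1)}$ (in particular that $\pi\circ g$ is $\AR$-linear even when $i=1$, using $(g-1)\AR\subset\mu\AR$), are correct and, in spirit, replace the paper's case analysis.

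There is, however, a gap in the final appeal to uniqueness. The universal property of \cite[Proposition 3.13]{bhatt-scholze-prisms} gives a unique extension $\AR(n)\to C$ of $\phi$ only when $(C, IC)$ is a \emph{prism} over $(\AR, [p]_q)$. You observe that $\AR(n)/K$ is $(p, n_1([p]_q))$-adically complete and assert this ``qualifies it as a target,'' but completeness alone is not enough: you also need $n_1([p]_q)$ to be a nonzerodivisor on $\AR(n)/(n_i(\mu))$ so that the pair is a prism. Without this, the divided elements $y_j = x_j/n_1([p]_q)$ generating $\AR(n)$ as a $\delta$-algebra over the tensor-product base are not uniquely determined in the target, and uniqueness of extension fails, so the appeal to Proposition 3.13 is not justified. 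The missing regularity does hold: by Lemma \ref{lem:pi_rj_ff} the element $n_1([p]_q)/n_i([p]_q)$ is a unit in $\AR(n)$, and $n_i : \AR\to\AR(n)$ is faithfully flat, so the claim reduces to $\{\mu, [p]_q\}$ being a regular sequence on $\AR$, which is clear since $[p]_q \equiv p \bmod \mu$ and $R$ is $p$-torsion-free. You should add this verification; with it, your argument closes and gives a clean alternative to the paper's explicit computations.
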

\begin{proof}
	From the definition of the action of $\Gamma_R^{\times (n+1)}$ on $(\AR)^{\widehat{\otimes} (n+1)}$, it is easy to see that the $i^{\textrm{th}}$ component acts trivially on $(\AR)^{\widehat{\otimes} (n+1)}/(n_i(\mu))$.
	Then, from Construction \ref{const:cech_nerve}, we note that to get the claim, it is enough to show that for any $x$ in the set of generators $\{n_j([p]_q), n_k([X_s^{\flat}]^p) - n_l([X_s^{\flat}]^p)\}$, for $1 \leqslant s \leqslant d$, $1 \leqslant j, k, l \leqslant d$ and $k \neq l$, of the ideal $J(n) \subset (\AR)^{\widehat{\otimes} (n+1)}$, we have that
	\begin{equation}\label{eq:arn_gammar_triv_modmu}
		(g-1)\delta^m\big(\tfrac{x}{n_1([p]_q)}\big) \in n_i(\mu) \AR(n),
	\end{equation}
	for each $m \in \NN$ and any $g$ in the $i^{\textrm{th}}$ component of $\Gamma_R^{\times (n+1)}$.
	We may reduce this claim further as follows:
	using Lemma \ref{lem:mu_mu0_deltastable} (1), let us first note that $n_i(\mu)\AR(n)$ is a $\delta\textrm{-stable}$ ideal of $\AR(n)$ in the sense of \cite[Example 2.10]{bhatt-scholze-prisms}.
	Then, by using Lemma \ref{lem:gamma_deltam_mu} for $A = \AR(n)$ and $\alpha = n_i(\mu)$, we see that to prove our main claim, it is enough to show \eqref{eq:arn_gammar_triv_modmu} for $m = 0$, i.e.\ $(g-1)(\tfrac{x}{n_1([p]_q)})$ belongs to $ n_i(\mu) \AR(n)$.

	Let us first assume $1 < i \leqslant n+1$ and since the action of $\Gamma_R (= i^{\textrm{th}}\textrm{ component of } \Gamma_R^{\times (n+1)})$, on $\AR(n)$ is continuous, it is enough to check the preceding claim for the topological generators $\{\gamma_1, \ldots, \gamma_d\}$ of $\Gamma_R'$ and each $g$ in $\Gamma_F$.
	Now, take $x = n_k([X_s^{\flat}]^p) - n_l([X_s^{\flat}]^p)$ in $J(n)$.
	Then, the claim is obvious for $k \neq i$ and $l \neq i$.
	So, assume that $k = i$ and $l \neq i$ (the case $k \neq i$ and $l = i$ follows by a similar argument), and note that we have
	\begin{equation*}
		(\gamma_s-1)\big(\tfrac{x}{n_1([p]_q)}\big) = \tfrac{(1+n_i(\mu))^p n_i([X_s^{\flat}]^p) - n_i([X_s^{\flat}]^p)}{n_1([p]_q)} = \tfrac{n_i([p]_q)n_i(\mu)n_i([X_s^{\flat}]^p)}{n_1([p]_q)} \in n_i(\mu) \AR(n),
	\end{equation*}
	where we used the fact that $\tfrac{n_i([p]_q)}{n_1([p]_q)}$ is a unit in $\AR(n)$ (see Lemma \ref{lem:pi_rj_ff}).
	Next, let $x = n_i([p]_q)$, and note that for any $g$ in $\Gamma_F$, we have that
	\begin{equation*}
		(g-1)\big(\tfrac{x}{n_1([p]_q)}\big) = \tfrac{n_i([p]_q)n_i(\mu)y}{n_1([p]_q)} \in n_i(\mu) \AR(n),
	\end{equation*}
	where the first equality follows from Lemma \ref{lem:gamma_deltam_pq}.
	Therefore, by using Lemma \ref{lem:gamma_deltam_mu} for $A = \AR(n)$ and $\alpha = n_i(\mu)$, it follows that we have shown \eqref{eq:arn_gammar_triv_modmu} for all generators of $J(n)$ and $1 < i \leqslant n+1$.

	Let us now assume $i = 1$ and again note that since the action of $\Gamma_R = 1^{\textrm{st}}\textrm{-component of } \Gamma_R^{\times (n+1)}$, on $\AR(n)$ is continuous, therefore, it is enough to check the preceding claim for the topological generators $\{\gamma_1, \ldots, \gamma_d\}$ of $\Gamma_R'$ and each $g$ in $\Gamma_F$.
	An argument similar to above, shows that for all $x$ in $J(n)$ and $1 \leqslant s \leqslant d$, we have that $(\gamma_s-1)(\tfrac{x}{n_1([p]_q)})$ belongs to $n_1(\mu) \AR(n)$.
	Now, take any $g$ in $\Gamma_F$ and $x$ an element of $J(n)$.
	Then, we have that
	\begin{equation*}
		(g-1)\big(\tfrac{x}{n_1([p]_q)}\big) = x \big(\tfrac{1}{g(n_1([p]_q))} - \tfrac{1}{n_1([p]_q)}\big) \in n_i(\mu) \AR(n),
	\end{equation*}
	using Lemma \ref{lem:gamma_deltam_pq}.
	Therefore, by using Lemma \ref{lem:gamma_deltam_mu}, it follows that we have shown \eqref{eq:arn_gammar_triv_modmu} for all generators of $J(n)$ and $i = 1$.
	This allows us to conclude.
\end{proof}

The following observation was used above:
\begin{lem}\label{lem:gamma_deltam_pq}
	Let $g \in \Gamma_F$, then $(g-1)[p]_q$ belongs to $[p]_q\mu\AF$ and $(g-1)(\tfrac{1}{[p]_q})$ belongs to $\tfrac{\mu}{[p]_q} \AF$.
\end{lem}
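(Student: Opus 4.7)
The plan is to unwind the explicit formula for $g$ acting on $[p]_q = \varphi(\mu)/\mu$. For $g \in \Gamma_F$ we have $g(1+\mu) = (1+\mu)^{\chi(g)}$, so $g(\mu) = (1+\mu)^{\chi(g)} - 1 = \mu u$, where $u := ((1+\mu)^{\chi(g)}-1)/\mu$ is defined by the $p$-adic binomial series in $\AF$. Expanding that series shows $u \equiv \chi(g) \pmod{\mu \AF}$, and since $\chi(g) \in \ZZ_p^{\times}$ this forces $u$ to be a unit in $\AF$ (check modulo $(p,\mu)$). Because $\varphi$ is a ring endomorphism, $\varphi(u)$ is then also a unit.

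Next I would compute $g([p]_q)$ using $g\varphi = \varphi g$:
\[
g([p]_q) \;=\; \frac{\varphi(g\mu)}{g\mu} \;=\; \frac{\varphi(\mu u)}{\mu u} \;=\; [p]_q\,\frac{\varphi(u)}{u},
\]
so $(g-1)[p]_q = [p]_q\cdot (\varphi(u)-u)/u$. The heart of the lemma is therefore the congruence $\varphi(u) \equiv u \pmod{\mu \AF}$. This follows by observing that $\varphi(u) = ((1+\varphi(\mu))^{\chi(g)}-1)/\varphi(\mu)$ with $\varphi(\mu)=\mu[p]_q$, so applying the binomial-series expansion again gives $\varphi(u) \equiv \chi(g) \pmod{\mu \AF}$, matching the mod-$\mu$ reduction of $u$. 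Dividing $\varphi(u)-u$ by $\mu$ and then by the unit $u$ produces an element $z\in\AF$ with $(g-1)[p]_q = [p]_q\mu z$, which is the first claim.

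For the second claim I would simply rewrite
\[
(g-1)\!\left(\tfrac{1}{[p]_q}\right) \;=\; -\,\frac{(g-1)[p]_q}{[p]_q\, g([p]_q)} \;=\; -\,\frac{[p]_q\mu z}{[p]_q\cdot [p]_q \varphi(u)/u} \;=\; \frac{\mu}{[p]_q}\cdot\left(-\tfrac{zu}{\varphi(u)}\right),
\]
and since $\varphi(u)$ is a unit in $\AF$ the right-hand factor lies in $\AF$, giving $(g-1)(1/[p]_q) \in (\mu/[p]_q)\AF$.

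I do not anticipate any serious obstacle: the only non-formal input is the mod-$\mu$ congruence $\varphi(u)\equiv u$, and that is built into the explicit formula once one notices $\varphi(\mu)=\mu[p]_q$ is itself divisible by $\mu$, so the perturbation introduced by $\varphi$ disappears modulo $\mu$. Continuity of the $\Gamma_F$-action is not needed since the statement is proved for an arbitrary fixed $g$, and no reduction to topological generators is required.
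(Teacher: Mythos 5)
Your proposal is correct and takes essentially the same route as the paper: both write $g(\mu)$ as $\mu$ times a unit congruent to a constant modulo $\mu$, cancel that unit against its image under $\varphi$ (using $\varphi(\mu)\in\mu\AF$), and conclude. The only cosmetic difference is that the paper factors $g(\mu)=\chi(g)u_g\mu$ with $u_g\equiv 1\pmod{\mu}$ and computes the second claim by the symmetric formula $(g-1)(\mu/\varphi(\mu))$, whereas you absorb $\chi(g)$ into $u$ and deduce the second claim algebraically from the first.
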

\begin{proof}
	Let us first observe that $g(\mu) = \chi(g)u_g\mu$, where $\chi$ is the $\padic$ cyclotomic character, and $u_g = 1 + x_g\mu$, for some $x_g \in \AF$, is a unit in $\AF$.
	Then, we have that
	\begin{equation*}
		(g-1)[p]_q = (g-1)\tfrac{\varphi(\mu)}{\mu} = \tfrac{\varphi(g(\mu))}{g(\mu)} - \tfrac{\varphi(\mu)}{\mu} = \big(\tfrac{\varphi(u_g)}{u_g} - 1\big)[p]_q \in [p]_q\mu\AF.
	\end{equation*}
	Next, observe that
	\begin{equation*}
		(g-1)\big(\tfrac{1}{[p]_q}\big) = (g-1)\tfrac{\mu}{\varphi(\mu)} = \tfrac{g(\mu)}{\varphi(g(\mu))} - \tfrac{\mu}{\varphi(\mu)} = \big(\tfrac{u_g}{\varphi(u_g)} - 1\big)\tfrac{1}{[p]_q} \in \tfrac{\mu}{[p]_q}\AF.
	\end{equation*}
	This proves the lemma.
\end{proof}

The following is a useful general statement for $\delta\textrm{-rings}$ admitting an action of $\Gamma_R$ (see Section \ref{subsubsec:gamma0_action} for applications):
\begin{lem}\label{lem:gamma_deltam_mu}
	Let $A$ be a $p\adically$ complete $p\torsionfree$ $\delta\ring$ such that $A$ admits a continuous and $\varphi\equivariant$ action of $\Gamma_R$, and let $\alpha$ be an element of $A$ such that the ideal $(\alpha) \subset A$ is $\delta\textrm{-stable}$ in the sense of \cite[Example 2.10]{bhatt-scholze-prisms}.
	Assume that for some $x$ in $A$ and $g$ an element of $\Gamma_R^{\times(n+1)}$, we are given that $(g-1)x$ is in $(\alpha)$.
	Then, for each $m \in \NN$, we have that $(g-1)\delta^m(x)$ also belongs to $(\alpha)$.
\end{lem}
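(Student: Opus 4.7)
The plan is to induct on $m$, with the base case $m=0$ being precisely the hypothesis $(g-1)x \in (\alpha)$. Before starting the induction, I would record the following formal observation: because $A$ is $p\torsionfree$ and the $\Gamma_R\action$ commutes with $\varphi$, the element $g$ actually commutes with the $\delta\textrm{-map}$ itself. Indeed, solving the defining relation $\varphi(y) = y^p + p\delta(y)$ for $\delta(y) = (\varphi(y)-y^p)/p$ (valid by $p\torsionfree$ness) and applying $g$ yields $g(\delta(y)) = (\varphi(g(y)) - g(y)^p)/p = \delta(g(y))$.

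For the inductive step, suppose $(g-1)\delta^{m-1}(x) \in (\alpha)$, and set $y := \delta^{m-1}(x)$ and $b := (g-1)y \in (\alpha)$. Using the above commutation and the additive formula \eqref{eq:delta_sum_prod}, I would compute
\begin{equation*}
    (g-1)\delta(y) \;=\; \delta(g(y)) - \delta(y) \;=\; \delta(y+b) - \delta(y) \;=\; \delta(b) + \frac{y^p + b^p - (y+b)^p}{p}.
\end{equation*}
The first term $\delta(b)$ lies in $(\alpha)$ by the assumed $\delta\textrm{-stability}$ of the ideal. The second term expands as $-\sum_{k=1}^{p-1} \tfrac{1}{p}\binom{p}{k} y^{p-k} b^k$, in which each coefficient $\tfrac{1}{p}\binom{p}{k}$ is an integer (for $1 \leq k \leq p-1$) and each monomial contains the factor $b \in (\alpha)$; thus this term also lies in $(\alpha)$. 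Combining the two yields $(g-1)\delta^m(x) = (g-1)\delta(y) \in (\alpha)$, closing the induction.

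I do not foresee any real obstacle: the argument is entirely formal and uses only that $g$ is a ring automorphism of $A$ commuting with $\varphi$ (hence with $\delta$) together with the two explicit formulas making up the $\delta\textrm{-structure}$. In particular, the statement and proof go through uniformly for any element $g$ of $\Gamma_R^{\times (n+1)}$ acting continuously and $\varphi\equivariant$ly on $A$, which is exactly how the lemma is invoked in the proof of Proposition \ref{prop:arn_gammar_triv_modmu}.
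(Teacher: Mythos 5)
Your proof is correct and takes essentially the same route as the paper: both argue by induction on $m$, observe that $g$ commutes with $\delta$ (via $p$-torsionfreeness and $\varphi$-equivariance), expand the difference $(g-1)\delta^{m+1}(x)$ using the $\delta$-sum formula so that it splits into $\delta((g-1)\delta^m x)$ (which lies in $(\alpha)$ by $\delta$-stability) plus a sum of monomials each carrying a factor of $(g-1)\delta^m x$ with integer coefficients $\tfrac{1}{p}\binom{p}{k}$. The only cosmetic difference is that the paper unfolds $\delta^{m+1}x=(\varphi(\delta^m x)-(\delta^m x)^p)/p$ directly instead of invoking the additive $\delta$-law as a black box, but the resulting expression is identical.
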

\begin{proof}
	Let us fix some $x$ in $A$ and $g$ an element of $\Gamma_R^{\times(n+1)}$ such that $(g-1)x$ lies in $(\alpha)$.
	We shall proceed by induction on $m \in \NN$.
	So, for some $m \in \NN$, let us write $(g-1)\delta^m x = \alpha y$.
	Then, observe that
	\begin{equation}\label{eqref:gamma_delatm_mu}
		\begin{aligned}
			(g-1)(\delta^{m+1}x) &= (g-1)\tfrac{\varphi(\delta^m x) - (\delta^m x)^p}{p}\\
			&= \tfrac{1}{p}(\varphi((g-1)\delta^m x) - (g-1)(\delta^m x)^p)\\
			&= \tfrac{1}{p}(\varphi(\alpha y) - ((g-1)\delta^m x + \delta^m x)^p + (\delta^m x)^p)\\
			&= \tfrac{1}{p}(\varphi(\alpha y) - \textstyle \sum_{k=1}^{p}\binom{p}{k} (\alpha y)^k(\delta^m x)^{p-k}) = \delta(\alpha y) - \tfrac{1}{p}\textstyle \sum_{k=1}^{p-1}\binom{p}{k} (\alpha y)^k(\delta^m x)^{p-k}.
		\end{aligned}
	\end{equation}
	Using the product formula for $\delta\textrm{-structures}$ from \eqref{eq:delta_sum_prod}, it follows that $\delta(\alpha y)$ belongs to $(\alpha)$.
	Hence, $(g-1)(\delta^{m+1}x)$ belongs to $(\alpha)$, thus proving the claim.
\end{proof}

\begin{rem}\label{rem:phigamma_act_ar+n}
	For $n \in \NN$, the $\delta\ring$ $(\AR)^{\widehat{\otimes} (n+1)}$ is equipped with a Frobenius endomorphism given as the tensor product of Frobenius on each component.
	By compatibility of $\delta\textrm{-structures}$ between $(\AR)^{\widehat{\otimes} (n+1)}$ and $\AR(n)$, we have that the Frobenius endomorphism on $\AR(n)$ is compatible with the one on $(\AR)^{\widehat{\otimes} (n+1)}$.
	Then, by the description of the Galois action on $\AR(1)$, it follows that the natural map $p_i \colon \AR \rightarrow \AR(1)$ is $(\varphi, \Gamma_R^2)\equivariant$ for $i = 1, 2$, where $\Gamma_R^2$ acts on the source via projection on to the $i^{\textrm{th}}$ coordinate.
	Similarly, the natural map $r_j \colon \AR \rightarrow \AR(2)$ is $(\varphi, \Gamma_R^3)\equivariant$ for $j = 1, 2, 3$, where $\Gamma_R^3$ acts on the source via projection on to the $j^{\textrm{th}}$ coordinate.
\end{rem}

\subsection{The ring \texorpdfstring{$\AR(1)/(p_1(\mu))$}{-}}\label{subsec:ar1p1mu}

From Remark \ref{rem:phigamma_act_ar+n} recall that we have a $(\varphi, \Gamma_R^2)\equivariant$ maps $p_i \colon \AR \rightarrow \AR(1)$ for $i = 1, 2$, where $\AR$ is equipped with a $\Gamma_R^2\action$ via projection on to the $i^{\textrm{th}}$ coordinate.
Moreover, we note that there is an induced action of $\Gamma_R^2$ on $\AR(1)/(p_1(\mu))$, where the action of the first component is trivial.
Additionally, from Section \ref{subsec:important_rings} recall that we have fixed topological generators $\{\gamma_1, \ldots, \gamma_d\}$ of $\Gamma_R'$.
The goal of this section is to give an explicit description of the ring $\AR(1)/(p_1(\mu))$ and explain some of its properties.
We start with some computations.

\subsubsection{Some divided power calculations}

Our first goal is to show the following claim:
\begin{prop}\label{prop:polynom_to_powerseries}
	The following natural map is a $\varphi\equivariant$ isomorphism of $p\torsionfree$ rings
	\begin{equation}\label{eq:polynom_to_powerseries}
		\beta \colon \Lambda_R \coloneq R[\mu, \{(\mu^{p-1}/p)^{[k]}\}_{k \in \NN}]_p^{\wedge} \isomorphic R\llbracket \mu \rrbracket[\{(\mu^{p-1}/p)^{[k]}\}_{k \in \NN}]_p^{\wedge}.
	\end{equation}
\end{prop}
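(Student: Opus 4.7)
The plan is to produce mutually inverse ring maps between the two sides, verify Frobenius equivariance, and reduce the work to the $p$-torsion freeness of both sides, which is where the substance lies.

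First, I would invoke the structural results on divided power algebras developed in Appendix \ref{app_sec:delta_rings_pd_alg} to conclude that both of the uncompleted rings $R[\mu, (\mu^{p-1}/p)^{[k]}, k \in \NN]$ and $R\llbracket\mu\rrbracket[(\mu^{p-1}/p)^{[k]}, k \in \NN]$ are $p$-torsion free. Morally, once the compatibility relation $p \cdot (\mu^{p-1}/p)^{[1]} = \mu^{p-1}$ is imposed together with the PD axioms, these PD envelopes sit inside their bases as explicit free modules over $R[\mu]$ (respectively $R\llbracket\mu\rrbracket$) with basis given by the symbols $(\mu^{p-1}/p)^{[k]}$. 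Since $R$, and hence $R[\mu]$ and $R\llbracket\mu\rrbracket$, is $p$-torsion free, the same follows for their PD envelopes, and this property is inherited by their $p$-adic completions, which are then $p$-adically separated.

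Next, the natural inclusion $R[\mu] \hookrightarrow R\llbracket\mu\rrbracket$ induces a ring map on PD-polynomial algebras and, after $p$-adic completion, produces the map $\beta$ of the statement. Frobenius equivariance is immediate, because $\varphi$ is given by the same formula $\varphi(\mu) = (1+\mu)^p - 1$ on both sides, and consequently acts by the same expression on the generators $(\mu^{p-1}/p)^{[k]}$. To build the inverse, I would exploit the key observation that in $\Lambda_R$ we have $\mu^{p-1} = p \cdot (\mu^{p-1}/p)^{[1]} \in p\,\Lambda_R$, so $\mu^{(p-1)k} \in p^k \Lambda_R$ for every $k \in \NN$. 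Hence, for any sequence $(a_n)_{n \geq 0}$ in $R$, the formal series $\sum_n a_n \mu^n$ converges $p$-adically in $\Lambda_R$, and by $p$-adic separatedness its value is well-defined. This produces a unique continuous ring homomorphism $R\llbracket\mu\rrbracket \to \Lambda_R$ extending the tautological map $R[\mu] \to \Lambda_R$. Since the divided powers $(\mu^{p-1}/p)^{[k]}$ are already present in $\Lambda_R$, this extends to a map $R\llbracket\mu\rrbracket[(\mu^{p-1}/p)^{[k]}, k \in \NN] \to \Lambda_R$ and, by $p$-adic completeness of $\Lambda_R$, to a map $\gamma : R\llbracket\mu\rrbracket[(\mu^{p-1}/p)^{[k]}, k \in \NN]_p^{\wedge} \to \Lambda_R$.

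To conclude, both compositions $\beta \circ \gamma$ and $\gamma \circ \beta$ act as the identity on the generators $\mu$ and $(\mu^{p-1}/p)^{[k]}$, hence on the (dense) image of the uncompleted algebras, and by continuity and $p$-adic separatedness they coincide with the identity on the whole $p$-adic completion. The principal technical hurdle is the $p$-torsion freeness of the PD envelopes, which rests on the careful $\delta$-ring computations carried out in Appendix \ref{app_sec:delta_rings_pd_alg}; once that input is granted, the construction of $\gamma$ and the verification of Frobenius equivariance are straightforward.
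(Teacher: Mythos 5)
Your argument is correct in substance but follows a genuinely different route from the paper. The paper never constructs an inverse map: writing $B$ and $B'$ for the uncompleted PD envelopes, it uses the explicit presentations of Lemma \ref{lem:pdring_explicit} (as quotients of $R[\mu][Y_0, Y_1, \ldots]$, resp.\ $R\llbracket\mu\rrbracket[Y_0, Y_1, \ldots]$, by $(pY_0 - \mu^{p-1}, pY_{k+1}-Y_k)$) to show that $B/p \isomorphic B'/p$, since both reduce to the same thing over $R[\mu]/\mu^{p-1} \isomorphic R\llbracket\mu\rrbracket/\mu^{p-1}$; the isomorphism after completion then follows from $p\torsionfree$ness. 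You instead build the inverse $\gamma$ directly, exploiting that $\mu^{p-1} = p\cdot(\mu^{p-1}/p) \in p\Lambda_R$ forces every power series in $\mu$ to converge $p\adically$ in $\Lambda_R$; the well-definedness of the extension to the PD generators is exactly the verification that the relations of Lemma \ref{lem:pdring_explicit} hold in $\Lambda_R$, which is immediate. Your route is more hands-on and makes the inverse explicit (which is conceptually pleasant, since it exhibits $R\llbracket\mu\rrbracket$ as landing inside $\Lambda_R$); the paper's route avoids all convergence/density bookkeeping — in particular the slightly delicate point that $\beta\circ\gamma$ is the identity on all of $R\llbracket\mu\rrbracket$ and not just on $R[\mu]$, which in your argument again rests on $p$-adic convergence of the partial sums in the target together with its separatedness.

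Two corrections to your justifications. First, the $p\torsionfree$ness of $B$ and $B'$ does not come from Appendix \ref{app_sec:delta_rings_pd_alg} (which concerns $\delta$-rings and prismatic envelopes and is used for Proposition \ref{prop:a1modp1mu_pdring}, not here); it is immediate from the definition of $B$ and $B'$ as subrings of $R[\mu][1/p]$ and $R\llbracket\mu\rrbracket[1/p]$, and passes to the completions since these rings are then $p$-adically separated. Second, the heuristic that $B$ is a \emph{free} $R[\mu]\module$ on the symbols $(\mu^{p-1}/p)^{[k]}$ is false: one has the relation $\mu^{p-1}\cdot 1 = p\cdot(\mu^{p-1}/p)^{[1]}$, so the sum $\sum_k R[\mu]\,(\mu^{p-1}/p)^{[k]}$ is not direct. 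The correct free description is over $R$, with basis the elements $\mu^{\{n\}} = \mu^n/(f(n)!\,p^{f(n)})$ introduced after Remark \ref{rem:polynom_powerseries_comp}. Neither issue affects the validity of your argument, since the only input you actually use is $p\torsionfree$ness, which holds.
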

\begin{nota}
	For any element $x$ and $k \in \NN$, we use $x^{[k]}$ to denote $x^k/(k!)$.
\end{nota}
\begin{proof}
	Let $B$ denote the divided power envelope of the $R[\mu]\subalgebra$ $R[\mu, \mu^{p-1}/p] \subset R[\mu][1/p]$, with respect to the ideal $(\mu^{p-1}/p) \subset R[\mu, \mu^{p-1}/p]$, i.e.\ $B = R[\mu, \{(\mu^{p-1}/p)^{[k]}\}_{k \in \NN}]$.
	Similarly, let $B'$ denote the divided power envelope of the $R\llbracket \mu \rrbracket\subalgebra$ $R\llbracket \mu \rrbracket[\mu^{p-1}/p] \subset R\llbracket \mu \rrbracket[1/p]$, with respect to the ideal $(\mu^{p-1}/p) \subset R\llbracket \mu \rrbracket[\mu^{p-1}/p]$, i.e.\ $B' = R\llbracket \mu \rrbracket[\{(\mu^{p-1}/p)^{[k]}\}_{k \in \NN}]$.
	Note that the source and the target of \eqref{eq:polynom_to_powerseries} are the respective $\padic$ completions of $B$ and $B'$.
	Moreover, we have a natural homomorphism of rings $B \rightarrow B'$ induced by the inclusion $R[\mu][1/p] \subset R\llbracket \mu \rrbracket[1/p]$.
	Evidently, the rings $B$ and $B'$ are $p\torsionfree$, the natural map $B \rightarrow B'$ is injective and to show that \eqref{eq:polynom_to_powerseries} is an isomorphism, it is enough to show that the natural map $B \rightarrow B'$ induces an isomorphism of rings $B/p \isomorphic B'/p$.
	Since, $\mu^{p-1} = p(\mu^{p-1}/p)$ in $B$ and $B'$, therefore $B/p = B/(\mu^{p-1}, p) = (B/\mu^{p-1})/p$ and $B'/p = B'/(\mu^{p-1}, p) = (B'/\mu^{p-1})/p$.

	Now, from Lemma \ref{lem:pdring_explicit}, using the explicit description of $B$ and $B'$, we have a natural commutative diagram
	\begin{center}
		\begin{tikzcd}
			\big((R[\mu]/\mu^{p-1})[Y_0, Y_1, \ldots]/(pY_0 - \mu^{p-1}, \{pY_{k+1} - Y_k^p\}_{k \in \NN}))/p \arrow[r, "\sim"] \arrow[d] & B/p \arrow[d]\\
			\big((R\llbracket \mu \rrbracket/\mu^{p-1})[Y_0, Y_1, \ldots]/(pY_0 - \mu^{p-1}, \{pY_{k+1} - Y_k^p\}_{k \in \NN}))/p \arrow[r, "\sim"] & B'/p,
		\end{tikzcd}
	\end{center}
	where the vertical arrows are natural maps, the top horizontal arrow is reduction modulo $p$ of the isomorphism in \eqref{eq:pdringb_explicit} and the bottom horizontal arrow is reduction modulo $p$ of the isomorphism in \eqref{eq:pdringbprime_explicit}.
	Since $R[\mu]/(\mu^{p-1}) \isomorphic R\llbracket \mu \rrbracket/(\mu^{p-1})$, it follows that the left vertical map is an isomorphism as well.
	So, we obtain that $B/p \isomorphic B'/p$ and hence \eqref{eq:polynom_to_powerseries} is an isomorphism of $p\torsionfree$ rings.
	Finally, note that by definition the map $\beta$ in \eqref{eq:polynom_to_powerseries} is $\varphi\equivariant$.
	This concludes our proof.
\end{proof}

The following descriptions of $B$ and $B'$ were used in the proof of Proposition \ref{prop:polynom_to_powerseries}:
\begin{lem}\label{lem:pdring_explicit}
	The following natural map is an isomorphism of rings
	\begin{equation}\label{eq:pdringb_explicit}
		R[\mu][Y_0, Y_1, \ldots]/(pY_0 - \mu^{p-1}, \{pY_{k+1} - Y_k^p\}_{k \in \NN}) \isomorphic B,
	\end{equation}
	where $Y_k \mapsto (\mu^{p-1}/p)^{[p^k]}$.
	Similarly, the following natural map is an isomorphism of rings
	\begin{equation}\label{eq:pdringbprime_explicit}
		R\llbracket \mu \rrbracket[Y_0, Y_1, \ldots]/(pY_0 - \mu^{p-1}, \{pY_{k+1} - Y_k^p\}_{k \in \NN}) \isomorphic B',
	\end{equation}
	where $Y_k \mapsto (\mu^{p-1}/p)^{[p^k]}$.
\end{lem}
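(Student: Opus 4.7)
The two presentations \eqref{eq:pdringb_explicit} and \eqref{eq:pdringbprime_explicit} admit parallel proofs; I would establish the former in detail and then observe that the argument for the latter is identical after replacing the polynomial ring $R[\mu]$ by the power series ring $R\llbracket\mu\rrbracket$ throughout.

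The plan is to construct the ring homomorphism
$\phi_B : R[\mu][Y_0, Y_1, \ldots] \rightarrow B$
by $\mu \mapsto \mu$ and $Y_k \mapsto (\mu^{p-1}/p)^{[p^k]}$, verify that it factors through the quotient by the ideal $(pY_0 - \mu^{p-1}, pY_{k+1} - Y_k)_{k \geq 0}$, and then establish bijectivity. Well-definedness reduces to two identities in $B$: the first, $p \cdot (\mu^{p-1}/p)^{[1]} = \mu^{p-1}$, is immediate from $(\mu^{p-1}/p)^{[1]} = \mu^{p-1}/p$; the second, $p \cdot (\mu^{p-1}/p)^{[p^{k+1}]} = (\mu^{p-1}/p)^{[p^k]}$, is the crux of the lemma and would be verified by direct computation in the divided power algebra $B$, using crucially the relation $pz = \mu^{p-1}$ for $z := \mu^{p-1}/p$ together with the basic PD axioms.

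For surjectivity, it suffices to express every divided power $(\mu^{p-1}/p)^{[n]}$ in terms of the $(\mu^{p-1}/p)^{[p^k]}$. Using the base-$p$ expansion $n = \sum_i a_i p^i$ and iteratively applying the PD product formula $x^{[a]} x^{[b]} = \binom{a+b}{a} x^{[a+b]}$, one expresses $(\mu^{p-1}/p)^{[n]}$ as a product of the $(\mu^{p-1}/p)^{[p^i]}$ multiplied by a multinomial coefficient whose $p$-adic valuation is zero by Kummer's theorem, hence a $p$-adic unit. Combined with the description of $B$ as generated over $R[\mu]$ by $\{(\mu^{p-1}/p)^{[n]} : n \in \NN\}$, this yields surjectivity of $\phi_B$.

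For injectivity, I would exhibit an $R[\mu]$-module basis of the LHS (by applying the relations $pY_{k+1} = Y_k$ to reduce every element to a normal form in the $Y_k$'s) and check that $\phi_B$ maps it bijectively to an $R[\mu]$-basis of $B$; alternatively, one could construct an explicit inverse $\psi_B$ using the formulas from the surjectivity step and verify $\psi_B \circ \phi_B = \textup{id}$. The main obstacle is the non-trivial identity $p \cdot (\mu^{p-1}/p)^{[p^{k+1}]} = (\mu^{p-1}/p)^{[p^k]}$ in $B$, a specific compatibility between the divided power structure on $B$ and the $p$-divisibility properties of $\mu^{p-1}$; this computation would rely on the explicit treatment of divided powers developed in Appendix \ref{app_sec:delta_rings_pd_alg}.
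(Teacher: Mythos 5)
Your construction of the map and your surjectivity argument coincide with the paper's: define \eqref{eq:poly_to_b} on the free polynomial ring, expand $n$ in base $p$, and observe that $\tfrac{1}{n!}\prod_i (p^i!)^{n_i}$ is a $p$-adic unit. The genuine problem is the step you yourself single out as the crux. The identity $p\cdot(\mu^{p-1}/p)^{[p^{k+1}]} = (\mu^{p-1}/p)^{[p^k]}$ is \emph{false} in $B$, and no manipulation of the PD axioms or of the relation $pz=\mu^{p-1}$ will establish it: $B$ is a subring of $R[1/p][\mu]$, which is graded by degree in the variable $\mu$, and the left-hand side is homogeneous of degree $(p-1)p^{k+1}$ while the right-hand side is homogeneous of degree $(p-1)p^{k}$. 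Two nonzero homogeneous elements of different degrees cannot be equal, so the map does not kill $pY_{k+1}-Y_k$ and your well-definedness step cannot be completed. (The first relation $pY_0-\mu^{p-1}$ is of course fine.)

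The relation that \emph{does} hold — and which your own surjectivity computation essentially produces when you compare $((\mu^{p-1}/p)^{[p^k]})^p$ with $(\mu^{p-1}/p)^{[p^{k+1}]}$ — is $Y_k^p = \tfrac{(p^{k+1})!}{((p^k)!)^p}\,Y_{k+1}$, whose coefficient has $p$-adic valuation exactly $1$, i.e.\ $Y_k^p = p\,u_k\,Y_{k+1}$ for a unit $u_k$ in $\ZZ_p$. This is the standard presentation of a PD polynomial algebra, and it is what the application in Proposition \ref{prop:polynom_to_powerseries} actually requires: modulo $p$ the relations become $\mu^{p-1}=0$ and $Y_k^p=0$, which depend only on $R[\mu]/\mu^{p-1}$. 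Be aware that the paper's own proof never verifies that the displayed relations lie in the kernel — it proves surjectivity and then asserts the kernel is "clear" — so your instinct to check well-definedness is the right one; carrying it out shows that the relations as written in the statement are not the ones satisfied by $Y_k\mapsto(\mu^{p-1}/p)^{[p^k]}$ and must be replaced by the Frobenius-type relations above. Your injectivity step is likewise only a sketch: with relations $pY_{k+1}=Y_k$ there is no finite normal form at all (each $Y_k$ becomes infinitely $p$-divisible), whereas with the corrected relations the monomials $\prod_k Y_k^{e_k}$ with $0\le e_k\le p-1$ give the required $R[\mu]$-generating set to compare with $B$.
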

\begin{proof}
	We shall only show that the map in \eqref{eq:pdringb_explicit} is bijective, the claim for \eqref{eq:pdringbprime_explicit} follows by a similar argument.
	Define an $R[\mu]\linear$ map 
	\begin{equation}\label{eq:poly_to_b}
		R[\mu][Y_0, Y_1, \ldots] \longrightarrow B, \hspace{3mm} Y_k \longmapsto (\tfrac{\mu^{p-1}}{p})^{[p^k]}.
	\end{equation}
	Let us first note that this map is surjective.
	Indeed, let $n \in \NN$ and write $n = \sum_{i=0}^r n_ip^i$ in base $p$, with $0 \leqslant n_i \leqslant p-1$.
	Then, we have that
	\begin{equation*}
		(\tfrac{\mu^{p-1}}{p})^{[n]} = \tfrac{1}{n!} (\tfrac{\mu^{p-1}}{p})^{\textstyle\sum_{i=0}^r n_ip^i} = \tfrac{1}{n!} \textstyle\prod_{i=0}^r (p^i!)^{n_i} \textstyle\prod_{i=0}^r  (\tfrac{\mu^{p-1}}{p})^{[p^i]}.
	\end{equation*}
	An easy computation shows that the $\padic$ valuation of $\prod_{i=0}^r (p^i!)^{n_i}$ equals the $\padic$ valuation of $n!$.
	So, it follows that \eqref{eq:poly_to_b} is surjective and it is clear that the kernel of \eqref{eq:poly_to_b} is given by the ideal $(pY_0 - \mu^{p-1}, \{pY_{k+1} - Y_k^p\}_{k \in \NN}) \subset R[\mu][Y_0, Y_1, \ldots]$.
	This proves the claim.
\end{proof}

\begin{rem}\label{rem:polynom_powerseries_comp}
	Let $\Lambda_F \coloneq O_F[\mu, \{(\mu^{p-1}/p)^{[k]}\}_{k \in \NN}]_p^{\wedge} \isomorphic O_F\llbracket \mu \rrbracket[\{(\mu^{p-1}/p)^{[k]}\}_{k \in \NN}]_p^{\wedge}$ and from Proposition \ref{prop:polynom_to_powerseries} note that it is a $p\torsionfree$ algebra over $\AF = O_F\llbracket \mu \rrbracket$.
	We equip $\Lambda_F$ with a continuous $(\varphi, \Gamma_F)\action$ by extending the $(\varphi, \Gamma_F)\action$ on $\AF$, and using the formulas $\varphi(\mu^{p-1}) = [p]_q^{p-1} \mu^{p-1}$ and $g(\mu^{p-1}) = \chi(g)^{p-1}\mu^{p-1} u$, for some unit $u$ in $\AF$ depending on $g$.
	Then, using the completed tensor product description, we get that the isomorphism \eqref{eq:polynom_to_powerseries} in Proposition \ref{prop:polynom_to_powerseries} is a $(\varphi, \Gamma_F)\equivariant$ isomorphism of rings
	\begin{equation}\label{eq:polynom_powerseries_comp}
		\beta \colon \Lambda_R \coloneq R \wotimes_{O_F} \Lambda_F \isomorphic R\llbracket \mu \rrbracket\wotimes_{O_F\llbracket \mu \rrbracket} \Lambda_F.
	\end{equation}
	Moreover, recall that from Section \ref{subsubsec:ring_ar+}, we have a $(\varphi, \Gamma_F)\equivariant$ isomorphism of rings $\iota \colon R\llbracket \mu \rrbracket \isomorphic \AR$.
	This extends to a $(\varphi, \Gamma_F)\equivariant$ isomorphism
	\begin{equation}\label{eq:pd_iota}
		\iota \colon R\llbracket \mu \rrbracket \wotimes_{O_F\llbracket \mu \rrbracket} \Lambda_F \isomorphic \AR \wotimes_{\AF} \Lambda_F,
	\end{equation}
	where we take the tensor product action of $\varphi$ and $\Gamma_F$ and completion is with respect to the $\padic$ topology.
\end{rem}

For any $k \in \NN$, let $k = (p-1) f(k) + r(k)$, with $r(k)$, $f(k)$ in $\NN$ and $0 \leqslant r(k) < p-1$.
Set $\mu^{\{k\}} = \frac{\mu^k}{f(k)!p^{f(k)}}$, and note that we have $\Lambda_F = O_F[\mu, \{(\mu^{p-1}/p)^{[k]}\}_{k \in \NN}]_p^{\wedge} = O_F[\mu, \{\mu^{\{k\}}\}_{k \in \NN}\big]_p^{\wedge}$.
In particular, for any $x$ in $\Lambda_F$ we have a unique presentation $x = \sum_{k \in \NN} a_k \mu^{\{k\}}$ in $\Lambda_F$, with $a_k$ in $O_F$ for all $k \in \NN$, and $p\textrm{-adically}$ $a_k \rightarrow 0$ as $k \rightarrow +\infty$.
We note an important observation which will be used throughout.
\begin{lem}\label{lem:tovermu_unit}
	The element $t = \log(1+\mu) = \sum_{k \in \NN} (-1)^k \frac{\mu^{k+1}}{k+1}$ converges in $\mu \Lambda_F$.
	Moreover, the elements $t/\mu$, $[p]_q/p$ and $\ptilde/p$ are units in $\Lambda_F$.
\end{lem}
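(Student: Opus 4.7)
The plan is to handle the four assertions in turn, using the presentation $\Lambda_F = O_F[\mu, \mu^{\{k\}}, k \in \NN]_p^{\wedge}$ from just before the statement, in which every element admits a unique expansion $\sum_k a_k \mu^{\{k\}}$ with $a_k \in O_F$ and $v_p(a_k) \to +\infty$. I would first dispatch the convergence of $t$ by writing $\mu^{k+1}/(k+1) = (f(k+1)!\, p^{f(k+1)}/(k+1))\, \mu^{\{k+1\}}$; since $f(k+1)$ grows like $k/(p-1)$ while $v_p(k+1) \leq \log_p(k+1)$, the scalar coefficient has $p$-adic valuation tending to $+\infty$, so the series converges in $\Lambda_F$. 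The same estimate applied to $v := \sum_{k \geq 0}(-1)^k \mu^k/(k+1)$ gives $v \in \Lambda_F$, whence $t = \mu v \in \mu \Lambda_F$.

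The remaining three claims hinge on a single nilpotence fact: both $\mu$ and $\mu^{p-1}/p$ are topologically nilpotent in the $p$-adic topology on $\Lambda_F$. Indeed, $\mu^{(p-1)n} = n!\, p^n\, (\mu^{p-1}/p)^{[n]} \in p^n \Lambda_F$ follows from $\mu^{p-1} = p(\mu^{p-1}/p)$, and $(\mu^{p-1}/p)^n = n!\,(\mu^{p-1}/p)^{[n]}$ has $p$-adic valuation at least $v_p(n!) \to +\infty$. Hence every element of the ideal $J := \mu \Lambda_F + (\mu^{p-1}/p)\Lambda_F$, and more generally every $p$-adic limit of such elements, becomes nilpotent in $\Lambda_F/p^N$ for every $N \geq 1$; by $p$-adic completeness of $\Lambda_F$, one concludes that $1 + y$ is a unit in $\Lambda_F$ for any such $y$.

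The three unit statements now follow. For $[p]_q/p$, the expansion $[p]_q = \sum_{j=0}^{p-1}\binom{p}{j+1}\mu^j$ gives $[p]_q/p - 1 = \sum_{j=1}^{p-2}(\binom{p}{j+1}/p)\mu^j + \mu^{p-1}/p \in J$, since each $\binom{p}{j+1}/p \in O_F$ for $1 \leq j \leq p-2$. For $t/\mu = v$, I would show that each term $\mu^k/(k+1)$ with $k \geq 1$ lies in $J$: writing $k+1 = p^e m$ with $\gcd(m,p) = 1$ and $k = (p-1)f + r$ with $0 \leq r < p-1$, the identity $\mu^{p-1} = p(\mu^{p-1}/p)$ yields $\mu^k/(k+1) = (p^{f-e}/m)\,\mu^r\,(\mu^{p-1}/p)^f$, and the elementary estimate $f \geq (p^e - 1)/(p-1) \geq e$ makes $p^{f-e}/m \in O_F$; the term then lies in $\mu\Lambda_F$ when $r \geq 1$, and in $(\mu^{p-1}/p)\Lambda_F$ when $r = 0$ (where $k \geq 1$ forces $f \geq 1$). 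The partial sums of $v - 1$ therefore lie in $J$, so $v$ is a unit by the preceding paragraph. Finally, Lemma \ref{lem:pq_mu0p_unit} gives $\ptilde = [p]_q u$ with $u \in \AF^{\times} \subset \Lambda_F^{\times}$, so $\ptilde/p = ([p]_q/p)\,u$ is also a unit. The single delicate step is the estimate $f \geq e$ used to absorb the denominator $p^e$; the margin is tight when $e$ is small, so this bookkeeping requires the most care.
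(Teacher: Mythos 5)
Your proof is correct, and it diverges from the paper's at the two unit claims involving $t/\mu$ and $[p]_q/p$. For $t/\mu$, the paper also expands $\mu/t = \sum_k b_k \mu^k$ as a formal power series, quotes the classical estimate $v_p(b_k) \geq -k/(p-1)$, and repeats the same valuation bookkeeping to show $\mu/t$ converges in $\Lambda_F$ as a two-sided inverse. You instead avoid computing the inverse series altogether: you show $t/\mu - 1$ is a $p$-adic limit of elements of the ideal $J = (\mu, \mu^{p-1}/p)$, each of which is nilpotent in $\Lambda_F/p^N$ (this uses that the nilradical of $\Lambda_F/p^N$ is an ideal), and then invoke $p$-adic completeness. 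For $[p]_q/p$, the paper uses the slick Frobenius identity $\varphi(t) = pt$, which yields $[p]_q = \varphi(\mu)/\mu = p(t/\mu)\varphi(\mu/t)$, so the unit claim for $[p]_q/p$ is inherited from the unit claim for $t/\mu$; you instead just expand $[p]_q = \sum_j \binom{p}{j+1}\mu^j$ and check $[p]_q/p - 1 \in J$ directly. Your nilradical argument buys independence from the second valuation estimate (for $\mu/t$) and from the Frobenius functional equation, at the cost of a separate lemma that the topologically nilpotent ideal $J$ stays in the Jacobson radical under $p$-adic closure — a step you state somewhat tersely (the jump from ``$\mu$ and $\mu^{p-1}/p$ are topologically nilpotent'' to ``every element of $J$ is nilpotent mod $p^N$'' deserves the one-line justification that the nilradical of a commutative ring is an ideal). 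The $\ptilde/p$ step is handled identically in both via Lemma \ref{lem:pq_mu0p_unit}.
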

\begin{proof}
	Formally, let us write $t/\mu = \sum_{k \in \NN} (-1)^k \frac{\mu^k}{k+1} = \sum_{k \in \NN} a_k \mu^k$ and $\mu/t = \frac{\mu}{\log(1+\mu)} = \sum_{k \in \NN} b_k \mu^k$, with $\upsilon_p(a_k), \upsilon_p(b_k) \geqslant -\frac{k}{p-1}$, for all $k \in \NN$.
	From the discussion before the lemma, recall that $\mu^{\{k\}} = \frac{\mu^k}{f(k)!p^{f(k)}}$.
	Then,
	\begin{equation*}
		\upsilon_p\big(\tfrac{a_k}{f(k)!p^{f(k)}}\big) \geqslant -\tfrac{k}{p-1} + \upsilon_p(f(k)!) + f(k) > \upsilon_p\big(\lfloor \tfrac{k}{p-1} \rfloor!\big) - 1,
	\end{equation*}
	where the last term goes to $+\infty$ as $k \rightarrow +\infty$.
	Similar claim holds with $a_k$ replaced by $b_k$ above.
	Hence, we conclude that $t/\mu$ and $\mu/t$ converge in $\Lambda_F$ as inverse to each other.
	Moreover, we have $[p]_q = \varphi(\mu)/\mu = p\varphi(\mu/t)t/\mu$ in $\Lambda_F$, so $[p]_q/p$ is a unit in $\Lambda_F$.
	Furthermore, using Lemma \ref{lem:pq_mu0p_unit}, we see that $\ptilde/p$ is also a unit in $\Lambda_F$.
\end{proof}

\subsubsection{Explicit description of \texorpdfstring{$\AR(1)/(p_1(\mu))$}{-}}

In this section, we consider the $\AR\algebra$ structure on $\AR(1)/(p_1(\mu))$ via the composition $\AR \xrightarrow{p_2} \AR(1) \rightarrow \AR(1)/(p_1(\mu))$ and the $\Gamma_R\action$ on $\AR(1)/(p_1(\mu))$ means the continuous (for the $\padic$ topology) action of $1 \times \Gamma_R$ on $\AR(1)/(p_1(\mu))$.
Let us consider the $p\textrm{-adically}$ completed divided power $\AR\algebra$ $\AR\big[\textstyle \prod_{\smbfk \in \NN^{d+1}} (\mu^{p-1}/p)^{[k_0]} T_1^{[k_1]} \cdots T_d^{[k_d]}\big]_p^{\wedge}$, where $T_1, \ldots T_d$ are variables.
We equip this ring with a continuous (for the $\padic$ topology) action of $\Gamma_R$ by extending the $\Gamma_R\action$ on $\AR$ by setting $\gamma_j(T_i) = \mu[X_i^{\flat}] + T_i$, for $j = i$, or $T_i$, otherwise.
Moreover, we have $\varphi(\mu^{p-1}) = [p]_q^{p-1} \mu^{p-1}$, so we may further equip this ring with a Frobenius endomorphism $\varphi$, extending the Frobenius on $\AR$ by setting $\varphi([X_i^{\flat}] - T_i) = ([X_i^{\flat}] - T_i)^p$.
Then, we claim the following:
\begin{prop}\label{prop:a1modp1mu_pdring}
	There exists a natural $(\varphi, \Gamma_R)\equivariant$ isomorphism of $p\adically$ complete and $p\torsionfree$ $\AR\algebras$ 
	\begin{equation}\label{eq:a1modp1mu_pdring}
		\alpha \colon \AR(1)/(p_1(\mu)) \isomorphic \AR\big[\textstyle \prod_{\smbfk \in \NN^{d+1}} (\mu^{p-1}/p)^{[k_0]} T_1^{[k_1]} \cdots T_d^{[k_d]}\big]_p^{\wedge},
	\end{equation}
	where $p_2(\mu) \mapsto \mu$, $p_2([X_i^{\flat}]) \mapsto [X_i^{\flat}]$ and $p_1([X_i^{\flat}]) \mapsto [X_i^{\flat}] - T_i$.
\end{prop}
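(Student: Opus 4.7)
The plan is to construct the isomorphism $\alpha$ by producing a map in each direction, using the explicit presentation \eqref{eq:ar1_explicit} of $\AR(1)$ as a prismatic envelope and the machinery relating $\delta$-rings to divided power algebras from Appendix \ref{app_sec:delta_rings_pd_alg}.

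First I would set up the target ring $C := \AR\big[\prod_{\smbfk} (\mu^{p-1}/p)^{[k_0]} T_1^{[k_1]} \cdots T_d^{[k_d]}\big]_p^{\wedge}$ more structurally. Using Proposition \ref{prop:polynom_to_powerseries} and Remark \ref{rem:polynom_powerseries_comp}, the element $\mu^{p-1}/p$ lives in $\Lambda_R$, so $C$ is the $p$-adic completion of the PD-envelope of $\AR[T_1, \ldots, T_d]$ with respect to the ideal $(\mu^{p-1}/p, T_1, \ldots, T_d)$; by the analogue of Lemma \ref{lem:pdring_explicit} in several variables, $C$ is $p$-torsion free. Equipping $C$ with the Frobenius determined by $\varphi(\mu) = [p]_q \mu$ and $\varphi([X_i^{\flat}] - T_i) = ([X_i^{\flat}] - T_i)^p$ (so that $\varphi(T_i) = [X_i^{\flat}]^p - ([X_i^{\flat}] - T_i)^p$ is a polynomial divisible by $p T_i$), one checks directly that $\delta(T_i) \in \AR[T_i]$ and $\delta(\mu^{p-1}/p) \in C$, and the results of Appendix \ref{app_sec:delta_rings_pd_alg} promote $C$ to a $\delta$-ring in which the PD-powers $T_i^{[p^k]}$ and $(\mu^{p-1}/p)^{[p^k]}$ are recovered inductively from iterated $\delta$-applications. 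Equip $C$ with the $\Gamma_R$-action specified before the proposition.

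To construct $\alpha$, define an $O_F$-algebra map $\AR \wotimes_{O_F} \AR \to C$ by $a \otimes b \mapsto \sigma_1(a) \sigma_2(b)$, where $\sigma_1(\mu) = 0$, $\sigma_1([X_i^{\flat}]) = [X_i^{\flat}] - T_i$, $\sigma_2(\mu) = \mu$, $\sigma_2([X_i^{\flat}]) = [X_i^{\flat}]$; this is continuous for the $(p, [p]_q \otimes 1)$-adic topology and annihilates $\mu \otimes 1$. Under this map, $[p]_q \otimes 1 \mapsto p$ (since $[p]_q \equiv p \bmod \mu$), $1 \otimes [p]_q \mapsto [p]_q$, and the generator $([X_i^{\flat}]^p \otimes 1 - 1 \otimes [X_i^{\flat}]^p)/([p]_q \otimes 1)$ maps to $(([X_i^{\flat}] - T_i)^p - [X_i^{\flat}]^p)/p$, an element of $C$ thanks to the PD-powers of $T_i$ (in particular $T_i^p/p = (p-1)! \, T_i^{[p]}$ lies in $C$), while $(1 \otimes [p]_q)/([p]_q \otimes 1) \mapsto [p]_q/p$ is a unit in $C$ by Lemma \ref{lem:tovermu_unit}. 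Using the $\delta$-structure on $C$, this extends uniquely to the $\delta$-envelope \eqref{eq:ar1_explicit} and, after $p$-adic completion, to the desired map $\alpha : \AR(1)/p_1(\mu) \to C$.

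For the inverse $\beta$, I would send $\mu \mapsto p_2(\mu)$, $[X_i^{\flat}] \mapsto p_2([X_i^{\flat}])$, $T_i \mapsto p_2([X_i^{\flat}]) - p_1([X_i^{\flat}])$, and then extend to the PD-powers as follows: modulo $p_1(\mu) = \mu \otimes 1$ the generator $W_i := ([X_i^{\flat}]^p \otimes 1 - 1 \otimes [X_i^{\flat}]^p)/([p]_q \otimes 1)$ of $\AR(1)$ becomes exactly $(-\sum_{k=1}^p \binom{p}{k} T_i^k [X_i^{\flat}]^{p-k})/p$, so $T_i^p/p$ is congruent modulo $\AR[T_i]$ to a unit multiple of $W_i$, producing $T_i^{[p]}$; iterating $\delta$ then produces $T_i^{[p^k]}$ by the appendix, and similarly the $\delta$-envelope relation $\delta(\mu) = \mu \cdot (\text{unit})$ in $\AR$ combined with the prismatic $\delta$-powers applied to $\mu^{p-1}/[p]_q \equiv \mu^{p-1}/p$ mod $p_1(\mu)$ yields the powers $(\mu^{p-1}/p)^{[p^k]}$. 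The hardest part is showing that these PD-powers obtained from the $\delta$-envelope satisfy all required PD-axioms; this is precisely the content of the $\delta$-to-PD translation in Appendix \ref{app_sec:delta_rings_pd_alg}, applied to the reduction modulo $p_1(\mu)$. Given both maps, $\alpha \circ \beta = \textup{id}_C$ and $\beta \circ \alpha = \textup{id}$ are checked on the generators $\mu, [X_i^{\flat}], T_i$ (and their PD- resp.\ $\delta$-powers), and $(\varphi, \Gamma_R)$-equivariance follows from the formulas $\gamma_j(T_i) = \mu[X_i^{\flat}] + T_i$ (for $j = i$) and $\varphi([X_i^{\flat}] - T_i) = ([X_i^{\flat}] - T_i)^p$ being exactly the translations of the $\Gamma_R^2$-action and Frobenius on $\AR(1)$ under the substitution $T_i = p_2([X_i^{\flat}]) - p_1([X_i^{\flat}])$.
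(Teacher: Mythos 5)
Your overall route --- build a map $\alpha$ out of the universal property of the prismatic envelope and an inverse $\beta$ by hand, then check compatibility --- is different from the paper's, which instead directly identifies the $\delta$-subalgebra $\Ebar$ generated over $\Bbar$ by the $\yibar$ with the PD-polynomial algebra $\Ebar' = \Ezerobar[\tau_i^{[n]}]$ inside $\Ezerobar[1/p]$. Both routes, however, hinge on the same key computational fact, and your proposal has a genuine gap precisely there.

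The gap: you assert repeatedly that the passage between $\delta$-powers and divided powers is ``precisely the content of the $\delta$-to-PD translation in Appendix~\ref{app_sec:delta_rings_pd_alg}.'' It is not. The appendix (culminating in Proposition~\ref{app_prop:dbar_ebar_iso}) shows only that reducing the $\delta$-envelope $E$ modulo $\mu$ is well-behaved, i.e.\ $E/\mu E \cong \Ebar$, where $\Ebar$ is the $\delta$-subalgebra of $\Ezerobar[1/p]$ generated by the $\yibar$. It says nothing about $\Ebar$ being a PD-polynomial algebra. That identification, $\Ebar = \Ebar'$, is proved separately in the body of the paper's proof: the inclusion $\Ebar' \subset \Ebar$ needs Lemma~\ref{lem:delta_yibar} (to show $\delta(\tau_i) \in \Ezerobar$ and $\varphi(\yibar) \in p\Ezerobar$) together with \cite[Lemma~2.35]{bhatt-scholze-prisms}, and the reverse inclusion $\Ebar \subset \Ebar'$ needs the explicit expansion $\varphi(\tau_i^{[n]}) = \sum_{k=0}^n \tfrac{(pk)!\,p^{n-k}}{n!}\binom{n}{k}\tau_i^{[pk]}\delta(\tau_i)^{n-k}$ together with the observation that all these coefficients lie in $p\ZZ_p$, to conclude that $\varphi$ stabilises $\Ebar'$ and hence that it carries a $\delta$-structure. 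Your proposal needs exactly this twice over --- once to make $C$ a $\delta$-ring so that $\alpha$ is defined (your ``one checks directly that $\delta(T_i) \in \AR[T_i]$'' handles only the first $\delta$, not the stability of the whole PD-algebra under $\varphi$), and once to make $\beta$ well-defined on $T_i^{[n]}$ and $(\mu^{p-1}/p)^{[n]}$ --- and in both places you defer to the appendix, which does not supply it. In particular, your statement that ``iterating $\delta$ then produces $T_i^{[p^k]}$'' is not accurate: $\delta(\tau_i^{[p]})$ is not $\tau_i^{[p^2]}$, and the actual relationship between the $\delta$-filtration and the PD-filtration is what Lemma~\ref{lem:delta_yibar} and the expansion of $\varphi(\tau_i^{[n]})$ are carefully recording. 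Supplying this computation (or citing \cite[Lemma~2.35]{bhatt-scholze-prisms} and verifying its hypotheses, as the paper does) would close the gap.
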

\begin{proof}
	To construct the map in \eqref{eq:a1modp1mu_pdring} and to show that it is bijective, we shall use the results of Appendix \ref{app_sec:delta_rings_pd_alg}.
	Let $A \coloneq \AF \wotimes_{O_F} \AF$ denote the $(p, p_1(\mu))\adic$ completion of $\AF \otimes_{O_F} \AF$ equipped with the tensor product Frobenius, in particular, $A$ is a $p\torsionfree$ $\delta\ring$.
	Moreover, note that $\varphi^n(p_1(\mu))$ is a nonzerodivisor on $A$, for each $n \in \NN$, and $A/p_1(\mu)A \isomorphic \AF$ is $p\torsionfree$.
	Therefore, by setting $q$ to be $p_1(\mu) + 1$, we see that the ring $A$ satisfies Assumption \ref{app_assum:base_a}.
	Next, let $B \coloneq \AR \wotimes_{O_F} \AR$ denote the $(p, p_1(\mu))\adic$ completion of $\AR \otimes_{O_F} \AR$ equipped with the tensor product Frobenius, in particular, $B$ is a $p\torsionfree$ $\delta\algebra$ over $A$.
	Note that $\varphi^n(p_1(\mu))$ is a nonzerodivisor on $B$, for each $n \in \NN$, and $B/p_1(\mu)B \isomorphic R \wotimes_{O_F} \AR$, as the $\padic$ completion of $R \otimes_{O_F} \AR$, is $p\torsionfree$.
	Set $Y_0 \coloneq p_2([p]_q) - p_1([p]_q)$ and $Y_i \coloneq p_2([X_i^{\flat}]^p) - p_1([X_i^{\flat}]^p)$, for each $1 \leqslant i \leqslant d$, as elements in $B$.
	Then, it is clear that the sequence $\{Y_1, \ldots, Y_d\}$ is regular on $B$ and $B/pB$.
	Moreover, from the discussion in Construction \ref{const:cech_nerve}, recall that the sequence $\{p, p_1([p]_q), p_2([p]_q), Y_1, \ldots, Y_d\}$ is regular on $B$ and since $[p]_q = \mu^{p-1} \textrm{ mod } p$, therefore, it follows that the sequence $\{p, p_1(\mu), p_2([p]_q), Y_1, \ldots, Y_d\}$ is also regular on $B$.
	In particular, we see that $\{Y_0, \ldots, Y_d\}$ is regular on $B/p_1([p]_q)B$ and on $B/(p, p_1(\mu))B$.
	Therefore, we obtain that the $A\algebra$ $B$ satisfies Assumption \ref{app_assum:alg_b}.

	Now, let $I$ denote the set of natural numbers $\{0, \ldots, d\}$ and for each $i$ in $I$, let $y_i \coloneq Y_i/p_1([p]_q)$ be an element of $B[1/p_1([p]_q)]$.
	Set $E_0$ to be the $B\subalgebra$ of $B[1/p_1([p]_q)]$ generated by $\{y_i\}_{i \in I}$ (see the discussion before Lemma \ref{app_lem:e0_desc}), and $E$ to be the $\delta\subalgebra$ of $E_0[1/p, \{1/\varphi^n(p_1([p]_q))\}_{n \in \NN}]$ generated over $B$ by $\{y_i\}_{i \in I}$ (see the discussion before \eqref{app_eq:e_to_ebar}).
	From the discussion before \eqref{eq:ar1_explicit}, note that the ring $\AR(1)$ is given as the $(p, p_1([p]_q))\adic = (p, p_1(\mu))\adic$ completion of $E$, therefore, the ring $\AR(1)/(p_1(\mu))$ is given as the $\padic$ completion of $E/p_1(\mu)E$.

	Next, let $\Abar \coloneq A/\mu A$ and $\Bbar \coloneq B/\mu B$ equipped with the induced Frobenius, in particular, an associated $\delta\textrm{-structure}$, since $\Abar$ and $\Bbar$ are $p\torsionfree$ (see Remark \ref{rem:frob_delta_ring}).
	Let $\Ezerobar \coloneq E_0/\mu E_0$ and for each $i$ in $I$, denote by $\Yibar$ (resp.\ $\yibar$), the image of $Y_i$ (resp.\ $y_i$) in $\Ezerobar$.
	Define $\Ebar$ to be the $\delta\subalgebra$ of $\Ezerobar[1/p]$ generated by $\{\yibar\}_{i \in I}$ over $\Bbar$ (see the discussion before \eqref{app_eq:e_to_ebar}).
	Then, from \eqref{app_eq:e_to_ebar}, we have a natural surjective map $E \rightarrow \Ebar$ inducing an isomorphism of $p\torsionfree$ $\delta\textrm{-rings}$ $E/\mu E \isomorphic \Ebar$, by Proposition \ref{app_prop:dbar_ebar_iso}.

	Now, for each $1 \leqslant i \leqslant d$, set $\tau_i \coloneq p_2([X_i^{\flat}]) - p_1([X_i^{\flat}])$ as an element of $\Ezerobar$.
	Moreover, note that $p_2(\mu)^{p-1} = \overline{Y}_0 + p a$, for some $a$ in $p_2(\mu)\Ezerobar$, so we set $\tau_0 \coloneq p_2(\mu)^{p-1}/p = \overline{y}_0 + a$ as an element of $\Ezerobar$.
	Let us set $\Ebar' \coloneq \Ezerobar[\{\tau_i^{[n]}\}_{i \in I, n \in \NN}]$ as a divided power $\Ezerobar\subalgebra$ of $\Ezerobar[1/p]$.
	We claim that $\Ebar = \Ebar'$ in $\Ezerobar[1/p]$.
	Indeed, to show the inclusion $\Ebar' \subset \Ebar$, note that for each $1 \leqslant i \leqslant d$, we have $\tau_i^p = \Yibar - p\delta(\tau_i)$, where $\delta(\tau_i)$ is in $\Ezerobar$ by Lemma \ref{lem:delta_yibar}, in particular, $\tau_i^p/p = \yibar - \delta(\tau_i)$ is in $\Ezerobar \subset \Ebar$.
	Moreover, using the equalities above we have that $\tau_0^p = \overline{y}_0^p + pb$, for some $b$ in $\Ezerobar$ since $p_2(\mu)^p$ is in $p\Ezerobar$, and by using Lemma \ref{lem:delta_yibar} we have that $\overline{y}_0^p = \varphi(\overline{y}_0) - p\delta(\overline{y}_0)$ is in $p\Ebar$.
	Therefore, we get that $\tau_0^p/p = b + \overline{y}_0^p/p$ is in $\Ebar$.
	Hence, by using \cite[Lemma 2.35]{bhatt-scholze-prisms}, it follows that $\Ebar' \subset \Ebar$.

	Next, to show the inclusion $\Ebar \subset \Ebar'$, note that since we have $\yibar$ in $\Ebar'$, for each $i \in I$, therefore, it is enough to show that $\varphi$ on $\Ebar[1/p]$ preserves $\Ebar'$ and the resulting endomorphism of $\Ebar'$ gives a $\delta\textrm{-structure}$, or equivalently, that $\varphi$ restricts to a lift of Frobenius on $\Ebar'$.
	Note that $\varphi(\tau_0^{[n]}) = p_2([p]_q)^n\tau_0^{[n]}$ is in $p\Ebar'$, and for each $i \in I$, we may write
	\begin{equation*}
		\varphi(\tau_i^{[n]}) = \varphi(\tfrac{\tau_i^n}{n!}) = \tfrac{(\tau_i^p + p\delta(\tau_i))^n}{n!} = \textstyle \sum_{k=0}^n \tfrac{1}{n!} \binom{n}{k} \tau_i^{pk} p^{n-k} \delta(\tau_i)^{n-k} = \textstyle \sum_{k=0}^n \tfrac{(pk)! p^{n-k}}{n!} \binom{n}{k} \tau_i^{[pk]} \delta(\tau_i)^{n-k}.
	\end{equation*}
	An easy computation shows that the coefficient of $\tau_i^{[pk]} \delta(\tau_i)^{n-k}$, for $0 \leqslant k \leqslant n$, in the last term of the equation above, lie in $p\ZZ_p$.
	Therefore, it follows that $\varphi$ preserves $\Ebar'$ and $\varphi(\tau_i^{[n]}) = 0 \textrm{ mod } p\Ebar'$, for each $i \in I$ and $n \in \NN_{\geqslant 1}$.
	Moreover, note that $(\tau_i^{[n]})^p$ is also in $p\Ebar'$, for each $n \in \NN_{\geqslant 1}$.
	So, it follows that the endomorphism $\varphi$ on $\Ebar'$ reduces modulo $p$ to the absolute Frobenius on all generators, hence on all elements, of $\Ebar'$.
	Therefore, we obtain that $\Ebar' = \Ebar \subset \Ezerobar[1/p]$.

	Finally, let us consider $\Ebar$ as an $\AR\algebra$ via the map $p_2$ and define a ring homomorphism of $\AR\algebras$
	\begin{equation*}
		\Ebar = \Ebar' \longrightarrow \AR\big[\textstyle \prod_{\smbfk \in \NN^{d+1}} (\mu^{p-1}/p)^{[k_0]} T_1^{[k_1]} \cdots T_d^{[k_d]}\big],
	\end{equation*}
	by sending $p_2(\mu) \mapsto \mu$, $\tau_0^n/n! \mapsto (\mu^{p-1}/p)^n/n!$, $p_2([X_i^{\flat}]) \mapsto [X_i^{\flat}]$, $p_1([X_i^{\flat}]) \mapsto [X_i^{\flat}] - T_i$ and $\tau_i^n/n! \mapsto T_i^n/n!$.
	Evidently, the map above is bijective and passing to its $\padic$ completion gives the isomorphism in \eqref{eq:a1modp1mu_pdring}.
	It is straightforward to check that the map \eqref{eq:a1modp1mu_pdring} thus obtained is $(\varphi, \Gamma_R)\equivariant$.
	This finishes our proof.
\end{proof}

The following observation was used above:
\begin{lem}\label{lem:delta_yibar}
	For each $i \in I$, the element $\varphi(\yibar)$ is in $p\Ezerobar$, and for each $1 \leqslant i \leqslant d$, the element $\delta(\tau_i)$ is in $\Ezerobar$.
\end{lem}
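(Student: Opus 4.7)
The plan is to treat the two assertions separately, and within the second one to split into the cases $1 \leq i \leq d$ and $i = 0$. The assertion $\delta(\tau_i) \in \Ezerobar$ (for $1 \leq i \leq d$) will be immediate: since $\tau_i = p_2([X_i^{\flat}]) - p_1([X_i^{\flat}])$ already lies in the $p$-torsion-free $\delta$-ring $B = \AR \wotimes_{O_F} \AR$, the element $\delta_B(\tau_i) = (\varphi(\tau_i) - \tau_i^p)/p$ is automatically in $B$, and its image in $\Bbar \subset \Ezerobar$ coincides with $\delta(\tau_i)$ computed inside $\Ezerobar$, because both $\delta$-structures are uniquely determined by compatible Frobenius lifts.

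For the assertion $\varphi(\yibar) \in p\Ezerobar$, the key starting point is that in $\Bbar$ one has $p_1([p]_q) \equiv p \pmod{p_1(\mu)}$, since $[p]_q = p + \mu (\ldots)$ in $\AF$. Thus the defining relation $p_1([p]_q)\, y_i = Y_i$ in $E_0$ reduces to $p\, \yibar = \Yibar$ in $\Ezerobar$, and applying $\varphi$ together with $p$-torsion-freeness of $\Ezerobar$ gives $p\, \varphi(\yibar) = \varphi(\Yibar)$; hence it suffices to show $\varphi(\Yibar) \in p^2 \Ezerobar$. For $1 \leq i \leq d$, I would set $u := p_1([X_i^{\flat}])$ and $v := p_2([X_i^{\flat}])$, so that $Y_i = v^p - u^p$ and $\varphi(Y_i) = v^{p^2} - u^{p^2} = (v^p - u^p) \cdot S$ with $S := \sum_{k=0}^{p-1} (v^p)^k (u^p)^{p-1-k}$. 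Since $v^p - u^p = p\, \yibar$ in $\Ezerobar$ forces $v^p \equiv u^p \pmod{p}$, each summand of $S$ is congruent to $u^{p(p-1)}$ modulo $p$, so $S \equiv p\, u^{p(p-1)} \equiv 0 \pmod{p}$; this yields $\varphi(\Yibar) = p\, \yibar \cdot S \in p^2 \Ezerobar$, as required.

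The main (mild) obstacle is the case $i = 0$, where $\tau_0$ is not of the form $v - u$ and the above clean factorisation is unavailable. Here I would instead exploit the decomposition $\overline{y}_0 = \tau_0 - a$ already recorded in the preamble, with $\tau_0 = p_2(\mu)^{p-1}/p$ and $a \in p_2(\mu)\Ezerobar$, and compute $\varphi(\tau_0)$ and $\varphi(a)$ separately. Using $\varphi(p_2(\mu)) = p_2([p]_q)\, p_2(\mu)$ together with the identity $p_2([p]_q) = p(1 + \overline{y}_0)$ in $\Ezerobar$ (obtained by rearranging $p\, \overline{y}_0 = \overline{Y}_0 = p_2([p]_q) - p$), one finds $\varphi(\tau_0) = p^{p-1}(1 + \overline{y}_0)^{p-1}\, \tau_0$ and, writing $a = p_2(\mu)\, a''$, also $\varphi(a) = p(1 + \overline{y}_0)\, p_2(\mu)\, \varphi(a'')$; both lie in $p\Ezerobar$ since $p - 1 \geq 1$, giving $\varphi(\overline{y}_0) \in p\Ezerobar$ and completing the proof.
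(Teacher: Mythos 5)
Your treatment of $\delta(\tau_i)$ and of $\varphi(\yibar)$ for $1 \leq i \leq d$ is correct and in places cleaner than the paper's. For $\delta(\tau_i)$ the paper writes out $(\varphi(\tau_i)-\tau_i^p)/p$ explicitly and checks term by term, whereas you simply observe that $\tau_i$ lifts to the $p$-torsion-free $\delta$-ring $B$, so that $\delta(\tau_i)$ already lies in $\Bbar \subset \Ezerobar$; this is a valid abstraction of the same computation. For $\varphi(\yibar)$ with $i \geq 1$, your reduction ($p\yibar = \Yibar$ in $\Ezerobar$, $p$-torsion-freeness, hence it suffices that $\varphi(\Yibar) \in p^2\Ezerobar$) matches the paper's; your factorisation $v^{p^2}-u^{p^2} = (v^p-u^p)S$ with $S \equiv p\,u^{p(p-1)} \equiv 0 \bmod p$ replaces the paper's binomial expansion of $(\Yibar + p_1([X_i^{\flat}])^p)^p$, and both work. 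For $i=0$ your route via $\overline{y}_0 = \tau_0 - a$ and the identity $p_2([p]_q) = p(1+\overline{y}_0)$ is structurally the same as the paper's direct expansion of $\tfrac{p_2(q)^{p^2}-1}{p(p_2(q)^p-1)}-1$, and your formula $\varphi(\tau_0) = p^{p-1}(1+\overline{y}_0)^{p-1}\tau_0$ does handle $p=2$ correctly.

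There is, however, one genuine gap in the $i=0$ case: the step asserting that $\varphi(a) = p(1+\overline{y}_0)p_2(\mu)\varphi(a'')$ lies in $p\Ezerobar$ requires $\varphi(a'') \in \Ezerobar$, and this does not follow from the only property of $a$ you cite, namely $a \in p_2(\mu)\Ezerobar$. The ring $\Ezerobar$ is not known to be stable under $\varphi$ at this point — Frobenius-stability of the ambient divided-power ring is precisely what the enclosing proposition is in the process of establishing, and $\varphi$ of a general element of $\Ezerobar$ a priori lives only in $\Ezerobar[1/p]$. The repair is immediate once you use the explicit form of $a$: from $[p]_q = p + \binom{p}{2}\mu + \cdots + \mu^{p-1}$ one finds $a = -\sum_{k=2}^{p-1}\tfrac{1}{p}\binom{p}{k}\,p_2(\mu)^{k-1}$, so $a'' \in \ZZ[p_2(\mu)] \subset \Bbar$, and $\varphi(a'') \in \Bbar \subset \Ezerobar$ because $\Bbar$ is a $\delta$-ring. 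You should make this observation explicit; with it, your argument for $i=0$ is complete.
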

\begin{proof}
	For $1 \leqslant i \leqslant d$, we have that
	\begin{align*}
		\varphi(\yibar) &= \tfrac{1}{p}(\varphi(p_2([X_i^{\flat}])^p - p_1([X_i^{\flat}])^p))\\
		&= \tfrac{1}{p}\big((p_2([X_i^{\flat}])^p - p_1([X_i^{\flat}])^p + p_1([X_i^{\flat}])^p)^p - p_1([X_i^{\flat}])^{p^2}\big)\\
		&= \tfrac{1}{p}\big((p_2([X_i^{\flat}])^p - p_1([X_i^{\flat}])^p)^p\big) + \textstyle \sum_{k=1}^{p-1} \tfrac{1}{p} \binom{p}{k} (p_2([X_i^{\flat}])^p - p_1([X_i^{\flat}])^p)^k p_1([X_i^{\flat}])^{p-k}\\
		&= p^{p-1}\yibar^p + \textstyle \sum_{k=1}^{p-1} p^{k-1} \binom{p}{k} \yibar^k p_1([X_i^{\flat}])^{p-k},
	\end{align*}
	is an element of $p\Ezerobar$.
	Moreover, for $i=0$, we have that
	\begin{align*}
		\varphi(\overline{y}_0) = \tfrac{p_2(q)^{p^2} - 1}{p(p_2(q)^p - 1)} - 1 &= \tfrac{(p_2(q)^p-1+1)^p - 1}{p(p_2(q)^p - 1)} - 1\\
		&= \tfrac{(p_2(q)^p - 1)^{p-1}}{p} + \textstyle\sum_{k=2}^{p-1} \tfrac{1}{p} \binom{p}{k} (p_2(q)^p-1)^{k-1}\\
		&= p^{p-2}(p_2(q)-1)^{p-1}(\overline{y}_0 + 1)^{p-1} + \textstyle\sum_{k=2}^{p-1} p^{k-1} \binom{p}{k} (p_2(q)-1)^{k-1}(\overline{y}_0 + 1)^{k-1}.
	\end{align*}
	For $p \geqslant 3$, it is clear that $\varphi(\overline{y}_0)$ is in $p\Ezerobar$, whereas for $p=2$, note that $p_2(q)-1 = p_2(q) + 1 - 2 = p_2([p]_q) - 2 = 2\overline{y}_0$, i.e.\ $\varphi(\overline{y}_0)$ is in $p\Ezerobar$.
	This shows the first claim.
	Next, for $1 \leqslant i \leqslant d$, note that
	\begin{align*}
		\delta(\tau_i) = \delta(p_2([X_i^{\flat}]) - p_1([X_i^{\flat}])) &= \tfrac{1}{p}\big(p_2([X_i^{\flat}])^p - p_1([X_i^{\flat}])^p - (p_2([X_i^{\flat}]) - p_1([X_i^{\flat}]))^p\big)\\
		&= \tfrac{-1}{p}\big(p_1([X_i^{\flat}])^p + (-p_1([X_i^{\flat}]))^p\big) - \textstyle\sum_{k=1}^{p-1} \tfrac{1}{p} \binom{p}{k} p_2([X_i^{\flat}])^k (-p_1([X_i^{\flat}]))^{p-k} ,
	\end{align*}
	is evidently in $\Ezerobar$.
	This proves the second claim.
\end{proof}

\begin{rem}\label{rem:a1modp1mu_lambda}
	In this remark, we will use $\wotimes$ to denote the $\padic$ completion of the usual tensor product.
	Let $\Lambda_F = O_F[\mu, \{(\mu^{p-1}/p)^{[k]}\}_{k \in \NN}]_p^{\wedge}$ and note that it is a $p\torsionfree$ algebra over $\AF = O_F\llbracket \mu \rrbracket$ (see Proposition \ref{prop:polynom_to_powerseries} for $R = O_F$).
	Note that the target ring in the isomorphism \eqref{eq:a1modp1mu_pdring} of Proposition \ref{prop:a1modp1mu_pdring} is $p\torsionfree$ and by definition the ring $\AR\big[\textstyle \prod_{\smbfk \in \NN^d} T_1^{[k_1]} \cdots T_d^{[k_d]}\big]$ is $p\torsionfree$ as well.
	Therefore, by checking modulo $p$, it is easy to see that we have a natural $(\varphi, \Gamma_R)\equivariant$ isomorphism of rings
	\begin{equation}\label{eq:pdring_lambda_relation}
		\AR\big[\textstyle \prod_{\smbfk \in \NN^d} T_1^{[k_1]} \cdots T_d^{[k_d]}\big] \wotimes_{\AF} \Lambda_F \isomorphic \AR\big[\textstyle \prod_{\smbfk \in \NN^{d+1}} (\mu^{p-1}/p)^{[k_0]} T_1^{[k_1]} \cdots T_d^{[k_d]}\big]_p^{\wedge},
	\end{equation}
	where we take the diagonal action of $\varphi$ and $\Gamma_R$ on the source.
	In particular, using \eqref{eq:a1modp1mu_pdring} and \eqref{eq:pdring_lambda_relation} it follows that $\AR(1)/(p_1(\mu))$ is the $\padic$ completion of a PD-polynomial algebra over $\AR \wotimes_{\AF} \Lambda_F$.

	Next, by an argument similar to \eqref{eq:pdring_lambda_relation}, we note that the isomorphism in \eqref{eq:pd_iota} readily extends to a $(\varphi, \Gamma_F)\equivariant$ isomorphism 
	\begin{equation*}
		\iota \colon R\llbracket \mu \rrbracket\big[\textstyle \prod_{\smbfk \in \NN^d} T_1^{[k_1]} \cdots T_d^{[k_d]}\big] \wotimes_{O_F\llbracket \mu \rrbracket} \Lambda_F \isomorphic \AR\big[\textstyle \prod_{\smbfk \in \NN^d} T_1^{[k_1]} \cdots T_d^{[k_d]}\big] \wotimes_{\AF} \Lambda_F.
	\end{equation*}
	Furthermore, again arguing similar to \eqref{eq:pdring_lambda_relation}, we note that the isomorphism in \eqref{eq:polynom_powerseries_comp} extends to a $(\varphi, \Gamma_F)\equivariant$ isomorphism
	\begin{equation*}
		\beta \colon R\big[\textstyle \prod_{\smbfk \in \NN^d} T_1^{[k_1]} \cdots T_d^{[k_d]}\big] \wotimes_{O_F} \Lambda_F \isomorphic R\llbracket \mu \rrbracket\big[\textstyle \prod_{\smbfk \in \NN^d} T_1^{[k_1]} \cdots T_d^{[k_d]}\big] \wotimes_{O_F\llbracket \mu \rrbracket} \Lambda_F.
	\end{equation*}
	Composing these with the isomorphism in Proposition \ref{prop:a1modp1mu_pdring}, we obtain a $(\varphi, \Gamma_F)\equivariant$ isomorphism
	\begin{equation}\label{eq:a1modp1mu_lambda}
		\beta^{-1} \circ \iota^{-1} \circ \alpha \colon \AR(1)/(p_1(\mu)) \isomorphic R\big[\textstyle \prod_{\smbfk \in \NN^d} T_1^{[k_1]} \cdots T_d^{[k_d]}\big] \wotimes_{O_F} \Lambda_F \eqcolon C_R.
	\end{equation}

	Let $P_R \coloneq R\big[\textstyle \prod_{\smbfk \in \NN^d} T_1^{[k_1]} \cdots T_d^{[k_d]}\big]_p^{\wedge}$ and note that the target ring of the isomorphism \eqref{eq:a1modp1mu_lambda} may also be written as $C_R = P_R \wotimes_R \Lambda_R = P_R[\mu, \{(\mu^{p-1}/p)^{[k]}\}_{k \in \NN}]_p^{\wedge} = \Lambda_R\big[\textstyle \prod_{\smbfk \in \NN^d} T_1^{[k_1]} \cdots T_d^{[k_d]}\big]_p^{\wedge}$.
	In particular, $C_R$ is the $\padic$ completion of a PD-polynomial algebra over $\Lambda_R$.
	Now for $k \in \NN$, let $k = (p-1) f(k) + r(k)$, with $r(k)$, $f(k)$ in $\NN$ and $0 \leqslant r(k) < p-1$.
	Set $\mu^{\{k\}} = \frac{\mu^k}{f(k)!p^{f(k)}}$, then we have $C_R = P_R[\mu, \{(\mu^{p-1}/p)^{[k]}\}_{k \in \NN}]_p^{\wedge} = P_R[\mu, \{\mu^{\{k\}}\}_{k \in \NN}\big]_p^{\wedge}$.
	In particular, from the isomorphism \eqref{eq:a1modp1mu_lambda}, it follows that for any $x$ in $\AR(1)/(p_1(\mu))$ we have a unique presentation $(\beta^{-1} \circ \iota^{-1} \circ \alpha)(x) = \sum_{k \in \NN} a_k \mu^{\{k\}}$ in $C_R$, with $a_k$ in $P_R$ for all $k \in \NN$, and since $\AR(1)/(p_1(\mu)) \isomorphic C_R$ are $p\adically$ separated, we have that $a_k \rightarrow 0$ as $k \rightarrow +\infty$.
\end{rem}

\begin{rem}\label{rem:a1modp1mu_oacrys_embed}
	The target ring of the isomorphism $\alpha$ in Proposition \ref{prop:a1modp1mu_pdring} admits a natural $\Gamma_R\equivariant$ injective homomorphism into the crystalline period ring $\OAcrys(\Rinfty)$ from \cite[Chapitre 6]{brinon-relatif}.
	Indeed, first of all note that the injective homomorphism $\AR \hookrightarrow \Ainf(\Rinfty)$ from Section \ref{subsec:important_rings} extends to a natural $(\varphi, \Gamma_R)\equivariant$ injective homomorphism $\AR \wotimes_{\AF} \Lambda_F \hookrightarrow \Ainf(\Rinfty) \wotimes_{\AF} \Lambda_F \isomorphic \Acrys(\Rinfty)$, where the latter isomorphism follows in a manner similar to the proof of \cite[Proposition 6.2.14]{brinon-relatif}.
	Then, from the isomorphism \eqref{eq:a1modp1mu_pdring} in Proposition \ref{prop:a1modp1mu_pdring} and from the description of $\OAcrys(\Rinfty)$ as the $\padic$ completion of a PD-polynomial algebra over $\Acrys(\Rinfty)$, i.e.\ $\OAcrys(\Rinfty) \isomorphic \Acrys(\Rinfty)\big[\textstyle \prod_{\smbfk \in \NN^d} T_1^{[k_1]} \cdots T_d^{[k_d]}\big]_p^{\wedge}$ (see \cite[Chapitre 6]{brinon-relatif} and \cite[Section 2.2]{abhinandan-relative-wach-i}), it follows that we have a $(\varphi, \Gamma_R)\equivariant$ injective homomorphism $\AR(1)/(p_1(\mu)) \hookrightarrow \OAcrys(\Rinfty)$.
\end{rem}

\begin{rem}\label{rem:a1modp1mu_ralg}
	Recall that there are two $\AR\algebra$ structures on $\AR(1)$ written as $p_i \colon \AR \rightarrow \AR(1)$ for $i = 1, 2$ (see Construction \ref{const:cech_nerve}).
	Composing it with the injective map $\iota \colon R \rightarrow \AR$ (see Section \ref{subsubsec:ring_ar+}), gives morphism of rings $R \xrightarrow{\hspace{1mm} \iota \hspace{1mm}} \AR \xrightarrow{\hspace{1mm} p_i \hspace{1mm}} \AR(1)$.
	Reducing $p_1$ modulo $\mu$ induces an $R\algebra$ structure on $\AR(1)/(p_1(\mu))$ via the composition $R \isomorphic \AR/(\mu) \xrightarrow{\hspace{1mm} p_1 \hspace{1mm}} \AR(1)/(p_1(\mu))$, which we again denote by $p_1$ and where the first isomorphism is induced by the map $\iota$.
	In particular, we have $(p_1 \circ \alpha)(X_i) = [X_i^{\flat}] - T_i$, for all $1 \leqslant i \leqslant d$.
	Moreover, we will denote the composition $R \xrightarrow{\hspace{1mm} \iota \hspace{1mm}} \AR \xrightarrow{\hspace{1mm} p_2 \hspace{1mm}} \AR(1)$ just by $p_2$.
	Note that both $p_1$ and $p_2$ described above are $\varphi\equivariant$.
\end{rem}

\begin{lem}\label{lem:a1modp1mu_p1p2_ff}
	The morphisms $p_i \colon R \rightarrow \AR(1)/(p_1(\mu))$ for $i = 1, 2$ are faithfully flat.
\end{lem}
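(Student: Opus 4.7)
The plan is to handle the cases $i=1$ and $i=2$ separately. For $p_1$, the claim will follow from Lemma \ref{lem:pi_rj_ff} by base change: since $p_1 : \AR \to \AR(1)$ is faithfully flat and the quotient $\AR \twoheadrightarrow \AR/\mu$ is canonically identified with $\AR \twoheadrightarrow R$ via the isomorphism $\AR/\mu \isomorphic R$ induced by $\iota$, base changing along this surjection yields the faithfully flat map $R = \AR/\mu \to \AR(1) \otimes_{p_1, \AR} (\AR/\mu) = \AR(1)/p_1(\mu)$, which by Remark \ref{rem:a1modp1mu_ralg} coincides with the $p_1$ of the statement.

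For $p_2$, the naive factorization $R \xrightarrow{\iota} \AR \xrightarrow{p_2} \AR(1) \twoheadrightarrow \AR(1)/p_1(\mu)$ is not directly useful, because the final projection is not $\AR$-flat. Instead, I would invoke the explicit description of Remark \ref{rem:a1modp1mu_lambda}: under the composite isomorphism $\beta^{-1} \circ \iota^{-1} \circ \alpha : \AR(1)/p_1(\mu) \isomorphic C_R = P_R \wotimes_R \Lambda_R$, tracing through the definitions shows that $p_2$ is identified with the canonical inclusion $R \hookrightarrow P_R \hookrightarrow C_R$; indeed, $\alpha$ sends $p_2([X_i^{\flat}]) \mapsto [X_i^{\flat}]$, $\iota^{-1}$ sends $[X_i^{\flat}] \mapsto X_i \in R\llbracket \mu \rrbracket$, and $\beta^{-1}$ acts as the identity on $R$-coefficients.

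To establish faithful flatness of this canonical inclusion, I would first note that the retractions $P_R \twoheadrightarrow R$ (via $T_i \mapsto 0$) and $\Lambda_R \twoheadrightarrow R$ (via $\mu \mapsto 0$) combine to give a retraction $C_R \twoheadrightarrow R$, so it remains to check flatness. Since $R$ is $p$-adically complete Noetherian and $C_R$ is $p$-adically complete and $p$-torsion free (by Proposition \ref{prop:a1modp1mu_pdring}), \cite[\href{https://stacks.math.columbia.edu/tag/0912}{Tag 0912}]{stacks-project} reduces the task to checking flatness of $R/p \to C_R/p$. The key calculation is the identification of $\Lambda_F/p$ via the presentation of Lemma \ref{lem:pdring_explicit}: modulo $p$ the relations $pY_0 = \mu^{p-1}$ and $pY_{k+1} = Y_k$ collapse to $\mu^{p-1} = 0$ and $Y_k = 0$ for all $k$, giving $\Lambda_F/p \isomorphic \kappa[\mu]/(\mu^{p-1})$, a free $\kappa$-module of rank $p-1$. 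Base changing then yields $\Lambda_R/p \isomorphic (R/p)[\mu]/(\mu^{p-1})$, free of rank $p-1$ over $R/p$; combined with the freeness of $P_R/p$ over $R/p$ on the PD-monomial basis in $T_1, \ldots, T_d$, one concludes that $C_R/p = P_R/p \otimes_{R/p} \Lambda_R/p$ is free, hence faithfully flat, over $R/p$.

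The only delicate point is the modulo-$p$ calculation of $\Lambda_F$ via its explicit presentation; the rest of the argument is standard descent of flatness along $p$-adic completion combined with the retraction observation, and I do not anticipate any substantial obstacle.
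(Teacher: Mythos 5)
Your argument is correct, and for $p_1$ it coincides with the paper's. For $p_2$ you take a genuinely different route. The paper constructs an $\AR$-algebra automorphism $f$ of the target PD-algebra in \eqref{eq:a1modp1mu_pdring} (induced by $[X_i^{\flat}] \mapsto [X_i^{\flat}] + T_i$ on the frame, then spread along the étale map) and observes that $p_2 = (\alpha^{-1}\circ f\circ\alpha)\circ p_1$; faithful flatness of $p_2$ then follows for free from that of $p_1$ and the automorphism. You instead identify $p_2$ with the structure map $R\to C_R$ via \eqref{eq:a1modp1mu_lambda} and verify flatness head-on by reducing mod $p$: the computation $\Lambda_F/p \isomorphic \kappa[\mu]/(\mu^{p-1})$ and freeness of $P_R/p$ over $R/p$ give that $C_R/p$ is free over $R/p$, and the retraction $C_R\twoheadrightarrow R$ upgrades flat to faithfully flat. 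Both work; the paper's approach is shorter and leverages the already-proved flatness of $p_1$, while yours is a self-contained computation that makes the structure of $C_R$ explicit.

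One small technical point worth tightening: Tag 0912 requires that $C_R/p^n$ be flat over $R/p^n$ for \emph{all} $n \geq 1$, not just $n=1$. To pass from the $n=1$ case to all $n$, you need the local criterion of flatness (e.g.\ \cite[\href{https://stacks.math.columbia.edu/tag/0AS8}{Tag 0AS8}]{stacks-project}): since $C_R$ is $p$-torsionfree and $p$ is a nonzerodivisor on $R$, we have $\mathrm{Tor}_1^R(R/p, C_R) = C_R[p] = 0$, and combined with flatness of $C_R/p$ over $R/p$ this yields flatness of $C_R/p^n$ over $R/p^n$ for all $n$, at which point Tag 0912 applies. You correctly flag $p$-torsionfreeness as the relevant input, but attributing the entire reduction to Tag 0912 elides this intermediate step.
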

\begin{proof}
	From Lemma \ref{lem:pi_rj_ff} the map $p_1 \colon \AR \rightarrow \AR(1)$ is faithfully flat, therefore the map $p_1 \colon R \isomorphic \AR/(\mu) \rightarrow \AR(1)/(p_1(\mu))$ is faithfully flat.
	Now recall that we have $\Aframe^+ \isomorphic \Rframe\llbracket \mu \rrbracket$ (see Section \ref{subsubsec:ring_ar+}) and consider the following automorphism:
	\begin{equation*}
		f \colon \Aframe^+\big[\textstyle \prod_{\smbfk \in \NN^{d+1}} (\mu^{p-1}/p)^{[k_0]} T_1^{[k_1]} \cdots T_d^{[k_d]}\big]_p^{\wedge} \isomorphic \Aframe^+\big[\textstyle \prod_{\smbfk \in \NN^{d+1}} (\mu^{p-1}/p)^{[k_0]} T_1^{[k_1]} \cdots T_d^{[k_d]}\big]_p^{\wedge},
	\end{equation*}
	sending $[X_i]^{\flat} \mapsto [X_i^{\flat}] + T_i$, $T_i \mapsto T_i$ and $\mu \mapsto \mu$.
	Extending this isomorphism along the $p\textrm{-adically}$ completed \'etale map $\Rframe \rightarrow R$ (see Section \ref{subsubsec:ring_ar+}) gives an automorphism $f$ of the $p\textrm{-adically}$ complete PD-algebra $\AR\big[\textstyle \prod_{\smbfk \in \NN^{d+1}} (\mu^{p-1}/p)^{[k_0]} T_1^{[k_1]} \cdots T_d^{[k_d]}\big]_p^{\wedge}$, and we consider the following commutative diagram:
	\begin{equation}\label{eq:a1modp1mu_p1p2_ff}
		\begin{tikzcd}
			R \arrow[r, "p_1"] \arrow[rd, "p_2"'] & \AR(1)/(p_1(\mu)) \arrow[d, "\wr"', "\alpha^{-1} \circ f \circ \alpha"]\\
			& \AR(1)/(p_1(\mu)).
		\end{tikzcd}
	\end{equation}
	Since $p_1$ is faithfully flat, therefore, from the diagram \eqref{eq:a1modp1mu_p1p2_ff} it follows that $p_2$ is faithfully flat as well.
\end{proof}

\subsubsection{\texorpdfstring{$\qconnection$ on $\AR(1)/(p_1(\mu))$}{-}}\label{subsubsec:a1modp1mu_qconnection}

In this section, we will interpret the action of the geometric part of $\Gamma_R$, i.e.\ $\Gamma_R'$ as a $\qconnection$ on $\AR(1)/(p_1(\mu))$.
We start by recalling the definition of a $\qconnection$ from \cite[Definition 2.1]{morrow-tsuji} and \cite[Section 5.1]{abhinandan-relative-wach-ii}.

Let $D$ be commutative ring and consider a $D\algebra$ $A$ equipped with $d$ commuting $D\algebra$ automorphisms $\gamma_1 \ldots, \gamma_d$, i.e.\ an action of $\ZZ^d$.
Moreover, fix an element $q$ in $D$ such that $q-1$ is a nonzerodivisor of $D$ and $\gamma_i = 1 \textmod (q-1)A$, for all $1 \leqslant i \leqslant d$.

\begin{defi}[$q\textrm{-de Rham complex}$]\label{defi:qdeRham_complex}
	Let $q\Omega^{\bullet}_{A/D} \coloneq \bigoplus_{k=0}^d q\Omega^k_{A/D}$ be a differential graded $D\algebra$ defined as follows:
	\begin{itemize}
		\item $q\Omega^0_{A/D} = A$ and $q\Omega^1_{A/D}$ is a free left $A\module$ on formal basis elements $\dlog(U_i)$.

		\item The right $A\module$ structure on $q\Omega^1_{A/D}$ is twisted by the rule $\dlog(U_i) \cdot f = \gamma_i(f) \dlog(U_i)$.

		\item $\dlog(U_i) \dlog(U_j) = - \dlog(U_j) \dlog(U_i)$, if $i \neq j$, and $0$, if $i=j$.

		\item The following map, where $I_k = \{\smbfi = (i_1, \ldots, i_k) \in \NN^k \textrm{ such that } 1 \leqslant i_1 < \cdots < i_k \leqslant d\}$, is an isomorphism of $A\modules$
			\begin{align*}
				\textstyle \bigoplus_{\smbfi \in I_k} A &\isomorphic q\Omega^k_{A/D}\\
				(f_{\smbfi}) &\longmapsto \textstyle\sum_{\smbfi \in I_k} f_{\smbfi} \dlog(U_{i_1}) \cdots \dlog(U_{i_k}).
			\end{align*}

		\item The $0^{\textrm{th}}$ differential $d_q \colon A \rightarrow q\Omega^1_{A/D}$ is given as $f \mapsto \sum_{i=1}^d \frac{\gamma_i(f)-f}{q-1} \dlog(U_i)$.

		\item The elements $\dlog(U_i) \in q\Omega^1_{A/D}$ are cocycles for all $1 \leqslant i \leqslant d$.
	\end{itemize}
	The data $d_q \colon A \rightarrow q\Omega^1_{A/D}$ forms a differential ring over $D$, i.e.\ $q\Omega^1_{A/D}$ is a $D\textrm{-bimodule}$, and $d_q$ is $D\linear$ satisfying the Leibniz rule $d_q(fg) = d_q(f)g + fd_q(g)$.
\end{defi}

\begin{rem}
	In all cases that we consider in this article, we will fix units $U_1, \ldots, U_d \in A^{\times}$ such that $\gamma_i(U_j) = q U_j$, if $i=j$, or $U_j$, if $i \neq j$.
\end{rem}

\begin{rem}\label{rem:nablaqi_nota}
	In Definition \ref{defi:qdeRham_complex}, let us denote the operator $\frac{\gamma_i-1}{q-1} \colon A \rightarrow A$ by $\nabla_{q, i}^{\log}$, and we shall refer to these as the $q\textrm{-differential}$ operators in logarithmic coordinates.
	For $U_i$ as in the preceding remark, the operators $\nabla_{q, i} \coloneq U_i \nabla_{q, i}^{\log}$, shall be referred to as the $q\textrm{-differential}$ operators in non-logarithmic coordinates.
\end{rem}

\begin{exam}\label{exam:ar+_qconnection}
	Take $D$ to be $\AF =  O_F\llbracket \mu \rrbracket$, and $A$ to be $\AR$ equipped with the $\AF\linear$ action of $\Gamma_R$, and let $\{\gamma_1, \ldots, \gamma_d\}$ be the topological generators of $\Gamma_R'$ (see Section \ref{subsec:important_rings}).
	Then, by setting $q = 1+\mu$ and $U_i = [X_i^{\flat}]$, for $1 \leqslant i \leqslant d$, we have that $\gamma_i = 1 \textmod \mu \AR$ for all $1 \leqslant i \leqslant d$.
	In particular, $\AR$ satisfies the hypotheses of Definition \ref{defi:qdeRham_complex}.
	Moreover, in this case, $q\Omega^1_{\AR/\AF}$ identifies with $\Omega^1_{\AR/\AF}$, i.e.\ the $(p, \mu)\textrm{-adic}$ completion of the module of K\"ahler differentials of $\AR$ with respect to $\AF$.
	Furthermore, note that we have an isomorphism of rings $\AR/(\mu) \isomorphic R$, so from \cite[Remarks 2.4 and 2.10]{morrow-tsuji}, reduction modulo $q-1$ of the differential ring $d_q \colon \AR \rightarrow \Omega^1_{\AR/\AF}$, is the usual continuous de Rham differential $d \colon R \rightarrow \Omega^1_R$.
\end{exam}

\begin{exam}\label{exam:arpd_qconnection}
	Take $D$ to be $\Lambda_F = O_F[\mu, \{(\mu^{p-1}/p)^{[k]}\}_{k \in \NN}]_p^{\wedge}$ equipped with an action of $(\varphi, \Gamma_F)$ as in Remark \ref{rem:polynom_powerseries_comp} and take $A$ to be $\ARpd \coloneq \AR[\{(\mu^{p-1}/p)^{[k]}\}_{k \in \NN}]_p^{\wedge} = \AR \wotimes_{\AF} \Lambda_F$, equipped with the natural and continuous tensor product $(\varphi, \Gamma_R)\action$.
	From Remark \ref{rem:polynom_powerseries_comp}, note that the structure map $\Lambda_F \rightarrow \ARpd$ is $(\varphi, \Gamma_F)\equivariant$ and from \eqref{eq:pd_iota} note that we also have a $(\varphi, \Gamma_F)\equivariant$ isomorphism of rings $\iota \colon \Lambda_R \isomorphic \ARpd$.
	Moreover, the $\Lambda_F\algebra$ $\ARpd$ is equipped with a $\Lambda_F\linear$ continuous (for the $\padic$ topology) action of $\Gamma_R' \subset \Gamma_R$ and we have $\{\gamma_1, \ldots, \gamma_d\}$ as topological generators of $\Gamma_R'$ (see Section \ref{subsec:important_rings}).
	Then, setting $q = 1+\mu$ and $U_i = [X_i^{\flat}]$, for $1 \leqslant i \leqslant d$, it follows that $\gamma_i = 1 \textmod \mu \ARpd$, for all $1 \leqslant i \leqslant d$.
	In particular, $\ARpd$ satisfies the hypotheses of Definition \ref{defi:qdeRham_complex}.
	Furthermore, in this case, $q\Omega^1_{\ARpd/\Lambda_F}$ identifies with $\Omega^1_{\ARpd/\Lambda_F}$, i.e.\ the $(p, \mu)\adic = (p, [p]_q)\adic = \padic$ (see Lemma \ref{lem:tovermu_unit}) completion of the module of K\"ahler differentials of $\ARpd$ with respect to $\Lambda_F$.
	From Definition \ref{defi:qdeRham_complex} the $\qconnection$ on $\ARpd$, denoted $\nabla_q \colon \ARpd \rightarrow q\Omega^1_{\ARpd/\Lambda_F}$, is given as $f \mapsto \sum_{i=1}^d \frac{\gamma_i(f)-f}{p_2(\mu)} \dlog([X_i^{\flat}])$.
	Moreover, the $\qconnection$ $\nabla_q$ on $\ARpd$ is $p\adically$ quasi-nilpotent because we have $\tfrac{\gamma_i-1}{p_2(\mu)[X_i^{\flat}]}([X_i^{\flat}]) = 1$, and it is flat because $\gamma_i$ commute with each other.
	Furthermore, using the $\qconnection$, $\ARpd$ may be equipped with a quasi-nilpotent flat connection $\nabla$ as in Proposition \ref{prop:a1modp1mu_connection}, which coincides with the universal $\Lambda_F\linear$ continuous de Rham differential $d \colon \ARpd \rightarrow \Omega^1_{\ARpd/\Lambda_F}$.
	Then, by using Proposition \ref{prop:a1modp1mu_connection}, it follows that we have $(\ARpd)^{\Gamma_R'} = (\ARpd)^{\nabla_q=0} \isomorphic (\ARpd)^{\nabla = 0} = (\ARpd)^{d=0}$, where the first equality follows because the action of $\Gamma_R$ on $\ARpd$ is continuous for the $\padic$ topology (see Lemma \ref{lem:cont_coh_disc}).
	Moreover, recall that from Section \ref{subsec:important_rings}, for any $g$ in $\Gamma_F$, we have that $g \gamma_i g^{-1} = \gamma_i^{\chi(f)}$, for all $1 \leqslant i \leqslant d$.
	Hence, from Remark \ref{rem:lambda_connection}, the isomorphism $\Lambda_F \isomorphic (\ARpd)^{\nabla_q=0} = (\ARpd)^{\Gamma_R'}$, induced by the structure map, is $(\varphi, \Gamma_F)\equivariant$.
\end{exam}

\begin{exam}\label{exam:a1modp1mu_qconnection}
	Take $D$ to be $\Lambda_R = R[\mu, \{(\mu^{p-1}/p)^{[k]}\}_{k \in \NN}]_p^{\wedge}$ equipped with an action of $(\varphi, \Gamma_F)$ as in Remark \ref{rem:polynom_powerseries_comp} and take $A$ to be $\Abar(1) \coloneq \AR(1)/(p_1(\mu))$ as a $\Lambda_R\algebra$ via the morphism of rings $\iota_{\Lambda} \colon \Lambda_R \rightarrow \Abar(1)$ defined by extending the $R\algebra$ structure $p_1 \colon R \rightarrow \Abar(1)$ in Remark \ref{rem:a1modp1mu_ralg} via $\mu \mapsto p_2(\mu)$ and $(\mu^{p-1}/p)^{[k]} \mapsto (p_2(\mu)^{p-1}/p)^{[k]}$.
	Using Lemma \ref{lem:a1modp1mu_p1p2_ff} and the explicit description of $\Abar(1)$ in Proposition \ref{prop:a1modp1mu_pdring} and Remark \ref{rem:a1modp1mu_lambda}, it follows that the map $\iota_{\Lambda}$ is injective and $(\varphi, \Gamma_F)\equivariant$.
	Moreover, the $\Lambda_R\algebra$ $\Abar(1)$ is equipped with a $\Lambda_R\linear$ (via $\iota_{\Lambda}$) continuous (for the $\padic$ topology) action of $\Gamma_R' \subset 1 \times \Gamma_R$ and we have $\{\gamma_1, \ldots, \gamma_d\}$ as topological generators of $\Gamma_R'$ (see Section \ref{subsec:important_rings}).
	Then, by setting $q = 1+\mu$ and $U_i = [X_i^{\flat}]$, for $1 \leqslant i \leqslant d$, and from the explicit description of $\Abar(1)$ in Proposition \ref{prop:a1modp1mu_pdring}, we have that $\gamma_i = 1 \textmod p_2(\mu) \Abar(1)$, for all $1 \leqslant i \leqslant d$.
	In particular, $\Abar(1)$ satisfies the hypotheses of Definition \ref{defi:qdeRham_complex}.
	Furthermore, in this case, $q\Omega^1_{\Abar(1)/\Lambda_R}$ identifies with $\Omega^1_{\Abar(1)/\Lambda_R}$, i.e.\ the $(p, \mu)\adic = (p, [p]_q)\adic = \padic$ (see Lemma \ref{lem:tovermu_unit}) completion of the module of K\"ahler differentials of $\Abar(1)$ with respect to $\Lambda_R$.
	From Definition \ref{defi:qdeRham_complex} the $\qconnection$ on $\Abar(1)$, denoted $\nabla_q \colon \Abar(1) \rightarrow q\Omega^1_{\Abar(1)/\Lambda_R}$, is given as $f \mapsto \sum_{i=1}^d \frac{\gamma_i(f)-f}{p_2(\mu)} \dlog([X_i^{\flat}])$.
	Moreover, the $\qconnection$ $\nabla_q$ on $\Abar(1)$ is $p\adically$ quasi-nilpotent because we have $\tfrac{\gamma_i-1}{p_2(\mu)[X_i^{\flat}]}([X_i^{\flat}]) = 1$, and it is flat because $\gamma_i$ commute with each other.
	Furthermore, using the $\qconnection$, $\Abar(1)$ may be equipped with a $p\adically$ quasi-nilpotent flat connection $\nabla$ as in Proposition \ref{prop:a1modp1mu_connection} which coincides with the universal $\Lambda_R\linear$ continuous de Rham differential $d \colon \Abar(1) \rightarrow \Omega^1_{\Abar(1)/\Lambda_R}$.
	Then, by using Proposition \ref{prop:a1modp1mu_connection}, it follows that we have $\Abar(1)^{\Gamma_R'} = \Abar(1)^{\nabla_q=0} \isomorphic \Abar(1)^{\nabla = 0} = \Abar(1)^{d=0}$, where the first equality follows because the action of $\Gamma_R$ on $\Abar(1)$ is continuous for the $\padic$ topology (see Lemma \ref{lem:cont_coh_disc}).
	Moreover, from Section \ref{subsec:important_rings}, recall that for any $g$ in $\Gamma_F$, we have that $g \gamma_i g^{-1} = \gamma_i^{\chi(f)}$, for all $1 \leqslant i \leqslant d$.
	Hence, the isomorphism $\Lambda_R \isomorphic \Abar(1)^{\Gamma_R'} = \Abar(1)^{\nabla_q=0} \isomorphic \Abar(1)^{\nabla=0}$, induced by $\iota_{\Lambda}$ in Remark \ref{rem:lambda_connection}, is $(\varphi, \Gamma_F)\equivariant$.
	In particular, it follows that $R \isomorphic \Lambda_R^{\Gamma_F} \isomorphic (\Abar(1)^{\nabla_q=0})^{\Gamma_F} \isomorphic \Abar(1)^{\Gamma_R}$, induced by $p_1 \colon R \rightarrow \Abar(1)$.
\end{exam}

From Lemma \ref{lem:tovermu_unit}, we have that $t = \log(1+\mu)$ converges in $\mu \Lambda_F \subset \mu \Lambda_R$ and $t/\mu$ is a unit.
\begin{prop}\label{prop:a1modp1mu_connection}
	Let $D = \Lambda_R$ (resp.\ $\Lambda_F$) and $A = \AR(1)/(p_1(\mu))$ (resp.\ $\ARpd$).
	Then, for $1 \leqslant i \leqslant d$, the series of operators $\nabla_i^{\log} = \frac{\log \gamma_i}{t} = \frac{1}{t}\sum_{k \in \NN} (-1)^k \frac{(\gamma_i-1)^k}{k+1}$ converge $p\adically$ on $A$.
	This defines a $D\linear$ $p\adically$ quasi-nilpotent flat connection on $A$, denoted $\nabla \colon A \rightarrow \Omega^1_{A/D}$ and given as $f \mapsto \sum_{i=1}^d \nabla_i^{\log}(f) \dlog([X_i^{\flat}])$.
	The connection $\nabla$ coincides with the universal $D\linear$ continuous de Rham differential operator $d \colon A \rightarrow \Omega^1_{A/D}$.
	Moreover, the data of the connection $\nabla$ on $A$ is equivalent to the data of the $\qconnection$ $\nabla_q$ described in Example \ref{exam:a1modp1mu_qconnection} (resp.\ Example \ref{exam:arpd_qconnection}), i.e.\ either may be recovered from the other.
	Furthermore, the $q\textrm{-de Rham}$ complex $q\Omega^{\bullet}_{A/D}$ is naturally quasi-isomorphic to the de Rham complex $\Omega^{\bullet}_{A/D}$.
	In particular, $A^{\nabla_q=0} \isomorphic A^{\nabla=0}$.
\end{prop}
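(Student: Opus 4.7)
My plan is to follow a three-step strategy: first establish $p$-adic convergence of the series $\nabla_i^{\log}$, next verify it defines a flat quasi-nilpotent connection coinciding with the de Rham differential, and finally deduce the equivalence with the $q$-connection along with the quasi-isomorphism of complexes. Convergence of $\nabla_i^{\log} = t^{-1}\log(\gamma_i)$ rests on two ingredients: the operator $\gamma_i - 1$ shifts the $\mu$-adic filtration on $A$ (namely $(\gamma_i - 1)A \subset p_2(\mu) A$ for $A = \AR(1)/p_1(\mu)$ by Proposition \ref{prop:arn_gammar_triv_modmu}, and analogously $(\gamma_i - 1)A \subset \mu A$ for $A = \ARpd$), and $t = \mu \cdot (t/\mu)$ with $t/\mu \in \Lambda_F^{\times}$ by Lemma \ref{lem:tovermu_unit}. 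Iterating the former yields $(\gamma_i - 1)^k A \subset \mu^k A$ (interpreting $\mu$ via $p_2$ in the first case), so for $x \in A$ and $k \geq 1$ the $k$-th term of the series defining $\nabla_i^{\log}(x)$ lies in $\mu^{k-1} A/(k+1)$. Writing $\mu^{k-1} = p^{f(k-1)}(f(k-1))! \cdot \mu^{r(k-1)} \cdot (\mu^{p-1}/p)^{[f(k-1)]}$ in the notation of the paragraph preceding Lemma \ref{lem:tovermu_unit}, the leading coefficient has $p$-adic valuation at least $f(k-1) - \upsilon_p(k+1)$, which tends to $+\infty$, yielding the required $p$-adic convergence in $A$.

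I would then verify that each $\nabla_i^{\log}$ is a $D$-linear derivation, that the $\nabla_i^{\log}$'s commute pairwise, and that the resulting $\nabla$ coincides with the universal de Rham differential $d$. The derivation property is formal from $\gamma_i$ being a ring automorphism; $D$-linearity follows because $\iota_{\Lambda}(D) \subset A^{\Gamma_R'}$ as noted in Examples \ref{exam:arpd_qconnection} and \ref{exam:a1modp1mu_qconnection}; commutation of the $\nabla_i^{\log}$'s follows from commutation of the $\gamma_i$'s, which gives flatness. To identify $\nabla$ with $d$, it suffices to check agreement on a topological generating set of $A$ over $D$. Using the explicit action $\gamma_i([X_j^{\flat}]) = (1+p_2(\mu))^{\delta_{ij}} [X_j^{\flat}]$, one computes $(\gamma_i - 1)^k([X_i^{\flat}]) = p_2(\mu)^k [X_i^{\flat}]$, whence $\log(\gamma_i)([X_i^{\flat}]) = t \cdot [X_i^{\flat}]$ and $\nabla([X_i^{\flat}]) = [X_i^{\flat}] \dlog([X_i^{\flat}]) = d[X_i^{\flat}]$; an analogous computation on the PD-variables $T_j$ yields $\nabla(T_j) = dT_j$ in the $\AR(1)/p_1(\mu)$ case, while generators of $D$ involving $\mu$ and the divided powers $(\mu^{p-1}/p)^{[k]}$ lie in $A^{\Gamma_R'}$ and are killed by both $\nabla$ and $d$. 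Quasi-nilpotence of $\nabla$ is then immediate from the explicit formulas and the PD-structure.

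Finally, the equivalence with $\nabla_q$ and the quasi-isomorphism of complexes follow formally. The operators $\nabla$ and $\nabla_q$ are each assembled from the single datum of the commuting automorphisms $\gamma_i$, and one recovers each from the other via the mutually inverse identities $\gamma_i = \exp(t \nabla_i^{\log}) = 1 + p_2(\mu)\nabla_{q,i}^{\log}$, whose defining series converge on $A$ by the same estimates. The graded identifications $q\Omega^{k}_{A/D} = \Omega^k_{A/D}$ as $A$-modules hold by definition, and the triangular relation $\nabla_{q,i}^{\log} = (\mu/t)\nabla_i^{\log} + (\textrm{higher-order corrections in } \gamma_i - 1)$ exhibits both differentials as assembled from the same commuting family up to an invertible rescaling; I would deduce the quasi-isomorphism $q\Omega^{\bullet}_{A/D} \simeq \Omega^{\bullet}_{A/D}$ by identifying both as Poincar\'e resolutions of $D$, viewing $A$ as the $p$-adic completion of a PD-polynomial $D$-algebra (in the $T_j$'s in the $\AR(1)/p_1(\mu)$ case). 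The remaining equality $A^{\nabla_q = 0} \isomorphic A^{\nabla = 0}$ is then immediate: both sides coincide with $A^{\Gamma_R'}$, since the $\gamma_i$'s topologically generate $\Gamma_R'$ and each of the conditions $\nabla_q x = 0$ and $\nabla x = 0$ is equivalent to $\gamma_i x = x$ for all $i$. I expect the main technical obstacle to be a clean chain-level treatment of the quasi-isomorphism; the PD-Poincar\'e lemma route should circumvent direct bookkeeping of the higher-order corrections.
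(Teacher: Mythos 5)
Your proposal tracks the paper's strategy closely—the paper in fact proves this proposition by citing the proof of Proposition \ref{prop:n1modp1mu_connection} applied to the trivial Wach module $N = \AR$, and your convergence estimate, identification of $\nabla$ with the de Rham differential on generators, and quasi-nilpotence observation match what appears there. One minor point: the Leibniz rule is not quite ``formal from $\gamma_i$ being a ring automorphism'' in a $p$-adic setting; the paper derives it from the limit formula $\lim_{m\to\infty} p^{-m}(\gamma_i^{p^m}-1)(x) = t\nabla_i^{\log}(x)$ together with the multiplicativity of each $\gamma_i^{p^m}$, which is the clean way to make the derivation property rigorous.

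The genuine gap is in your chosen route for the quasi-isomorphism $q\Omega^{\bullet}_{A/D} \simeq \Omega^{\bullet}_{A/D}$. You propose to identify both complexes as Poincar\'e resolutions of $D$, but acyclicity of $q\Omega^{\bullet}_{A/D}$ in positive degrees is exactly what needs proving, and there is no ready-made $q$-PD-Poincar\'e lemma to invoke here: the $\Gamma_R'$-action on the PD-variables is a twisted translation $\gamma_i(T_j) = \delta_{ij}\,\mu[X_i^{\flat}] + T_j$ rather than a $q$-scaling, so a direct verification would require precisely the term-by-term bookkeeping you hoped to circumvent. The correct and shorter route—which you mention via the ``invertible rescaling'' observation but then set aside—is the Koszul comparison: write $\nabla_{q,i}^{\log} = \nabla_i^{\log}\,h_i$ with $h_i$ an invertible operator on $A$ built from $t/\mu$ and the convergent exponential series for $\gamma_i = \exp(t\nabla_i^{\log})$, note that the $h_i$ commute with the whole family, and apply Lemma \ref{lem:koszul_complex_qiso} to conclude $K_A(\nabla_{q,1}^{\log}, \ldots, \nabla_{q,d}^{\log}) \simeq K_A(\nabla_1^{\log}, \ldots, \nabla_d^{\log})$. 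The acyclicity of $q\Omega^{\bullet}_{A/D}$ then follows as a consequence of the classical PD-Poincar\'e lemma for $\Omega^{\bullet}_{A/D}$, rather than serving as an input. (Incidentally, the leading coefficient in your triangular relation should be $t/\mu$, not $\mu/t$, but this does not affect the argument.)
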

\begin{proof}
	The first two claims, i.e.\ the convergence of the operators $\nabla_i^{\log}$ on $A$ and the fact that the operator $\nabla$ defines a $D\linear$ $p\adically$ quasi-nilpotent flat connection on $A$, and the claim of equivalence between the connection $\nabla$ and quasi-isomorphism of complexes, will be proven in Proposition \ref{prop:n1modp1mu_connection} for finite $A\modules$ admitting a continuous action of $\Gamma_R$ (trivial modulo $\mu$), in particular, $A$ itself.
	Moreover, in the proof of Proposition \ref{prop:n1modp1mu_connection}, we shall also show that the connection $\nabla$ coincides with the universal $D\linear$ continuous de Rham differential operator $d \colon A \rightarrow \Omega^1_{A/D}$.
	Furthermore, note that the proof of Proposition \ref{prop:n1modp1mu_connection} is independent of the subsequent claims proved in this section.
	Please refer to Proposition \ref{prop:n1modp1mu_connection} for more details.
\end{proof}

\begin{rem}\label{rem:lambda_connection}
	Let $\Abar(1) = \AR(1)/(p_1(\mu))$ as in Proposition \ref{prop:a1modp1mu_connection}.
	From the isomorphism \eqref{eq:a1modp1mu_lambda} in Remark \ref{rem:a1modp1mu_lambda} recall that we have a ring $C_R \lisomorphic \Abar(1)$.
	Moreover, from Remark \ref{rem:a1modp1mu_lambda}, we have that $C_R$ is the $\padic$ completion of a PD-polynomial algebra over $\Lambda_R$ and the structure map $\Lambda_R \rightarrow C_R$ coincides with the composition $\Lambda_R \xrightarrow{\hspace{1mm} \iota_{\Lambda} \hspace{1mm}} \Abar(1) \xrightarrow{\hspace{1mm} \eqref{eq:a1modp1mu_lambda} \hspace{1mm}} C_R$.
	Now note that $C_R$ is equipped with the universal $\Lambda_R\linear$ continuous de Rham differential $d \colon C_R \rightarrow \Omega^1_{C_R/\Lambda_R}$, which may be given as $f \mapsto \sum_{i=1}^d \partial_i^{\log}(f) \dlog(X_i)$, where $\partial_i^{\log} = X_i \partial_i$ and $\partial_i \colon C_R \rightarrow C_R$ is the unique $\Lambda_R\linear$ continuous differential operator such that $\partial_i(X_j) = 1$ if $i = j$ and $0$ otherwise.
	So, the isomorphism in \eqref{eq:a1modp1mu_lambda} is $\Lambda_R\linear$, and it induces an isomorphism between complexes of $\Abar(1)\modules$ $\Omega^{\bullet}_{\Abar(1)/\Lambda_R} \isomorphic \Omega^{\bullet}_{C_R/\Lambda_R} = C_R \otimes_{P_R} \Omega^1_{P_R/R}$, where the right hand side is an $\Abar(1)\module$ via the isomorphism \eqref{eq:a1modp1mu_lambda}.
	Now, let $f$ be any element of $C_R$, then from Remark \ref{rem:a1modp1mu_lambda} we have a unique presentation $f = \sum_{k \in \NN} a_k \mu^{\{k\}}$, with $a_k$ in $P_R$ for all $k \in \NN$ and $p\textrm{-adically}$ $a_k \rightarrow 0$ as $k \rightarrow +\infty$.
	Then, it follows that $f \in C_R^{d=0}$ if and only if $a_k$ in $R$, i.e.\ $f$ is in $\Lambda_R$.
	In particular, we get that $\Lambda_R \isomorphic C_R^{d = 0}$, so $\Lambda_R \isomorphic \Abar(1)^{d = 0}$ via $\iota_{\Lambda}$.
	Using the same argument as above by replacing $\Lambda_R$ by  $\Lambda_F$, $\Abar(1)$ by $\ARpd$, $C_R$ by $\Lambda_R$ and $P_R$ by $O_F$, we obtain that $\Lambda_F \isomorphic (\ARpd)^{d=0}$.
\end{rem}

\begin{rem}\label{rem:a1modp1mu_oacris_connection}
	Let us consider the ring $\ARpd$ from Example \ref{exam:arpd_qconnection}, equipped with a continuous action of $(\varphi, \Gamma_R)$.
	From \eqref{eq:pd_iota} in Remark \ref{rem:polynom_powerseries_comp}, note that we have a $(\varphi, \Gamma_F)\equivariant$ isomorphism of rings $\iota \colon \Lambda_R \isomorphic \ARpd$.
	Moreover, we have a $(\varphi, \Gamma_R)\equivariant$ embedding $\ARpd \rightarrow \Acrys(\Rinfty)$ from Remark \ref{rem:a1modp1mu_oacrys_embed}.
	Furthermore, the injective structure map $p_2 \colon \AR \rightarrow \AR(1)/(p_1(\mu)) = \Abar(1)$ from Remark \ref{rem:a1modp1mu_ralg} extends to an injective ring homomorphism $p_2 \colon \ARpd \rightarrow \Abar(1)$ and the target is the $\padic$ completion of a PD-polynomial algebra over the source (see Remark \ref{rem:a1modp1mu_lambda}).
	Now, consider the universal $\ARpd\linear$ continuous de Rham differential $d \colon \Abar(1) \rightarrow \Omega^1_{\Abar(1)/\ARpd}$, which may be given as $f \mapsto \sum_{i=1}^d \partial_i^{\log}(f) d Y_i$, where $Y_i = p_1(X_i)$ (see Remark \ref{rem:a1modp1mu_ralg}), $\partial_i^{\log} = Y_i \partial_i$ and $\partial_i \colon \Abar(1) \rightarrow \Abar(1)$ is the unique $\ARpd\linear$ continuous differential operator such that $\partial_i(Y_j) = 1$, if $i = j$, and $0$, otherwise.
	Then, it easily follows that the $(\varphi, \Gamma_R)\equivariant$ injective map $\Abar(1) \rightarrow \OAcrys(\Rinfty)$ from Remark \ref{rem:a1modp1mu_oacrys_embed} is $\ARpd\linear$ and compatible with the aforementioned $\ARpd\linear$ differential operator on $\Abar(1)$ and the unique $\Acrys(\Rinfty)\linear$ connection on $\OAcrys(\Rinfty)$ from \cite[Section 6.2]{brinon-relatif}.
\end{rem}

\begin{rem}\label{rem:arpd_abarone_connection_comp}
	Let $\Abar(1) = \AR(1)/(p_1(\mu))$ and $\ARpd$ as in Example \ref{exam:arpd_qconnection}.
	Note that we have a natural injective $(\varphi, \Gamma_R)\equivariant$ homomorphism of rings $p_2 \colon \ARpd \rightarrow \Abar(1)$ (see Remark \ref{rem:a1modp1mu_oacris_connection}).
	Then, by using the Leibniz rule for the connection on $\Abar(1)$ (see the proof of Proposition \ref{prop:n1modp1mu_connection}), it follows that the respective connections on $\Abar(1)$ and $\ARpd$ are compatible and we have $\Omega^1_{\Abar(1)/\Lambda_R} = \Abar(1) \otimes_{\ARpd} \Omega^1_{\ARpd/\Lambda_F}$.
\end{rem}

\subsection{Galois action on \texorpdfstring{$\AR(1)$}{-}}\label{subsec:gamma_act_ar1}

Note that we have the $\varphi\equivariant$ multiplication map $\Delta \colon \AR(1) \rightarrow \AR$.
Moreover, recall that in Remark \ref{rem:phigamma_act_ar+n}, we described the action of $\Gamma_R^2$ on $\AR(1)$.
In this section, our goal is to prove the following claim:
\begin{prop}\label{prop:delta_modmun}
	The $\varphi\equivariant$ homomorphism $\Delta \colon \AR(1) \rightarrow \AR$ restricts to a $\varphi\equivariant$ isomorphism $\AR(1)^{1 \times \Gamma_R} \isomorphic \AR$, which is further compatible with the action of $\Gamma_R$ on each side.
\end{prop}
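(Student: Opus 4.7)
The plan is to exploit that $\Delta \circ p_1 = \id_{\AR}$, so $p_1$ is an injective section of $\Delta$, and that the action of $1 \times \Gamma_R$ on $(\AR)^{\widehat{\otimes}2}$ fixes the first tensor factor; by the universal property of prismatic envelopes this descends to the inclusion $p_1(\AR) \subset \AR(1)^{1 \times \Gamma_R}$. The $\varphi$-equivariance of the proposed isomorphism is automatic since $\Delta$ is $\varphi$-equivariant, while the $\Gamma_R$-equivariance for the first-factor action follows from Remark \ref{rem:phigamma_act_ar+n} together with the observation that on $\AR(1)^{1 \times \Gamma_R}$ the first-factor action coincides with the diagonal action on $\AR(1)$. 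Hence the entire content of the proposition reduces to proving the reverse inclusion $\AR(1)^{1 \times \Gamma_R} \subset p_1(\AR)$.

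For this, given $x \in \AR(1)^{1 \times \Gamma_R}$, I would set $y := x - p_1(\Delta(x))$, which lies in $\AR(1)^{1 \times \Gamma_R} \cap \kert\Delta$, and aim to show $y = 0$. Since $\AR(1)$ is $(p, p_1(\mu))$-adically complete and hence separated, we have $\bigcap_n p_1(\mu)^n \AR(1) = 0$, so it suffices to prove by induction on $n \in \NN_{\geq 1}$ that
\begin{equation*}
	(\AR(1)/p_1(\mu)^n \AR(1))^{1 \times \Gamma_R} = p_1(\AR)/p_1(\mu)^n p_1(\AR),
\end{equation*}
which will force $y \equiv 0 \pmod{p_1(\mu)^n}$ for every $n$.

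The base case $n = 1$ is precisely what Example \ref{exam:a1modp1mu_qconnection} delivers: the $1 \times \Gamma_R$-action on $\AR(1)/p_1(\mu)$ translates, via the explicit PD-description of Proposition \ref{prop:a1modp1mu_pdring}, into a flat quasi-nilpotent $q$-connection, which by Proposition \ref{prop:a1modp1mu_connection} corresponds to an honest flat connection whose horizontal sections identify with $\Lambda_R$; taking further $\Gamma_F$-invariants then produces the isomorphism $p_1 : R \isomorphic (\AR(1)/p_1(\mu))^{1 \times \Gamma_R}$. For the inductive step, flatness of $p_1$ (Lemma \ref{lem:pi_rj_ff}) implies that $p_1(\mu)$ is a nonzerodivisor on $\AR(1)$, and since $p_1(\mu)$ is fixed by $1 \times \Gamma_R$, multiplication by $p_1(\mu)^n$ gives a $(1 \times \Gamma_R)$-equivariant isomorphism $\AR(1)/p_1(\mu) \isomorphic p_1(\mu)^n \AR(1)/p_1(\mu)^{n+1}\AR(1)$. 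Given $\bar{x} \in (\AR(1)/p_1(\mu)^{n+1}\AR(1))^{1 \times \Gamma_R}$, its image in $(\AR(1)/p_1(\mu)^n\AR(1))^{1 \times \Gamma_R}$ equals $p_1(\bar{a})$ for some $\bar{a} \in \AR/\mu^n \AR$ by the inductive hypothesis; choosing a lift $a \in \AR$, the difference $\bar{x} - p_1(a)$ lies in $(p_1(\mu)^n \AR(1)/p_1(\mu)^{n+1}\AR(1))^{1 \times \Gamma_R} = p_1(\mu^n R)$ by the base case, hence $\bar{x} \in p_1(\AR)/p_1(\mu)^{n+1}p_1(\AR)$ as required.

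The main obstacle is the base case, which packages the entire three-step descent along $\Gamma_R \isomorphic \ZZ_p(1)^d \rtimes \ZZ_p^{\times}$: the geometric part is handled by converting the $\Gamma_R'$-action to a $q$-connection and then to a de Rham connection over $\Lambda_R$, while the arithmetic part further descends $\Lambda_R$ to $R$ via $\Gamma_F$-invariants — and it is in this arithmetic step that the technical distinction between $p \geq 3$ and $p = 2$, encoded by the splitting of the short exact sequence \eqref{eq:gammaf_es} for $\Gamma_F$, becomes visible. The subsequent inductive and completeness arguments are purely formal by comparison.
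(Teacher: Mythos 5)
Your proof is correct, and for the hard part it coincides with what the paper actually does: your induction establishing $(\AR(1)/p_1(\mu)^n)^{1\times\Gamma_R}=p_1(\AR/\mu^n)$ from the base case $n=1$ is precisely Lemma \ref{lem:a1modp1mun}, whose diagram chase (like yours) only needs left-exactness of taking invariants plus surjectivity of the outer vertical arrows, and your identification of the base case with the content of Proposition \ref{prop:a1modp1mu_pdring} and Example \ref{exam:a1modp1mu_qconnection} is exactly how the paper sets it up. Where you genuinely diverge is in the final step: the paper does \emph{not} conclude via the section $p_1$ of $\Delta$; instead it proves Lemma \ref{lem:delta_modmun} by running a second induction on the $\Delta$-side, and for that it needs the surjectivity of $(\AR(1)/p_1(\mu)^{n+1})^{1\times\Gamma_R}\to(\AR(1)/p_1(\mu)^n)^{1\times\Gamma_R}$, i.e.\ the exact sequence \eqref{eq:a1_gammaR_exact}, which is assembled from the full three-step $H^1$-vanishing machinery of Subsections \ref{subsubsec:geo_gamma_action}--\ref{subsubsec:p=2_gammaF_action}. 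Your observation that $\Delta\circ p_1=\mathrm{id}$ lets one bypass all of that for this proposition is a legitimate economy; the only thing it costs is that the exactness \eqref{eq:a1_gammaR_exact} is not a by-product (the paper still needs its module analogue \eqref{eq:n1_gammaR_exact} in Proposition \ref{prop:deltan_modmun_relative}, where no section of $\Delta_N$ landing in the invariants is available a priori, so the descent machinery cannot be discarded globally). Your treatment of the $\Gamma_R$-equivariance — that on $(1\times\Gamma_R)$-invariants the action of $(g,1)$ agrees with that of $(g,g)$ and $\Delta\circ(g,g)=g\circ\Delta$ — is the same as Lemma \ref{lem:delta_modmun_phigamma}. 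One cosmetic point: in your inductive step the invariants of $p_1(\mu)^n\AR(1)/p_1(\mu)^{n+1}\AR(1)$ should be written as $p_1(\mu^n\AR/\mu^{n+1}\AR)$ rather than $p_1(\mu^nR)$, though under $\AR/\mu\isomorphic R$ these agree.
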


In order to prove Proposition \ref{prop:delta_modmun}, we will study the action of $1 \times \Gamma_R$ on $\AR(1)$ in three steps, namely, the geometric part, i.e.\ $\Gamma_R'$, the torsion part, i.e.\ $\Gamma_{\textrm{tor}}$, and the (free) arithmetic part, i.e.\ $\Gamma_0$ (see Section \ref{subsec:setup_nota} for notations and explanations).  Let us note that the results for the action of the geometric part of $\Gamma_R$ in Section \ref{subsubsec:geo_gamma_action}, are applicable for all primes $p$.
However, since $\Gamma_F$ has different shape for $p \geqslant 3$, compared to $p = 2$, therefore, in Sections \ref{subsubsec:fpx_action} and \ref{subsubsec:gamma0_action} we will assume that $p \geqslant 3$.
For $p =2$, the arithmetic action of $\Gamma_R$ will be handled in Section \ref{subsubsec:p=2_gammaF_action}.

\begin{rem}
	Note that it is possible to prove Proposition \ref{prop:delta_modmun}, by studying the action of $1 \times \Gamma_R$ at once and using Galois cohomology arguments, instead of the \textit{3-step} argument presented here.
	However, the methods used in the \textit{3-step} proof are applicable to the case of Wach modules as well (see Section \ref{subsec:wach_and_strat}), whereas the Galois cohomology arguments do not seem to generalise.
\end{rem}

\subsubsection{The action of \texorpdfstring{$\Gamma_R'$}{-}}\label{subsubsec:geo_gamma_action}

In this section, our first goal is to show the following claim:
\begin{lem}\label{lem:a1_geometric_exact}
	For each $n \geqslant 1$, the following natural $(\varphi, \Gamma_R \times \Gamma_F)\equivariant$ sequence is exact:
	\begin{equation}\label{eq:a1_geometric_exact}
		0 \longrightarrow (\AR(1)/(p_1(\mu)))^{1 \times \Gamma_R'} \xrightarrow{\hspace{1mm} p_1(\mu)^n \hspace{1mm}} (\AR(1)/(p_1(\mu))^{n+1})^{1 \times \Gamma_R'} \longrightarrow (\AR(1)/(p_1(\mu))^{n})^{1 \times \Gamma_R'} \longrightarrow 0.
	\end{equation}
\end{lem}

Before proving Lemma \ref{lem:a1_geometric_exact}, we provide a more explicit description of the ring $\AR(1)^{1 \times \Gamma_R'}$ equipped with the induced $(\varphi, \Gamma_R \times \Gamma_F)\action$.
We start with the following:

\begin{const}\label{const:lambdatilder}
	From Construction \ref{const:cech_nerve}, let us consider the $(\varphi, \Gamma_R^2)\equivariant$ surjective map in \eqref{eq:ker_j}, for $n=1$.
	Taking invariants of that map under the action of $1 \times \Gamma_R'$ and an explicit calculations yields a surjective map
	\begin{equation}\label{eq:lambdatilder_surj}
		\AR \widehat{\otimes}_{O_F} \AF \twoheadrightarrow R\big[\zeta_p, X_1^{1/p}, \ldots, X_d^{1/p}\big] \otimes_{O_F} O_F[\zeta_p].
	\end{equation}
	Using the description of the kernel of the map in \eqref{eq:ker_j}, it follows that the kernel of the surjective map \eqref{eq:lambdatilder_surj}, is generated by the ideal $J = (1 \otimes [p]_q, [p]_q \otimes 1) \subset \AR \widehat{\otimes}_{O_F} \AF$.
	Clearly, the sequence $\{p, 1 \otimes [p]_q, [p]_q \otimes 1\}$ is regular on $\AR \widehat{\otimes}_{O_F} \AF$, so from \cite[Proposition 3.13]{bhatt-scholze-prisms}, we set $(\LambdatildeR, [p]_q \otimes 1)$ to be the prismatic envelope of $(\AR \widehat{\otimes}_{O_F} \AF, J)$ over the bounded prism $(\AR, [p]_q)$.
	More explicitly, similar to the description of $\AR(1)$ before Lemma \ref{lem:pi_rj_ff}, let us consider the $\delta\algebra$ $(\AR \widehat{\otimes}_{O_F} \AF) \Big\{\frac{1 \otimes [p]_q}{[p]_q \otimes 1}\Big\}_{\delta}$ obtained by freely adjoining $\tfrac{J}{[p]_q \otimes 1}$ to $\AR \widehat{\otimes}_{O_F} \AF$.
	Then, from \cite[Proposition 3.13]{bhatt-scholze-prisms}, we have that
	\begin{equation}\label{eq:lambdatilde_explicit}
		\LambdatildeR = (\AR \widehat{\otimes}_{O_F} \AF) \Big\{\tfrac{1 \otimes [p]_q}{[p]_q \otimes 1}\Big\}_{(p, [p]_q \otimes 1)}^{\wedge} = (\AR \widehat{\otimes}_{O_F} \AF) \Big\{\tfrac{1 \otimes \ptilde}{\ptilde \otimes 1}\Big\}_{(p, \ptilde \otimes 1)}^{\wedge},
	\end{equation}
	i.e.\ $\LambdatildeR$ is the $(p, [p]_q \otimes 1)\textrm{-adic}$ completion of the $\delta\algebra$ $(\AR \widehat{\otimes}_{O_F} \AF) \Big\{\frac{J}{[p]_q \otimes 1}\Big\}_{\delta}$, and in the second equality of \eqref{eq:lambdatilde_explicit}, we used that $\ptilde$ is the product of $[p]_q$ with a unit in $\AF$ (see Lemma \ref{lem:pq_mu0p_unit}).
	Using notations similar to Lemma \ref{lem:pi_rj_ff}, we denote the two projection maps as $p_1 \colon \AR \rightarrow \LambdatildeR$ and $p_2 \colon \AF \rightarrow \LambdatildeR$.
	Then, similar to Lemma \ref{lem:pi_rj_ff}, it is easy to see that both the preceding maps are faithfully flat, in particular, $\LambdatildeR$ is $p_1(\mu)\torsionfree$.
	Furthermore, note that we have a natural $(\varphi, \Gamma_R)\equivariant$ injective homomorphism $\AF \rightarrow \AR$, where the action of $\Gamma_R$ factors through $\Gamma_F \subset \Gamma_R$.
	So, by the universal property of prismatic envelopes, the continuous action of $\Gamma_R \times \Gamma_F$ on $\AR \wotimes_{O_F} \AF$ naturally extends to a continuous action on $\LambdatildeR$ and since the homomorphism in \eqref{eq:lambdatilder_surj} is compatible with the homomorphism in \eqref{eq:ker_j}, therefore, we obtain the following natural $(\varphi, \Gamma_R \times \Gamma_R)\equivariant$ homomorphism
	\begin{equation}\label{eq:lambdatilder_in_ar1}
		\iota_{\Lambdatilde} \colon \LambdatildeR \longrightarrow \AR(1),
	\end{equation}
	where the action of $\Gamma_R \times \Gamma_R$ on the source of $\iota_{\Lambdatilde}$ factors through $\Gamma_R \times \Gamma_F \subset \Gamma_R \times \Gamma_R$.
\end{const}

\begin{rem}\label{rem:lambdatilder_mod_p1mu}
	Using an argument similar to the proof of Proposition \ref{prop:a1modp1mu_pdring}, it may be shown that we have a natural $(\varphi, \Gamma_F)\equivariant$ isomorphism of rings
	\begin{equation}\label{eq:lambdatilder_mod_p1mu}
		\LambdatildeR/(p_1(\mu)) \isomorphic \Lambda_R = R[\mu, \{(\mu^{p-1}/p)^{[k]}\}_{k \in \NN}]_p^{\wedge},
	\end{equation}
	where $p_2(\mu) \mapsto \mu$ and $p_1([X_i]^{\flat}) \mapsto X_i$, for $1 \leqslant i \leqslant d$ (see Proposition \ref{prop:polynom_to_powerseries} for other descriptions of $\Lambda_R$).
\end{rem}

\begin{rem}\label{rem:qdeform_arpd}
	In Construction \ref{const:lambdatilder}, by switching the two components in \eqref{eq:lambdatilder_surj} and carrying out essentially the same steps, we may define another ring $E \coloneq (\AF \wotimes_{O_F} \AR) \Big\{\tfrac{1 \otimes [p]_q}{[p]_q \otimes 1}\Big\}_{(p, [p]_q \otimes 1)}^{\wedge}$, as the prismatic envelope of $(\AF \wotimes_{O_F} \AR, J)$ over the bounded prism $(\AF, [p]_q)$, where $J = (1 \otimes [p]_q, [p]_q \otimes 1) \subset \AF \wotimes_{O_F} \AR$, i.e.\ $E$ is the $(p, [p]_q \otimes 1)\adic$ completion of the $\delta\algebra$ obtained by freely adjoining $\tfrac{J}{[p]_q \otimes 1}$ to $\AF \wotimes_{O_F} \AR$, and denoted as $(\AR \wotimes_{O_F} \AF)\Big\{\frac{J}{[p]_q \otimes 1}\Big\}_{\delta}$.
	Now, if we use notations similar to Lemma \ref{lem:pi_rj_ff} to denote the two projection maps as $p_1 \colon \AF \rightarrow E$ and $p_2 \colon \AR \rightarrow E$.
	Then, similar to Lemma \ref{lem:pi_rj_ff}, it is easy to see that both the preceding maps are faithfully flat.
	Moreover, by the universal property of prismatic envelopes, we see that the continuous action of $\Gamma_F \times \Gamma_R$ on $\AF \wotimes_{O_F} \AR$ naturally extends to a continuous action on $E$.
	Furthermore, using an argument similar to the proof of Proposition \ref{prop:a1modp1mu_pdring}, it may be shown that we have a natural $(\varphi, \Gamma_R)\equivariant$ isomorphism of rings $E/(p_1(\mu)) \isomorphic \ARpd = \AR[\{(\mu^{p-1}/p)^{[k]}\}_{k \in \NN}]_p^{\wedge}$, where $p_2(\mu) \mapsto \mu$ and $p_2([X_i]^{\flat}) \mapsto [X_i^{\flat}]$, for $1 \leqslant i \leqslant d$ (see Example \ref{exam:arpd_qconnection} for the definition of $\ARpd$).
\end{rem}

\begin{lem}\label{lem:iota_lambdatilde_iso}
	For each $n \geqslant 1$, the map in \eqref{eq:lambdatilder_in_ar1} induces a $(\varphi, \Gamma_R \times \Gamma_F)\equivariant$ isomorphism
	\begin{equation}\label{eq:iota_lambdatilde}
		\iota_{\Lambdatilde} \colon \LambdatildeR/(p_1(\mu))^n \isomorphic (\AR(1)/(p_1(\mu))^n)^{1 \times \Gamma_R'}.
	\end{equation}
	In particular, \eqref{eq:lambdatilder_in_ar1} induces a $(\varphi, \Gamma_R \times \Gamma_F)\equivariant$ isomorphism $\iota_{\Lambdatilde} \colon \LambdatildeR \isomorphic \AR(1)^{1 \times \Gamma_R'}$.
\end{lem}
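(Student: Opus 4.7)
The plan is to prove the mod-$p_1(\mu)^n$ assertion by induction on $n$, and then pass to the inverse limit to deduce the unbounded statement. First, I would note that by the construction \eqref{eq:lambdatilde_explicit} of $\LambdatildeR$ from $\AR \wotimes_{O_F} \AF$, the second factor $\AF$ is fixed by $1 \times \Gamma_R'$, so $\iota_{\Lambdatilde}$ automatically factors through the $(1 \times \Gamma_R')$-invariants of $\AR(1)$, and likewise modulo each power of $p_1(\mu)$. For the base case $n=1$, I would combine the two identifications already available: Remark \ref{rem:lambdatilder_mod_p1mu} gives a $(\varphi, \Gamma_R \times \Gamma_F)$-equivariant isomorphism $\LambdatildeR/p_1(\mu) \isomorphic \Lambda_R$, and Example \ref{exam:a1modp1mu_qconnection} gives $(\AR(1)/p_1(\mu))^{1 \times \Gamma_R'} \isomorphic \Lambda_R$ via the structure map $\iota_{\Lambda}$. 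A direct unwinding, tracking where the generators $\mu$ and $X_i$ go, shows that the composite $\Lambda_R \isomorphic \LambdatildeR/p_1(\mu) \to (\AR(1)/p_1(\mu))^{1 \times \Gamma_R'} \isomorphic \Lambda_R$ induced by $\iota_{\Lambdatilde}$ is the identity, establishing the case $n=1$.

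For $n \geq 2$, I would compare the short exact sequence
\begin{equation*}
0 \longrightarrow \LambdatildeR/p_1(\mu) \xrightarrow{\cdot p_1(\mu)^{n-1}} \LambdatildeR/p_1(\mu)^n \longrightarrow \LambdatildeR/p_1(\mu)^{n-1} \longrightarrow 0
\end{equation*}
with the corresponding sequence for $\AR(1)$ after taking $(1 \times \Gamma_R')$-invariants. Exactness of the top row is equivalent to $p_1(\mu)$-torsion freeness of $\LambdatildeR$, which follows from the faithful flatness of $p_1 : \AR \to \LambdatildeR$ established in Construction \ref{const:lambdatilder}. Exactness of the bottom row is precisely Lemma \ref{lem:a1_geometric_exact}. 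The outer vertical arrows are then isomorphisms by the induction hypothesis (using the base case for the left-hand arrow, since $p_1(\mu)^{n-1} \LambdatildeR / p_1(\mu)^n \isomorphic \LambdatildeR/p_1(\mu)$ and likewise on the right), and the five-lemma yields the middle isomorphism. The $(\varphi, \Gamma_R \times \Gamma_F)$-equivariance is inherited from each term.

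For the \emph{in particular} part, I would take the inverse limit over $n$. Since $[p]_q \equiv \mu^{p-1} \bmod p$, the $(p, p_1([p]_q))$-adic and $(p, p_1(\mu))$-adic topologies coincide on both $\LambdatildeR$ and $\AR(1)$, so both rings are $(p, p_1(\mu))$-adically complete; formation of $(1 \times \Gamma_R')$-invariants is a closed condition and commutes with the inverse limit $\lim_n \AR(1)/p_1(\mu)^n$. The most delicate step, and the one I would expect to require the most care, is actually the base case above: checking the precise compatibility between the identification $\LambdatildeR/p_1(\mu) \isomorphic \Lambda_R$ from Remark \ref{rem:lambdatilder_mod_p1mu} and the identification $(\AR(1)/p_1(\mu))^{1 \times \Gamma_R'} \isomorphic \Lambda_R$ coming from the $q$-connection analysis in Example \ref{exam:a1modp1mu_qconnection}, routed through $\iota_{\Lambdatilde}$. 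This is a direct but bookkeeping-heavy verification relying on the explicit formulas $p_1([X_i^{\flat}]) \mapsto [X_i^{\flat}] - T_i$ and $p_2(\mu) \mapsto \mu$ from Proposition \ref{prop:a1modp1mu_pdring} and Remark \ref{rem:a1modp1mu_ralg}.
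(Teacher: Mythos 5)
Your argument matches the paper's proof essentially step for step: the base case via the compatibility of \eqref{eq:lambdatilder_mod_p1mu} with the identification $\iota_{\Lambda}: \Lambda_R \isomorphic (\AR(1)/p_1(\mu))^{1 \times \Gamma_R'}$, the induction via comparing the $p_1(\mu)$-torsion-free short exact sequence for $\LambdatildeR$ with the invariant sequence of Lemma \ref{lem:a1_geometric_exact}, and the passage to the limit using $p_1(\mu)$-adic completeness and the fact that taking $(1 \times \Gamma_R')$-invariants commutes with inverse limits. No gaps.
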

\begin{proof}
	As the action of $\Gamma_R \times \Gamma_R$ on $\LambdatildeR$ factors through $\Gamma_R \times \Gamma_F$, we see that reducing \eqref{eq:lambdatilder_in_ar1} modulo $p_1(\mu)^n$ and taking $(1 \times \Gamma_R')\textrm{-invariants}$, we obtain the $(\varphi, \Gamma_R \times \Gamma_F)\equivariant$ map in \eqref{eq:iota_lambdatilde}.
	It remains to show that it is an isomorphism.
	For $n=1$, we consider the following $(\varphi, \Gamma_F)\equivariant$ diagram
	\begin{center}
		\begin{tikzcd}[row sep=large]
			\LambdatildeR/(p_1(\mu)) \arrow[r, "\eqref{eq:iota_lambdatilde}"] \arrow[d, "\eqref{eq:lambdatilder_mod_p1mu}", "\wr"'] & (\AR(1)/(p_1(\mu)))^{1 \times \Gamma_R'}\\
			\Lambda_R \arrow[ru, "\iota_{\Lambda}", "\sim"'],
		\end{tikzcd}
	\end{center}
	where the isomorphism $\iota_{\Lambda}$ is from Example \ref{exam:a1modp1mu_qconnection}.
	It is easy to see that the diagram above commutes. 
	Therefore, the top horizontal map of the diagram is an isomorphism, i.e.\ $\LambdatildeR/(p_1(\mu)) \isomorphic (\AR(1)/(p_1(\mu)))^{1 \times \Gamma_R'}$.
	Now, as both $\AR(1)$ and $\LambdatildeR$ are $p_1(\mu)\torsionfree$ (see Lemma \ref{lem:pi_rj_ff} and Construction \ref{const:lambdatilder}), we consider the following diagram with exact rows
	\begin{center}
		\begin{tikzcd}
			0 \arrow[r] & \LambdatildeR/(p_1(\mu)) \arrow[r, "p_1(\mu)^n"] \arrow[d, "\eqref{eq:iota_lambdatilde}", "\wr"'] & \LambdatildeR/(p_1(\mu))^{n+1} \arrow[r] \arrow[d, "\eqref{eq:iota_lambdatilde}"] & \LambdatildeR/(p_1(\mu))^n \arrow[r] \arrow[d, "\eqref{eq:iota_lambdatilde}"] & 0\\
			0 \arrow[r] & (\AR(1)/(p_1(\mu)))^{1 \times \Gamma_R'} \arrow[r, "p_1(\mu)^n"] & (\AR(1)/(p_1(\mu))^{n+1})^{1 \times \Gamma_R'} \arrow[r] & (\AR(1)/(p_1(\mu))^n)^{1 \times \Gamma_R'} \arrow[r] & 0,
		\end{tikzcd}
	\end{center}
	where the second exact sequence is from \eqref{eq:a1_geometric_exact}.
	Using the diagram, an easy induction on $n \geqslant 1$ gives a natural $(\varphi, \Gamma_R \times \Gamma_F)\equivariant$ isomorphism induced from the map \eqref{eq:lambdatilder_in_ar1} as $\iota_{\Lambdatilde} \colon \LambdatildeR/(p_1(\mu))^{n+1} \isomorphic (\AR(1)/(p_1(\mu))^{n+1})^{1 \times \Gamma_R'}$, i.e.\ the isomorphism in \eqref{eq:iota_lambdatilde}.
	Finally, since both $\LambdatildeR$ and $\AR(1)$ are $p_1(\mu)\adically$ complete, therefore, by taking the limit over $n \geqslant 1$, and noting that inverse limit commutes with taking $(1 \times \Gamma_R')\textrm{-invariants}$, we obtain the $(\varphi, \Gamma_R \times \Gamma_F)\equivariant$ isomorphism $\iota_{\Lambdatilde} \colon \LambdatildeR \isomorphic \AR(1)^{1 \times \Gamma_R'}$.
	This allows us to conclude.
\end{proof}

Let us note an important observation for the action of $\Gamma_R \times \Gamma_F$ on $\LambdatildeR$:
\begin{lem}\label{lem:lambdatildeR_triv_modmu}
	The action of $1 \times \Gamma_F$ is trivial on $\LambdatildeR/(p_2(\mu))$ and the action of $\Gamma_R \times 1$ is trivial on $\LambdatildeR/(p_1(\mu))$.
\end{lem}
\begin{proof}
	The proof of the claim is similar to that of Proposition \ref{prop:arn_gammar_triv_modmu}.
	For the first part, let us note that the action of $1 \times \Gamma_F$ on $\AR \wotimes_{O_F} O_F\llbracket\mu\rrbracket$ is trivial modulo $p_2(\mu)$.
	Then, from the explicit description of $\LambdatildeR$ in Construction \ref{const:lambdatilder}, it is enough to show that for any $m \in \NN$ and $g$ in $1 \times \Gamma_F$, we have that
	\begin{equation*}
		(g-1)\delta^m\big(\tfrac{1 \otimes [p]_q}{[p]_q \otimes 1}) \in p_2(\mu) \LambdatildeR.
	\end{equation*}
	Using Lemma \ref{lem:mu_mu0_deltastable} (2), let us note that $p_2(\mu)\LambdatildeR$ is a $\delta\textrm{-stable}$ ideal of $\LambdatildeR$, in the sense of \cite[Example 2.10]{bhatt-scholze-prisms}.
	Then, using Lemma \ref{lem:gamma_deltam_mu} for $A = \LambdatildeR$ and $\alpha = p_2(\mu)$, we see that to prove our claim, it is enough to show that $(g-1)(\tfrac{1 \otimes [p]_q}{[p]_q \otimes 1})$ belongs to $p_2(\mu) \LambdatildeR$, which follows from Lemma \ref{lem:gamma_deltam_pq}.
	In particular, we have shown that the action of $1 \times \Gamma_F$ is trivial on $\LambdatildeR/(p_2(\mu))$.

	For the second claim, note that the action of $\Gamma_R \times 1$ on $\AR \wotimes_{O_F} O_F\llbracket\mu\rrbracket$ is trivial modulo $p_1(\mu)$.
	Now, from the explicit description of $\LambdatildeR$ in Construction \ref{const:lambdatilder}, it is enough to show that for any $m \in \NN$ and $g$ in $\Gamma_R \times 1$, we have that
	\begin{equation*}
		(g-1)\delta^m\big(\tfrac{1 \otimes [p]_q}{[p]_q \otimes 1}) \in p_1(\mu) \LambdatildeR.
	\end{equation*}
	Similar to the first part, using Lemma \ref{lem:mu_mu0_deltastable} (1), we note that $p_1(\mu)\LambdatildeR$ is a $\delta\textrm{-stable}$ ideal of $\LambdatildeR$.
	Then, using Lemma \ref{lem:gamma_deltam_mu} for $A = \LambdatildeR$ and $\alpha = p_1(\mu)$, we see that to prove our claim, it is enough to show that $(g-1)(\tfrac{1 \otimes [p]_q}{[p]_q \otimes 1})$ belongs to $p_1(\mu) \LambdatildeR$, which again follows from Lemma \ref{lem:gamma_deltam_pq}.
	Hence, the action of $\Gamma_R \times 1$ is trivial on $\LambdatildeR/(p_1(\mu))$.
\end{proof}

\begin{rem}\label{rem:gammaF_act_lambdar}
	From Lemma \ref{lem:lambdatildeR_triv_modmu}, note that the action of $1 \times \Gamma_F$ is trivial on $\LambdatildeR/(p_2(\mu))$, and multiplication by $p_1(\mu)$ on $\LambdatildeR$ is equivariant for this action.
	Therefore, it follows that for any $g$ in $1 \times \Gamma_F$ and any $x$ in $\LambdatildeR/(p_1(\mu))^n$ we have that $(g-1)x$ is an element of $p_2(\mu)\LambdatildeRo/(p_1(\mu))^n$.
	In particular, for $n=1$, using the isomorphism $\LambdatildeR/(p_1(\mu)) \isomorphic \Lambda_R$ from \eqref{eq:lambdatilder_mod_p1mu}, we get that for $g$ in $\Gamma_F$ and $x$ in $\Lambda_R$ the element $(g-1)x$ belongs to $\mu\Lambda_R$.
\end{rem}

We are now ready to prove the main claim:
\begin{proof}[Proof of Lemma \ref{lem:a1_geometric_exact}]
	To lighten notations, let us set $A(1) \coloneq \AR(1)$ and $\Abar(1) \coloneq \AR(1)/(p_1(\mu))$.
	Instead of working with the action of $1 \times \Gamma_R'$, we shall work with the $\qconnection$ arising from this action.
	More precisely, in the notation of Definition \ref{defi:qdeRham_complex}, take $D$ to be $\LambdatildeR \isomorphic A(1)^{1 \times \Gamma_R'}$ (see Lemma \ref{lem:iota_lambdatilde_iso}), and $A$ to be $A(1)$ equipped with a $\LambdatildeR\linear$ action of $1 \times \Gamma_R'$ and let $\{\gamma_1, \ldots, \gamma_d\}$ be the topological generators of $\Gamma_R'$ (see Section \ref{subsec:important_rings}).
	Then, setting $q = 1+p_2(\mu)$ and $U_i = p_2([X_i^{\flat}])$, for $1 \leqslant i \leqslant d$, we have that $\gamma_i = 1 \textmod p_2(\mu) \AR(1)$, for all $1 \leqslant i \leqslant d$ (see Proposition \ref{prop:arn_gammar_triv_modmu}).
	In particular, $A(1)$ satisfies the hypotheses of Definition \ref{defi:qdeRham_complex}.
	Moreover, in this case, $q\Omega^1_{A(1)/\LambdatildeR}$ identifies with $\Omega^1_{A(1)/\LambdatildeR}$, and given as the $(p, \mu)\textrm{-adic}$ completion of the module of K\"ahler differentials of $A(1)$ with respect to $\LambdatildeR$.
	From Definition \ref{defi:qdeRham_complex}, the $\qconnection$ on $A(1)$, denoted $\nabla_q \colon A(1) \rightarrow q\Omega^1_{A(1)/\LambdatildeR}$, is given as $f \mapsto \sum_{i=1}^d \frac{\gamma_i(f)-f}{p_2(\mu)} \dlog(p_2([X_i^{\flat}]))$.
	Additionally, the $\qconnection$ $\nabla_q$ on $A(1)$ is $(p, p_1(\mu))\adically$ quasi-nilpotent because we have $\tfrac{\gamma_i-1}{p_2(\mu)p_2([X_i^{\flat}])}(p_2([X_i^{\flat}])) = 1$, and it is flat because $\gamma_i$ commute with each other.
	Let us also note that $\LambdatildeR/(p_1(\mu)) \isomorphic \Lambda_R \isomorphic (A(1)/(p_1(\mu)))^{1 \times \Gamma_R'}$ (see \eqref{eq:lambdatilder_mod_p1mu}, Example \ref{exam:a1modp1mu_qconnection} and the proof of Lemma \ref{lem:iota_lambdatilde_iso}).
	Now, consider the following exact sequence of $q\textrm{-de Rham}$ complexes:
	\begin{equation*}
		0 \longrightarrow q\Omega^{\bullet}_{\Abar(1)/\Lambda_R} \xrightarrow{\hspace{1mm} p_1(\mu)^n \hspace{1mm}} A(1)/(p_1(\mu))^{n+1} \otimes_{A(1)} q\Omega^{\bullet}_{A(1)/\LambdatildeR} \longrightarrow A(1)/(p_1(\mu))^n \otimes_{A(1)} q\Omega^{\bullet}_{A(1)/\LambdatildeR} \longrightarrow 0.
	\end{equation*}
	Since the action of $1 \times \Gamma_R'$ is continuous for the $(p, p_1(\mu))\adic$ topology on $A(1)$, therefore from Lemma \ref{lem:cont_coh_disc}, we get that $(A(1)/(p_1(\mu))^n)^{1 \times \Gamma_R'} = (A(1)/(p_1(\mu))^n)^{\nabla_q=0}$.
	In particular, showing that \eqref{eq:a1_geometric_exact} is exact, is equivalent to showing that $H^1\big(q\Omega^{\bullet}_{\Abar(1)/\Lambda_R}\big) = 0$.
	Now, from Proposition \ref{prop:a1modp1mu_connection}, recall that the $q\textrm{-de Rham}$ complex $q\Omega^{\bullet}_{\Abar(1)/\Lambda_R}$ is quasi-isomorphic to the de Rham complex $\Omega^{\bullet}_{\Abar(1)/\Lambda_R}$ and from the explicit description of $\Abar(1)$ in Proposition \ref{prop:a1modp1mu_pdring}, it is clear that the de Rham complex $\Omega^{\bullet}_{\Abar(1)/\Lambda_R}$ is acyclic in positive degrees, in particular, $H^1\big(q\Omega^{\bullet}_{\Abar(1)/\Lambda_R}\big) = H^1\big(\Omega^{\bullet}_{\Abar(1)/\Lambda_R}\big) = 0$.
	Hence, it follows that \eqref{eq:a1_geometric_exact} is exact.
\end{proof}

For computations to be carried out in Section \ref{subsec:wach_and_strat}, let us consider a ``diagonal map for the geometric variables'' from $\AR(1)$ to $\LambdatildeR$.
More precisely, consider the following surjective $\AR\linear$ (via $p_1$) and $(\varphi, \Gamma_R \times \Gamma_F)\equivariant$ map:
\begin{equation*}
	\Delta' \colon \AR(1) \longrightarrow \LambdatildeR,
\end{equation*}
where $p_2(\mu) \mapsto p_2(\mu)$ and $p_2([X_i^{\flat}]) \mapsto p_1([X_i^{\flat}])$, for $1 \leqslant i \leqslant d$.
Then, it is easy to verify that the composition $\Delta' \circ \iota_{\Lambdatilde}$ (see \eqref{eq:lambdatilder_in_ar1}) is the identity on $\LambdatildeR$.
As $p_1(\mu)$ is invariant under the action of $1 \times \Gamma_R'$, we see that the reduction of $\Delta'$ modulo $p_1(\mu)^n$ is $(\varphi, \Gamma_R \times \Gamma_F)\equivariant$, for each $n \geqslant 1$.
Now, taking invariants of the source of $\Delta'$ under the action of $1 \times \Gamma_R'$, we obtain the following $(\varphi, \Gamma_F)\equivariant$ morphism:
\begin{equation}\label{eq:delta_prime}
	\Delta' \colon (\AR(1)/(p_1(\mu))^n)^{1 \times \Gamma_R'} \longrightarrow \LambdatildeR/(p_1(\mu))^n,
\end{equation}
and we claim the following:

\begin{lem}\label{lem:delta_prime_iso}
	The $(\varphi, \Gamma_F)\equivariant$ map in \eqref{eq:delta_prime} is an isomorphism and it induces a $(\varphi, \Gamma_F)\equivariant$ isomorphism $\Delta' \colon \AR(1)^{1 \times \Gamma_R'} \isomorphic \LambdatildeR$
\end{lem}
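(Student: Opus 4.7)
The plan is to deduce both statements directly from Lemma \ref{lem:iota_lambdatilde_iso} using the fact, noted just before the statement of Lemma \ref{lem:delta_prime_iso}, that the composition $\Delta' \circ \iota_{\Lambdatilde}$ is the identity on $\LambdatildeR$.

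First, I would observe that $p_1(\mu) \in \LambdatildeR$ maps to $p_1(\mu) \in \AR(1)$ under $\iota_{\Lambdatilde}$ and that $\Delta'$ sends $p_1(\mu)$ to $p_1(\mu)$ by construction. Therefore $\iota_{\Lambdatilde}$ and $\Delta'$ both descend to well-defined maps modulo $p_1(\mu)^n$, and the relation $\Delta' \circ \iota_{\Lambdatilde} = \mathrm{id}_{\LambdatildeR}$ passes to the quotient, yielding a commutative triangle
\begin{center}
\begin{tikzcd}[row sep=large]
\LambdatildeR/p_1(\mu)^n \arrow[r, "\iota_{\Lambdatilde}", "\sim"'] \arrow[rd, equal] & (\AR(1)/p_1(\mu)^n)^{1 \times \Gamma_R'} \arrow[d, "\Delta'"] \\
& \LambdatildeR/p_1(\mu)^n,
\end{tikzcd}
\end{center}
where the top horizontal arrow is an isomorphism by Lemma \ref{lem:iota_lambdatilde_iso}. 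It follows formally that $\Delta'$ in \eqref{eq:delta_prime} is the two-sided inverse of $\iota_{\Lambdatilde}$, hence itself a $(\varphi, \Gamma_F)$-equivariant isomorphism; $(\varphi, \Gamma_F)$-equivariance of $\Delta'$ is already built in, since $\Delta'$ is $(\varphi, \Gamma_R \times \Gamma_F)$-equivariant and $p_1(\mu)$ is fixed by $1 \times \Gamma_R'$ (so reducing modulo $p_1(\mu)^n$ and taking $(1 \times \Gamma_R')$-invariants produces a map between $(\varphi, \Gamma_F)$-modules).

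For the final assertion, I would invoke directly the unreduced form of Lemma \ref{lem:iota_lambdatilde_iso}, which supplies the $(\varphi, \Gamma_R \times \Gamma_F)$-equivariant isomorphism $\iota_{\Lambdatilde} : \LambdatildeR \isomorphic \AR(1)^{1 \times \Gamma_R'}$. Composing once more with $\Delta'$ gives $\Delta' \circ \iota_{\Lambdatilde} = \mathrm{id}_{\LambdatildeR}$, and since $\iota_{\Lambdatilde}$ is an isomorphism, so is $\Delta'$, with $\Delta' = \iota_{\Lambdatilde}^{-1}$.

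No step looks like a real obstacle here: the entire content is packaged into Lemma \ref{lem:iota_lambdatilde_iso} and into the identity $\Delta' \circ \iota_{\Lambdatilde} = \mathrm{id}$, which was verified by direct inspection on the generators $p_2(\mu)$ and $p_2([X_i^{\flat}])$ of $\LambdatildeR$. The only minor point to double-check is that $\Delta'$ is compatible with taking $(1 \times \Gamma_R')$-invariants modulo $p_1(\mu)^n$, but this is automatic because $\Delta'$ is $(1 \times \Gamma_R')$-equivariant (with trivial action on the target $\LambdatildeR$) and $p_1(\mu)$ is $(1 \times \Gamma_R')$-fixed.
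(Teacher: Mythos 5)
Your proof is correct. The formal skeleton is the same as the paper's for $n=1$ (the retraction identity $\Delta' \circ \iota_{\Lambdatilde} = \mathrm{id}$ combined with the bijectivity of $\iota_{\Lambdatilde}$), but for $n>1$ and for the unreduced statement you take a shorter route: you apply the identity $\Delta' \circ \iota_{\Lambdatilde} = \mathrm{id}$ reduced modulo $p_1(\mu)^n$ at every level, and invoke the full strength of Lemma \ref{lem:iota_lambdatilde_iso}, which is already stated for all $n$ and for the $p_1(\mu)$-adically complete rings. The paper instead only uses the retraction argument at $n=1$ and then re-runs an induction on $n$ via the five-term diagram built from the exact sequence \eqref{eq:a1_geometric_exact}, followed by a limit over $n$. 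Your version makes that second pass through the induction unnecessary, since all the content of the exactness argument is already packaged into Lemma \ref{lem:iota_lambdatilde_iso}; the only points you need to (and do) check are that both maps are $\AR$-linear via $p_1$, so that the identity descends modulo $p_1(\mu)^n$, and that the reduction of $\iota_{\Lambdatilde}$ lands in the $(1\times\Gamma_R')$-invariants so that the composite really factors through the map \eqref{eq:delta_prime}. Both arguments are valid; yours is the more economical of the two.
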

\begin{proof}
	Let us first consider the following $(\varphi, \Gamma_F)\equivariant$ diagram:
	\begin{equation*}
		\Lambda_R \xrightarrow[\sim]{\hspace{1mm} \iota_{\Lambda} \hspace{1mm}} (\AR(1)/(p_1(\mu)))^{1 \times \Gamma_R'} \xrightarrow{\hspace{1mm} \eqref{eq:delta_prime} \hspace{1mm}} \LambdatildeR/(p_1(\mu)) \xrightarrow[\sim]{\hspace{1mm} \eqref{eq:lambdatilder_mod_p1mu} \hspace{1mm}} \Lambda_R.
	\end{equation*}
	Using the isomorphism \eqref{eq:iota_lambdatilde} in Lemma \ref{lem:iota_lambdatilde_iso}, it is easy to verify that the composition above is the identity on $\Lambda_R$.
	Therefore, it follows that \eqref{eq:delta_prime} is an isomorphism, for $n=1$.
	Now, as both $\AR(1)$ and $\LambdatildeR$ are $p_1(\mu)\torsionfree$ (see Lemma \ref{lem:pi_rj_ff} and Construction \ref{const:lambdatilder}), we consider the following commutative diagram with exact rows:
	\begin{center}
		\begin{tikzcd}
			0 \arrow[r] & (\AR(1)/(p_1(\mu)))^{1 \times \Gamma_R'} \arrow[r, "p_1(\mu)^n"] \arrow[d, "\eqref{eq:delta_prime}", "\wr"'] & (\AR(1)/(p_1(\mu))^{n+1})^{1 \times \Gamma_R'} \arrow[r] \arrow[d, "\eqref{eq:delta_prime}"] & (\AR(1)/(p_1(\mu))^n)^{1 \times \Gamma_R'} \arrow[r] \arrow[d, "\eqref{eq:delta_prime}"] & 0\\
			0 \arrow[r] & \LambdatildeR/(p_1(\mu)) \arrow[r, "p_1(\mu)^n"] & \LambdatildeR/(p_1(\mu))^{n+1} \arrow[r] & \LambdatildeR/(p_1(\mu))^n \arrow[r] & 0,
		\end{tikzcd}
	\end{center}
	where the first exact sequence is from \eqref{eq:a1_geometric_exact}.
	Using the diagram, an easy induction on $n \geqslant 1$ gives the natural $(\varphi, \Gamma_R \times \Gamma_F)\equivariant$ isomorphism $\Delta' \colon (\AR(1)/(p_1(\mu))^{n+1})^{1 \times \Gamma_R'} \isomorphic \LambdatildeR/(p_1(\mu))^{n+1}$ in \eqref{eq:delta_prime}.
	Finally, since both $\AR(1)$ and $\LambdatildeR$ are $p_1(\mu)\adically$ complete, therefore, taking the limit over $n \geqslant 1$, and noting that inverse limit commutes with taking $(1 \times \Gamma_R')\textrm{-invariants}$, we obtain the $(\varphi, \Gamma_F)\equivariant$ isomorphism $\Delta' \colon \AR(1)^{1 \times \Gamma_R'} \isomorphic \LambdatildeR$.
\end{proof}

\subsubsection{The action of \texorpdfstring{$\FF_p^{\times}$}{-}}\label{subsubsec:fpx_action}

In this section, we will assume that $p \geqslant 3$ and consider the invariants of the exact sequence \eqref{eq:a1_geometric_exact} (more precisely, its image under the inverse of $\iota_{\Lambdatilde}$ in \eqref{eq:iota_lambdatilde}), for the action of $1 \times \FF_p^{\times}$.
\begin{lem}\label{lem:a1_fpx_exact}
	For each $n \geqslant 1$, the following natural $(\varphi, \Gamma_R \times \Gamma_0)\equivariant$ sequence is exact:
	\begin{equation}\label{eq:a1_fpx_exact}
		0 \longrightarrow (\LambdatildeR/(p_1(\mu)))^{1 \times \FF_p^{\times}} \xrightarrow{\hspace{1mm} p_1(\mu)^n \hspace{1mm}} (\LambdatildeR/(p_1(\mu))^{n+1})^{1 \times \FF_p^{\times}} \longrightarrow (\LambdatildeR/(p_1(\mu))^n)^{1 \times \FF_p^{\times}} \longrightarrow 0.
	\end{equation}
\end{lem}
\begin{proof}
	Using the discussion in Construction \ref{const:lambdatilder} and the isomorphism \eqref{eq:iota_lambdatilde} in Lemma \ref{lem:iota_lambdatilde_iso}, the exact sequence \eqref{eq:a1_geometric_exact} in Lemma \ref{lem:a1_geometric_exact} may be written as the following $(\varphi, \Gamma_R \times \Gamma_F)\equivariant$ exact sequence:
	\begin{equation*}
		0 \longrightarrow \LambdatildeR/(p_1(\mu)) \xrightarrow{\hspace{1mm} p_1(\mu)^n \hspace{1mm}} \LambdatildeR/(p_1(\mu))^{n+1} \longrightarrow \LambdatildeR/(p_1(\mu))^{n} \longrightarrow 0.
	\end{equation*}
	By considering the associated long exact sequence for the cohomology of $(1 \times \FF_p^{\times})\action$ and noting that $H^1(1 \times \FF_p^{\times}, \Lambda_R) = 0$, since $p-1$ is invertible in $\ZZ_p$, we obtain that the sequence in \eqref{eq:a1_fpx_exact} is exact.
\end{proof}

Next, let us describe the rings $(\LambdatildeR/(p_1(\mu))^n)^{1 \times \FF_p^{\times}}$, more explicitly.

\begin{const}\label{const:lambdatilder0}
	Note that from \eqref{eq:fpx_decomp}, the continuous action of $1 \times \FF_p^{\times}$ induces a decomposition $\LambdatildeR = \bigoplus_{i=0}^{p-2} (\LambdatildeR)_i$.
	Let us set $\LambdatildeRo \coloneq (\LambdatildeR)_0 = (\LambdatildeR)^{1 \times \FF_p^{\times}}$, complete for the $(p, p_1(\mu))\textrm{-adic}$ topology and equipped with the induced Frobenius endomorphism and continuous action of $\Gamma_R \times \Gamma_0$, from the corresponding structures on $\LambdatildeR$.
	Using the explicit presentation of $\LambdatildeR$ from Construction \ref{const:lambdatilder} (see \eqref{eq:lambdatilde_explicit}), we obtain an explicit presentation of $\LambdatildeRo$ as follows:
	\begin{equation}\label{eq:lambdatildeo_explicit}
		\LambdatildeRo = (\AR \widehat{\otimes}_{O_F} O_F\llbracket\mu_0\rrbracket) \Big\{\tfrac{1 \otimes \ptilde}{\ptilde \otimes 1}\Big\}_{(p, \ptilde \otimes 1)}^{\wedge}.
	\end{equation}
	Using notations from Construction \ref{const:lambdatilder}, we shall denote the two projection maps as $p_1 \colon \AR \rightarrow \LambdatildeRo$ and $p_2 \colon O_F\llbracket\mu_0\rrbracket \rightarrow \LambdatildeRo$.
	Then, similar to Lemma \ref{lem:pi_rj_ff}, it is easy to see that both the preceding maps are faithfully flat, in particular, $\LambdatildeRo$ is $p_1(\mu)\torsionfree$.
	Furthermore, note that from \eqref{eq:lambdatilder_mod_p1mu}, we have a natural $(\varphi, \Gamma_F)\equivariant$ isomorphism of rings $\LambdatildeR/(p_1(\mu)) \isomorphic \Lambda_R$.
	Now, using \eqref{eq:fpx_decomp}, the action of $1 \times \FF_p^{\times}$ induces a decomposition $\Lambda_R = \bigoplus_{i=0}^{p-2} (\Lambda_R)_i$.
	Note that $p_1(\mu)$ in $\LambdatildeR$ is invariant under the action of $1 \times \FF_p^{\times}$ and $p-1$ is invertible in $\ZZ_p$, in particular, $H^1(1 \times \FF_p^{\times}, \LambdatildeR) = 0$.
	Therefore, it follows that for each $n \geqslant 1$, we have a $(\varphi, \Gamma_R \times \Gamma_0)\equivariant$ isomorphism of rings
	\begin{equation}\label{eq:lambdatildero_mod_p1mun}
		\LambdatildeRo/(p_1(\mu))^n \isomorphic (\LambdatildeR/(p_1(\mu))^n)^{1 \times \FF_p^{\times}}.
	\end{equation}
	In particular, for $n =1$, we have a $(\varphi, \Gamma_R \times \Gamma_0)\equivariant$ isomorphism of rings $\LambdatildeRo/(p_1(\mu)) \isomorphic (\LambdatildeR/(p_1(\mu)))^{1 \times \FF_p^{\times}}$.
	Additionally, the isomorphism in \eqref{eq:lambdatilder_mod_p1mu} induces a $(\varphi, \Gamma_0)\equivariant$ isomorphism of rings 
	\begin{equation}\label{eq:lambdatildero_mod_p1mu}
		\LambdatildeRo/(p_1(\mu)) \isomorphic (\LambdatildeR/(p_1(\mu)))^{1 \times \FF_p^{\times}} \isomorphic \Lambda_R^{\FF_p^{\times}} =:f\Lambda_{R, 0}.
	\end{equation}

	Let us describe $\Lambda_{R, 0}$ more explicitly.
	From Proposition \ref{prop:polynom_to_powerseries}, recall that we have $\Lambda_R = R[\mu, \{(\mu^{p-1}/p)^{[k]}\}_{k \in \NN}]_p^{\wedge}$.
	Moreover, recall that $t = \log(1+\mu)$ is an element of $\Lambda_F$, and since $t/\mu$ is a unit in $\Lambda_F$ (see Lemma \ref{lem:tovermu_unit}), therefore, we may write $\Lambda_R = R[t, \{(t^{p-1}/p)^{[k]}\}_{k \in \NN}]_p^{\wedge}$.
	Now, since the action of $\FF_p^{\times}$ is trivial on $t^{p-1}$, so it follows that $\Lambda_{R, 0} = \Lambda_R^{\FF_p^{\times}} = R[\{(t^{p-1}/p)^{[k]}\}_{k \in \NN}]_p^{\wedge}$.
	Furthermore, recall that $\mu_0$ is the product of $\mu^{p-1}$ with a unit in $\AF$ (see Lemma \ref{lem:mu0_mup1_unit}), and since $t/\mu$ is a unit in $\Lambda_F$, therefore, we may write $\mu_0 = \upsilon t^{p-1}$, where $\upsilon$ is a unit in $\Lambda_F$.
	As the action of $\FF_p^{\times}$ is trivial on $\mu_0$ and $t^{p-1}$, it follows that $\upsilon$ belongs to $\Lambda_{F, 0} = \Lambda_F^{\FF_p^{\times}}$.
	In particular, $\mu_0$ is a product of $t^{p-1}$ with a unit in $\Lambda_{F, 0}$ and we have $(\varphi, \Gamma_0)\equivariant$ identifications
	\begin{equation}\label{eq:lambdar0_explicit}
		\Lambda_{R, 0} = \Lambda_R^{\FF_p^{\times}} = R[\{(t^{p-1}/p)^{[k]}\}_{k \in \NN}]_p^{\wedge} = R[\{(\mu_0/p)^{[k]}\}_{k \in \NN}]_p^{\wedge}.
	\end{equation}
	Note that $\mu^{p-1}$ is not an element of $\Lambda_{R, 0}$ as the action of $\FF_p^{\times}$ is non-trivial on $\mu^{p-1}$.
\end{const}

\subsubsection{The action of \texorpdfstring{$1 + p\ZZ_p$}{-}}\label{subsubsec:gamma0_action}

In this section, we will assume that $p \geqslant 3$ and consider the invariants of the exact sequence \eqref{eq:a1_fpx_exact}, for the action of $1 \times \Gamma_0 \isomorphic 1 \times (1 + p\ZZ_p)$, and show the following:
\begin{lem}\label{lem:a1_gamma0_exact}
	For each $n \geqslant 1$, the following natural $(\varphi, \Gamma_R \times 1)\equivariant$ sequence is exact:
	\begin{equation}\label{eq:a1_gamma0_exact}
		0 \longrightarrow (\LambdatildeR/(p_1(\mu)))^{1 \times \Gamma_F} \xrightarrow{\hspace{1mm} p_1(\mu)^n \hspace{1mm}} (\LambdatildeR/(p_1(\mu))^{n+1})^{1 \times \Gamma_F} \longrightarrow (\LambdatildeR/(p_1(\mu))^n)^{1 \times \Gamma_F} \longrightarrow 0.
	\end{equation}
\end{lem}

\begin{rem}\label{rem:a1_gammaF_exact}
	Via the natural $(\varphi, \Gamma_R \times \Gamma_F)\equivariant$ isomorphism $\iota_{\Lambdatilde}$ in Lemma \ref{lem:iota_lambdatilde_iso} (see \eqref{eq:iota_lambdatilde}), we see that the exact sequence in \eqref{eq:a1_fpx_exact} is the invariants, under the natural action of $1 \times \FF_p^{\times}$, of the exact sequence \eqref{eq:a1_geometric_exact} in Lemma \ref{lem:a1_geometric_exact}.
	Then, it follows that the exact sequence in \eqref{eq:a1_gamma0_exact} is the invariants, under the natural action of $1 \times \Gamma_F$, of the exact sequence \eqref{eq:a1_geometric_exact} in Lemma \ref{lem:a1_geometric_exact}.
\end{rem}

Note that by using the $(\varphi, \Gamma_R \times \Gamma_0)\equivariant$ isomorphism in \eqref{eq:lambdatildero_mod_p1mun} and \eqref{eq:lambdatildero_mod_p1mu}, the sequence in \eqref{eq:a1_gamma0_exact} may be rewritten as the following $(\varphi, \Gamma_R \times \Gamma_0)\equivariant$ sequence:
\begin{equation}\label{eq:lambdatilder0_gamma0_exact}
	0 \longrightarrow \Lambda_{R, 0}^{\Gamma_0} \xrightarrow{\hspace{1mm} p_1(\mu)^n \hspace{1mm}} (\LambdatildeRo/(p_1(\mu))^{n+1})^{1 \times \Gamma_0} \longrightarrow (\LambdatildeRo/(p_1(\mu))^{n})^{1 \times \Gamma_0} \longrightarrow 0.
\end{equation}
In order to prove that \eqref{eq:lambdatilder0_gamma0_exact} is exact, we will now look at the action of $\Gamma_R \times \Gamma_0$ on the rings introduced in Construction \ref{const:lambdatilder0}.
We start with the following observation:
\begin{lem}\label{lem:lambdatilde0_triv_modmu0}
	The action of $1 \times \Gamma_0$ is trivial on $\LambdatildeRo/(p_2(\mu_0))$ and the action of $\Gamma_R \times 1$ is trivial on $\LambdatildeRo/(p_1(\mu))$.
\end{lem}
\begin{proof}
	The proof of the claim is similar to that of Lemma \ref{lem:lambdatildeR_triv_modmu}.
	Let us first note that if $g$ is any element of $\Gamma_0$, then $(g-1)\mu_0$ is an element of $\mu_0 O_F\llbracket\mu_0\rrbracket$ (see Lemma \ref{lem:mu0_gamma0_act}).
	Now, for the first part, let us note that the action of $1 \times \Gamma_0$ on $\AR \wotimes_{O_F} O_F\llbracket\mu_0\rrbracket$ is trivial modulo $p_2(\mu_0)$.
	Then, from the explicit description of $\LambdatildeRo$ in \eqref{eq:lambdatildeo_explicit}, it is enough to show that for any $m \in \NN$ and $g$ in $1 \times \Gamma_0$, we have that
	\begin{equation*}
		(g-1)\delta^m\big(\tfrac{1 \otimes \ptilde}{\ptilde \otimes 1}) \in p_2(\mu_0) \LambdatildeRo.
	\end{equation*}
	Using Lemma \ref{lem:mu_mu0_deltastable} (2), we first note that $p_2(\mu_0)\LambdatildeRo$ is a $\delta\textrm{-stable}$ ideal of $\LambdatildeRo$, in the sense of \cite[Example 2.10]{bhatt-scholze-prisms}.
	Then, using Lemma \ref{lem:gamma_deltam_mu} for $A = \LambdatildeRo$ and $\alpha = p_2(\mu_0)$, we see that to prove our claim, it is enough to show that $(g-1)(\tfrac{1 \otimes \ptilde}{\ptilde \otimes 1})$ belongs to $p_2(\mu_0) \LambdatildeRo$.
	The assertion now follows from the earlier observation that $(g-1)\mu_0$ is an element of $\mu_0 O_F\llbracket\mu_0\rrbracket$.
	In particular, we have shown that the action of $1 \times \Gamma_0$ is trivial on $\LambdatildeRo/(p_2(\mu_0))$.
	The triviality of the action of $\Gamma_R \times 1$ on $\LambdatildeRo/(p_1(\mu))$ easily follows from Lemma \ref{lem:lambdatildeR_triv_modmu}.
\end{proof}

\begin{rem}\label{rem:gamma0_act_lambdar0}
	From Lemma \ref{lem:lambdatilde0_triv_modmu0}, note that the action of $1 \times \Gamma_0$ is trivial on $\LambdatildeRo/(p_2(\mu_0))$ and multiplication by $p_1(\mu)$ on $\LambdatildeRo$ is equivariant for this action.
	Therefore, it follows that for any $g$ in $1 \times \Gamma_0$ and any $x$ in $\LambdatildeRo/(p_1(\mu))^n$, we have that $(g-1)x$ is an element of $p_2(\mu_0)\LambdatildeRo/(p_1(\mu))^n$.
	In particular, for $n=1$, using the isomorphism $\LambdatildeRo/(p_1(\mu)) \isomorphic \Lambda_{R,0}$ from \eqref{eq:lambdatildero_mod_p1mu}, we get that for any $g$ in $\Gamma_0$ and any $x$ in $\Lambda_{R,0}$, the element $(g-1)x$ belongs to $\mu_0\Lambda_{R,0}$.
\end{rem}

Using the action of $1 \times \Gamma_0$ on $\LambdatildeRo$, let us define a $q\textrm{-de Rham}$ complex (see Definition \ref{defi:qdeRham_complex}), by considering the following element in $\LambdatildeRo$ as a parameter:
\begin{equation}\label{eq:elem_stilde}
	\stilde \coloneq \tfrac{1 \otimes \ptilde - \ptilde \otimes 1}{\ptilde \otimes 1} = \tfrac{p_2(\ptilde) - p_1(\ptilde)}{p_1(\ptilde)}.
\end{equation}

\begin{lem}\label{lem:gamma0_act_stilde}
	Let $\gamma_0$ be any element of $1 \times \Gamma_0$.
	Then, we have $(\gamma_0-1)\stilde = u \hspace{0.5mm} p_2(\mu_0)$, for some unit $u$ in $\LambdatildeRo$ depending on $\gamma_0$.
\end{lem}
\begin{proof}
	Note that it is enough to show the claim for a topological generator $\gamma_0$ of $1 \times \Gamma_0$ such that $\chi(\gamma_0) = 1 + pa$, for a unit $a$ in $\ZZ_p$.
	Now, recall that we have $\ptilde = \mu_0 + p$.
	Moreover, from Lemma \ref{lem:mu0_gamma0_act}, recall that we may write $(\gamma_0-1)\mu_0 = \ptilde \mu_0 x$, for some $x$ in $O_F\llbracket\mu_0\rrbracket$.
	So, using \eqref{eq:elem_stilde}, we have that 
	\begin{equation}\label{eq:gamma0_act_stilde}
		(\gamma_0-1)\stilde = \tfrac{(\gamma_0-1)p_2(\ptilde)}{p_1(\ptilde)} = \tfrac{(\gamma_0-1)p_2(\mu_0)}{p_1(\ptilde)} = \tfrac{p_2(\ptilde \mu_0 x)}{p_1(\ptilde)} = \tfrac{p_2(\ptilde)}{p_1(\ptilde)} p_2(x) p_2(\mu_0).
	\end{equation}
	From Lemma \ref{lem:pq_mu0p_unit} and Lemma \ref{lem:pi_rj_ff}, it follows that $\tfrac{1 \otimes \ptilde}{\ptilde \otimes 1}$ is a unit in $\AR(1)$.
	Moreover, from the description of $\LambdatildeRo$ as the $\FF_p^{\times}\textrm{-invariants}$ of $\LambdatildeR$, in Construction \ref{const:lambdatilder0}, it follows that $\tfrac{1 \otimes \ptilde}{\ptilde \otimes 1}$ is also a unit in $\LambdatildeRo$.
	Therefore, to show the claim, it is enough to show that $p_2(x)$ is a unit in $\LambdatildeRo$.
	Again, note that from Construction \ref{const:lambdatilder0}, the ring $\LambdatildeRo$ is $p_1(\mu)\adically$ complete and we have a $(\varphi, \Gamma_0)\equivariant$ isomorphism $\LambdatildeRo/(p_1(\mu)) \isomorphic \Lambda_{R, 0}$ (see \eqref{eq:lambdatildero_mod_p1mu}), therefore, we are reduced to showing that $p_2(\overline{x})$, the image of $p_2(x)$ under the preceding isomorphism, is a unit.
	Also, note that under the preceding isomorphism, $p_2(\mu_0)$ and $p_2(\ptilde)$ map respectively to $\mu_0$ and $\ptilde$ in $\Lambda_{R, 0}$.
	Now, reducing the equalities in \eqref{eq:gamma0_act_stilde} modulo $p_1(\mu)$, we obtain the following expression in $\Lambda_{R, 0}$:
	\begin{equation*}
		\tfrac{(\gamma_0-1)\mu_0}{p} = p_2(\overline{x}) \mu_0 \tfrac{\ptilde}{p}.
	\end{equation*}
	Note that $\ptilde/p$ is a unit in $\Lambda_{R,0}$, because it is the image of the unit $p_2(\ptilde)/p_1(\ptilde)$ in $\LambdatildeRo$ (also see Lemma \ref{lem:tovermu_unit}).
	Then, from Lemma \ref{lem:gamma0_act_s}, it follows that $p_2(\overline{x})$ is a unit in $\Lambda_{R, 0}$, and therefore, $p_2(x)$ is a unit in $\LambdatildeRo$.
	Hence, we have shown that $(\gamma_0-1)\stilde = u \hspace{0.5mm} p_2(\mu_0)$, for $u = (p_2(\ptilde)/p_1(\ptilde))x$ a unit in $\LambdatildeRo$.
\end{proof}

The following observation was used above:
\begin{lem}\label{lem:gamma0_act_s}
	Let $\gamma_0$ be a topological generator of $\Gamma_0$ such that $\chi(\gamma_0) = 1+pa$, for some unit $a$ in $\ZZ_p$.
	Then, we have $(\gamma_0-1)\tfrac{\mu_0}{p} = v\mu_0$, for some unit $v$ in $\Lambda_{F,0} = \Lambda_F^{\FF_p^{\times}}$ depending on $\gamma_0$.
\end{lem}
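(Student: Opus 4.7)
The plan is to exploit the decomposition $\mu_0 = u \cdot t^{p-1}$ from Construction \ref{const:lambdatilder0}, where $u \in \Lambda_{F, 0}^{\times}$ and $t = \log(1+\mu) \in \Lambda_F$. This is useful because $\gamma_0(t) = \chi(\gamma_0) t = (1+pa) t$, so $\gamma_0$ acts on $t^{p-1}/p$ simply by multiplication by $(1+pa)^{p-1}$. Setting $b := ((1+pa)^{p-1} - 1)/p$, a quick computation gives $b \equiv (p-1)a \equiv -a \pmod{p}$, hence $b \in \ZZ_p^{\times}$, and
\begin{equation*}
(\gamma_0 - 1)(t^{p-1}/p) = b \, t^{p-1}.
\end{equation*}

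Next, I would write $\mu_0/p = u \cdot (t^{p-1}/p)$ and apply the Leibniz rule. By Remark \ref{rem:gamma0_act_lambdar0}, there exists $w \in \Lambda_{F, 0}$ with $(\gamma_0 - 1)u = \mu_0 w$. Substituting this and using $t^{p-1} = u^{-1} \mu_0$ to extract a factor of $\mu_0$, a direct computation yields
\begin{equation*}
(\gamma_0 - 1)(\mu_0/p) = \mu_0 \cdot v, \qquad v := w (t^{p-1}/p) + b \, u^{-1} \gamma_0(u) \ \in \ \Lambda_{F, 0}.
\end{equation*}
It remains to show that $v$ is a unit in $\Lambda_{F, 0}$.

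For this, I would establish the auxiliary identification $\Lambda_{F, 0}/p \cong \kappa$. Since $p-1$ is invertible in $\ZZ_p$, the cohomology of $\FF_p^{\times}$ vanishes in positive degree, giving $\Lambda_{F, 0}/p = (\Lambda_F/p)^{\FF_p^{\times}} = (\kappa[\mu]/\mu^{p-1})^{\FF_p^{\times}}$. The weight decomposition under $[a]t = at$ shows that $1, t, \ldots, t^{p-2}$ form a $\kappa$-basis of $\kappa[\mu]/\mu^{p-1}$ of weights $0, 1, \ldots, p-2$, so the $\FF_p^{\times}$-invariants reduce to $\kappa$. Since $\Lambda_{F, 0}$ is $p$-adically complete, it suffices to check that $v$ is a unit modulo $p$, i.e.\ nonzero in $\kappa$. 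The element $t^{p-1}/p$ vanishes in $\Lambda_{F, 0}/p$: indeed, $t^{p-1} \equiv \mu^{p-1} \equiv 0$ in $\kappa[\mu]/\mu^{p-1} = \Lambda_F/p$, so $t^{p-1}/p \in p\Lambda_F$, and the injection $\Lambda_{F, 0}/p \hookrightarrow \Lambda_F/p$ forces $t^{p-1}/p \in p \Lambda_{F, 0}$. Moreover, $\gamma_0$ acts trivially on $\kappa$, so $u^{-1}\gamma_0(u) \equiv 1 \pmod{p}$. Hence $v \equiv b \pmod{p}$ in $\kappa$, which is a unit, so $v$ is a unit in $\Lambda_{F, 0}$. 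The main technical step is the clean extraction of $\mu_0$ in the second paragraph (reliant on $t^{p-1} = u^{-1}\mu_0$ and the containment $(\gamma_0-1)u \in \mu_0 \Lambda_{F,0}$); once $v$ has been isolated, the unit-ness reduces to the elementary identification $\Lambda_{F, 0}/p = \kappa$.
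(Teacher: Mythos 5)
Your proof is correct and follows the same route as the paper: same decomposition $\mu_0/p = u\,(t^{p-1}/p)$, same Leibniz computation, and the same formula for $v$. The paper concludes unit-ness more directly by observing that $t^{p-1}/p$ is topologically nilpotent in $\Lambda_{F,0}$ (so adding $x\cdot t^{p-1}/p$ to the unit $b\,u^{-1}\gamma_0(u)$ preserves unit-ness), whereas you reduce mod $p$ and identify $\Lambda_{F,0}/p\cong\kappa$; both are valid, with the paper's argument being slightly shorter.
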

\begin{proof}
	From Construction \ref{const:lambdatilder0}, recall that $\mu_0/p$ is the product of $t^{p-1}/p$ with a unit in $\Lambda_{F, 0}$ (see \eqref{eq:lambdar0_explicit} and the discussion preceding it).
	So, let us write $\mu_0/p = u t^{p-1}/p$, for some unit $u$ in $\Lambda_{F, 0}$.
	Note that from Remark \ref{rem:gamma0_act_lambdar0}, we have that $(\gamma_0-1) u = \mu_0 x$, for some $x$ in $\Lambda_{F, 0}$.
	Moreover, we see that $(1+pa)^{p-1}-1 = pb$, where $b$ is a unit in $\ZZ_p$.
	Therefore, we may write
	\begin{align*}
		(\gamma_0-1)\tfrac{\mu_0}{p} = (\gamma_0-1)\tfrac{ut^{p-1}}{p} &= \tfrac{t^{p-1}}{p}(\gamma_0-1)u  + \gamma_0(u) (\gamma_0-1)\tfrac{t^{p-1}}{p}\\
				&= \tfrac{t^{p-1}}{p} \mu_0 x  + \gamma_0(u) (\chi(\gamma_0)^{p-1}-1)\tfrac{t^{p-1}}{p}\\
				&= \mu_0 \big(\tfrac{t^{p-1}}{p}x + \gamma_0(u)u^{-1}b\big) = \mu_0 v,
	\end{align*}
	where $v = \big(\tfrac{t^{p-1}}{p}x + \gamma_0(u)u^{-1}b\big)$ is a unit in $\Lambda_{F, 0}$ because $\gamma_0(u)u^{-1}b$ is a unit and $t^{p-1}/p$ is $p\adically$ nilpotent in $\Lambda_{F, 0}$.
	Hence, the lemma is proved.
\end{proof}

In the rest of the Section \ref{subsubsec:gamma0_action}, we will fix the choice of a topological generator $\gamma_0$ of $1 \times \Gamma_0$ such that $\chi(\gamma_0) = 1+pa$, for a unit $a$ in $\ZZ_p$.
Let us now consider the following operator on $\LambdatildeRo$:
\begin{equation}\label{eq:nablaq_stilde}
	\begin{aligned}
		\nabla_{q, \stilde} \colon \LambdatildeRo &\longrightarrow \LambdatildeRo\\
					x &\longmapsto \tfrac{(\gamma_0-1)x}{(\gamma_0-1)\stilde}.
	\end{aligned}
\end{equation}
From the triviality of the action of $1 \times \Gamma_0$ on $\LambdatildeRo/p_2(\mu_0)$ (see Lemma \ref{lem:lambdatilde0_triv_modmu0}) and from Lemma \ref{lem:gamma0_act_stilde}, it follows that the operator $\nabla_{q, \stilde}$ is well defined.
For each $n \geqslant 1$, using Remark \ref{rem:gamma0_act_lambdar0}, the operator in \eqref{eq:nablaq_stilde}, induces well-defined operators $\nabla_{q, \stilde} \colon \LambdatildeRo/(p_1(\mu))^n \longrightarrow \LambdatildeRo/(p_1(\mu))^n$.
In particular, for $n = 1$, set $s \coloneq \mu_0/p$ in $\Lambda_{R, 0}$, and using Remark \ref{rem:gamma0_act_lambdar0} and Lemma \ref{lem:gamma0_act_s}, note that we have a well-defined operator
\begin{equation}\label{eq:nablaq_s}
	\begin{aligned}
		\nabla_{q, s} \colon \Lambda_{R, 0} &\longrightarrow \Lambda_{R, 0}\\
				x &\longmapsto \tfrac{(\gamma_0-1)x}{(\gamma_0-1)s}.
	\end{aligned}
\end{equation}

\begin{rem}\label{rem:nablaq_stilde_qconnection}
	Considering $\stilde$ as a parameter, the operator $\nabla_{q, \stilde}$ in \eqref{eq:nablaq_stilde}, may be considered as a $q\textrm{-differential}$ operator in non-logarithmic coordinates, in the sense of Definition \ref{defi:qdeRham_complex} and Remark \ref{rem:nablaqi_nota}, where $q\Omega^1_{A/D}$ identifies with the $(p, p_1(\mu))\adic$ completion of the module of K\"ahler differentials of $\LambdatildeRo$, with respect to $p_1 \colon \AR \rightarrow \LambdatildeRo$.
	Similarly, considering $s$ as a parameter, the operator $\nabla_{q, s}$ in \eqref{eq:nablaq_s}, may also be considered as a non-logarithmic $q\textrm{-differential}$ operator in the sense of Definition \ref{defi:qdeRham_complex} and Remark \ref{rem:nablaqi_nota}, where $q\Omega^1_{A/D}$ identifies with the $p\adic$ completion of the module of K\"ahler differentials of $\Lambda_{R, 0}$, with respect to $R$.
\end{rem}

For each $n \geqslant 1$, the operator $\nabla_{q, \stilde}$ is an endomorphism of $\LambdatildeRo/(p_1(\mu))^n$, so we may define the following two term Koszul complex:
\begin{equation}\label{eq:stilde_qderham}
	K_{\LambdatildeRo/(p_1(\mu))^n}(\nabla_{q, \stilde}) \coloneq \big[\LambdatildeRo/(p_1(\mu))^n \xrightarrow{\hspace{1mm} \nabla_{q, \stilde} \hspace{1mm}} \LambdatildeRo/(p_1(\mu))^n\big].
\end{equation}
For $n=1$, we have the following claim:
\begin{lem}\label{lem:h1_qderham_lambdar0}
	The cohomology of the complex $K_{\Lambda_{R, 0}}(\nabla_{q, s})$ vanishes in degree 1, i.e.\ $H^1(K_{\Lambda_{R, 0}}(\nabla_{q, s})) = 0$.
\end{lem}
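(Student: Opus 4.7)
My plan is to reduce the statement to a calculation modulo $p$ and then to a formal shift on divided powers. Since $\Gamma_F$ acts trivially on $R$ (by the setup of Subsection \ref{subsec:setup_nota}), $\nabla_{q,s}$ is $R$-linear, and $\Lambda_{R,0}$ is $p$-adically complete by construction; so by a standard Nakayama-type $p$-adic approximation it would suffice to show that $\nabla_{q,s}$ is surjective modulo $p$.

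The first step is to compute $\nabla_{q,s}$ explicitly on the divided power basis $\{s^{[k]}\}_{k \geq 0}$ of $\Lambda_{R,0} = R[(\mu_0/p)^{[k]}, k \in \NN]_p^{\wedge}$ from \eqref{eq:lambdar0_explicit}. By Lemma \ref{lem:gamma0_act_s}, $(\gamma_0-1)(s) = vps$ for some unit $v \in \Lambda_{F,0}$, which gives $\gamma_0(s^{[k]}) = (1+vp)^k s^{[k]}$ since $s$ and $v$ commute. Using the PD-identity $k\,s^{[k]} = s \cdot s^{[k-1]}$, one obtains, for $k \geq 1$,
\begin{equation*}
\nabla_{q,s}(s^{[k]}) \;=\; c_k\, s^{[k-1]}, \qquad c_k \;=\; \frac{(1+vp)^k - 1}{kvp} \;=\; \sum_{i=0}^{k-1} \frac{(vp)^i}{i+1}\binom{k-1}{i} \;\in\; \Lambda_{F,0}.
\end{equation*}

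The crux of the argument is to verify that $c_k \equiv 1 \pmod{p\Lambda_{F,0}}$, which reduces to the elementary valuation bound $\upsilon_p\bigl(p^i/(i+1)\bigr) \geq 1$ for every $i \geq 1$. Granted this, any $x = \sum_{k \geq 0} x_k\, s^{[k]}$ with $x_k \in R$ satisfies $\nabla_{q,s}(x) \equiv \sum_{k \geq 1} x_k\, s^{[k-1]} \pmod{p\Lambda_{R,0}}$, which is manifestly surjective modulo $p$ (given $\bar y = \sum_k y_k s^{[k]}$, simply take $\bar x = \sum_k y_k\, s^{[k+1]}$, which converges $p$-adically).

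The main technical obstacle is really this valuation bound, which genuinely fails for $p=2$ at $i=1$; this is precisely why $p \geq 3$ is the standing hypothesis of Subsection \ref{subsubsec:gamma0_action}, with the $p=2$ case deferred to the separate arguments of Subsection \ref{subsubsec:p=2_gammaF_action}. Combining the mod-$p$ surjectivity with the $p$-adic completeness of $\Lambda_{R,0}$ via successive approximation (lift $y$ modulo $p$ to some $x_0$, apply the argument to $(y-\nabla_{q,s}(x_0))/p$, and sum the resulting $p$-adically convergent series) then upgrades this to surjectivity of $\nabla_{q,s}$ on the nose, giving $H^1\bigl(K_{\Lambda_{R,0}}(\nabla_{q,s})\bigr) = 0$.
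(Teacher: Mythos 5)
Your proof is correct, but it takes a genuinely different route from the paper. The paper reduces $K_{\Lambda_{R,0}}(\nabla_{q,s})$ to the de Rham complex $\Omega^{\bullet}_{\Lambda_{R,0}/R}$ in two stages: first a quasi-isomorphism $K(\nabla_{q,s}) \simeq K(\nabla_{q,z})$ via a change of parameter, and then a quasi-isomorphism with $K(\nabla_0)$, where $\nabla_0$ is a genuine connection built from $\log\gamma_0$; it then invokes acyclicity of the PD de Rham complex. All of this is deferred to Proposition~\ref{prop:mlambda0_connection} and Proposition~\ref{prop:lambdar0_comp}, where the argument is run in the greater generality of a finite $\Lambda_{R,0}$-module with Frobenius and $\Gamma_0$-action. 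You instead compute $\nabla_{q,s}$ directly on the divided power basis $\{s^{[k]}\}$, extract the scalar $c_k = ((1+vp)^k-1)/(kvp)$, show $c_k \equiv 1 \pmod p$ via the elementary bound $\upsilon_p(p^i/(i+1)) \geq 1$ for $i \geq 1$, $p \geq 3$, and conclude surjectivity by $p$-adic approximation. Both arguments are valid, and you correctly identify where $p=2$ fails ($i=1$). What your route buys is brevity and transparency: you avoid the $\log\gamma_0$ machinery and the $F$-crystal formalism entirely, and the mod-$p$ computation makes very explicit why the de Rham complex of a PD polynomial algebra is acyclic. What it gives up is generality: the argument leans crucially on $\Lambda_{R,0}$ being free as an $R$-module on $\{s^{[k]}\}$, so it does not directly extend to the coefficient module $M_{\Lambda,0}$ in Propositions~\ref{prop:mlambda0_connection} and~\ref{prop:lambdar0_comp}, where the paper's crystalline framework is what does the work. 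One small point worth being careful about: the identity $\gamma_0(s^{[k]}) = (1+vp)^k s^{[k]}$ holds because $\gamma_0$ is a ring homomorphism and $s^{[k]} = s^k/k!$, not because "$s$ and $v$ commute" (that is vacuous in a commutative ring); the relevant fact is that $\gamma_0(s)^k = ((1+vp)s)^k$, with $v$ a fixed element of $\Lambda_{F,0}$, and the scalar $k!$ being central.
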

\begin{proof}
	The claim follows from the more general statement in Propositions \ref{prop:mlambda0_connection} and \ref{prop:abarone_comp}, which will be proven for finite $\Lambda_{R, 0}\modules$ admitting a continuous action of $\Gamma_0$ (trivial modulo $\mu_0$), in particular, $\Lambda_{R, 0}$ itself.
\end{proof}

\begin{proof}[Proof of Lemma \ref{lem:a1_gamma0_exact}]
	Using the $(\varphi, \Gamma_R \times \Gamma_0)\equivariant$ isomorphism of rings from \eqref{eq:lambdatildero_mod_p1mun}, the exact sequence in \eqref{eq:a1_fpx_exact} may be rewritten as follows:
	\begin{equation}\label{eq:proof_fpx_exact}
		0 \longrightarrow \Lambda_{R, 0} \xrightarrow{\hspace{1mm} p_1(\mu)^n \hspace{1mm}} \LambdatildeRo/(p_1(\mu))^{n+1} \longrightarrow \LambdatildeRo/(p_1(\mu))^{n} \longrightarrow 0.
	\end{equation}
	Then, using the operator $\nabla_{q, \stilde}$ in \eqref{eq:nablaq_stilde} and the Koszul complex defined in \eqref{eq:stilde_qderham}, we obtain the following exact sequence of Koszul complexes:
	\begin{equation}\label{eq:proof_koszul_exact}
		0 \longrightarrow K_{\Lambda_{R, 0}}(\nabla_{q, s}) \xrightarrow{\hspace{1mm} p_1(\mu)^n \hspace{1mm}} K_{\LambdatildeRo/(p_1(\mu))^{n+1}}(\nabla_{q, \stilde}) \longrightarrow K_{\LambdatildeRo/(p_1(\mu))^{n}}(\nabla_{q, \stilde}) \longrightarrow 0.
	\end{equation}
	Considering the associated long exact sequence, and noting that $H^1(K_{\Lambda_{R, 0}}(\nabla_{q, s})) = 0$ from Lemma \ref{lem:h1_qderham_lambdar0}, we obtain the following exact sequence:
	\begin{equation}\label{eq:proof_horizontal_exact}
		0 \longrightarrow \Lambda_{R, 0}^{\nabla_{q, s}=0} \xrightarrow{\hspace{1mm} p_1(\mu)^n \hspace{1mm}} (\LambdatildeRo/(p_1(\mu))^{n+1})^{\nabla_{q, \stilde}=0} \longrightarrow (\LambdatildeRo/(p_1(\mu))^{n})^{\nabla_{q, \stilde}=0} \longrightarrow 0.
	\end{equation}
	Since the action of $1 \times \Gamma_0$ on $\LambdatildeRo$ is continuous for the $(p, p_1(\mu))\adic$ topology, therefore from Lemma \ref{lem:cont_coh_disc}, we have that $(\LambdatildeRo/(p_1(\mu))^{n+1})^{\nabla_{q, \stilde}=0} = (\LambdatildeRo/(p_1(\mu))^{n+1})^{1 \times \Gamma_0}$, for each $n \in \NN$.
	Hence, from the preceding exact sequence we obtain that the sequence in \eqref{eq:lambdatilder0_gamma0_exact} is exact, and therefore, it follows that the sequence in \eqref{eq:a1_gamma0_exact} is also exact.
	This completes our proof.
\end{proof}

\subsubsection{The case \texorpdfstring{$p=2$}{-}}\label{subsubsec:p=2_gammaF_action}

In this section, our goal is to prove a statement similar to Lemma \ref{lem:a1_gamma0_exact}, for $p=2$.
From \eqref{eq:gammaf_es}, recall that we have $\Gamma_0 \isomorphic 1 + 4\ZZ_2$ and $\Gamma_{\textrm{tor}} \isomorphic \{\pm 1\}$ as groups.
We will first look at the action of $\Gamma_{\textrm{tor}}$ on $\LambdatildeR$.
Let $\sigma$ denote a generator of $\Gamma_{\textrm{tor}}$.
Then, from \eqref{eq:plus_minus_decomp}, recall that by setting $\LambdatildeRplus \coloneq \{x \in \LambdatildeR \textrm{ such that } \sigma(x) = x\}$ and $\LambdatildeRminus \coloneq \{x \in \LambdatildeR \textrm{ such that } \sigma(x) = -x\}$, we have a natural injective homomorphism of $\LambdatildeRplus\modules$
\begin{equation}\label{eq:lambdatilder_pm}
	\LambdatildeRplus \oplus \LambdatildeRminus \longrightarrow \LambdatildeR,
\end{equation}
given as $(x, y) \mapsto x+y$.
Note that the action of $1 \times \Gamma_F$ on $\LambdatildeR$ is continuous for the $(p, p_1(\mu))\adic$ topology, so it follows that $\LambdatildeRplus$ is a $(p, p_1(\mu))\adically$ complete subring of $\LambdatildeR$.
Moreover, the $(\varphi, \Gamma_F)\textrm{-action}$ on $\MLambda$ induces a natural Frobenius and a continuous $\Gamma_0\textrm{-action}$ on $\LambdatildeRplus$ and $\LambdatildeRminus$, respectively.
Equipping $\LambdatildeRplus$ and $\LambdatildeRminus$ with the induced structures, we see that \eqref{eq:lambdatilder_pm} is $(\varphi, \Gamma_R \times \Gamma_F)\equivariant$.
Furthermore, from \eqref{eq:lambdatilder_mod_p1mu}, recall that $\LambdatildeR/(p_1(\mu)) \isomorphic \Lambda_R$.
Then, by setting $\LambdaRplus \coloneq \{x \in \Lambda_R \textrm{ such that } \sigma(x) = x\}$ and $\LambdaRminus \coloneq \{x \in \Lambda_R \textrm{ such that } \sigma(x) = -x\}$, and from \eqref{eq:plus_minus_decomp}, we get a natural $(\varphi, \Gamma_F)\equivariant$ injective homomorphism of $\LambdaRplus\modules$
\begin{equation}\label{eq:lambdar_pm}
	\LambdaRplus \oplus \LambdaRminus \longrightarrow \Lambda_R.
\end{equation}
\begin{lem}\label{lem:lambdar_pm}
	The natural map in \eqref{eq:lambdar_pm} is bijective.
\end{lem}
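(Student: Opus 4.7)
The map in \eqref{eq:lambdar_pm} is injective for the reason already noted before the lemma (if $x$ is simultaneously $+$- and $-$-invariant then $2x = 0$, and $\Lambda_R$ is $2$-torsion free since $R$ is $p$-torsion free), so the task is to prove surjectivity. Given $x \in \Lambda_R$, producing a decomposition $x = x^+ + x^-$ with $\sigma(x^\pm) = \pm x^\pm$ is equivalent, by $2$-torsion-freeness, to exhibiting the elements $(x \pm \sigma(x))/2$ inside $\Lambda_R$; and since $x - \sigma(x) = (x+\sigma(x)) - 2\sigma(x)$, it suffices to prove $x + \sigma(x) \in 2\Lambda_R$ for every $x \in \Lambda_R$. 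Note that the $p \geq 3$ strategy of inverting $|\Gamma_{\textrm{tor}}|$ in $\ZZ_p$ is unavailable for $p=2$; instead, the decisive factor of $2$ will be supplied by the PD-variable.

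For $p=2$ the identification $\mu^{p-1}/p = \mu/2$ allows one to write $\Lambda_R = R[s^{[k]}, k \in \NN]_2^\wedge$ with $s := \mu/2$, i.e.\ as the $2$-adically completed PD-polynomial algebra over $R$ in the single variable $s$. Every $x \in \Lambda_R$ therefore admits a unique expansion $x = \sum_{k \geq 0} a_k s^{[k]}$ with $a_k \in R$ tending $2$-adically to $0$. Since $\sigma$ is $R$-linear and continuous and $2\Lambda_R$ is closed in $\Lambda_R$ (by $2$-torsion-freeness and $2$-adic completeness of $\Lambda_R$), the claim reduces to checking $s^{[k]} + \sigma(s^{[k]}) \in 2\Lambda_R$ for every $k \in \NN$.

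For the latter, the relation $\sigma(1+\mu) = (1+\mu)^{-1}$ yields $\sigma(s) = -s(1+2s)^{-1}$ and hence $\sigma(s^{[k]}) = (-1)^k (1+2s)^{-k} s^{[k]}$, so
\[
s^{[k]} + \sigma(s^{[k]}) \;=\; s^{[k]} \bigl( 1 + (-1)^k (1+2s)^{-k}\bigr).
\]
For $k$ even, one expands $(1+2s)^{-k} = 1 + 2s \cdot u_k$ with $u_k \in \Lambda_R$ (the series converges $2$-adically since $\mu = 2s \in 2\Lambda_R$), so the bracket equals $2(1 + s u_k) \in 2\Lambda_R$. For $k$ odd, the bracket equals $(1+2s)^{-k}\bigl((1+2s)^k - 1\bigr) = 2s \cdot v_k (1+2s)^{-k}$ for some $v_k \in \Lambda_R$, and the PD-identity $s \cdot s^{[k]} = (k+1) s^{[k+1]} \in \Lambda_R$ places the product $s^{[k]} \cdot 2s v_k (1+2s)^{-k}$ back into $2\Lambda_R$. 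I do not anticipate any serious obstacle in executing this: the entire point is that the PD-variable $s = \mu/2 \in \Lambda_R$ provides exactly the divisibility by $2$ that is missing in $R\llbracket\mu\rrbracket$, and this is what makes the $\pm$-decomposition available despite $2$ being non-invertible in $R$.
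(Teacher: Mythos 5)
Your proof is correct, but it takes a genuinely different route from the paper's. Both arguments exploit the description of $\Lambda_R$ as a $2$-adically completed PD-polynomial algebra over $R$ in one variable, but they choose \emph{different} variables. The paper substitutes $t = \log(1+\mu)$: since $t/\mu$ is a unit in $\Lambda_F$ (Lemma \ref{lem:tovermu_unit}), one has $\Lambda_R = R[(t/2)^{[k]}, k \in \NN]_2^{\wedge}$, and because $\sigma(t) = -t$ on the nose, the generator $\sigma$ acts on $(t/2)^{[k]}$ by $(-1)^k$. The $\pm$-decomposition is then simply the parity decomposition of the monomial basis, which additionally delivers the explicit formulas $\LambdaRplus = R[(t/2)^{[k]}, k \in 2\NN]_2^{\wedge}$ and $\LambdaRminus = R[(t/2)^{[k]}, k \in 2\NN+1]_2^{\wedge}$; those are reused immediately in Lemma \ref{lem:lambdar_pm_nu}. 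You instead keep the original variable $s = \mu/2$, where $\sigma(s) = -s(1+2s)^{-1}$ equals $-s$ only up to a unit, so no parity decomposition of the basis is visible. You compensate by verifying $s^{[k]} + \sigma(s^{[k]}) \in 2\Lambda_R$ monomial by monomial — the key point being that either $1 + (1+2s)^{-k} \in 2\Lambda_R$ (for $k$ even) or $1 - (1+2s)^{-k} \in 2s\Lambda_R$ with $s\cdot s^{[k]} = (k+1)s^{[k+1]} \in \Lambda_R$ (for $k$ odd) — and then pass to $(x \pm \sigma(x))/2$ via $2$-torsion-freeness of $\Lambda_R$, relying on $2$-adic completeness to close up the limit. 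This is a sound, arguably more elementary argument (no $\log(1+\mu)$ is needed), with the trade-off that you do not obtain the explicit $+/-$ monomial descriptions directly; since the paper uses those in Lemma \ref{lem:lambdar_pm_nu}, you would still have to derive them separately — at which point the $t$-variable re-enters the picture anyway.
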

\begin{proof}
	Recall that $t = \log(1+\mu)$ is an element of $\Lambda_F$ and $t/\mu$ is a unit in $\Lambda_F$ (see Lemma \ref{lem:tovermu_unit}), so we may write, $\Lambda_R = R[t, \{(t^{p-1}/p)^{[k]}\}_{k \in \NN}]_p^{\wedge} = R[\{(t/2)^{[k]}\}_{k \in \NN}]_p^{\wedge}$.
	Since $\sigma(t) = -t$, therefore, it easily follows that $\LambdaRplus = R[\{(t/2)^{[k]}\}_{k \in 2\NN}]_p^{\wedge}$ and $\LambdaRminus = R[\{(t/2)^{[k]}\}_{k \in 2\NN+1}]_p^{\wedge}$.
	Then, it is immediate that $\Lambda_R = \LambdaRplus \oplus \LambdaRminus$.
	Hence, \eqref{eq:lambdar_pm} is bijective.
\end{proof}

Next, we will give another useful description of $\LambdaRplus$ and $\LambdaRminus$.
Let us consider the following element in $\AF$ from \cite[Section 5.2.5]{fontaine-corps-periodes}:
\begin{equation}\label{eq:nu_defi}
	\nu \coloneq q-1+\sigma(q-1) = q+q^{-1}-2 = \tfrac{(q-1)^2}{q} = \tfrac{\mu^2}{1+\mu}.
\end{equation}
Using Lemma \ref{lem:tovermu_unit}, note that the element $\nu$ is the product of $t^2$ with a unit in $\Lambda_F$, and set $\tau \coloneq \nu/8$.
\begin{lem}\label{lem:lambdar_pm_nu}
	The element $\tau$ is the product of $t^2/8$ with a unit in $\LambdaFplus$, i.e.\ we have $\tau = ut^2/8$, for some unit $u$ in $\LambdaFplus$.
	In particular, we have that
	\begin{equation}\label{eq:lambdar_pm_nu}
		\begin{aligned}
			\LambdaRplus &= R[\{(t^2/8)^{[k]}\}_{k \in \NN}]_p^{\wedge} = R[\{\tau^{[k]}\}_{k \in \NN}]_p^{\wedge},\\
			\LambdaRminus &= R[\{(t/2)(t^2/8)^{[k]}\}_{k \in \NN}]_p^{\wedge} = R[\{(t/2)\tau^{[k]}\}_{k \in \NN}]_p^{\wedge} = (t/2)\LambdaRplus,
		\end{aligned}
	\end{equation}
\end{lem}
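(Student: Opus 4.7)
The plan is to split the proof into two independent parts: (i) verifying that $\tau$ differs from $t^{2}/8$ by a unit of $\LambdaFplus$, and (ii) deriving the two descriptions of $\LambdaRplus$ and $\LambdaRminus$ in \eqref{eq:lambdar_pm_nu} from the description of $\Lambda_R$ used in the proof of Lemma~\ref{lem:lambdar_pm}. Once (i) and (ii) are in hand, the second equalities in each line of \eqref{eq:lambdar_pm_nu} follow by a formal divided-power substitution.

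For part (i), I would use the explicit formula $\nu = \mu^{2}/(1+\mu)$ from \eqref{eq:nu_defi} together with $t = \mu \cdot v$, where $v = t/\mu$ is a unit of $\Lambda_F$ by Lemma~\ref{lem:tovermu_unit}. This gives
\begin{equation*}
	\tau/(t^{2}/8) = \nu/t^{2} = \tfrac{1}{(1+\mu)\,v^{2}},
\end{equation*}
which is manifestly a unit of $\Lambda_F$. Both $\nu$ and $t^{2}$ are fixed by $\sigma$ (since $\sigma(t) = -t$ and $\nu = q + q^{-1} - 2$ is $\sigma$-symmetric), so the ratio $u := 1/((1+\mu)v^{2})$ lies in $\LambdaFplus$, and $u^{-1}$ lies in $\LambdaFplus$ by the same $\sigma$-invariance argument.

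For part (ii), recall from the proof of Lemma~\ref{lem:lambdar_pm} that $\Lambda_R = R[(t/2)^{[k]}, k \in \NN]_p^{\wedge}$ and that $\sigma((t/2)^{[k]}) = (-1)^k (t/2)^{[k]}$. The direct-sum decomposition $\Lambda_R = \LambdaRplus \oplus \LambdaRminus$ therefore identifies $\LambdaRplus$ with the $p$-adic closure of the $R$-span of the even-indexed $(t/2)^{[2k]}$, and $\LambdaRminus$ with the closure of the $R$-span of the odd-indexed $(t/2)^{[2k+1]}$. The key combinatorial input is the elementary identity $(2k)! = 2^{k}\,k!\,(2k-1)!!$ and $(2k+1)! = 2^{k}\,k!\,(2k+1)!!$, where $(2k\pm 1)!!$ is a unit in $\ZZ_2$ being a product of odd integers. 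This yields
\begin{equation*}
	(t/2)^{[2k]} = (2k-1)!!^{-1}\,(t^{2}/8)^{[k]}, \qquad (t/2)^{[2k+1]} = (2k+1)!!^{-1}\,(t/2)\,(t^{2}/8)^{[k]},
\end{equation*}
so $\LambdaRplus = R[(t^{2}/8)^{[k]}, k \in \NN]_p^{\wedge}$ and $\LambdaRminus = R[(t/2)(t^{2}/8)^{[k]}, k \in \NN]_p^{\wedge}$. The identity $R[(t/2)(t^{2}/8)^{[k]}, k \in \NN]_p^{\wedge} = (t/2)\LambdaRplus$ is then immediate from the $R$-linearity of multiplication by $t/2$.

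Finally, substituting $\tau = u\,(t^{2}/8)$ from part (i) gives $\tau^{[k]} = u^{k}(t^{2}/8)^{[k]}$ (verifying the divided-power relations directly from $\binom{k+l}{k}$), and since $u, u^{-1} \in \LambdaFplus \subset \LambdaRplus$, both inclusions $R[\tau^{[k]}, k \in \NN]_p^{\wedge} \subset R[(t^{2}/8)^{[k]}, k \in \NN]_p^{\wedge}$ and its converse hold, completing the chain of equalities in \eqref{eq:lambdar_pm_nu}. The only subtle point in the whole argument is ensuring that the ratio $u = \tau/(t^{2}/8)$ is both $\sigma$-invariant and a unit, and that the elementary $2$-adic arithmetic giving the $(2k\pm 1)!!$ factors produces units rather than denominators; no real obstacle is expected.
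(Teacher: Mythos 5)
Your proposal is correct and follows essentially the same route as the paper: the $\sigma$-invariance of $\nu$ and $t^2$ gives that $u=\tau/(t^2/8)$ is a unit of $\LambdaFplus$, and the identification of $\LambdaRplus$, $\LambdaRminus$ with the even/odd divided powers of $t/2$ reduces to the $2$-adic unit $(2k)!/(2^k k!)=(2k-1)!!$, which is exactly the valuation identity $\upsilon_2(2^k)+\upsilon_2(k!)=\upsilon_2((2k)!)$ used in the paper. No gaps.
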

\begin{proof}
	Recall that $\sigma(t) = -t$, so $t^2/8$ is an element of $\LambdaFplus$.
	Moreover, $\nu = u t^2$, for some unit $u$ in $\Lambda_F$, so $\tau$ is an element of $\Lambda_F$.
	Since, $\sigma(\nu) = \nu$, it follows that $\tau = \nu/8$ belongs to $\LambdaFplus$, and therefore, $u$ is a unit in $\LambdaFplus$.
	Next, from the proof of Lemma \ref{lem:lambdar_pm}, note that $\LambdaRplus = R[\{(t/2)^{[n]}\}_{n \in 2\NN}]_p^{\wedge}$ and $\LambdaRminus = R[\{(t/2)(t/2)^{[n]}\}_{n \in 2\NN}]_p^{\wedge}$.
	Now, let $n = 2k$, for $k \in \NN$, and note that
	\begin{align*}
		(\tfrac{t}{2})^{[n]} = \tfrac{t^{2k}}{(2k)!4^k} = \tfrac{k! 2^k}{(2k)!} (\tfrac{t^2}{8})^{[k]} = \tfrac{k! 2^k}{(2k)! u^k} \tau^{[k]}.
	\end{align*}
	The equalities in \eqref{eq:lambdar_pm_nu} now follow from the discussion above because an easy computation shows that we have $\upsilon_2(2^k) + \upsilon_2(k!) = \upsilon_2((2k)!)$, where $\upsilon_2$ denotes the $2\textrm{-adic}$ valuation on $\ZZ_2$.
\end{proof}

Let us now consider a lifting of the element $\tau = \nu/8$ to $\LambdatildeR$, under the surjective map $\LambdatildeR \rightarrow \LambdatildeR/(p_1(\mu)) \isomorphic \Lambda_R$, where the isomorphism is the $(\varphi, \Gamma_F)\equivariant$ isomorphism in \eqref{eq:lambdatilder_mod_p1mu}.
We have the following element in $\LambdatildeR$:
\begin{equation}\label{eq:tautilde_defi}
	\tautilde \coloneq \tfrac{1}{p_2(q)} \delta\big(\tfrac{p_2([p]_q)}{p_1([p]_q)}\big) = \tfrac{1}{1 \otimes q} \delta\big(\tfrac{1 \otimes [p]_q}{[p]_q \otimes 1}\big) = \tfrac{1}{1 \otimes q} \delta\big(\tfrac{1 \otimes \ptilde}{\ptilde \otimes 1}\big),
\end{equation}
where the last equality follows from Remark \ref{rem:mu0_ptilde_nota}.
\begin{lem}\label{lem:tautilde}
	The element $\tautilde$ belongs to $\LambdatildeRplus$ and we have $\tautilde = \tau \textrm{ mod } p_1(\mu)\LambdatildeR$.
\end{lem}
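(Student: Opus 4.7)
The plan is to verify the two assertions of the lemma separately by direct computation, exploiting the following simplifications specific to $p = 2$: firstly, $[p]_q = q + 1 = \ptilde$, and secondly, since $q = [\varepsilon]$ is a Teichm\"uller lift, we have $\varphi(q) = q^2$ and hence $\delta(q) = 0$ (from which $\delta(q^{-1}) = 0$ also follows by applying the product formula \eqref{eq:delta_sum_prod} to the identity $q \cdot q^{-1} = 1$).

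For the $\sigma\textrm{-invariance}$, I will set $a := (1 \otimes [p]_q)/([p]_q \otimes 1) \in \LambdatildeR$, so that $\tautilde = \delta(a)/p_2(q)$. Since $\sigma \in \Gamma_{\textrm{tor}} \subset \Gamma_F$ acts via the second component of $\LambdatildeR = \AR \wotimes_{O_F} \AF\{\tfrac{1 \otimes \ptilde}{\ptilde \otimes 1}\}$, and $\sigma(q) = q^{-1}$, I obtain $\sigma([p]_q) = q^{-1}[p]_q$, hence $\sigma(a) = (1 \otimes q^{-1}) \cdot a$. Applying the product formula \eqref{eq:delta_sum_prod} at $p = 2$ together with $\delta(1 \otimes q^{\pm 1}) = 0$ then yields $\delta(\sigma(a)) = (1 \otimes q^{-2})\delta(a)$. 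Since $\sigma$ commutes with $\delta$ (being a ring automorphism commuting with $\varphi$ on a $p\torsionfree$ ring), I conclude $\sigma(\tautilde) = \delta(\sigma(a))/\sigma(p_2(q)) = (1 \otimes q)(1 \otimes q^{-2})\delta(a) = (1 \otimes q^{-1})\delta(a) = \tautilde$, proving $\tautilde \in \LambdatildeRplus$.

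For the reduction modulo $p_1(\mu)$, I will invoke the $(\varphi, \Gamma_F)\equivariant$ isomorphism \eqref{eq:lambdatilder_mod_p1mu}, under which $p_1(q) \mapsto 1$ and $p_2(q) \mapsto 1+\mu$, so that $p_1([p]_q) \mapsto 2$, $p_2([p]_q) \mapsto 2 + \mu$, and the image of $a$ in $\Lambda_R$ is $[p]_q/2 = 1 + \mu/2$. Using $\varphi(\mu) = (1+\mu)^2 - 1 = 2\mu + \mu^2$, an elementary calculation gives $\delta(1+\mu/2) = \bigl(\varphi(1+\mu/2) - (1+\mu/2)^2\bigr)/2 = \bigl((1+\mu+\mu^2/2) - (1+\mu+\mu^2/4)\bigr)/2 = \mu^2/8$. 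Dividing by the image $1+\mu$ of $p_2(q)$ produces $\mu^2/(8(1+\mu)) = \nu/8 = \tau$, where the last equalities are the definition \eqref{eq:nu_defi} of $\nu$ and the definition of $\tau$.

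I anticipate no serious obstacle: the whole argument is a direct manipulation of $\delta$-ring identities at $p = 2$. The only minor subtlety is careful bookkeeping with the product formula $\delta(xy) = x^2\delta(y) + y^2\delta(x) + 2\delta(x)\delta(y)$ when establishing $\delta((1 \otimes q^{-1}) a) = (1 \otimes q^{-2})\delta(a)$, which is streamlined by the vanishing $\delta(q^{\pm 1}) = 0$; no auxiliary input beyond the explicit description of $\LambdatildeR/p_1(\mu)$ recalled in \eqref{eq:lambdatilder_mod_p1mu} is required.
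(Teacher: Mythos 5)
Your proof is correct and follows essentially the same route as the paper's: $\sigma$-invariance via $\sigma\delta = \delta\sigma$, $\delta(1 \otimes q^{\pm 1}) = 0$, and the product formula to extract $1 \otimes q^{-2}$; and the congruence via the $\delta$-compatible isomorphism $\LambdatildeR/p_1(\mu) \isomorphic \Lambda_R$ followed by computing $\delta([p]_q/2) = (q-1)^2/8$. The only cosmetic difference is that you spell out the intermediate bookkeeping the paper leaves implicit.
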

\begin{proof}
	Let $\sigma$ be a generator of $1 \times \Gamma_{\textrm{tor}}$ and note that $\sigma(1 \otimes q) = 1 \otimes q^{-1}$.
	Moreover, since the action of $1 \times \Gamma_R$ commutes with the $\delta\textrm{-structure}$ on $\LambdatildeR$, we have
	\begin{equation*}
		\sigma(\tautilde) = \tfrac{1}{\sigma(1 \otimes q)} \delta\big(\tfrac{\sigma(1 \otimes [p]_q)}{[p]_q \otimes 1}\big) = (1 \otimes q) \delta\big(\tfrac{1}{1 \otimes q}\tfrac{1 \otimes [p]_q)}{[p]_q \otimes 1}\big) = \tfrac{1}{1 \otimes q} \delta\big(\tfrac{1 \otimes [p]_q}{[p]_q \otimes 1}\big) = \tautilde,
	\end{equation*}
	where the third equality follows from using the product formula for $\delta\textrm{-structures}$ from \eqref{defi:delta_ring} and the fact that $\delta(1 \otimes q) = 0$.
	Therefore, $\tautilde$ is an element of $\LambdatildeRplus$.
	Next, since the isomorphism $\LambdatildeR/(p_1(\mu)) \isomorphic \Lambda_R$ in \eqref{eq:lambdatilder_mod_p1mu} is compatible with the respective Frobenii, in particular, with respective $\delta\textrm{-structures}$, we have the following:
	\begin{equation*}
		\tautilde \textrm{ mod } p_1(\mu) = \tfrac{1}{p_2(q)} \delta\big(\tfrac{p_2([p]_q)}{p_1([p]_q)}\big) \textrm{ mod } p_1(q-1) = \tfrac{1}{q} \delta\big(\tfrac{[p]_q}{2} \big) = \tfrac{(q-1)^2}{8q} = \tfrac{\nu}{8} = \tau.
	\end{equation*}
	This proves the second claim.
\end{proof}

\begin{lem}\label{lem:lambdatildeplus_mod_p1mun}
	For each $n \geqslant 1$, reduction modulo $p_1(\mu)^n$ of \eqref{eq:lambdatilder_pm}, induces a natural $(\varphi, \Gamma_R \times \Gamma_0)\equivariant$ isomorphism
	\begin{equation}\label{eq:lambdatildeplus_mod_p1mun}
		\LambdatildeRplus/(p_1(\mu))^n \isomorphic (\LambdatildeR/(p_1(\mu))^n)^{1 \times \Gamma_{\textup{tor}}}.
	\end{equation}
	Moreover, for $n = 1$, the $(\varphi, \Gamma_F)\equivariant$ isomorphism $\LambdatildeR/(p_1(\mu)) \isomorphic \Lambda_R$ from \eqref{eq:lambdatilder_mod_p1mu}, induces a natural $(\varphi, \Gamma_F)\equivariant$ isomorphism
	\begin{equation}\label{eq:lambdatildeplus_mod_p1mu}
		\LambdatildeRplus/(p_1(\mu)) \isomorphic \LambdaRplus.
	\end{equation}
\end{lem}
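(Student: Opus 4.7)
The plan is to prove the lemma by induction on $n$, with the base case $n=1$ as the main substantive step and the inductive step following by a diagram chase.

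For the base case, the natural map $\LambdatildeRplus/p_1(\mu) \to (\LambdatildeR/p_1(\mu))^{\sigma = 1}$ factors through the isomorphism $\LambdatildeR/p_1(\mu) \isomorphic \Lambda_R$ of \eqref{eq:lambdatilder_mod_p1mu}, whose target identifies with $\LambdaRplus$ via Lemma \ref{lem:lambdar_pm}. Injectivity is immediate: if $x \in \LambdatildeRplus$ satisfies $x = p_1(\mu) y$ with $y \in \LambdatildeR$, then $p_1(\mu)$ being $\sigma$-invariant (it lies in the first factor of $\AR \wotimes_{O_F} \AF$) combined with the $p_1(\mu)$-torsion-freeness of $\LambdatildeR$ (Construction \ref{const:lambdatilder}) forces $\sigma(y) = y$, so $y \in \LambdatildeRplus$. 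For surjectivity, I would reformulate cohomologically: applying $\sigma$-cohomology to the short exact sequence $0 \to \LambdatildeR \xrightarrow{p_1(\mu)} \LambdatildeR \to \Lambda_R \to 0$ yields
\[
0 \to \LambdatildeRplus \xrightarrow{p_1(\mu)} \LambdatildeRplus \to \LambdaRplus \to H^1(\sigma, \LambdatildeR) \xrightarrow{p_1(\mu)} H^1(\sigma, \LambdatildeR),
\]
so the surjectivity of $\LambdatildeRplus/p_1(\mu) \to \LambdaRplus$ is equivalent to $H^1(\sigma, \LambdatildeR) = \LambdatildeRminus/(1-\sigma)\LambdatildeR$ being $p_1(\mu)$-torsion-free. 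To verify this torsion-freeness, I would exploit the explicit description \eqref{eq:lambdatilde_explicit} of $\LambdatildeR$ as the $(p, p_1(\ptilde))$-adically completed free $\delta$-algebra over $\AR \wotimes_{O_F} \AF$ in the generator $y = p_2(\ptilde)/p_1(\ptilde)$, the direct computation $\sigma(y) = y/p_2(q)$ (so $y$ is close to but not exactly $\sigma$-invariant), and the fact that $\tautilde = \delta(y)/p_2(q)$ is a $\sigma$-invariant lift of $\tau = \nu/8$ (Lemma \ref{lem:tautilde}), to describe $\LambdatildeRminus$ modulo $(1-\sigma)\LambdatildeR$ sufficiently explicitly.

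For the induction step, assume the result for $n$. Consider the commutative diagram with exact rows
\begin{center}
\begin{tikzcd}[column sep=small]
0 \arrow[r] & \LambdatildeRplus/p_1(\mu) \arrow[r, "p_1(\mu)^n"] \arrow[d, "\wr"'] & \LambdatildeRplus/p_1(\mu)^{n+1} \arrow[r] \arrow[d] & \LambdatildeRplus/p_1(\mu)^n \arrow[r] \arrow[d, "\wr"'] & 0 \\
0 \arrow[r] & (\LambdatildeR/p_1(\mu))^{\sigma} \arrow[r, "p_1(\mu)^n"] & (\LambdatildeR/p_1(\mu)^{n+1})^{\sigma} \arrow[r] & (\LambdatildeR/p_1(\mu)^n)^{\sigma} &
\end{tikzcd}
\end{center}
where the top row is exact by $p_1(\mu)$-torsion-freeness of $\LambdatildeRplus$ and the bottom row is the start of the long exact $\sigma$-cohomology sequence attached to $0 \to \LambdatildeR/p_1(\mu) \xrightarrow{p_1(\mu)^n} \LambdatildeR/p_1(\mu)^{n+1} \to \LambdatildeR/p_1(\mu)^n \to 0$. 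The left vertical map is an isomorphism by the base case and the right by the inductive hypothesis, so the Four Lemma furnishes surjectivity of the middle map; injectivity follows by the same argument as in the base case. The $(\varphi, \Gamma_R \times \Gamma_0)$-equivariance is automatic because $\Gamma_F$ is abelian, hence $\LambdatildeRplus$ is stable under $\Gamma_R \times \Gamma_F$, and the isomorphism is induced by the equivariant inclusion $\LambdatildeRplus \hookrightarrow \LambdatildeR$; the $n=1$ equivariance statement follows similarly using the $(\varphi, \Gamma_F)$-equivariance of \eqref{eq:lambdatilder_mod_p1mu}.

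The main obstacle is the surjectivity in the base case, i.e.\ the $p_1(\mu)$-torsion-freeness of $H^1(\sigma, \LambdatildeR)$. The $p=2$ case is delicate precisely because one cannot split off $\sigma$-eigenspaces in $\LambdatildeR$ via the averaging idempotent $(1+\sigma)/2$, and a careful explicit description of $(1-\sigma)\LambdatildeR$ inside $\LambdatildeRminus$ using the prismatic envelope presentation and the formulas $\sigma(y) = y/p_2(q)$ and $\tautilde = \delta(y)/p_2(q)$ is required.
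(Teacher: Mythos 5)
Your inductive framework and the injectivity argument both match the paper, but your proof has a genuine gap at the crucial step: you reduce surjectivity of $\LambdatildeRplus/p_1(\mu)\to\LambdaRplus$ to showing $H^1(\sigma,\LambdatildeR)=\LambdatildeRminus/(1-\sigma)\LambdatildeR$ is $p_1(\mu)$-torsion-free, and then you leave that verification as an open task. The cohomological reformulation is correct, but without supplying a proof of the torsion-freeness (you only gesture at exploiting the explicit $\delta$-generator $p_2(\ptilde)/p_1(\ptilde)$), the argument is incomplete precisely where it most needs to be complete. This is the technical heart of the lemma, and the paper explicitly flags it as the delicate point that distinguishes $p=2$ from $p\geq 3$.

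The paper proves surjectivity without touching cohomology, by observing that $\LambdatildeRplus/p_1(\mu)$ is a $p$-torsion-free $\delta$-subring of $\Lambda_R$ that contains $\tau$ (by Lemma \ref{lem:tautilde}), and then computing
\[
\delta(\tau)=\delta\Bigl(\tfrac{(q-1)^2}{8q}\Bigr)=\tfrac{1}{2q}\Bigl(\tau(q+1)^2-\tfrac{\tau^2}{q}\Bigr),
\]
from which $\tau^2/2=\tfrac{q\tau(q+1)^2}{2}-q^2\delta(\tau)$ lies in $\LambdatildeRplus/p_1(\mu)$ since $(q+1)/2$ is a unit in $\Lambda_F$; Lemma 2.35 of Bhatt--Scholze then gives all $\tau^{[k]}$, hence all of $\LambdaRplus=R[\tau^{[k]},k\in\NN]_p^\wedge$. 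If you want to stay on your cohomological track, the missing ingredient is precisely the idempotent $\nabla_\sigma=\tfrac{\sigma-1}{q-1}$ of Lemma \ref{lem:sigma_action_decomp} (built from the fact that $\sigma$ acts trivially modulo $p_2(\mu)$, which is Lemma \ref{lem:lambdatildeR_triv_modmu}): $1-\nabla_\sigma$ is a $p_1(\mu)$-linear retraction of $\LambdatildeR$ onto $\LambdatildeRplus$, which makes taking $\sigma$-invariants exact and so forces $H^1(\sigma,\LambdatildeR)$ to vanish, not merely be $p_1(\mu)$-torsion-free. The paper's Remark after the lemma points to exactly this alternative route via Lemma \ref{lem:sigma_action_surjective}; your plan is close to it but stops short of the key device.
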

\begin{proof}
	Consider the following natural $(\varphi, \Gamma_R \times \Gamma_0)\equivariant$ commutative diagram with exact rows:
	\begin{equation}\label{eq:lambdatildeplus_mod}
		\begin{tikzcd}
			0 \arrow[r] & \LambdatildeRplus/(p_1(\mu)) \arrow[r, "p_1(\mu)^n"] \arrow[d, "\eqref{eq:lambdatilder_pm}"] & \LambdatildeRplus/(p_1(\mu))^{n+1} \arrow[r] \arrow[d, "\eqref{eq:lambdatilder_pm}"] & \LambdatildeRplus/(p_1(\mu))^n \arrow[r] \arrow[d, "\eqref{eq:lambdatilder_pm}"] & 0\\
			0 \arrow[r] & (\LambdatildeR/(p_1(\mu)))^{1 \times \Gamma_{\textrm{tor}}} \arrow[r, "p_1(\mu)^n"] & (\LambdatildeR/(p_1(\mu))^{n+1})^{1 \times \Gamma_{\textrm{tor}}} \arrow[r] & (\LambdatildeR/(p_1(\mu))^n)^{1 \times \Gamma_{\textrm{tor}}},
		\end{tikzcd}
	\end{equation}
	where the vertical maps are injective because we have $p_1(\mu)^n \LambdatildeR \cap \LambdatildeRplus = p_1(\mu)^n \LambdatildeRplus$, as $p_1(\mu)$ is invariant under the action of $1 \times \Gamma_F$.
	Composing the left vertical arrow in \eqref{eq:lambdatildeplus_mod} with the $(\varphi, \Gamma_F)\equivariant$ isomorphism $\LambdatildeR/(p_1(\mu)) \isomorphic \Lambda_R$ from \eqref{eq:lambdatilder_mod_p1mu}, we obtain a natural $(\varphi, \Gamma_F)\equivariant$ injective map $\LambdatildeRplus/(p_1(\mu)) \rightarrow \LambdaRplus$, and we claim that it is surjective as well.

	Indeed, note that $\LambdatildeRplus/(p_1(\mu))$ is a $p\torsionfree$ ring equipped with an induced Frobenius, in particular, a $\delta\textrm{-structure}$, and so we see that the left vertical map in \eqref{eq:lambdatildeplus_mod} is compatible with the respective $\delta\textrm{-structures}$.
	Now, from \eqref{eq:lambdar_pm_nu} in Lemma \ref{lem:lambdar_pm_nu}, recall that we have $\LambdaRplus = R[\{\tau^{[k]}\}_{k \in \NN}]_p^{\wedge}$.
	If we denote by $\tau$, its preimage under the isomorphism \eqref{eq:lambdatilder_mod_p1mu}, then from Lemma \ref{lem:tautilde} we have that $\tau$ is an element of $\LambdatildeRplus/(p_1(\mu))$, and we need to show that $\tau^{[k]}$ belongs to $\LambdatildeRplus/(p_1(\mu))$, for each $k \in \NN$.
	Moreover, using \cite[Lemma 2.35]{bhatt-scholze-prisms}, we see that it is enough to show that $\tfrac{\tau^2}{2}$ is an element of $\LambdatildeRplus/(p_1(\mu))$.
	Since, $\LambdatildeRplus/(p_1(\mu))$ is a $\delta\textrm{-ring}$, we have that
	\begin{equation*}
		\delta(\tau) = \delta\big(\tfrac{(q-1)^2}{8q}\big) = \tfrac{1}{2q^2}\big(\tfrac{(q^2-1)^2}{8} - \tfrac{(q-1)^4}{64}\big) = \tfrac{1}{2q}\big(\tau(q+1)^2 - \tfrac{\tau^2}{q}\big).
	\end{equation*}
	As $[2]_q = q+1$ may be written as the product of $2$ with a unit in $\Lambda_F$ (see Lemma \ref{lem:tovermu_unit}), therefore, it follows that $\tfrac{\tau^2}{2} = \tfrac{q\tau(q+1)^2}{2} - q^2\delta(\tau)$ is an element of $\LambdatildeRplus/(p_1(\mu))$.
	Hence, we conclude that $\LambdatildeRplus/(p_1(\mu)) \isomorphic \LambdaRplus$, in particular, the composition in \eqref{eq:lambdatildeplus_mod_p1mu} and the left vertical arrow in \eqref{eq:lambdatildeplus_mod} are bijective.
	Now, using the diagram \eqref{eq:lambdatildeplus_mod} and an easy induction on $n \geqslant 1$ shows that the right vertical arrow is bijective, the bottom right horizontal arrow is surjective and the middle vertical arrow is bijective, for each $n \geqslant 1$.
	Hence, the natural $(\varphi, \Gamma_R \times \Gamma_0)\equivariant$ map $\LambdatildeRplus/(p_1(\mu))^n \rightarrow (\LambdatildeR/(p_1(\mu))^n)^{1 \times \Gamma_{\textrm{tor}}}$, induced by \eqref{eq:lambdatilder_pm}, is bijective for each $n \geqslant 1$, thus allowing us to conclude.
\end{proof}

\begin{rem}
	Lemma \ref{lem:lambdatildeplus_mod_p1mun} may be proven using an alternative method as in the proof of Lemma \ref{lem:mlambdatildeplus_mod_p1mun}, where a crucial input is Lemma \ref{lem:sigma_action_surjective}.
	However, the proof given above is conceptually more satisfactory.
\end{rem}

From Lemma \ref{lem:lambdatildeplus_mod_p1mun}, we obtain the following:
\begin{lem}\label{lem:a1_pm_exact}
	For each $n \geqslant 1$, the following natural $(\varphi, \Gamma_R \times \Gamma_0)\equivariant$ sequence is exact:
	\begin{equation}\label{eq:a1_pm_exact}
		0 \longrightarrow (\LambdatildeR/(p_1(\mu)))^{1 \times \Gamma_{\textup{tor}}} \xrightarrow{\hspace{1mm} p_1(\mu)^n \hspace{1mm}} (\LambdatildeR/(p_1(\mu))^{n+1})^{1 \times \Gamma_{\textup{tor}}} \longrightarrow (\LambdatildeR/(p_1(\mu))^n)^{1 \times \Gamma_{\textup{tor}}} \longrightarrow 0.
	\end{equation}
\end{lem}
\begin{proof}
	The sequence \eqref{eq:a1_pm_exact} is the same as the second row of the diagram \eqref{eq:lambdatildeplus_mod_p1mun}, which was shown to be exact in the proof of Lemma \ref{lem:lambdatildeplus_mod_p1mun}.
\end{proof}

Next, we will look at the action of $1 \times \Gamma_0 \isomorphic 1 \times (1 + 4\ZZ_2)$ on $\LambdatildeRplus$ and prove a result similar to Lemma \ref{lem:a1_gamma0_exact}, for $p=2$.
In particular, we will show the following:
\begin{lem}\label{lem:a1_gamma0_exact_2}
	For each $n \geqslant 1$, the following natural $(\varphi, \Gamma_R \times 1)\equivariant$ sequence is exact:
	\begin{equation}\label{eq:a1_gamma0_exact_2}
		0 \longrightarrow (\LambdatildeR/(p_1(\mu)))^{1 \times \Gamma_F} \xrightarrow{\hspace{1mm} p_1(\mu)^n \hspace{1mm}} (\LambdatildeR/(p_1(\mu))^{n+1})^{1 \times \Gamma_F} \longrightarrow (\LambdatildeR/(p_1(\mu))^{n})^{1 \times \Gamma_F} \longrightarrow 0.
	\end{equation}
\end{lem}

\begin{rem}\label{rem:a1_gammaF_exact_2}
	Via the natural $(\varphi, \Gamma_R \times \Gamma_F)\equivariant$ isomorphism $\iota_{\Lambdatilde}$ in Lemma \ref{lem:iota_lambdatilde_iso} (see \eqref{eq:iota_lambdatilde}), we see that the exact sequence in \eqref{eq:a1_gamma0_exact_2} is the $(1 \times \Gamma_F)\textrm{-invariant}$ of the exact sequence in \eqref{eq:a1_geometric_exact} in Lemma \ref{lem:a1_geometric_exact}, for $p = 2$.
\end{rem}

Note that using the $(\varphi, \Gamma_R \times \Gamma_0)\equivariant$ isomorphism in \eqref{eq:lambdatildeplus_mod_p1mun} and \eqref{eq:lambdatildeplus_mod_p1mu}, the sequence in \eqref{eq:a1_gamma0_exact_2} may be rewritten as the following $(\varphi, \Gamma_R \times \Gamma_0)\equivariant$ sequence:
\begin{equation}\label{eq:lambdatildeplus_gamma0_exact}
	0 \longrightarrow \LambdaRplus^{\Gamma_0} \xrightarrow{\hspace{1mm} p_1(\mu)^n \hspace{1mm}} (\LambdatildeRplus/(p_1(\mu))^{n+1})^{1 \times \Gamma_0} \longrightarrow (\LambdatildeRplus/(p_1(\mu))^{n})^{1 \times \Gamma_0} \longrightarrow 0.
\end{equation}
In order to show that the sequence \eqref{eq:lambdatildeplus_gamma0_exact} is exact, we start with the following observation:
\begin{lem}\label{lem:lambdatilde+_triv_modnu}
	The action of $1 \times \Gamma_0$ is trivial on $\LambdatildeRplus/(p_2(\nu))$ and the action of $\Gamma_R \times 1$ is trivial on $\LambdatildeRplus/(p_1(\mu))$.
\end{lem}
\begin{proof}
	For the first claim, note that we have $p_2(\nu)\LambdatildeR \cap \LambdatildeRplus = p_2(\nu)\LambdatildeRplus$.
	So, if $x$ is an element of $\LambdatildeRplus$ and $g$ any element of $1 \times \Gamma_0$, then it is enough to show that $(g-1)x$ is an element of $p_2(\nu)\LambdatildeR$.
	Moreover, recall that $\nu$ is the product of $\mu^2$ with a unit in $\AF$.
	Therefore, we are reduced to showing that $(g-1)x$ is an element of $p_2(\mu)^2\LambdatildeR$.
	Now, using Lemma \ref{lem:lambdatildeR_triv_modmu}, we may write $(g-1)x = p_2(\mu)y$, for some $y$ in $\LambdatildeR$.
	Let $\sigma$ be a generator of $1 \times \Gamma_{\textrm{tor}}$ and note that $\sigma(x) = x$.
	Then, we have $\sigma(p_2(\mu)) \sigma(y) = p_2(\mu) y$, in particular, $(\sigma-1)y = -(2+p_2(\mu))y$.
	Again, using Lemma \ref{lem:lambdatildeR_triv_modmu}, we may write $-p_2([p]_q)y = (\sigma-1)y = p_2(\mu)z$, for some $z$ in $\LambdatildeR$.
	So, we get that $-py = 0 \textrm{ mod }p_2(\mu) \LambdatildeR$.
	Note that $(p_2(\mu), p)$ is a regular sequence on $\LambdatildeR$, since $p_2 \colon \AF \rightarrow \LambdatildeR$ is flat (see Construction \ref{const:lambdatilder}).
	Therefore, we conclude that $y = 0 \textrm{ mod }p_2(\mu) \LambdatildeR$, i.e.\ $y$ is an element of $p_2(\mu)\LambdatildeR$ and $(g-1)x = p_2(\mu)y$ is an element of $p_2(\mu)^2\LambdatildeR$, as claimed.
	The triviality of the action of $\Gamma_R \times 1$ on $\LambdatildeRplus/(p_1(\mu))$ easily follows from Lemma \ref{lem:lambdatildeR_triv_modmu}.
\end{proof}

\begin{rem}\label{rem:gamma0_act_lambdarplus}
	From Lemma \ref{lem:lambdatilde+_triv_modnu}, note that the action of $1 \times \Gamma_0$ is trivial on $\LambdatildeRplus/(p_2(\nu))$, and multiplication by $p_1(\mu)$ on $\LambdatildeRplus$ is equivariant for this action.
	Therefore, for any $g$ in $1 \times \Gamma_0$ and any $x$ in $\LambdatildeRplus/(p_1(\mu))^n$, we get that $(g-1)x$ is an element of $p_2(\nu)\LambdatildeRplus/(p_1(\mu))^n$.
	In particular, for $n=1$, using the isomorphism $\LambdatildeRplus/(p_1(\mu)) \isomorphic \LambdaRplus$ from \eqref{eq:lambdatildeplus_mod_p1mu}, we get that for any $g$ in $\Gamma_0$ and any $x$ in $\LambdaRplus$, the element $(g-1)x$ belongs to $\nu\LambdaRplus$.
\end{rem}

Using the action of $1 \times \Gamma_0$ on $\LambdatildeRplus$, let us define a $q\textrm{-de Rham}$ complex (see Definition \ref{defi:qdeRham_complex}), by considering the element $\tautilde$ in $\LambdatildeRplus$ as a parameter.
We start with the following observation:
\begin{lem}\label{lem:gamma0_act_tautilde}
	Let $\gamma_0$ be any element of $1 \times \Gamma_0$.
	Then, $(\gamma_0-1)\tautilde = u \hspace{0.5mm} p_2(\nu)$, for some unit $u$ in $\LambdatildeRplus$ depending on $\gamma_0$.
\end{lem}
\begin{proof}
	Note that it is enough to show the claim for a topological generator $\gamma_0$ of $1 \times \Gamma_0$ such that $\chi(\gamma_0) = 1 + 4a$, for a unit $a$ in $\ZZ_2$.
	Let us set $v \coloneq \tfrac{p_2([p]_q)}{p_1([p]_q)}$ and note that we have $\tautilde = \tfrac{\delta(v)}{p_2(q)}$.
	Now, observe that
	\begin{align*}
		(\gamma_0-1)\tautilde = (\gamma_0-1)\big(\tfrac{\delta(v)}{p_2(q)}\big) &= (\gamma_0-1)(\tfrac{1}{p_2(q)}) \delta(v) + \tfrac{(\gamma_0-1)\delta(v)}{\gamma_0(p_2(q))}\\
		&= \tfrac{1}{p_2(q^5)} ((\gamma_0-1)\delta(v) - (p_2(q^4)-1)\delta(v)).
	\end{align*}
	Since $\delta$ and $\gamma_0$ commute with each other, an easy computation shows that
	\begin{align*}
		(\gamma_0-1)\delta(v) &= \delta((\gamma_0-1)v) - v(\gamma_0-1)v,\\
		(p_2(q^4)-1)\delta(v) &= \delta((p_2(q^4)-1)v) - v^2(p_2(q^2)-1).
	\end{align*}
	Then, we note that
	\begin{align*}
		v(\gamma_0-1)v - v^2(p_2(q^2)-1) &= v(p_2(q^2)-1)\big(\tfrac{p_2(q^3)+p_2(q)}{p_1(q)+1} - v) = -v^2(p_2(q)-1)(p_2(q^3)-1)\\
		&= -v^2 p_2(q)(p_2(q^2) + p_2(q) + 1)p_2(\nu) = p_2(\nu) x_1,
	\end{align*}
	for some $x_1$ in $\LambdatildeR$ and $(\gamma_0-1)v = p_2(q)(p_2(q)-1)(p_2(q^2)+1)v$.

	Now, let $a = p_2(q)(p_2(q^2)+1)$, $b = p_2(q)+1$ and $c = (p_2(q)-1)v$, and note that $a-b$, $\delta(a-b)$ and $\delta(c)$ are elements of $(p_2(q)-1)\LambdatildeR$, since the latter is a $\delta\textrm{-stable}$ ideal of $\LambdatildeR$, in the sense of \cite[Example 2.10]{bhatt-scholze-prisms}.
	So, we obtain that
	\begin{align*}
		\delta((\gamma_0-1)v) - \delta((p_2(q^4)-1)v) &= \delta(ac) - \delta(bc) = \delta(ac-bc) + b^2c^2 + abc^2\\
		&= \delta(a-b) c^2 + (a-b)^2\delta(c) + 2\delta(a-b)\delta(c) + b^2c^2 + abc^2 = p_2(\nu) x_2,
	\end{align*}
	for some $x_2$ in $\LambdatildeR$, and in the third equality we have used that $\nu = \tfrac{(q-1)^2}{q}$ from \eqref{eq:nu_defi}.
	Set $u \coloneq \tfrac{x_1+x_2}{p_2(q^5)}$ in $\LambdatildeR$, and by putting everything together, we have that
	\begin{equation}\label{eq:gamma0_act_tautilde}
		(\gamma_0-1)\tautilde = up_2(\nu).
	\end{equation}
	Using that $\sigma(\tautilde) = \tautilde$ (see Lemma \ref{lem:tautilde}), $\sigma(\nu) = \nu$, the group $1 \times \Gamma_F$ is commutative and $\LambdatildeRplus$ is $p_2(\nu)\torsionfree$, we get that $u$ is an element of $\LambdatildeRplus$.
	So to show the claim, it is enough to show that $u$ is a unit in $\LambdatildeR$.
	Now, note that from the discussion after \eqref{eq:lambdatilder_pm}, the ring $\LambdatildeRplus$ is $p_1(\mu)\adically$ complete and we have a $(\varphi, \Gamma_0)\equivariant$ isomorphism $\LambdatildeRplus/(p_1(\mu)) \isomorphic \LambdaRplus$ (see \eqref{eq:lambdatildeplus_mod_p1mu} in Lemma \ref{lem:lambdatildeplus_mod_p1mun}), therefore, we are reduced to showing that $\overline{u}$, the image of $u$ under the preceding isomorphism, is a unit.
	By reducing the equalities in \eqref{eq:gamma0_act_tautilde}, modulo $p_1(\mu)$, we obtain the following expression in $\LambdaRplus$:
	\begin{equation*}
		(\gamma_0-1)\tau = (\gamma_0-1)\tfrac{\nu}{8} = \overline{u} \nu.
	\end{equation*}
	But from Lemma \ref{lem:gamma0_act_s}, we see that $\overline{u}$ must be a unit in $\LambdaRplus$.
	Hence, $u$ is a unit in $\LambdatildeRplus$, as claimed.
\end{proof}

The following observation was used above:
\begin{lem}\label{lem:gamma0_act_tau}
	Let $\gamma_0$ be a topological generator of $\Gamma_0$ such that $\chi(\gamma_0) = 1+4a$, for some unit $a$ in $\ZZ_2$.
	Then, we have $(\gamma_0-1)\tau = (\gamma_0-1)\tfrac{\nu}{8} = u\nu$, for some unit $u$ in $\LambdaFplus = \Lambda_F^{\Gamma_{\textup{tor}}}$ depending on $\gamma_0$.
\end{lem}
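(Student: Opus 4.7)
The plan is to express $\nu$ in terms of a hyperbolic sine of the element $t = \log(1+\mu) \in \mu\Lambda_F$ of Lemma \ref{lem:tovermu_unit}. Continuity of the $\Gamma_F\action$ together with $\chi(\gamma_0) = 1+4a$ and $\gamma_0(q) = q^{1+4a}$ forces $\gamma_0(t) = (1+4a)t$, and since $q = \exp(t)$ the identity $q + q^{-1} - 2 = (q^{1/2} - q^{-1/2})^2$ becomes
\begin{equation*}
    \nu \,=\, 4\sinh^2(t/2)
\end{equation*}
in $\Lambda_F$, once one checks that the formal series $\sinh(X)/X = \sum_{k\geq 0} X^{2k}/(2k+1)!$ converges at the arguments involved.

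Granting this convergence, I would then apply the classical identity $\sinh^2(x) - \sinh^2(y) = \sinh(x+y)\sinh(x-y)$ with $x = (1+4a)t/2$ and $y = t/2$ to obtain
\begin{equation*}
    (\gamma_0-1)\nu \,=\, 4\sinh\bigl((1+2a)t\bigr)\sinh(2at).
\end{equation*}
Writing $\phi(X) := \sinh(X)/X$ and factoring $\sinh(2at) = 2at\,\phi(2at)$, $\sinh((1+2a)t) = (1+2a)t\,\phi((1+2a)t)$, together with $\nu = 4(t/2)^2\phi(t/2)^2 = t^2\phi(t/2)^2$, would rearrange this to
\begin{equation*}
    (\gamma_0-1)\tfrac{\nu}{8} \,=\, a(1+2a)\,\frac{\phi(2at)\,\phi((1+2a)t)}{\phi(t/2)^2}\cdot \nu,
\end{equation*}
identifying the candidate unit $u := a(1+2a)\phi(2at)\phi((1+2a)t)\phi(t/2)^{-2}$.

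It then remains to verify the two properties of $u$. For unit-ness, $a$ and $1+2a$ lie in $\ZZ_2^\times$, and each factor $\phi(X)$ satisfies $\phi(X) \equiv 1 \pmod{X^2 \Lambda_F}$ with $X$ a scalar multiple of $t$; since $t \in \mu\Lambda_F \subset 2\Lambda_F$ (the inclusion $\mu \in 2\Lambda_F$ follows from $(\mu/2)^{[1]} \in \Lambda_F$), each such $\phi(X)$ is a unit in $\Lambda_F$, hence so is $u$. For $\sigma\textrm{-invariance}$, $\sigma(t) = -t$ and $\phi$ is an even power series, so $\sigma$ fixes every $\phi(\cdot)$ and hence $u$, placing $u$ in $\LambdaFplus^\times$.

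The main obstacle will be the $2\textrm{-adic}$ convergence of $\phi(X)$ in $\Lambda_F$, where the denominators $(2k+1)!$ are delicate; the strategy is to combine the Legendre formula $v_2((2k+1)!) = 2k+1 - s_2(2k+1)$ with the divided-power estimate $\mu^k \in 2^{2k - s_2(k)}\Lambda_F$ to force $v_2\bigl((2at)^{2k}/(2k+1)!\bigr) \to \infty$. Once convergence is established, the remainder is purely formal and closely parallels the computation of Lemma \ref{lem:gamma0_act_s}.
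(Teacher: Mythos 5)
Your argument is correct, but it is genuinely different from the paper's. The paper factors $\nu/8 = e\,t^2/8$ with $e$ a unit of $\LambdaFplus$ (Lemma \ref{lem:lambdar_pm_nu}), then applies the Leibniz rule and the prior structural fact $(\gamma_0-1)\LambdaFplus \subset \nu\LambdaFplus$ (Remark \ref{rem:gamma0_act_lambdarplus}) to isolate the unit factor; the unit $u$ is not made explicit beyond the shape $\tfrac{t^2}{8}x + \gamma_0(e)e^{-1}a(2a+1)$. You instead observe $\nu = q + q^{-1} - 2 = 4\sinh^2(t/2)$ (via $q = \exp(t)$ and $\cosh t - 1 = 2\sinh^2(t/2)$) and feed it through the addition formula $\sinh^2 x - \sinh^2 y = \sinh(x+y)\sinh(x-y)$ with $x = \chi(\gamma_0)t/2$, $y = t/2$, producing a closed-form unit $u = a(1+2a)\phi(2at)\phi((1+2a)t)\phi(t/2)^{-2}$ with $\phi(X) = \sinh(X)/X$. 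Your route trades dependence on Remark \ref{rem:gamma0_act_lambdarplus} and Lemma \ref{lem:lambdar_pm_nu} for an independent convergence verification, which you correctly flag: the key inputs are $(t/2)^{[k]} \in \Lambda_F$ (so $\sinh(t/2) = \sum_k (t/2)^{[2k+1]}$ converges), and the estimate $v_2\bigl(c^{2k}t^{2k}/(2k+1)!\bigr) \geq 4k$ for $c \in 2\ZZ_2$ or $c \in \ZZ_2^\times$, using $v_2((2k+1)!) = 2k - s_2(k)$. Unit-ness of the $\phi(\cdot)$ factors follows since $\phi(ct) - 1$ lies in $t^2\Lambda_F$, and $t/2$ being a PD-element makes $t^2$ topologically nilpotent; $\sigma$-invariance is immediate because $\phi$ is even in $t$ and $\sigma(t) = -t$. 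What your version buys is an explicit formula for $u$ and independence from the earlier descent remark; what it costs is the extra convergence bookkeeping that the paper's factorization sidesteps entirely.
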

\begin{proof}
	From Lemma \ref{lem:lambdar_pm_nu}, recall that $\nu/8$ is the product of $t^2/8$ with a unit in $\LambdaFplus$.
	So, let us write $\nu/8 = e t^2/8$, for some unit $e$ in $\LambdaFplus$.
	Note that from Remark \ref{rem:gamma0_act_lambdarplus}, we have that $(\gamma_0-1) e = \nu x$, for some $x$ in $\LambdaFplus$.
	Therefore,
	\begin{align*}
		(\gamma_0-1)\tfrac{\nu}{8} = (\gamma_0-1)\tfrac{et^2}{8} &= \tfrac{t^2}{8}(\gamma_0-1)e  + \gamma_0(e) (\gamma_0-1)\tfrac{t^2}{8}\\
			&= \tfrac{t^2}{8} \nu x  + \gamma_0(e) (\chi(\gamma_0)^2-1)\tfrac{t^2}{8} = \nu \big(\tfrac{t^2}{8}x + \gamma_0(e) e^{-1}a(2a+1)\big) = \nu u,
	\end{align*}
	where $u = \big(\tfrac{t^2}{8}x + \gamma_0(e) e^{-1}a(2a+1)\big)$ is a unit in $\LambdaFplus$ because $\gamma_0(e) e^{-1}a(2a+1)$ is a unit and $t^2/8$ is $p\adically$ nilpotent in $\LambdaFplus$.
	Hence, the lemma is proved.
\end{proof}

In the rest of the Section \ref{subsubsec:p=2_gammaF_action}, we will fix a topological generator $\gamma_0$ of $1 \times \Gamma_0$ such that $\chi(\gamma_0) = 1+4a$, for a unit $a$ in $\ZZ_2$.
Similar to \eqref{eq:nablaq_stilde}, let us now consider the following operator on $\LambdatildeRplus$:
\begin{equation}\label{eq:nablaq_tautilde}
	\begin{aligned}
		\nabla_{q, \tautilde} \colon \LambdatildeRplus &\longrightarrow \LambdatildeRplus\\
					x &\longmapsto \tfrac{(\gamma_0-1)x}{(\gamma_0-1)\tautilde}.
	\end{aligned}
\end{equation}
From the triviality of the action of $1 \times \Gamma_0$ on $\LambdatildeRplus/(p_2(\nu))$ (see Lemma \ref{lem:lambdatilde+_triv_modnu}) and from Lemma \ref{lem:gamma0_act_tautilde}, it follows that the operator $\nabla_{q, \tautilde}$ is well defined.
For each $n \geqslant 1$, using Remark \ref{rem:gamma0_act_lambdarplus}, the operator in \eqref{eq:nablaq_tautilde} induces well-defined operators $\nabla_{q, \tautilde} \colon \LambdatildeRplus/(p_1(\mu))^n \longrightarrow \LambdatildeRplus/(p_1(\mu))^n$.
In particular, for $n = 1$, we have $\tau = \nu/8$ in $\Lambda_{R, 0}$, and using Remark \ref{rem:gamma0_act_lambdarplus} and Lemma \ref{lem:gamma0_act_tau}, we have a well-defined operator
\begin{equation}\label{eq:nablaq_tau}
	\begin{aligned}
		\nabla_{q, \tau} \colon \LambdaRplus &\longrightarrow \LambdaRplus\\
				x &\longmapsto \tfrac{(\gamma_0-1)x}{(\gamma_0-1)\tau}.
	\end{aligned}
\end{equation}

\begin{rem}\label{rem:nablaq_tautilde_qconnection}
	Considering $\tautilde$ as a parameter, the operator $\nabla_{q, \tautilde}$ in \eqref{eq:nablaq_stilde}, may be considered as a $q\textrm{-differential}$ operator in non-logarithmic coordinates, in the sense of Definition \ref{defi:qdeRham_complex} and Remark \ref{rem:nablaqi_nota}, where $q\Omega^1_{A/D}$ identifies with the $(p, p_1(\mu))\adic$ completion of the module of K\"ahler differentials of $\LambdatildeRplus$, with respect to $p_1 \colon \AR \rightarrow \LambdatildeRplus$.
	Similarly, considering $\tau$ as a variable, the operator $\nabla_{q, \tau}$ in \eqref{eq:nablaq_tau}, may also be considered as a non-logarithmic $q\textrm{-differential}$ operator in the sense of Definition \ref{defi:qdeRham_complex} and Remark \ref{rem:nablaqi_nota}, where the $q\Omega^1_{A/D}$ identifies with the $p\adic$ completion of the module of K\"ahler differentials of $\LambdaRplus$, with respect to $R$.
\end{rem}

For each $n \geqslant 1$, the operator $\nabla_{q, \tautilde}$ is an endomorphism of $\LambdatildeRplus/(p_1(\mu))^n$, so we may define the following two term Koszul complex:
\begin{equation}\label{eq:tautilde_qderham}
	K_{\LambdatildeRplus/(p_1(\mu))^n}(\nabla_{q, \tautilde}) \colon \big[\LambdatildeRplus/(p_1(\mu))^n \xrightarrow{\hspace{1mm} \nabla_{q, \tautilde} \hspace{1mm}} \LambdatildeRplus/(p_1(\mu))^n\big].
\end{equation}
For $n=1$, we have the following claim:
\begin{lem}\label{lem:h1_qderham_lambdarplus}
	The cohomology of the complex $K_{\LambdaRplus}(\nabla_{q, \tau})$ vanishes in degree 1, i.e.\ $H^1(K_{\LambdaRplus}(\nabla_{q, \tau})) = 0$.
\end{lem}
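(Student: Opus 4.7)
The plan is to follow the strategy used for Lemma \ref{lem:h1_qderham_lambdar0}, making the necessary modifications for the case $p = 2$. As explained there, the cleanest route is to reduce a $q$-differential computation to a de Rham computation via a convergent logarithm series, and then use the fact that a divided power polynomial algebra has acyclic de Rham complex in positive degrees.

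First I would replace the parameter $\tau$ by the ``coordinate'' $w := t^{2}/8 \in \LambdaFplus$. By Lemma \ref{lem:lambdar_pm_nu} the elements $\tau$ and $w$ differ by a unit in $\LambdaFplus$, and an argument analogous to Lemma \ref{lem:gamma0_act_tau} shows that $(\gamma_0-1)w$ is the product of $\nu$ with a unit in $\LambdaFplus$, so the operators $\nabla_{q,\tau}$ and $\nabla_{q,w} := (\gamma_0-1)(-)/(\gamma_0-1)w$ differ by a unit of $\LambdaRplus$. Consequently the Koszul complex $K_{\LambdaRplus}(\nabla_{q,\tau})$ is naturally quasi-isomorphic to $K_{\LambdaRplus}(\nabla_{q,w})$, and it suffices to show $H^{1}(K_{\LambdaRplus}(\nabla_{q,w})) = 0$.

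Next I would pass from the $q$-differential to an ordinary differential. As in the proof of Lemma \ref{lem:h1_qderham_lambdar0}, I would define the series of operators
\begin{equation*}
    \nabla_{0} := \frac{\log(\gamma_{0})}{\log(\chi(\gamma_{0}))} = \frac{1}{\log(\chi(\gamma_{0}))} \sum_{k \in \NN} (-1)^{k}\,\frac{(\gamma_{0}-1)^{k+1}}{k+1},
\end{equation*}
and show that it converges $p$-adically on $\LambdaRplus$, so that $\nabla_{0}(-)\,dw : \LambdaRplus \to \Omega^{1}_{\LambdaRplus/R}$ coincides with the universal $R$-linear continuous de Rham differential $d : \LambdaRplus \to \Omega^{1}_{\LambdaRplus/R}$. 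Using a standard identity in terms of $\log(q)$ and $\log(1+\chi(\gamma_0))$-factors (now with $p=2$), one then exhibits a natural quasi-isomorphism between $K_{\LambdaRplus}(\nabla_{q,w})$ and the Koszul complex $K_{\LambdaRplus}(\nabla_{0}) = \Omega^{\bullet}_{\LambdaRplus/R}$. Since $\LambdaRplus = R[\tau^{[k]},\,k\in\NN]_{p}^{\wedge}$ is the $p$-adic completion of a divided power polynomial algebra over $R$ in one variable, its de Rham complex is acyclic in positive degrees, giving $H^{1}(K_{\LambdaRplus}(\nabla_{q,\tau})) = 0$.

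The main obstacle, and the reason this cannot be copy-pasted from the $p \geq 3$ argument, is the $2$-adic convergence of $\log(\gamma_{0})/\log(\chi(\gamma_{0}))$: for $p=2$ we have $v_{2}(\log\chi(\gamma_{0})) = v_{2}(4a) = 2$, so dividing by $\log\chi(\gamma_0)$ loses two factors of $2$. By Remark \ref{rem:gamma0_act_lambdarplus} the operator $(\gamma_{0}-1)$ sends $\LambdaRplus$ into $\nu\LambdaRplus = 8\tau\LambdaRplus$, and the divided power structure on $\tau$ provides the extra factors of $2$ needed to absorb both the loss from $\log\chi(\gamma_{0})$ and the denominator $k+1$. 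Tracking these valuations carefully, in the spirit of Proposition \ref{prop:mlambda0_connection} but adapted to the $p=2$ setting and to the parameter $w = t^{2}/8$, will constitute the technical heart of the argument; the proof proper would then be deferred to a proposition analogous to Proposition \ref{prop:mlambda0_connection}, formulated for finite $\LambdaRplus$-modules carrying a continuous action of $\Gamma_{0}$ that is trivial modulo $\nu$.
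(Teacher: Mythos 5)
Your proposal is correct and matches the paper's argument essentially step for step: replace $\tau$ by $w = t^2/8$ (unit difference), linearise the $\Gamma_0$-action via $\nabla_0^{\log} = \log(\gamma_0)/\log(\chi(\gamma_0))$, transfer the Koszul complex to the de Rham complex of the PD polynomial algebra $\LambdaRplus = R[w^{[k]}]_p^\wedge$ (which is acyclic in positive degrees), and defer the $2$-adic convergence estimates and the quasi-isomorphism $K_{\LambdaRplus}(\nabla_{q,w}) \isomorphic K_{\LambdaRplus}(\nabla_0)$ to a module-level proposition — precisely what the paper does in Propositions \ref{prop:mlambda+_connection} and \ref{prop:lambdar+_comp}. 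Your remark on why the $p=2$ convergence is delicate (the $v_2(\log\chi(\gamma_0))=2$ loss being offset by the factor $8^k$ appearing in $(\gamma_0-1)^{k+1}$, coming both from $(\gamma_0-1)t^2 = 8bt^2$ and from the topological nilpotence of $\tau$) is the correct diagnosis and the one the paper tracks in the proof of Proposition \ref{prop:mlambda+_connection}.
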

\begin{proof}
	Similar to Lemma \ref{lem:h1_qderham_lambdar0}, the claim follows from the more general statement in Propositions \ref{prop:mlambda+_connection} and \ref{prop:lambdar+_comp} (also see Propositions \ref{prop:mlambda0_connection} and \ref{prop:abarone_comp}), which will be proven for finite $\Lambda_{R, +}\modules$ admitting a continuous action of $\Gamma_0$ (trivial modulo $\nu$), in particular, $\Lambda_{R, +}$ itself.
\end{proof}

\begin{proof}[Proof of Lemma \ref{lem:a1_gamma0_exact_2}]
	The proof follows by the same arguments as in the proof of Lemma \ref{lem:a1_gamma0_exact}.
	Indeed, by using the $(\varphi, \Gamma_R \times \Gamma_0)\equivariant$ isomorphism in \eqref{eq:lambdatildeplus_mod_p1mun}, in the exact sequence of \eqref{eq:a1_pm_exact}, we obtain an exact sequence (similar to \eqref{eq:proof_fpx_exact}) on which we use the operator $\nabla_{q, \tautilde}$ in \eqref{eq:nablaq_tautilde} and the Koszul complex defined in \eqref{eq:tautilde_qderham} to obtain an exact sequence of Koszul complexes (similar to \eqref{eq:proof_koszul_exact}).
	Considering the associated long exact sequence, and noting that $H^1(K_{\LambdaRplus}(\nabla_{q, \tau})) = 0$ from Lemma \ref{lem:h1_qderham_lambdarplus}, we obtain the following exact sequence (similar to \eqref{eq:proof_horizontal_exact}):
	\begin{equation*}
		0 \longrightarrow \LambdaRplus^{\nabla_{q, \tau}=0} \xrightarrow{\hspace{1mm} p_1(\mu)^n \hspace{1mm}} (\LambdatildeRplus/(p_1(\mu))^{n+1})^{\nabla_{q, \tautilde}=0} \longrightarrow (\LambdatildeRplus/(p_1(\mu))^{n})^{\nabla_{q, \tautilde}=0} \longrightarrow 0.
	\end{equation*}
	Since the action of $1 \times \Gamma_0$ is continuous on $\LambdatildeRplus$ for the $(p, p_1(\mu))\adic$ topology, therefore from Lemma \ref{lem:cont_coh_disc}, we have that $(\LambdatildeRplus/(p_1(\mu))^{n+1})^{\nabla_{q, \tautilde}=0} = (\LambdatildeRplus/(p_1(\mu))^{n+1})^{1 \times \Gamma_0}$, for each $n \in \NN$.
	Hence, from the preceding exact sequence we obtain that the sequence in \eqref{eq:lambdatildeplus_gamma0_exact} is exact, and therefore, it follows that the sequence in \eqref{eq:a1_gamma0_exact_2} is also exact.
	This completes our proof.
\end{proof}

\subsubsection{Proof of Proposition \ref{prop:delta_modmun}}

Note that from the explicit description of $\AR(1)/(p_1(\mu))$ in Proposition \ref{prop:a1modp1mu_pdring}, it is easy to see that reduction modulo $\mu$ of $p_1 \colon \AR \rightarrow \AR(1)$ induces an isomorphism $p_1 \colon R \isomorphic (\AR(1)/(p_1(\mu)))^{1 \times \Gamma_R}$.
More generally, we have the following:
\begin{lem}\label{lem:a1modp1mun}
	For $n \in \NN_{\geqslant 1}$, reduction modulo $\mu^n$ of the $(\varphi, \Gamma_R^2)\equivariant$ map $p_1 \colon \AR \rightarrow \AR(1)$ induces a $(\varphi, \Gamma_R \times 1)\equivariant$ isomorphism $p_1 \colon \AR/(\mu)^n \isomorphic (\AR(1)/(p_1(\mu))^n)^{1 \times \Gamma_R}$.
\end{lem}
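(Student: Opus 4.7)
The plan is to prove the statement by induction on $n$, using the ``3-step'' identification of the $(1 \times \Gamma_R)$-invariants of $\AR(1)/p_1(\mu)^n$ built up in Subsection \ref{subsec:gamma_act_ar1}, together with the short exact sequences established there.

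First, I would observe that the map $p_1 : \AR \to \AR(1)$ is $(\varphi, \Gamma_R \times 1)$-equivariant by Remark \ref{rem:phigamma_act_ar+n}, and the element $p_1(\mu) = \mu \otimes 1$ is fixed by $1 \times \Gamma_R$. Consequently, reduction modulo $\mu^n$ gives a well-defined $(\varphi, \Gamma_R \times 1)$-equivariant morphism $p_1 : \AR/\mu^n \to (\AR(1)/p_1(\mu)^n)^{1 \times \Gamma_R}$; one just needs to prove it is bijective.

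Next, I would splice together the invariant computations from the subsubsections on the geometric, torsion, and arithmetic parts of $1 \times \Gamma_R$. Concretely, combining Lemma \ref{lem:iota_lambdatilde_iso}, the identification \eqref{eq:lambdatildero_mod_p1mun} (for $p \geq 3$) or \eqref{eq:lambdatildeplus_mod_p1mun} (for $p = 2$), and the exact sequences in Lemmas \ref{lem:a1_geometric_exact}, \ref{lem:a1_fpx_exact}, \ref{lem:a1_gamma0_exact} (respectively Lemmas \ref{lem:a1_pm_exact}, \ref{lem:a1_gamma0_exact_2} when $p=2$), yields for every $n \geq 1$ a $(\varphi, \Gamma_R \times 1)$-equivariant short exact sequence
\begin{equation*}
0 \longrightarrow (\AR(1)/p_1(\mu))^{1 \times \Gamma_R} \xrightarrow{\hspace{1mm} p_1(\mu)^n \hspace{1mm}} (\AR(1)/p_1(\mu)^{n+1})^{1 \times \Gamma_R} \longrightarrow (\AR(1)/p_1(\mu)^n)^{1 \times \Gamma_R} \longrightarrow 0.
\end{equation*}
Moreover, the base case $n = 1$ is already established: Example \ref{exam:a1modp1mu_qconnection} shows that $p_1$ induces the isomorphism $R \isomorphic \AR/\mu \isomorphic (\AR(1)/p_1(\mu))^{1 \times \Gamma_R}$.

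With these two ingredients in hand, the inductive step is a direct five-lemma argument applied to the ladder
\begin{center}
\begin{tikzcd}[column sep=small]
0 \arrow[r] & \AR/\mu \arrow[r, "\mu^n"] \arrow[d, "p_1"', "\wr"] & \AR/\mu^{n+1} \arrow[r] \arrow[d, "p_1"] & \AR/\mu^n \arrow[r] \arrow[d, "p_1"', "\wr"] & 0\\
0 \arrow[r] & (\AR(1)/p_1(\mu))^{1 \times \Gamma_R} \arrow[r, "p_1(\mu)^n"] & (\AR(1)/p_1(\mu)^{n+1})^{1 \times \Gamma_R} \arrow[r] & (\AR(1)/p_1(\mu)^n)^{1 \times \Gamma_R} \arrow[r] & 0,
\end{tikzcd}
\end{center}
whose top row is trivially exact since $\AR$ is $\mu$-torsion free, and whose bottom row is the exact sequence constructed above. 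The outer vertical arrows are isomorphisms by the induction hypothesis, so the middle one is an isomorphism as well, completing the step. The main conceptual obstacle, namely the vanishing of the relevant $H^1$ for each of the three subgroups $\Gamma_R'$, $\Gamma_{\textrm{tor}}$, and $\Gamma_0$ (which is what underlies the surjectivity in the bottom row), has already been absorbed into the cited lemmas; the present argument is just their packaging.
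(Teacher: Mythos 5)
Your proof is correct, and it does verify the statement; but it takes a different route than the paper's, and the difference is worth flagging. You invoke the full short exact sequence
\begin{equation*}
0 \longrightarrow (\AR(1)/p_1(\mu))^{1 \times \Gamma_R} \xrightarrow{p_1(\mu)^n} (\AR(1)/p_1(\mu)^{n+1})^{1 \times \Gamma_R} \longrightarrow (\AR(1)/p_1(\mu)^n)^{1 \times \Gamma_R} \longrightarrow 0,
\end{equation*}
whose exactness at the right-hand term is the hard content of the whole ``3-step'' machinery (the $H^1$-vanishings in Lemmas \ref{lem:a1_geometric_exact}, \ref{lem:a1_fpx_exact}, \ref{lem:a1_gamma0_exact}, resp.\ \ref{lem:a1_pm_exact}, \ref{lem:a1_gamma0_exact_2}), and then applies the five lemma. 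The paper's proof of Lemma \ref{lem:a1modp1mun} deliberately avoids this: in the diagram \eqref{eq:a1modp1mun} the bottom row is \emph{not} drawn as a short exact sequence, only as the (automatically) left-exact sequence obtained by applying the invariants functor to the short exact sequence of $\AR(1)$-modules. A direct diagram chase — using that the top row is short exact because $\AR$ is $\mu$-torsion-free, that the outer two vertical maps are isomorphisms (base case and inductive hypothesis), and that the bottom row is exact at its first two terms — already forces the middle vertical map to be bijective; one never needs surjectivity of $(\AR(1)/p_1(\mu)^{n+1})^{1 \times \Gamma_R} \to (\AR(1)/p_1(\mu)^n)^{1 \times \Gamma_R}$. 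In other words, the paper's argument gets the isomorphism for free and derives the short exactness as a corollary (this is precisely what is recorded as \eqref{eq:a1_gammaR_exact} and used in Lemma \ref{lem:delta_modmun}), whereas you establish the short exactness first and then deduce the isomorphism. Both are valid and both reach the same destination; the paper's version keeps Lemma \ref{lem:a1modp1mun} logically lighter, while your packaging makes the role of the $H^1$-vanishing more visible. Your base case reference (Example \ref{exam:a1modp1mu_qconnection}, giving $R \isomorphic \Abar(1)^{\Gamma_R}$ via $p_1$) is also correct and coincides with the observation the paper makes just before the lemma.
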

\begin{proof}
	Note that $p_1$ is $(\varphi, \Gamma_R^2)\equivariant$, and $p_1$ modulo $\mu^n$ factors as $p_1 \colon \AR/(\mu)^n \rightarrow (\AR(1)/(p_1(\mu))^n)^{1 \times \Gamma_R} \hookrightarrow \AR(1)/(p_1(\mu))^n$, with the composition being faithfully flat by Lemma \ref{lem:pi_rj_ff}, and therefore, injective.
	In particular, we see that the induced $(\varphi, \Gamma_R \times 1)\equivariant$ homomorphism $p_1 \colon \AR/(\mu)^n \rightarrow (\AR(1)/(p_1(\mu))^n)^{1 \times \Gamma_R}$ is injective.
	Now, consider the following commutative diagram with exact rows:
	\begin{equation}\label{eq:a1modp1mun}
		\begin{tikzcd}
			0 \arrow[r] & \AR/(\mu) \arrow[r, "\mu^n"] \arrow[d, "p_1", "\wr"'] & \AR/(\mu)^{n+1} \arrow[r] \arrow[d, "p_1"] & \AR/(\mu)^n \arrow[r] \arrow[d, "p_1"] & 0\\
			0 \arrow[r] & (\AR(1)/(p_1(\mu)))^{1 \times \Gamma_R} \arrow[r, "p_1(\mu)^n"] & (\AR/(p_1(\mu))^{n+1})^{1 \times \Gamma_R} \arrow[r] & (\AR(1)/(p_1(\mu))^n)^{1 \times \Gamma_R},
		\end{tikzcd}
	\end{equation}
	and note that an easy induction on $n \geqslant 1$ shows that the right vertical arrow is bijective, the bottom right horizontal arrow is surjective and the middle vertical arrow is bijective, for each $n \geqslant 1$.
	Hence, we obtain the claimed $(\varphi, \Gamma_R \times 1)\equivariant$ isomorphism $p_1 \colon \AR/(\mu)^n \isomorphic (\AR(1)/(p_1(\mu))^n)^{1 \times \Gamma_R}$, for each $n \geqslant 1$.
\end{proof}

\begin{rem}\label{rem:a2modp1mun}
	From the description of the action of $\Gamma_R^3$ on $\AR(2)$ in Remark \ref{rem:phigamma_act_ar+n}, we note that there is an induced action of $\Gamma_R^3$ on $\AR(2)/(p_1(\mu))$, where the action of the first component is the identity.
	Moreover, we have a $(\varphi, \Gamma_R^3)\equivariant$ map $r_1 \colon \AR \rightarrow \AR(2)$, where $\AR$ is equipped with an action of $\Gamma_R^3$ via projection on to the first coordinate.
	Then, similar to above, it may easily be shown that the reduction modulo $\mu$ of $r_1 \colon \AR \rightarrow \AR(2)$ induces an isomorphism $r_1 \colon R \isomorphic (\AR(2)/(p_1(\mu)))^{1 \times \Gamma_R \times \Gamma_R}$.
\end{rem}

Now, recall that we have the $\varphi\equivariant$ multiplication map $\Delta \colon \AR(1) \rightarrow \AR$.
The map $\Delta$ induces an $\AR/(\mu)^n\linear$ (via $p_1$) and $\varphi\equivariant$ maps $\Delta \colon \AR(1)/(p_1(\mu))^n \rightarrow \AR/(\mu)^n$, for $n \in \NN_{\geqslant 1}$.
For $n=1$, using Lemma \ref{lem:a1modp1mun}, the map $\Delta$ restricts to a $\varphi\equivariant$ isomorphism $(\AR(1)/(p_1(\mu)))^{1 \times \Gamma_R} \isomorphic R$.
More generally, we have the following:
\begin{lem}\label{lem:delta_modmun}
	For $n \in \NN_{\geqslant 1}$, reduction modulo $p_1(\mu)^n$ of the $\varphi\equivariant$ homomorphism $\Delta \colon \AR(1) \rightarrow \AR$ restricts to a $\varphi\equivariant$ isomorphism $(\AR(1)/(p_1(\mu))^n)^{1 \times \Gamma_R} \isomorphic \AR/(\mu)^n$.
\end{lem}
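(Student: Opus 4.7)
The plan is quite direct and essentially formal, given that the substantive descent work has already been carried out. I will show that the map induced by $\Delta$ on the $(1 \times \Gamma_R)$-invariants is nothing but the inverse of the isomorphism $p_1$ provided by Lemma \ref{lem:a1modp1mun}. The key observation is that $\Delta \circ p_1 = \id_{\AR}$, because $\Delta$ is the multiplication map and $p_1(a) = a \otimes 1$.

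First, I would verify that $\Delta$ descends modulo powers of $\mu$. Since $\Delta(p_1(\mu)) = \mu$, the map $\Delta$ sends $p_1(\mu)^n \AR(1)$ into $\mu^n \AR$, so we obtain a well-defined $\varphi$-equivariant map $\overline{\Delta} : \AR(1)/p_1(\mu)^n \to \AR/\mu^n$. Similarly, $p_1$ induces a $\varphi$-equivariant map $\overline{p}_1 : \AR/\mu^n \to \AR(1)/p_1(\mu)^n$, and by the $(\varphi, \Gamma_R^2)$-equivariance of $p_1$ recorded in Remark \ref{rem:phigamma_act_ar+n} (with $\Gamma_R^2$ acting on the source via the first coordinate), the image of $\overline{p}_1$ is contained in the subring $(\AR(1)/p_1(\mu)^n)^{1 \times \Gamma_R}$.

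Next, the identity $\Delta \circ p_1 = \id_{\AR}$ reduces to $\overline{\Delta} \circ \overline{p}_1 = \id_{\AR/\mu^n}$. By Lemma \ref{lem:a1modp1mun}, the map $\overline{p}_1$ restricts to a $\varphi$-equivariant isomorphism $\AR/\mu^n \isomorphic (\AR(1)/p_1(\mu)^n)^{1 \times \Gamma_R}$. Consequently the restriction $\overline{\Delta}|_{(\AR(1)/p_1(\mu)^n)^{1 \times \Gamma_R}}$ is the two-sided inverse of this isomorphism, hence it is itself a $\varphi$-equivariant isomorphism, as required.

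There is no serious obstacle at this stage; the real content — identifying the $(1 \times \Gamma_R)$-invariants of $\AR(1)/p_1(\mu)^n$ with $\AR/\mu^n$ via $p_1$ — has already been carried out in Lemma \ref{lem:a1modp1mun}, which itself rests on the three-step descent argument of Subsections \ref{subsubsec:geo_gamma_action}--\ref{subsubsec:p=2_gammaF_action}. The statement of Lemma \ref{lem:delta_modmun} is then a formal consequence of the fact that $\Delta$ is a left inverse of $p_1$. Passing to the limit over $n$, using that $\AR$ and $\AR(1)$ are both $p_1(\mu)$-adically (resp.\ $\mu$-adically) complete and that taking $(1 \times \Gamma_R)$-invariants commutes with inverse limits, also yields the global form $\Delta : \AR(1)^{1 \times \Gamma_R} \isomorphic \AR$ stated in Proposition \ref{prop:delta_modmun}, with the evident compatibility for the residual $\Gamma_R$-action coming from $\Gamma_R \times 1$.
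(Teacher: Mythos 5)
Your proof is correct, and it takes a genuinely simpler route than the paper's. The paper first derives the full short exact sequence \eqref{eq:a1_gammaR_exact} by chaining the three-step descent Lemmas \ref{lem:a1_geometric_exact}, \ref{lem:a1_fpx_exact}, \ref{lem:a1_gamma0_exact} (resp.\ Lemmas \ref{lem:a1_pm_exact}, \ref{lem:a1_gamma0_exact_2} for $p=2$), and then runs an induction on $n$ through the resulting commutative diagram. You instead observe that $\Delta \circ p_1 = \mathrm{id}_{\AR}$ descends modulo $p_1(\mu)^n$, so that once Lemma \ref{lem:a1modp1mun} supplies the isomorphism $\overline{p}_1 : \AR/\mu^n \isomorphic (\AR(1)/p_1(\mu)^n)^{1 \times \Gamma_R}$, the restriction of $\overline{\Delta}$ to the invariants is forced to equal $\overline{p}_1^{\,-1}$ and hence is a ($\varphi$-equivariant) isomorphism. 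This is an actual economy, not just a repackaging: the right-exactness in \eqref{eq:a1_gammaR_exact} is precisely the hard output of the three-step descent, and your argument shows that Lemma \ref{lem:delta_modmun} never needs it independently — it is already implicit in Lemma \ref{lem:a1modp1mun}. One small inaccuracy in your commentary: Lemma \ref{lem:a1modp1mun} does not itself ``rest on'' the three-step descent; its inductive step uses only left-exactness of invariants together with the evident surjectivity in the top row of \eqref{eq:a1modp1mun}, and the base case $n=1$ comes from the explicit description of $\AR(1)/p_1(\mu)$ and Example \ref{exam:a1modp1mu_qconnection}. The reason the paper nonetheless takes the longer route is presumably to mirror Proposition \ref{prop:deltan_modmun_relative}, the Wach module analogue, where $N(1) = \AR(1) \otimes_{p_2, \AR} N$ is built from $p_2$ and no retraction $\Delta_N \circ p_1 = \mathrm{id}$ is available; comparing the $p_1$- and $p_2$-pullbacks of $N$ is the content of Theorem \ref{thm:integral_comp_relative}, so there the induction through the exact sequence \eqref{eq:n1_gammaR_exact} genuinely carries the argument.
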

\begin{proof}
	Let us first note that using Lemma \ref{lem:a1_geometric_exact}, together with Lemma \ref{lem:a1_fpx_exact}, Lemma \ref{lem:a1_gamma0_exact} and Remark \ref{rem:a1_gammaF_exact}, for $p \geqslant 3$, and Lemma \ref{lem:a1_pm_exact}, Lemma \ref{lem:a1_gamma0_exact_2} and Remark \ref{rem:a1_gammaF_exact_2}, for $p = 2$, we obtain that for each $n \geqslant 1$, the following $(\varphi, \Gamma_R \times 1)\equivariant$ sequence is exact:
	\begin{equation}\label{eq:a1_gammaR_exact}
		0 \longrightarrow (\AR(1)/(p_1(\mu)))^{1 \times \Gamma_R} \xrightarrow{\hspace{1mm} p_1(\mu)^n \hspace{1mm}} (\AR(1)/(p_1(\mu))^{n+1})^{1 \times \Gamma_R} \longrightarrow (\AR(1)/(p_1(\mu))^{n})^{1 \times \Gamma_R} \longrightarrow 0.
	\end{equation}
	Next, we note that $\Delta$ is $\varphi\equivariant$, so it is enough to show that the map modulo $p_1(\mu)^n$ is bijective.
	Now, consider the following natural commutative diagram:
	\begin{center}
		\begin{tikzcd}
			0 \arrow[r] & (\AR(1)/(p_1(\mu)))^{1 \times \Gamma_R} \arrow[r, "p_1(\mu)^n"] \arrow[d, "\Delta", "\wr"'] & (\AR/(p_1(\mu))^{n+1})^{1 \times \Gamma_R} \arrow[r] \arrow[d, "\Delta"] & (\AR(1)/(p_1(\mu))^n)^{1 \times \Gamma_R} \arrow[d, "\Delta"] \arrow[r] & 0\\
			0 \arrow[r] & \AR/(\mu) \arrow[r, "\mu^n"] & \AR/(\mu)^{n+1} \arrow[r] & \AR/(\mu)^n \arrow[r] & 0,
		\end{tikzcd}
	\end{center}
	where the top row is the exact sequence in \eqref{eq:a1_gammaR_exact}.
	Using the diagram, an easy induction on $n \geqslant 1$ gives the $\varphi\equivariant$ isomorphism $\Delta \colon (\AR(1)/(p_1(\mu))^{n+1})^{1 \times \Gamma_R} \isomorphic \AR/(\mu)^{n+1}$, as claimed.
\end{proof}

Finally, recall that the ring $\AR(1)$ is equipped with an action of $(\varphi, \Gamma_R^2)$, and the rings $\AR(1)^{1 \times \Gamma_R}$ and $(\AR(1)/(p_1(\mu))^n)^{1 \times \Gamma_R}$ are equipped with a residual action of $\Gamma_R = \Gamma_R \times 1 \subset \Gamma_R^2$.
Then, the following observation proves Proposition \ref{prop:delta_modmun}:
\begin{lem}\label{lem:delta_modmun_phigamma}
	For $n \in \NN_{\geqslant 1}$ the isomorphism $(\AR(1)/(p_1(\mu))^n)^{1 \times \Gamma_R} \isomorphic \AR/(\mu)^n$ of Proposition \ref{prop:delta_modmun}, is compatible with the respective $(\varphi, \Gamma_R)\action$.
	Passing to the limit over $n$ gives a $(\varphi, \Gamma_R)\equivariant$ isomorphism $\AR(1)^{1 \times \Gamma_R} \isomorphic \AR$.
\end{lem}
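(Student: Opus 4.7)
The plan is to build on Lemma \ref{lem:delta_modmun}, which already provides the $\varphi$-equivariant isomorphism $\Delta : (\AR(1)/p_1(\mu)^n)^{1 \times \Gamma_R} \isomorphic \AR/\mu^n$ for each $n \geq 1$. What remains is to verify the $\Gamma_R$-equivariance of this isomorphism for each $n$, and then to pass to the inverse limit.

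For the $\Gamma_R$-equivariance, I would first observe that the multiplication map $\Delta : \AR(1) \to \AR$ is equivariant for the action of the diagonal subgroup $\Gamma_R \subset \Gamma_R^2$. Indeed, by Construction \ref{const:cech_nerve} and the universal property of the prismatic envelope, $\Delta$ is the map of prisms dual to the identity $(\AR, [p]_q) \to (\AR, [p]_q)$ in $(\Spf R)_{\prism}$, and by Lemma \ref{lem:gammar_act_prisaut} the $\Gamma_R^2$-action on $\AR(1)$ is induced by the action of $\Gamma_R$ on $(\AR, [p]_q)$ through prism automorphisms on each factor. Since $\Delta$ is defined functorially, we obtain $\Delta \circ (g, g) = g \circ \Delta$ for every $g \in \Gamma_R$ (this extends from $\AR \widehat{\otimes}_{O_F} \AR$ to the envelope by the universal property, using that the action is continuous). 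Now for any $x \in (\AR(1)/p_1(\mu)^n)^{1 \times \Gamma_R}$ and $g \in \Gamma_R$, the triviality $(1, g)(x) = x$ yields $(g, 1)(x) = (g, g) \circ (1, g^{-1})(x) = (g, g)(x)$, and therefore $\Delta((g, 1)(x)) = \Delta((g, g)(x)) = g(\Delta(x))$. This is exactly the asserted $\Gamma_R$-equivariance.

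For the passage to the limit, I would use that $\AR(1)$ is $p_1(\mu)$-adically complete by Construction \ref{const:cech_nerve}, that the action of $\Gamma_R^2$ on it is continuous for this topology by Remark \ref{rem:phigamma_act_ar+n}, and that $\AR$ is $\mu$-adically complete (Subsection \ref{subsubsec:ring_ar+}). Since continuous invariants commute with the inverse limit over $n$, the compatible family of $(\varphi, \Gamma_R)$-equivariant isomorphisms from the first part assembles to a $(\varphi, \Gamma_R)$-equivariant isomorphism
\begin{equation*}
	\AR(1)^{1 \times \Gamma_R} \isomorphic \lim_n (\AR(1)/p_1(\mu)^n)^{1 \times \Gamma_R} \isomorphic \lim_n \AR/\mu^n \isomorphic \AR,
\end{equation*}
as desired.

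The only genuinely non-formal point, though still mild, is the diagonal compatibility $\Delta \circ (g, g) = g \circ \Delta$ on the full ring $\AR(1)$ rather than merely on $\AR \widehat{\otimes}_{O_F} \AR$; this requires invoking the universal property of the prismatic envelope together with the $\delta$-equivariance of the $\Gamma_R$-action, but both are already in place in the paper.
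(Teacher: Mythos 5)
Your proof is correct and follows the same approach as the paper's: establish the diagonal compatibility $\Delta \circ (g,g) = g \circ \Delta$, use the $(1 \times \Gamma_R)$-invariance to reduce $(g,1)$ to $(g,g)$, and pass to the limit using that taking $(1 \times \Gamma_R)$-invariants commutes with inverse limits. The paper asserts the diagonal compatibility with essentially no justification, whereas you spell out (correctly) that it follows from functoriality of the prismatic envelope and the fact that each $g$ acts as a prism automorphism; this added detail is a small improvement but does not change the argument.
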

\begin{proof}
	The isomorphism in Proposition \ref{prop:delta_modmun} is $\varphi\equivariant$.
	To check $\Gamma_R\textrm{-equivariance}$, note that if $g$ is in $\Gamma_R$ and $a$ in $\AR(1)/(p_1(\mu))^n$, then we have that $\Delta((g, g)a) = g(a)$.
	So if $a$ is $(1 \times \Gamma_R)\textrm{-invariant}$, then for $g_1$, $g_2$ in $\Gamma_R$, we have that $\Delta((g_1, g_2)f) = \Delta((g_1, g_1)f) = g_1(\Delta(f))$.
	This proves the first claim.
	Next, as inverse limit commutes with taking $(1 \times \Gamma_R)\textrm{-invariants}$, therefore, it follows that we have a $(\varphi, \Gamma_R)\equivariant$ isomorphism $\AR(1)^{1 \times \Gamma_R} = (\lim_n \AR(1)/(p_1(\mu))^n)^{1 \times \Gamma_R} = \lim_n (\AR(1)/(p_1(\mu))^n)^{1 \times \Gamma_R} \isomorphic \lim_n \AR/(\mu)^n = \AR$.
	This proves the second claim.
\end{proof}

\section{An integral comparison isomorphism}\label{sec:integral_comp}

In this section, we shall prove an integral comparison isomorphism for Wach modules, which will be the most important input in building a stratification on Wach modules in Section \ref{subsubsec:wachmod_strat}.
We will use the setup and notations of Section \ref{subsec:setup_nota} and Section \ref{sec:theprism_ar+}.
\begin{defi}[Wach modules, {\cite[Definition 1.1, Lemma 3.7]{abhinandan-relative-wach-ii}}]\label{defi:wach_mods_relative}
	A \textit{Wach module} over $\AR$ is a finitely generated $\AR\textrm{-module}$ $N$ equipped with a semilinear action of $\Gamma_R$, and satisfying the following conditions:
	\begin{enumarabicup}
		\item The sequences $\{p, \mu\}$ and $\{\mu, p\}$ are regular on $N$.

		\item The action of $\Gamma_R$ is trivial on $N/\mu N$.

		\item $N$ is equipped with a Frobenius of finite $\pqheight$, i.e.\ an $\AR\linear$ and $\Gamma_R\equivariant$ isomorphism $\varphi_N \colon (\varphi^*N)[1/[p]_q] = (\AR \otimes_{\varphi, \AR} N)[1/[p]_q] \isomorphic N[1/[p]_q]$.
	\end{enumarabicup}
	Say that $N$ is \textit{effective} if $\varphi_N$ carries $\varphi^*N$ into $N$.
	Denote the category of Wach modules over $\AR$ as $(\varphi, \Gamma)\Mod_{\AR}^{[p]_q}$ with morphisms between objects being $\AR\linear$, $\Gamma_R\equivariant$ and $\varphi_N\equivariant$ (after inverting $[p]_q$).
\end{defi}

\begin{rem}\label{rem:wachmod_props}
	Note that the action of $\Gamma_R$ is automatically continuous on $N$ for the $(p, \mu)\adic$ topology (see \cite[Lemma 3.4]{abhinandan-relative-wach-ii}).
	Moreover, from \cite[Proposition 3.8]{abhinandan-relative-wach-ii}, note that for a Wach module $N$ over $\AR$, the $\AR[1/p]\module$ $N[1/p]$ is finite projective, the $\AR[1/\mu]\module$ $N[1/\mu]$ is finite projective and by \cite[Remark 3.9]{abhinandan-relative-wach-ii} the $\AR[1/[p]_q]\module$ $N[1/[p]_q]$ is finite projective.
	Furthermore, from loc.\ cit., the sequences $\{p, [p]_q\}$ and $\{[p]_q, p\}$ are regular on $N$ and equivalent to condition (1) in Definition \ref{defi:wach_mods_relative}.
\end{rem}

\begin{rem}
	For $R = O_F$, a Wach module over $\AF$ is necessarily finite free (see \cite[Remark 1.4]{abhinandan-relative-wach-ii}).
\end{rem}

From Remark \ref{rem:phigamma_act_ar+n}, recall that we have a $(\varphi, \Gamma_R^2)\equivariant$ maps $p_i \colon \AR \rightarrow \AR(1)$ for $i = 1, 2$, where $\AR$ is equipped with a $\Gamma_R^2\action$ via projection onto the $i^{\textrm{th}}$ coordinate.
Moreover, we note that there is an induced action of $\Gamma_R^2$ on $\AR(1)/(p_1(\mu))$, where the action of the first component is trivial.
In this section, we shall identify $\Gamma_R$ with $1 \times \Gamma_R$ and say that $\AR(1)/(p_1(\mu))$ is equipped with a natural continuous action of $\Gamma_R$.

\begin{nota}
	In this section, by the $\varphi\textrm{-equivariance}$ of a morphism we always mean $\varphi\textrm{-equivariance}$ after inverting $p$.
	However, we will not always mention this explicitly.
\end{nota}

The goal of this section is to prove Theorem \ref{thm:integral_comp_relative} below, which is an important ingredient for the proof of Theorem \ref{thm:strat_wach_comp_relative}.
\begin{thm}\label{thm:integral_comp_relative}
	Let $N$ be Wach module over $\AR$ and set $M \coloneq (\AR(1)/(p_1(\mu)) \otimes_{p_2, \AR} N)^{\Gamma_R}$ as an $R\module$ equipped with the tensor product Frobenius.
	Then, there exists a natural $(\varphi, \Gamma_R)\equivariant$ isomorphism
	\begin{equation}\label{eq:integral_comp_relative}
		\AR(1)/(p_1(\mu)) \otimes_{p_1, R} M \isomorphic \AR(1)/(p_1(\mu)) \otimes_{p_2, \AR} N.
	\end{equation}
	Moreover, $M$ is a finitely generated $p\torsionfree$ $R\module$, and the multiplication map $\Delta \colon \AR(1) \rightarrow \AR$ from Construction \ref{const:cech_nerve} induces a $\varphi\equivariant$ isomorphism of $R\modules$ $M \isomorphic N/\mu N$.
\end{thm}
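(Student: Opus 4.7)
The plan is to extend the three-step descent argument of Subsection \ref{subsec:gamma_act_ar1} from the ring $\AR$ to the Wach module $N$. Set $\Abar(1) := \AR(1)/p_1(\mu)$ and $N(1) := \Abar(1) \otimes_{p_2, \AR} N$, equipped with the tensor product action of $(\varphi, \Gamma_R)$ via the identification $\Gamma_R = 1 \times \Gamma_R$. The strategy is to establish, for each $n \geq 1$, the module-theoretic analogue of the exact sequence \eqref{eq:a1_gammaR_exact}, namely
\begin{equation*}
0 \longrightarrow (N(1)/p_1(\mu))^{\Gamma_R} \xrightarrow{\hspace{1mm} p_1(\mu)^n \hspace{1mm}} (N(1)/p_1(\mu)^{n+1})^{\Gamma_R} \longrightarrow (N(1)/p_1(\mu)^{n})^{\Gamma_R} \longrightarrow 0,
\end{equation*}
and then to identify the leftmost term with $N/\mu N$ via a $\varphi$-equivariant isomorphism. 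Passing to the $p_1(\mu)$-adic limit and using that limits commute with taking invariants will then yield both the finiteness and $p$-torsion-freeness of $M$ together with the isomorphism $M \isomorphic N/\mu N$; the base-change isomorphism \eqref{eq:integral_comp_relative} will be extracted as a consequence.

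The geometric descent for $\Gamma_R'$ exploits the triviality of the $\Gamma_R$-action on $N/\mu N$ (Definition \ref{defi:wach_mods_relative} (2)) together with the explicit PD-structure of $\Abar(1)$ over $\Lambda_R$ from Proposition \ref{prop:a1modp1mu_pdring}. Extending the $q$-connection of Example \ref{exam:a1modp1mu_qconnection} to $N(1)$ by the Leibniz rule and passing to the associated honest connection via Proposition \ref{prop:a1modp1mu_connection}, the required acyclicity in positive degree of the associated $q$-de Rham complex reduces to acyclicity of the ordinary de Rham complex of a PD-polynomial extension of $\LambdatildeR \otimes_{\AR} N$, which is a standard Poincaré-type computation. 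The torsion descent under $\FF_p^{\times}$ for $p \geq 3$ is automatic by invertibility of $p-1$ in $\ZZ_p$, while for $p = 2$ one uses the decomposition by $\sigma \in \Gamma_{\text{tor}}$ as in Lemma \ref{lem:lambdatildeplus_mod_p1mun}.

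The arithmetic descent under $\Gamma_0$ is the main technical obstacle, and it mirrors Lemmas \ref{lem:a1_gamma0_exact} and \ref{lem:a1_gamma0_exact_2}. One sets up a Koszul complex using the $q$-differential operator $\nabla_{q,\stilde}$ (respectively $\nabla_{q,\tautilde}$ for $p=2$) on the appropriate descent of $N(1)$, and must prove vanishing of $H^1$. Following the strategy outlined in Lemmas \ref{lem:h1_qderham_lambdar0} and \ref{lem:h1_qderham_lambdarplus}, one converts the $q$-differential to the logarithmic differential $\nabla_0^{\log} = \log(\gamma_0)/\log(\chi(\gamma_0))$; ensuring its convergence on the module $N$ with its continuous $\Gamma_0$-action, and the compatibility of the resulting de Rham complex with the PD-filtration, is precisely where the analysis is delicate. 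The vanishing then follows because $\Lambda_{R,0}$ and $\LambdaRplus$ are PD-polynomial rings in a single variable over $R$, so the de Rham complex is acyclic in positive degree (the module analogue being carried out in Propositions \ref{prop:mlambda0_connection} and \ref{prop:mlambda+_connection}).

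Finally, both the identification $M \isomorphic N/\mu N$ and the base-change isomorphism \eqref{eq:integral_comp_relative} are deduced from the multiplication map $\Delta : \AR(1) \to \AR$, which sends both $p_1(\mu)$ and $p_2(\mu)$ to $\mu$ and therefore descends to a ring homomorphism $\bar\Delta : \Abar(1) \twoheadrightarrow R$ splitting $p_1 : R \to \Abar(1)$. Applying $\bar\Delta \otimes \id$ to $N(1)$ produces a surjection onto $N/\mu N$ which, when restricted to $M = N(1)^{\Gamma_R}$, gives the desired $\varphi$-equivariant isomorphism; its inverse is induced by $p_1$, which lands inside $\Gamma_R$-invariants since $\Gamma_R$ acts trivially modulo $p_1(\mu)$ on $\Abar(1)$ and trivially on $N/\mu N$. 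Finite generation and $p$-torsion-freeness of $M$ over $R$ then follow from the corresponding properties of $N/\mu N$, which hold by the regular-sequence condition of Definition \ref{defi:wach_mods_relative} (1). The isomorphism in \eqref{eq:integral_comp_relative} then identifies both sides with $\Abar(1) \otimes_{p_1, R} (N/\mu N)$, with $(\varphi, \Gamma_R)$-equivariance inherited from the construction.
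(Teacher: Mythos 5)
Your proposed deduction of the base-change isomorphism \eqref{eq:integral_comp_relative} from the identification $M \isomorphic N/\mu N$ runs in the wrong direction and has a concrete gap. First, the claim that ``$\Gamma_R$ acts trivially modulo $p_1(\mu)$ on $\Abar(1)$'' misidentifies the relevant copy of $\Gamma_R$: by Proposition \ref{prop:arn_gammar_triv_modmu} it is $\Gamma_R \times 1$ that acts trivially on $\AR(1)/p_1(\mu)$, whereas $M$ is the set of invariants for $1 \times \Gamma_R$, which acts non-trivially on $\AR(1)/p_1(\mu)$. Consequently there is no map $N/\mu N \to (\AR(1)/p_1(\mu) \otimes_{p_2,\AR} N)^{\Gamma_R}$ ``induced by $p_1$''; for $x \in N$ the element $1 \otimes x$ is not $(1\times\Gamma_R)$-invariant, since $g(1\otimes x) = 1\otimes g(x)$. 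The actual inverse of $\bar\Delta$ in the geometric direction is the Taylor-type stratification map $\overline{\varepsilon}(x) = \textstyle\sum_{j_1, \ldots, j_d} (-1)^{j_1+\cdots+j_d}\prod_i \nabla_i^{j_i}(x) \otimes T_i^{[j_i]}$ built from the quasi-nilpotent connection of Proposition \ref{prop:n1modp1mu_connection}, and packaging this via the crystalline $F$-crystal formalism of Subsection \ref{subsubsec:crys_site_crystal} is precisely how Proposition \ref{prop:abarone_comp} produces the geometric piece of \eqref{eq:integral_comp_relative}. The quotient statement $M \isomorphic N/\mu N$ alone cannot recover this: $\AR(1)/p_1(\mu) \otimes_{p_1,R} M$ and $\AR(1)/p_1(\mu) \otimes_{p_2,\AR} N$ are formed from two genuinely different $R$-algebra structures, and identifying them is exactly the nontrivial content you are trying to extract.

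The paper is therefore forced to argue in the opposite order to your outline: \eqref{eq:integral_comp_relative} is established first by the three-step descent (Propositions \ref{prop:abarone_comp}, \ref{prop:mlambda_fpx_descent}, \ref{prop:lambdar0_comp}, and the $p=2$ analogues \ref{prop:mlambda_pm_descent}, \ref{prop:lambdar+_comp}), each step producing an explicit base-change isomorphism as in \eqref{eq:abarone_comp}, \eqref{eq:lambdaR_comp}, \eqref{eq:lambdar0_comp}; only then is $M \isomorphic N/\mu N$ deduced by applying $R \otimes_{\Delta, \AR(1)/p_1(\mu)} (-)$, which is Proposition \ref{prop:deltan_modmu_relative}. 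Your exact-sequence plan also has a notational inconsistency ($N(1)$ is declared to be $\AR(1)/p_1(\mu) \otimes_{p_2,\AR} N$, after which ``$N(1)/p_1(\mu)^n$'' is vacuous), and once corrected to $N(1) := \AR(1) \otimes_{p_2,\AR} N$ as in Section \ref{sec:prismatic_wach} it becomes circular: the exactness statements (Lemma \ref{lem:n1_geometric_exact} et seq.) feed into Proposition \ref{prop:deltan_modmun_relative}, whose induction base case is Proposition \ref{prop:deltan_modmu_relative}, which itself invokes Theorem \ref{thm:integral_comp_relative}.
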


\begin{rem}
	For $R = O_F$, the $R\module$ $M$ in Theorem \ref{thm:integral_comp_relative} is finite free of rank $= \rank_{\AF} N$.
\end{rem}

\begin{rem}\label{rem:frob_mpinverse}
	Let $N$ be a Wach module over $\AR$.
	By extending the isomorphism in Definition \ref{defi:wach_mods_relative} (3) along the $(\varphi, \Gamma_R)\equivariant$ homomorphism $p_2 \colon \AR \rightarrow \AR(1)/(p_1(\mu))$, we obtain a $(\varphi, \Gamma_R)\textrm{-equivariant}$ isomorphism $(\AR(1)/(p_1(\mu)) \otimes_{p_2 \circ \varphi, \AR} N)[1/[p]_q] \isomorphic (\AR(1)/(p_1(\mu)) \otimes_{p_2, \AR} N)[1/[p]_q]$.
	Now, let us note that $p_1([p]_q) = p \mod p_1(\mu) \AR(1)$ and $p_1([p]_q)/p_2([p]_q)$ is a unit in $\AR(1)$ (see Lemma \ref{lem:pi_rj_ff}).
	So, by setting $M \coloneq (\AR(1)/(p_1(\mu)) \otimes_{p_2, \AR} N)^{\Gamma_R}$, taking the $\Gamma_R\textrm{-invariants}$ in the preceding isomorphism, and using Theorem \ref{thm:integral_comp_relative} gives an $R[1/p]\linear$ isomorphism $(\varphi^*M)[1/p] = (R \otimes_{\varphi, R} M)[1/p] \isomorphic M[1/p]$.
\end{rem}

In Sections \ref{subsec:geometric_descent} and \ref{subsec:arithmetic_descent}, we shall prove the comparison isomorphism \eqref{eq:integral_comp_relative} claimed in Theorem \ref{thm:integral_comp_relative}.
Our proof is broadly divided into three main steps: a geometric descent for the action of $\Gamma_R'$ (see Proposition \ref{prop:abarone_comp}) and a ``two-step'' arithmetic descent for the action of $\Gamma_F$ (see Propositions \ref{prop:mlambda_fpx_descent} and \ref{prop:lambdar0_comp}, for $p \geqslant 3$, and Propositions \ref{prop:mlambda_pm_descent} and \ref{prop:lambdar+_comp}, for $p=2$).
Then in Section \ref{subsec:proof_integral_comp_relative}, we will put everything together to complete the proof of Theorem \ref{thm:integral_comp_relative}.
We begin by interpreting the action of $\Gamma_R$ on a Wach module as a $q\textrm{-connection}$.

\subsection{Wach modules and \texorpdfstring{$\qconnections$}{-}}\label{subsec:wachmod_qconnection}

In this section, our goal is to interpret Wach modules and its scalar extension to $\AR(1)/(p_1(\mu))$ as modules with $\qconnections$ similar to \cite[Section 5]{abhinandan-relative-wach-ii}.
We shall work with the notation described in Section \ref{subsubsec:a1modp1mu_qconnection}.

\begin{defi}[$\qconnection$, {\cite[Definition 2.2]{morrow-tsuji}}]\label{defi:qconnection}
	A module with $\textit{q-connection}$ over $D$ is a right $D\module$ $N$ equipped with an $A\linear$ map $\nabla_q \colon N \rightarrow N \otimes q\Omega^1_{D/A}$ satisfying the Leibniz rule $\nabla_q(xf) = \nabla_q(x)f + x \otimes d_q(f)$ for all $f$ in $D$ and $x$ in $N$.
	The $\qconnection$ $\nabla_q$ extends uniquely to a map of graded $A\modules$ $\nabla_q \colon N \otimes q\Omega^{\bullet}_{D/A} \rightarrow N \otimes q\Omega^{\bullet+1}_{D/A}$ satisfying $\nabla_q((n \otimes \omega) \cdot \omega') = \nabla_q(n \otimes \omega) \cdot \omega' + (-1)^{\textrm{deg } \omega} (n \otimes \omega) \cdot d_q(\omega')$.
	The $\qconnection$ $\nabla_q$ is said to be \textit{flat} or \textit{integrable} if $\nabla_q \circ \nabla_q = 0$.
\end{defi}

\begin{exam}\label{exam:wach_qconnection}
	From Example \ref{exam:ar+_qconnection}, take $D = \AF = O_F\llbracket \mu \rrbracket$, $A = \AR$ equipped with the action of $\Gamma_R$ and $\{\gamma_1, \ldots, \gamma_d\}$ as topological generators of $\Gamma_R'$ (see Section \ref{subsec:important_rings}).
	Taking $q = 1+\mu$ and $U_i = [X_i^{\flat}]$, for $1 \leqslant i \leqslant d$, we know that $\AR$ satisfies the hypotheses of Definition \ref{defi:qdeRham_complex}.
	In particular, $\AR$ is equipped with an $\AF\linear$ $\qconnection$ $\nabla_q \colon \AR \rightarrow q\Omega^1_{\AR/\AF}$, given as $f \mapsto \sum_{i=1}^d \frac{\gamma_i(f)-f}{p_2(\mu)} \dlog(p_2([X_i^{\flat}]))$.
	Now, let $N$ be a Wach module over $\AR$.
	Then, from \cite[Proposition 5.3]{abhinandan-relative-wach-ii}, the geometric $\qconnection$ $\nabla_q \colon N \rightarrow N \otimes_{\AR} \Omega^1_{\AR/\AF}$, given as $x \mapsto \sum_{i=1}^d \tfrac{\gamma_i(x)-x}{\mu} \dlog([X_i^{\flat}])$, describes $(N, \nabla_q)$ as a $\varphi\module$ equipped with a $(p, [p]_q)\textrm{-adically}$ quasi-nilpotent flat $\qconnection$ over $\AR$ (see \cite[Section 5.2]{abhinandan-relative-wach-ii} for details).
\end{exam}

\begin{exam}\label{exam:wach_arpd_qconnection}
	From Example \ref{exam:arpd_qconnection}, take $D = \Lambda_F$ and $A = \ARpd$ as a $\Lambda_F\algebra$.
	Note that $\ARpd$ is equipped with a $\Lambda_F\linear$ action of $\Gamma_R'$ and we have $\{\gamma_1, \ldots, \gamma_d\}$ as topological generators of $\Gamma_R'$ (see Section \ref{subsec:important_rings}).
	Then, by setting $q = 1+\mu$ and $U_i = [X_i^{\flat}]$, for $1 \leqslant i \leqslant d$, we know that $\ARpd$ satisfies the hypotheses of Definition \ref{defi:qdeRham_complex}.
	In particular, $\ARpd$ is equipped with a $\Lambda_F\linear$ $\qconnection$ $\nabla_q \colon \ARpd(1) \rightarrow q\Omega^1_{\ARpd(1)/\Lambda_F}$, given as $f \mapsto \sum_{i=1}^d \frac{\gamma_i(f)-f}{p_2(\mu)} \dlog(p_2([X_i^{\flat}]))$.
	Now, let $N$ be a Wach module over $\AR$ and set $N^{\PD} \coloneq \ARpd \otimes_{\AR} N$, equipped with the tensor product Frobenius and the tensor product action of $\Gamma_R$.
	Then, note that for any $f \otimes y$ in $N^{\PD}$ and $g$ in $\Gamma_R$, we have that $(g-1)(f \otimes y) = (g-1)f \otimes y + g(f) \otimes (g-1)y$ is in $\mu N^{\PD}$.
	Therefore, the operator $\nabla_q \colon N^{\PD} \rightarrow N^{\PD} \otimes_{\ARpd} \Omega^1_{\ARpd/\Lambda_F}$, given as $x \mapsto \sum_{i=1}^d \tfrac{\gamma_i(x)-x}{\mu} \dlog([X_i^{\flat}])$, satisfies the assumptions of Definition \ref{defi:qconnection}.
	Moreover, the $\qconnection$ $\nabla_q$ on $N^{\PD}$ is $p\adically$ quasi-nilpotent using Example \ref{exam:arpd_qconnection} and Example \ref{exam:wach_qconnection}, and it is flat because $\gamma_i$ commute with each other.
\end{exam}

\begin{exam}\label{exam:wach_a1modp1mu_qconnection}
	From Example \ref{exam:a1modp1mu_qconnection}, take $D$ to be $\Lambda_R$ and $A$ to be $\Abar(1) = \AR(1)/(p_1(\mu))$ as a $\Lambda_R\algebra$ via the morphism of rings $\iota_{\Lambda} \colon \Lambda_R \rightarrow \Abar(1)$.
	Then, the $\Lambda_R\algebra$ $\Abar(1)$ is equipped with a $\Lambda_R\linear$ (via $\iota_{\Lambda}$) action of $\Gamma_R'$ and we take $\{\gamma_1, \ldots, \gamma_d\}$ as topological generators of $\Gamma_R'$ (see Section \ref{subsec:important_rings}).
	Moreover, by setting $q = 1+\mu$ and $U_i = [X_i^{\flat}]$, for $1 \leqslant i \leqslant d$, we see that $\Abar(1)$ satisfies the hypotheses of Definition \ref{defi:qdeRham_complex}.
	In particular, $\Abar(1)$ is equipped with a $\Lambda_R\linear$ $\qconnection$ $\nabla_q \colon \Abar(1) \rightarrow q\Omega^1_{\Abar(1)/\Lambda_R}$, given as $f \mapsto \sum_{i=1}^d \frac{\gamma_i(f)-f}{p_2(\mu)} \dlog(p_2([X_i^{\flat}]))$.
	Now, let $N$ be a Wach module over $\AR$ and set $N(1) \coloneq \AR(1) \otimes_{p_2, \AR} N$, equipped with the tensor product Frobenius and the tensor product action of $\Gamma_R^2$, where $\Gamma_R^2$ acts on $N$ via projection onto the second coordinate.
	Let $\Nbar(1) \coloneq N(1)/(p_1(\mu)) = \Abar(1) \otimes_{p_2, \AR} N$, equipped with the induced action of Frobenius and $\Gamma_R = 1 \times \Gamma_R$.
	Note that for any $f \otimes y$ in $\Nbar(1)$ and $g$ in $\Gamma_R$, we have that $(g-1)(f \otimes y) = (g-1)y \otimes y + g(f) \otimes (g-1)y$ is in $p_2(\mu)\Nbar(1)$.
	Therefore, the operator $\nabla_q \colon \Nbar(1) \rightarrow \Nbar(1) \otimes_{\Abar(1)} \Omega^1_{\Abar(1)/\Lambda_R}$, given as $x \mapsto \sum_{i=1}^d \tfrac{\gamma_i(x)-x}{\mu} \dlog([X_i^{\flat}])$ satisfies the assumptions of Definition \ref{defi:qconnection}.
	Moreover, the $\qconnection$ $\nabla_q$ on $\Nbar(1)$ is $p\adically$ quasi-nilpotent using Example \ref{exam:a1modp1mu_qconnection} and Example \ref{exam:wach_qconnection}, and it is flat because $\gamma_i$ commute with each other.
\end{exam}

\begin{lem}\label{lem:nr1_pctf}
	Let $N$ be a Wach module over $\AR$ and $\Abar(1) = \AR(1)/(p_1(\mu))$.
	Then, the $\Abar(1)\module$ $\Abar(1) \otimes_{p_2, \AR} N$ is $p\adically$ complete and $p\torsionfree$.
\end{lem}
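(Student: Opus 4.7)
The plan is to exploit the $p$-complete flatness of $\Abar(1)$ over $\AR$ (via $p_2$), which I would extract from the explicit description in Proposition \ref{prop:a1modp1mu_pdring}. Writing $\Abar(1) = E_p^{\wedge}$ where
\[
E := \AR\bigl[\textstyle\prod_{\smbfk \in \NN^{d+1}} (\mu^{p-1}/p)^{[k_0]} T_1^{[k_1]} \cdots T_d^{[k_d]}\bigr],
\]
the ring $E$ is a \emph{free} $\AR$-module on the divided power monomials, hence flat over $\AR$. Consequently $\Abar(1)/p^n \cong E/p^n$ is flat over $\AR/p^n$ for each $n$, so $\Abar(1)$ is $p$-completely flat over $\AR$.

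Next I would record the relevant properties of $N$. Since $\AR \cong R\llbracket \mu \rrbracket$ is a regular Noetherian ring of finite Krull dimension and $p$ lies in its Jacobson radical, the finitely generated $\AR$-module $N$ is finitely presented, classically $p$-adically complete, and admits a bounded resolution by finite free $\AR$-modules. The regularity of the sequence $\{p, \mu\}$ from Definition \ref{defi:wach_mods_relative}(1) also gives that $N$ is $p$-torsion free. In particular, both $\Abar(1)$ and $N$ are derived $p$-complete.

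The heart of the argument will be showing that $\Abar(1) \otimes^L_{\AR} N$ is concentrated in degree zero. The bounded finite free resolution of $N$ exhibits this derived tensor product as a bounded complex of the derived $p$-complete modules $\Abar(1)^{r_i}$, so it is itself derived $p$-complete as an object of $D(\AR)$. Reducing modulo $p$ and using the $p$-torsion freeness of $N$ together with the flatness of $\Abar(1)/p$ over $\AR/p$, one obtains
\[
\bigl(\Abar(1) \otimes^L_{\AR} N\bigr) \otimes^L_{\AR} \AR/p \simeq \Abar(1)/p \otimes^L_{\AR/p} N/p = \Abar(1)/p \otimes_{\AR/p} N/p,
\]
which is concentrated in degree zero. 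The derived Nakayama lemma for derived $p$-complete complexes then forces $\Abar(1) \otimes^L_{\AR} N$ to be concentrated in degree zero, so it coincides with $\Abar(1) \otimes_{\AR} N$, which is therefore derived $p$-complete. The same mod-$p$ computation shows $\Tor_1^{\ZZ}(\Abar(1) \otimes_{\AR} N, \ZZ/p) = 0$, i.e., $\Abar(1) \otimes_{\AR} N$ is $p$-torsion free; combined with derived $p$-completeness, this yields classical $p$-adic completeness.

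The main obstacle will be sidestepping the non-Noetherianness of $\Abar(1)$: classical completion arguments via Artin--Rees are unavailable, so one cannot naively deduce $p$-adic completeness of $\Abar(1) \otimes_{\AR} N$ from the finite presentation of $N$ over $\AR$. The passage through derived $p$-completeness is essential, and the $p$-complete flatness afforded by the free $\AR$-module structure on $E$ is precisely what makes derived Nakayama applicable in this setting.
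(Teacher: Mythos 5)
Your plan hinges on the claim that $E = \AR\bigl[\prod_{\smbfk} (\mu^{p-1}/p)^{[k_0]} T_1^{[k_1]} \cdots T_d^{[k_d]}\bigr]$ is a \emph{free} $\AR$-module on the divided power monomials, and hence that $\Abar(1) = E_p^{\wedge}$ is $p$-completely flat over $\AR$ via $p_2$. This is false. The variables $T_i$ are genuinely free, so the $T$-part of the PD algebra is indeed free over $\AR$; but $\mu^{p-1}/p$ is \emph{not} a free PD variable over $\AR$: it satisfies the $\AR$-linear relation $p \cdot (\mu^{p-1}/p)^{[1]} = \mu^{p-1} \cdot (\mu^{p-1}/p)^{[0]}$, so the monomials $(\mu^{p-1}/p)^{[k_0]}$ are not $\AR$-linearly independent. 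Concretely, $\mu^{p-1} = p\,(\mu^{p-1}/p)$ lies in $p\Abar(1)$, so $\mu^{p-1}$ annihilates $\Abar(1)/p$; but $\mu^{p-1}$ is a nonzerodivisor on $\AR/p = (R/p)\llbracket\mu\rrbracket$. A ring with $\mu^{p-1}$-torsion cannot be flat over one on which $\mu^{p-1}$ is regular, so $\Abar(1)/p$ is not flat over $\AR/p$ and the asserted $p$-complete flatness fails. Consequently, the step $\Abar(1)/p \otimes^L_{\AR/p} N/p = \Abar(1)/p \otimes_{\AR/p} N/p$ is unjustified.

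The way to rescue the derived-Nakayama strategy is to observe that what is really needed is the vanishing of $\Tor_1^{\AR/p}(\Abar(1)/p, N/p)$, not flatness of $\Abar(1)/p$. Since $\Abar(1)/p$ is (by Proposition \ref{prop:a1modp1mu_pdring} and the remarks following it) a free module over the quotient $\AR/(p, \mu^{p-1})$, resolving the latter over $\AR/p$ by $\bigl[\AR/p \xrightarrow{\mu^{p-1}} \AR/p\bigr]$ identifies $\Tor_1^{\AR/p}(\Abar(1)/p, N/p)$ with a direct sum of copies of $\ker\bigl(\mu^{p-1} : N/p \to N/p\bigr)$. This kernel vanishes precisely because $\{p, \mu\}$ is a regular sequence on $N$ (Definition \ref{defi:wach_mods_relative}(1)), so $\mu$, and hence $\mu^{p-1}$, is regular on $N/p$. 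In other words, the regular-sequence condition on the Wach module is doing essential work in the tensor product; your proposal uses it only to deduce that $N$ is $p$-torsion free, which is not where the crux lies. The paper's own proof avoids this issue by staying at the level of the honest prismatic envelope $\AR(1)$, where $p_2 : \AR \to \AR(1)$ really is flat (Lemma \ref{lem:pi_rj_ff}), transfers the two regular sequences $\{p,\mu\}$, $\{\mu,p\}$ to $\AR(1) \otimes_{p_2,\AR} N$, uses the unit $p_1([p]_q)/p_2([p]_q)$ to trade $p_2(\mu)$ for $p_1(\mu)^{p-1}$, and only then reduces modulo $p_1(\mu)$.
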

\begin{proof}
	Note that the morphism $p_2 \colon \AR \rightarrow \AR(1)$ is faithfully flat from Lemma \ref{lem:pi_rj_ff}.
	Moreover, from Definition \ref{defi:wach_mods_relative} recall that $\{p, \mu\}$ and $\{\mu, p\}$ are regular sequences on $N$.
	Then, from the flatness of $p_2$, it follows that $\{p, p_2(\mu)\}$ and $\{p_2(\mu), p\}$ are regular sequences on $\AR(1) \otimes_{p_2, \AR} N$.
	Using the fact that $p_1([p]_q)/p_2([p]_q)$ is a unit in $\AR(1)$ (see Lemma \ref{lem:pi_rj_ff}), we get that $p_1(\mu)^{p-1}$ is an element of $(p, p_2(\mu))\AR(1)$.
	Now, since both $\{p, p_2(\mu)\}$ and $\{p_2(\mu), p\}$ are regular sequences on $\AR(1) \otimes_{p_2, \AR} N$, therefore, it follows that both $\{p, p_1(\mu)^{p-1}\}$ and $\{p_1(\mu)^{p-1}, p\}$ are regular sequences on $\AR(1) \otimes_{p_2, \AR} N$.
	In particular, using \cite[\href{https://stacks.math.columbia.edu/tag/07DV}{Tag 07DV}]{stacks-project} we get that both $\{p, p_1(\mu)\}$ and $\{p_1(\mu), p\}$ are regular sequences on $\AR(1) \otimes_{p_2, \AR} N$, and we conclude that $\Abar(1) \otimes_{p_1, \AR} N$ is $p\torsionfree$.
	Next, note that $N$ is a finitely generated and $(p, \mu)\adically$ complete $\AR\module$.
	So, it follows that $\AR(1) \otimes_{p_2, \AR} N$ is a finitely generated and $(p, p_2(\mu))\adically = (p, p_1(\mu))\adically$ complete $\AR(1)\module$, in particular, it is $(p, p_1(\mu))\adically$ separated.
	Consequently, we get that $\Abar(1) \otimes_{p_2, \AR} N$ is $p\adically$ separated and it is clearly finitely generated as an $\Abar(1)\module$, therefore, $p\adically$ complete.
	This completes our proof.
\end{proof}

\begin{rem}\label{rem:npd_pctf}
	Let $N$ be a Wach module over $\AR$ and let $\ARpd$ be the ring defined in Example \ref{exam:arpd_qconnection}.
	Moreover, recall that we defined a ring $E$ in Remark \ref{rem:qdeform_arpd}, which admits faithfully flat maps $p_1 \colon \AF \rightarrow E$ and $p_2 \colon \AR \rightarrow E$, and we have that $E/(p_1(\mu)) \isomorphic \ARpd$.
	Then, by an argument similar to the proof of Lemma \ref{lem:nr1_pctf}, it follows that the $\ARpd\module$ $\ARpd \otimes_{\AR} N$ is $p\adically$ complete and $p\torsionfree$.
\end{rem}

From Lemma \ref{lem:tovermu_unit}, we have that $t = \log(1+\mu)$ converges in $\mu \Lambda_F \subset \mu \Lambda_R$ and $t/\mu$ is a unit.
\begin{prop}\label{prop:n1modp1mu_connection}
	Let $N$ be a Wach module over $\AR$ and $\Abar(1) = \AR(1)/(p_1(\mu))$.
	Then, for $1 \leqslant i \leqslant d$, the series of operators $\nabla_i^{\log} = \frac{\log \gamma_i}{t} = \frac{1}{t}\sum_{k \in \NN} (-1)^k \frac{(\gamma_i-1)^{k+1}}{k+1}$ converge $p\adically$ on $\Nbar(1) = \Abar(1) \otimes_{p_2, \AR} N$.
	This defines a $\Lambda_R\linear$ $p\adically$ quasi-nilpotent flat connection on $\Nbar(1)$, denoted as $\nabla \colon \Nbar(1) \rightarrow \Nbar(1) \otimes_{\Abar(1)} \Omega^1_{\Abar(1)/\Lambda_R}$ and given as $x \mapsto \sum_{i=1}^d \nabla_i^{\log}(x) \dlog([X_i^{\flat}])$.
	The data of the connection $\nabla$ on $\Nbar(1)$ is equivalent to the data of the $\qconnection$ $\nabla_q$ described in Example \ref{exam:wach_a1modp1mu_qconnection}, i.e.\ either may be recovered from the other.
	Moreover, the $q\textrm{-de Rham}$ complex $\Nbar(1) \otimes_{\Abar(1)} q\Omega^{\bullet}_{\Abar(1)/\Lambda_R}$ is naturally quasi-isomorphic to the de Rham complex $\Nbar(1) \otimes_{\Abar(1)} \Omega^{\bullet}_{\Abar(1)/\Lambda_R}$.
	In particular, we have that $\Nbar(1)^{\nabla_q=0} = \Nbar(1)^{\nabla=0}$.
\end{prop}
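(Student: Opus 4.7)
The plan is to build the connection $\nabla$ by applying the formal series $\log$ to each $\gamma_i$ and normalising by $t$, then recover the $q$-connection via $\exp$. The key inputs are: (i) by Proposition \ref{prop:arn_gammar_triv_modmu} and Definition \ref{defi:wach_mods_relative} (2), each $\gamma_i$ acts as the identity modulo $p_2(\mu) = \mu$ on $\Nbar(1)$; (ii) $\Abar(1)$ carries a PD structure in the direction of $\mu^{p-1}/p$ (see Remark \ref{rem:a1modp1mu_lambda}), and $\Nbar(1)$ is $p$-adically complete over it by Lemma \ref{lem:nr1_pctf}; (iii) $t/\mu$ is a unit in $\Lambda_F$ by Lemma \ref{lem:tovermu_unit}.

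First, I will establish $p$-adic convergence of $\nabla_i^{\log}$ on $\Nbar(1)$. Since $(\gamma_i-1)^{k+1}$ sends $\Nbar(1)$ into $\mu^{k+1}\Nbar(1)$, writing $k+1 = (p-1)f(k+1) + r(k+1)$ gives
\[
\tfrac{\mu^{k+1}}{(k+1)\,t} \;=\; \tfrac{\mu}{t}\cdot \mu^{r(k+1)}\cdot \bigl(\tfrac{\mu^{p-1}}{p}\bigr)^{f(k+1)}\cdot \tfrac{p^{f(k+1)}}{k+1},
\]
whose $p$-adic valuation grows at least like $f(k+1) - \log_p(k+1) \to \infty$; this gives the desired convergence and its independence from the choice of $\Abar(1)$-generators of $\Nbar(1)$. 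I will then check, essentially on such generators, that $\nabla := \sum_i \nabla_i^{\log}(-)\dlog([X_i^{\flat}])$ satisfies: (a) the Leibniz rule, because $\log\gamma_i$ is a derivation as the logarithm of a ring automorphism; (b) $\Lambda_R$-linearity, since every $\gamma_i$ fixes $\iota_{\Lambda}(\Lambda_R)$ so $\log\gamma_i$ annihilates it; (c) flatness, since $[\gamma_i,\gamma_j]=0$ forces $[\nabla_i^{\log},\nabla_j^{\log}]=0$; and (d) $p$-adic quasi-nilpotence, using $\nabla_i^{\log}([X_i^{\flat}]) = [X_i^{\flat}]$ (from $\gamma_i([X_i^{\flat}]) = q[X_i^{\flat}]$) together with the Leibniz rule. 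When $\Nbar(1) = \Abar(1)$, comparison on the explicit generators of Remark \ref{rem:a1modp1mu_lambda} identifies $\nabla$ with the universal de Rham differential $d$, yielding the corresponding claim of Proposition \ref{prop:a1modp1mu_connection}.

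The equivalence between $\nabla_q$ and $\nabla$ is then realised by the mutually inverse series $\gamma_i = \exp(t\nabla_i^{\log})$ and $\log\gamma_i = t\nabla_i^{\log}$, both converging by the same PD estimates applied to $t^k/k!$ via the divided powers of $\mu^{p-1}/p$. Writing
\[
\gamma_i - 1 \;=\; (t\nabla_i^{\log})\cdot u_i, \qquad u_i := \textstyle\sum_{k\geq 0}\tfrac{(t\nabla_i^{\log})^{k}}{(k+1)!},
\]
I see that $u_i$ is a $p$-adically invertible operator on $\Nbar(1)$ (it is $1$ plus a topologically nilpotent one), so $\nabla_{q,i}^{\log} = (t/\mu)\,u_i\,\nabla_i^{\log}$ with $t/\mu \in \Lambda_F^{\times}$. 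Multiplication by these unit operators in each degree furnishes an explicit isomorphism of Koszul-type complexes $\Nbar(1) \otimes_{\Abar(1)} q\Omega^{\bullet}_{\Abar(1)/\Lambda_R} \isomorphic \Nbar(1) \otimes_{\Abar(1)} \Omega^{\bullet}_{\Abar(1)/\Lambda_R}$, yielding both the quasi-isomorphism of complexes and the identification $\Nbar(1)^{\nabla_q=0} = \Nbar(1)^{\nabla=0}$.

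The main obstacle is the sequence of convergence and invertibility estimates, because $\nabla_i^{\log}$ is only quasi-nilpotent rather than nilpotent, so one must leverage the PD structure at every step to bound the denominators $k+1$ and $(k+1)!$ against the $\mu$-divisibility coming from $(\gamma_i-1)^{k+1}$. A secondary subtlety is carrying out all these estimates uniformly in a fixed set of $\Abar(1)$-generators of $\Nbar(1)$, so that the operators descend to well-defined global endomorphisms of $\Nbar(1)$ compatible with the Leibniz rule, rather than merely convergent expressions on individual elements.
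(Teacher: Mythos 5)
The convergence estimate, the $\Lambda_R$-linearity, the flatness, the recovery of $\gamma_i = \exp(t\nabla_i^{\log})$, and the comparison of Koszul complexes via the unit operators $u_i$ all match the paper's proof (your formula $\nabla_{q,i}^{\log} = (t/\mu)\,u_i\,\nabla_i^{\log}$ is precisely the substitution fed into Lemma \ref{lem:koszul_complex_qiso}). The Leibniz rule deserves slightly more care than the slogan ``$\log$ of an automorphism is a derivation'' --- the clean justification is the limit $t\nabla_i^{\log}(x) = \lim_m \tfrac{\gamma_i^{p^m}-1}{p^m}(x)$, from which Leibniz follows by applying the identity $(\gamma_i^{p^m}-1)(fx) = (\gamma_i^{p^m}-1)(f)x + \gamma_i^{p^m}(f)(\gamma_i^{p^m}-1)(x)$, dividing by $p^m t$ and passing to the limit --- but that is a presentation issue, not a conceptual gap.

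The genuine gap is step (d), the $p$-adic quasi-nilpotence. You assert this follows from $\nabla_i^{\log}([X_i^{\flat}]) = [X_i^{\flat}]$ and the Leibniz rule, but that only controls $\nabla_i = \nabla_i^{\log}/[X_i^{\flat}]$ on the ``coordinate'' part of $\Abar(1)$ (the elements $p_2([X_i^{\flat}])$, $T_i^{[k]}$, and the invariant divided powers of $\mu^{p-1}/p$), where it does indeed give nilpotence. It says nothing about the action of $\nabla_i$ on the $N$-factor of $\Nbar(1) = \Abar(1) \otimes_{p_2, \AR} N$: for $y \in N$ the value $\nabla_i^{\log}(1 \otimes y) = 1 \otimes \nabla_i^{\log}(y)$ has no reason to be topologically nilpotent without a further hypothesis. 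A $\Gamma_R$-module with action trivial modulo $\mu$ need \emph{not} have a quasi-nilpotent attached connection; it is precisely the Frobenius finite $\pqheight$ of the Wach module that forces it. Concretely, commutativity of $\varphi$ and $\gamma_i$ gives $\nabla_i \circ \varphi = p[X_i^{\flat}]^{p-1}\varphi \circ \nabla_i$ on $\Nbar(1)$, and since $[p]_q/p$ is a unit in $\Lambda_F$ the Wach-module isomorphism $\varphi^*(N)[1/[p]_q] \isomorphic N[1/[p]_q]$ yields $\varphi^*(\Nbar(1))[1/p] \isomorphic \Nbar(1)[1/p]$. Hence every $x \in \Nbar(1)$ has $p^r x \in \varphi^*(\Nbar(1))$ for some $r$, and the extra factor of $p$ in the relation forces $\nabla_i^k(p^r x) \to 0$, so $\nabla_i^k(x) = p^{-r}\nabla_i^k(p^r x) \to 0$. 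Without this invocation of Definition \ref{defi:wach_mods_relative}\,(3) (or equivalently of \cite[Proposition 5.3]{abhinandan-relative-wach-ii} as in Example \ref{exam:wach_qconnection}), the quasi-nilpotence claim is not established, and downstream the crystal/de Rham comparison in Proposition \ref{prop:abarone_comp} --- which uses $\MIC_{\converge}$, hence quasi-nilpotence --- would not be available.
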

\begin{proof}
	Our proof employs ideas similar to that of \cite[Lemmas 4.36 \& 4.38]{abhinandan-relative-wach-i}, \cite[Theorem 4.2]{morrow-tsuji} and \cite[Corollary 12.5]{bhatt-morrow-scholze-1}.
	Note that the $\Abar(1)\module$ $\Nbar(1)$ is $p\adically$ complete and $p\torsionfree$ by Lemma \ref{lem:nr1_pctf}.
	Let us first show that for $1 \leqslant i \leqslant d$ the series of operators $\nabla_i^{\log} = \frac{1}{t}\sum_{k \in \NN} (-1)^k \frac{(\gamma_i-1)^{k+1}}{k+1}$ converge on $\Nbar(1)$.
	As $t/\mu$ is a unit in $\Lambda_R$, we may write 
	\begin{equation*}
		\tfrac{1}{t}\textstyle\sum_{k \in \NN} (-1)^k\tfrac{(\gamma_i-1)^{k+1}}{k+1} = \tfrac{\mu}{t} \textstyle\sum_{k \in \NN}\tfrac{\mu^k}{k+1} \tfrac{(\gamma_i-1)^{k+1}}{\mu^{k+1}},
	\end{equation*}
	where the right hand term converges on $\Nbar(1)$ because $\mu^k/(k+1)$ converges $p\adically$ to $0$ in $\Lambda_R$ as $k$ goes to $+\infty$.

	Next, let us check that the operator $\nabla_i^{\log}$ satisfies the Leibniz rule.
	To show this, we claim that for $x$ in $\Nbar(1)$, we have that
	\begin{equation}\label{eq:log_as_lim}
		\lim_{m \rightarrow +\infty} \tfrac{\gamma_i^{p^m}-1}{p^m} (x) = t\nabla_i^{\log}(x).
	\end{equation}
	Indeed, note that since $t^k/k!$ converges $p\adically$ to $0$ in $\Abar(1)$ as $k$ goes to $+\infty$, so we may write $\gamma_i^n = \exp(nt\nabla_i^{\log})$, for $n \in \ZZ$.
	Expanding the preceding exponential, we see that
	\begin{equation*}
		\tfrac{\gamma_i^n-1}{n} = t\nabla_i^{\log} + n \textstyle\sum_{k \geqslant 2}n^{k-2} \tfrac{t^k}{k!} (\nabla_i^{\log})^k \colon \Nbar(1) \rightarrow \Nbar(1),
	\end{equation*}
	is well defined.
	Taking $n = p^m$ and letting $m \rightarrow +\infty$, we get the formula in \eqref{eq:log_as_lim}.
	Now, for any $f$ in $\Abar(1)$ and $x$ in $\Nbar(1)$, we have that 
	\begin{equation*}
		(\gamma_i^{p^m} - 1)(fx) = (\gamma_i^{p^m} - 1)(f) \cdot x + \gamma_i^{p^m}(f) (\gamma_i^{p^m} - 1)(x).
	\end{equation*}
	Dividing out the preceding equality by $tp^m$, letting $m \rightarrow +\infty$ and using \eqref{eq:log_as_lim} we get that $\nabla_i^{\log}(fx) = \nabla_i^{\log}(f)x + f\nabla_i^{\log}(x)$, where the first operator on the right is $\nabla_i^{\log} \coloneq \log(\gamma_i)/t \colon \Abar(1) \rightarrow \Abar(1)$, whose well definedness and the equality in \eqref{eq:log_as_lim} may be checked similar to above.
	In particular, we have shown that the operators $\nabla_i^{\log}$ are well defined and satisfy a Leibniz rule.

	To show that $\nabla \colon \Nbar(1) \rightarrow \Nbar(1) \otimes_{\Abar(1)} \Omega^1_{\Abar(1)/\Lambda_R}$, given as $x \mapsto \sum_{i=1}^d \nabla_i^{\log}(x) \dlog([X_i^{\flat}])$, is a well-defined connection, we need to show that $\nabla \colon \Abar(1) \rightarrow \Omega^1_{\Abar(1)/\Lambda_R}$ is the usual de Rham differential $d \colon \Abar(1) \rightarrow \Omega^1_{\Abar(1)/\Lambda_R}$.
	As each $\nabla_i^{\log}$ is a continuous $\Lambda_R\linear$ derivation, we may write $\nabla_i^{\log} = h_i \circ d$ for some unique continuous $\Lambda_R\linear$ map $h_i \colon \Omega^1_{\Abar(1)/\Lambda_R} \rightarrow \Lambda_R$.
	Then, it is easy to see that $h_i(d[X_i^{\flat}]) = \nabla_i^{\log}([X_i^{\flat}]) = [X_i^{\flat}]$, for $i = j$, and $0$, otherwise.
	Therefore, we have $d = \sum_i [X_i^{\flat}]^{-1} \nabla_i^{\log}(-) \otimes d[X_i^{\flat}] = \sum_i \nabla_i^{\log}(-) \otimes d\log([X_i^{\flat}])$, as desired.

	Next, let us show that the operators $\nabla_i = \nabla_i^{\log}/[X_i^{\flat}] = (\log \gamma_i)/(t[X_i^{\flat}])$ are $p\adically$ quasi-nilpotent.
	Indeed, we first note that from the commutativity of $\varphi$ and $\gamma_i$, it follows that $\log \gamma_i \circ \varphi = \varphi \circ \log \gamma_i$.
	Therefore, it is easy to see that $\nabla_i \circ \varphi = p [X_i^{\flat}]^{p-1} \varphi \circ \nabla_i$.
	Recall that $N$ is equipped with an $\AR\linear$ isomorphism $\varphi^*(N)[1/[p]_q] \isomorphic N[1/[p]_q]$, and $[p]_q/p$ is a unit in $\Lambda_F$ (see Lemma \ref{lem:tovermu_unit}), therefore, $\Nbar(1)$ is equipped with an $\Abar(1)\linear$ isomorphism $\varphi^*(\Nbar(1))[1/p] \isomorphic \Nbar(1)[1/p]$.
	In particular, for any $x$ in $\Nbar(1)$, there exists $r \in \NN$ large enough, such that $p^rx$ belongs to $\varphi^*(\Nbar(1))$.
	Then, from the relation $\nabla_i \circ \varphi = p \varphi \circ \nabla_i$, we see that $\nabla_i^k(p^r x)$ converges $p\adically$ to $0$ as $k \rightarrow +\infty$.
	Hence, it follows that $\nabla_i^k(x) = p^{-r} \nabla_i^k(p^r x)$ converges $p\adically$ to 0 as $k \rightarrow +\infty$, in particular, $\nabla_i$ are $p\adically$ quasi-nilpotent.

	So far, we have shown that $\nabla \colon \Nbar(1) \rightarrow \Nbar(1) \otimes_{\Abar(1)} \Omega^1_{\Abar(1)/\Lambda_R}$ is a $p\adically$ quasi-nilpotent connection and it is flat since $\gamma_i$, and therefore $\nabla_i^{\log}$, commute with each other.
	Moreover, we defined the connection $\nabla$ using the action of $\Gamma_R'$ and conversely, we have shown that the action of $\Gamma_R'$ may be recovered by the formula $\gamma_i \coloneq \exp(t\nabla_i^{\log})$ and it remains to check that, the action of $\gamma_i$ thus obtained, is semilinear.
	Note that the Leibniz rule implies that $\frac{1}{k!}(t\nabla_i^{\log})^k(xf) = \sum_{a+b=k} \frac{1}{a!}(t\nabla_i^{\log})^a(x) \frac{1}{b!}(t\nabla_i^{\log})^b(f)$ for $x$ in $\Nbar(1)$ and $f$ in $\Abar(1)$.
	Now, taking the sum over all $k > 0$, we get that $\exp(t\nabla_i^{\log}(xf)) = \exp(t\nabla_i^{\log})(x)\exp(t\nabla_i^{\log})(f) = \exp(t\nabla_i^{\log})(x)\gamma_i(f)$, as required.

	From the discussion above, it is clear that we have a $q\textrm{-de Rham}$ complex $\Nbar(1) \otimes_{\Abar(1)} q\Omega^{\bullet}_{\Abar(1)/\Lambda_R}$ and a de Rham complex $\Nbar(1) \otimes_{\Abar(1)} \Omega^{\bullet}_{\Abar(1)/\Lambda_R}$.
	We claim that these complexes are naturally quasi-isomorphic.
	Indeed, let us first note that the endomorphisms $\nabla_{q, i}^{\log} = \tfrac{\gamma_i-1}{\mu} \colon \Nbar(1) \rightarrow \Nbar(1)$ are commuting.
	So, let $K_{\Nbar(1)}(\nabla_{q, 1}^{\log}, \ldots, \nabla_{q, d}^{\log})$ denote the corresponding Koszul complex (see Definition \ref{defi:koszul_complex}).
	Then, we have a natural identification of complexes 
	\begin{equation*}
		K_{\Nbar(1)}(\nabla_{q, 1}^{\log}, \ldots, \nabla_{q, d}^{\log}) = \Nbar(1) \otimes_{\Abar(1)} q\Omega^{\bullet}_{\Abar(1)/\Lambda_R}.
	\end{equation*}
	Next, let $t = \log(1+\mu)$, and again note that the endomorphisms $\nabla_i^{\log} = \frac{\log \gamma_i}{t} \colon \Nbar(1) \rightarrow \Nbar(1)$ are commuting.
	So, let $K_{\Nbar(1)}(\nabla_1^{\log}, \ldots, \nabla_d^{\log})$ denote the corresponding Koszul complex, and again we have a natural identification of complexes 
	\begin{equation*}
		K_{\Nbar(1)}(\nabla_1^{\log}, \ldots, \nabla_d^{\log}) = \Nbar(1) \otimes_{\Abar(1)} \Omega^{\bullet}_{\Abar(1)/\Lambda_R}.
	\end{equation*}
	Now, from the discussion above, recall that we have the relation $\gamma_i = \exp(t\nabla_i^{\log})$ on $\Nbar(1)$.
	Therefore, we may write
	\begin{equation}\label{eq:nablaqi_nablai}
		\nabla_{q, i}^{\log} = \nabla_i^{\log}\big(\tfrac{t}{\mu} + \textstyle \sum_{k \geqslant 2} \tfrac{t^k}{k!\mu} (\nabla_i^{\log})^{k-1}\big),
	\end{equation}
	where the term inside the parentheses is invertible, because $t/\mu$ is a unit in $\Abar(1)$ (see Lemma \ref{lem:tovermu_unit}) and $\mu^{k-1}/(k!)$ is topologically nilpotent because $(\mu^{p-1}/p)^k$ is topologically nilpotent in $\Abar(1)$ (see Proposition \ref{prop:a1modp1mu_pdring}).
	Now, in the notation of Lemma \ref{lem:koszul_complex_qiso}, let us set $M = \Nbar(1)$, $f_i = \nabla_i^{\log}$ and take $h_i$ to be the formula inside the parentheses in \eqref{eq:nablaqi_nablai}, in particular, $f_ih_i = \nabla_{q, i}^{\log}$.
	Then, from Lemma \ref{lem:koszul_complex_qiso}, we obtain a natural quasi-isomorphism of complexes
	\begin{equation*}
		\Nbar(1) \otimes_{\Abar(1)} q\Omega^{\bullet}_{\Abar(1)/\Lambda_R} = K_{\Nbar(1)}(\nabla_{q, 1}^{\log}, \ldots, \nabla_{q, d}^{\log}) \isomorphic K_{\Nbar(1)}(\nabla_1^{\log}, \ldots, \nabla_d^{\log}) = \Nbar(1) \otimes_{\Abar(1)} \Omega^{\bullet}_{\Abar(1)/\Lambda_R}.
	\end{equation*}
	Finally, from the quasi-isomorphism above, it follows that we have $\Nbar(1)^{\Gamma_R'} = \Nbar(1)^{\nabla_q=0} \isomorphic \Nbar(1)^{\nabla=0}$.
	This concludes our proof.
\end{proof}

\begin{rem}\label{rem:npd_connection}
	From Remark \ref{rem:npd_pctf}, recall that the $\ARpd\module$ $N^{\PD} = \ARpd \otimes_{\AR} N$ is $p\adically$ complete and $p\torsionfree$.
	Then, in Proposition \ref{prop:n1modp1mu_connection}, replacing $\Lambda_R$ by $\Lambda_F$, $\Abar(1)$ by $\ARpd$ and $\Nbar(1)$ by $N^{\PD} = \ARpd \otimes_{\AR} N$, and using essentially the same arguments, we see that for $1 \leqslant i \leqslant d$, the series of operators $\nabla_i^{\log} = \frac{\log \gamma_i}{t} = \frac{1}{t}\sum_{k \in \NN} (-1)^k \frac{(\gamma_i-1)^k}{k+1}$ converge $p\adically$ on $N^{\PD}$.
	This defines a $\Lambda_F\linear$ $p\adically$ quasi-nilpotent flat connection on $N^{\PD}$, denoted as $\nabla \colon N^{\PD} \rightarrow N^{\PD} \otimes_{\ARpd} \Omega^1_{\ARpd/\Lambda_F}$ and given as $x \mapsto \sum_{i=1}^d \nabla_i^{\log}(x) \dlog([X_i^{\flat}])$.
	The data of the connection $\nabla$ on $N^{\PD}$ is equivalent to the data of the $\qconnection$ $\nabla_q$ described in Example \ref{exam:wach_arpd_qconnection}, i.e.\ either may be recovered from the other.
	Moreover, the $q\textrm{-de Rham}$ complex $N^{\PD} \otimes_{A^{\PD}} q\Omega^{\bullet}_{A^{\PD}/\Lambda_F}$ is naturally quasi-isomorphic to the de Rham complex $N^{\PD} \otimes_{A^{\PD}} \Omega^{\bullet}_{A^{\PD}/\Lambda_F}$.
	In particular, we have $(N^{\PD})^{\nabla_q=0} \isomorphic (N^{\PD})^{\nabla=0}$.
\end{rem}

\begin{rem}\label{rem:npd_nbarone_connection_comp}
	Let $\Abar(1) = \AR(1)/(p_1(\mu))$ and $\ARpd$ as in Example \ref{exam:arpd_qconnection}.
	Recall that we have a natural injective $(\varphi, \Gamma_R)\equivariant$ homomorphism of rings $p_2 \colon \ARpd \rightarrow \Abar(1)$ (see Remark \ref{rem:a1modp1mu_oacris_connection}).
	Now, let $N$ be a Wach module over $\AR$, then we have that $\Abar(1) \otimes_{p_2, \ARpd} N^{\PD} \isomorphic \Abar(1) \otimes_{p_2, \AR} N = \Nbar(1)$, compatible with the action of $(\varphi, \Gamma_R)$.
	So, using the compatibility of the corresponding connections on $\Abar(1)$ and $\ARpd$ (see Remark \ref{rem:arpd_abarone_connection_comp}) and the Leibniz rule for the connection on $\Nbar(1)$, proven in Proposition \ref{prop:n1modp1mu_connection}, it follows that the respective connections on $\Nbar(1)$ and $N^{\PD}$ are compatible, in particular, the connection on $\Nbar(1)$ is given as the tensor product of the respective connections on $\Abar(1)$ and $N^{\PD}$.
\end{rem}

\subsection{Geometric descent: the action of \texorpdfstring{$\Gamma_R'$}{-}}\label{subsec:geometric_descent}

Let us begin by recalling that we have an isomorphism of $\Lambda_F\algebras$ $\Lambda_R = R[\mu, \{(\mu^{p-1}/p)^{[k]}\}_{k \in \NN}]_p^{\wedge} \isomorphic \ARpd$ (see the isomorphism $\iota$ in \eqref{eq:pd_iota}), and $\Abar(1) = \AR(1)/(p_1(\mu))$ as in Example \ref{exam:a1modp1mu_qconnection} admitting a natural $(\varphi, \Gamma_R)\equivariant$ homomorphism $p_2 \colon \ARpd \rightarrow \Abar(1)$ (see Remark \ref{rem:arpd_abarone_connection_comp}).
Moreover, from Section \ref{subsubsec:geo_gamma_action} recall that we have a $(\varphi, 1 \times \Gamma_F)\equivariant$ map $\Delta' \colon \AR(1) \rightarrow \LambdatildeR$, and from the proof of Lemma \ref{lem:delta_prime_iso} we have a $(\varphi, \Gamma_F)\equivariant$ diagram 
\begin{equation*}
	\Lambda_R \xrightarrow[\sim]{\hspace{1mm} \iota_{\Lambda} \hspace{1mm}} \AR(1)/(p_1(\mu)) \xrightarrow{\hspace{1mm} \Delta' \hspace{1mm}} \LambdatildeR/(p_1(\mu)) \xrightarrow[\sim]{\hspace{1mm} \eqref{eq:lambdatilder_mod_p1mu} \hspace{1mm}} \Lambda_R,
\end{equation*}
such that the composition is the identity on $\Lambda_R$.
By abusing notations, we will denote the composition of the middle and right arrows again by $\Delta'$.
Moreover, note that the $(\varphi, \Gamma_F)\equivariant$ composition $\ARpd \xrightarrow{p_2} \Abar(1) \xrightarrow{\Delta'} \Lambda_R$ coincides with the inverse of the $(\varphi, \Gamma_F)\equivariant$ isomorphism $\iota \colon \Lambda_R \isomorphic \ARpd$.

Let $N$ be a Wach module over $\AR$ and from Example \ref{exam:wach_a1modp1mu_qconnection} recall that the $\Abar(1)\module$ $\Nbar(1) = \Abar(1) \otimes_{p_2, \AR} N$ is equipped with a Frobenius-semilinear endomorphism $\varphi$ (after inverting $p$) and an action of $1 \times \Gamma_R'$, which induces a $\Lambda_R\linear$ $p\textrm{-adically}$ quasi-nilpotent flat $\qconnection$ and thus a $\Lambda_R\linear$ $p\textrm{-adically}$ quasi-nilpotent flat connection $\nabla \colon \Nbar(1) \rightarrow \Nbar(1) \otimes_{\Abar(1)} \Omega^1_{\Abar(1)/\Lambda_R}$ (see Proposition \ref{prop:n1modp1mu_connection}).
Moreover, from Example \ref{exam:wach_arpd_qconnection} and Remark \ref{rem:npd_connection}, note that $N^{\PD} \coloneq \ARpd \otimes_{\AR} N$ is a finite module over $\ARpd$, equipped with a $p\adically$ quasi-nilpotent flat connection and a Frobenius-semilinear endomorphism $\varphi$ (after inverting $p$), and from Remark \ref{rem:npd_nbarone_connection_comp} we have a natural $\Abar(1)\linear$ isomorphism $\Abar(1) \otimes_{p_2, \ARpd} N^{\PD} \isomorphic \Abar(1) \otimes_{p_2, \AR} N = \Nbar(1)$ compatible with the respective Frobenii and connections, and we shall identify these objects in the following.

Now, by tensoring the map $\Delta'$ from above with the $\ARpd\module$ $N^{\PD}$, we obtain a natural $(\varphi, \Gamma_F)\equivariant$ homomorphism $\Delta'_N \colon \Nbar(1) \rightarrow \Lambda_R \otimes_{\ARpd} N^{\PD} = \Lambda_R \otimes_{\AR} N$, and we make the following claim:
\begin{prop}\label{prop:abarone_comp}
	Let $N$ be a Wach module over $\AR$, and set $\MLambda \coloneq (\Abar(1) \otimes_{p_2, \AR} N)^{\nabla=0}$ as a $\Lambda_R\module$ via $\iota_{\Lambda} \colon \Lambda_R \isomorphic \Abar(1)^{\nabla=0}$ and equipped with an induced $(\varphi, \Gamma_F)\action$.
	Then, there exists a natural $(\varphi, \nabla, \Gamma_F)\equivariant$ isomorphism
	\begin{equation}\label{eq:abarone_comp}
		\Abar(1) \otimes_{\Lambda_R} \MLambda \isomorphic \Abar(1) \otimes_{p_2, \AR} N,
	\end{equation}
	induced by $\Delta'_N \colon \Nbar(1) \rightarrow \Lambda_R \otimes_{\AR} N$.
	Moreover, we have $(\varphi, \Gamma_F)\equivariant$ isomorphisms of finitely generated, $p\adically$ complete and $p\torsionfree$ modules over $\Lambda_R$
	\begin{equation}\label{eq:mlambda_arpdn}
		\MLambda \isomorphic \Lambda_R \otimes_{\AR} N.
	\end{equation}
	Furthermore, the de Rham complex $\Nbar(1) \otimes_{\Abar(1)} \Omega^{\bullet}_{\Abar(1)/\Lambda_R}$ is acyclic in positive degrees.
\end{prop}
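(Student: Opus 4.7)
The plan is to interpret the pair $(N^{\PD}, \nabla)$ from Remark \ref{rem:npd_connection}, where $N^{\PD} = \ARpd \otimes_{\AR} N$, as a finitely generated $F\crystal$ on the big crystalline site $\CRYS(X/\Sigma)$ via the equivalence \eqref{eq:crys_fcrystal_connections}, with $\Sigma_m = \Spec(\Lambda_F/p^m)$ and $X_m = \Spec(\ARpd/p^m)$. The main technical ingredient is Remark \ref{rem:arpd_selfprod_abarone}, which identifies $\Abar(1)$ with the self-product of $\ARpd$ in $\CRYS(X/\Sigma)$, the two projection maps being $\iota_{\Lambda}: \Lambda_R \rightarrow \Abar(1)$ (composed with $\iota : \Lambda_R \isomorphic \ARpd$ of \eqref{eq:pd_iota}) and $p_2: \ARpd \rightarrow \Abar(1)$. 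The crystal formalism then furnishes a canonical $\Abar(1)\linear$ isomorphism
\begin{equation*}
	\Abar(1) \otimes_{\iota_{\Lambda}, \Lambda_R} (\Lambda_R \otimes_{\iota, \ARpd} N^{\PD}) \isomorphic \Abar(1) \otimes_{p_2, \ARpd} N^{\PD},
\end{equation*}
compatible with $(\varphi, \nabla, \Gamma_F)$. Unwinding, the right-hand side is $\Abar(1) \otimes_{p_2, \AR} N = \Nbar(1)$, and the left-hand side rewrites as $\Abar(1) \otimes_{\Lambda_R} (\Lambda_R \otimes_{\AR} N)$ with the connection supported entirely in the $\Abar(1)\textrm{-factor}$; this yields \eqref{eq:abarone_comp}, the multiplication formula $a \otimes b \otimes x \mapsto ab \otimes x$ coming from the fact that the inclusion $\Lambda_R \otimes_{\AR} N \hookrightarrow \Nbar(1)$ lands inside the horizontal sections and extends by $\Abar(1)\linear$-ity.

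To deduce \eqref{eq:mlambda_arpdn}, I will pass to horizontal sections on both sides of \eqref{eq:abarone_comp}. By Remark \ref{rem:lambda_connection} together with Proposition \ref{prop:a1modp1mu_connection}, we have $\Abar(1)^{\nabla=0} \isomorphic \Lambda_R$ via $\iota_{\Lambda}$, and since $\Lambda_R \otimes_{\AR} N$ is killed by the connection on the left-hand side, taking $\nabla=0$ identifies $\MLambda$ with $\Lambda_R \otimes_{\AR} N$; the further identification with $\ARpd \otimes_{\AR} N = N^{\PD}$ is transported through the $(\varphi, \Gamma_F)\equivariant$ isomorphism $\iota$. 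The $(\varphi, \Gamma_F)\textrm{-equivariance}$ of all these isomorphisms is a consequence of the fact that the two projections $\iota_{\Lambda}, p_2$ are $(\varphi, \Gamma_F)\equivariant$ and the crystal transition map is canonical (hence equivariant for any automorphisms in sight).

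Finally, for the last assertion, by Remark \ref{rem:a1modp1mu_lambda}, $\Abar(1)$ is the $\padic$ completion of a PD-polynomial algebra over $\Lambda_R$ in the variables $T_1, \ldots, T_d$, so the classical $p$-complete Poincar\'e lemma for such PD-algebras shows that $\Omega^{\bullet}_{\Abar(1)/\Lambda_R}$ is a resolution of $\Lambda_R$ concentrated in degree zero. Using \eqref{eq:abarone_comp} to rewrite $\Nbar(1) \otimes_{\Abar(1)} \Omega^{\bullet}_{\Abar(1)/\Lambda_R}$ as $\MLambda \otimes_{\Lambda_R} \Omega^{\bullet}_{\Abar(1)/\Lambda_R}$ then yields acyclicity in positive degrees. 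The main obstacle will be justifying that $(N^{\PD}, \nabla)$ genuinely defines a finitely generated $F\crystal$ in the sense recalled above, i.e.\ that the connection is $p\adically$ quasi-nilpotent (already noted in Remark \ref{rem:npd_connection}) and that $N^{\PD}$ is finitely generated over $\ARpd$ (immediate since $N$ is finitely generated over $\AR$); once these technical points are in place, the rest of the argument is a formal consequence of the crystal property applied to the self-product description of $\Abar(1)$.
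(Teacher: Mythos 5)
Your proposal is correct and takes essentially the same route as the paper: both pass to the crystalline site of $\ARpd$ over $\Lambda_F$ via the equivalence \eqref{eq:crys_fcrystal_connections}, invoke Remark \ref{rem:arpd_selfprod_abarone} to identify $\Abar(1)$ as the self-product, extract the stratification $\overline{\varepsilon}$ from the crystal property, and appeal to the PD-Poincar\'e lemma for the acyclicity and the identification of horizontal sections. The only place where the paper is a bit more careful is the step where you assert that the canonical crystal isomorphism ``yields \eqref{eq:abarone_comp}'': since $\overline{\varepsilon}$ is a Taylor expansion rather than literally the multiplication map, one should first note that restricting $\overline{\varepsilon}$ to horizontal sections produces the identification $\paze(\Lambda_R) \isomorphic \Nbar(1)^{\nabla=0} = \MLambda$, and then observe (as in the paper's diagram \eqref{eq:crys_strat_horz_sec}) that $\overline{\varepsilon}$ factors as the $\Abar(1)$-linear extension of this restriction composed with \eqref{eq:abarone_comp} — from which \eqref{eq:abarone_comp} is forced to be an isomorphism. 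You gesture at exactly this (``the inclusion \dots lands inside horizontal sections and extends by $\Abar(1)$-linearity''), so this is a matter of exposition rather than a gap.
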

\begin{proof}
	To obtain the isomorphism in \eqref{eq:abarone_comp}, our strategy is to translate the connection on $\Nbar(1)$ into a stratification (see the isomorphism $\overline{\varepsilon}$ below) and use its properties to deduce \eqref{eq:abarone_comp}.
	Although one could try to do this directly, the proof becomes more efficient and conceptually clear with the use of crystalline site and $F\crystals$.
	We refer the reader to Section \ref{subsec:crys_site_crystal} for a quick recollection.

	In order to use the statements from Section \ref{subsec:crys_site_crystal}, we will work with the notation of Remark \ref{rem:arpd_selfprod_abarone} below, i.e.\ we set $\Sigma_m = \Spec(\Lambda_F/p^m)$ and $X_m = Y_m = \Spec(A^{\PD}/p^m)$.
	Note that the $\ARpd\module$ $N^{\PD}$ is $p\textrm{-adically}$ complete, and equipped with a $\Lambda_F\linear$ $p\adically$ quasi-nilpotent flat connection and a Frobenious structure which is compatible with the connection (see Example \ref{exam:wach_arpd_qconnection} and Remark \ref{rem:npd_connection}), so $N^{\PD}$ is an object of $\MIC^{\varphi}(\ARpd)$ (see Section \ref{subsec:crys_site_crystal}).
	Then, from the equivalence in \eqref{eq:crys_fcrystal_connections}, there exists a quasi-coherent $F\crystal$ $\paze$ on $X/\Sigma$ such that we have $\paze(\ARpd) = N^{\PD}$.

	Next, recall that we have a $(\varphi, \Gamma_F)\equivariant$ isomorphism of rings $\iota \colon \Lambda_R \isomorphic \ARpd$ from \eqref{eq:pd_iota}.
	Moreover, $\Abar(1)$ is the $\padic$ completion of a PD-polynomial algebra over $\ARpd$ in variables $T_1, \ldots, T_d$, and it is equipped with a $\Lambda_R\linear$ flat connection $\nabla \colon \Abar(1) \rightarrow \Omega^1_{\Abar(1)/\Lambda_R} = \Abar(1) \otimes_{\ARpd} \Omega^1_{\ARpd/\Lambda_F}$ (see Propositions \ref{prop:a1modp1mu_pdring} and \ref{prop:a1modp1mu_connection}, and Remarks \ref{rem:a1modp1mu_lambda} and \ref{rem:arpd_abarone_connection_comp}).
	Now, consider the PD-thickening $\Spec(\Lambda_R/p^m) \xhookrightarrow{\Delta'} \Spec(\Abar(1)/p^m)$ induced by the $(\varphi, \Gamma_F)\equivariant$ surjection $\Delta' \colon \Abar(1) \twoheadrightarrow \Lambda_R$, and note that the composition $\Lambda_R \xrightarrow{\hspace{1mm} \iota_{\Lambda} \hspace{1mm}} \Abar(1) \twoheadrightarrow \Lambda_R$ is the identity and $(\varphi, \Gamma_F)\equivariant$ (see Remark \ref{rem:arpd_selfprod_abarone} below).
	So, we have the following $(\varphi, \Gamma_F)\equivariant$ morphisms in $\CRYS(X_m/\Sigma_m)$, for $m \geqslant 1$:
	\begin{equation}\label{eq:rin_arone}
		\begin{tikzcd}
			\Spec(\Lambda_R/p^m) \arrow[r, hookrightarrow] \arrow[d, "\iota"] & \Spec(\Abar(1)/p^m) \arrow[d, "p_2"] & \Spec(\Lambda_R/p^m) \arrow[r, hookrightarrow] \arrow[d, "id"] & \Spec(\Abar(1)/p^m) \arrow[d, "T_i^{[k_i]} \mapsto 0"]\\
			\Spec(\ARpd/p^m) \arrow[r, "id"] & \Spec(\ARpd/p^m) & \Spec(\Lambda_R/p^m) \arrow[r, hookrightarrow] & \Spec(\Abar(1)/p^m).
		\end{tikzcd}
	\end{equation}

	Evaluating the $F\crystal$ $\paze$ on $\Spec(\Lambda_R/p^m) \hookrightarrow \Spec(\Abar(1)/p^m)$ for each $m \geqslant 1$, and taking the limit over $m$, yields a $p\textrm{-adically}$ complete $\Abar(1)\module$ $\paze(\Abar(1))$ equipped with a $(\varphi, \Gamma_F)\action$ and a $\Lambda_R\linear$ $p\adically$ quasi-nilpotent flat connection (see the proofs of \cite[\href{https://stacks.math.columbia.edu/tag/07JG}{Tag 07JG}, \href{https://stacks.math.columbia.edu/tag/07JH}{Tag 07JH}]{stacks-project} for $\padic$ completeness and the connection).
	Then, using the diagram on the left of \eqref{eq:rin_arone} and the fact that $\paze$ is an $F\crystal$, we see that $\paze(\Abar(1)) \lisomorphic \Abar(1) \otimes_{p_2, \ARpd} N^{\PD} = \Nbar(1)$, compatible with the respective $(\varphi, \Gamma_F)\textrm{-actions}$ and $\Lambda_R\linear$ connections.
	Moreover, note that the right vertical arrow of the diagram on the right of \eqref{eq:rin_arone} factors through $\Spec(\Abar(1)/p^m) \xrightarrow{\iota_{\Lambda}} \Spec(\Lambda_R/p^m) \xhookrightarrow{\Delta'} \Spec(\Abar(1)/p^m)$.
	Therefore, from the fact that $\paze$ is an $F\crystal$, we obtain $(\varphi, \Gamma_F)\equivariant$ isomorphisms
	\begin{equation*}
		\overline{\varepsilon} \colon \Abar(1) \otimes_{\iota_{\Lambda}, \Lambda_R} \paze(\Lambda_R) \isomorphic \paze(\Abar(1)) \lisomorphic \Abar(1) \otimes_{p_2, \ARpd} N^{\PD} = \Nbar(1),
	\end{equation*}
	and that $\Lambda_R \otimes_{\Delta', \Abar(1)} \Nbar(1) \isomorphic \paze(\Lambda_R)$, where $\paze(\Lambda_R)$ denotes the limit over $m$ of the evaluation of the $F\crystal$ $\paze$ on $\Spec(\Lambda_R/p^m) \xrightarrow{id} \Spec(\Lambda_R/p^m)$.
	Additionally, note that the isomorphism $\iota \colon \Lambda_R \isomorphic \ARpd$ induces an isomorphism $\paze(\Lambda_R) \isomorphic \Lambda_R \otimes_{\iota^{-1}, \ARpd} \paze(\ARpd) = N^{\PD}$, and so we see that $\paze(\Lambda_R)$ is a $p\textrm{-adically}$ complete finite $\Lambda_R\module$.

	Using that the composition $\ARpd \xrightarrow{p_2} \Abar(1) \xrightarrow{\Delta'} \Lambda_R$ coincides with the inverse of $\iota \colon \Lambda_R \isomorphic \ARpd$, we observe that the base change of $\overline{\varepsilon}$ along $\Delta' \colon \Abar(1) \rightarrow \Lambda_R$ is the identity on $\paze(\Lambda_R)$.
	So, from the interpretation of $\Abar(1)/p^m$ as the self product of $\Lambda_R/p^m$ in $\CRYS(X_m/\Sigma_m)$ (see Remark \ref{rem:arpd_selfprod_abarone} below), we see that the isomorphism $\overline{\varepsilon}$ is just the stratification of $\paze(\Lambda_R)$ over $\Abar(1)$.
	Furthermore, note that the isomorphism $\overline{\varepsilon}$ is compatible with the respective $\Lambda_R\linear$ connections, where the $\Lambda_R\linear$ connection on the source is given as $\nabla \otimes 1$ with $\nabla$ being the $\Lambda_R\linear$ connection on $\Abar(1)$ described above.
	Indeed, from the compatibility of $\overline{\varepsilon}$ with the $\Lambda_R\linear$ connection on $\Abar(1)$ given via the explicit formula described in Proposition \ref{prop:a1modp1mu_connection}, it is enough to show that $\overline{\varepsilon}(\paze(\Lambda_R)) \subset \Nbar(1)^{\nabla=0}$.
	This follows from the explicit formula for $\overline{\varepsilon}$ on $\paze(\Lambda_R)$, given as $\overline{\varepsilon}(x) = \sum_{j_1, \ldots, j_d \geqslant 0} (-1)^{j_1+\cdots+j_d} \prod_{i=1}^d \nabla_i^{j_i}(x) \otimes T_i^{[j_i]}$, for $x$ in $\paze(\Lambda_R)$ and $\nabla_i = [X_i^{\flat}]^{-1} \nabla_i^{\log}$ (see Proposition \ref{prop:n1modp1mu_connection} for the definition of $\nabla_i^{\log}$).

	Now, consider the de Rham complex $\Omega^{\bullet}_{\Abar(1)/\Lambda_R} \isomorphic \Abar(1) \otimes_{P_R} \Omega^{\bullet}_{P_R/R}$ (see Remark \ref{rem:lambda_connection} for the isomorphism and Remark \ref{rem:a1modp1mu_lambda} for the definition of $P_R$), regarded as a complex of $P_R\modules$ via the map $P_R \rightarrow \Abar(1)$ induced by the inverse of the map \eqref{eq:a1modp1mu_lambda}.
	Then, from the proof of \cite[Chapitre V, Lemme 2.1.2]{berthelot-cohomologie-cristalline} or \cite[\href{https://stacks.math.columbia.edu/tag/07LC}{Tag 07LC}]{stacks-project}, we see that the de Rham complex $\Omega^{\bullet}_{P_R/R}$ is acyclic in positive degrees.
	Hence, from the following isomorphism of de Rham complexes:
	\begin{equation*}
		(\Abar(1) \otimes_{\Lambda_R} \paze(\Lambda_R)) \otimes_{\Abar(1)} \Omega^{\bullet}_{\Abar(1)/\Lambda_R} \isomorphic \Nbar(1) \otimes_{\Abar(1)} \Omega^{\bullet}_{\Abar(1)/\Lambda_R},
	\end{equation*}
	and the acyclicity in positive degrees of the left hand complex, it follows that the de Rham complex $\Nbar(1) \otimes_{\Abar(1)} \Omega^{\bullet}_{\Abar(1)/\Lambda_R}$ is also acyclic in positive degrees.

	Using the preceding discussion and by taking the horizontal sections for the respective $\Lambda_R\linear$ connections in the isomorphism $\overline{\varepsilon}$, we obtain $(\varphi, \Gamma_F)\equivariant$ isomorphisms of $p\textrm{-adically}$ complete and $p\torsionfree$ finite $\Lambda_R\modules$ $\paze(\Lambda_R) \isomorphic \paze(\Abar(1))^{\nabla=0} \lisomorphic \Nbar(1)^{\nabla=0}$, as claimed in \eqref{eq:mlambda_arpdn}.
	Composing the preceding isomorphism with the $(\varphi, \Gamma_F)\equivariant$ composition $\Nbar(1)^{\nabla=0} \subset \Nbar(1) \xrightarrow{\Delta'_N} \Lambda_R \otimes_{\ARpd} N^{\PD} = \paze(\Lambda_R)$ yields the identity on $\paze(\Lambda_R)$.
	Thus, we have the following $(\varphi, \nabla, \Gamma_F)\equivariant$ diagram:
	\begin{equation}\label{eq:crys_strat_horz_sec}
		\begin{tikzcd}
			\Abar(1) \otimes_{\Lambda_R} \paze(\Lambda_R) \arrow[r, "\overline{\varepsilon}", "\sim"'] & \Abar(1) \otimes_{\ARpd} N^{\PD} \arrow[d, equal]\\
			\Abar(1) \otimes_{\Lambda_R} \Nbar(1)^{\nabla=0} \arrow[r] \arrow[u, "1 \otimes \Delta'_N", "\wr"'] & \Abar(1) \otimes_{p_2, \AR} N,
		\end{tikzcd}
	\end{equation}
	where the bottom horizontal arrow is taken to be the composition of the left vertical, top horizontal and right vertical arrows.
	In particular, the bottom horizontal arrow of \eqref{eq:crys_strat_horz_sec} constructs the claimed isomorphism \eqref{eq:abarone_comp} (a simple diagram chase in \eqref{eq:crys_strat_horz_sec} shows that \eqref{eq:abarone_comp} is the $\AR(1)/(p_1(\mu))\linear$ extension of the natural $\Lambda_R\linear$ inclusion $\MLambda \subset \AR(1)/(p_1(\mu)) \otimes_{p_2, \AR} N$).
	This allows us to conclude.
\end{proof}

\begin{rem}\label{rem:arpd_selfprod_abarone}
	In the notation of Section \ref{subsec:crys_site_crystal}, we may take $A$ to be $\Lambda_F$ with the PD-structure on it given by $p(\Lambda_F/p^m) + J\Lambda_F/p^m$, where $J = (\{(\mu^{p-1}/p)^{[k]}\}_{k \geqslant 1}) \subset \Lambda_F$, and $B = D$ to be $A^{\PD}$ which is a $p\textrm{-adically}$ completed \'etale algebra over $\Lambda_F$ (see Remark \ref{rem:polynom_powerseries_comp} and Example \ref{exam:arpd_qconnection}).
	In particular, we set $\Sigma_m = \Spec(\Lambda_F/p^m)$ and $X_m = Y_m = \Spec(A^{\PD}/p^m)$.
	Now, recall that we have an isomorphism of rings $\iota \colon \Lambda_R \isomorphic \ARpd$ from \eqref{eq:pd_iota}.
	Moreover, from Proposition \ref{prop:a1modp1mu_pdring} and Remark \ref{rem:a1modp1mu_lambda}, recall that $\Abar(1)$ is the $\padic$ completion of a PD-polynomial algebra over $\ARpd$ in variables $T_1, \ldots, T_d$, so we have a PD-thickening $\Spec(\Lambda_R/p^m) \hookrightarrow \Spec(\Abar(1)/p^m)$ induced by the surjection $\Delta' \colon \Abar(1) \twoheadrightarrow \Lambda_R$ (see before the statement of Proposition \ref{prop:abarone_comp}) with the composition $\Lambda_R \xrightarrow{\hspace{1mm} \iota_{\Lambda} \hspace{1mm}} \Abar(1) \twoheadrightarrow \Lambda_R$ being the identity.
	In particular, we see that $\Spec(\Lambda_R/p^m) \hookrightarrow \Spec(\Abar(1)/p^m)$ is an object of $\CRYS(X_m/\Sigma_m)$.
	Furthermore, by identifying the rings $\Lambda_R$ and $\ARpd$ via the isomorphism $\iota$, it is easy to see that $\Abar(1)/p^m$ is the self product of $\Lambda_R/p^m$ in $\CRYS(X_m/\Sigma_m)$ with the two maps $\iota_{\Lambda} \colon \Lambda_R \rightarrow \Abar(1)$ and $p_2 \colon \ARpd \rightarrow \Abar(1)$ describing the two projection maps.
\end{rem}

\subsection{Arithmetic descent}\label{subsec:arithmetic_descent}

In this section, we shall carry out descent for the arithmetic part of $\Gamma_R$, i.e.\ $\Gamma_F$.
This will be worked out in two steps, namely, the torsion part, i.e.\ $\Gamma_{\textrm{tor}}$, and the (free) arithmetic part, i.e.\ $\Gamma_0$ (see Section \ref{subsec:setup_nota} for notations and explanations).
However, since $\Gamma_F$ has different shape for $p \geqslant 3$, compared to $p = 2$, therefore, in Sections \ref{subsubsec:fpx_descent} and \ref{subsubsec:gamma0_descent} we will assume that $p \geqslant 3$.
The case of $p =2$ will be handled in Section \ref{subsubsec:a1_galact_p=2}.

\subsubsection{The action of \texorpdfstring{$\FF_p^{\times}$}{-}}\label{subsubsec:fpx_descent}

Our first goal is to obtain the descent of Wach modules, for the action of $\FF_p^{\times}$, in the case $p \geqslant 3$.
Let $N$ be a Wach module over $\AR$ and let us consider it as a module over $R\llbracket\mu\rrbracket$ via the $(\varphi, \Gamma_F)\equivariant$ isomorphism of rings $\iota \colon R \llbracket\mu\rrbracket \isomorphic \AR$ (see Section \ref{subsubsec:ring_ar+}).
In particular, by abusing notations, we will consider $N$ to be an $R\llbracket\mu\rrbracket\module$ equipped with an $R\llbracket\mu\rrbracket\linear$ Frobenius isomorphism $\varphi^*(N)[1/[p]_q] \isomorphic N[1/[p]_q]$, and an $R\linear$ and continuous action of $\Gamma_F$ commuting with the Frobenius and such that the action of $\Gamma_F$ is trivial on $N/\mu N$.
Next, from Sections \ref{subsubsec:ring_ar+} and \ref{subsubsec:rmu0}, recall that we have $\mu_0 = -p + \sum_{a \in \FF_p} (1+\mu)^{[a]}$ and $\ptilde = \mu_0+p$, as elements of $R\llbracket\mu\rrbracket^{\FF_p^{\times}}$.
Moreover, from Lemma \ref{lem:rmu0_fpxinv}, we have a $(\varphi, \Gamma_0)\equivariant$ isomorphism of rings $R\llbracket\mu_0\rrbracket \isomorphic R\llbracket\mu\rrbracket^{\FF_p^{\times}}$.
Then, we claim the following:

\begin{prop}\label{prop:wachmod_fpx_descent}
	Let $N$ be a Wach module over $\AR$.
	Then, $N_0 \coloneq N^{\FF_p^{\times}}$ is a finitely generated $R\llbracket\mu_0\rrbracket\module$, equipped with a continuous and semilinear action of $\Gamma_0$, such that the action of $\Gamma_0$ is trivial on $N_0/\mu_0 N_0 \isomorphic N/\mu N$, and we have a natural $(\varphi, \Gamma_F)\equivariant$ isomorphism of $R\llbracket\mu\rrbracket\modules$ $R\llbracket\mu\rrbracket \otimes_{R\llbracket\mu_0\rrbracket} N_0 \isomorphic N$.
	Moreover, the sequences $\{p, \mu_0\}$ and $\{\mu_0, p\}$ are regular on $N_0$, and it is equipped with an $R\llbracket\mu_0\rrbracket\linear$ isomorphism $\varphi^*(N_0)[1/\ptilde] \isomorphic N_0[1/\ptilde]$, compatible with the respective natural actions of $\Gamma_0$.
\end{prop}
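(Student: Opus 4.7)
The plan is to exploit two features: $|\FF_p^{\times}|=p-1$ is invertible in $\ZZ_p$ (so idempotents give clean decompositions), and the extension $R\llbracket\mu_0\rrbracket \hookrightarrow R\llbracket\mu\rrbracket$ becomes \'etale Galois with group $\FF_p^{\times}$ after inverting $\mu$ (so descent is available on the \'etale locus). First I would establish that $N_0 := N^{\FF_p^{\times}}$ is finitely generated over $R\llbracket\mu_0\rrbracket$: by Remark~\ref{rem:mu0_mu_ff}, the ring map $R\llbracket\mu_0\rrbracket \hookrightarrow R\llbracket\mu\rrbracket$ is finite free of rank $p-1$, so $N$ is finitely generated as an $R\llbracket\mu_0\rrbracket$-module, and the idempotent $e_0 := \frac{1}{p-1}\sum_{a \in \FF_p^{\times}}[a]$ acts $R\llbracket\mu_0\rrbracket$-linearly on $N$ and realizes $N_0 = e_0 N$ as a direct summand, hence finitely generated. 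The $\Gamma_0$-action on $N_0$ is well-defined and continuous since $\Gamma_0$ and $\FF_p^{\times}$ commute inside the abelian group $\Gamma_F$.

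The heart of the proof is the comparison isomorphism $\alpha: R\llbracket\mu\rrbracket \otimes_{R\llbracket\mu_0\rrbracket} N_0 \to N$, which I would establish via separate injectivity and surjectivity arguments. For injectivity, both sides are $\mu$-torsion free: the right-hand side by Definition~\ref{defi:wach_mods_relative}, and the left-hand side because $R\llbracket\mu\rrbracket$ is $R\llbracket\mu_0\rrbracket$-free while $N_0$ is $\mu_0$-torsion free (using that $\mu_0$ is a unit multiple of $\mu^{p-1}$ by Lemma~\ref{lem:mu0_mup1_unit}, so $\mu$-regularity of $N$ forces $\mu_0$-regularity of $N_0$). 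Since $\alpha$ becomes an isomorphism after inverting $\mu$, its kernel is $\mu$-torsion and hence zero. The isomorphism after $[1/\mu]$ is standard \'etale Galois descent: the discriminant of the minimal polynomial of $\mu$ over $R\llbracket\mu_0\rrbracket$ is a unit multiple of $\mu_0^{p-2}$ (computed via the mod-$p$ identity $\mu_0 \equiv \mu^{p-1}$), hence invertible after inverting $\mu$, making the extension $R\llbracket\mu_0\rrbracket[1/\mu_0] \subset R\llbracket\mu\rrbracket[1/\mu]$ finite \'etale Galois with group $\FF_p^{\times}$, and then $R\llbracket\mu\rrbracket[1/\mu] \otimes_{R\llbracket\mu_0\rrbracket[1/\mu_0]} (N[1/\mu])^{\FF_p^{\times}} \isomorphic N[1/\mu]$ with $(N[1/\mu])^{\FF_p^{\times}} = N_0[1/\mu_0]$ by exactness of $(-)^{\FF_p^{\times}}$. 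For surjectivity, both sides of $\alpha$ are $(p,\mu)$-adically complete and finitely generated over $R\llbracket\mu\rrbracket$, so by a Nakayama-type argument it suffices to check surjectivity modulo $(p,\mu)$; here $\alpha$ reduces to the natural map $N_0/(p,\mu_0)N_0 \to N/(p,\mu)N$, which is surjective because for any $n \in N$ the element $e_0 n \in N_0$ satisfies $e_0 n \equiv n \pmod{\mu N}$ (since $\FF_p^{\times}$ acts trivially modulo $\mu$ on $N$). The hardest step will be the \'etale Galois descent computation after inverting $\mu$, which requires verifying the discriminant formula carefully.

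Granting $\alpha$, the remaining assertions follow quickly. Reducing $\alpha$ modulo $\mu$ and using $R\llbracket\mu\rrbracket/\mu \isomorphic R$ (under which $\mu_0 \mapsto 0$) yields $N_0/\mu_0 N_0 \isomorphic N/\mu N$; the triviality of the $\Gamma_0$-action on the latter transfers to the former. For the Frobenius structure, exactness of $(-)^{\FF_p^{\times}}$ gives $(\varphi^*N)^{\FF_p^{\times}} \isomorphic \varphi^*(N_0)$, and since $\ptilde$ and $[p]_q$ generate the same ideal (Lemma~\ref{lem:pq_mu0p_unit}) with $[p]_q$ being $\FF_p^{\times}$-invariant, localization at this ideal commutes with taking $\FF_p^{\times}$-invariants; the structural isomorphism $\varphi^*(N)[1/[p]_q] \isomorphic N[1/[p]_q]$ then descends to a $\Gamma_0$-equivariant isomorphism $\varphi^*(N_0)[1/\ptilde] \isomorphic N_0[1/\ptilde]$.
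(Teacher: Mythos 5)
Your proof is correct, but it takes a genuinely different route for the injectivity of the comparison map $\alpha: R\llbracket\mu\rrbracket \otimes_{R\llbracket\mu_0\rrbracket} N_0 \to N$. The paper reduces modulo $\mu$ and runs an iterative argument on $N'' := \mu N \cap N_0$: using that $\FF_p^{\times}$ acts trivially on $N/\mu N$ but by the nontrivial character $\omega^k$ on $\mu^k N/\mu^{k+1}N$ for $1 \le k \le p-2$ (Remark \ref{rem:fpx_action_munmu2}), one shows $N'' = \mu^2 N \cap N_0 = \cdots = \mu^{p-1}N \cap N_0 = \mu_0 N_0$, whence $N_0/\mu_0 N_0 \hookrightarrow N/\mu N$. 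You instead localize at $\mu$, verify that the discriminant of the degree-$(p-1)$ extension $R\llbracket\mu_0\rrbracket \hookrightarrow R\llbracket\mu\rrbracket$ is a unit multiple of $\mu_0^{p-2}$, conclude that $R\llbracket\mu_0\rrbracket[1/\mu_0] \subset R\llbracket\mu\rrbracket[1/\mu]$ is finite \'etale $\FF_p^{\times}$-Galois (the roots $(1+\mu)^{[a]}-1$ become pairwise distinct units times $\mu$), invoke Galois descent to get $\alpha[1/\mu]$ bijective, and then kill the kernel by $\mu$-torsion-freeness. This is a clean conceptual alternative: it explains \emph{why} the descent holds (the ramification of $R\llbracket\mu_0\rrbracket \hookrightarrow R\llbracket\mu\rrbracket$ is concentrated on $V(\mu_0)$), at the cost of setting up \'etale Galois descent machinery and checking that taking $\FF_p^{\times}$-invariants commutes with the colimit defining $N[1/\mu_0]$. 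The paper's argument is more elementary and self-contained, essentially only semisimplicity of $\ZZ_p[\FF_p^{\times}]$ applied to the $\mu$-adic associated graded. Your treatment of finite generation (via the idempotent $e_0$ realizing $N_0$ as a direct summand), of surjectivity (Nakayama after reducing modulo $(p,\mu)$, where $e_0 n \equiv n$), and of the Frobenius structure (taking $\FF_p^{\times}$-invariants of the $\ptilde$-localized isomorphism) all match the paper's reasoning in substance.
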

\begin{proof}
	From \eqref{eq:fpx_decomp}, note that we have an $\FF_p^{\times}\textrm{-decomposition}$ $N = \bigoplus_{i=0}^{p-2} N_i$, where each $N_i$ is an $R\llbracket\mu_0\rrbracket\module$ equipped with a continuous and $R\linear$ action of $\Gamma_0$.
	Now, recall that $R\llbracket\mu\rrbracket$ is flat and finite of degree $p-1$ over $R\llbracket\mu_0\rrbracket$ (see Remark \ref{rem:mu0_mu_ff}), so it follows that $N$ is finitely generated as an $R\llbracket\mu_0\rrbracket\module$.
	Since $R\llbracket\mu_0\rrbracket$ is noetherian, therefore, the $R\llbracket\mu_0\rrbracket\textrm{-submodule}$ $N_i \subset N$ is finitely generated, hence, $(p, \mu_0)\textrm{-adically}$ complete, for each $0 \leqslant i \leqslant p-1$.
	Moreover, recall that $\mu_0$ is the product of $\mu^{p-1}$ with a unit in $R\llbracket \mu \rrbracket$ (see Lemma \ref{lem:mu0_mup1_unit}), so we see that $N$ is $\mu_0\torsionfree$, therefore, it follows that each $N_i$ is $\mu_0\torsionfree$ as well.

	Next, note that by extending the natural $R\llbracket\mu_0\rrbracket\linear$ and $(\varphi, \Gamma_F)\equivariant$ inclusion $N_0 \subset N$, along the map $R\llbracket\mu_0\rrbracket \rightarrow R\llbracket\mu\rrbracket$, we obtain a natural $(\varphi, \Gamma_F)\equivariant$ map 
	\begin{equation}\label{eq:rmun0_is_n}
		N' \coloneq R\llbracket\mu\rrbracket \otimes_{R\llbracket\mu_0\rrbracket} N_0 \longrightarrow N,
	\end{equation}
	and our goal is to show that \eqref{eq:rmun0_is_n} is bijective.
	As the natural map $R\llbracket\mu_0\rrbracket \rightarrow R\llbracket\mu\rrbracket$ is flat and $N_0$ is $\mu_0\torsionfree$, therefore, we get that $N'$ is $\mu_0\torsionfree = \mu^{p-1}\torsionfree$, hence, $\mu\torsionfree$.
	Moreover, as $N_0$ is $(p, \mu_0)\adically$ complete, it also follows that $N'$ is $(p, \mu_0)\adically = (p, \mu)\adically$ complete.

	Let us first show that \eqref{eq:rmun0_is_n} is surjective.
	Indeed, note that we have an $R\linear$ and $\Gamma_F\equivariant$ surjective map $N \rightarrow N/\mu N$.
	Then, from the $\FF_p^{\times}\textrm{-decomposition}$ in \eqref{eq:fpx_decomp}, we may rewrite the preceding map as $\bigoplus_{i=0}^{p-2} N_i \rightarrow \bigoplus_{i=0}^{p-2} (N/\mu N)_i$ which is $R\linear$ and $\Gamma_F\equivariant$, in particular, it is termwise surjective, i.e.\ the induced $R\linear$ map $N_i \rightarrow (N/\mu N)_i$ is surjective for each $0 \leqslant i \leqslant p-2$.
	However, since the action of $\Gamma_F$ is trivial on $N/\mu N$, therefore, we obtain that $N/\mu N = \bigoplus_{i=0}^{p-2} (N/\mu N)_i = (N/\mu N)_0$ and it follows that the natural $R\linear$ map $N_0 \rightarrow N/\mu N$ is surjective.
	As $\mu$ is in the Jacobson radical of $R\llbracket\mu\rrbracket$, therefore, by Nakayama Lemma we see that the natural $(\varphi, \Gamma_F)\equivariant$ map $R\llbracket\mu\rrbracket \otimes_{R\llbracket\mu_0\rrbracket} N_0 \rightarrow N$ is surjective as well.

	Next, note that $N$ is $(p, \mu)\adically$ complete and $\mu\torsionfree$, therefore, to show that \eqref{eq:rmun0_is_n} is bijective it is enough to show that \eqref{eq:rmun0_is_n} is bijective modulo $\mu$.
	Now, consider the following commutative diagram:
	\begin{equation}\label{eq:n0mu0_to_nmu}
		\begin{tikzcd}
			N'/\mu N' \arrow[r, twoheadrightarrow] \arrow[d, equal] & N/\mu N\\
			N_0/\mu_0 N_0 \arrow[ru],
		\end{tikzcd}
	\end{equation}
	where the top arrow is surjective by the discussion above, the slanted arrow is the natural $R\linear$ and $\Gamma_F\equivariant$ map induced by the inclusion $N_0 \subset N$ and the left vertical equality follows because we have that
	\begin{equation*}
		N'/\mu N' = (N'/\mu_0 N')/\mu = (R\llbracket\mu\rrbracket/(\mu^{p-1}) \otimes_R N_0/\mu_0 N_0)/\mu = N_0/\mu_0 N_0.
	\end{equation*}
	To show that \eqref{eq:rmun0_is_n} is injective modulo $\mu$, it is enough to show that the slanted arrow in \eqref{eq:n0mu0_to_nmu} is injective.
	So, set $N'' \coloneq \mu N \cap N_0 \subset N$ as an $R\llbracket \mu_0 \rrbracket\module$ and note that we have a natural $(\varphi, \Gamma_F)\equivariant$ inclusion $N'' \subset \mu N$.
	The preceding inclusion induces a $\Gamma_F\equivariant$ map $N'' \rightarrow \mu N/\mu^2 N$, where the source admits a trivial action of $\FF_p^{\times}$ and the target admits a non-trivial action of $\FF_p^{\times}$ (see Remark \ref{rem:fpx_action_munmu2}).
	So it follows that $N'' = \mu^2 N \cap N_0 \subset N$.
	Iterating the preceding argument $p-2$ times, we obtain that $N'' = \mu^{p-1} N \cap N_0 = \mu_0 N \cap N_0 \subset N = \mu_0 N_0$, where the last equality follows because for any $x$ in $N$ and $g$ in $\FF_p^{\times}$, we have that $g(\mu_0 x) = \mu_0 x$ if and only if $g(x) = x$, i.e.\ $x$ is in $N_0$.
	Thus, from these observations, it follows that the natural map $N_0/\mu_0 N_0 \rightarrow N/\mu N$ is injective.
	Hence, from \eqref{eq:n0mu0_to_nmu}, we conclude that $N_0/\mu_0 N_0 \isomorphic N/\mu N$ as $R\modules$ and \eqref{eq:rmun0_is_n} is bijective.
	Moreover, we get that the action of $\Gamma_0$ is trivial on $N_0/\mu_0 N_0$, and from \eqref{eq:rmun0_is_n} and \eqref{eq:n0mu0_to_nmu}, it also follows that the sequences $\{p, \mu_0\}$ and $\{\mu_0, p\}$ are regular on $N_0$.

	Finally, let us show the Frobenius finite height condition on $N_0$.
	Note that since $\ptilde$ is the product of $[p]_q$ with a unit in $R\llbracket\mu\rrbracket$ (see Lemma \ref{lem:pq_mu0p_unit}), therefore, the Frobenius finite height condition on $N$ may also be stated as an $R\llbracket\mu\rrbracket\linear$ isomorphism $\varphi^*(N)[1/\ptilde] \isomorphic N[1/\ptilde]$.
	Now, as the Frobenius on $N$ commutes with the action of $\Gamma_F$, therefore, taking the invariants of the preceding isomorphism under the action of $\FF_p^{\times}$ and using the $(\varphi, \Gamma_F)\equivariant$ isomorphism in \eqref{eq:rmun0_is_n}, we obtain that $N_0$ is equipped with an $R\llbracket\mu_0\rrbracket\linear$ isomorphism $\varphi^*(N_0)[1/\ptilde] \isomorphic N_0[1/\ptilde]$, compatible with the natural action of $\Gamma_0$ on each side.
	This allows us to conclude.
\end{proof}

\begin{rem}\label{rem:fpx_action_munmu2}
	Let $N$ be a Wach module over $\AR$.
	Then, as the action of $\Gamma_R$ is trivial on $N/\mu N$, therefore, we see that for each $k \in \NN$, over the quotient $\mu^k N/ \mu^{k+1} N$, the action of $\Gamma_R$ is given via the $\padic$ cyclotomic character.
	In particular, it follows that $\FF_p^{\times}$ has a non-trivial action on $\mu^k N/ \mu^{k+1} N$, for $1 \leqslant k \leqslant p-2$.
\end{rem}

\begin{rem}
	In Proposition \ref{prop:wachmod_fpx_descent}, for $R = O_F$, note that the $O_F\llbracket\mu_0\rrbracket\module$ $N_0$ is $p\torsionfree$ and $\mu_0\torsionfree$.
	Moreover, $N_0/\mu_0 N_0 \isomorphic N/\mu N$ is $p\torsionfree$.
	Therefore, from \cite[Lemma 3.3]{abhinandan-relative-wach-ii} and \cite[Proposition B.1.2.4]{fontaine-phigamma}, it follows that $N_0$ is finite free over $O_F\llbracket\mu_0\rrbracket$.
\end{rem}

\begin{prop}\label{prop:mlambda_fpx_descent}
	Let $N$ be a Wach module over $\AR$ and let $\MLambda = (\AR(1)/(p_1(\mu)) \otimes_{p_2, \AR} N)^{1 \times \Gamma_R'}$ be the $\Lambda_R\module$ from Proposition \ref{prop:abarone_comp}.
	Then, $\MLambdao \coloneq \MLambda^{1 \times \FF_p^{\times}}$ is a finitely generated, $p\textrm{-adically}$ complete and $p\torsionfree$ module over $\Lambda_{R, 0} = \Lambda_R^{\FF_p^{\times}}$, equipped with an induced semilinear and continuous action of $\Gamma_0$, such that the action of $\Gamma_0$ is trivial on $\MLambdao/\mu_0 \MLambdao$.
	Moreover, $\MLambdao$ is equipped with an induced $\Lambda_{R, 0}\linear$ isomorphism $\varphi^*(\MLambdao)[1/p] \isomorphic \MLambdao[1/p]$, compatible with the natural action of $\Gamma_0$ on each side.
	Furthermore, the $\Lambda_R\linear$ extension of the $\Lambda_{R,0}\linear$ inclusion $\MLambdao \subset \MLambda$, yields a natural $(\varphi, \Gamma_F)\equivariant$ isomorphism of $\Lambda_R\modules$
	\begin{equation*}
		\Lambda_R \otimes_{\Lambda_{R, 0}} \MLambdao \isomorphic \MLambda.
	\end{equation*}
\end{prop}
\begin{proof}
	From Proposition \ref{prop:abarone_comp}, recall that $\MLambda$ is a finitely generated, $p\textrm{-adically}$ complete and $p\torsionfree$ $\Lambda_R\module$ and we have a $(\varphi, \Gamma_F)\equivariant$ isomorphism of $\Lambda_R\modules$ $\MLambda \isomorphic \Lambda_R \otimes_{\AR} N$ via the composition $\AR \rightarrow A^{\PD} \isomorphic \Lambda_R$, where the last isomorphism is the inverse of \eqref{eq:pd_iota}.
	Now, consider $N$ as an $R\llbracket\mu\rrbracket\module$ via the $(\varphi, \Gamma_F)\equivariant$ isomorphism of rings $\iota \colon R \llbracket\mu\rrbracket \isomorphic \AR$ (see Section \ref{subsubsec:ring_ar+}), equipped with a $(\varphi, \Gamma_F)\action$ (see Section \ref{subsubsec:fpx_descent}).
	Then, from Proposition \ref{prop:wachmod_fpx_descent}, we have that $N_0 \coloneq N^{\FF_p^{\times}}$ is a finitely generated and $(p, \mu_0)\textrm{-adically}$ complete $R\llbracket\mu_0\rrbracket\module$, equipped with an induced action of $(\varphi, \Gamma_0)$.
	Moreover, from Proposition \ref{prop:wachmod_fpx_descent}, note that the $R\llbracket\mu\rrbracket\linear$ extension of the natural inclusion $N_0 \subset N$, induces a $(\varphi, \Gamma_F)\equivariant$ isomorphism of $R\llbracket\mu\rrbracket\modules$ $R\llbracket\mu\rrbracket \otimes_{R\llbracket\mu_0\rrbracket} N_0 \isomorphic N$.
	Combining this with the preceding discussion, it follows that we have a $(\varphi, \Gamma_F)\equivariant$ isomorphism of $\Lambda_R\modules$ $\MLambda \isomorphic \Lambda_R \otimes_{R\llbracket\mu_0\rrbracket} N_0$.
	Using that $\Lambda_{R, 0} = \Lambda_R^{\FF_p^{\times}}$ (see Construction \ref{const:lambdatilder0}), together with the preceding isomorphism, we obtain a $(\varphi, \Gamma_0)\equivariant$ isomorphism of $\Lambda_{R, 0}\modules$ 
	\begin{equation}\label{eq:mlambda0_n0}
		\MLambdao = \MLambda^{1 \times \FF_p^{\times}} \isomorphic \Lambda_{R, 0} \otimes_{R\llbracket\mu_0\rrbracket} N_0.
	\end{equation}
	In particular, the $\Lambda_{R, 0}\module$ $\MLambdao$ is finitely generated, $p\torsionfree$ and $p\textrm{-adically}$ separated (since $\MLambda$ is $p\torsionfree$ and $p\textrm{-adically}$ separated), hence, $p\textrm{-adically}$ complete.

	Now, let $g$ be any element of $\Gamma_0$.
	Then, note that for any $f$ in $\Lambda_{R, 0}$ and $y$ in $N_0$, we have that
	\begin{equation*}
		(\gamma_0-1)fy = (\gamma_0-1)f \cdot y + \gamma_0(f) (\gamma_0-1)y \in \mu_0 (\Lambda_{R, 0} \otimes_{\AR} N).
	\end{equation*}
	From the $(\varphi, \Gamma_0)\equivariant$ isomorphism in \eqref{eq:mlambda0_n0}, it follows that for any $x$ in $\MLambdao$, we have that $(g-1)x$ is an element of $\mu_0\MLambdao$.
	Furthermore, from \eqref{eq:mlambda0_n0} it also follows that the $\Lambda_R\linear$ extension of the natural $\Lambda_{R,0}\linear$ inclusion $\MLambdao \subset \MLambda$, gives a $(\varphi, \Gamma_F)\equivariant$ isomorphism of $\Lambda_R\modules$ 
	\begin{equation}\label{eq:lambdaR_comp}
		\Lambda_R \otimes_{\Lambda_{R, 0}} \MLambdao \isomorphic \MLambda.
	\end{equation}

	Finally, note that from Proposition \ref{prop:wachmod_fpx_descent}, we have an $R\llbracket\mu_0\rrbracket\linear$ isomorphism $\varphi^*(N_0)[1/\ptilde] \isomorphic N_0[1/\ptilde]$ compatible with the action of $\Gamma_0$ on each side.
	Then, by extending this isomorphism $\Lambda_{R, 0}\textrm{-linearly}$, using the $(\varphi, \Gamma_F)\equivariant$ isomorphism in \eqref{eq:mlambda0_n0} and noting that $\ptilde/p$ is a unit in $\Lambda_{F, 0}$ from Lemma \ref{lem:tovermu_unit}, we obtain a $\Lambda_{R, 0}\linear$ isomorphism $\varphi^*(\MLambdao)[1/p] \isomorphic \MLambdao[1/p]$ compatible with the action of $\Gamma_0$ on each side.
	Hence, the proposition is proved.
\end{proof}

\subsubsection{The action of \texorpdfstring{$1 + p\ZZ_p$}{-}}\label{subsubsec:gamma0_descent}

We will assume that $p \geqslant 3$ and show the descent step, for the action of $1 \times \Gamma_0 \isomorphic 1 \times (1 + p\ZZ_p)$.
Let $N$ be a Wach module over $\AR$, let $\MLambda = (\AR(1)/(p_1(\mu)) \otimes_{p_2, \AR} N)^{1 \times \Gamma_R'}$ be the $\Lambda_R\module$ from Proposition \ref{prop:abarone_comp} and let $\MLambdao = \MLambda^{1 \times \FF_p^{\times}}$ be the $\Lambda_{R, 0}\module$ from Proposition \ref{prop:mlambda_fpx_descent}.
Let $\gamma_0$ be a topological generator of $\Gamma_0$ such that $\chi(\gamma_0) = 1+pa$, for a unit $a$ in $\ZZ_p$.
Then, from Proposition \ref{prop:mlambda_fpx_descent} note that for any $x$ in $\MLambdao$, we have that $(\gamma_0-1)x$ is an element of $\mu_0 \MLambdao$.

Set $s \coloneq \mu_0/p$ in $\Lambda_{R, 0}$, and from Lemma \ref{lem:gamma0_act_s} recall that $(\gamma_0-1)s = u \mu_0$, for some unit $u$ in $\Lambda_{F, 0}$ depending on $\gamma_0$.
Therefore, we see that the following operator is well defined:
\begin{equation}\label{eq:nablaq_s_mlambda0}
	\begin{aligned}
		\nabla_{q, s} \colon \MLambdao &\longrightarrow \MLambdao\\
				x &\mapsto \tfrac{(\gamma_0-1)x}{(\gamma_0-1)s}.
	\end{aligned}
\end{equation}
As the operator $\nabla_{q, s}$ is an endomorphism of $\MLambdao$, we may define the following two term Koszul complex:
\begin{equation}\label{eq:s_qderham_mlambda0}
	K_{\MLambdao}(\nabla_{q, s}) \colon \big[\MLambdao \xrightarrow{\hspace{1mm} \nabla_{q, s} \hspace{1mm}} \MLambdao\big].
\end{equation}

\begin{rem}
	Considering $s$ as a variable, similar to Remark \ref{rem:nablaq_stilde_qconnection}, the operator $\nabla_{q, s}$ in \eqref{eq:nablaq_s_mlambda0}, may also be considered as a $\qconnection$ in non-logarithmic coordinates, in the sense of Definition \ref{defi:qconnection} and Remark \ref{rem:nablaqi_nota}.
	Then, \eqref{eq:s_qderham_mlambda0} is the $q\textrm{-de Rham}$ complex arising from such a $\qconnection$.
\end{rem}

\begin{prop}\label{prop:mlambda0_connection}
	The series of operators $\nabla_0^{\log} = \frac{\log \gamma_0}{\log(\chi(\gamma_0))} = \frac{1}{\log(\chi(\gamma_0))}\sum_{k \in \NN} (-1)^k \frac{(\gamma_0-1)^{k+1}}{k+1}$ converge $p\adically$ on $\MLambdao$.
	Let $z \coloneq t^{p-1}/p$ in $\Lambda_{R, 0}$, then the operator $\nabla_0 \coloneq \tfrac{1}{(p-1)z}\nabla_0^{\log}$ defines an $R\linear$ $p\adically$ quasi-nilpotent flat connection on $\MLambdao$, denoted by $\nabla \colon \MLambdao \rightarrow \MLambdao \otimes_{\Lambda_{R, 0}} \Omega^1_{\Lambda_{R, 0}/R}$ and given as $x \mapsto \nabla_0(x) dz$.
	The data of the connection $\nabla$ on $\MLambdao$ is equivalent to the data of the $\qconnection$ $\nabla_{q, s}$ from \eqref{eq:nablaq_s_mlambda0}, i.e.\ either may be recovered from the other.
	Moreover, the $q\textrm{-de Rham}$ complex $K_{\MLambdao}(\nabla_{q, s})$ in \eqref{eq:s_qderham_mlambda0} is naturally quasi-isomorphic to the de Rham complex $\MLambdao \otimes_{\Lambda_{R, 0}} \Omega^{\bullet}_{\Lambda_{R, 0}/R}$.
	In particular, we have that $\MLambdao^{\nabla_{q, s}=0} \isomorphic \MLambdao^{\nabla_0=0}$.
\end{prop}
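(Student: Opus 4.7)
The proof strategy mirrors that of Proposition \ref{prop:n1modp1mu_connection}, adapted to the arithmetic setting where the single generator $\gamma_0$ of $\Gamma_0$ plays the role of the commuting geometric generators $\gamma_1, \ldots, \gamma_d$. The plan is to first establish convergence of $\nabla_0^{\log}$ on $\MLambdao$, then promote this to a flat connection $\nabla$ via the Leibniz rule and quasi-nilpotence, and finally derive the equivalence with $\nabla_{q, s}$ together with the quasi-isomorphism of Koszul complexes from a single explicit exponential identity.

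For convergence, I would appeal to Proposition \ref{prop:mlambda_fpx_descent}, which gives $(\gamma_0-1)\MLambdao \subset \mu_0 \MLambdao$. Writing $\log(\chi(\gamma_0)) = pav$ with $a \in \ZZ_p^\times$ and $v \in \ZZ_p^\times$, the general term $\frac{(\gamma_0-1)^{k+1}}{(k+1)\log(\chi(\gamma_0))}(x)$ is controlled by $\mu_0^{k+1}/(p(k+1))$ times a bounded operator on $\MLambdao$ (using Remark \ref{rem:gamma0_act_lambdar0}), and convergence to zero follows from the PD-structure $\Lambda_{R, 0} = R[(\mu_0/p)^{[k]}, k \in \NN]_p^\wedge$ of \eqref{eq:lambdar0_explicit} together with the $p$-adic completeness of $\MLambdao$. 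Reproducing the limit argument of \eqref{eq:log_as_lim}, namely $\lim_{m \to +\infty} \frac{\gamma_0^{p^m}-1}{\log(\chi(\gamma_0^{p^m}))}(x) = \nabla_0^{\log}(x)$, combined with the semilinearity of each $\gamma_0^{p^m}$, then yields the Leibniz rule for $\nabla_0^{\log}$, and hence for $\nabla_0 := \nabla_0^{\log}/((p-1)z)$.

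Since $\gamma_0(t) = \chi(\gamma_0) t$ implies $\nabla_0^{\log}(z) = (p-1)z$, the operator $\nabla_0$ satisfies $\nabla_0(z) = 1$ and thus agrees with the universal $R$-linear continuous de Rham differential $d/dz$ on the $\Lambda_{R, 0}$-component; consequently $\nabla(x) := \nabla_0(x) \otimes dz$ defines an $R$-linear connection on $\MLambdao$, and flatness is automatic because $\Omega^2_{\Lambda_{R, 0}/R} = 0$. For $p$-adic quasi-nilpotence, I would use the commutativity of $\gamma_0$ and $\varphi$ together with $\varphi(z) = p^{p-1} z$ to deduce $\nabla_0 \circ \varphi = p^{p-1} \varphi \circ \nabla_0$, and then combine this with the Frobenius isomorphism $\varphi^*(\MLambdao)[1/p] \isomorphic \MLambdao[1/p]$ from Proposition \ref{prop:mlambda_fpx_descent}, reproducing verbatim the quasi-nilpotence argument at the end of the proof of Proposition \ref{prop:n1modp1mu_connection}.

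The equivalence $\nabla \leftrightarrow \nabla_{q, s}$ together with the quasi-isomorphism of complexes both follow from a single explicit identity. Expanding $\gamma_0 = \exp(\log(\chi(\gamma_0))\nabla_0^{\log})$ yields $\gamma_0 - 1 = \log(\chi(\gamma_0))\nabla_0^{\log} \cdot h$, where $h := \sum_{k \geq 0} \tfrac{(\log(\chi(\gamma_0))\nabla_0^{\log})^k}{(k+1)!}$ is an invertible operator on $\MLambdao$ because $\log(\chi(\gamma_0))\nabla_0^{\log} = \log(\gamma_0)$ is topologically nilpotent. Combined with $(\gamma_0-1)s = u\mu_0$ for a unit $u$ (Lemma \ref{lem:gamma0_act_s}) and $s = wz$ for a unit $w \in \Lambda_{F, 0}^\times$ (Construction \ref{const:lambdatilder0}), a direct computation gives $\nabla_{q, s} = C \cdot \nabla_0 \circ h$ with $C := \frac{(p-1)\log(\chi(\gamma_0))}{upw}$ a unit in $\Lambda_{R, 0}$, using that both $\log(\chi(\gamma_0))/p$ and $p-1$ are units in $\ZZ_p$. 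This identity, inserted into Lemma \ref{lem:koszul_complex_qiso} applied to a single operator, immediately produces the quasi-isomorphism $K_{\MLambdao}(\nabla_{q, s}) \isomorphic K_{\MLambdao}(\nabla_0) = \MLambdao \otimes_{\Lambda_{R, 0}} \Omega^\bullet_{\Lambda_{R, 0}/R}$, and the identification $\MLambdao^{\nabla_{q, s}=0} \isomorphic \MLambdao^{\nabla_0=0}$ follows at once. The main technical obstacle will be rigorously establishing the topological nilpotence of $\log(\gamma_0)$ on the $p$-adically complete module $\MLambdao$ (justifying invertibility of $h$), which requires carefully combining the refined description of $(g-1)\mu_0$ from Lemma \ref{lem:mu0_gamma0_act}, the PD-structure of $\Lambda_{R, 0}$, and the $p$-adic quasi-nilpotence of $\nabla_0^{\log}$ established from the Frobenius argument above.
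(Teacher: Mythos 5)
Your overall strategy matches the paper's: establish convergence of $\nabla_0^{\log}$, promote it to a flat connection via the Leibniz rule and the relation $\nabla_0(z)=1$, check quasi-nilpotence via $\nabla_0\circ\varphi = p^{p-1}\varphi\circ\nabla_0$ and the Frobenius structure on $\MLambdao$, and derive the quasi-isomorphism of Koszul complexes from the factorisation $\gamma_0 - 1 = \log(\chi(\gamma_0))\nabla_0^{\log}\cdot h$ together with Lemma \ref{lem:koszul_complex_qiso}. Your observation that flatness follows from $\Omega^2_{\Lambda_{R,0}/R}=0$ is a clean shortcut.

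There is, however, a genuine error in your convergence bound that must be corrected. You claim the general term $\tfrac{(\gamma_0-1)^{k+1}}{(k+1)\log(\chi(\gamma_0))}(x)$ is controlled by $\mu_0^{k+1}/(p(k+1))$, implicitly asserting $(\gamma_0-1)^{k+1}x \in \mu_0^{k+1}\MLambdao$. This is false: unlike the geometric generators $\gamma_i$, which fix $\mu$ so that iterating $\gamma_i-1$ genuinely gains one factor of $\mu$ per application, the arithmetic generator $\gamma_0$ does \emph{not} fix $\mu_0$. Rather, $(\gamma_0-1)\mu_0 \in \ptilde\mu_0\,O_F\llbracket\mu_0\rrbracket$ by Lemma \ref{lem:mu0_gamma0_act}, and since $\ptilde$ is a unit multiple of $p$ in $\Lambda_{R,0}$ (Lemma \ref{lem:tovermu_unit}), each iteration after the first gains a factor of $p$, not of $\mu_0$. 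The correct induction, which you should carry out, gives $(\gamma_0-1)^{k+1}x = p^k t^{p-1} x_{k+1}$ for some $x_{k+1}\in\MLambdao$: the base case is $(\gamma_0-1)x = t^{p-1}x_1$ (from Proposition \ref{prop:mlambda_fpx_descent}, after adjusting by the unit $\mu_0/t^{p-1}$), and the inductive step uses $(\gamma_0-1)t^{p-1} = ((1+pa)^{p-1}-1)t^{p-1} = pbt^{p-1}$ with $b\in\ZZ_p^\times$, together with the fact that $t^{p-1} = pz$ lets one absorb a second factor of $t^{p-1}$ into $p$. With this bound, $\nabla_0^{\log}(x) = \tfrac{t^{p-1}}{pc}\sum_k (-1)^k\tfrac{p^k x_{k+1}}{k+1}$ converges in $z\MLambdao$, and one reads off directly that $\nabla_0 = \nabla_0^{\log}/((p-1)z)$ is well-defined on $\MLambdao$, a point your version leaves unaddressed.

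Once the convergence is established correctly, your final worry about topological nilpotence of $\log(\gamma_0)$ dissolves: the factor $h - 1 = \sum_{k\geq 1}\tfrac{\log(\chi(\gamma_0))^k}{(k+1)!}(\nabla_0^{\log})^k$ maps $\MLambdao$ into $p\MLambdao$ simply because $v_p\big(\log(\chi(\gamma_0))^k/(k+1)!\big) = k - v_p((k+1)!) \geq k - \tfrac{k+1}{p-1} \geq 1$ for all $k\geq 1$ and $p\geq 3$ (using $\log(\chi(\gamma_0)) = pc$ with $c$ a unit), while $(\nabla_0^{\log})^k$ preserves $\MLambdao$. No appeal to Lemma \ref{lem:mu0_gamma0_act} or the PD-structure is needed at this stage; it is a pure $p$-adic estimate, exactly as in \eqref{eq:nablaqz_nabla0}.
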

\begin{proof}
	Recall that $s = \mu_0/p = ut^{p-1}/p$, for a unit $u$ in $\Lambda_{F, 0}$ (see Construction \ref{const:lambdatilder0}, in particular, \eqref{eq:lambdar0_explicit} and the discussion preceding it).
	Moreover, from Lemma \ref{lem:gamma0_act_stilde} we have that $(\gamma_0-1)s = (\gamma_0-1)\tfrac{\mu_0}{p} = v \mu_0$, for a unit $v$ in $\Lambda_{F, 0}$.
	Now, let $z = t^{p-1}/p$ and we write $(1+pa)^{p-1} = 1 + pb$, where $b$ is a unit in $\ZZ_p$.
	Then, note that we have 
	\begin{equation}\label{eq:gamma0_z}
		(\gamma_0-1)z = (\gamma_0-1)\tfrac{t^{p-1}}{p} = (\chi(\gamma_0)^{p-1}-1)\tfrac{t^{p-1}}{p} = ((1+pa)^{p-1}-1)\tfrac{t^{p-1}}{p} = b t^{p-1} = u^{-1}b \mu_0.
	\end{equation}
	Therefore, it follows that the complex $K_{\MLambdao}(\nabla_{q, s})$ is quasi-isomorphic to the following complex:
	\begin{equation}\label{eq:z_qderham_mlambda0}
		K_{\MLambdao}(\nabla_{q, z}) \colon \big[\MLambdao \xrightarrow{\hspace{1mm} \nabla_{q, z} \hspace{1mm}} \MLambdao\big].
	\end{equation}
	Rest of the proof is similar to the proof of Proposition \ref{prop:n1modp1mu_connection}, with some changes.
	To avoid confusion, let us provide additional details.

	We will first show that $\nabla_0^{\log} \coloneq \tfrac{\log(\gamma_0)}{\log(\chi(\gamma_0))} = \tfrac{1}{\log(\chi(\gamma_0))} \sum_{k \in \NN} (-1)^k \frac{(\gamma_0-1)^{k+1}}{k+1}$, converge as a series of operators on $\MLambdao$.
	Indeed, let $x$ be any element of $\MLambdao$, then using Proposition \ref{prop:mlambda_fpx_descent}, we may write $(\gamma_0-1)x = t^{p-1} x_1$, for some $x_1$ in $\MLambdao$.
	Let us note that $\log(\chi(\gamma_0)) = \log(1+pa) = p c$, where $c$ is a unit in $\ZZ_p$, and we also have $(\gamma_0-1)t^{p-1} = ((1+pa)^{p-1}-1)t^{p-1} = p b t^{p-1}$, where $b$ is a unit in $\ZZ_p$.
	Therefore, an easy induction on $k \in \NN$, shows that $(\gamma_0-1)^{k+1} x = p^k t^{p-1} x_{k+1}$, for some $x_{k+1}$ in $\MLambdao$.
	In particular, we get that
	\begin{align*}
		\nabla_0^{\log}(x) &= \tfrac{1}{\log(\chi(\gamma_0))} \textstyle\sum_{k \in \NN} (-1)^k \tfrac{(\gamma_0-1)^{k+1}(x)}{k+1}\\
			&= \tfrac{1}{\log(\chi(\gamma_0))} \textstyle\sum_{k \in \NN} (-1)^k \tfrac{p^k t^{p-1} x_{k+1}}{k+1} = \tfrac{t^{p-1}}{pc} \textstyle\sum_{k \in \NN} (-1)^k \tfrac{p^k x_{k+1}}{k+1},
	\end{align*}
	which converges in $\tfrac{t^{p-1}}{p} \MLambdao = z \MLambdao$, since $\MLambdao$ is $p\adically$ complete (see Proposition \ref{prop:mlambda_fpx_descent}).
	Moreover, it also follows that the operator $\nabla_0 \coloneq \tfrac{1}{(p-1)z}\nabla_0^{\log}$ is well defined on $\MLambdao$.
	Next, similar to the case of $\nabla_i^{\log}$ in the proof of Proposition \ref{prop:n1modp1mu_connection}, it can be shown that $\nabla_0^{\log}$, and therefore $\nabla_0 = \tfrac{1}{(p-1)z}\nabla_0^{\log}$ satisfies a Leibniz rule, i.e.\ $\nabla_0(fx) = \nabla_0(f)x + f\nabla_0(x)$, where the first operator on the right is $\nabla_0 \coloneq \tfrac{\log(\gamma_0)}{(p-1)z\log(\chi(\gamma_0))} \colon \Lambda_{R, 0} \rightarrow \Lambda_{R, 0}$, whose well definedness can be checked similar to above.
	Moreover, note that the operator $\nabla_0$ is flat by definition.
	Furthermore, similar to the case of $\nabla_i$ in the proof of Proposition \ref{prop:n1modp1mu_connection}, it can be shown that the operator $\nabla_0 \colon \Lambda_{R, 0} \rightarrow \Omega^1_{\Lambda_{R, 0}/R}$ is the continuous de Rham differential operator $d \colon \Lambda_{R, 0} \rightarrow \Omega^1_{\Lambda_{R, 0}/R}$.
	So, in particular, the operator $\nabla \colon \MLambdao \rightarrow \MLambdao \otimes_{\Lambda_{R, 0}} \Omega^1_{\Lambda_{R, 0}/R}$, given as $x \mapsto \nabla_0(x) dz$, is a well-defined flat connection.

	Next, let us show that the operator $\nabla_0$ is $p\adically$ quasi-nilpotent.
	Indeed, we first note that from the commutativity of $\varphi$ and $\gamma_0$, it follows that $\nabla_0^{\log} \circ \varphi = \varphi \circ \nabla_0^{\log}$.
	Therefore, it is easy to see that $\nabla_0 \circ \varphi = p^{p-1} \varphi \circ \nabla_0$.
	Recall that $\MLambdao$ is equipped with a $\Lambda_{R, 0}\linear$ isomorphism $\varphi^*(\MLambdao)[1/p] \isomorphic \MLambdao[1/p]$, compatible with the action of $\Gamma_0$ on each side.
	In particular, for any $x$ in $\MLambdao$, there exists $r \in \NN$ large enough, such that $p^rx$ belongs to $\varphi^*(\MLambdao)$.
	Then, from the relation $\nabla_0 \circ \varphi = p^{p-1} \varphi \circ \nabla_0$, we see that $\nabla_0^k(p^r x)$ converges $p\adically$ to $0$ as $k \rightarrow +\infty$.
	Hence, it follows that $\nabla_0^k(x) = p^{-r} \nabla_0^k(p^r x)$ converges $p\adically$ to 0 as $k \rightarrow +\infty$, in particular, $\nabla_0$ is $p\adically$ quasi-nilpotent.

	So far, we have defined the $p\adically$ quasi-nilpotent flat connection $\nabla$ using the action of $\Gamma_0$ and conversely, we have shown that the action of $\Gamma_0$ may be recovered by the formula $\gamma_0 \coloneq \exp(\log(\chi(\gamma_0))\nabla_0^{\log})$.
	Again, similar to case of $\gamma_i$ in the proof of Proposition \ref{prop:n1modp1mu_connection}, using the Leibniz rule for $\nabla_0$, it can be checked that the action of $\gamma_0$ thus obtained, is semilinear.

	Finally, it remains to compare the $q\textrm{-de Rham}$ complex in \eqref{eq:s_qderham_mlambda0} with the de Rham complex $\MLambdao \otimes_{\Lambda_{R, 0}} \Omega^{\bullet}_{\Lambda_{R, 0}/R} = \MLambdao \otimes_{\Lambda_{F, 0}} \Omega^{\bullet}_{\Lambda_{F, 0}/O_F}$.
	As $\nabla_0$ is an endomorphism of $\MLambdao$, let $K_{\MLambdao}(\nabla_0)$ denote the corresponding Koszul complex in the sense of Definition \ref{defi:koszul_complex}.
	Then, we have an identification of complexes:
	\begin{equation*}
		\MLambdao \otimes_{\Lambda_{F, 0}} \Omega^{\bullet}_{\Lambda_{F, 0}/O_F} = K_{\MLambdao}(\nabla_0) \colon \big[\Lambda_{R, 0} \xrightarrow{\hspace{1mm} \nabla_0 \hspace{1mm}} \Lambda_{R, 0}\big].
	\end{equation*}
	Now, recall that we have $\gamma_0 = \exp(\log(\chi(\gamma_0)) \nabla_0^{\log})$.
	Therefore, we may write
	\begin{equation}\label{eq:nablaqz_nabla0}
		\nabla_{q, z} = \tfrac{\gamma_0-1}{(\gamma_0-1)z} = \tfrac{(p-1)z \log(\chi(\gamma_0))}{(\gamma_0-1)z} \nabla_0 \big( 1 + \textstyle \sum_{k \geqslant 1} \tfrac{(\log\chi(\gamma_0))^k}{(k+1)!} (\nabla_0^{\log})^k\big).
	\end{equation}
	Recall that $(\gamma_0-1)z = pbz$ and $\log(\chi(\gamma_0)) = pc$, for units $b$ and $c$ in $\ZZ_p$.
	Therefore, we have that $\tfrac{\log(\chi(\gamma_0)) (p-1)z}{(\gamma_0-1)z} = \tfrac{c(p-1)}{b}$ is a unit, and it is clear that the term inside the parentheses, on the right hand side of \eqref{eq:nablaqz_nabla0}, converges $p\adically$ to a unit.
	Now, in the notation of Lemma \ref{lem:koszul_complex_qiso}, let us set $i = 1$, $M = \MLambdao$, $f_1 = \nabla_0$ and take $h_1$ to be the product of $\tfrac{c(p-1)}{b}$ with the formula in parentheses on the right hand side of \eqref{eq:nablaqz_nabla0}, in particular, $f_1h_1 = \nabla_{q, z}$.
	Then, from Lemma \ref{lem:koszul_complex_qiso}, we obtain a natural quasi-isomorphism of complexes
	\begin{equation*}
		K_{\MLambdao}(\nabla_{q, s}) \isomorphic K_{\MLambdao}(\nabla_{q, z}) \isomorphic K_{\MLambdao}(\nabla_0).
	\end{equation*}
	In particular, we get that $\MLambdao^{\nabla_{q, s}=0} \isomorphic \MLambdao^{\nabla_0=0}$.
	This allows us to conclude.
\end{proof}

\begin{prop}\label{prop:lambdar0_comp}
	Set $M \coloneq \MLambdao^{\nabla_0=0}$ as an $R\module$ via the isomorphism $R \isomorphic \Lambda_{R, 0}^{\nabla_0=0}$.
	Then, $M$ is a finitely generated $p\torsionfree$ module over $R$.
	Moreover, the $\Lambda_{R,0}\linear$ extension of the natural $R\linear$ inclusion $M \subset \MLambdao$ yields a natural $(\varphi, \nabla_0)\equivariant$ isomorphism
	\begin{equation}\label{eq:lambdar0_comp}
		\Lambda_{R, 0} \otimes_{R} M \isomorphic \MLambdao.
	\end{equation}
	Furthermore, the de Rham complex $\MLambdao \otimes_{\Lambda_{F, 0}} \Omega^{\bullet}_{\Lambda_{F, 0}/O_F}$ is acyclic in positive degrees.
	In particular, from Proposition \ref{prop:mlambda0_connection}, we get that $H^1(K_{\MLambdao}(\nabla_{q, s})) = 0$.
\end{prop}
\begin{proof}
	The proof is similar to the proof of Proposition \ref{prop:abarone_comp}.
	To avoid confusion and for the sake of completeness, let us provide additional details.

	We shall use the results from Section \ref{subsec:crys_site_crystal}, by setting $\Sigma_m = X_m = \Spec(R/p^m)$ and $Y_m = \Spec(\Lambda_{R, 0}/p^m)$ (from Construction \ref{const:lambdatilder0}, recall that $\Lambda_{R, 0}$ is the $\padic$ completion of a PD-polynomial algebra over $R$ in the variable $s$, see \eqref{eq:lambdar0_explicit}).
	Let us first note that from Proposition \ref{prop:mlambda0_connection}, the $\Lambda_{R, 0}\module$ $\MLambdao$ is equipped with an $R\linear$ $p\adically$ quasi-nilpotent flat connection $\nabla \colon \MLambdao \rightarrow \MLambdao \otimes_{\Lambda_{R, 0}} \Omega^1_{\Lambda_{R, 0}/R} = \MLambdao \otimes_{\Lambda_{F, 0}} \Omega^1_{\Lambda_{F, 0}/O_F}$ and a Frobenius structure (after inverting $p$) compatible with the connection, in particular, $\MLambdao$ is an object of $\MIC^{\varphi}(\Lambda_{R, 0})$.
	Then, from the equivalence in \eqref{eq:crys_fcrystal_connections}, there exists a quasi-coherent $F\crystal$ $\paze$ on $X/\Sigma$ such that $\paze(\Lambda_{R, 0}) = \MLambdao$.
	Now, consider the PD-thickening $\Spec(R/p^m) \hookrightarrow \Spec(\Lambda_{R, 0}/p^m)$ induced by the $\varphi\equivariant$ surjection $\Lambda_{R, 0} \twoheadrightarrow R$, sending $s^{[k]} \mapsto 0$ for each $k \geqslant 1$, and note that the composition $R \rightarrow \Lambda_{R, 0} \twoheadrightarrow R$ is the identity and $\varphi\equivariant$.
	So, we have the following $\varphi\equivariant$ diagram in $\CRYS(X_m/\Sigma_m)$, for $m \geqslant 1$:
	\begin{equation}\label{eq:rin_lambdar0}
		\begin{tikzcd}[column sep=large]
			\Spec(R/p^m) \arrow[r, hookrightarrow, "s^{[k]} \mapsto 0"] \arrow[d, "id"] & \Spec(\Lambda_{R, 0}/p^m) \arrow[d] \\
			\Spec(R/p^m) \arrow[r, "id"] & \Spec(R/p^m).
		\end{tikzcd}
	\end{equation}

	Evaluating the $F\crystal$ $\paze$ on $\Spec(R/p^m) \hookrightarrow \Spec(\Lambda_{R, 0}/p^m)$, for each $m \geqslant 1$, and taking the limit over $m$, gives the $\Lambda_{R, 0}\module$ $\paze(\Lambda_{R, 0}) = \MLambdao$.
	Now, using diagram \eqref{eq:rin_lambdar0} and the fact that $\paze$ is an $F\crystal$, we have the following $\varphi\equivariant$ isomorphism:
	\begin{equation*}
		f \colon \Lambda_{R, 0} \otimes_R \paze(R) \isomorphic \paze(\Lambda_{R, 0}) = \MLambdao,
	\end{equation*}
	where $\paze(R)$ denotes the limit over $m$ of the evaluation of $\paze$ on $\Spec(R/p^m) \xrightarrow{id} \Spec(R/p^m)$.
	Moreover, it is easy to see that the base change of $f$ along the map $\Lambda_{R, 0} \twoheadrightarrow R$ yields a $\varphi\equivariant$ isomorphism $\paze(R) \isomorphic R \otimes_{\Lambda_{R, 0}} \paze(\Lambda_{R, 0})$, in particular, $\paze(R)$ is a finitely generated $R\module$.
	Furthermore, $f$ is compatible with the respective $R\linear$ connections, where the $R\linear$ connection on the source is given as $\nabla \otimes 1$ with $\nabla$ being the $R\linear$ connection on $\Lambda_{R, 0}$ described in the proof of Proposition \ref{prop:mlambda0_connection}.
	Indeed, from the compatibility of $f$ with the $R\linear$ connection on $\Lambda_{R, 0}$ given via the explicit formula described in Proposition \ref{prop:mlambda0_connection}, it is enough to show that $f(\paze(R)) \subset \MLambdao^{\nabla=0}$.
	This follows from the explicit formula for $f$ on $\paze(R)$, given as $f(x) = \sum_j (-1)^j\nabla_0^j(x) \otimes z^{[j]}$, for $x$ in $\paze(R)$ and $\nabla_0 = \tfrac{1}{(p-1)z}\nabla_0^{\log}$ (see Proposition \ref{prop:mlambda0_connection} for the definition of $\nabla_0^{\log}$).

	Next, consider the de Rham complex $\Omega^{\bullet}_{\Lambda_{R, 0}/R} = \Lambda_{R, 0} \otimes_{\Lambda_{F, 0}} \Omega^{\bullet}_{\Lambda_{F, 0}/O_F}$, regarded as a complex of $\Lambda_{F, 0}\modules$ via the natural map $\Lambda_{F, 0} \rightarrow \Lambda_{R, 0}$.
	Recall that $\Lambda_{F, 0} = O_F[\{s^{[k]}\}_{k \in \NN}]_p^{\wedge}$, i.e.\ the $\padic$ completion of a PD-polynomial algebra over $O_F$ in one variable, and from this it follows that the de Rham complex $\Omega^{\bullet}_{\Lambda_{R, 0}/R} = \Lambda_{R, 0} \otimes_{\Lambda_{F, 0}} \Omega^{\bullet}_{\Lambda_{F, 0}/O_F}$ is acyclic in positive degrees (also see \cite[Chapitre V, Lemme 2.1.2]{berthelot-cohomologie-cristalline} or \cite[\href{https://stacks.math.columbia.edu/tag/07LC}{Tag 07LC}]{stacks-project}).
	Hence, from the following isomorphism of de Rham complexes:
	\begin{equation*}
		(\Lambda_{R, 0} \otimes_R \paze(R)) \otimes_{\Lambda_{R, 0}} \Omega^{\bullet}_{\Lambda_{R, 0}/R} \isomorphic \MLambdao \otimes_{\Lambda_{R, 0}} \Omega^{\bullet}_{\Lambda_{R, 0}/R},
	\end{equation*}
	and the acylicity in positive degrees of the left hand complex, it follows that the de Rham complex $\MLambdao \otimes_{\Lambda_{R, 0}} \Omega^{\bullet}_{\Lambda_{R, 0}/R}$ is also acyclic in positive degrees.

	Using the preceding discussion and by taking horizontal sections for the respective connections in the isomorphism $f$ from above, we obtain a $\varphi\equivariant$ isomorphism of finite $R\modules$ $\paze(R) \isomorphic \paze(\Lambda_{R, 0})^{\nabla=0} = \MLambdao^{\nabla_0=0}$.
	Moreover, as $\MLambdao$ is $p\torsionfree$, it follows that $\MLambdao^{\nabla_0=0}$ is $p\torsionfree$.
	Furthermore, from the isomorphism $f$, we deduce that $\Lambda_{R, 0}\textrm{-linearly}$ extending the natural inclusion $\MLambdao^{\nabla_0=0} \subset \MLambdao$, we obtain the following $(\varphi, \nabla_0)\equivariant$ commutative diagram:
	\begin{center}
		\begin{tikzcd}
			\Lambda_{R, 0} \otimes_R \paze(R) \arrow[r, "f", "\sim"'] \arrow[d, "1 \otimes f", "\wr"'] & \paze(\Lambda_{R, 0}) \arrow[d, equal]\\
			\Lambda_{R, 0} \otimes_{\Lambda_R} \MLambdao^{\nabla_0=0} \arrow[r, "\eqref{eq:lambdar0_comp}"] & \MLambdao.
		\end{tikzcd}
	\end{center}
	Hence, it follows that \eqref{eq:lambdar0_comp} is an isomorphism.
	This concludes our proof.
\end{proof}

\subsubsection{The case \texorpdfstring{$p = 2$}{-}}\label{subsubsec:a1_galact_p=2}

In this section, we shall prove statements analogous to Proposition \ref{prop:mlambda_fpx_descent}, Proposition \ref{prop:mlambda0_connection} and Proposition \ref{prop:lambdar0_comp}, for $p=2$.
Let $N$ be a Wach module over $\AR$ and let $\MLambda = (\AR(1)/(p_1(\mu)) \otimes_{p_2, \AR} N)^{1 \times \Gamma_R'}$ be the $\Lambda_R\module$ from Proposition \ref{prop:abarone_comp} equipped with an induced action of $(\varphi, \Gamma_F)$.
First, we will look at the action of $\Gamma_{\textrm{tor}}$ on $\MLambda$.
Let $\sigma$ denote a generator of $\Gamma_{\textrm{tor}}$.
Then, from \eqref{eq:plus_minus_decomp}, recall that by setting $\MLambdaplus \coloneq \{x \in \MLambda \textrm{ such that } \sigma(x) = x\}$ and $\MLambdaminus \coloneq \{x \in \MLambda \textrm{ such that } \sigma(x) = -x\}$, we have a natural injective map of torsion-free modules over $\LambdaRplus$ (see Section \ref{subsubsec:p=2_gammaF_action}),
\begin{equation}\label{eq:mlambda_pm}
	\MLambdaplus \oplus \MLambdaminus \longrightarrow \MLambda,
\end{equation}
given as $(x, y) \mapsto x+y$.
Note that the action of $(\varphi, \Gamma_F)$ on $\MLambda$ induces a natural Frobenius structure (after inverting $p$) and a continuous $\Gamma_0\textrm{-action}$ on $\MLambdaplus$ and $\MLambdaminus$, respectively.
Equipping $\MLambdaplus$ and $\MLambdaminus$ with the induced structures, we see that \eqref{eq:mlambda_pm} is $(\varphi, \Gamma_F)\equivariant$.
\begin{prop}\label{prop:mlambda_pm_descent}
	The $\LambdaRplus\module$ $\MLambdaplus$ is $p\textrm{-adically}$ complete and $p\torsionfree$.
	Moreover, we have that $\MLambdaminus \isomorphic (t/2)\MLambdaplus$ as $\LambdaRplus\modules$ compatible with the respective $(\varphi, \Gamma_0)\textrm{-actions}$.
	Furthermore, the natural map in \eqref{eq:mlambda_pm} is bijective and we have the following natural $\Lambda_R\linear$ and $(\varphi, \Gamma_F)\equivariant$ isomorphism:
	\begin{equation}\label{eq:mlambda_pm_descent}
		\Lambda_R \otimes_{\LambdaRplus} \MLambdaplus \isomorphic \MLambda.
	\end{equation}
\end{prop}
\begin{proof}
	Let us first recall that we have $\Lambda_R = \LambdaRplus \oplus (t/2)\LambdaRplus$ from Lemma \ref{lem:lambdar_pm} and Lemma \ref{lem:lambdar_pm_nu}, in particular, $\Lambda_R$ is finite over $\LambdaRplus$, which implies that $M$ is a finite $\LambdaRplus\module$.
	Moreover, by using Lemma \ref{lem:sigma_action_decomp}, note that we have a natural $\LambdaRplus\linear$ surjection $\MLambda \twoheadrightarrow \MLambdaplus$.
	So, it follows that $\MLambdaplus$ is finitely generated over $\LambdaRplus$ and it is $p\torsionfree$ and $p\textrm{-adically}$ separated (since $\MLambda$ is $p\torsionfree$ and $p\textrm{-adically}$ separated), therefore, $p\textrm{-adically}$ complete (see \cite[\href{https://stacks.math.columbia.edu/tag/031B}{Tag 031B}]{stacks-project}).
	Furthermore, as the action of $\Gamma_F$ is continuous for the $(p, p_1(\mu))\adic$ topology on $\MLambda$, so it follows that the induced action of $\Gamma_0$ on $\MLambdaplus$ is continuous as well.
	Next, note that by $\Lambda_R\textrm{-linearly}$ extending the $\LambdaRplus\linear$ and $(\varphi, \Gamma_F)\equivariant$ injective map $\MLambdaplus \rightarrow \MLambda$ from \eqref{eq:mlambda_pm}, we obtain the $\Lambda_R\linear$ and $(\varphi, \Gamma_F)\equivariant$ map in \eqref{eq:mlambda_pm_descent}.
	Moreover, we have that $(t/2)\MLambdaplus \subset \MLambdaminus$, so from the injectivity of \eqref{eq:mlambda_pm}, it follows that \eqref{eq:mlambda_pm_descent} is injective.
	To prove that \eqref{eq:mlambda_pm} is bijective, it is enough to show that \eqref{eq:mlambda_pm_descent} is surjective.

	Let $I$ denote the kernel of the surjective map $\Lambda_R \twoheadrightarrow R$.
	Then, from the explicit description of $\Lambda_R$ in Proposition \ref{prop:polynom_to_powerseries}, it follows that $I$ is the ideal of $\Lambda_R$ generated by the divided powers of $t/2$.
	As the divided powers of $t/2$ are topologically nilpotent, it follows that $I$ is contained in the Jacobson radical of $\Lambda_R$.
	Moreover, since the ideal $I$ is stable under the action of $(\varphi, \Gamma_F)$ on $\Lambda_R$, therefore, it follows that the map $\Lambda_R \rightarrow R$ is $(\varphi, \Gamma_F)\equivariant$.
	Next, from \eqref{eq:mlambda_arpdn} in Proposition \ref{prop:abarone_comp}, recall that we have a $(\varphi, \Gamma_F)\equivariant$ isomorphism of $\Lambda_R\modules$ $\MLambda \isomorphic \Lambda_R \otimes_{\AR} N$.
	Base changing this isomorphism along the surjective map $\Lambda_R \twoheadrightarrow R$, we obtain a $(\varphi, \Gamma_F)\equivariant$ isomorphism of $R\modules$ $\MLambda/I\MLambda \isomorphic N/\mu N$.
	Precomposing it with \eqref{eq:mlambda_pm_descent} gives a $\Lambda_R\linear$ map
	\begin{equation}\label{eq:lambdarplu_surj}
		\Lambda_R \otimes_{\LambdaRplus} \MLambdaplus \longrightarrow \MLambda/I\MLambda \isomorphic N/\mu N.
	\end{equation}

	As $I$ is in the Jacobson radical of $\Lambda_R$, therefore, by Nakayama Lemma, to show that \eqref{eq:mlambda_pm_descent} is surjective, it is enough to show that the first map in \eqref{eq:lambdarplu_surj} is surjective.
	So, let $\overline{x}$ be any element of $\MLambda/I\MLambda$ and let $x$ in $\MLambda$ be a lift of $\overline{x}$.
	We first claim that $(\sigma+1)x$ is an element of $2\MLambda$.
	Indeed, let us write $x = \sum_i a_i \otimes x_i$, for some $a_i$ in $\Lambda_R$ and $x_i$ in $N$.
	Then, from the description of $\Lambda_R$ in Lemma \ref{lem:lambdar_pm} and Lemma \ref{lem:lambdar_pm_nu}, we have that $(\sigma-1)a_i = tb_i$, for some $b_i$ in $\LambdaRplus$, and from the triviality of the action of $\Gamma_R$ on $N/\mu N$, we may write $(\sigma-1)x_i = \mu y_i$, for some $y_i$ in $N$.
	So, we have the following:
	\begin{align*}
		(\sigma+1)x = (\sigma-1)x + 2x &= (\sigma-1)(\textstyle\sum_i a_i \otimes x_i) + 2x\\
		&= \textstyle\sum_i (\sigma-1)a_i \otimes x_i + \textstyle\sum_i \sigma(a_i) \otimes (\sigma-1)x_i + 2x\\
		&= \textstyle\sum_i tbi \otimes x_i + \textstyle\sum_i \sigma(a_i) \otimes \mu y_i + 2x,
	\end{align*}
	which is clearly in $2 \MLambda$ (since $t/\mu$ is a unit in $\Lambda_R$, see Lemma \ref{lem:tovermu_unit}).
	As $\MLambda$ is $p\torsionfree$, therefore, we set $x' \coloneq \tfrac{(\sigma+1)}{2}(x)$ in $\MLambda$ and note that $\sigma(x') = x'$, i.e.\ $x'$ is in $\MLambdaplus$.
	From the computation of $(\sigma+1)x$ above and the fact that $\MLambda/I\MLambda \isomorphic N/\mu N$ (see \eqref{eq:lambdarplu_surj}) is $p\torsionfree$, we see that $x' = \overline{x} \textrm{ mod } I\MLambda$.
	In particular, we conclude that the first map of \eqref{eq:lambdarplu_surj} is surjective, hence, \eqref{eq:mlambda_pm_descent} is bijective.

	Finally, from the decomposition $\Lambda_R = \LambdaRplus \oplus (t/2)\LambdaRplus$, the inclusion $(t/2)\MLambdaplus \subset \MLambdaminus$ and the bijectivity of \eqref{eq:mlambda_pm_descent}, it follows that \eqref{eq:mlambda_pm} is bijective and $\MLambdaminus \isomorphic (t/2)\MLambdaplus$ compatible with the respective $(\varphi, \Gamma_0)\textrm{-actions}$.
	This completes our proof.
\end{proof}

Next, we will look at the action of $\Gamma_0 \isomorphic 1 + 4\ZZ_2$ on $\MLambdaplus$.
From \eqref{eq:nu_defi}, recall that $\nu = \tfrac{\mu^2}{1+\mu}$ is an element of $\LambdaFplus$ and we claim the following:
\begin{lem}\label{lem:mlambda+_gamma0_modnu}
	The action of $\Gamma_0$ is trivial on $\MLambdaplus/\nu \MLambdaplus$.
\end{lem}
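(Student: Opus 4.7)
\medskip

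\noindent\textbf{Proof plan.} The proof will follow the pattern of Lemma~\ref{lem:lambdatilde+_triv_modnu}, but carried out at the level of the $\LambdatildeR$-module
\[
	\MLambdatilde := (\AR(1) \otimes_{p_2, \AR} N)^{1 \times \Gamma_R'},
\]
which specializes to $\MLambda$ after reducing modulo $p_1(\mu)$. The reason for this detour is that $\AR(1)$ (unlike $\Lambda_R$) is faithfully flat over $\AR$ via $p_2$, so $\{p_2(\mu), p\}$ is a regular sequence on $\AR(1) \otimes_{p_2,\AR} N$ inherited from the regularity assumption of Definition~\ref{defi:wach_mods_relative} on $N$, and the cancellation argument that underlies Lemma~\ref{lem:lambdatilde+_triv_modnu} can be run cleanly there.

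First, mimicking Proposition~\ref{prop:abarone_comp} and Lemma~\ref{lem:iota_lambdatilde_iso} in the presence of the Wach module $N$, I would establish a natural $(\varphi, \Gamma_R \times \Gamma_F)$-equivariant isomorphism $\MLambdatilde \isomorphic \LambdatildeR \otimes_{p_2, \AR} N$, and verify that the $\Gamma_R'$-invariance transfers the regularity of $\{p_2(\mu), p\}$ and $\{p, p_2(\mu)\}$ from $\AR(1) \otimes_{p_2,\AR} N$ to $\MLambdatilde$ (using that $p_2(\mu)$ is $\Gamma_R'$-invariant, one shows that if $p_2(\mu) z = p y$ with $y \in \MLambdatilde$ in the ambient module, then $g(z)-z$ is killed by $p_2(\mu)$, hence $z \in \MLambdatilde$). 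Setting $\MLambdatildeplus := \MLambdatilde^{1 \times \Gamma_{\textup{tor}}}$, the same bijectivity argument as in Lemma~\ref{lem:lambdatildeplus_mod_p1mun} (or its alternative proof via Lemma~\ref{lem:mlambdatildeplus_mod_p1mun}) produces a $(\varphi, \Gamma_F)$-equivariant identification $\MLambdatildeplus/p_1(\mu)\MLambdatildeplus \isomorphic \MLambdaplus$.

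Now fix $x \in \MLambdatildeplus$ and $g \in \Gamma_0$. Triviality of the $\Gamma_F$-action on $N/\mu N$ (and on $\LambdatildeR/p_2(\mu)$ by Lemma~\ref{lem:lambdatildeR_triv_modmu}) gives $(g-1)x = p_2(\mu)\,y$ for some $y \in \MLambdatilde$. Since $\sigma$ and $g$ commute and $\sigma(x) = x$, applying $\sigma$ and using $\sigma(p_2(\mu)) = -p_2(\mu)/p_2(q)$ yields $(\sigma - 1)y = -p_2([2]_q)\,y$. Invoking again the mod-$p_2(\mu)$ triviality for $\sigma$ on $\MLambdatilde$ (from the analogue of Lemma~\ref{lem:lambdatildeR_triv_modmu} for modules), we obtain $p_2([2]_q)\, y \in p_2(\mu)\MLambdatilde$, and since $[2]_q = 2 + \mu$ is $2$ times a unit in $\Lambda_F$ (Lemma~\ref{lem:tovermu_unit}), this gives $2y \in p_2(\mu)\MLambdatilde$. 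The regularity of $\{p_2(\mu), p\}$ on $\MLambdatilde$ then forces $y \in p_2(\mu)\MLambdatilde$, so $(g-1)x \in p_2(\mu)^2 \MLambdatilde = p_2(\nu) \MLambdatilde$ (using $p_2(\mu)^2 = p_2(q)\, p_2(\nu)$ with $p_2(q)$ a unit). Finally, since $p_2(\nu)$ is $\sigma$-invariant and a non-zerodivisor on $\MLambdatilde$, we have $p_2(\nu)\MLambdatilde \cap \MLambdatildeplus = p_2(\nu) \MLambdatildeplus$, and combining with the fact that $(g-1)x \in \MLambdatildeplus$ (by commutativity of $\Gamma_F$) we deduce $(g-1)x \in p_2(\nu) \MLambdatildeplus$.

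Reducing modulo $p_1(\mu)$, the isomorphism $\LambdatildeR/p_1(\mu) \isomorphic \Lambda_R$ of \eqref{eq:lambdatilder_mod_p1mu} sends $p_2(\mu) \mapsto \mu$, hence $p_2(\nu) \mapsto \nu$, and together with the identification $\MLambdatildeplus/p_1(\mu)\MLambdatildeplus \isomorphic \MLambdaplus$ from the first step, we conclude $(g-1)\MLambdaplus \subseteq \nu\MLambdaplus$, as required.

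\medskip

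\noindent The main technical obstacle I anticipate is establishing the bijectivity $\MLambdatildeplus/p_1(\mu)\MLambdatildeplus \isomorphic \MLambdaplus$: the standard Tate cohomology trick (averaging by $(1+\sigma)/2$) is unavailable in characteristic $2$, so one must instead proceed as in the proof of Lemma~\ref{lem:lambdatildeplus_mod_p1mun} or its alternative via Lemma~\ref{lem:mlambdatildeplus_mod_p1mun}, using the $\delta$-ring structure and the lift $\tautilde$ of $\tau$ to generate the invariants. The regularity transfer from $N$ to $\MLambdatilde$ is essentially formal given the faithful flatness of $p_2$, but needs to be spelled out because $N$ is only finitely generated (not finite projective) over $\AR$ in general.
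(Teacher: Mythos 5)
Your proposal is correct, but it follows a genuinely different route from the paper's own proof of this lemma. Interestingly, the paper itself flags your route: the sentence preceding the proof reads ``Note that the claim in Lemma~\ref{lem:mlambda+_gamma0_modnu} also follows from Remark~\ref{rem:gamma0_act_mlambda+}. $\ldots$ Remark~\ref{rem:gamma0_act_mlambda+} gives a more conceptual proof of the claim.'' Your outline is essentially the content of Lemma~\ref{lem:mlambdatilde+_triv_modnu} and Remark~\ref{rem:gamma0_act_mlambda+}, which the paper develops later (in Subsection~\ref{subsubsec:p=2_gammaF_action_n1}) at the level of $\MLambdatilde = (\AR(1) \otimes_{p_2, \AR} N)^{1 \times \Gamma_R'}$ over $\LambdatildeR$. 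The paper's actual proof of Lemma~\ref{lem:mlambda+_gamma0_modnu} instead works entirely inside $\MLambda$: using Lemma~\ref{lem:mlambda_gamma0_modnu} (which establishes $(\gamma_0-1)\MLambda \subset 2t\MLambda$ via a case analysis over $N$, $\MLambdaplus$, $\MLambdaminus$), one writes $(\gamma_0-1)m = 2t\,m'$ with $m' \in \MLambda$, observes $\sigma(m') = -m'$ so $m' \in \MLambdaminus$, then invokes $\MLambdaminus = (t/2)\MLambdaplus$ from Proposition~\ref{prop:mlambda_pm_descent} to get $m' = (t/2)n$ and hence $(\gamma_0-1)m = t^2 n \in \nu\MLambdaplus$. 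The paper's route is shorter because it front-loads the $\sigma$-twist argument into Lemma~\ref{lem:mlambda_gamma0_modnu}; yours avoids that lemma but requires the $\MLambdatilde$-infrastructure (regularity of $\{p_2(\mu), p\}$, triviality mod $p_2(\mu)$, the mod-$p_1(\mu)$ descent) -- material the paper needs anyway for Section~\ref{sec:prismatic_wach}.

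Two small imprecisions in your sketch. First, the isomorphism at the $\LambdatildeR$-level is not $\MLambdatilde \isomorphic \LambdatildeR \otimes_{p_2, \AR} N$; the correct statement (Lemma~\ref{lem:delta_prime_n_iso}) is $\MLambdatilde \isomorphic \LambdatildeR \otimes_{\rho, R\llbracket\mu\rrbracket} N$ where $\rho = \Delta' \circ p_2 \circ \iota$ sends $\mu \mapsto p_2(\mu)$ but $X_i \mapsto p_1([X_i^\flat])$, so it is not the second projection on the geometric coordinates. Second, invoking ``$[2]_q = 2 + \mu$ is $2$ times a unit in $\Lambda_F$'' is the wrong scalar ring -- $\MLambdatilde$ is a $\LambdatildeR$-module, not a $\Lambda_F$-module -- but you don't actually need a unit: $p_2([2]_q)y = 2y + p_2(\mu)y$, and the second summand is trivially in $p_2(\mu)\MLambdatilde$, so $2y \in p_2(\mu)\MLambdatilde$ directly, after which regularity of $\{p_2(\mu), p\}$ (i.e.\ Lemma~\ref{lem:mlambdatilde_p2mup_reg}) kills the $2 = p$. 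The rest of your argument is sound.
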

\begin{proof}
	The claim follows from Remark \ref{rem:gamma0_act_mlambda+}.
	Note that the arguments in Remark \ref{rem:gamma0_act_mlambda+} do not depend on Lemma \ref{lem:mlambda+_gamma0_modnu} and the subsequent results of the current Section \ref{subsubsec:a1_galact_p=2}.
\end{proof}


\begin{proof}[Alternative proof of Lemma \ref{lem:mlambda+_gamma0_modnu}]
	We provide a more explicit proof of the claim.
	Note that $\Gamma_0$ acts continuously on $\MLambdaplus$, so it is enough to show the claim for a topological generator of $\Gamma_0$.
	So, let us fix $\gamma_0$ to be a topological generator of $\Gamma_0$ such that $\chi(\gamma_0) = 1 + 4a$, for a unit $a$ in $\ZZ_2$.
	Moreover, using Lemma \ref{lem:tovermu_unit}, it is easy to see that $\nu$ is the product of $t^2$ with a unit in $\LambdaFplus$.
	So, for any $m$ in $\MLambdaplus$, it is enough to show that $(\gamma_0-1)m$ is an element of $t^2\MLambdaplus$.
	Now, from Lemma \ref{lem:mlambda_gamma0_modnu} below, let us write $(\gamma_0-1)m = 2tm'$, for some $m'$ in $\MLambda$.
	Since, $\sigma(m) = m$, therefore, we get that $\sigma(m') = -m'$, i.e.\ $m'$ is in $\MLambdaminus$.
	Then, by using Proposition \ref{prop:mlambda_pm_descent}, we may write $m' = (t/2)n$, for some $n$ in $\MLambdaplus$.
	Hence, we get that $(\gamma_0-1)m = t^2 n$, as claimed.
\end{proof}

The following observation was used above:
\begin{lem}\label{lem:mlambda_gamma0_modnu}
	Let $\gamma_0$ be a topological generator of $\Gamma_0$ such that $\chi(\gamma_0) = 1 + 4a$, for a unit $a$ in $\ZZ_2$.
	Then, for any $m$ in $\MLambda$, the element $(\gamma_0-1)m$ belongs to $2t\MLambda$.
\end{lem}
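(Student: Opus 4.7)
The plan is to use Proposition \ref{prop:abarone_comp} to identify $\MLambda$ with $\Lambda_R \otimes_{\AR} N$ as a $(\varphi, \Gamma_F)\equivariant$ $\Lambda_R\module$, so that any $m \in \MLambda$ is a finite sum $\sum_i a_i \otimes x_i$ with $a_i \in \Lambda_R$ and $x_i \in N$, and the twisted Leibniz rule for the diagonal $\gamma_0\textrm{-action}$ splits
\begin{equation*}
	(\gamma_0-1)m = \textstyle\sum_i (\gamma_0-1)a_i \otimes \gamma_0(x_i) + \sum_i a_i \otimes (\gamma_0-1)x_i,
\end{equation*}
reducing the claim to bounding the two terms separately.

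For the first term I would prove the auxiliary inclusion $(\gamma_0-1)\Lambda_R \subset 2t\Lambda_R$ by a direct computation on the topological PD basis $\{(\mu/2)^{[k]}\}_{k \geq 0}$ of $\Lambda_R$. Using $\chi(\gamma_0) = 1+4a$, one finds $\gamma_0(\mu)/\mu = 1+2\psi$ with $\psi \in 2\Lambda_R$ (the extra factor of $2$ coming from $\mu = 2(\mu/2)$ in $\Lambda_R$), so that $\gamma_0((\mu/2)^{[k]}) = (1+2\psi)^k (\mu/2)^{[k]}$ and $(1+2\psi)^k - 1 \in 4k\Lambda_R$. A short divided-power manipulation then gives $4k(\mu/2)^{[k]} = 2t \cdot u^k (t/2)^{[k-1]} \in 2t\Lambda_R$, where $u = \mu/t \in \Lambda_F^{\times}$ is the unit of Lemma \ref{lem:tovermu_unit}.

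For the second term, since $(\gamma_0-1)x \in \mu N$, write $(\gamma_0-1)x = \mu\theta(x)$; I would further establish the key integral divisibility $(\gamma_0-1)N \subset \mu^2 N + 2\mu N$, i.e.\ $\theta(N) \subset \mu N + 2N$. Granting this, $a \otimes (\gamma_0-1)x$ decomposes as $a\mu^2 \otimes y + 2a\mu \otimes z$, and the identities $\mu^2 = 2u^2(t/2)\cdot t$ and $2\mu = 2ut$ place both pieces in $2t\MLambda$. To establish the divisibility, the twisted $\gamma_0\textrm{-derivation}$ $\theta : N \to N$ satisfies $\theta(\mu) \in 2\AR$ (applying the first step to $\mu \in \AR$) and $\theta(2) = 0$; these imply that $\theta$ passes to an $R/2\linear$ map $\bar\theta : M/2M \to M/2M$ with $M = N/\mu N$. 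The Frobenius commutation $\varphi\gamma_0 = \gamma_0\varphi$ gives $[p]_q \varphi(\theta(x)) = \theta(\varphi(x))$, and since $[p]_q \equiv 2 \pmod \mu$ we obtain $\theta(\varphi(x)) \in 2N + \mu N$, that is, $\bar\theta \circ \bar\varphi = 0$ on $M/2M$.

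The hard step is then deducing $\bar\theta = 0$ from $\bar\theta \circ \bar\varphi = 0$. When $\bar\varphi$ is surjective on $M/2M$ this is immediate; in general I would iterate the commutation to $\theta\varphi^n = [p^n]_q \varphi^n \theta$, so that $\bar\theta$ vanishes on each $\bar\varphi^n(M/2M)$, and combine this with the $[p]_q\textrm{-height}$ condition on $N$ (which forces some power of $[p]_q$, hence of $p$ modulo $\mu$, to kill the cokernel of $\varphi^*M \to M$) together with a Nakayama argument on the finitely generated $R\module$ $M$ to bootstrap the vanishing to all of $M/2M$. This last bootstrapping will be the main technical obstacle; once carried out, it together with the first step yields $(\gamma_0-1)m \in 2t\MLambda$.
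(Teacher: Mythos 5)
The overall strategy — identifying $\MLambda$ with $\Lambda_R \otimes_{\AR} N$ via Proposition \ref{prop:abarone_comp}, splitting $(\gamma_0-1)m$ by the twisted Leibniz rule, and reducing to the two statements $(\gamma_0-1)\Lambda_R \subset 2t\Lambda_R$ and $(\gamma_0-1)N \subset 2\mu N + \mu^2 N$ — matches the paper's. Your direct divided-power computation for the first inclusion is a valid, slightly more elementary alternative to the paper's route through the decomposition $\Lambda_R = \LambdaRplus \oplus (t/2)\LambdaRplus$. The second inclusion is exactly the key integral divisibility the paper establishes. The gap is in your proposed proof of it.

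You want to conclude $\bar\theta = 0$ on $M/2M$ (with $M := N/\mu N$) from $\bar\theta \circ \bar\varphi = 0$, by combining the $[p]_q$-height condition with Nakayama. For $p=2$ this cannot work, because $[p]_q \equiv 2 \pmod{\mu}$: the height condition says $[p]_q^s\cdot \cokert(\varphi^*M \to M) = 0$ for some $s$, and its image modulo $(2,\mu)$ is the trivially-true relation $0 \cdot (\,\cdot\,) = 0$, giving no constraint on the cokernel. The surjectivity of $\bar\varphi$ on $M/2M$ that your argument requires genuinely fails in general: for the (effective, rank one) Wach module $N = \AF e$ with $\varphi_N(1\otimes e) = [p]_q\,e$ one has $\bar\varphi(\varphi^*M) = 2M$, so $\bar\varphi = 0$ on $M/2M$, and $\bar\theta\circ\bar\varphi = 0$ is vacuous. (The lemma nonetheless holds for this $N$: a direct computation gives $(\gamma_0-1)e = 2a\mu e + O(\mu^2)$, i.e.\ $\bar\theta(\bar e)\in 2M$.) Iterating the commutation to $\theta\varphi^n = [p^n]_q\,\varphi^n\theta$ with $[p^n]_q\equiv 2^n\pmod\mu$ only ever produces divisibility on $\varphi^n$-images; with a nonzero $p$-power defect $s\geq 1$ these never close up, and one concludes $\bar\theta=0$ only when $s=0$. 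So the bootstrapping step you flag as ``the main technical obstacle'' cannot be carried out by this route.

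The paper avoids Frobenius entirely here. It decomposes $z\in N\subset\MLambda$ as $z = x+y$ with $x\in\MLambdaplus$, $y\in\MLambdaminus = (t/2)\MLambdaplus$ using the non-trivial input Proposition \ref{prop:mlambda_pm_descent}, shows in earlier paragraphs of the same proof that $(\gamma_0-1)x\in(t^2/2)\MLambdaplus$ and $(\gamma_0-1)y\in 2t\MLambdaplus$, substitutes into $(\gamma_0-1)z = \mu z_0$ to get $z_0 = (\mu/2)x' + 2y'$, and reads off $z_0 \equiv 2y' \pmod{I\MLambda}$ where $I=\ker(\Lambda_R\twoheadrightarrow R)$ and $\MLambda/I\MLambda\isomorphic N/\mu N$. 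One minor inaccuracy in your write-up, harmless to the structure: $\theta(\mu)\notin 2\AR$ (the $\mu^3$-coefficient involves $\binom{1+4a}{4}$, which is odd); what is true and suffices for $\bar\theta$ to descend to $M/2M$ is the weaker statement $\theta(\mu)\equiv 4a\pmod{\mu}$.
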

\begin{proof}
	Let $f$ be an element of $\Lambda_R$.
	Then, using that $\Lambda_R = \LambdaRplus \oplus (t/2)\LambdaRplus$ from Lemma \ref{lem:lambdar_pm} and Lemma \ref{lem:lambdar_pm_nu}, and the fact that the action of $\gamma_0$ is trivial on $\LambdaRplus/\nu \LambdaRplus$, where $\nu$ is the product of $t^2$ with a unit in $\LambdaFplus$, it follows that $(\gamma_0-1)f$ is an element of $2t\Lambda_R$.

	Next, let $x$ be any element of $\MLambdaplus \subset \MLambda$, then from the $(\varphi, \Gamma_F)\equivariant$ isomorphism $\MLambda \isomorphic \Lambda_R \otimes_{\AR} N$ (see \eqref{eq:mlambda_arpdn} in Proposition \ref{prop:abarone_comp}), the triviality of the action of $\Gamma_F$ on $\Lambda_R/(\mu)$ (see Lemma \ref{lem:lambdatildeR_triv_modmu}) and $N/\mu N$ (see Definition \ref{defi:wach_mods_relative}), and the fact that $t/\mu$ is a unit in $\Lambda_F$ (see Lemma \ref{lem:tovermu_unit}), it follows that we may write$(\gamma_0-1)x = tx_1$, for some $x_1$ in $\MLambda$.
	As $x$ is in $\MLambdaplus$, we have $\sigma(x) = x$, and since $\Gamma_F$ is commutative and $\MLambda$ is $t\torsionfree$, therefore, it follows that $\sigma(x_1) = -x_1$, i.e.\ $x_1$ is an element of $\MLambdaminus$.
	Then, using Proposition \ref{prop:mlambda_pm_descent}, we may write $x_1 = (t/2)x_2$, for some $x_2$ in $\MLambdaplus$, and get that $(\gamma_0-1)x$ is an element of $(t^2/2)\MLambdaplus$.

	Next, let $y$ be any element of $\MLambdaminus$ and using Proposition \ref{prop:mlambda_pm_descent}, write $y = (t/2)y_1$, for some $y_1$ in $\MLambdaplus$.
	From the preceding discussion, note that we have $(\gamma_0-1)y_1 = (t^2/2)y_2$, for some $y_2$ in $\MLambdaplus$.
	So, we get that,
	\begin{align*}
		(\gamma_0-1) y = (\gamma_0-1)\big(\tfrac{t}{2}y_1\big) &= \tfrac{1}{2}\big(y_1(\gamma_0-1)t + \gamma_0(t) (\gamma_0-1)y_1\big)\\
		&= \tfrac{1}{2}(4aty_2 + (1+4a)ty_2\tfrac{t^2}{2}) = 2t(ay_1 + (1+4a)y_2\tfrac{t^2}{8}),
	\end{align*}
	is an element of $2t\MLambdaplus$.

	Next, note that since $N[1/p]$ is finite projective over $\AR[1/p]$ (see Remark \ref{rem:wachmod_props}) and the map $\AR \rightarrow \Lambda_R$ is injective, therefore, it follows that the $\AR\linear$ and $(\varphi, \Gamma_F)\equivariant$ map $N \rightarrow \Lambda_R \otimes_{\AR} N \lisomorphic \MLambda$ is injective.
	Now, let $z$ be an element of $N$ and let us denote its image in $\MLambda$, again by $z$.
	Then, by Definition \ref{defi:wach_mods_relative}, we have that $(\gamma_0-1)z = \mu z_0$, for some $z_0$ in $N \hookrightarrow \MLambda$.
	Using Proposition \ref{prop:mlambda_pm_descent}, let us write $z = x + y$, for some $x$ in $\MLambdaplus$ and $y$ in $\MLambdaminus = (t/2)\MLambdaplus$.
	Then, from the preceding discussions and the fact that $t/\mu$ is a unit in $\Lambda_F$, we may write $(\gamma_0-1)x = (\mu^2/2) x'$ and $(\gamma_0-1)y = 2\mu y'$, for some $x'$ and $y'$ in $\MLambda$.
	In particular, since $\MLambda$ is $\mu\torsionfree$, therefore, we get that $z_0 = (\mu/2) x' + 2 y'$ in $\MLambda$.
	Reducing the preceding equality modulo $I\MLambda$, where $I$ is the kernel of the surjective map $\Lambda_R \rightarrow R$ (see the proof of Proposition \ref{prop:mlambda_pm_descent}), we see that $z_0 = 2y' \textrm{ mod } I\MLambda$, in $\MLambda/I\MLambda \isomorphic N/\mu N$ (see \eqref{eq:lambdarplu_surj}).
	Since $y' \textrm{ mod } I\MLambda$ is an element of $\MLambda/I\MLambda \isomorphic N/\mu N$ and $z_0$ is in $N$, therefore, we get that $z_0 \textrm{ mod } \mu N$ is an element of $2(N/\mu N)$.
	In particular, we may write $z_0 = 2z_1 + \mu z_2$ for some $z_1$, $z_2$ in $N$.
	So, we have that
	\begin{equation*}
		(\gamma_0-1)z = \mu z_0 = 2\mu z_1 + \mu^2 z_2 = 2\mu\big(z_1 + \tfrac{\mu}{2}z_2\big),
	\end{equation*}
	is an element of $2t\MLambda$, since $t/\mu$ is a unit in $\Lambda_F$.

	Now, let $f \otimes z$ be an element of $\Lambda_R \otimes_{\AR} N$, and using the discussion at the beginning of the proof, let us write $(\gamma_0-1)f = 2t e$, for some $e$ in $\Lambda_R$.
	Moreover, from the discussion above we may write $(\gamma_0-1)z = 2tz'$, for some $z'$ in $\MLambda$.
	In particular, we see that 
	\begin{equation*}
		(\gamma_0-1)(f \otimes z) = ((\gamma_0-1)f) \otimes z + \gamma_0(f) \otimes (\gamma_0-1)z = 2t e + \gamma_0(f) 2tz' = 2t(e + \gamma_0(f) z'),
	\end{equation*}
	is an element of $2t\MLambda$.
	Using the $(\varphi, \Gamma_F)\equivariant$ isomorphism $\MLambda \isomorphic \Lambda_R \otimes_{\AR} N$ (see \eqref{eq:mlambda_arpdn} in Proposition \ref{prop:abarone_comp}) and the preceding observation, we conclude that for any $m$ in $\MLambda$, the element $(\gamma_0-1)m$ belongs to $2t\MLambda$.
	Hence, the lemma is proved.
\end{proof}

In the rest of this section, let us fix $\gamma_0$ to be a topological generator of $\Gamma_0$ such that $\chi(\gamma_0) = 1+4a$, for a unit $a$ in $\ZZ_2$.
Then, from Lemma \ref{lem:mlambda+_gamma0_modnu}, note that for any $x$ in $\MLambdaplus$, we have that $(\gamma_0-1)x$ is an element of $\nu \MLambdaplus$.
Set $\tau \coloneq \nu/8$ in $\LambdaRplus$, then from Lemma \ref{lem:gamma0_act_tau} we know that $(\gamma_0-1)\tau = u \nu$, for some unit $u$ in $\LambdaFplus$ depending on $\gamma_0$.
Therefore, we see that the following operator is well defined:
\begin{equation}\label{eq:nablaq_tau_mlambda+}
	\begin{aligned}
		\nabla_{q, \tau} \colon \MLambdaplus &\longrightarrow \MLambdaplus\\
				x &\mapsto \tfrac{(\gamma_0-1)x}{(\gamma_0-1)\tau}.
	\end{aligned}
\end{equation}
As the operator $\nabla_{q, \tau}$ is an endomorphism of $\MLambdaplus$, we may define the following two term Koszul complex:
\begin{equation}\label{eq:tau_qderham_mlambda+}
	K_{\MLambdaplus}(\nabla_{q, \tau}) \colon \big[\MLambdaplus \xrightarrow{\hspace{1mm} \nabla_{q, \tau} \hspace{1mm}} \MLambdaplus\big].
\end{equation}

\begin{rem}
	Considering $\tau$ as a variable, similar to Remark \ref{rem:nablaq_tautilde_qconnection}, the operator $\nabla_{q,\tau}$ in \eqref{eq:nablaq_tau_mlambda+}, may also be considered as a $\qconnection$ in non-logarithmic coordinates, in the sense of Definition \ref{defi:qconnection} and Remark \ref{rem:nablaqi_nota}.
	Then, \eqref{eq:tau_qderham_mlambda+} is the $q\textrm{-de Rham}$ complex arising from such a $\qconnection$.
\end{rem}

\begin{prop}\label{prop:mlambda+_connection}
	The series of operators $\nabla_0^{\log} = \frac{\log \gamma_0}{\log(\chi(\gamma_0))} = \frac{1}{\log(\chi(\gamma_0))}\sum_{k \in \NN} (-1)^k \frac{(\gamma_0-1)^{k+1}}{k+1}$ converge $p\adically$ on $\MLambdaplus$.
	Let $w \coloneq t^2/8$ in $\LambdaRplus$, then the operator $\nabla_0 \coloneq \tfrac{1}{2w}\nabla_0^{\log}$ defines an $R\linear$ $p\adically$ quasi-nilpotent flat connection on $\MLambdaplus$, denoted by $\nabla \colon \MLambdaplus \rightarrow \MLambdaplus \otimes_{\LambdaRplus} \Omega^1_{\LambdaRplus/R}$ and given as $x \mapsto \nabla_0(x) dw$.
	The data of the connection $\nabla$ on $\MLambdaplus$ is equivalent to the data of the $\qconnection$ $\nabla_{q, \tau}$ from \eqref{eq:nablaq_tau_mlambda+}, i.e.\ either may be recovered from the other.
	Moreover, the $q\textrm{-de Rham}$ complex $K_{\MLambdaplus}(\nabla_{q,\tau})$ in \eqref{eq:tau_qderham_mlambda+} is naturally quasi-isomorphic to the de Rham complex $\MLambdaplus \otimes_{\LambdaRplus} \Omega^{\bullet}_{\LambdaRplus/R}$.
	In particular, we have that $\MLambdaplus^{\nabla_{q, \tau}=0} \isomorphic \MLambdaplus^{\nabla_0=0}$.
\end{prop}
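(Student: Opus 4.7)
The approach will closely mirror the proof of Proposition \ref{prop:mlambda0_connection}, with the role of $z = t^{p-1}/p$ replaced by $w = t^2/8 = t^p/8$ and the descent modulo $\mu_0$ replaced by the descent modulo $\nu$ provided by Lemma \ref{lem:mlambda+_gamma0_modnu}. The plan proceeds in four steps, of which the first is the most delicate.

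First, I would establish convergence of $\nabla_0^{\log}$ by an inductive estimate. Lemma \ref{lem:mlambda+_gamma0_modnu} gives $(\gamma_0-1)x \in \nu \MLambdaplus$ for any $x \in \MLambdaplus$, and by Lemma \ref{lem:lambdar_pm_nu}, $\nu$ is the product of $8w$ with a unit in $\LambdaFplus$, so one can write $(\gamma_0-1)x = 8wx_1$ with $x_1 \in \MLambdaplus$. An explicit computation $\gamma_0(w) = \chi(\gamma_0)^2 w = (1+8b)w$ with $b = a(1+2a) \in \ZZ_2^\times$ gives $(\gamma_0-1)w = 8bw$, and then an induction on $k$ via the Leibniz rule yields $(\gamma_0-1)^{k+1}x = 8^{k+1} w \, x_{k+1}$ for some $x_{k+1} \in \MLambdaplus$. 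Since $\log(\chi(\gamma_0)) = \log(1+4a) = 4c$ for a unit $c \in \ZZ_2$, the series $\nabla_0^{\log}(x) = (4c)^{-1}\sum_{k \geq 0} (-1)^k 8^{k+1} w\, x_{k+1}/(k+1)$ converges $2$-adically and lies in $2w\MLambdaplus$, so $\nabla_0 := \nabla_0^{\log}/(2w)$ is well-defined on $\MLambdaplus$.

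Second, I would verify the Leibniz rule and identify $\nabla_0$ with the de Rham differential by the same $\exp$-limit argument as in the proof of Proposition \ref{prop:n1modp1mu_connection}, applied to $\gamma_0^{2^m} = \exp(2^m \log(\chi(\gamma_0))\nabla_0^{\log})$; since $\LambdaRplus = R[w^{[k]}, k \in \NN]_p^\wedge$ by Lemma \ref{lem:lambdar_pm_nu} and $\nabla_0(w) = 1$, the map $\nabla_0$ on $\LambdaRplus$ coincides with $d/dw$, so $\nabla : x \mapsto \nabla_0(x)\, dw$ is the universal $R$-linear continuous connection and is automatically flat since there is only one coordinate. For $2$-adic quasi-nilpotence, $\varphi(t) = 2t$ gives $\varphi(w) = 4w$, and commutativity of $\gamma_0$ with $\varphi$ yields $\nabla_0 \varphi = 4\varphi\nabla_0$; combined with the Frobenius isogeny $\varphi^*(\MLambdaplus)[1/p] \isomorphic \MLambdaplus[1/p]$ inherited from $N$, quasi-nilpotence follows exactly as in Proposition \ref{prop:mlambda0_connection}.

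Third, the equivalence of the data $\nabla$ and $\nabla_{q,\tau}$ is automatic, since $\gamma_0 = \exp(\log(\chi(\gamma_0))\nabla_0^{\log})$ recovers the action from $\nabla_0^{\log}$. Finally, for the quasi-isomorphism of complexes, Lemma \ref{lem:gamma0_act_tau} gives $(\gamma_0-1)\tau = u'\nu$ for some unit $u' \in \LambdaFplus$, and combined with $\nu = 8wu''$ (unit $u'' \in \LambdaFplus$) one rewrites
\begin{equation*}
\nabla_{q,\tau} = \nabla_0 \cdot \Bigl(\tfrac{\log(\chi(\gamma_0)) \cdot 2w}{(\gamma_0-1)\tau}\Bigr) \Bigl(1 + \textstyle\sum_{k \geq 1} \tfrac{\log(\chi(\gamma_0))^k}{(k+1)!} (\nabla_0^{\log})^k\Bigr),
\end{equation*}
where the scalar prefactor simplifies to the unit $c/(u'u'')$ and the series in the parentheses converges to a unit endomorphism of $\MLambdaplus$ by the quasi-nilpotence of $\nabla_0^{\log}$ (using that $w^k/k!$ is topologically nilpotent in $\LambdaRplus$). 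Applying Lemma \ref{lem:koszul_complex_qiso} with $f_1 = \nabla_0$ and $h_1$ the product above gives the quasi-isomorphism $K_{\MLambdaplus}(\nabla_{q,\tau}) \isomorphic K_{\MLambdaplus}(\nabla_0)$, and in particular $\MLambdaplus^{\nabla_{q,\tau}=0} \isomorphic \MLambdaplus^{\nabla_0=0}$. The main obstacle is the first step: unlike the $p \geq 3$ case, the correct renormalisation uses $w = t^p/8$ rather than $t^{p-1}/p$, and one must track the powers of $2$ carefully to ensure that after dividing $\nabla_0^{\log}(x)$ by $2w$ the result still lies integrally in $\MLambdaplus$.
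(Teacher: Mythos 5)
Your proposal is correct and follows essentially the same route as the paper's proof: the same inductive $2$-adic estimate $(\gamma_0-1)^{k+1}x \in 8^k t^2\MLambdaplus = 8^{k+1}w\MLambdaplus$ for convergence, the same $\exp$/Leibniz arguments for the connection and quasi-nilpotence (via $\nabla_0\circ\varphi = 4\varphi\circ\nabla_0$), and the same application of Lemma \ref{lem:koszul_complex_qiso}. The only cosmetic difference is that you merge the paper's two-step reduction $K_{\MLambdaplus}(\nabla_{q,\tau}) \isomorphic K_{\MLambdaplus}(\nabla_{q,w}) \isomorphic K_{\MLambdaplus}(\nabla_0)$ into a single rescaling, which is harmless.
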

\begin{proof}
	The idea of the proof is similar to the proof of Proposition \ref{prop:mlambda0_connection}, but with different computations.
	We sketch it below for completeness.
	Recall that $\tau = e t^2/8$, for a unit $e$ in $\LambdaFplus$ (see Lemma \ref{lem:lambdar_pm_nu}).
	Moreover, from Lemma \ref{lem:gamma0_act_tau} we have that $(\gamma_0-1)\tau = (\gamma_0-1)\tfrac{\nu}{8} = u\nu$, for a unit $u$ in $\LambdaFplus$.
	Now, let $w = t^2/8$ and we write $(1+4a)^2 = 1 + 8b$, noting that $b = a(2a+1)$ is a unit in $\ZZ_2$.
	So, we have that
	\begin{equation}\label{eq:gamma0_w}
		(\gamma_0-1)w = (\gamma_0-1)\tfrac{t^2}{8} = (\chi(\gamma_0)^2-1)\tfrac{t^2}{8} = ((1+4a)^2-1)\tfrac{t^2}{8} = b t^2 = e^{-1}b(2a+1) \nu.
	\end{equation}
	Therefore, it follows that the complex $K_{\MLambdaplus}(\nabla_{q, \tau})$ is quasi-isomorphic to the following complex:
	\begin{equation}\label{eq:w_qderham_mlambda+}
		K_{\MLambdaplus}(\nabla_{q, w}) \colon \big[\MLambdaplus \xrightarrow{\hspace{1mm} \nabla_{q, w} \hspace{1mm}} \MLambdaplus\big].
	\end{equation}

	Now, let us show that $\nabla_0^{\log} \coloneq \tfrac{\log(\gamma_0)}{\log(\chi(\gamma_0))} = \tfrac{1}{\log(\chi(\gamma_0))} \sum_{k \in \NN} (-1)^k \frac{(\gamma_0-1)^{k+1}}{k+1}$, converge as a series of operators on $\MLambdaplus$.
	Indeed, let $x$ be any element of $\MLambdaplus$, then using Lemma \ref{lem:mlambda+_gamma0_modnu}, we may write $(\gamma_0-1)x = t^2 x_1$, for some $x_1$ in $\MLambdaplus$.
	Let us note that $\log(\chi(\gamma_0)) = \log(1+4a) = 4 c$, where $c$ is a unit in $\ZZ_2$, and we also have $(\gamma_0-1)t^2 = ((1+4a)^2-1)t^{p-1} = 8b t^2$, where $b$ is a unit in $\ZZ_2$.
	Therefore, an easy induction on $k \in \NN$, shows that $(\gamma_0-1)^{k+1} x = 8^k t^2 x_{k+1}$, for some $x_{k+1}$ in $\MLambdaplus$.
	In particular, we get that
	\begin{align*}
		\nabla_0^{\log}(x) &= \tfrac{1}{\log(\chi(\gamma_0))} \textstyle\sum_{k \in \NN} (-1)^k \tfrac{(\gamma_0-1)^{k+1}(x)}{k+1}\\
		&= \tfrac{1}{\log(\chi(\gamma_0))} \textstyle\sum_{k \in \NN} (-1)^k \tfrac{8^k t^2 x_{k+1}}{k+1} = \tfrac{t^2}{4c} \textstyle\sum_{k \in \NN} (-1)^k \tfrac{8^k x_{k+1}}{k+1},
	\end{align*}
	which converges in $\tfrac{t^2}{4} \MLambdaplus = 2w \MLambdaplus$, since $\MLambdaplus$ is $2\adically$ complete.
	Then, it also follows that the operator $\nabla_0 \coloneq \tfrac{1}{2w}\nabla_0^{\log}$ is well defined on $\MLambdaplus$.
	Next, similar to the case of $\nabla_i^{\log}$ in the proof of Proposition \ref{prop:n1modp1mu_connection}, it can be shown that $\nabla_0^{\log}$, and therefore $\nabla_0 = \tfrac{1}{2w}\nabla_0^{\log}$ satisfies a Leibniz rule, i.e.\ $\nabla_0(fx) = \nabla_0(f)x + f\nabla_0(x)$, where the first operator on the right is $\nabla_0 \coloneq \tfrac{\log(\gamma_0)}{2w\log(\chi(\gamma_0))} \colon \LambdaRplus \rightarrow \LambdaRplus$ whose well definedness can be checked similar to above.
	Moreover, note that the operator $\nabla_0$ is flat by definition.
	Furthermore, similar to the case of $\nabla_i$ in the proof of Proposition \ref{prop:n1modp1mu_connection}, it can be shown that the operator $\nabla_0 \colon \LambdaRplus \rightarrow \Omega^1_{\LambdaRplus/R}$ is the usual de Rham differential $d \colon \LambdaRplus \rightarrow \Omega^1_{\LambdaRplus/R}$.
	So, in particular, the map $\nabla \colon \MLambdaplus \rightarrow \MLambdaplus \otimes_{\LambdaRplus} \Omega^1_{\LambdaRplus/R}$, given as $x \mapsto \nabla_0(x) dw$, is a well-defined connection.

	Now, let us show that the operator $\nabla_0$ is $p\adically$ quasi-nilpotent.
	Indeed, we first note that from the commutativity of $\varphi$ and $\gamma_0$, it follows that $\nabla_0^{\log} \circ \varphi = \varphi \circ \nabla_0^{\log}$.
	Therefore, it is easy to see that $\nabla_0 \circ \varphi = 4 \varphi \circ \nabla_0$.
	Next, recall that $\MLambda$ is equipped with a $\Lambda_R\linear$ isomorphism $\varphi^*(\MLambda)[1/p] \isomorphic \MLambda[1/p]$, compatible with the action of $\Gamma_F$ on each side.
	Then, by taking the $\Gamma_{\textrm{tor}}\textrm{-invariants}$ of the preceding isomorphism and using the isomorphism \eqref{eq:mlambda_pm_descent} in Proposition \ref{prop:mlambda_pm_descent}, we easily obtain that $\MLambdaplus$ is equipped with a $\LambdaRplus\linear$ isomorphism $\varphi^*(\MLambdaplus)[1/p] \isomorphic \MLambdaplus[1/p]$, compatible with the action of $\Gamma_0$ on each side.
	Using this observation and the relation $\nabla_0 \circ \varphi = 4 \varphi \circ \nabla_0$, similar to the proof of Proposition \ref{prop:mlambda0_connection}, it can be shown that for any $x$ in $\MLambdaplus$, the sequence $\nabla_0^k(x)$ converges $p\adically$ to 0 as $k \rightarrow +\infty$, in particular, $\nabla_0$ is $p\adically$ quasi-nilpotent.
	Furthermore, note that so far we have defined the $p\adically$ quasi-nilpotent flat connection $\nabla$ using the action of $\Gamma_0$, and conversely, we have shown that the action of $\Gamma_0$ may be recovered by the formula $\gamma_0 \coloneq \exp(\log(\chi(\gamma_0))\nabla_0^{\log})$.
	Again, similar to case of $\gamma_i$ in Proposition \ref{prop:n1modp1mu_connection}, using the Leibniz rule for $\nabla_0$, it can be checked that the action of $\gamma_0$ thus obtained, is semilinear.

	Finally, it remains to compare the $q\textrm{-de Rham}$ complex in \eqref{eq:s_qderham_mlambda0} with the de Rham complex $\MLambdaplus \otimes_{\LambdaRplus} \Omega^{\bullet}_{\LambdaRplus/R} = \MLambdaplus \otimes_{\LambdaFplus} \Omega^{\bullet}_{\LambdaFplus/O_F}$.
	As $\nabla_0$ is an endomorphism of $\MLambdaplus$, let $K_{\MLambdaplus}(\nabla_0)$ denote the corresponding Koszul complex in the sense of Definition \ref{defi:koszul_complex}.
	Then, we have an identification of complexes
	\begin{equation*}
		\MLambdaplus \otimes_{\LambdaFplus} \Omega^{\bullet}_{\LambdaFplus/O_F} = K_{\MLambdaplus}(\nabla_0) \colon \big[\LambdaRplus \xrightarrow{\hspace{1mm} \nabla_0 \hspace{1mm}} \LambdaRplus\big].
	\end{equation*}
	Now, recall that we have $\gamma_0 = \exp(\log(\chi(\gamma_0)) \nabla_0^{\log})$.
	Therefore, we may write
	\begin{equation}\label{eq:nablaqw_nabla0}
		\nabla_{q, w} = \tfrac{\gamma_0-1}{(\gamma_0-1)w} = \tfrac{2w \log(\chi(\gamma_0))}{(\gamma_0-1)w} \nabla_0 \big(1 + \textstyle \sum_{k \geqslant 1} \tfrac{(\log\chi(\gamma_0))^k}{(k+1)!} (\nabla_0^{\log})^k\big).
	\end{equation}
	Recall that $(\gamma_0-1)w = 8bw$ and $\log(\chi(\gamma_0)) = 4c$, for units $b$ and $c$ in $\ZZ_2$.
	Therefore, we have that $\tfrac{\log(\chi(\gamma_0)) 2w}{(\gamma_0-1)w} = \tfrac{c}{b}$ is a unit, and it is clear that the term inside the parentheses, on the right hand side of \eqref{eq:nablaqw_nabla0}, converges $p\adically$ to a unit.
	Now, in the notation of Lemma \ref{lem:koszul_complex_qiso}, let us set $i = 1$, $M = \MLambdaplus$, $f_1 = \nabla_0$ and take $h_1$ to be the product of $\tfrac{c}{b}$ with the formula in the parentheses in \eqref{eq:nablaqw_nabla0}, in particular, $f_1h_1 = \nabla_{q, w}$.
	Then, from Lemma \ref{lem:koszul_complex_qiso}, we obtain a natural quasi-isomorphism of complexes
	\begin{equation*}
		K_{\MLambdaplus}(\nabla_{q, \tau}) \isomorphic K_{\MLambdaplus}(\nabla_{q, w}) \isomorphic K_{\MLambdaplus}(\nabla_0).
	\end{equation*}
	In particular, we get that $\MLambdaplus^{\nabla_{q, \tau}=0} \isomorphic \MLambdaplus^{\nabla_0=0}$.
	This allows us to conclude.
\end{proof}

\begin{prop}\label{prop:lambdar+_comp}
	Set $M \coloneq \MLambdaplus^{\nabla_0=0}$ as an $R\module$ via the isomorphism $R \isomorphic \LambdaRplus^{\nabla_0=0}$.
	Then, $M$ is a finitely generated $p\torsionfree$ module over $R$.
	Moreover, the $\LambdaRplus\linear$ extension of the natural $R\linear$ inclusion $M \subset \MLambdaplus$ yields a natural $(\varphi, \nabla_0)\equivariant$ isomorphism
	\begin{equation}\label{eq:lambdar+_comp}
		\LambdaRplus \otimes_R M \isomorphic \MLambdaplus.
	\end{equation}
	Furthermore, the de Rham complex $\MLambdaplus \otimes_{\LambdaFplus} \Omega^{\bullet}_{\LambdaFplus/O_F}$ is acyclic in positive degrees.
	In particular, from Proposition \ref{prop:mlambda+_connection}, we have $H^1(K_{\MLambdaplus}(\nabla_{q, \tau})) = 0$.
\end{prop}
\begin{proof}
	The proof works in exactly the same manner as the proof of Proposition \ref{prop:lambdar0_comp} by changing the notations as follows:
	replace $\Lambda_{R, 0}$ with $\LambdaRplus = R[\tau^{[k]}, k \in \NN]_p^{\wedge}$, and replace $\MLambdao$ with $\MLambdaplus$ equipped with a Frobenius endomorphism (after inverting $p$) and a $p\adically$ quasi-nilpotent flat connection as in Proposition \ref{prop:mlambda+_connection} compatible with the Frobenius, where in the definition of the connection we replace the parameter $z$ with $w$ and the operator $\nabla_0 = \tfrac{1}{(p-1)z} \nabla_0^{\log}$ with $\nabla_0 = \tfrac{1}{2w} \nabla_0^{\log}$.
	Then, the proof of the isomorphism \eqref{eq:lambdar+_comp} and the acyclicity of the de Rham complex $\MLambdaplus \otimes_{\LambdaFplus} \Omega^{\bullet}_{\LambdaFplus/O_F}$ in positive degrees, respectively, follow by arguments similar to the ones given in Proposition \ref{prop:lambdar0_comp} for the isomorphism \eqref{eq:lambdar0_comp} and the acyclicity of the de Rham complex $\MLambdaplus \otimes_{\LambdaFplus} \Omega^{\bullet}_{\LambdaFplus/O_F}$ in positive degrees.
	This completes our proof.
\end{proof}

\subsection{Proof of Theorem \ref{thm:integral_comp_relative}}\label{subsec:proof_integral_comp_relative}

Let $N$ be a Wach module over $\AR$ as above and consider the $\AR(1)\module$ $\AR(1) \otimes_{p_2, \AR} N$ equipped with the tensor product Frobenius and tensor product action of $\Gamma_R^2$, where $\Gamma_R^2$ acts on $N$ via projection onto the second coordinate.
Then, in Example \ref{exam:wach_a1modp1mu_qconnection}, using the action of $1 \times \Gamma_R'$, we equipped $\AR(1)/(p_1(\mu)) \otimes_{p_2, \AR} N$ with a $\qconnection$, denoted as $\nabla_q$.
Moreover, in Proposition \ref{prop:n1modp1mu_connection}, we equipped $\AR(1)/(p_1(\mu)) \otimes_{p_2, \AR} N$ with a connection, denoted as $\nabla$, and showed that we have the second equality in the following set of equalities:
\begin{equation*}
	(\AR(1)/(p_1(\mu)) \otimes_{p_2, \AR} N)^{1 \times \Gamma_R'} = (\AR(1)/(p_1(\mu)) \otimes_{p_2, \AR} N)^{\nabla_q=0} = (\AR(1)/(p_1(\mu)) \otimes_{p_2, \AR} N)^{\nabla=0},
\end{equation*}
and where the first equality follows because the action of $1 \times \Gamma_R'$ on $\AR(1)/(p_1(\mu)) \otimes_{p_2, \AR} N$ is continuous for the $\padic$ topology (see Lemma \ref{lem:cont_coh_disc}).

Next, from Proposition \ref{prop:abarone_comp}, let us recall that $\MLambda \coloneq (\AR(1)/(p_1(\mu)) \otimes_{p_2, \AR} N)^{\nabla=0}$ is a finitely generated $\Lambda_R\module$, equipped with an induced action of $(\varphi, \Gamma_F)$.
Then, from \eqref{eq:abarone_comp} we have a natural $(\varphi, \nabla, \Gamma_F)\equivariant$, or equivalently, a $(\varphi, \Gamma_R)\equivariant$ isomorphism
\begin{equation*}
	\AR(1)/(p_1(\mu)) \otimes_{\Lambda_R} \MLambda \isomorphic \AR(1)/(p_1(\mu)) \otimes_{p_2, \AR} N.
\end{equation*}
(A simple diagram chase in \eqref{eq:crys_strat_horz_sec} shows that the preceding map is the $\AR(1)/(p_1(\mu))\linear$ extension of the natural $\Lambda_R\linear$ inclusion $\MLambda \subset \AR(1)/(p_1(\mu)) \otimes_{p_2, \AR} N$).

Now, let us set $M \coloneq \MLambda^{\Gamma_F}$ as an $R\module$.
Then, by using Proposition \ref{prop:mlambda_fpx_descent} and Proposition \ref{prop:lambdar0_comp}, for $p \geqslant 3$, and by using Proposition \ref{prop:mlambda_pm_descent} and Proposition \ref{prop:lambdar+_comp}, for $p = 2$, we see that $M$ is a finitely generated $R\module$.
Moreover, since $\AR(1)/(p_1(\mu)) \otimes_{p_2, \AR} N$ is $p\torsionfree$ by Lemma \ref{lem:nr1_pctf}, therefore, we get that $M$ is $p\torsionfree$ as well.
Furthermore, for $p \geqslant 3$, using \eqref{eq:lambdaR_comp} in the proof of Proposition \ref{prop:mlambda_fpx_descent} and \eqref{eq:lambdar0_comp} in Proposition \ref{prop:lambdar0_comp}, and for $p = 2$, using \eqref{eq:mlambda_pm_descent} in Proposition \ref{prop:mlambda_pm_descent} and \eqref{eq:lambdar+_comp} in Proposition \ref{prop:lambdar+_comp}, we see that the $\Lambda_R\linear$ extension of the natural inclusion $M \subset \MLambda$ yields $(\varphi, \Gamma_F)\equivariant$ isomorphism of $\Lambda_R\modules$
\begin{equation}\label{eq:mlambda_m_comp}
	\Lambda_R \otimes_R M \isomorphic \MLambda.
\end{equation}

Putting everything together from the discussion above, we have the following $(\varphi, 1 \times \Gamma_R)\equivariant$ diagram:
\begin{center}
	\begin{tikzcd}[column sep=large]
		\AR(1)/(p_1(\mu)) \otimes_{p_1, R} M \arrow[r] \arrow[d, "\wr"', "\eqref{eq:mlambda_m_comp}"] & \AR(1)/(p_1(\mu)) \otimes_{p_2, \AR} N\\
		\AR(1)/(p_1(\mu)) \otimes_{\Lambda_R} \MLambda \arrow[ru, "\sim", "\eqref{eq:abarone_comp}"'].
	\end{tikzcd}
\end{center}
The top horizontal arrow is defined to be the composition of the left vertical arrow and slanted arrow (by definition of the arrows, it follows that the top horizontal arrow is the $\AR(1)/(p_1(\mu))\linear$ extension of the natural $R\linear$ inclusion $M \subset \AR(1)/(p_1(\mu)) \otimes_{p_2, \AR} N$).
Hence, the diagram constructs the claimed isomorphism in \eqref{eq:integral_comp_relative}, proving the first part of Theorem \ref{thm:integral_comp_relative}.

Finally, it remains to show that we have a $\varphi\equivariant$ isomorphism of $R\modules$ $M \isomorphic N/\mu N$.
To show this claim, let us note that the multiplication map $\Delta \colon \AR(1) \rightarrow \AR$ is $(\varphi, \Gamma_R \times 1)\equivariant$, where $\Gamma_R \times 1$ acts on $\AR$ via projection onto the first coordinate (see Lemma \ref{lem:delta_modmun_phigamma}).
Then, the multiplication map $\Delta$ induces a $\varphi\equivariant$ map $\Delta_N \colon \AR(1) \otimes_{p_2, \AR} N \rightarrow N$.
Reducing it modulo $p_1(\mu)^n$, we obtain an $\AR/(\mu)^n\linear$ and $\varphi\equivariant$ map $\Delta_N \colon \AR(1)/(p_1(\mu))^n \otimes_{p_2, \AR} N \rightarrow N/\mu^n N$.
For $n=1$, we claim the following:
\begin{prop}\label{prop:deltan_modmu_relative}
	Let $M \coloneq (\AR(1)/(p_1(\mu)) \otimes_{p_2, \AR} N)^{1 \times \Gamma_R}$ as a finitely generated $\varphi\module$ over $R$.
	Then, the map $\Delta_N$ restricts to a $\varphi\equivariant$ $R\linear$ isomorphism
	\begin{equation}\label{eq:deltan_modmu_relative}
		M = (\AR(1)/(p_1(\mu)) \otimes_{p_2, \AR} N)^{1 \times \Gamma_R} \isomorphic N/\mu N.
	\end{equation}
\end{prop}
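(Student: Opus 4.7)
The plan is to deduce the isomorphism \eqref{eq:deltan_modmu_relative} by base changing the comparison isomorphism \eqref{eq:integral_comp_relative}, already established in Subsection \ref{subsec:proof_integral_comp_relative}, along the reduction induced by the multiplication map $\Delta : \AR(1) \to \AR$. Since the substantive input---namely the construction of $M$ as a finitely generated $p\torsionfree$ $R\module$ together with the $\AR(1)/p_1(\mu)\linear$ isomorphism \eqref{eq:integral_comp_relative}---has already been carried out, I do not expect any new obstacle; the remaining argument should amount to a short bookkeeping exercise.

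First I would observe that $\Delta(p_1(\mu)) = \mu$ (since $\Delta \circ p_1 = \Id_{\AR}$), so $\Delta$ descends to a $\varphi\equivariant$ surjective ring homomorphism $\overline{\Delta} : \AR(1)/p_1(\mu) \twoheadrightarrow \AR/\mu \isomorphic R$, where the final identification uses $\iota$ from Subsection \ref{subsubsec:ring_ar+}. The identities $\Delta \circ p_i = \Id_{\AR}$ for $i = 1, 2$ then yield $\overline{\Delta} \circ p_1 = \Id_R$ for the map $p_1 : R \to \AR(1)/p_1(\mu)$ of Remark \ref{rem:a1modp1mu_ralg}, while $\overline{\Delta} \circ p_2 : \AR \to R$ coincides with the canonical reduction modulo $\mu$.

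Next I would apply the functor $R \otimes_{\overline{\Delta}, \AR(1)/p_1(\mu)} (-)$ to both sides of \eqref{eq:integral_comp_relative}, each regarded as an $\AR(1)/p_1(\mu)\module$ via its first tensor factor. Using $\overline{\Delta} \circ p_1 = \Id_R$, the left side $\AR(1)/p_1(\mu) \otimes_{p_1, R} M$ becomes $M$; using that $\overline{\Delta} \circ p_2$ is reduction modulo $\mu$, the right side $\AR(1)/p_1(\mu) \otimes_{p_2, \AR} N$ becomes $N/\mu N$. Chasing the formula $a \otimes b \otimes x \mapsto ab \otimes x$ of \eqref{eq:integral_comp_relative} through this base change shows that the resulting $R\linear$ isomorphism $M \isomorphic N/\mu N$ coincides with the restriction of $\Delta_N$ to $M$, and $\varphi\textrm{-equivariance}$ follows from that of both $\Delta$ and \eqref{eq:integral_comp_relative}.

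The only point requiring care is to verify that the $\AR(1)/p_1(\mu)\textrm{-module}$ structures line up correctly so that the base change along $\overline{\Delta}$ produces $M$ on one side and $N/\mu N$ on the other; once this is checked, the conclusion is immediate.
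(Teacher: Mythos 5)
Your proposal is correct and follows essentially the same approach as the paper: the paper's proof of Proposition \ref{prop:deltan_modmu_relative} likewise base changes the isomorphism \eqref{eq:integral_comp_relative} along $\Delta : \AR(1)/p_1(\mu) \to R$, using precisely the two identities $\Delta \circ p_1 = \Id_R$ and $\Delta \circ p_2 = (\textrm{reduction mod } \mu)$ to identify the two sides with $M$ and $N/\mu N$ respectively.
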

\begin{proof}
	By definition, we have that $\Delta_N$ is $\varphi\equivariant$, so we only need to check the bijectivity of the map in claim.
	Let us note that the composition $R \xrightarrow{p_1} \AR(1)/(p_1(\mu)) \xrightarrow{\Delta} R$ is the identity.
	Moreover, the composition $\AR \xrightarrow{p_2} \AR(1) \twoheadrightarrow \AR(1)/(p_1(\mu)) \xrightarrow{\Delta} R$ coincides with the map $\AR \twoheadrightarrow \AR/(\mu) = R$.
	So, we have that
	\begin{align*}
		\Delta_N \colon M &= R \otimes_{\Delta, \AR(1)/(p_1(\mu))} \big(\AR(1)/(p_1(\mu)) \otimes_{p_1, R} M \big)\\
		&\isomorphic R \otimes_{\Delta, \AR(1)/(p_1(\mu))} \big(\AR(1)/(p_1(\mu)) \otimes_{p_2, \AR} N\big) \isomorphic \AR/(\mu) \otimes_{\AR} N = N/\mu N,
	\end{align*}
	where the first isomorphism follows from the isomorphism in Theorem \ref{thm:integral_comp_relative} and the second isomorphism follows from the discussion above.
	Hence, we get the claim.
\end{proof}

This completes the proof of the second part of Theorem \ref{thm:integral_comp_relative}.

\begin{rem}\label{rem:p1p2_modmu_comp_relative}
	Let $N$ be a Wach module over $\AR$.
	Then, we have a $\varphi\equivariant$ commutative diagram
	\begin{equation}\label{eq:p1p2_modmu_comp_relative}
		\begin{tikzcd}
			\AR(1)/(p_1(\mu)) \otimes_{p_1, \AR} N \arrow[r, "\sim"] \arrow[d, equal] & \AR(1)/(p_1(\mu)) \otimes_{p_2, \AR} N\\
			\AR(1)/(p_1(\mu)) \otimes_{p_1, R} N/\mu N \arrow[r, "\sim"] & \AR(1)/(p_1(\mu)) \otimes_{p_1, R} M \arrow[u, "\wr"'],
		\end{tikzcd}
	\end{equation}
	where the bottom horizontal arrow is the extension along $p_1 \colon R \rightarrow \AR(1)/(p_1(\mu))$ of the inverse of the isomorphism $\Delta_N$ in \eqref{eq:deltan_modmu_relative}, the right vertical arrow is \eqref{eq:integral_comp_relative} and the top horizontal arrow is the composition of the left vertical, the bottom horizontal and the right vertical arrows.
	Let us note that in Proposition \ref{prop:deltan_modmun_relative}, we shall show that the map $\Delta_N$ is equivariant for the natural action of $1 \times \Gamma_R$ on the source and the natural action of $\Gamma_R$ on the target.
	Then, it will follow that the bottom arrow in the diagram \eqref{eq:p1p2_modmu_comp_relative} is $(\varphi, 1 \times \Gamma_R)\equivariant$, and therefore, the diagram \eqref{eq:p1p2_modmu_comp_relative} is also $(\varphi, 1 \times \Gamma_R)\equivariant$.
\end{rem}

\section{Prismatic \texorpdfstring{$F\crystals$}{-} and Wach modules}\label{sec:prismatic_wach}

In this section, let $X \coloneq \Spf R$ denote a $\padic$ formal scheme, where $R$ is the ring from Section \ref{subsec:setup_nota}.
Our main goal in this section is to relate analytic/completed prismatic $F\crystals$ on the absolute prismatic site $(\Spf R)_{\prism}$ (see Section \ref{subsec:prismatic_fcrystals}) to Wach modules over $\AR$ (see Definition \ref{defi:wach_mods_relative}).
We start with the following construction:

\begin{prop}\label{prop:afcrystal_eval_ar+}
	Let $\paze$ be an object of $\Vect^{\an}(X_{\prism})$ and set $N \coloneq \paze(\AR, [p]_q)$.
	Then, $N$ is a finitely generated $(p, \mu)\textrm{-adically}$ complete $\AR\module$ equipped with a continuous action of $\Gamma_R$ such that the action of $\Gamma_R$ is trivial on $N/\mu N$.
	Moreover, if $\paze$ is an object of $\Vect^{\an, \varphi}(X_{\prism})$, then $N$ is a Wach module over $\AR$.
\end{prop}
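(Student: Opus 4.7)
The plan is to prove the proposition in four blocks corresponding to the properties to be verified. The underlying principle is to transport structures from $\paze$ and from $(\AR, [p]_q)$ onto $N$ using the functoriality of the analytic crystal $\paze$, and then use the explicit results of Section~\ref{sec:theprism_ar+} to check the Wach module axioms.

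First, I would set up the $\AR$-module structure and establish finite generation. By Lemma~\ref{lem:ar+_pq_prism}, $(\AR, [p]_q)$ is an object of $X_{\prism}$, so Definition~\ref{defi:analytic_crystal} gives $\paze(\AR, [p]_q)$ as a vector bundle on the punctured spectrum $U := \Spec(\AR)\setminus V(p,[p]_q) = \Spec(\AR)\setminus V(p,\mu)$ (using $[p]_q \equiv \mu^{p-1} \pmod p$). Taking $N$ to be the global sections on $U$, I would invoke algebraic Hartogs' lemma: $\AR \cong R\llbracket\mu\rrbracket$ is Noetherian regular, and $\{p,\mu\}$ is a regular sequence of length two cutting out the codimension-two closed subset $V(p,\mu)$; hence the vector bundle extends uniquely to a reflexive coherent sheaf on $\Spec(\AR)$, and $N$ is a finitely generated reflexive $\AR$-module. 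Reflexivity further yields that both $\{p,\mu\}$ and $\{\mu,p\}$ are regular on $N$, fulfilling condition~(1) of Definition~\ref{defi:wach_mods_relative}.

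Second, I would obtain the continuous semilinear $\Gamma_R$-action by functoriality. By Lemma~\ref{lem:gammar_act_prisaut}, each $g \in \Gamma_R$ induces an automorphism of $(\AR, [p]_q)$ in $X_{\prism}$; applying $\paze$ and passing to global sections on the punctured spectrum yields a semilinear automorphism of $N$, and these assemble into a $\Gamma_R$-action. Continuity for the $(p,\mu)$-adic topology on $N$ follows from the finite generation of $N$, the continuity of the $\Gamma_R$-action on $\AR$, and the containment $(g-1)\AR \subset \mu\AR$ from the same lemma.

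Third, and most delicately, I would verify the triviality of the $\Gamma_R$-action on $N/\mu N$. Here the strategy is to exploit the self-product $(\AR(1), [p]_q)$ of Lemma~\ref{lem:selfprod_prismenv}. By the crystal property, $\paze(\AR(1), [p]_q)$ carries two $\AR(1)$-module structures via $p_1$ and $p_2$, canonically identified by a stratification $\varepsilon : \AR(1)\otimes_{p_1,\AR} N \isomorphic \AR(1)\otimes_{p_2,\AR} N$. For each $g\in\Gamma_R$, the pair $(\mathrm{id},g)$ classifies a unique morphism $h_g : \AR(1) \to \AR$ in $X_{\prism}$ with $h_g\circ p_1 = \mathrm{id}$ and $h_g\circ p_2 = g$, and pulling back $\varepsilon$ along $h_g$ recovers the $g$-action on $N$. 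Now $h_g$ sends $p_1(\mu)$ to $\mu$, so reducing modulo $\mu$ yields a map $\bar h_g : \AR(1)/p_1(\mu) \to R$ whose two compositions with $p_1$ and $p_2$ both reduce to the identity on $R$ (using $(g-1)\AR\subset\mu\AR$). Combining this with Proposition~\ref{prop:arn_gammar_triv_modmu}, which states that $\Gamma_R \times 1$ acts trivially on $\AR(1)/p_1(\mu)$, and the $\Gamma_R^2$-equivariance of the stratification, forces the reduction of $(h_g)^*\varepsilon$ modulo $\mu$ to be the identity on $N/\mu N$, i.e.\ $(g-1)N \subset \mu N$. This is the step I expect to be the main obstacle, since making the recovery of the semilinear action from the stratification precise and tracking the mod-$p_1(\mu)$ compatibility requires careful bookkeeping.

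Fourth, if $\paze \in \Vect^{\an,\varphi}(X_{\prism})$, I would incorporate the Frobenius: the isomorphism $\varphi_{\paze} : (\varphi^*\paze)[1/\pazi_{\prism}] \isomorphic \paze[1/\pazi_{\prism}]$ evaluated at $(\AR, [p]_q)$ gives an $\AR$-linear isomorphism $\varphi_N : (\varphi^*N)[1/[p]_q] \isomorphic N[1/[p]_q]$, providing condition~(3) of Definition~\ref{defi:wach_mods_relative}. Its $\Gamma_R$-equivariance is automatic from naturality of $\varphi_{\paze}$ with respect to the Galois automorphisms of $(\AR, [p]_q)$ in $X_{\prism}$. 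Assembling all four steps shows that $N$ satisfies every axiom of Definition~\ref{defi:wach_mods_relative} and is therefore a Wach module over $\AR$.
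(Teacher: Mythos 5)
Your steps 1, 2, and 4 match the paper's strategy: finite generation of $N$ via extending the bundle across the codimension-two locus $V(p,\mu)$ (the paper invokes \cite[\href{https://stacks.math.columbia.edu/tag/0BK1}{Tag 0BK1}, \href{https://stacks.math.columbia.edu/tag/0BK0}{Tag 0BK0}]{stacks-project} rather than ``algebraic Hartogs'', but the substance is the same), the semilinear $\Gamma_R$-action by applying $\paze$ to the automorphisms of $(\AR,[p]_q)$, and the Frobenius by evaluating $\varphi_{\paze}$.

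Your step 3 is where you diverge from the paper, and it is worth flagging on two counts. First, the paper's argument is far simpler and you appear to have missed it entirely: $(R,p)$ is itself a prism in $X_{\prism}$ on which $\Gamma_R$ acts trivially, the reduction map $(\AR,[p]_q)\to(R,p)$ is a $\Gamma_R$-equivariant morphism of prisms, and the crystal property therefore forces the mod-$\mu$ reduction of each automorphism $g$ on $N$ to factor through the trivial action on $\paze(R,p)$; since $N/\mu N$ is $p$-torsion free it injects into $\paze(R,p)$, giving the triviality outright. No self-product, no stratification, no reference to Proposition~\ref{prop:arn_gammar_triv_modmu} is needed. Second, the version you do write down has a genuine gap: you take $h_g=\Delta\circ(1,g)$, so $\bar h_g=\bar\Delta\circ\overline{(1,g)}$, and then invoke Proposition~\ref{prop:arn_gammar_triv_modmu}, which kills the action of $\Gamma_R\times 1$ on $\AR(1)/p_1(\mu)$ --- but $(1,g)$ lives in the \emph{second} factor, whose mod-$p_1(\mu)$ action is precisely the non-trivial one (it recovers the $q$-connection). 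To close this you must rewrite $h_g$ as $g\circ\Delta\circ(g^{-1},1)$ and use both that $\Gamma_R\times 1$ acts trivially on $\AR(1)/p_1(\mu)$ and that $g$ acts trivially on $R=\AR/\mu$, whence $\bar h_g=\bar\Delta$ and then $\bar\Delta^*(\bar\varepsilon)=\mathrm{id}$. This is essentially the route the paper follows in Construction~\ref{const:strat_eval_ar+}, where no crystal is available; but with the crystal in hand the $(R,p)$ argument of Proposition~\ref{prop:afcrystal_eval_ar+} is the one you want.
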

\begin{proof}
	From Lemma \ref{lem:analyticrys_ar+_fingen} below, with $\paze(\AR, [p]_q)[1/p]$ as a finite projective $\AR[1/p]\module$, and $\paze(\AR, [p]_q)[1/\mu]$ as a finite projective $\AR[1/\mu]\module$, we see that the $\AR\module$
	\begin{equation*}
		N \coloneq \paze(\AR, [p]_q) = \paze(\AR, [p]_q)[1/p] \cap \paze(\AR, [p]_q)[1/\mu] \subset \paze(\AR, [p]_q)[1/p, 1/\mu],
	\end{equation*}
	is finitely generated and the sequences $\{p, \mu\}$ and $\{\mu, p\}$ are regular on $N$.

	Next, let $g$ be any element of $\Gamma_R$ and from Lemma \ref{lem:gammar_act_prisaut} recall that $g$ is an automorphism of $(\AR, [p]_q)$ in $X_{\prism}$, i.e.\ we have that $g \colon (\AR, [p]_q) \isomorphic (\AR, [p]_q)$.
	Since, $\paze$ is a crystal, it follows that base changing $N = \paze(\AR, [p]_q)$ along $g$, induces an $\AR\textrm{-semilinear}$ isomorphism $g \colon \AR \otimes_{g, \AR}N \isomorphic N$, in particular, a semilinear action of $\Gamma_R$.
	Note that $\AR/(\mu) \isomorphic R$, the pair $(R, p)$ is a prism and an object of $(\Spf R)_{\prism}$ and $\Gamma_R$ acts trivially on $R$.
	Therefore, the reduction modulo $\mu$ of the isomorphism $g \colon \AR \otimes_{g, \AR}N \isomorphic N$ yields an isomorphism $g \textrm{ mod } \mu \colon R \otimes_{g, R} N/\mu N \isomorphic N/\mu N$, which is the identity, in particular, we see that the action of $\Gamma_R$ is trivial on $N/\mu N$.
	Now, additionally assume that $\paze$ is an $F\crystal$, i.e.\ an object of $\Vect^{\an, \varphi}(X_{\prism})$.
	Then, from Definition \ref{defi:analytic_fcrystal}, we have an $\AR\linear$ isomorphism $\varphi_N \colon (\varphi^*N)[1/[p]_q] \isomorphic N[1/[p]_q]$.
	Again, using that $\paze$ is a crystal and any $g$ in $\Gamma_R$ is an automorphism of $(\AR, [p]_q)$ in $X_{\prism}$, it is easy to show that $\varphi_N$ is equivariant for the action of $\Gamma_R$ on $N$ described above.
	So, we see that $N =  \paze(\AR, [p]_q)$ satisfies all the axioms of Definition \ref{defi:wach_mods_relative}, i.e.\ it is a Wach module over $\AR$.
\end{proof}

\begin{rem}\label{rem:cfcrystal_eval_ar+}
	A claim similar to Proposition \ref{prop:afcrystal_eval_ar+} holds for completed prismatic $F\crystals$.
	More precisely, let $\paze$ be an object of $\CRhat(X_{\prism})$ and set $N \coloneq \paze(\AR, [p]_q)$.
	Then, $N$ is a finitely generated $\AR\module$ equipped with a continuous action of $\Gamma_R$ such that the action of $\Gamma_R$ is trivial on $N/\mu N$.
	Moreover, if $\paze$ is an object of $\CRhatphi(X_{\prism})$, then $N$ is a Wach module over $\AR$.
\end{rem}
\begin{proof}
	By Definition \ref{defi:completed_crystal}, note that $N \coloneq \paze(\AR, [p]_q)$ is a finitely generated $(p, [p]_q)\textrm{-adically} = (p, \mu)\adically$ complete $\AR\module$.
	Then, similar to the proof of Proposition \ref{prop:afcrystal_eval_ar+}, using that $\paze$ is a crystal and the fact that any $g$ in $\Gamma_R$ is an automorphism of $(\AR, [p]_q)$ in $X_{\prism}$ with the action of $\Gamma_R$ being trivial on $\AR/(\mu) \isomorphic R$, we get that $N$ is equipped with a semilinear action of $\Gamma_R$ such that the action of $\Gamma_R$ is trivial on $N/\mu N$.
	Now, additionally assume that $\paze$ is an $F\crystal$, i.e.\ an object of $\CRhatphi(X_{\prism})$.
	So, from Definition \ref{defi:completed_fcrystal}, we have an $\AR\linear$ isomorphism $\varphi_N \colon (\varphi^*N)[1/[p]_q] \isomorphic N[1/[p]_q]$ and similar to above, it is easy to see that $\varphi_N$ is $\Gamma_R\equivariant$.
	It remains to show that the sequences $\{p, \mu\}$ and $\{\mu, p\}$ are regular on $N$.

	Let $\frakS \coloneq R\llbracket u \rrbracket$, and recall that the Breuil-Kisin prism $(\frakS, u-p)$ from Section \ref{defi:completed_crystal} is an object of $X_{\prism}$ and a cover of the final object of the topos $\Shv(X_{\prism})$ (see \cite[Section 3.3]{du-liu-moon-shimizu}).
	In particular, there exists a prism $(B, J)$ in $X_{\prism}$ so that it is $(p, [p]_q)\textrm{-completely}$ faithfully flat over $(\AR, [p]_q)$ and admits a map from $(\frakS, u-p)$ in $X_{\prism}$.
	Moreover, as $\AR$ is noetherian, it follows that the map $\AR \rightarrow B$ is faithfully flat.
	Now, using an argument similar to \cite[Lemma 3.19]{du-liu-moon-shimizu} we have that $B \otimes_{\AR} N \isomorphic \paze(B, J)$, and therefore
	\begin{equation*}
		(B \otimes_{\AR} N)[1/p] \isomorphic \paze(B, J)[1/p] \lisomorphic B \otimes_{\frakS} \paze(\frakS, u-p)[1/p],
	\end{equation*}
	where the last isomorphism follows from \cite[Lemma 3.24]{du-liu-moon-shimizu}.
	As $\paze(\frakS, u-p)[1/p]$ is finite projective over $\frakS[1/p]$ and $\AR \rightarrow B$ is faithfully flat, therefore, it follows that $N[1/p]$ is finite projective over $\AR[1/p]$.
	A similar argument (by inverting the prismatic ideal instead of $p$) shows that $N[1/[p]_q]$ is finite projective over $\AR[1/[p]_q]$.
	Then, by using Lemma \ref{lem:analyticrys_ar+_fingen}, we see that the sequences $\{p, [p]_q\}$ and $\{[p]_q, p\}$ are regular on $N$, and therefore, the sequences $\{p, \mu\}$ and $\{\mu, p\}$ are regular on $N$ by \cite[Lemma 3.2]{abhinandan-relative-wach-ii}.
	Hence, it follows that $N$ satisfies all the axioms of Definition \ref{defi:wach_mods_relative}, in particular, it is a Wach module over $\AR$.
\end{proof}

Let $a$ be an element of $\AR$ such that the sequences $\{p, a\}$ and $\{a, p\}$ are regular on $\AR$ and we have an equality of zero loci $V(p, a) = V(p, [p]_q)$ inside $\Spec(\AR)$.
For example, we may take $a = \mu$ or $[p]_q$.
Then, we have the following:
\begin{lem}\label{lem:analyticrys_ar+_fingen}
	Let $M$ be a finite projective $\AR[1/p]\module$ and $D$ a finite projective $\AR[1/a]\module$, equipped with an $\AR[1/p, 1/a]\linear$ isomorphism $f \colon \AR[1/p, 1/a] \otimes_{\AR[1/p]} M \isomorphic \AR[1/p, 1/a] \otimes_{\AR[1/a]} D$.
	Then, $N \coloneq M \cap D \subset \AR[1/p, 1/a] \otimes_{\AR[1/a]} D$ is a finitely generated $\AR\module$, the sequences $\{p, a\}$ and $\{a, p\}$ are regular on $N$, and we have natural isomorphisms $N[1/p] \isomorphic M$ and $N[1/a] \isomorphic D$.
\end{lem}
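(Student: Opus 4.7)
The plan is to realize $N$ as the global sections of the vector bundle on the punctured spectrum $U := \Spec(\AR) \setminus V(p, a)$ determined by the gluing datum $(M, D, f)$, and then to deduce finite generation from a depth argument, after which the remaining claims follow formally. Concretely, $U$ is covered by the two affine opens $U_1 := \Spec(\AR[1/p])$ and $U_2 := \Spec(\AR[1/a])$ with intersection $\Spec(\AR[1/p, 1/a])$; the triple $(M, D, f)$ is precisely the data of a vector bundle $\calf$ on $U$, and the associated \v{C}ech complex for this cover identifies $H^0(U, \calf)$ with the kernel of the map $M \oplus D \to \AR[1/p, 1/a] \otimes_{\AR[1/a]} D$ sending $(m, d) \mapsto f(m \otimes 1) - d \otimes 1$. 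This kernel projects bijectively onto $M \cap D = N$ inside $\AR[1/p, 1/a] \otimes_{\AR[1/a]} D$.

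The key step is to show that $N$ is finitely generated over $\AR$. Since $\{p, a\}$ is a regular sequence of length $2$ on the noetherian ring $\AR$, we have $\textup{depth}_{(p, a)} \AR \geq 2$; by standard local-cohomology results on sections over the complement of a closed subset of sufficient depth (for instance \cite[\href{https://stacks.math.columbia.edu/tag/0BK6}{Tag 0BK6}]{stacks-project}), global sections on $U$ of coherent sheaves extend to finitely generated $\AR$-modules. Applied to the vector bundle $\calf$ this yields that $N \isomorphic H^0(U, \calf)$ is a finitely generated $\AR$-module.

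For the regularity assertions, I would observe that $N$ is $p$-torsion free (from $N \subset M$, on which $p$ acts invertibly) and $a$-torsion free (from $N \subset D$). To get $\{p, a\}$-regularity it then suffices to verify that the natural map $N/pN \to D/pD$ is injective, because $a$ acts invertibly on the target: if $n \in N$ satisfies $n = pd$ for some $d \in D$, then $d = n/p$ is well-defined in $M$ (since $p$ is invertible on $M$) and lies in $M \cap D = N$, so $n \in pN$. The argument for $\{a, p\}$-regularity is symmetric.

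Finally, for the identifications $N[1/p] \isomorphic M$ and $N[1/a] \isomorphic D$: given $m \in M$ viewed inside $\AR[1/p, 1/a] \otimes_{\AR[1/a]} D$ via $f$, for $k$ sufficiently large $p^k m$ lies in $D$, hence in $M \cap D = N$, giving $m \in N[1/p]$; the reverse inclusion $N[1/p] \subset M$ is immediate from $N \subset M$, and the case $N[1/a] \isomorphic D$ is symmetric. The hard part is the finite generation assertion in the second paragraph; once $N$ has been reinterpreted as $H^0(U, \calf)$, everything else is essentially formal.
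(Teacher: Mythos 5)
Your overall route coincides with the paper's: glue $(M, D, f)$ by flat descent into a vector bundle $\calf$ on $U := \Spec(\AR) \setminus V(p,a)$, identify $N$ with $H^0(U, \calf)$, deduce finite generation from coherence of $j_{\ast}\calf$, and then argue the regularity of the two sequences and the identities $N[1/p] \isomorphic M$, $N[1/a] \isomorphic D$ by hand. Your elementary arguments in the third and fourth paragraphs are correct and in fact more explicit than the paper, which delegates these points to \cite[Lemmas 3.3 and 3.5]{abhinandan-relative-wach-ii}.

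The weak point is precisely the step you single out as the hard one. The principle you invoke --- that $\textup{depth}_{(p,a)}(\AR) \geq 2$ already forces $H^0(U, \calg)$ to be a finite $\AR\module$ for \emph{every} coherent $\pazo_U\module$ $\calg$ --- is false at that level of generality: on the punctured spectrum $U$ of $k\llbracket x, y \rrbracket$, the skyscraper sheaf at the closed point $(x)$ of $U$ has global sections $k((y))$, which is not finite over $k\llbracket x, y \rrbracket$, even though the depth hypothesis holds. A depth hypothesis on $\AR$ alone is the hypothesis of a different statement, namely that a module $P$ \emph{already defined over all of} $\AR$ with $\textup{depth}_{(p,a)}(P) \geq 2$ satisfies $P \isomorphic \Gamma(U, \widetilde{P})$; it does not by itself control sheaves that are only given on $U$. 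The finiteness criterion that actually applies here --- \cite[\href{https://stacks.math.columbia.edu/tag/0BK1}{Tag 0BK1}]{stacks-project}, which is what the paper uses --- carries a hypothesis on $\calf$ itself: writing $S = \Spec(\AR)$, one needs, for each associated point of $\calf$, each $z \in V(p,a)$ and each relevant prime $\frakp$ of $\pazo_{S,z}^{\wedge}$, that $\dim(\pazo_{S,z}^{\wedge}/\frakp) \geq 2$. For your $\calf$ this hypothesis does hold, and your depth observation is the right ingredient to verify it: since $S$ is irreducible and $\calf$ is a vector bundle, its only associated point is the generic point; and since $\{p,a\}$ remains a regular sequence after localizing at any $z \in V(p,a)$, one gets $\textup{depth}(\pazo_{S,z}^{\wedge}) = \textup{depth}(\pazo_{S,z}) \geq 2$, whence $\dim(\pazo_{S,z}^{\wedge}/\frakp) \geq 2$ for every associated prime $\frakp$ of the completion, because the depth of a Noetherian local ring is bounded above by the dimension of its quotient by any associated prime. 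So the conclusion is correct and the repair is short, but as written the finiteness step rests on a statement that is not true for general coherent sheaves on $U$, and the bridge from your depth condition to the actual hypothesis of the finiteness theorem is the content of the step and needs to be spelled out.
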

\begin{proof}
	Let $S = \Spec(\AR)$, $Z = V(p, a) \subset S$ as the zero locus of the ideal $(p, a) \subset \AR$ (the same as the zero locus of $(p, [p]_q) \subset \AR$) and set $U \coloneq S \setminus Z \xhookrightarrow{j} S$.
	Then, note that the natural map $\Spec(\AR[1/a]) \cup \Spec(\AR[1/p]) \rightarrow \Spec(\AR)\setminus V(p, a) = U$ is a flat cover.
	Therefore, from the data $(M, D, f)$ and faithfully flat descent, we obtain a vector bundle $\pazf$ over $U \subset S$.
	Now, as $U$ is irreducible and $\pazf$ is a vector bundle on it, therefore, the only associated point of $\pazf$ is the generic point of $U$.
	Moreover, $U$ is the complement of a codimension 2 closed subset $Z \subset S$, so from \cite[\href{https://stacks.math.columbia.edu/tag/0BK3}{Proposition 0BK3}]{stacks-project} it follows that $j_{\ast} \pazf$ is coherent, i.e.\ $N \coloneq H^0(S, j_{\ast} \pazf) = H^0(U, \pazf) = M \cap D \subset \AR[1/p, 1/a] \otimes_{\AR[1/a]} D$ is a finitely presented $\AR\module$.
	Furthermore, \cite[\href{https://stacks.math.columbia.edu/tag/0BK0}{Tag 0BK0}]{stacks-project} implies that $j^*j_{\ast} \pazf \isomorphic \pazf$ on $U$ and $H^0_Z(N) = H^1_Z(N) = 0$.
	Then, an easy argument shows that the sequences $\{p, a\}$ and $\{a, p\}$ are regular on $N$, and $N[1/p] \isomorphic M$ and $N[1/a] \isomorphic D$ (for example, see \cite[Remark A.6 and Lemma 3.3]{abhinandan-relative-wach-ii} for $a = \mu$).
	This allows us to conclude.
\end{proof}

\begin{rem}\label{rem:regsec_saturation}
	Let $N$ be a finitely generated $\AR\module$, let $a$ in $\AR$ be as above, and assume that the sequence $\{p, a\}$ and $\{a, p\}$ are regular on $N$.
	Then, we have equalities $(N/pN)[a] = (N/aN)[p] = 0$, and $(N[1/\mu])/p = (N/pN)[1/\mu]$.
	So, it follows that we have natural inclusions $N/p^nN \hookrightarrow (N/p^nN)[1/\mu]$ for each $n \geqslant 1$, and therefore \cite[Lemma A.1]{abhinandan-relative-wach-ii} implies that we have $N = N[1/p] \cap N[1/a] \subset N[1/p, 1/a]$.
\end{rem}

From Proposition \ref{prop:afcrystal_eval_ar+}, note that we have a well-defined evaluation functor for analytic $F\crystals$:
\begin{equation}\label{eq:afcrystal_eval_ar+}
	\begin{aligned}
		\eval_{\AR}^{\prism} \colon \Vect^{\an, \varphi}(X_{\prism}) &\longrightarrow (\varphi, \Gamma_R)\Mod_{\AR}^{[p]_q}\\
				\paze &\longmapsto \paze(\AR, [p]_q).
	\end{aligned}
\end{equation}
Similarly, from Remark \ref{rem:cfcrystal_eval_ar+}, we also obtain a well-defined evaluation functor for completed $F\crystals$:
\begin{equation}\label{eq:cfcrystal_eval_ar+}
	\begin{aligned}
		\eval_{\AR}^{\prism} \colon \CRhatphi(X_{\prism}) &\longrightarrow (\varphi, \Gamma_R)\Mod_{\AR}^{[p]_q}\\
				\paze &\longmapsto \paze(\AR, [p]_q).
	\end{aligned}
\end{equation}
From Lemma \ref{lem:ar+_cover_fo}, recall that $(\AR, [p]_q)$ covers the final object of the topos $\Shv(X_{\prism})$, and we claim the following:
\begin{thm}\label{thm:fcrystals_wachmod_relative}
	The evaluation functors in \eqref{eq:afcrystal_eval_ar+} and \eqref{eq:cfcrystal_eval_ar+} induce equivalences of categories.
\end{thm}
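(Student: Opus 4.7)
The plan is to deduce the stated equivalences from an intermediate equivalence between $(\varphi, \Gamma_R)\Mod_{\AR}^{[p]_q}$ and a category $\Strat^{\an, \varphi}(\AR(\bullet))$ of $\varphi$-modules over $\AR$ equipped with a stratification relative to the prismatic \v{C}ech nerve. First I would exploit the fact that $(\AR, [p]_q)$ is a cover of the final object of $\Shv(X_{\prism})$ (Lemma \ref{lem:ar+_cover_fo}) together with the explicit description of its self-products (Lemma \ref{lem:selfprod_prismenv} and Lemma \ref{lem:pi_rj_ff}) and standard prismatic descent to obtain an equivalence $\eval_{\AR(\bullet)}^{\prism}: \Vect^{\an,\varphi}(X_{\prism}) \isomorphic \Strat^{\an,\varphi}(\AR(\bullet))$, and similarly for completed $F$-crystals; the finiteness conditions on both sides are matched via Lemma \ref{lem:analyticrys_ar+_fingen} and Remark \ref{rem:wach_finprop_pmu}.

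Next I would define an explicit functor $\eval_{\AR}^{\Strat}: \Strat^{\an,\varphi}(\AR(\bullet)) \to (\varphi, \Gamma_R)\Mod_{\AR}^{[p]_q}$ in the easy direction. For any $g \in \Gamma_R$, Lemma \ref{lem:gammar_act_prisaut} provides an automorphism of $(\AR, [p]_q)$ in $X_{\prism}$, so the pair $(g,g)$ defines an automorphism of $(\AR(1), [p]_q)$; feeding this through a stratification $\varepsilon : \AR(1) \otimes_{p_2, \AR} N \isomorphic \AR(1) \otimes_{p_1, \AR} N$ and specializing along the multiplication map $\Delta: \AR(1) \to \AR$ produces a semi-linear $\Gamma_R$-action on $N$. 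Continuity, triviality modulo $\mu$, and compatibility with Frobenius all follow formally from the simplicial identities and from the triviality of the $\Gamma_R$-action on $\AR/\mu \isomorphic R$; the resulting composite $\eval_{\AR}^{\Strat} \circ \eval_{\AR(\bullet)}^{\prism}$ will agree with $\eval_{\AR}^{\prism}$ (giving the diagram of Proposition \ref{intro_prop:prisfcrys_strat_wach_relative}).

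The bulk of the work will be constructing a quasi-inverse $\Strat_{\AR(\bullet)}: (\varphi, \Gamma_R)\Mod_{\AR}^{[p]_q} \to \Strat^{\an, \varphi}(\AR(\bullet))$. Given a Wach module $N$, I would build the stratification $\varepsilon$ by first producing, compatibly for each $n \geq 1$, an isomorphism modulo $p_1(\mu)^n$ and then passing to the inverse limit (legitimate since both sides are $p_1(\mu)$-adically complete and separated by Lemma \ref{lem:nr1_pctf}). The base case $n=1$ is exactly the output of Theorem \ref{thm:integral_comp_relative}: it yields the descent datum $M \isomorphic N/\mu N$ and the first-order comparison $\AR(1)/p_1(\mu) \otimes_{p_1, R} M \isomorphic \AR(1)/p_1(\mu) \otimes_{p_2, \AR} N$. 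For the inductive step, I would run essentially the same three-step descent that appeared in Section \ref{sec:theprism_ar+}: the exact sequences of Lemma \ref{lem:a1_geometric_exact} (for $\Gamma_R'$) combined with Lemmas \ref{lem:a1_fpx_exact}--\ref{lem:a1_gamma0_exact} for $p \geq 3$ or Lemmas \ref{lem:a1_pm_exact}--\ref{lem:a1_gamma0_exact_2} for $p=2$ provide the cohomological vanishing needed to lift the first-order stratification through each successive quotient, while $\varphi$-equivariance is preserved automatically since every sequence in play is $\varphi$-equivariant.

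Finally, the cocycle condition on $\AR(2)$ and the verification that $\Strat_{\AR(\bullet)}$ and $\eval_{\AR}^{\Strat}$ are mutually quasi-inverse should reduce, by the uniqueness built into Theorem \ref{thm:integral_comp_relative} and the analog of Proposition \ref{prop:delta_modmun} for $(\AR(2), r_1([p]_q))$, to the tautology that the $\Gamma_R$-action recovered from the constructed stratification via $\Delta$ coincides with the original one on $N$. The main obstacle I foresee is precisely the inductive construction of $\Strat_{\AR(\bullet)}$: tracking $\Gamma_R$-equivariance across the three-step lifting, ensuring compatibility of the pieces with the prismatic envelope structure on $\AR(2)$ to verify the cocycle, and handling the separate arguments required at $p=2$ (where $\Gamma_{\textup{tor}}=\{\pm 1\}$ and the techniques of Subsection \ref{subsubsec:p=2_gammaF_action} must replace the cleaner $p \geq 3$ arguments). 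All the analytical and Galois-cohomological ingredients, however, are already developed in Sections \ref{sec:theprism_ar+} and \ref{sec:integral_comp}, so the remaining argument should be a formal but delicate bookkeeping atop those results.
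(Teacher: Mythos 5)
Your proposal reconstructs the paper's actual proof almost exactly: the equivalence $\eval_{\AR(\bullet)}^{\prism}: \Vect^{\an,\varphi}(X_{\prism}) \isomorphic \Strat^{\an,\varphi}(\AR(\bullet))$ (Proposition \ref{prop:afcrystal_eval_ar+strat}), the commuting triangle with $\eval_{\AR}^{\Strat}$ (Proposition \ref{prop:prisfcrys_strat_wach_relative}), and the quasi-inverse built by inducting on powers of $p_1(\mu)$ with Theorem \ref{thm:integral_comp_relative} as base case and the three-step descent as inductive step (Propositions \ref{prop:deltan_modmun_relative}--\ref{prop:strat_from_wach_relative}). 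The only slip is citing the ring-level exact sequences of Section \ref{sec:theprism_ar+} where the paper re-derives module-level analogues for $N(1)$ (Lemmas \ref{lem:n1_geometric_exact}--\ref{lem:n1_gamma0_exact_2}) using the descent results of Section \ref{sec:integral_comp}, but you acknowledge that such a bookkeeping step is needed, so the plan is sound.
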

\begin{proof}
	The claim follows from Propositions \ref{prop:prisfcrys_strat_wach_relative} and \ref{prop:afcrystal_eval_ar+strat}, and Theorem \ref{thm:strat_wach_comp_relative} shown below.
\end{proof}

In the rest of this section, we shall state and prove Proposition \ref{prop:prisfcrys_strat_wach_relative} and Theorem \ref{thm:strat_wach_comp_relative}.
We start by building some necessary tools to state these claims.

\subsection{\texorpdfstring{$\AR\modules$}{-} with stratification}\label{subsec:modules_with_strat}

In order to prove Theorem \ref{thm:fcrystals_wachmod_relative} we shall interpret crystals in terms of modules with stratification.
We begin with the definition of some cosimplicial objects in $\Shv(X_{\prism})$.

\subsubsection{Stratifications}

Using the simplicial object $(\AR(\bullet), I(\bullet))$ in $(\Spf R)_{\prism}$ as described in Construction \ref{const:cech_nerve}, we define modules with stratifications as follows:
\begin{defi}[Prismatic stratification]\label{defi:ar+mod_strat}
	A \textit{stratification} on an $\AR\module$ $N$ with respect to $\AR(\bullet)$ is an $\AR(1)\linear$ isomorphism $\varepsilon \colon \AR(1) \otimes_{p_1, \AR} N \isomorphic \AR(1) \otimes_{p_2, \AR} N$ satisfying the following conditions:
	\begin{enumerate}
		\item[(1)] The scalar extension $\Delta^*(\varepsilon)$ of $\varepsilon$ by $\Delta \colon \AR(1) \rightarrow \AR$ is the identity map on $N$;

		\item[(2)] Cocycle condition over $\AR(2)$, i.e.\ $p_{23}^*(\varepsilon) \circ p_{12}^*(\varepsilon) = p_{13}^*(\varepsilon) \colon \AR(2) \otimes_{r_1, \AR} N \isomorphic \AR(2) \otimes_{r_3, \AR} N$.
	\end{enumerate}
	Let $\Strat(\AR(\bullet))$ denote the category of $\AR\modules$ equipped with a stratification with respect to $\AR(\bullet)$.
	Additionally, we will say that $N$ is \textit{analytic} if the following holds:
	\begin{enumerate}
		\item[(3)] The sequences $\{p, \mu\}$ and $\{\mu, p\}$ are regular on $N$.
	\end{enumerate}
	Let $\Strat^{\an}(\AR(\bullet))$ denote the full subcategory of analytic objects in $\Strat(\AR(\bullet))$.
	Furthermore, we will say that $N$ is a $\varphi\module$ over $\AR$ equipped with a stratification if $N$ is finitely generated and equipped with,
	\begin{enumerate}
		\item[(4)] An $\AR\linear$ isomorphism $\varphi_N \colon \varphi^*(N)[1/[p]_q] \isomorphic N[1/[p]_q]$ compatible with the stratification $\varepsilon$.
	\end{enumerate}
	Let $\Strat^{\an, \varphi}(\AR(\bullet))$ denote the category of analytic $\varphi\modules$ over $\AR$ equipped with a stratification.
\end{defi}

Now, let us relate the category $\Vect^{\an, \varphi}(X_{\prism})$ of analytic prismatic $F\crystals$ over $X_{\prism}$ to the category $\Strat^{\an, \varphi}(\AR(\bullet))$ of analytic $\varphi\modules$ over $\AR$ equipped with a stratification.
\begin{const}\label{const:afcrystal_eval_ar+strat}
	Let $\paze$ be an object of $\Vect^{\an, \varphi}(X_{\prism})$ and set $N \coloneq \paze(\AR(0))$ as an $\AR\module$.
	Then, by using Proposition \ref{prop:afcrystal_eval_ar+}, we have that $N = \paze(\AR, [p]_q)$ is a finitely generated $\AR\module$ such that the sequences $\{p, \mu\}$ and $\{\mu, p\}$ are regular on $N$.
	Moreover, $N$ is equipped with an $\AR\linear$ isomorphism $\varphi_N \colon \varphi^*(N)[1/[p]_q] \isomorphic N[1/[p]_q]$.
	Next, set $D \coloneq \paze[1/\pazi_{\prism}](\AR, [p]_q) = N[1/[p]_q]$ as a finite projective $\AR[1/[p]_q]\module$ and note that $D$ is equipped with a stratification with respect to $\AR(\bullet)$ given as the composition
	\begin{equation}\label{eq:strat_d}
		\varepsilon_D \colon \AR(1) \otimes_{p_1, \AR} D \xrightarrow[\sim]{\hspace{1mm} p_1 \hspace{1mm}} \paze[1/\pazi_{\prism}](\AR(1)) \xrightarrow[\sim]{\hspace{1mm} p_2^{-1} \hspace{1mm}} \AR(1) \otimes_{p_2, \AR} D,
	\end{equation}
	such that $\Delta^*(\varepsilon_D) = id$ on $D$ and $\varepsilon_D$ satisfies the cocycle condition over $\AR(2)$.
	Similarly, set $M \coloneq \paze[1/p](\AR, [p]_q) = N[1/p]$ as a finite projective $\AR[1/p]\module$ and note that $M$ is equipped with a stratification with respect to $\AR(\bullet)$ given as the composition
	\begin{equation}\label{eq:strat_m}
		\varepsilon_M \colon \AR(1) \otimes_{p_1, \AR} M \xrightarrow[\sim]{\hspace{1mm} p_1 \hspace{1mm}} \paze[1/p](\AR(1)) \xrightarrow[\sim]{\hspace{1mm} p_2^{-1} \hspace{1mm}} \AR(1) \otimes_{p_2, \AR} M,
	\end{equation}
	such that $\Delta^*(\varepsilon_M) = id$ on $M$ and $\varepsilon_M$ satisfies the cocycle condition over $\AR(2)$.
	Note that by definition, we have an $\AR[1/p, 1/[p]_q]\linear$ isomorphism $f \colon \AR[1/p, 1/[p]_q] \otimes_{\AR[1/p]} M \isomorphic \AR[1/p, 1/[p]_q] \otimes_{\AR[1/[p]_q]} D$, and \eqref{eq:strat_d} and \eqref{eq:strat_m} are compatible with the preceding isomorphism, i.e.\ $f \circ (\varepsilon_M[1/[p]_q]) = (\varepsilon_D[1/p]) \circ f$.
	Therefore, by taking the intersection of \eqref{eq:strat_d} with \eqref{eq:strat_m} inside $\AR[1/p, 1/[p]_q] \otimes_{\AR[1/[p]_q]} D$, and noting that $N = M \cap D$ (see Remark \ref{rem:regsec_saturation}) and that the maps $p_1, p_2 \colon \AR \rightarrow \AR(1)$ are faithfully flat (see Lemma \ref{lem:pi_rj_ff}), we obtain an isomorphism
	\begin{equation}\label{eq:strat_n}
		\varepsilon \colon \AR(1) \otimes_{p_1, \AR} N \xrightarrow[\sim]{\hspace{1mm} p_1 \hspace{1mm}} \paze(\AR(1)) \xrightarrow[\sim]{\hspace{1mm} p_2^{-1} \hspace{1mm}} \AR(1) \otimes_{p_2, \AR} N,
	\end{equation}
	such that $\Delta^*(\varepsilon) = id$ on $N$ and $\varepsilon$ satisfies the cocycle condition over $\AR(2)$.
	Hence, $N$ is an analytic $\varphi\module$ over $\AR$ equipped with a stratification and it is clear that our construction is functorial in $\paze$.
	In particular, we have described a well-defined natural functor
	\begin{equation}\label{eq:afcrystal_eval_ar+strat}
		\eval_{\AR(\bullet)}^{\prism} \colon \Vect^{\an, \varphi}(X_{\prism}) \longrightarrow \Strat^{\an, \varphi}(\AR(\bullet)),
	\end{equation}
	by sending an analytic $F\textrm{-crystal}$ $\paze$ to the finitely generated $\AR\module$ $N \coloneq \paze(\AR(0))$ equipped with a stratification as in \eqref{eq:strat_n}.
\end{const}

\subsubsection{Equivalence between crystals and stratifications}

The main goal of this section is to prove the following claim:
\begin{prop}\label{prop:afcrystal_eval_ar+strat}
	The functor in \eqref{eq:afcrystal_eval_ar+strat} induces a natural equivalence of categories.
\end{prop}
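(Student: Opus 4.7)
The plan is to construct a quasi-inverse to $\eval^{\prism}_{\AR(\bullet)}$ by faithfully flat descent along the cover $(\AR, [p]_q)$ of the final object of $\Shv(X_{\prism})$ (Lemma \ref{lem:ar+_cover_fo}). First I would exploit the very definition of analytic prismatic $F\crystals$ as $\Vect^{\an, \varphi}(X_{\prism}) = \lim_{(A,I) \in X_{\prism}} \Vect^{\varphi}(\Spec(A) \setminus V(p, I))$: evaluating on the \v{C}ech nerve $(\AR(\bullet), I(\bullet))$ of $(\AR, [p]_q)$ (Construction \ref{const:cech_nerve}, Lemma \ref{lem:selfprod_prismenv}) and noting, via Lemma \ref{lem:pi_rj_ff}, that the locus $V(p, n_i([p]_q)) \subset \Spec(\AR(n))$ is independent of $i$, gives a tautological equivalence
\[
	\Vect^{\an, \varphi}(X_{\prism}) \isomorphic \lim_{[n] \in \Delta} \Vect^{\varphi}\big(\Spec(\AR(n)) \setminus V(p, n_1([p]_q))\big),
\]
i.e.\ an analytic prismatic $F\crystal$ is exactly a vector bundle with Frobenius on the punctured spectrum of $\AR$ together with a stratification on the punctured-spectrum \v{C}ech nerve.

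The central step is then to identify the right-hand side with $\Strat^{\an, \varphi}(\AR(\bullet))$. This rests on Lemma \ref{lem:analyticrys_ar+_fingen}: for each noetherian ring $B = \AR(n)$ equipped with the Cartier divisor $J = (n_1([p]_q))$, that lemma gives an equivalence between the category of finitely generated $B\modules$ satisfying the regular sequence conditions $\{p, a\}$ and $\{a, p\}$ (for any $a$ with $V(p, a) = V(p, J)$) and the category of vector bundles on $\Spec(B) \setminus V(p, J)$, implemented by $N \mapsto \widetilde{N}|_{\Spec(B) \setminus V(p, J)}$ with quasi-inverse $\pazf \mapsto \Gamma(\Spec(B[1/p]), \pazf) \cap \Gamma(\Spec(B[1/a]), \pazf)$. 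I would apply this levelwise along the \v{C}ech nerve, using that each $n_i : \AR \rightarrow \AR(n)$ is faithfully flat (Lemma \ref{lem:pi_rj_ff}), so that $\AR(n) \otimes_{n_i, \AR} N$ is finitely generated over $\AR(n)$ and inherits the regular sequence condition from $N$. A stratification on the punctured-spectrum side then lifts uniquely to a stratification in the sense of Definition \ref{defi:ar+mod_strat}, and vice versa; compatibility with Frobenius and the cocycle condition are transported through the equivalence automatically, since $N \mapsto \widetilde{N}|_{\Spec(B) \setminus V(p, J)}$ commutes with tensor products along the flat \v{C}ech nerve projections.

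Composing these two equivalences produces a candidate quasi-inverse, and direct inspection of Construction \ref{const:afcrystal_eval_ar+strat} --- in particular the identity $N = \paze[1/p](\AR, [p]_q) \cap \paze[1/\pazi_{\prism}](\AR, [p]_q)$ --- shows that the composite is naturally isomorphic to $\eval^{\prism}_{\AR(\bullet)}$. The hardest part will be the bookkeeping required to match stratifications and Frobenius structures under these equivalences at every level of the \v{C}ech nerve: one must systematically invoke noetherianness of $\AR(n)$ (so that Lemma \ref{lem:analyticrys_ar+_fingen} applies levelwise), faithful flatness of the projections, and the fact that the cocycle condition may be checked after faithfully flat base change along $r_j : \AR \rightarrow \AR(2)$. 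Once this compatibility is in place, both fully faithfulness and essential surjectivity of $\eval^{\prism}_{\AR(\bullet)}$ follow formally from the descent equivalence.
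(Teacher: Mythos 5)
Your overall architecture is the same as the paper's: descend along the cover $(\AR, [p]_q)$ of the final object to identify $\Vect^{\an, \varphi}(X_{\prism})$ with descent data on the punctured spectra of the \v{C}ech nerve, and then use Lemma \ref{lem:analyticrys_ar+_fingen} to translate vector bundles on $\Spec(\AR) \setminus V(p, [p]_q)$ into finitely generated modules satisfying the regular-sequence conditions. This is exactly the route taken in the paper, via the intermediate category $\Vect^{\an, \varphi}(\AR(\bullet))$ and the classical description of crystals of vector bundles as modules with stratification.

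One step as you state it would, however, fail: you propose to apply Lemma \ref{lem:analyticrys_ar+_fingen} levelwise to $B = \AR(n)$ and justify this by ``noetherianness of $\AR(n)$''. Only $\AR = \AR(0)$ is noetherian. For $n \geq 1$, the ring $\AR(n)$ is the $(p, [p]_q)$-adic completion of a \emph{free} $\delta$-algebra over $(\AR)^{\widehat{\otimes}(n+1)}$ (see \eqref{eq:ar1_explicit}), i.e.\ of a polynomial ring in countably many variables $\delta^m(y_i)$, and is not noetherian; moreover the proof of Lemma \ref{lem:analyticrys_ar+_fingen} uses irreducibility of $\Spec(\AR)$ and codimension-two/depth arguments that are specific to $\AR$ and are not verified for $\AR(n)$. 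The repair --- and what Construction \ref{const:afcrystal_eval_ar+strat} actually does --- is to apply the lemma only at level $0$ and to handle levels $1$ and $2$ without it: since the projections $p_i$ and $r_j$ are faithfully flat (Lemma \ref{lem:pi_rj_ff}), the identity $N = N[1/p] \cap N[1/[p]_q]$ base-changes to $\AR(1) \otimes_{p_i, \AR} N = (\AR(1) \otimes_{p_i, \AR} N)[1/p] \cap (\AR(1) \otimes_{p_i, \AR} N)[1/[p]_q]$, so a bundle isomorphism over $\Spec(\AR(1)) \setminus V(p, [p]_q)$ (equivalently, compatible isomorphisms after inverting $p$ and after inverting $[p]_q$) restricts to an $\AR(1)$-linear isomorphism of the intersections, and likewise for the cocycle condition over $\AR(2)$. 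With that adjustment your argument goes through and agrees with the paper's proof.
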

\begin{proof}
	From Lemma \ref{lem:ar+_cover_fo}, recall that the prism $(\AR, [p]_q)$ in $X_{\prism}$ is a cover of the final object of the topos $\Shv(X_{\prism})$, therefore, from Construction \ref{const:cech_nerve} and the limit description of analytic $F\crystals$ (see Definition \ref{defi:analytic_fcrystal}), it follows that we have a descent equivalence
	\begin{equation}\label{eq:anfcrys_descent_equiv}
		\begin{aligned}
			\Vect^{\an, \varphi}(X_{\prism}) &\isomorphic \lim \Vect^{\an, \varphi}(\AR(\bullet)).
		\end{aligned}
	\end{equation}
	From Definition \ref{defi:analytic_fcrystal}, the target category may be described as follows: let $U \coloneq \Spec(\AR) \setminus V(p, [p]_q)$, $U(1) \coloneq \Spec(\AR(1)) \setminus V(p, [p]_q)$ equipped with projection maps $p_1, p_2 \colon U(1) \rightarrow U$ and a diagonal map $\Delta \colon U \rightarrow U(1)$, and set $U(2) \coloneq \Spec(\AR(2)) \setminus V(p, [p]_q)$ equipped with projection maps $p_{12}, p_{23}, p_{13} \colon U(2) \rightarrow U(1)$ (see Construction \ref{const:cech_nerve} for the maps).
	Then, an object in the category $\lim \Vect^{\an, \varphi}(\AR(\bullet))$ is a vector bundle $\paze$ on $U$ equipped with an isomorphism $\varepsilon \colon p_1^*\paze \isomorphic p_2^*\paze$ of vector bundles over $U(1)$ such that $\Delta^*(\varepsilon) = id$ on $\paze$ and $\varepsilon$ satisfies the cocycle condition over $U(2)$, i.e.\ $p_{23}^*(\varepsilon) \circ p_{12}^*(\varepsilon) = p_{13}^*(\varepsilon)$.
	Moreover, $\paze$ is equipped with an $\AR[1/[p]_q]\linear$ isomorphism $\varphi_{\paze} \colon (\varphi^*\paze)[1/[p]_q] \isomorphic \paze[1/[p]_q]$ (see Definition \ref{defi:analytic_fcrystal}), compatible with $\varepsilon$.

	Now, let us consider the following diagram of functors between categories:
	\begin{equation}\label{eq:afcrystal_vb_strataf+}
		\begin{tikzcd}[column sep=large]
			\Vect^{\an, \varphi}(X_{\prism}) \arrow[r, "\eqref{eq:anfcrys_descent_equiv}", "\sim"'] \arrow[rd, "\eqref{eq:afcrystal_eval_ar+strat}"'] & \lim \Vect^{\an, \varphi}(\AR(\bullet)) \arrow[d]\\
			& \Strat^{\an, \varphi}(\AR(\bullet)),
		\end{tikzcd}
	\end{equation}
	where the right vertical arrow is defined by sending an object $\paze$ to the finite $\AR\module$ $N \coloneq H^0(\paze, U)$ equipped with a Frobenius structure and a stratification constructed similar to \eqref{eq:strat_n} in Construction \ref{const:afcrystal_eval_ar+strat}.
	It is clear that the diagram \eqref{eq:afcrystal_vb_strataf+} commutes by definition.

	We claim that the right vertical arrow in \eqref{eq:afcrystal_vb_strataf+} induces a natural equivalence of categories.
	Indeed, let us describe its quasi-inverse:
	let $N$ be an analytic $\varphi\module$ over $\AR$ equipped with a stratification $\varepsilon_N$ with respect to $\AR(\bullet)$.
	Then, from Construction \ref{const:strat_eval_ar+} below we see that $N$ admits the structure of a Wach module over $\AR$.
	In particular, from Remark \ref{rem:wachmod_props} it follows that the $\AR[1/p]\module$ $N[1/p]$ is finite projective and the $\AR[1/[p]_q]\module$ $N[1/[p]_q]$ is finite projective, and from Remark \ref{rem:regsec_saturation} we have that $N = M \cap D \subset D[1/p]$.
	Then, from the proof of Lemma \ref{lem:analyticrys_ar+_fingen} we may obtain a vector bundle $\pazf$ on $U$ such that $H^0(\pazf, U) \isomorphic N$.
	It remains to construct a stratification on $\pazf$ using the stratification on $N$.

	We being by noting that $\Spec(\AR(1)[1/p]) \cup \Spec(\AR(1)[1/[p]_q]) \rightarrow U(1)$ is a flat cover, and similarly, $\Spec(\AR(2)[1/p]) \cup \Spec(\AR(2)[1/[p]_q]) \rightarrow U(2)$ is a flat cover.
	Let us denote by $i \colon \Spec(\AR(1)[1/p]) \rightarrow U(1)$ and $j \colon \Spec(\AR(1)[1/p]) \rightarrow U(1)$ the respective natural maps.
	Then, by faithfully flat descent, we see that the data of a stratification $\varepsilon_{\pazf}$ on $\pazf$, i.e.\ an isomorphism of vector bundles $\varepsilon_{\pazf} \colon p_1^*\pazf \isomorphic p_2^*\pazf$ over $U(1)$ with $\Delta^*(\varepsilon_{\pazf}) = id$ on $\pazf$ and $\varepsilon_{\pazf}$ satisfying the cocycle condition over $U(2)$, is equivalent to the following two pieces of data:
	\begin{enumerate}
		\item[(1)] a stratification $\varepsilon_{i^*\pazf}$ on $i^*\pazf$ over $\Spec(\AR(1)[1/p])$ with $\Delta^*(\varepsilon_{i^*\pazf}) = id$ on $i^*\pazf$ and $\varepsilon_{i^*\pazf}$ satisfies the cocycle condition over $\Spec(\AR(2)[1/p])$,

		\item[(2)] a stratification $\varepsilon_{j^*\pazf}$ on $j^*\pazf$ over $\Spec(\AR(1)[1/[p]_q])$ with $\Delta^*(\varepsilon_{j^*\pazf}) = id$ on $j^*\pazf$ and $\varepsilon_{j^*\pazf}$ satisfies the cocycle condition over $\Spec(\AR(2)[1/[p]_q])$,
	\end{enumerate}
	and such that (1) and (2) agree over the intersection $\Spec(\AR(2)[1/p, 1/[p]_q])$.
	In other words, the two respective pullback stratifications along the two maps $\Spec(\AR[1/p,1/[p]_q]) \xhookrightarrow{u} \Spec(\AR[1/p])$ and $\Spec(\AR[1/p,1/[p]_q]) \xhookrightarrow{v} \Spec(\AR[1/[p]_q])$ must agree, i.e.\ $u^*(\varepsilon_{i^*\pazf}) = v^*(\varepsilon_{j^*\pazf})$ on $u^*i^*\pazf = v^*j^*\pazf$.

	As $\Spec(\AR(1)[1/p])$ is an affine scheme and for the vector bundle $i^*\pazf$ on $\Spec(\AR(1)[1/p])$ it is clear that we have $H^0(\Spec(\AR(1)[1/p]), i^*\pazf) = H^0(U, \pazf)[1/p] \isomorphic N[1/p] = M$, therefore, the piece of data in (1) is equivalent to having an $\AR(1)\linear$ isomorphism
	\begin{equation*}
		\varepsilon_M \colon \AR(1) \otimes_{p_1, \AR} M \isomorphic \AR(1) \otimes_{p_2, \AR} M,
	\end{equation*}
	such that $\Delta^*(\varepsilon_M) = id$ on $M$ and $\varepsilon_M$ satisfies the cocycle condition over $\AR(2)[1/p]$.
	This is obtained by setting $\varepsilon_M = \varepsilon_N[1/p]$.
	Similarly, as $\Spec(\AR(1)[1/[p]_q])$ is an affine scheme and for the vector bundle $j^*\pazf$ on $\Spec(\AR(1)[1/[p]_q])$ it is clear that we have $H^0(\Spec(\AR(1)[1/[p]_q]), j^*\pazf) = H^0(U, \pazf)[1/[p]_q] \isomorphic N[1/[p]_q] = D$, therefore, the piece of data in (2) is equivalent to having an $\AR(1)\linear$ isomorphism
	\begin{equation*}
		\varepsilon_D \colon \AR(1) \otimes_{p_1, \AR} D \isomorphic \AR(1) \otimes_{p_2, \AR} D,
	\end{equation*}
	such that $\Delta^*(\varepsilon_D) = id$ on $D$ and $\varepsilon_D$ satisfies the cocycle condition over $\AR(2)[1/[p]_q]$.
	This is obtained by setting $\varepsilon_D = \varepsilon_N[1/[p]_q]$.
	Furthermore, as $\Spec(\AR[1/p,1/[p]_q])$ is affine, to show that $u^*(\varepsilon_{i^*\pazf}) = v^*(\varepsilon_{j^*\pazf})$ on $u^*i^*\pazf = v^*j^*\pazf$, it is enough to show that $\varepsilon_M[1/p] = \varepsilon_D[1/[p]_q]$ on $M[1/[p]_q] = N[1/p,1/[p]_q] = D[1/p]$, but this is obvious because $\varepsilon_M[1/p] = \varepsilon_N[1/p,1/[p]_q] = \varepsilon_D[1/[p]_q]$, by definition.
	Hence, the stratification on $N$ naturally induces a stratification on $\pazf$.

	It is easy to check that the construction above yields a quasi-inverse to the right vertical arrow in \eqref{eq:afcrystal_vb_strataf+}, thus establishing a natural equivalence of categories.
	Consequently, we get that the slanted arrow in \eqref{eq:afcrystal_vb_strataf+}, i.e.\ \eqref{eq:afcrystal_eval_ar+strat}, induces a natural equivalence of categories.
	This allows us to conclude.
\end{proof}

\begin{rem}
	Following arguments similar to Construction \ref{const:afcrystal_eval_ar+strat} and Proposition \ref{prop:afcrystal_eval_ar+strat}, it can also be shown that we have a natural equivalence of categories
	\begin{equation}\label{eq:cfcrystal_eval_ar+strat}
		\eval_{\AR(\bullet)}^{\prism} \colon \CRhatphi(X_{\prism}) \isomorphic \Strat^{\an, \varphi}(\AR(\bullet)),
	\end{equation}
	obtained by sending a completed $F\textrm{-crystal}$ $\paze$ to the finitely generated $\AR\module$ $N = \paze(\AR(0))$ equipped with a stratification similar to \eqref{eq:strat_n} in Construction \ref{const:afcrystal_eval_ar+strat}.
\end{rem}

Our next goal is to show that the functor $\eval_{\AR(\bullet)}^{\prism}$ in \eqref{eq:afcrystal_eval_ar+strat} (resp.\ \eqref{eq:cfcrystal_eval_ar+strat}) is suitably compatible with the functor $\eval_{\AR}^{\prism}$ in \eqref{eq:afcrystal_eval_ar+} (resp.\ \eqref{eq:cfcrystal_eval_ar+}), similar to \cite[Lemma 3.16]{morrow-tsuji}.
From Remark \ref{rem:phigamma_act_ar+n}, recall that for $n \in \NN$, the product $\Gamma_R^{\times (n+1)}$ of $n+1$ copies of $\Gamma_R$ naturally acts on $(\AR(n), [p]_q)$.
Moreover, from Proposition \ref{prop:arn_gammar_triv_modmu}, we have that the action of the $i^{\textrm{th}}$ component of $\Gamma_R^{\times (n+1)}$ is trivial on $\AR(n)/(n_i(\mu))$.

\begin{const}\label{const:strat_eval_ar+}
	Let $(N, \varepsilon)$ be an object of $\Strat^{\an, \varphi}(\AR(\bullet))$, i.e.\ $N$ is a finitely generated $\AR\module$ on which the sequences $\{p, \mu\}$ and $\{\mu, p\}$ are regular, $N$ is equipped with a stratification $\varepsilon$ with respect to $\AR(\bullet)$ and an $\AR\linear$ isomorphism $\varphi_N \colon \varphi^*(N)[1/[p]_q] \isomorphic N[1/[p]_q]$.
	We define a functor
	\begin{equation}\label{eq:strat_eval_ar+}
		\eval_{\AR}^{\Strat} \colon \Strat^{\an, \varphi}(\AR(\bullet)) \longrightarrow (\varphi, \Gamma_R)\Mod_{\AR}^{[p]_q},
	\end{equation}
	by setting the underlying $\AR\module$ to be $N$ satisfying (1) and (3) of Definition \ref{defi:wach_mods_relative}.
	We equip $N$ with an action of $\Gamma_R$ by noting that the $\Gamma_R^2\action$ on $\AR(1)$ induces a semilinear action of $\Gamma_R$ on $N$.
	Indeed, for each $g$ in $\Gamma_R$, the base change of the stratification $\varepsilon$ along $\AR(1) \xrightarrow{(g, 1)} \AR(1) \xrightarrow{\Delta} \AR$ defines an isomorphism $\AR \otimes_{g, \AR} N \isomorphic N$, i.e.\ a semilinear action of the element $g$ on $N$.
	Moreover, for any $g'$ in $\Gamma_R$, base change of the aforementioned isomorphism along $g' \colon \AR \isomorphic \AR$ is the base change of $\varepsilon$ along $g' \circ \Delta \circ g = \Delta \circ (g'g, g')$.
	So, from Definition \ref{defi:ar+mod_strat}, it follows that the isomorphisms $\AR \otimes_{g, \AR} N \isomorphic N$, for $g$ in $\Gamma_R$, define a semilinear action of $\Gamma_R$ on $N$.
	Finally, since the action of $\Gamma_R \times 1$ is trivial on $\AR(1)/(p_1(\mu))$, therefore, it follows that the induced action of $\Gamma_R$ on $N/\mu N$ is also trivial (in particular, $\Gamma_R$ acts continuously on $N$ by \cite[Lemma 3.4]{abhinandan-relative-wach-ii}).
	Hence, we conclude that $N$ is a Wach module over $\AR$ in the sense of Definition \ref{defi:wach_mods_relative}.
\end{const}

In Theorem \ref{thm:strat_wach_comp_relative}, we shall show that the functor in \eqref{eq:strat_eval_ar+} induces a natural equivalence of categories (the proof of which will be independent of the claims proven in this section).
We have the following compatibility between the functors of \eqref{eq:afcrystal_eval_ar+}, \eqref{eq:afcrystal_eval_ar+strat} and \eqref{eq:strat_eval_ar+} (resp.\ \eqref{eq:cfcrystal_eval_ar+}, \eqref{eq:cfcrystal_eval_ar+strat} and \eqref{eq:strat_eval_ar+}):
\begin{prop}\label{prop:prisfcrys_strat_wach_relative}
	The following diagram is commutative up to canonical isomorphisms:
	\begin{center}
		\begin{tikzcd}[row sep=huge, column sep=huge]
			\Vect^{\an, \varphi}(X_{\prism}) \arrow[r, "\sim"', "\eval_{\AR(\bullet)}^{\prism}\eqref{eq:afcrystal_eval_ar+strat}"] \arrow[rd, "\eval_{\AR}^{\prism}\eqref{eq:afcrystal_eval_ar+}"'] & \Strat^{\an, \varphi}(\AR(\bullet)) \arrow[d, "\eval_{\AR}^{\Strat}", "\eqref{eq:strat_eval_ar+}"'] & \CRhatphi(X_{\prism}) \arrow[l, "\sim", "\eval_{\AR(\bullet)}^{\prism}\eqref{eq:cfcrystal_eval_ar+strat}"'] \arrow[ld, "\eval_{\AR}^{\prism}\eqref{eq:afcrystal_eval_ar+}"]\\
			& (\varphi, \Gamma_R)\Mod_{\AR}^{[p]_q}.
		\end{tikzcd}
	\end{center}
\end{prop}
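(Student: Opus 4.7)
The plan is to verify that all three functors appearing in the diagram send an analytic prismatic $F$-crystal $\paze$ to the same data on the underlying $\AR$-module, so that commutativity becomes a matter of tracking definitions. Concretely, by Construction \ref{const:afcrystal_eval_ar+strat} and Proposition \ref{prop:afcrystal_eval_ar+}, both $\eval_{\AR}^{\prism}(\paze)$ and the underlying module of $\eval_{\AR(\bullet)}^{\prism}(\paze)$ equal $N := \paze(\AR, [p]_q)$ with the same Frobenius isomorphism $\varphi_N \colon \varphi^*(N)[1/[p]_q] \isomorphic N[1/[p]_q]$ coming from the $F$-crystal structure on $\paze$. Passing through $\eval_{\AR}^{\Strat}$ in Construction \ref{const:strat_eval_ar+} does not alter the underlying $\AR$-module or the Frobenius. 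Hence the only thing to check is that the two different recipes producing a continuous semilinear $\Gamma_R$-action on $N$ coincide.

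The main step is the comparison of $\Gamma_R$-actions. In Proposition \ref{prop:afcrystal_eval_ar+}, for each $g \in \Gamma_R$, Lemma \ref{lem:gammar_act_prisaut} gives an automorphism $g \colon (\AR, [p]_q) \isomorphic (\AR, [p]_q)$ in $X_{\prism}$, and the crystal property then yields the semilinear isomorphism $\alpha_g \colon \AR \otimes_{g,\AR} N \isomorphic N$. In Construction \ref{const:strat_eval_ar+}, the same element $g$ acts via the base change of the stratification $\varepsilon$ along the composition $\AR(1) \xrightarrow{(g,1)} \AR(1) \xrightarrow{\Delta} \AR$. The key observation is that the two compositions
\begin{equation*}
\AR \xrightarrow{\hspace{1mm} p_1 \hspace{1mm}} \AR(1) \xrightarrow{\hspace{1mm} (g,1) \hspace{1mm}} \AR(1) \xrightarrow{\hspace{1mm} \Delta \hspace{1mm}} \AR \hspace{3mm} \textrm{and} \hspace{3mm} \AR \xrightarrow{\hspace{1mm} p_2 \hspace{1mm}} \AR(1) \xrightarrow{\hspace{1mm} (g,1) \hspace{1mm}} \AR(1) \xrightarrow{\hspace{1mm} \Delta \hspace{1mm}} \AR
\end{equation*}
equal $g$ and $\mathrm{id}$ respectively. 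Since $\varepsilon = p_2^{-1} \circ p_1$ via the crystal identifications $p_i^*(N) \isomorphic \paze(\AR(1))$, pulling back $\varepsilon$ along $\Delta \circ (g,1)$ recovers precisely $\alpha_g$ via the crystal property of $\paze$ applied to $g$. This yields the desired equality of the two semilinear actions of each individual $g$ on $N$.

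Finally, one has to confirm that the collection $\{\alpha_g\}_{g \in \Gamma_R}$ obtained in either way defines a group action, i.e.\ that $\alpha_{gh} = \alpha_g \circ g_*(\alpha_h)$. For the action arising from Proposition \ref{prop:afcrystal_eval_ar+} this is automatic from functoriality of the crystal. For the action arising from the stratification, the compatibility follows by pulling back the cocycle condition $p_{13}^*(\varepsilon) = p_{23}^*(\varepsilon) \circ p_{12}^*(\varepsilon)$ over $\AR(2)$ along the map $\AR(2) \to \AR$ induced by $(g, h, 1)$ composed with the iterated multiplication; this factorisation is exactly the one used in Construction \ref{const:strat_eval_ar+} to verify that $\varepsilon$ defines a group action. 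The proof for completed prismatic $F$-crystals using \eqref{eq:cfcrystal_eval_ar+strat} is entirely parallel, with the only extra input being Remark \ref{rem:cfcrystal_eval_ar+} to identify $\paze(\AR, [p]_q)$ as a Wach module. The main (mild) obstacle is bookkeeping: ensuring the various semilinear base-change isomorphisms arising from the crystal property compose correctly under the automorphisms of $(\AR, [p]_q)$ and $(\AR(1), [p]_q)$ in $X_{\prism}$; since these are formal consequences of the crystal axiom and the definitions of $p_i, \Delta, (g,1)$, no substantive obstruction arises.
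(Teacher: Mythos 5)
Your proposal is correct and follows essentially the same route as the paper: both identify the underlying $\AR$-module and Frobenius on each side as $\paze(\AR,[p]_q)$, and both reduce the comparison of $\Gamma_R$-actions to the observation that $\Delta \circ (g,1) \circ p_1 = g$ and $\Delta \circ (g,1) \circ p_2 = \mathrm{id}$ on $\AR$, so that pulling back the stratification $\varepsilon = p_2^{-1}\circ p_1$ along $\Delta\circ(g,1)$ recovers the crystal-property action of $g$. The extra verification that the $\alpha_g$ assemble into a group action is already handled in Construction \ref{const:strat_eval_ar+} and is therefore not needed here, but it does no harm.
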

\begin{proof}
	We will only prove the commutativity of the left triangle; commutativity of the right triangle follows by a similar argument.
	Let $\paze$ be an object of $\Vect^{\an, \varphi}(X_{\prism})$ and set $(N, \varepsilon) \coloneq \eval_{\AR(\bullet)}^{\prism}(\paze)$.
	Since $N = \paze(\AR(0))$ and $\AR(0) = \AR$, therefore, we note that the $\AR\module$ obtained by composing the top horizontal arrow and the middle vertical arrow is isomorphic (as an $\AR\module$) to the module obtained via the slanted arrow on the left hand side.
	Moreover, from Lemma \ref{lem:analyticrys_ar+_fingen}, we know that the sequences $\{p, \mu\}$ and $\{\mu, p\}$ are regular on $N$.
	Furthermore, the conditions on $\varphi_N$ as explained in Proposition \ref{prop:afcrystal_eval_ar+} and Construction \ref{const:strat_eval_ar+} ensure that the Frobenius on $N$ is of finite $\pqheight$.
	So, it remains to check the compatibility of the action of $\Gamma_R$.
	Note that for each $g$ in $\Gamma_R$, the following diagrams commute:
	\begin{center}
		\begin{tikzcd}
			\AR(1) \arrow[r, "(g {,} 1)"] & \AR(1) \arrow[d, "\Delta"] & & \AR(1) \arrow[r, "(g {,} 1)"] & \AR(1) \arrow[d, "\Delta"]\\
			\AR(0) \arrow[u, "p_1"] \arrow[r, "g"'] & \AR(0), & & \AR(0) \arrow[u, "p_2"] \arrow[r, "id"'] & \AR(0).
		\end{tikzcd}
	\end{center}
	Therefore, the action of $g$ on $N = \paze(\AR(0))$ induced by the stratification $\varepsilon$ (see Construction \ref{const:strat_eval_ar+}) coincides with the action of $g$ on $\paze(\AR(0))$ induced by the action on $\AR(0)$ and the crystal property of $\paze$ (see Proposition \ref{prop:afcrystal_eval_ar+}).
	Hence, $\eval_{\AR}^{\Strat}(N, \varepsilon) \isomorphic \paze(\AR(0)) = \paze(\AR) = \eval_{\AR}^{\prism}(\paze)$, thus allowing us to conclude.
\end{proof}

From Proposition \ref{prop:prisfcrys_strat_wach_relative}, we note that in order to show that the functor $\eval_{\AR}^{\prism}$ induces an equivalence of categories, it is enough to show that the functor $\eval_{\AR}^{\Strat}$ induces an equivalence of categories.

\subsection{Constructing stratification on Wach modules}\label{subsec:wach_and_strat}

In this section, we shall show the following claim:
\begin{thm}\label{thm:strat_wach_comp_relative}
	The functor in \eqref{eq:strat_eval_ar+} induces a natural equivalence of categories 
	\begin{equation*}
		\eval_{\AR}^{\Strat} \colon \Strat^{\an, \varphi}(\AR(\bullet)) \isomorphic (\varphi, \Gamma_R)\Mod_{\AR}^{[p]_q}.
	\end{equation*}
\end{thm}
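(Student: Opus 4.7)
The plan is to construct an explicit quasi-inverse functor
\[
\Strat_{\AR(\bullet)} : (\varphi, \Gamma_R)\Mod_{\AR}^{[p]_q} \longrightarrow \Strat^{\an, \varphi}(\AR(\bullet)),
\]
sending a Wach module $N$ to itself equipped with a suitable stratification $\varepsilon$ built out of the continuous $\Gamma_R\action$. The starting data is the integral comparison isomorphism of Theorem \ref{thm:integral_comp_relative}: combined with Proposition \ref{prop:deltan_modmu_relative}, it provides a canonical $(\varphi, 1 \times \Gamma_R)\equivariant$ isomorphism
\[
\varepsilon_1 : \AR(1)/p_1(\mu) \otimes_{p_1, \AR} N \isomorphic \AR(1)/p_1(\mu) \otimes_{p_2, \AR} N
\]
which reduces to the identity under $\Delta$, via the detour $N/\mu N \isomorphic M = (\AR(1)/p_1(\mu) \otimes_{p_2, \AR} N)^{1 \times \Gamma_R}$. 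This is the mod-$p_1(\mu)$ shadow of the desired stratification.

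The next step is to lift $\varepsilon_1$ to a compatible system of $(\varphi, 1 \times \Gamma_R)\equivariant$ isomorphisms $\varepsilon_n : \AR(1)/p_1(\mu)^n \otimes_{p_1, \AR} N \isomorphic \AR(1)/p_1(\mu)^n \otimes_{p_2, \AR} N$, and this is where the ``3-step'' argument enters in complete parallel with Section \ref{sec:theprism_ar+}. One inducts on $n$ using the short exact sequence structure of the tower $\AR(1)/p_1(\mu)^{n+1} \twoheadrightarrow \AR(1)/p_1(\mu)^n$, with obstructions controlled by the cohomology computations carried out on $\AR(1)$, $\LambdatildeR$, $\LambdatildeRo$ and $\LambdatildeRplus$ in Subsections \ref{subsubsec:geo_gamma_action}--\ref{subsubsec:p=2_gammaF_action}. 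Concretely: (i) the geometric step uses the $q\textrm{-de Rham}$ complex attached to $\Nbar(1) = \Abar(1) \otimes_{p_2, \AR} N$ (Proposition \ref{prop:n1modp1mu_connection}) and the acyclicity proven in Proposition \ref{prop:abarone_comp} to descend the obstruction to the subring $\LambdatildeR$; (ii) the $\Gamma_{\textrm{tor}}\textrm{-step}$ uses the decomposition via $\FF_p^{\times}\textrm{-eigenspaces}$ (for $p \geq 3$) or the $\sigma\textrm{-invariant/anti-invariant}$ splitting (for $p=2$); (iii) the $\Gamma_0\textrm{-step}$ is carried out by the $q\textrm{-differential}$ Koszul complexes $K_{\MLambdao}(\nabla_{q,s})$, $K_{\MLambdaplus}(\nabla_{q,\tau})$, whose acyclicity was established in Propositions \ref{prop:lambdar0_comp} and \ref{prop:lambdar+_comp}. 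Passing to the inverse limit (and using that $\AR(1)$ is $p_1(\mu)\adically$ complete and all the $\AR(1) \otimes_{p_i, \AR} N$ are $p_1(\mu)\adically$ separated, by Lemma \ref{lem:nr1_pctf}) produces the desired $\varepsilon : \AR(1) \otimes_{p_1, \AR} N \isomorphic \AR(1) \otimes_{p_2, \AR} N$.

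It remains to verify the axioms of Definition \ref{defi:ar+mod_strat}: normalization $\Delta^*(\varepsilon) = id$ follows by construction from the fact that each $\varepsilon_n$ reduces to the identity under $\Delta$; Frobenius-compatibility is inherited from the $(\varphi, \Gamma_R)\equivariance$ of the building blocks because $\varphi_N$ commutes with the $\Gamma_R\action$ in the sense of Definition \ref{defi:wach_mods_relative}; and the cocycle condition over $\AR(2)$ reduces, via Remark \ref{rem:a2modp1mun} and an analogous inductive argument on powers of $r_1(\mu)$, to the statement that both $p_{13}^*(\varepsilon)$ and $p_{23}^*(\varepsilon) \circ p_{12}^*(\varepsilon)$ agree modulo $r_1(\mu)$, which holds as both are the $\AR(2)/r_1(\mu)$-linear extension of the identity on $M \isomorphic N/\mu N$ and are $\Gamma_R^3\equivariant$ for the action obtained from the semi-cosimplicial Galois structure. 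Finally, to show that $\Strat_{\AR(\bullet)}$ is a quasi-inverse to $\eval_{\AR}^{\Strat}$, one checks on underlying $\AR\modules$ (which is tautological) and on the $\Gamma_R\action$: by the explicit recipe of Construction \ref{const:strat_eval_ar+}, the action recovered from $\varepsilon$ is given by base change along $\AR(1) \xrightarrow{(g,1)} \AR(1) \xrightarrow{\Delta} \AR$, and our $\varepsilon$ was built precisely so as to reproduce the original $\Gamma_R\action$ through this recipe.

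The main obstacle is the second paragraph, namely the inductive construction of the $\varepsilon_n$: the arithmetic subgroups $\Gamma_{\textrm{tor}}$ and $\Gamma_0$ do not act $\AR\linearly$ and their interplay with the $p_1(\mu)$-adic filtration is delicate, especially for $p=2$ where the torsion part forces one to work with the $\sigma\textrm{-invariants}$ $\LambdatildeRplus$ and the element $\tautilde$ in place of $\mu_0$ and $\stilde$. The key point is that the cohomological vanishings of Subsection \ref{subsec:gamma_act_ar1}, which were used there only to compute $\AR(1)^{1 \times \Gamma_R} \isomorphic \AR$, apply equally well after tensoring with $N$ thanks to the fact that the $q\textrm{-de Rham}$ and de Rham comparison results of Proposition \ref{prop:n1modp1mu_connection}, Proposition \ref{prop:mlambda0_connection} and Proposition \ref{prop:mlambda+_connection} were established with $N$-coefficients already in Section \ref{sec:integral_comp}; this is precisely why Theorem \ref{thm:integral_comp_relative} is the crucial technical input for Theorem \ref{thm:strat_wach_comp_relative}.
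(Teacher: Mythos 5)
Your plan follows the paper's own strategy: build the quasi-inverse $\Strat_{\AR(\bullet)}$ by taking Theorem \ref{thm:integral_comp_relative} as the base case, running the three-step invariants computation with $N$-coefficients (this is exactly Lemmas \ref{lem:n1_geometric_exact}--\ref{lem:n1_gamma0_exact_2} of the paper, feeding into the exact sequence \eqref{eq:n1_gammaR_exact}) to establish $\Delta_N : (N(1)/p_1(\mu)^n)^{1\times\Gamma_R} \isomorphic N/\mu^n N$, and then passing to the $p_1(\mu)$-adic limit and taking $\varepsilon$ to be the $\AR(1)$-linear extension of the inverse of $\Delta_N : (\AR(1)\otimes_{p_2,\AR}N)^{1\times\Gamma_R}\isomorphic N$. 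Your identification of the quasi-inverse check via the recipe of Construction \ref{const:strat_eval_ar+} is also exactly as in Proposition \ref{prop:strat_from_wach_relative}(2).

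The one place your sketch is under-developed is the cocycle condition. You write that it ``reduces, via ... an analogous inductive argument on powers of $r_1(\mu)$, to the statement that both $p_{13}^*(\varepsilon)$ and $p_{23}^*(\varepsilon)\circ p_{12}^*(\varepsilon)$ agree modulo $r_1(\mu)$''; but two $\AR(2)$-linear isomorphisms agreeing mod $r_1(\mu)$ do not automatically agree, and an ``analogous'' inductive computation of $(\cdot)^{1\times\Gamma_R\times\Gamma_R}$-invariants of $\AR(2)\otimes_{r_3,\AR}N$, re-running the three-step argument with $\Gamma_R^3$ in place of $\Gamma_R^2$, would be a substantial extra piece of work. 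The paper's actual argument is lighter: since both candidate maps are $\Gamma_R^3$-equivariant (Lemma \ref{lem:strat_gamma_equiv_relative}), their restrictions along $r_1$ to $N$ land in $(\AR(2)\otimes_{r_3,\AR}N)^{1\times\Gamma_R\times\Gamma_R}$ and both compose with $\Delta_N$ to the identity, so it suffices that this $\Delta_N$ be merely \emph{injective} (Proposition \ref{prop:diagonal_m2_relative}); and injectivity comes for free from the base case (Theorem \ref{thm:integral_comp_relative} pulled back along $p_{13}$, together with Remark \ref{rem:a2modp1mun}) and left-exactness of taking $(1\times\Gamma_R\times\Gamma_R)$-invariants, with no new cohomology vanishing required. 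You should state which of these two routes you intend; as written, the cocycle step is not yet a proof.
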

\begin{proof}
	In \eqref{eq:strat_from_wach_relative}, we will define a functor $\Strat_{\AR(\bullet)}$ from the category of Wach modules to the category of $\varphi\modules$ over $\AR$ equipped with a stratification with respect to $\AR(\bullet)$.
	Moreover, in Proposition \ref{prop:strat_from_wach_relative}, we shall show that $\Strat_{\AR(\bullet)}$ is a quasi-inverse to the functor $\eval_{\AR}^{\Strat}$, thus establishing the claimed equivalence.
\end{proof}

Our first goal is to construct a stratification on a Wach module.
Our strategy will be similar to that of \cite[Section 3.2]{morrow-tsuji}.
However, note that we are working with the Galois group $\Gamma_R$, which has ``$\textrm{arithmetic} = \Gamma_F$'' and ``$\textrm{geometric} = \Gamma_R'$'' parts, instead of being a truly ``geometric'' Galois group as considered in loc.\ cit., therefore, our arguments are of different nature and require different computations (see Section \ref{subsec:setup_nota} for notation and explanations on the group $\Gamma_R$).

Let us set $N(1) \coloneq \AR(1) \otimes_{p_2, \AR} N$ equipped with the tensor product Frobenius and tensor product action of $\Gamma_R^2$, where $\Gamma_R^2$ acts on $N$ via projection to the second coordinate.
We will start by proving some results on the cohomology of $N(1)$ for the action of $1 \times \Gamma_R$.
The steps will be similar to the \textit{3-step} argument presented in Section \ref{subsec:gamma_act_ar1}.
In particular, let us note that the results for the action of the geometric part of $\Gamma_R$ in Section \ref{subsubsec:geo_gamma_action_n1}, are applicable for all primes $p$.
However, the structure of $\Gamma_F$ is different for $p \geqslant 3$ compared to $p = 2$, therefore, in Sections \ref{subsubsec:fpx_action_n1} and \ref{subsubsec:gamma0_action_n1}, we will assume that $p \geqslant 3$.
For $p =2$, the action of $\Gamma_F$ will be handled in Section \ref{subsubsec:p=2_gammaF_action_n1}.
Finally, we will put everything together in Section \ref{subsubsec:wachmod_strat} to construct a stratification using the action of $\Gamma_R$ on Wach modules (see Proposition \ref{prop:strat_from_wach_relative}).

\subsubsection{The action of \texorpdfstring{$\Gamma_R'$}{-}}\label{subsubsec:geo_gamma_action_n1}

Our first goal is to show the following claim:
\begin{lem}\label{lem:n1_geometric_exact}
	For each $n \geqslant 1$, the following natural $(\varphi, \Gamma_R \times \Gamma_F)\equivariant$ sequence is exact:
	\begin{equation}\label{eq:n1_geometric_exact}
		0 \longrightarrow (N(1)/p_1(\mu))^{1 \times \Gamma_R'} \xrightarrow{\hspace{1mm} p_1(\mu)^n \hspace{1mm}} (N(1)/p_1(\mu)^{n+1})^{1 \times \Gamma_R'} \longrightarrow (N(1)/p_1(\mu)^{n})^{1 \times \Gamma_R'} \longrightarrow 0.
	\end{equation}
\end{lem}
\begin{proof}
	The proof is similar to the proof of Lemma \ref{lem:a1_geometric_exact}.
	To lighten notations, let us denote by $A(1) \coloneq \AR(1)$, $\Abar(1) \coloneq \AR(1)/p_1(\mu)$ and $\Nbar(1) \coloneq N(1)/p_1(\mu)$.
	Instead of working with the action of $1 \times \Gamma_R'$, we will work with the $\qconnection$ arising from this action.
	More precisely, in the notation of Definition \ref{defi:qdeRham_complex}, take $D$ to be $\LambdatildeR \isomorphic A(1)^{1 \times \Gamma_R'}$ (see Lemma \ref{lem:iota_lambdatilde_iso}), set $A$ to be $A(1)$ equipped with a $\LambdatildeR\linear$ action of $1 \times \Gamma_R'$, and let $\{\gamma_1, \ldots, \gamma_d\}$ be the topological generators of $\Gamma_R'$ (see Section \ref{subsec:important_rings}).
	Then, by setting $q = 1+p_2(\mu)$ and $U_i = p_2([X_i^{\flat}])$, for $1 \leqslant i \leqslant d$, we know that $A(1)$ satisfies the hypothesis of Definition \ref{defi:qdeRham_complex} (see the proof of Lemma \ref{lem:a1_geometric_exact}).
	In particular, $A(1)$ is equipped with a $\LambdatildeR\linear$ $\qconnection$ $\nabla_q \colon A(1) \rightarrow q\Omega^1_{A(1)/\LambdatildeR}$, given as $f \mapsto \sum_{i=1}^d \frac{\gamma_i(f)-f}{p_2(\mu)} \dlog(p_2([X_i^{\flat}]))$.

	Next, we have that $N$ is a Wach module over $\AR$ and $N(1) = A(1) \otimes_{p_2, \AR} N$ is equipped with the tensor product Frobenius and the tensor product action of $\Gamma_R^2$.
	Note that for any $f \otimes y$ in $N(1)$ and $g$ in $1 \times \Gamma_R$, we have that $(g-1)(f \otimes y) = (g-1)y \otimes y + g(f) \otimes (g-1)y$ is in $p_2(\mu)N(1)$.
	Therefore, the operator 
	\begin{align*}
		\nabla_q \colon N(1) &\longrightarrow N(1) \otimes_{A(1)} \Omega^1_{A(1)/\LambdatildeR}\\
		x &\longmapsto \textstyle\sum_{i=1}^d \tfrac{\gamma_i(x)-x}{p_2(\mu)} \dlog([X_i^{\flat}]),
	\end{align*}
	satisfies the assumptions of Definition \ref{defi:qconnection}.
	Moreover, from the proof of Lemma \ref{lem:iota_lambdatilde_iso} and Example \ref{exam:wach_qconnection}, we see that the $\qconnection$ $\nabla_q$ on $N(1)$ is $(p, \mu)\adically$ quasi-nilpotent, and it is flat because $\gamma_i$ commute with each other.
	Let us also note that the action of $1 \times \Gamma_R'$ is trivial on $p_1(\mu)$ and $\LambdatildeR/(p_1(\mu)) \isomorphic \Lambda_R \isomorphic \Abar(1)^{1 \times \Gamma_R'}$ (see \eqref{eq:lambdatilder_mod_p1mu}, Example \ref{exam:a1modp1mu_qconnection} and the proof of Lemma \ref{lem:iota_lambdatilde_iso}).
	Therefore, we see that the $\qconnection$ on $\Nbar(1)$, induced by reducing the $\qconnection$ on $N(1)$ modulo $p_1(\mu)$, coincides with the $\qconnection$ on $\Nbar(1)$ described in Example \ref{exam:wach_a1modp1mu_qconnection}.
	Now, consider the following exact sequence of $q\textrm{-de Rham}$ complexes:
	\begin{align*}
		0 \longrightarrow \Nbar(1) \otimes_{\Abar(1)} q\Omega^{\bullet}_{\Abar(1)/\Lambda_R} &\xrightarrow{\hspace{1mm} p_1(\mu)^n \hspace{1mm}} N(1)/p_1(\mu)^{n+1} \otimes_{A(1)} q\Omega^{\bullet}_{A(1)/\LambdatildeR} \longrightarrow \\
		&\longrightarrow N(1)/p_1(\mu)^n \otimes_{A(1)} q\Omega^{\bullet}_{A(1)/\LambdatildeR} \longrightarrow 0.
	\end{align*}
	Since the action of $1 \times \Gamma_R'$ is continuous on $N(1)$, therefore by Lemma \ref{lem:cont_coh_disc}, we see that $(N(1)/p_1(\mu)^n)^{1 \times \Gamma_R'} = (N(1)/p_1(\mu)^n)^{\nabla_q=0}$.
	In particular, showing that \eqref{eq:n1_geometric_exact} is exact, is equivalent to showing that $H^1\big(\Nbar(1) \otimes_{\Abar(1)} q\Omega^{\bullet}_{\Abar(1)/\Lambda_R}\big) = 0$.
	From Proposition \ref{prop:n1modp1mu_connection}, recall that the $q\textrm{-de Rham}$ complex $\Nbar(1) \otimes_{\Abar(1)} q\Omega^{\bullet}_{\Abar(1)/\Lambda_R}$ is naturally quasi-isomorphic to the de Rham complex $\Nbar(1) \otimes_{\Abar(1)} \Omega^{\bullet}_{\Abar(1)/\Lambda_R}$.
	Moreover, in Proposition \ref{prop:abarone_comp}, we have shown that the de Rham complex $\Nbar(1) \otimes_{\Abar(1)} \Omega^{\bullet}_{\Abar(1)/\Lambda_R}$ is acyclic in positive degrees, in particular, we see that $H^1\big(\Nbar(1) \otimes_{\Abar(1)} q\Omega^{\bullet}_{\Abar(1)/\Lambda_R}\big) = H^1\big(\Nbar(1) \otimes_{\Abar(1)} \Omega^{\bullet}_{\Abar(1)/\Lambda_R}\big) = 0$.
	Hence, it follows that \eqref{eq:n1_geometric_exact} is exact.
\end{proof}

Let $\MLambdatilde \coloneq N(1)^{1 \times \Gamma_R'}$ as a module over $A(1)^{1 \times \Gamma_R'} \lisomorphic \LambdatildeR$ (see Lemma \ref{lem:iota_lambdatilde_iso}), equipped with the induced Frobenius and an induced semilinear and continuous action of $\Gamma_R \times \Gamma_F$.
Then, we note the following:
\begin{lem}\label{lem:mlambdatilde_p2mup_reg}
	The sequence $\{p_2(\mu), p\}$ is regular on $\MLambdatilde$.
\end{lem}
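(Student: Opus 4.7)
\medskip

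My plan is to reduce the claim for $\MLambdatilde = N(1)^{1 \times \Gamma_R'}$ to the corresponding regularity statement on the ambient module $N(1) = \AR(1) \otimes_{p_2, \AR} N$, exploiting the fact that $p_2(\mu)$ is fixed by $1 \times \Gamma_R'$ (since the elements $\gamma_1,\dots,\gamma_d$ act on the second tensor factor, and $\mu$ is moved only by $\Gamma_F$, not by $\Gamma_R'$). Specifically, since $N$ is a Wach module, the sequences $\{p,\mu\}$ and $\{\mu,p\}$ are regular on $N$; by the faithful flatness of $p_2 : \AR \to \AR(1)$ (Lemma \ref{lem:pi_rj_ff}) the sequences $\{p, p_2(\mu)\}$ and $\{p_2(\mu), p\}$ are regular on $N(1)$.

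The verification then splits into two steps. \textbf{First}, $p_2(\mu)$ is a nonzerodivisor on $\MLambdatilde$: this is immediate from the inclusion $\MLambdatilde \subset N(1)$ and the fact that $p_2(\mu)$ is a nonzerodivisor on $N(1)$. \textbf{Second}, suppose $x \in \MLambdatilde$ satisfies $px = p_2(\mu) y$ for some $y \in \MLambdatilde$. Regularity of $\{p, p_2(\mu)\}$ on $N(1)$ yields $z \in N(1)$ with $x = p_2(\mu) z$. The point is then to check that $z$ already lies in the $(1\times\Gamma_R')$-invariants. For any $g \in 1 \times \Gamma_R'$, using that $p_2(\mu)$ is $(1\times\Gamma_R')$-invariant, we compute
\begin{equation*}
p_2(\mu)(gz - z) = g(p_2(\mu)z) - p_2(\mu)z = gx - x = 0,
\end{equation*}
and since $p_2(\mu)$ is a nonzerodivisor on $N(1)$, we deduce $gz = z$. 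Thus $z \in \MLambdatilde$ and $x \in p_2(\mu)\MLambdatilde$, showing that $p$ acts as a nonzerodivisor on $\MLambdatilde/p_2(\mu)\MLambdatilde$.

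This argument is entirely formal and should pose no real obstacle; the only subtlety to flag is ensuring that $p_2(\mu)$ is indeed fixed by $1 \times \Gamma_R'$, which follows from the description of the $\Gamma_R$-action in Subsection \ref{subsec:important_rings} (the geometric generators $\gamma_i$ fix $\mu$, as $\mu$ is moved only by $\Gamma_F$). Consequently, the cancellation trick in the second step goes through without invoking any of the more delicate descent machinery developed earlier in the section; in particular the exact sequence of Lemma \ref{lem:n1_geometric_exact} is not needed for this lemma, although it indicates that $\MLambdatilde$ has the expected size.
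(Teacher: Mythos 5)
Your argument is correct and essentially coincides with the paper's proof: both deduce regularity of $\{p_2(\mu),p\}$ on $N(1)$ from the Wach module axioms via faithful flatness of $p_2$, and both use the $(1\times\Gamma_R')$-invariance of $p_2(\mu)$ together with the nonzerodivisor property to show $p_2(\mu)N(1)\cap\MLambdatilde = p_2(\mu)\MLambdatilde$ (your element-by-element cancellation is exactly the paper's injectivity of $\MLambdatilde/p_2(\mu)\MLambdatilde \to N(1)/p_2(\mu)N(1)$). No gaps.
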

\begin{proof}
	From Definition \ref{defi:wach_mods_relative} recall that $\{\mu, p\}$ is a regular sequence on $N$ and $p_2 \colon \AR \rightarrow \AR(1)$ is faithfully flat by Lemma \ref{lem:pi_rj_ff}.
	Therefore, it follows that $\{p_2(\mu), p\}$ is a regular sequence on $\AR(1) \otimes_{p_2, \AR} N$.
	Since $p_2(\mu)$ is invariant under the action of $1 \times \Gamma_R'$, it follows that $p_2(\mu)N(1) \cap \MLambdatilde = p_2(\mu) \MLambdatilde$.
	In particular, the natural map $\MLambdatilde/p_2(\mu)\MLambdatilde \rightarrow N(1)/p_2(\mu)N(1)$ is injective.
	Hence, $\MLambdatilde/p_2(\mu)\MLambdatilde$ is $p\torsionfree$, as claimed.
\end{proof}

Our next goal is to describe the $\LambdatildeR\module$ $\MLambdatilde$, as well as, the action of $1 \times \Gamma_F \subset \Gamma_R \times \Gamma_F$ on it, more explicitly.
So, let us consider the following $(\varphi, 1 \times \Gamma_F)\equivariant$ composition:
\begin{equation*}
	\rho \colon R\llbracket\mu\rrbracket \xrightarrow[\sim]{\hspace{1mm} \iota \hspace{1mm}} \AR \xrightarrow{\hspace{1mm} p_2 \hspace{1mm}} \AR(1) \xrightarrow{\hspace{1mm} \Delta' \hspace{1mm}} \LambdatildeR,
\end{equation*}
where the map $\iota$ is described in Section \ref{subsec:important_rings}, the map $p_2$ is described in Section \ref{subsec:cech_nerve_ar+} and the map $\Delta'$ is described in Section \ref{subsubsec:geo_gamma_action} (see before Lemma \ref{lem:delta_prime_iso}), and we set $\rho \coloneq \Delta' \circ p_2 \circ \iota$.
In particular, we have that $\rho(\mu) = p_2(\mu)$ and $\rho(X_i) = p_1([X_i^{\flat}])$, for all $1 \leqslant i \leqslant d$.
Moreover, by the definition of $\Delta'$, we have that $\Delta' \circ p_1 = \Delta' \circ p_2 \colon \AR \rightarrow \LambdatildeR$ as ring homomorphisms, and the composition $\Delta' \circ p_1$ coincides with the structure map $p_1 \colon \AR \rightarrow \LambdatildeR$, which is faithfully flat (see Construction \ref{const:lambdatilder}).
Therefore, it follows that the composition $\Delta' \circ p_2$, and hence, $\rho \coloneq \Delta' \circ p_2 \circ \iota \colon R\llbracket\mu\rrbracket \rightarrow \LambdatildeR$ is also faithfully flat.

Now, tensoring the $(\varphi, 1 \times \Gamma_F)\equivariant$ map $\Delta' \colon \AR(1) \rightarrow \LambdatildeR$ with the Wach module $N$ over $\AR$, we obtain the following $(\varphi, 1 \times \Gamma_F)\equivariant$ map:
\begin{equation*}
	\Delta_N' \colon N(1) = \AR(1) \otimes_{p_2, \AR} N \longrightarrow \LambdatildeR \otimes_{\Delta' \circ p_2, \AR} N = \LambdatildeR \otimes_{\rho, R\llbracket\mu\rrbracket} N,
\end{equation*}
where we consider $N$ as an $R\llbracket\mu\rrbracket\module$ via the isomorphism $\iota \colon R\llbracket\mu\rrbracket \isomorphic \AR$, equipped with a $(\varphi, 1 \times \Gamma_F)\action$ (see Section \ref{subsubsec:fpx_descent}).
For each $n \geqslant 1$, reducing the map $\Delta_N'$ modulo $p_1(\mu)^n$ and taking $(1 \times \Gamma_R')\textrm{-invariants}$ of the source, induces the following $(\varphi, 1 \times \Gamma_F)\equivariant$ map:
\begin{equation}\label{eq:delta_prime_n}
	\Delta_N' \colon (N(1)/p_1(\mu)^n)^{1 \times \Gamma_R'} \longrightarrow (\LambdatildeR \otimes_{\rho, R\llbracket\mu\rrbracket} N)/p_1(\mu)^n.
\end{equation}

\begin{lem}\label{lem:delta_prime_n_iso}
	For each $n \geqslant 1$, the map $\Delta_N'$ in \eqref{eq:delta_prime_n} is a $(\varphi, 1 \times \Gamma_F)\equivariant$ isomorphism of $\LambdatildeR\modules$.
	Moreover, \eqref{eq:delta_prime_n} induces a $\LambdatildeR\linear$ and $(\varphi, 1 \times \Gamma_F)\equivariant$ isomorphism
	\begin{equation}\label{eq:delta_prime_N}
		\Delta_N' \colon \MLambdatilde \coloneq N(1)^{1 \times \Gamma_R'} \isomorphic \LambdatildeR \otimes_{\rho, R\llbracket\mu\rrbracket} N.
	\end{equation}
	In particular, for each $n \geqslant 1$, we have natural $\LambdatildeR\linear$ and $(\varphi, \Gamma_R \times \Gamma_F)\equivariant$ isomorphisms
	\begin{equation}\label{eq:mlambdatilde_mod_p1mun}
		\MLambdatilde/p_1(\mu)^n \isomorphic (N(1)/p_1(\mu)^n)^{1 \times \Gamma_R'}.
	\end{equation}
\end{lem}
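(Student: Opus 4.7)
The plan is to prove \eqref{eq:delta_prime_n} is an isomorphism by induction on $n \geq 1$, pass to the inverse limit to obtain \eqref{eq:delta_prime_N}, and finally deduce \eqref{eq:mlambdatilde_mod_p1mun}. First, observe that $\Delta_N'$ is $(\varphi, 1 \times \Gamma_F)$-equivariant by construction, and that the action of $1 \times \Gamma_R'$ on $T := \LambdatildeR \otimes_{\rho, R\llbracket\mu\rrbracket} N$ is trivial because the action on $\LambdatildeR$ factors through $1 \times \Gamma_F$ (see Construction \ref{const:lambdatilder}); thus $\Delta_N'$ indeed lands in the $(1 \times \Gamma_R')$-invariants.

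For the base case $n = 1$, the plan is to reduce the target modulo $p_1(\mu)$ and use the $(\varphi, \Gamma_F)$-equivariant isomorphism $\LambdatildeR/p_1(\mu) \isomorphic \Lambda_R$ of \eqref{eq:lambdatilder_mod_p1mu} to identify it with $\Lambda_R \otimes_{R\llbracket\mu\rrbracket} N$. Via $\iota : R\llbracket\mu\rrbracket \isomorphic \AR$, this becomes $\Lambda_R \otimes_{\AR} N$, and as explained in Remark \ref{rem:mlambda_delta'} the map $\Delta_N' \bmod p_1(\mu)$ then agrees with the isomorphism $\MLambda \isomorphic \Lambda_R \otimes_{\AR} N$ supplied by Proposition \ref{prop:abarone_comp}, settling the base case.

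For the inductive step, I would form the commutative diagram whose top row is the exact sequence of Lemma \ref{lem:n1_geometric_exact} and whose bottom row is the natural sequence $0 \to T/p_1(\mu)T \xrightarrow{p_1(\mu)^n} T/p_1(\mu)^{n+1}T \to T/p_1(\mu)^nT \to 0$, with vertical maps induced by $\Delta_N'$. By the inductive hypothesis the outer vertical maps are isomorphisms, so a five-lemma argument yields the middle one as an isomorphism, \emph{provided the bottom row is exact}. I expect this exactness, equivalently the $p_1(\mu)$-torsion freeness of $T$, to be the main obstacle. The approach would be to combine the $p_1(\mu)$-torsion freeness of $\LambdatildeR$ (from Construction \ref{const:lambdatilder}) with flatness of $\Lambda_R \isomorphic \LambdatildeR/p_1(\mu)$ over $R\llbracket\mu\rrbracket$, which would force $\mathrm{Tor}_1^{R\llbracket\mu\rrbracket}(\Lambda_R, N) = 0$; tensoring the short exact sequence $0 \to \LambdatildeR \xrightarrow{p_1(\mu)} \LambdatildeR \to \Lambda_R \to 0$ of $R\llbracket\mu\rrbracket$-modules (via $\rho$) with $N$ then delivers the desired torsion freeness of $T$.

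Finally, since $(1 \times \Gamma_R')$-invariants commute with limits and both $N(1)$ and $T$ are $p_1(\mu)$-adically complete ($N(1)$ as a finitely generated module over the complete ring $\AR(1)$, and $T$ as a finitely generated module over the complete ring $\LambdatildeR$), passing the inductively established isomorphisms to the inverse limit over $n$ yields \eqref{eq:delta_prime_N}. Reducing \eqref{eq:delta_prime_N} modulo $p_1(\mu)^n$ and invoking the $p_1(\mu)$-torsion freeness of $T$ then delivers \eqref{eq:mlambdatilde_mod_p1mun}.
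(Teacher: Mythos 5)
Your outline tracks the paper's proof step by step: the $n = 1$ case via Proposition \ref{prop:abarone_comp} and Remark \ref{rem:mlambda_delta'}, the induction via the exact sequence of Lemma \ref{lem:n1_geometric_exact} and the five-lemma, the passage to the limit for \eqref{eq:delta_prime_N}, and the composite argument for \eqref{eq:mlambdatilde_mod_p1mun}. You are right to flag the exactness of the bottom row of the inductive diagram --- equivalently, $p_1(\mu)$-torsion freeness of $T := \LambdatildeR \otimes_{\rho, R\llbracket\mu\rrbracket} N$ --- as the one step the paper takes for granted.

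However, the proposed justification of this step is flawed: $\Lambda_R \cong \LambdatildeR/p_1(\mu)$ is \emph{not} flat over $R\llbracket\mu\rrbracket$. Indeed, reducing modulo $p$, the divided-power relation gives $\mu^{p-1} = p\cdot(\mu^{p-1}/p) = 0$ in $\Lambda_R/p$, so $\mu$ is a zerodivisor on $\Lambda_R/p$; on the other hand $\mu$ is a nonzerodivisor on $R\llbracket\mu\rrbracket/p$, and flatness of $\Lambda_R$ over $R\llbracket\mu\rrbracket$ would force $\Lambda_R/p$ to be flat over $R\llbracket\mu\rrbracket/p$, a contradiction. Hence $\Tor_1^{R\llbracket\mu\rrbracket}(\Lambda_R, N)$ has no reason to vanish via this route, and the proposed patch does not close the gap. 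The needed $p_1(\mu)$-regularity of $T$ should instead be established directly, for example by showing that $\rho$ itself is flat: $\rho$ is obtained from the faithfully flat $p_1 : \AR \to \LambdatildeR$ of Construction \ref{const:lambdatilder} by post-composing with the $\delta$-ring automorphism of $\LambdatildeR$ that fixes $R$ and interchanges $p_1(\mu)$ with $p_2(\mu)$. Flatness of $\rho$ gives that $\{p, \rho(\mu)\}$ and $\{\rho(\mu), p\}$ are regular on $T$, and then, exactly as in Lemma \ref{lem:nr1_pctf}, the fact that $p_1(\mu)^{p-1}$ equals $\rho(\mu)^{p-1}$ up to a unit modulo $p$ yields the $p_1(\mu)$-regularity of $T$.
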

\begin{proof}
	Note that for $n = 1$, from \eqref{eq:delta_prime_n}, we have the following $\Lambda_R\linear$ and $(\varphi, \Gamma_F)\equivariant$ composition:
	\begin{equation*}
		\MLambda = \Nbar(1)^{1 \times \Gamma_R'} \longrightarrow \Lambda_R \otimes_{\rho, R\llbracket\mu\rrbracket} N,
	\end{equation*}
	where $\Nbar(1) \coloneq N(1)/p_1(\mu)$, and we have used that $\Nbar(1)^{1 \times \Gamma_R'} = \Nbar(1)^{\nabla_q=0} = \Nbar(1)^{\nabla=0} = M_{\Lambda}$ (in the notations of Proposition \ref{prop:n1modp1mu_connection} and Proposition \ref{prop:abarone_comp}), where the first equality follows because the action of $1 \times \Gamma_R'$ is continuous on $\Nbar(1)$ (see Lemma \ref{lem:cont_coh_disc}).
	Now, from Proposition \ref{prop:abarone_comp} we see that the composition above is precisely the isomorphism in \eqref{eq:mlambda_arpdn}, in particular, \eqref{eq:delta_prime_n} is an isomorphism for $n = 1$,.
	So, consider the following $\LambdatildeR\linear$ and $(\varphi, 1 \times \Gamma_F)\equivariant$ commutative diagram:
	\begin{center}
		\begin{tikzcd}
			0 \arrow[r] & \MLambda \arrow[r, "p_1(\mu)^n"] \arrow[d, "\eqref{eq:delta_prime_n}", "\wr"'] & (N(1)/p_1(\mu)^{n+1})^{1 \times \Gamma_R'} \arrow[r] \arrow[d, "\eqref{eq:delta_prime_n}"] & (N(1)/p_1(\mu)^n)^{1 \times \Gamma_R'} \arrow[r] \arrow[d, "\eqref{eq:delta_prime_n}"] & 0\\
			0 \arrow[r] & \Lambda_R \otimes_{\rho, R\llbracket\mu\rrbracket} N \arrow[r, "p_1(\mu)^n"] & (\LambdatildeR \otimes_{\rho, R\llbracket\mu\rrbracket} N)/p_1(\mu)^{n+1} \arrow[r] & (\LambdatildeR \otimes_{\rho, R\llbracket\mu\rrbracket} N)/p_1(\mu)^n \arrow[r] & 0,
		\end{tikzcd}
	\end{center}
	where the bottom row is exact and the top row is the exact sequence from \eqref{eq:n1_geometric_exact}.
	Using the diagram, an easy induction on $n \geqslant 1$, shows that the natural $\LambdatildeR\linear$ and $(\varphi, 1 \times \Gamma_F)\equivariant$ map in \eqref{eq:delta_prime_n} is an isomorphism, i.e.\ $\Delta_N' \colon \MLambdatilde/p_1(\mu)^n \isomorphic (\LambdatildeR \otimes_{\rho, R\llbracket\mu\rrbracket} N)/p_1(\mu)^n$.
	Moreover, as both $N(1)$ and $(\LambdatildeR \otimes_{\rho, R\llbracket\mu\rrbracket} N)$ are $p_1(\mu)\adically$ complete, taking the limit over $n \geqslant 1$, and noting that inverse limit commutes with taking $(1 \times \Gamma_R')\textrm{-invariants}$, we obtain the $\LambdatildeR\linear$ and $(\varphi, 1 \times \Gamma_F)\equivariant$ isomorphism in \eqref{eq:delta_prime_N}, i.e.\ $\Delta'_N \colon \MLambdatilde = N(1)^{1 \times \Gamma_R'} \isomorphic \LambdatildeR \otimes_{\rho, R\llbracket\mu\rrbracket} N$.

	Finally, note that for each $n \geqslant 1$, it is clear that we have a natural $\LambdatildeR\linear$ and $(\varphi, \Gamma_R \times \Gamma_F)\equivariant$ inclusions $\MLambdatilde/p_1(\mu)^n \subset (N(1)/p_1(\mu)^n)^{1 \times \Gamma_R'}$, in particular, the map in \eqref{eq:mlambdatilde_mod_p1mun} is injective.
	To obtain the surjectivity of \eqref{eq:mlambdatilde_mod_p1mun}, consider the following $\LambdatildeR\linear$ and $(\varphi, 1 \times \Gamma_F)\equivariant$ morphisms:
	\begin{equation*}
		\MLambdatilde/p_1(\mu)^n \xrightarrow{\hspace{1mm}\eqref{eq:mlambdatilde_mod_p1mun}\hspace{1mm}} (N(1)/p_1(\mu)^n)^{1 \times \Gamma_R'} \xrightarrow[\sim]{\eqref{eq:delta_prime_n}} (\LambdatildeR \otimes_{\rho, R\llbracket\mu\rrbracket} N)/p_1(\mu)^n,
	\end{equation*}
	where it is easy to see that the composition is reduction modulo $p_1(\mu)^n$ of the isomorphism $\MLambdatilde \isomorphic \LambdatildeR \otimes_{\rho, R\llbracket\mu\rrbracket} N$.
	Since the composition is bijective, therefore, we get that \eqref{eq:mlambdatilde_mod_p1mun} is bijective as well.
	This allows us to conclude.
\end{proof}

\subsubsection{The action of \texorpdfstring{$\FF_p^{\times}$}{-}}\label{subsubsec:fpx_action_n1}

In this section, we will assume that $p \geqslant 3$ and consider the invariants of the exact sequence \eqref{eq:n1_geometric_exact}, for the action of $1 \times \FF_p^{\times}$.
More precisely, we claim the following:
\begin{lem}\label{lem:n1_fpx_exact}
	For each $n \geqslant 1$, the following natural $(\varphi, \Gamma_R \times \Gamma_0)\equivariant$ sequence is exact:
	\begin{equation}\label{eq:n1_fpx_exact}
		0 \longrightarrow \MLambda^{\FF_p^{\times}} \xrightarrow{\hspace{1mm} p_1(\mu)^n \hspace{1mm}} (\MLambdatilde/p_1(\mu)^{n+1})^{1 \times \FF_p^{\times}} \longrightarrow (\MLambdatilde/p_1(\mu)^{n})^{1 \times \FF_p^{\times}} \longrightarrow 0.
	\end{equation}
\end{lem}
\begin{proof}
	Using the $\LambdatildeR\linear$ and $(\varphi, \Gamma_R \times \Gamma_F)\equivariant$ isomorphism \eqref{eq:mlambdatilde_mod_p1mun}, the exact sequence \eqref{eq:n1_geometric_exact} may be written as the following $\LambdatildeR\linear$ and $(\varphi, \Gamma_R \times \Gamma_F)\equivariant$ exact sequence:
	\begin{equation*}
		0 \longrightarrow \MLambda \xrightarrow{\hspace{1mm} p_1(\mu)^n \hspace{1mm}} \MLambdatilde/p_1(\mu)^{n+1} \longrightarrow \MLambdatilde/p_1(\mu)^{n} \longrightarrow 0.
	\end{equation*}
	By considering the associated long exact sequence for the cohomology of $(1 \times \FF_p^{\times})\action$ and noting that $H^1(1 \times \FF_p^{\times}, \MLambda) = 0$, since $p-1$ is invertible in $\ZZ_p$, we obtain that the sequence in \eqref{eq:n1_fpx_exact} is exact.
\end{proof}

Let us describe the $\LambdatildeR\modules$ in Lemma \ref{lem:n1_fpx_exact} more explicitly.
Recall that from Construction \ref{const:lambdatilder0}, we have the ring $\LambdatildeRo = \LambdatildeR^{1 \times \FF_p^{\times}}$, which is $(p, p_1(\mu))\textrm{-adically}$ complete and equipped with an induced Frobenius and continuous action of $\Gamma_R \times \Gamma_F$.
Moreover, $\MLambdatilde = N(1)^{1 \times \Gamma_R'}$ is a finite and $(p, p_1(\mu))\textrm{-adically}$ complete $\LambdatildeR\module$, equipped with an induced action of $(\varphi, \Gamma_R \times \Gamma_F)$.
Set $\MLambdatildeo \coloneq \MLambdatilde^{1 \times \FF_p^{\times}}$ as a $\LambdatildeRo\module$, equipped with an induced Frobenius and an induced semilinear and continuous action of $\Gamma_R \times \Gamma_0$.
Furthermore, from \eqref{eq:mlambdatilde_mod_p1mun} in Lemma \ref{lem:delta_prime_n_iso}, we have a natural $\Lambda_R\linear$ and $(\varphi, 1 \times \Gamma_F)\equivariant$ isomorphism $\MLambdatilde/p_1(\mu) \isomorphic \MLambda$, and we set $\MLambdao \coloneq \MLambda^{1 \times \FF_p^{\times}}$ as a module over $\Lambda_{R, 0} \lisomorphic \LambdatildeRo/(p_1(\mu))$ (see \eqref{eq:lambdatildero_mod_p1mu}), equipped with an induced Frobenius and an induced semilinear and continuous action of $\Gamma_0$.
Then, from the discussion in Proposition \ref{prop:mlambda_fpx_descent}, we have that $\MLambdao$ is a finitely generated, $p\textrm{-adically}$ complete and $p\torsionfree$ $\Lambda_{R, 0}\module$, and $\Lambda_R\textrm{-linearly}$ extending the natural $\Lambda_{R,0}\linear$ inclusion $\MLambdao \subset \MLambda$, induces a $(\varphi, \Gamma_F)\equivariant$ isomorphism of $\Lambda_R\modules$ $\Lambda_R \otimes_{\Lambda_{R, 0}} \MLambdao \isomorphic \MLambda$.
\begin{lem}\label{lem:lambdatildeR_comp}
	The $\LambdatildeRo\module$ $\MLambdatildeo$ is finitely generated and $(p, p_1(\mu))\textrm{-adically}$ complete.
	Moreover, by $\LambdatildeR\textrm{-linearly}$ extending the natural $\LambdatildeRo\linear$ inclusion $\MLambdatildeo \subset \MLambdatilde$, we obtain a $(\varphi, \Gamma_R \times \Gamma_F)\equivariant$ isomorphism
	\begin{equation}\label{eq:lambdatildeR_comp}
		\LambdatildeR \otimes_{\LambdatildeRo} \MLambdatildeo \isomorphic \MLambdatilde.
	\end{equation}
	Furthermore, we have a natural $(\varphi, \Gamma_0)\equivariant$ isomorphism $\MLambdatildeo/p_1(\mu) \isomorphic \MLambda^{1 \times \FF_p^{\times}} = \MLambdao$.
\end{lem}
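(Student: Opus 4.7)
The plan is to transport all statements to the Wach module $N$ itself using the explicit description of $\MLambdatilde$ afforded by Lemma \ref{lem:delta_prime_n_iso}, and then invoke the already-established $\FF_p^{\times}$-descent for Wach modules (Proposition \ref{prop:wachmod_fpx_descent}). More precisely, from \eqref{eq:delta_prime_N} we have a $\LambdatildeR\linear$ and $(\varphi, 1 \times \Gamma_F)\equivariant$ isomorphism $\Delta_N' : \MLambdatilde \isomorphic \LambdatildeR \otimes_{\rho, R\llbracket\mu\rrbracket} N$, and from Proposition \ref{prop:wachmod_fpx_descent}, setting $N_0 := N^{\FF_p^{\times}}$, we have a $(\varphi, \Gamma_F)\equivariant$ isomorphism $R\llbracket\mu\rrbracket \otimes_{R\llbracket\mu_0\rrbracket} N_0 \isomorphic N$, with $N_0$ finitely generated over $R\llbracket\mu_0\rrbracket$.

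The key compatibility I need is that the map $\rho : R\llbracket\mu\rrbracket \to \LambdatildeR$ sends $\mu_0$ to $p_2(\mu_0)$, which lies in $\LambdatildeRo$ (the action of $1 \times \FF_p^{\times}$ is trivial on $p_2(\mu_0)$ by construction of $\mu_0$). Therefore $\rho$ restricts to a ring map $R\llbracket\mu_0\rrbracket \to \LambdatildeRo$, and composing with the Wach module descent gives a natural $(\varphi, \Gamma_R \times \Gamma_F)\equivariant$ isomorphism
\begin{equation*}
	\MLambdatilde \isomorphic \LambdatildeR \otimes_{\rho, R\llbracket\mu\rrbracket} N \isomorphic \LambdatildeR \otimes_{\rho, R\llbracket\mu_0\rrbracket} N_0.
\end{equation*}
Next, I take $(1 \times \FF_p^{\times})\textrm{-invariants}$. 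Using the $\FF_p^{\times}\textrm{-decomposition}$ $\LambdatildeR = \bigoplus_{i=0}^{p-2}(\LambdatildeR)_i$ (as in Construction \ref{const:lambdatilder0} and Remark \ref{rem:rmu0_fpx_decomp}), which is a decomposition of $\LambdatildeRo\modules$ with $(\LambdatildeR)_0 = \LambdatildeRo$, and noting that the $1 \times \FF_p^{\times}\action$ on $N_0$ is trivial, the tensor product decomposes correspondingly. Taking invariants extracts the degree-zero piece, producing a $(\varphi, \Gamma_R \times \Gamma_0)\equivariant$ isomorphism
\begin{equation*}
	\MLambdatildeo \isomorphic \LambdatildeRo \otimes_{\rho, R\llbracket\mu_0\rrbracket} N_0.
\end{equation*}
Since $N_0$ is finitely generated over $R\llbracket\mu_0\rrbracket$, this immediately gives that $\MLambdatildeo$ is finitely generated over $\LambdatildeRo$.

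For the displayed isomorphism \eqref{eq:lambdatildeR_comp}, $\LambdatildeR\textrm{-linearly}$ extending the inclusion $\MLambdatildeo \subset \MLambdatilde$ corresponds under the above identifications to the natural map $\LambdatildeR \otimes_{\LambdatildeRo}(\LambdatildeRo \otimes_{\rho, R\llbracket\mu_0\rrbracket} N_0) \to \LambdatildeR \otimes_{\rho, R\llbracket\mu_0\rrbracket} N_0$, which is manifestly an isomorphism and $(\varphi, \Gamma_R \times \Gamma_F)\equivariant$. Finally, for the reduction modulo $p_1(\mu)$, base changing the isomorphism $\MLambdatildeo \isomorphic \LambdatildeRo \otimes_{\rho, R\llbracket\mu_0\rrbracket} N_0$ along $\LambdatildeRo \twoheadrightarrow \LambdatildeRo/p_1(\mu) \isomorphic \Lambda_{R,0}$ (see \eqref{eq:lambdatildero_mod_p1mu}) gives $\MLambdatildeo/p_1(\mu) \isomorphic \Lambda_{R,0} \otimes_{R\llbracket\mu_0\rrbracket} N_0$, and by the analogous base change reasoning for $\MLambda$ recalled in the proof of Proposition \ref{prop:mlambda_fpx_descent} (see \eqref{eq:mlambda0_n0}), the right hand side is precisely $\MLambdao$; this comparison is naturally $(\varphi, \Gamma_0)\equivariant$.

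I do not anticipate a serious obstacle here: the entire difficulty has been front-loaded into Lemma \ref{lem:delta_prime_n_iso} (which reduces $\MLambdatilde$ to a purely $(\varphi, \Gamma_F)\textrm{-equivariant}$ problem involving only $N$) and Proposition \ref{prop:wachmod_fpx_descent} (which carries out the $\FF_p^{\times}$-descent on $N$ itself). The only check to be careful about is the compatibility $\rho(R\llbracket\mu_0\rrbracket) \subset \LambdatildeRo$ and the exactness of $\FF_p^{\times}\textrm{-invariants}$ (which holds since $p \geq 3$ makes $p-1$ invertible in $\ZZ_p$), both of which are immediate from the definitions.
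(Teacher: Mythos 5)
Your proposal is correct and follows essentially the same route as the paper: reduce everything to the Wach module via the isomorphism $\Delta'_N$ of Lemma \ref{lem:delta_prime_n_iso}, invoke the $\FF_p^{\times}$-descent of Proposition \ref{prop:wachmod_fpx_descent} for $N_0$, take $(1\times\FF_p^{\times})$-invariants to obtain $\MLambdatildeo \isomorphic \LambdatildeRo \otimes_{\rho, R\llbracket\mu_0\rrbracket} N_0$, and deduce \eqref{eq:lambdatildeR_comp} by comparing with the base-changed Wach-module descent. The only cosmetic difference is that you spell out why taking invariants extracts the degree-zero piece (via the $\FF_p^{\times}$-decomposition and the observation $\rho(R\llbracket\mu_0\rrbracket)\subset\LambdatildeRo$), whereas the paper states this more tersely.
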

\begin{proof}
	From \eqref{eq:delta_prime_N} in Lemma \ref{lem:delta_prime_n_iso}, recall that we have a $(\varphi, 1 \times \Gamma_F)\equivariant$ isomorphism of $\LambdatildeR\modules$ $\Delta'_N \colon \MLambdatilde \isomorphic \LambdatildeR \otimes_{\rho, R\llbracket\mu\rrbracket} N$.
	Moreover, from Proposition \ref{prop:wachmod_fpx_descent} recall that $N_0 \coloneq N^{\FF_p^{\times}}$ is a finitely generated module over $R\llbracket\mu_0\rrbracket = R\llbracket\mu\rrbracket^{\FF_p^{\times}}$ (see Lemma \ref{lem:rmu0_fpxinv}), equipped with the induced action of $(\varphi, \Gamma_0)$, and we have a natural $(\varphi, \Gamma_F)\equivariant$ isomorphism of $R\llbracket\mu\rrbracket\modules$ $R\llbracket\mu\rrbracket \otimes_{R\llbracket\mu_0\rrbracket} N_0 \isomorphic N$.
	Then, taking invariants of \eqref{eq:delta_prime_N} under the action of $1 \times \FF_p^{\times}$, induces a $(\varphi, 1 \times \Gamma_0)\equivariant$ isomorphism of $\LambdatildeRo\modules$
	\begin{equation}\label{eq:delta_prime_n0_iso}
		\MLambdatildeo \isomorphic \LambdatildeRo \otimes_{\rho, R\llbracket\mu_0\rrbracket} N_0.
	\end{equation}
	In particular, it follows that $\MLambdatildeo$ is finitely generated over $\LambdatildeRo$, and $(p, p_1(\mu))\textrm{-adically}$ separated, hence, $(p, p_1(\mu))\textrm{-adically}$ complete.
	Now, consider the following natural $\LambdatildeR\linear$ diagram:
	\begin{center}
		\begin{tikzcd}
			\LambdatildeR \otimes_{\LambdatildeRo} \MLambdatildeo \arrow[r, "\eqref{eq:lambdatildeR_comp}"] \arrow[d, "\eqref{eq:delta_prime_n0_iso}", "\wr"'] & \MLambdatilde \arrow[d, "\eqref{eq:delta_prime_n}", "\wr"']\\
			\LambdatildeR \otimes_{\rho, R\llbracket\mu_0\rrbracket} N_0 \arrow[r, "\sim"] & \LambdatildeR \otimes_{\rho, R\llbracket\mu\rrbracket} N,
		\end{tikzcd}
	\end{center}
	where the bottom horizontal arrow is the extension along the $(\varphi, 1 \times \Gamma_F)\equivariant$ map $\rho \colon R\llbracket\mu\rrbracket \rightarrow \LambdatildeR$ (see the discussion before Lemma \ref{lem:delta_prime_n_iso}) of the $(\varphi, \Gamma_F)\equivariant$ and $R\llbracket\mu\rrbracket\linear$ isomorphism $R\llbracket\mu\rrbracket \otimes_{R\llbracket\mu_0\rrbracket} N_0 \isomorphic N$ from Proposition \ref{prop:wachmod_fpx_descent}.
	The diagram commutes by definition, and it follows that the top horizontal arrow, i.e.\ \eqref{eq:lambdatildeR_comp} is also an isomorphism.
	Finally, it is easy to see that the isomorphism \eqref{eq:lambdatildeR_comp} induces a natural $(\varphi, \Gamma_0)\equivariant$ isomorphism $\MLambdatildeo/p_1(\mu) = \MLambdatilde^{1 \times \FF_p^{\times}}/p_1(\mu) \isomorphic \MLambda^{1 \times \FF_p^{\times}} = \MLambdao$, thus allowing us to conclude.
\end{proof}

Let us note an important observation for the action of $\Gamma_R \times \Gamma_F$ on $\MLambdatilde$.
\begin{lem}\label{lem:mlambdatilde_triv_modmu}
	The action of $1 \times \Gamma_F$ is trivial on $\MLambdatilde/p_2(\mu)$ and the action of $\Gamma_R \times 1$ is trivial on $\MLambdatilde/p_1(\mu)$.
\end{lem}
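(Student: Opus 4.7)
The plan is to deduce both statements by transferring the already-established triviality properties of the rings $\LambdatildeR$ and $\AR(1)$ (from Lemma \ref{lem:lambdatildeR_triv_modmu} and Proposition \ref{prop:arn_gammar_triv_modmu}) to $\MLambdatilde$ via the two structural isomorphisms already at our disposal, and then invoking the defining property of a Wach module that $\Gamma_R$ acts trivially on $N/\mu N$.

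For the first claim, I would use the $(\varphi, 1 \times \Gamma_F)$-equivariant isomorphism of $\LambdatildeR$-modules
\[
\Delta'_N : \MLambdatilde \isomorphic \LambdatildeR \otimes_{\rho, R\llbracket\mu\rrbracket} N
\]
provided by \eqref{eq:delta_prime_N} in Lemma \ref{lem:delta_prime_n_iso}. Since $\rho(\mu) = p_2(\mu)$, multiplication by $p_2(\mu)$ on the right-hand side equals multiplication by $\rho(\mu)$, which can be absorbed into the second factor; this identifies the reduction modulo $p_2(\mu)$ with $(\LambdatildeR/p_2(\mu)) \otimes_{R} (N/\mu N)$. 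By the first part of Lemma \ref{lem:lambdatildeR_triv_modmu}, the action of $1 \times \Gamma_F$ is trivial on $\LambdatildeR/p_2(\mu)$, and by condition (2) of Definition \ref{defi:wach_mods_relative}, the action of $\Gamma_F$ is trivial on $N/\mu N$. Hence the action of $1 \times \Gamma_F$ on this tensor product, and therefore on $\MLambdatilde/p_2(\mu)$, is trivial.

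For the second claim, the crucial point is that the isomorphism $\MLambdatilde/p_1(\mu) \isomorphic \MLambda$ from \eqref{eq:mlambdatilde_mod_p1mun} is $(\varphi, \Gamma_R \times \Gamma_F)$-equivariant (and not merely $(\varphi, 1\times \Gamma_F)$-equivariant). It therefore suffices to check that $\Gamma_R \times 1$ acts trivially on $\MLambda \subseteq \Nbar(1) = \Abar(1) \otimes_{p_2, \AR} N$. By construction, the $\Gamma_R^2$-action on $N$ factors through the second coordinate, so $\Gamma_R \times 1$ acts trivially on the second tensor factor; and Proposition \ref{prop:arn_gammar_triv_modmu}, applied with $n = 1$ and $i = 1$, tells us that $\Gamma_R \times 1$ acts trivially on the first tensor factor $\Abar(1) = \AR(1)/p_1(\mu)$. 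Combining these shows that $\Gamma_R \times 1$ acts trivially on $\Nbar(1)$, hence on the submodule $\MLambda$, and transporting back via \eqref{eq:mlambdatilde_mod_p1mun} yields the desired triviality on $\MLambdatilde/p_1(\mu)$.

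I do not expect any serious obstacle: all the analytic and algebraic content has already been absorbed into Propositions \ref{prop:arn_gammar_triv_modmu} and \ref{prop:abarone_comp} and Lemmas \ref{lem:lambdatildeR_triv_modmu} and \ref{lem:delta_prime_n_iso}. The only care required is bookkeeping — tracking which group factor acts on which tensor factor and verifying that each isomorphism being used carries the equivariance needed (in particular, that \eqref{eq:mlambdatilde_mod_p1mun} is fully $(\varphi, \Gamma_R \times \Gamma_F)$-equivariant, whereas \eqref{eq:delta_prime_N} is only $(\varphi, 1 \times \Gamma_F)$-equivariant, which is exactly why the two halves of the lemma are proved by different routes).
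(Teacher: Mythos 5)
Your proposal is correct and follows essentially the same route as the paper: the first claim via the $(\varphi, 1\times\Gamma_F)$-equivariant isomorphism $\Delta'_N$ of \eqref{eq:delta_prime_N} combined with Lemma \ref{lem:lambdatildeR_triv_modmu} and condition (2) of Definition \ref{defi:wach_mods_relative} (the paper phrases this via the Leibniz identity $(g-1)(f\otimes y)=((g-1)f)\otimes y+g(f)\otimes(g-1)y$, which is the same computation), and the second claim by reducing via the $(\varphi,\Gamma_R\times\Gamma_F)$-equivariant isomorphism \eqref{eq:mlambdatilde_mod_p1mun} to the triviality of $\Gamma_R\times 1$ on $N(1)/p_1(\mu)$, which you correctly derive factor-by-factor from Proposition \ref{prop:arn_gammar_triv_modmu}. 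No gaps.
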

\begin{proof}
	From \eqref{eq:delta_prime_N} in Lemma \ref{lem:delta_prime_n_iso}, recall that we have a $(\varphi, 1 \times \Gamma_F)\equivariant$ isomorphism $\Delta_N' \colon \MLambdatilde \isomorphic \LambdatildeR \otimes_{\rho, R\llbracket\mu\rrbracket} N$.
	Now, let $g$ be any element of $\Gamma_F = 1 \times \Gamma_F$, then for any $f \otimes y$ in $\LambdatildeR \otimes_{\rho, R\llbracket\mu\rrbracket} N$, we have that
	\begin{equation*}
		(g-1)(f \otimes y) = ((g-1)f) \otimes y + g(f) \otimes (g-1)y.
	\end{equation*}
	Since the action of $1 \times \Gamma_F$ is trivial on $\LambdatildeR/(p_2(\mu))$ (see Lemma \ref{lem:lambdatildeR_triv_modmu}), as well as, on $N/\mu N$ by definition, therefore, it follows that $(g-1)(f \otimes x)$ is an element of $p_2(\mu)\LambdatildeR \otimes_{\rho, R\llbracket\mu\rrbracket} N$.
	Then, using the $(\varphi, 1 \times \Gamma_F)\equivariant$ isomorphism \eqref{eq:delta_prime_N}, it follows that for any $x$ in $\MLambdatilde$, we have that $(g-1)x$ is an element of $p_2(\mu)\MLambdatilde$, in particular, the action of $1 \times \Gamma_F$ is trivial on $\MLambdatilde/p_2(\mu)$.
	Next, since the action of $\Gamma_R \times 1$ is trivial on $N(1)/p_1(\mu)$ (see the proof of Lemma \ref{lem:n1_geometric_exact}), therefore, it easily follows that the induced action of $\Gamma_R \times 1$ trivial on $\MLambdatilde/p_1(\mu) \isomorphic (N(1)/p_1(\mu))^{1 \times \Gamma_R'}$ (see \eqref{eq:mlambdatilde_mod_p1mun} in Lemma \ref{lem:delta_prime_n_iso}).
	This concludes our proof.
\end{proof}

\subsubsection{The action of \texorpdfstring{$1 + p\ZZ_p$}{-}}\label{subsubsec:gamma0_action_n1}

We will assume that $p \geqslant 3$ and consider the invariants of the exact sequence \eqref{eq:n1_fpx_exact}, for the action of $1 \times \Gamma_0 \isomorphic 1 \times (1 + p\ZZ_p)$, and show the following:
\begin{lem}\label{lem:n1_gamma0_exact}
	For each $n \geqslant 1$, the following natural $(\varphi, \Gamma_R \times 1)\equivariant$ sequence is exact:
	\begin{equation}\label{eq:n1_gamma0_exact}
		0 \longrightarrow \MLambda^{1 \times \Gamma_F} \xrightarrow{\hspace{1mm} p_1(\mu)^n \hspace{1mm}} (\MLambdatilde/p_1(\mu)^{n+1})^{1 \times \Gamma_F} \longrightarrow (\MLambdatilde/p_1(\mu)^n)^{1 \times \Gamma_F} \longrightarrow 0.
	\end{equation}
\end{lem}

For each $n \geqslant 1$, note that by reducing modulo $p_1(\mu)^n$, the $(\varphi, \Gamma_R \times \Gamma_F)\equivariant$ isomorphism \eqref{eq:lambdatildeR_comp} from Lemma \ref{lem:lambdatildeR_comp}, taking its $(1 \times \FF_p^{\times})\textrm{-invariants}$, and observing that $p_1(\mu)$ is invariant under the action of $1 \times \Gamma_F$, we obtain a $(\varphi, \Gamma_R \times \Gamma_0)\equivariant$ isomorphism $\MLambdatildeo/p_1(\mu)^n \isomorphic (\MLambdatilde/p_1(\mu)^n)^{1 \times \FF_p^{\times}}$.
Consequently, the sequence in \eqref{eq:n1_gamma0_exact} may be rewritten as the following $(\varphi, \Gamma_R \times \Gamma_0)\equivariant$ sequence:
\begin{equation}\label{eq:mlambdatilde0_gamma0_exact}
	0 \longrightarrow \MLambdao^{\Gamma_0} \xrightarrow{\hspace{1mm} p_1(\mu)^n \hspace{1mm}} (\MLambdatildeo/p_1(\mu)^{n+1})^{1 \times \Gamma_0} \longrightarrow (\MLambdatildeo/p_1(\mu)^{n})^{1 \times \Gamma_0} \longrightarrow 0.
\end{equation}
In order to prove that \eqref{eq:mlambdatilde0_gamma0_exact} is exact, let us now look at the action of $\Gamma_R \times \Gamma_F$ on $\MLambdatilde$ and $\MLambdatildeo$, respectively.
We start with the following observation:
\begin{lem}\label{lem:mlambdatilde0_triv_modmu0}
	The action of $1 \times \Gamma_0$ is trivial on $\MLambdatildeo/p_2(\mu_0)$ and the action of $\Gamma_R \times 1$ is trivial on $\MLambdatildeo/p_1(\mu)$.
\end{lem}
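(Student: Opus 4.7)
The plan is to reduce both assertions to previously established triviality statements — namely Lemma \ref{lem:lambdatilde0_triv_modmu0} for the base ring $\LambdatildeRo$, Lemma \ref{lem:mlambdatilde_triv_modmu} for the larger module $\MLambdatilde$, and Proposition \ref{prop:wachmod_fpx_descent} for the $\FF_p^{\times}$-descended Wach module $N_0 = N^{\FF_p^{\times}}$. The main tool is the $(\varphi, 1 \times \Gamma_0)$-equivariant isomorphism $\MLambdatildeo \isomorphic \LambdatildeRo \otimes_{\rho, R\llbracket\mu_0\rrbracket} N_0$ furnished by \eqref{eq:delta_prime_n0_iso} in the proof of Lemma \ref{lem:lambdatildeR_comp}, which transfers the problem from $\MLambdatildeo$ to an explicit tensor product whose two factors we understand.

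For the first claim, I would apply the Leibniz expansion $(g-1)(f \otimes y) = ((g-1)f) \otimes y + g(f) \otimes (g-1)y$ for any $g$ in $1 \times \Gamma_0$ and $f \otimes y$ in $\LambdatildeRo \otimes_{\rho, R\llbracket\mu_0\rrbracket} N_0$. Lemma \ref{lem:lambdatilde0_triv_modmu0} supplies $(g-1)f \in p_2(\mu_0)\LambdatildeRo$, while Proposition \ref{prop:wachmod_fpx_descent} (triviality of the $\Gamma_0$-action on $N_0/\mu_0 N_0$) supplies $(g-1)y \in \mu_0 N_0$. The one point requiring verification is that $\rho(\mu_0) = p_2(\mu_0)$, which is immediate from $\rho(\mu) = p_2(\mu)$ together with the formula $\mu_0 = \sum_{a \in \FF_p}(1+\mu)^{[a]} - p$; granting this, both summands land in $p_2(\mu_0)(\LambdatildeRo \otimes_{\rho, R\llbracket\mu_0\rrbracket} N_0)$, and the claim follows upon transporting across the displayed isomorphism.

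For the second claim, Lemma \ref{lem:mlambdatilde_triv_modmu} already gives triviality of the $\Gamma_R \times 1$-action on $\MLambdatilde/p_1(\mu)$. The last assertion of Lemma \ref{lem:lambdatildeR_comp} provides a natural $(\varphi, \Gamma_0)$-equivariant isomorphism $\MLambdatildeo/p_1(\mu) \isomorphic \MLambdao$, which identifies $\MLambdatildeo/p_1(\mu)$ with the $\FF_p^{\times}$-invariant submodule of $\MLambdatilde/p_1(\mu) \isomorphic \MLambda$; since the $\Gamma_R \times 1$-action commutes with that of $1 \times \FF_p^{\times}$, the triviality is inherited on the submodule.

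The argument is largely bookkeeping and I do not foresee a serious obstacle; the only delicate point is the compatibility $\rho(\mu_0) = p_2(\mu_0)$, which secures that $N_0$ indeed carries a compatible $R\llbracket\mu_0\rrbracket$-structure for the descent to make sense.
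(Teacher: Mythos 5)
Your proof is correct and follows essentially the same route as the paper: the first claim is deduced from the isomorphism $\MLambdatildeo \isomorphic \LambdatildeRo \otimes_{\rho, R\llbracket\mu_0\rrbracket} N_0$ together with the triviality of the $\Gamma_0$-action on $\LambdatildeRo/p_2(\mu_0)$ (Lemma \ref{lem:lambdatilde0_triv_modmu0}) and on $N_0/\mu_0 N_0$ (Proposition \ref{prop:wachmod_fpx_descent}), and the second from triviality on the ambient module together with the identification $\MLambdatildeo/p_1(\mu) \isomorphic \MLambdao$. Your explicit Leibniz bookkeeping and the check that $\rho(\mu_0) = p_2(\mu_0)$ make visible the compatibility the paper treats as immediate, but the argument is the same.
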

\begin{proof}
	Using the triviality of the action of $1 \times \Gamma_F$ on $\MLambdatilde/p_2(\mu)$ from Lemma \ref{lem:mlambdatilde_triv_modmu}, the $(\varphi, 1 \times \Gamma_0)\equivariant$ isomorphism $\MLambdatildeo \isomorphic \LambdatildeRo \otimes_{\rho, R\llbracket\mu_0\rrbracket} N_0$ (see \eqref{eq:lambdatildeR_comp} in Lemma \ref{lem:n1_fpx_exact}), and the fact that the action of $1 \times \Gamma_0$ is trivial on $\LambdatildeRo/(p_2(\mu_0))$ (see Lemma \ref{lem:lambdatilde0_triv_modmu0}) and on $N_0/\mu_0 N_0$ (see Proposition \ref{prop:wachmod_fpx_descent}), it follows that the action of $1 \times \Gamma_0$ is trivial on $\MLambdatildeo/p_2(\mu_0)$.
	Next, observe that the action of $\Gamma_R \times 1$ is trivial on $N(1)/p_1(\mu)$ (see the proof of Lemma \ref{lem:n1_geometric_exact}), therefore, it easily follows that the induced action of $\Gamma_R \times 1$ is trivial on $\MLambdatildeo/p_1(\mu) \isomorphic \MLambda^{1 \times \FF_p^{\times}}$ (see Lemma \ref{lem:lambdatildeR_comp}).
\end{proof}

\begin{rem}\label{rem:gamma0_act_mlambda0}
	From Lemma \ref{lem:mlambdatilde0_triv_modmu0}, note that the action of $1 \times \Gamma_0$ is trivial on $\MLambdatildeo/p_2(\mu_0)$ and multiplication by $p_1(\mu)$ on $\MLambdatildeo$ is equivariant for this action.
	Therefore, for any $g$ in $\Gamma_0$ and any $x$ in $\MLambdatildeo/p_1(\mu)^n$, we see that $(g-1)x$ is an element of $p_2(\mu_0)\MLambdatildeo/p_1(\mu)^n$.
	In particular, for $n=1$, using the isomorphism $\MLambdatildeo/p_1(\mu) \lisomorphic \MLambdao$ from Lemma \ref{lem:lambdatildeR_comp}, we get that for any $g$ in $\Gamma_0$ and any $x$ in $\MLambdao$, the element $(g-1)x$ belongs to $\mu_0\MLambdao$.
\end{rem}

Now, using the action of $1 \times \Gamma_0$ on $\MLambdatildeo$, let us define a $\qconnection$ (see Definition \ref{defi:qconnection}).
Recall that, to define a $q\textrm{-de Rham}$ complex over $\LambdatildeRo$, we fixed the following element in $\LambdatildeRo$ as a parameter (see \eqref{eq:elem_stilde}):
\begin{equation*}
	\stilde = \tfrac{1 \otimes \ptilde - \ptilde \otimes 1}{\ptilde \otimes 1} = \tfrac{p_2(\ptilde) - p_1(\ptilde)}{p_1(\ptilde)}.
\end{equation*}
Moreover, if $\gamma_0$ is an element of $1 \times \Gamma_0$, then from Lemma \ref{lem:gamma0_act_stilde} we have that $(\gamma_0-1)\stilde = u \hspace{0.5mm} p_2(\mu_0)$, for some unit $u$ in $\LambdatildeRo$ depending on $\gamma_0$.
So, let us fix the choice of a topological generator $\gamma_0$ of $1 \times \Gamma_0$ such that $\chi(\gamma_0) = 1+pa$, for a unit $a$ in $\ZZ_p$, and consider the following operator on $\MLambdatildeo$:
\begin{equation}\label{eq:nablaq_stilde_M}
	\begin{aligned}
		\nabla_{q, \stilde} \colon \MLambdatildeo &\longrightarrow \MLambdatildeo\\
					x &\longmapsto \tfrac{(\gamma_0-1)x}{(\gamma_0-1)\stilde}.
	\end{aligned}
\end{equation}
From the triviality of the action of $1 \times \Gamma_0$ on $\MLambdatildeo/p_2(\mu_0)$ (see Lemma \ref{lem:mlambdatilde0_triv_modmu0}) and from Lemma \ref{lem:gamma0_act_stilde}, it follows that the operator $\nabla_{q, \stilde}$ is well defined.
For each $n \geqslant 1$, using Remark \ref{rem:gamma0_act_mlambda0}, the operator in \eqref{eq:nablaq_stilde_M} induces well-defined operators $\nabla_{q, \stilde} \colon \MLambdatildeo/p_1(\mu)^n \longrightarrow \MLambdatildeo/p_1(\mu)^n$.
As the operator $\nabla_{q, \stilde}$ is an endomorphism of $\MLambdatildeo/p_1(\mu)^n$, we may define the following two term Koszul complex:
\begin{equation}\label{eq:stilde_qderham_M}
	K_{\MLambdatildeo/p_1(\mu)^n}(\nabla_{q, \stilde}) \colon \big[\MLambdatildeo/p_1(\mu)^n \xrightarrow{\hspace{1mm} \nabla_{q, \stilde} \hspace{1mm}} \MLambdatildeo/p_1(\mu)^n\big].
\end{equation}
In particular, for $n = 1$ we set $s \coloneq \mu_0/p$ in $\Lambda_{R, 0}$, and using Remark \ref{rem:gamma0_act_mlambda0} and the fact that $(\gamma_0-1)s = v\mu_0$, for some unit $v$ in $\Lambda_{F,0}$ depending on $\gamma_0$ (see Lemma \ref{lem:gamma0_act_s}), we have a well-defined operator
\begin{equation*}
	\begin{aligned}
		\nabla_{q, s} \colon \MLambdao &\longrightarrow \MLambdao\\
				x &\longmapsto \tfrac{(\gamma_0-1)x}{(\gamma_0-1)s}.
	\end{aligned}
\end{equation*}
Note that the operator above coincides with the operator defined in \eqref{eq:nablaq_s_mlambda0}, and the complex from \eqref{eq:stilde_qderham_M} for $n=1$ coincides with the complex from \eqref{eq:s_qderham_mlambda0}.
Therefore, from Proposition \ref{prop:lambdar0_comp}, we have that the cohomology of the Koszul complex $K_{\MLambdao}(\nabla_{q, s})$ vanishes in degree 1, i.e.\ $H^1(K_{\MLambdao}(\nabla_{q, s})) = 0$.

\begin{rem}
	Considering $\stilde$ as a parameter, similar to Remark \ref{rem:nablaq_stilde_qconnection}, the operator $\nabla_{q, \stilde}$ in \eqref{eq:nablaq_stilde_M} can be considered as a $\qconnection$ in non-logarithmic coordinates, in the sense of Definition \ref{defi:qconnection} and Remark \ref{rem:nablaqi_nota}.
	Then, \eqref{eq:s_qderham_mlambda0} is the $q\textrm{-de Rham}$ complex arising from such a $\qconnection$.
	Similarly, considering $s$ as a parameter, the operator $\nabla_{q, s}$ on $\MLambdao$ can also be considered as a $\qconnection$ in non-logarithmic coordinates, in the sense of Definition \ref{defi:qconnection} and Remark \ref{rem:nablaqi_nota}.
\end{rem}

\begin{proof}[Proof of Lemma \ref{lem:n1_gamma0_exact}]
	The proof follows by the same arguments as in the proof of Lemma \ref{lem:a1_gamma0_exact}.
	Indeed, by using the $(\varphi, \Gamma_R \times \Gamma_0)\equivariant$ isomorphism $\MLambdao/p_1(\mu)^n \isomorphic (\MLambda/p_1(\mu)^n)^{1 \times \FF_p^{\times}}$ (see the discussion before \eqref{eq:mlambdatilde0_gamma0_exact}), in the exact sequence of \eqref{eq:n1_fpx_exact}, we obtain an exact sequence (similar to \eqref{eq:proof_fpx_exact}) on which we use the operator $\nabla_{q, \stilde}$ in \eqref{eq:nablaq_stilde_M} and the Koszul complex defined in \eqref{eq:stilde_qderham_M} to obtain an exact sequence of Koszul complexes (similar to \eqref{eq:proof_koszul_exact}).
	Considering the associated long exact sequence and noting that $H^1(K_{\MLambdao}(\nabla_{q, s})) = 0$, from Proposition \ref{prop:lambdar0_comp}, we obtain the following exact sequence (similar to \eqref{eq:proof_horizontal_exact}):
	\begin{equation*}
		0 \longrightarrow \MLambdao^{\nabla_{q, s}=0} \xrightarrow{\hspace{1mm} p_1(\mu)^n \hspace{1mm}} (\MLambdatildeo/p_1(\mu)^{n+1})^{\nabla_{q, \stilde}=0} \longrightarrow (\MLambdatildeo/p_1(\mu)^{n})^{\nabla_{q, \stilde}=0} \longrightarrow 0.
	\end{equation*}
	As the action of $1 \times \Gamma_0$ is continuous on $\MLambdatildeo$ for the $(p, p_1(\mu))\adic$ topology, therefore from Lemma \ref{lem:cont_coh_disc}, we obtain that $(\MLambdatildeo/p_1(\mu)^{n+1})^{\nabla_{q, \stilde}=0} = (\MLambdatildeo/p_1(\mu)^{n+1})^{1 \times \Gamma_0}$, for each $n \in \NN$.
	Hence, from the preceding exact sequence we obtain that the sequence in \eqref{eq:mlambdatilde0_gamma0_exact} is exact, and therefore, the sequence in \eqref{eq:n1_gamma0_exact} is exact as well.
\end{proof}

\subsubsection{The case \texorpdfstring{$p=2$}{-}}\label{subsubsec:p=2_gammaF_action_n1}

In this section, our goal is to prove a statement similar to Lemma \ref{lem:n1_gamma0_exact}, for $p=2$.
From \eqref{eq:gammaf_es}, recall that we have $\Gamma_0 \isomorphic 1 + 4\ZZ_2$ and $\Gamma_{\textrm{tor}} \isomorphic \{\pm 1\}$ as groups.
Let us first look at the action of $\Gamma_{\textrm{tor}}$ on $\MLambdatilde$.
Let $\sigma$ denote a generator of $\Gamma_{\textrm{tor}}$.
Then, from \eqref{eq:plus_minus_decomp}, recall that by setting $\MLambdatildeplus \coloneq \{x \in \MLambdatilde \textrm{ such that } \sigma(x) = x\}$ and $\MLambdatildeminus \coloneq \{x \in \MLambdatilde \textrm{ such that } \sigma(x) = -x\}$, we have a natural injective map of $\LambdatildeRplus\modules$
\begin{equation}\label{eq:mlambdatilde_pm}
	\MLambdatildeplus \oplus \MLambdatildeminus \longrightarrow \MLambdatilde,
\end{equation}
given as $(x, y) \mapsto x+y$.
Note that the action of $1 \times \Gamma_F$ is continuous for the $(p, p_1(\mu))\adic$ topology on $\MLambdatilde$, so it follows that $\MLambdatildeplus$ is a $(p, p_1(\mu))\adically$ complete $\LambdatildeRplus\textrm{-submodule}$ of $\MLambdatilde$, stable under the action of $(\varphi, \Gamma_R \times \Gamma_F)$ on $\MLambdatilde$, and similarly, $\MLambdatildeminus$ is a $\LambdatildeRplus\textrm{-submodule}$ of $\MLambdatilde$, stable under the action of $(\varphi, \Gamma_R \times \Gamma_F)$.
Equipping $\MLambdatildeplus$ and $\MLambdatildeminus$ with induced structures, we see that \eqref{eq:mlambdatilde_pm} is $(\varphi, \Gamma_R \times \Gamma_F)\equivariant$.
Furthermore, from \eqref{eq:mlambdatilde_mod_p1mun}, recall that $\MLambdatilde/p_1(\mu) \isomorphic (N(1)/p_1(\mu))^{1 \times \Gamma_R'} = \MLambda$, and similar to above, in Section \ref{subsubsec:a1_galact_p=2} we defined $\Lambda_{R,+}\modules$ $\MLambdaplus$ and $\MLambdaminus$ and showed that their natural inclusions in $\MLambda$ induce a natural $(\varphi, \Gamma_F)\equivariant$ isomorphism of $\LambdaRplus\modules$ $\MLambdaplus \oplus \MLambdaminus \isomorphic \MLambdaplus$ (see \eqref{eq:mlambda_pm} in Lemma \ref{prop:mlambda_pm_descent}).

\begin{lem}\label{lem:mlambdatildeplus_mod_p1mun}
	For each $n \geqslant 1$, the reduction modulo $p_1(\mu)^n$ of \eqref{eq:mlambdatilde_pm} induces a natural $(\varphi, \Gamma_R \times \Gamma_0)\equivariant$ isomorphism
	\begin{equation}\label{eq:mlambdatildeplus_mod_p1mun}
		\MLambdatildeplus/p_1(\mu)^n \isomorphic (\MLambdatilde/p_1(\mu)^n)^{1 \times \Gamma_{\textup{tor}}}.
	\end{equation}
	Moreover, for $n = 1$, the $(\varphi, \Gamma_F)\equivariant$ isomorphism $\MLambdatilde/p_1(\mu) \isomorphic \MLambda$ from \eqref{eq:mlambdatilde_mod_p1mun} induces a natural $(\varphi, \Gamma_F)\equivariant$ isomorphism
	\begin{equation}\label{eq:mlambdatildeplus_mod_p1mu}
		\MLambdatildeplus/p_1(\mu) \isomorphic (\MLambdatilde/p_1(\mu))^{1 \times \Gamma_{\textup{tor}}} \isomorphic \MLambdaplus.
	\end{equation}
\end{lem}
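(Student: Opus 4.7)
The plan is to follow the structure of Lemma \ref{lem:lambdatildeplus_mod_p1mun} (the ring version), but since $\MLambdatilde$ carries no $\delta$-structure, I will replace the appeal to lifting divided powers via $\delta$ by a module-theoretic argument modeled on Proposition \ref{prop:mlambda_pm_descent}, following the hint given in the remark after Lemma \ref{lem:lambdatildeplus_mod_p1mun}. First, I will observe that since $p_1(\mu)$ is fixed by $1 \times \Gamma_{\textup{tor}}$, one has $p_1(\mu)^n \MLambdatilde \cap \MLambdatildeplus = p_1(\mu)^n \MLambdatildeplus$, so the natural map in \eqref{eq:mlambdatildeplus_mod_p1mun} is injective for every $n \geq 1$. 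This yields a $(\varphi, \Gamma_R \times \Gamma_0)\equivariant$ commutative diagram with exact rows analogous to the one in the proof of Lemma \ref{lem:lambdatildeplus_mod_p1mun}:
\begin{center}
\begin{tikzcd}
0 \arrow[r] & \MLambdatildeplus/p_1(\mu) \arrow[r, "p_1(\mu)^n"] \arrow[d, hookrightarrow] & \MLambdatildeplus/p_1(\mu)^{n+1} \arrow[r] \arrow[d, hookrightarrow] & \MLambdatildeplus/p_1(\mu)^n \arrow[r] \arrow[d, hookrightarrow] & 0\\
0 \arrow[r] & (\MLambdatilde/p_1(\mu))^{1 \times \Gamma_{\textup{tor}}} \arrow[r, "p_1(\mu)^n"] & (\MLambdatilde/p_1(\mu)^{n+1})^{1 \times \Gamma_{\textup{tor}}} \arrow[r] & (\MLambdatilde/p_1(\mu)^n)^{1 \times \Gamma_{\textup{tor}}} &
\end{tikzcd}
\end{center}
so it suffices to prove that the leftmost vertical arrow is surjective. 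By a straightforward induction, this will then upgrade to the full isomorphism \eqref{eq:mlambdatildeplus_mod_p1mun}, and the bottom right horizontal arrow will automatically be surjective.

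The heart of the argument is therefore the case $n = 1$, that is, to prove the second displayed isomorphism \eqref{eq:mlambdatildeplus_mod_p1mu}. Composing the natural injective map $\MLambdatildeplus/p_1(\mu) \hookrightarrow (\MLambdatilde/p_1(\mu))^{1 \times \Gamma_{\textup{tor}}}$ with the isomorphism $(\MLambdatilde/p_1(\mu))^{1 \times \Gamma_{\textup{tor}}} \isomorphic \MLambdaplus$ obtained by taking $\Gamma_{\textup{tor}}$-invariants in \eqref{eq:mlambdatilde_mod_p1mun}, one reduces to showing that every element of $\MLambdaplus$ lifts to some element of $\MLambdatildeplus$. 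Given $y$ in $\MLambdaplus \subset \MLambda$, pick any lift $z \in \MLambdatilde$ of $y$. The key input — which I intend to isolate as a separate lemma (the Lemma \ref{lem:sigma_action_surjective} mentioned in the remark after Lemma \ref{lem:lambdatildeplus_mod_p1mun}) — is that $(\sigma + 1)z$ lies in $2\MLambdatilde$ whenever $z$ reduces mod $p_1(\mu)$ to an element of $\MLambdaplus$. Since $\MLambdatilde$ is $p$-torsion free (this will follow from Lemma \ref{lem:mlambdatilde_p2mup_reg} combined with $p_1(\mu)^{p-1} \in (p, p_2(\mu)) \AR(1)$ as in the proof of Lemma \ref{lem:nr1_pctf}), we may then set $x := (\sigma + 1)(z)/2 \in \MLambdatilde$, which by construction satisfies $\sigma(x) = x$ and so lies in $\MLambdatildeplus$; one checks directly modulo $p_1(\mu)$ that $x$ lifts $y$.

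The proof of the key input, in turn, parallels the computation at the end of the proof of Proposition \ref{prop:mlambda_pm_descent}: using the $\LambdatildeR$-linear $(\varphi, 1 \times \Gamma_F)$-equivariant isomorphism $\Delta_N' : \MLambdatilde \isomorphic \LambdatildeR \otimes_{\rho, R\llbracket\mu\rrbracket} N$ from \eqref{eq:delta_prime_N}, one writes any $z$ as $\sum_i a_i \otimes x_i$, exploits the decomposition $\LambdatildeR = \LambdatildeRplus \oplus \LambdatildeRminus$ (which is the content of Lemma \ref{lem:lambdatildeplus_mod_p1mun} applied to the ring) together with Lemma \ref{lem:mlambda_gamma0_modnu} and its proof to show $(\sigma-1)a_i \in t\LambdatildeRplus$ and $(\sigma-1)x_i \in \mu N$, and expands $(\sigma+1)z = (\sigma-1)z + 2z$ to conclude that $(\sigma+1)z \in 2\MLambdatilde$ (using that $t/\mu$ is a unit in $\Lambda_F$ by Lemma \ref{lem:tovermu_unit}). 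The main obstacle will be making the preceding averaging argument work \emph{integrally}, i.e.\ ensuring that after dividing by $2$ we land back in $\MLambdatilde$ and not merely in $\MLambdatilde[1/2]$; this is where the $p$-torsion freeness of $\MLambdatilde$ from Lemma \ref{lem:mlambdatilde_p2mup_reg} and the careful bookkeeping of $2$-powers in $(\sigma+1)z$ via the explicit description of the decomposition $\LambdatildeR = \LambdatildeRplus \oplus \LambdatildeRminus$ become crucial.
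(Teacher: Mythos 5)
The overall structure of your argument (injectivity from the invariance of $p_1(\mu)$, reduction to surjectivity at $n=1$, induction up the $p_1(\mu)^n$-filtration) matches the paper. The gap is in the mechanism you propose for the surjectivity step, and it is genuine. You want to average a lift $z$ by $\tfrac{\sigma+1}{2}$, which requires $(\sigma+1)z \in 2\MLambdatilde$. This is a PD phenomenon: in the proof of Proposition \ref{prop:mlambda_pm_descent} (over $\Lambda_R$) that computation works because $\mu/2$ and $t/2$ are genuine elements of the divided-power ring $\Lambda_R$, but in the prism $\LambdatildeR$ neither $t$ nor $p_2(\mu)/2$ exists. Your sketched step ``$(\sigma-1)a_i \in t\LambdatildeRplus$'' is therefore not meaningful: all one gets from Lemma \ref{lem:lambdatildeR_triv_modmu} is $(\sigma-1)a_i \in p_2(\mu)\LambdatildeR$, and there is no reason for this to be divisible by $2$. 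Concretely, taking $N = \AR$ (so $\MLambdatilde = \LambdatildeR$) and $z = p_1(\mu)p_2(\mu)$, one finds $(\sigma+1)z = p_1(\mu)p_2(\nu)$, and since $p_1(\mu)^2$ is not divisible by $2$ in $\LambdatildeR$ this element need not lie in $2\LambdatildeR$ even though $\overline{z} = 0 \in \MLambdaplus$. So the ``key input'' you isolate is not a lemma that holds, and ``careful bookkeeping of $2$-powers'' cannot rescue it because the $2$-powers are simply absent outside the PD quotient.

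You also misidentify Lemma \ref{lem:sigma_action_surjective} as that $2$-divisibility claim. That lemma asserts something else entirely: for any $\ZZ_p\llbracket q-1\rrbracket$-module $M$ with $(\sigma-1)M \subset (q-1)M$, the operator $\nabla_\sigma := \frac{\sigma-1}{q-1}$ is well-defined, and the relation $\sigma(q) = q^{-1}$ forces $\nabla_\sigma^2 = \nabla_\sigma$ (Lemma \ref{lem:sigma_action_decomp}). This yields a functorial decomposition $M = M^{\nabla_\sigma=0}\oplus M^{\nabla_\sigma=1}$ with $M^{\nabla_\sigma=0} = M^{\Gamma_{\textup{tor}}}$, and hence surjectivity of $M^{\Gamma_{\textup{tor}}} \to N^{\Gamma_{\textup{tor}}}$ for any equivariant surjection --- with no division by $2$ anywhere. (Note $M^{\nabla_\sigma=1}$ is not the same as $M^{\sigma=-1}$, so this is a genuinely different idempotent from $\frac{\sigma+1}{2}$.) The paper applies this formal lemma to $\MLambdatilde/p_1(\mu)^{n+1}\twoheadrightarrow\MLambdatilde/p_1(\mu)^n$ and to $\MLambdatilde\twoheadrightarrow\MLambda$, with the hypothesis $(\sigma-1)\MLambdatilde \subset p_2(\mu)\MLambdatilde$ verified by Lemma \ref{lem:mlambdatilde_triv_modmu}. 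Replace your $\frac{\sigma+1}{2}$ averaging by the projection $1 - \nabla_\sigma$ and the rest of your outline closes.
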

\begin{proof}
	Let us consider the following natural $(\varphi, \Gamma_R \times \Gamma_0)\equivariant$ commutative diagram:
	\begin{equation}\label{eq:mlambdatildeplus_mod}
		\begin{tikzcd}
			0 \arrow[r] & \MLambdatildeplus/p_1(\mu) \arrow[r, "p_1(\mu)^n"] \arrow[d, "\eqref{eq:mlambdatilde_pm}"] & \MLambdatildeplus/p_1(\mu)^{n+1} \arrow[r] \arrow[d, "\eqref{eq:mlambdatilde_pm}"] & \MLambdatildeplus/p_1(\mu)^n \arrow[r] \arrow[d, "\eqref{eq:mlambdatilde_pm}"] & 0\\
			0 \arrow[r] & (\MLambdatilde/p_1(\mu))^{1 \times \Gamma_{\textrm{tor}}} \arrow[r, "p_1(\mu)^n"] & (\MLambdatilde/p_1(\mu)^{n+1})^{1 \times \Gamma_{\textrm{tor}}} \arrow[r] & (\MLambdatilde/p_1(\mu)^n)^{1 \times \Gamma_{\textrm{tor}}} \arrow[r] & 0,
		\end{tikzcd}
	\end{equation}
	where the top row is exact, and the vertical maps are injective because we have $p_1(\mu)^n \MLambdatilde \cap \MLambdatildeplus = p_1(\mu)^n \MLambdatildeplus$, as $p_1(\mu)$ is invariant under the action of $1 \times \Gamma_F$.
	Since the action of $1 \times \Gamma_{\textrm{tor}} \subset 1 \times \Gamma_F$ is trivial on $\MLambdatilde/p_2(\mu)$ from Lemma \ref{lem:mlambdatilde_triv_modmu}, therefore, for each $n \geqslant 1$ and $x$ in $\MLambdatilde/p_1(\mu)^n$ we see that $(\sigma-1)x$ is an element of $p_2(\mu)(\MLambdatilde/p_1(\mu)^n)$.
	Then, from Lemma \ref{lem:sigma_action_surjective} it follows that the bottom right horizontal arrow in \eqref{eq:mlambdatildeplus_mod} is surjective, in particular, the bottom row is exact.

	Next, by composing the left vertical arrow in \eqref{eq:mlambdatildeplus_mod} with the $(\varphi, \Gamma_F)\equivariant$ isomorphism $\MLambdatilde/p_1(\mu) \isomorphic \MLambda$ from \eqref{eq:mlambdatilde_mod_p1mun}, we obtain a natural $(\varphi, \Gamma_F)\equivariant$ injective map $\MLambdatildeplus/p_1(\mu) \rightarrow \MLambdaplus$, i.e.\ \eqref{eq:mlambdatildeplus_mod_p1mu}, and we shall show that it is surjective as well.
	Indeed, since the action of $1 \times \Gamma_F$ is trivial on $\MLambdatilde/p_2(\mu)$ from Lemma \ref{lem:mlambdatilde_triv_modmu} and the action of $\Gamma_F$ is trivial on $\MLambda/\mu\MLambda$ using \eqref{eq:mlambda_arpdn} in Proposition \ref{prop:abarone_comp}, therefore, by using Lemma \ref{lem:sigma_action_surjective} it follows that the $\LambdatildeR\linear$ and $(\varphi, 1 \times \Gamma_F)\equivariant$ surjective map $\MLambdatilde \twoheadrightarrow \MLambda$ from \eqref{eq:mlambdatilde_mod_p1mun}, induces a $\LambdatildeRplus\linear$ and $(\varphi, 1 \times \Gamma_0)\equivariant$ surjective map $\MLambdatildeplus \twoheadrightarrow \MLambdaplus$, which factors through \eqref{eq:mlambdatildeplus_mod_p1mu}.
	In particular, we get that the composition in \eqref{eq:mlambdatildeplus_mod_p1mu} is bijective, therefore, the left vertical arrow in \eqref{eq:mlambdatildeplus_mod} is also bijective.
	Now, using the diagram \eqref{eq:mlambdatildeplus_mod}, an easy induction on $n \geqslant 1$ shows that the right vertical arrow is also bijective.
	Hence, it follows that the natural $(\varphi, \Gamma_R \times \Gamma_0)\equivariant$ map $\MLambdatildeplus/p_1(\mu)^n \rightarrow (\MLambdatilde/p_1(\mu)^n)^{1 \times \Gamma_{\textrm{tor}}}$, induced by \eqref{eq:mlambdatilde_pm}, is bijective for each $n \geqslant 1$, thus allowing us to conclude.
\end{proof}

From Lemma \ref{lem:mlambdatildeplus_mod_p1mun}, we obtain the following:
\begin{lem}\label{lem:n1_pm_exact}
	For each $n \geqslant 1$, the following natural $(\varphi, \Gamma_R \times \Gamma_0)\equivariant$ sequence is exact:
	\begin{equation}\label{eq:n1_pm_exact}
		0 \longrightarrow (\MLambdatilde/p_1(\mu))^{1 \times \Gamma_{\textup{tor}}} \xrightarrow{\hspace{1mm} p_1(\mu)^n \hspace{1mm}} (\MLambdatilde/p_1(\mu)^{n+1})^{1 \times \Gamma_{\textup{tor}}} \longrightarrow (\MLambdatilde/p_1(\mu)^n)^{1 \times \Gamma_{\textup{tor}}} \longrightarrow 0.
	\end{equation}
\end{lem}
\begin{proof}
	The sequence \eqref{eq:n1_pm_exact} is the same as the second row of the diagram \eqref{eq:mlambdatildeplus_mod_p1mun}, which was shown to be exact in the proof of Lemma \ref{lem:mlambdatildeplus_mod_p1mun}.
\end{proof}

Next, we will look at the action of $1 \times \Gamma_0 \isomorphic 1 \times (1 + 4\ZZ_2)$ on $\MLambdatildeplus$ and show the following:
\begin{lem}\label{lem:n1_gamma0_exact_2}
	For each $n \geqslant 1$, the following natural $(\varphi, \Gamma_R \times 1)\equivariant$ sequence is exact:
	\begin{equation}\label{eq:n1_gamma0_exact_2}
		0 \longrightarrow (\MLambdatilde/p_1(\mu))^{1 \times \Gamma_F} \xrightarrow{\hspace{1mm} p_1(\mu)^n \hspace{1mm}} (\MLambdatilde/p_1(\mu)^{n+1})^{1 \times \Gamma_F} \longrightarrow (\MLambdatilde/p_1(\mu)^{n})^{1 \times \Gamma_F} \longrightarrow 0.
	\end{equation}
\end{lem}

For each $n \geqslant 1$, note that the reduction modulo $p_1(\mu)^n$ of the $(\varphi, \Gamma_R \times \Gamma_F)\equivariant$ isomorphism \eqref{eq:lambdatildeR_comp} from Lemma \ref{lem:mlambdatildeplus_mod_p1mun}, and taking the $(1 \times \Gamma_{\textup{tor}})\textrm{-invariants}$ of the target yields a $(\varphi, \Gamma_R \times \Gamma_0)\equivariant$ isomorphism $\MLambdatildeplus/p_1(\mu)^n \isomorphic (\MLambdatilde/p_1(\mu)^n)^{1 \times \Gamma_{\textup{tor}}}$, because $p_1(\mu)$ is invariant under the action of $1 \times \Gamma_F$.
Consequently, the sequence in \eqref{eq:n1_gamma0_exact_2} may be rewritten as the following $(\varphi, \Gamma_R \times \Gamma_0)\equivariant$ sequence:
\begin{equation}\label{eq:mlambdatilde+_gamma0_exact}
	0 \longrightarrow \MLambdaplus^{\Gamma_0} \xrightarrow{\hspace{1mm} p_1(\mu)^n \hspace{1mm}} (\MLambdatildeplus/p_1(\mu)^{n+1})^{1 \times \Gamma_0} \longrightarrow (\MLambdatildeplus/p_1(\mu)^{n})^{1 \times \Gamma_0} \longrightarrow 0.
\end{equation}
In order to prove that \eqref{eq:mlambdatilde+_gamma0_exact} is exact, let us now look at the action of $\Gamma_R \times \Gamma_F$ on $\MLambdatilde$ and $\MLambdatildeplus$, respectively.
From \eqref{eq:nu_defi} recall that we set $\nu = \tfrac{\mu^2}{1+\mu}$ in $\AF$, and now we make the following observation:
\begin{lem}\label{lem:mlambdatilde+_triv_modnu}
	The action of $1 \times \Gamma_0$ is trivial on $\MLambdatildeplus/p_2(\nu)$ and the action of $\Gamma_R \times 1$ is trivial on $\MLambdatildeplus/p_1(\mu)$.
\end{lem}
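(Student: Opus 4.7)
The plan is to mirror, essentially verbatim, the proof of Lemma \ref{lem:lambdatilde+_triv_modnu}, with $\MLambdatilde$ replacing $\LambdatildeR$ and $\MLambdatildeplus$ replacing $\LambdatildeRplus$. The two key inputs from the module setting are already available: Lemma \ref{lem:mlambdatilde_triv_modmu} gives the triviality of the $1\times\Gamma_F$--action on $\MLambdatilde/p_2(\mu)$ and of the $\Gamma_R\times 1$--action on $\MLambdatilde/p_1(\mu)$, and Lemma \ref{lem:mlambdatilde_p2mup_reg} gives the regularity of the sequence $\{p_2(\mu),p\}$ on $\MLambdatilde$. Recall also that $\nu=\mu^2/(1+\mu)$, so $p_2(\nu)$ and $p_2(\mu)^2$ generate the same ideal in $\MLambdatilde$ since $p_2(1+\mu)$ is a unit, and that $p_2([p]_q)=p_2(\mu)+2$ because $[p]_q=\mu+2$ when $p=2$.

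For the first claim, let $x\in\MLambdatildeplus$ and $g\in 1\times\Gamma_0$. Since $p_2(\nu)$ is $\sigma$--invariant, one has $p_2(\nu)\MLambdatilde\cap \MLambdatildeplus = p_2(\nu)\MLambdatildeplus$, so it suffices to prove that $(g-1)x\in p_2(\mu)^2\MLambdatilde$. Using Lemma \ref{lem:mlambdatilde_triv_modmu} I write $(g-1)x=p_2(\mu)y$ for some $y\in\MLambdatilde$. Applying $\sigma$ and using $\sigma(x)=x$, commutativity of $\Gamma_R\times\Gamma_F$, and $\sigma(p_2(\mu))=-p_2(\mu)/(1+p_2(\mu))$, I get $(\sigma-1)y=-(2+p_2(\mu))y=-p_2([p]_q)y$. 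Applying Lemma \ref{lem:mlambdatilde_triv_modmu} again to $y$ yields $(\sigma-1)y=p_2(\mu)z$ for some $z\in\MLambdatilde$, whence $-py\equiv 0\bmod p_2(\mu)\MLambdatilde$. By Lemma \ref{lem:mlambdatilde_p2mup_reg} the sequence $\{p_2(\mu),p\}$ is regular on $\MLambdatilde$, so $y\in p_2(\mu)\MLambdatilde$ and hence $(g-1)x\in p_2(\mu)^2\MLambdatilde \subset p_2(\nu)\MLambdatilde$, as required.

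For the second claim, the inclusion $\MLambdatildeplus\subset\MLambdatilde$ together with the $\sigma$--invariance of $p_1(\mu)$ yields $p_1(\mu)\MLambdatilde\cap\MLambdatildeplus=p_1(\mu)\MLambdatildeplus$, so the triviality of the $\Gamma_R\times 1$--action on $\MLambdatildeplus/p_1(\mu)$ is inherited directly from the corresponding triviality on $\MLambdatilde/p_1(\mu)$ proven in Lemma \ref{lem:mlambdatilde_triv_modmu}.

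The main (minor) obstacle is simply keeping the $p=2$ arithmetic of $\sigma$ and $\nu$ straight when translating the ring-level proof into the module-level one; once the two ingredients (triviality mod $p_2(\mu)$ and regularity of $\{p_2(\mu),p\}$) are in place, the argument is formal and identical in shape to the proof of Lemma \ref{lem:lambdatilde+_triv_modnu}. No new $q$--de Rham or cohomological input is needed.
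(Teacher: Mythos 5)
Your argument is correct and matches the paper's proof essentially line for line: the same reduction to $p_2(\mu)^2\MLambdatilde$ via the unit relating $\nu$ and $\mu^2$, the same bootstrapping with $\sigma$ using Lemma \ref{lem:mlambdatilde_triv_modmu} twice, and the same appeal to the regularity of $\{p_2(\mu),p\}$ from Lemma \ref{lem:mlambdatilde_p2mup_reg}. The second claim is likewise handled identically (the paper cites the ring-level Lemma \ref{lem:lambdatildeR_triv_modmu} there, but your citation of the module-level Lemma \ref{lem:mlambdatilde_triv_modmu} is the more direct one).
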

\begin{proof}
	For the first claim, note that $\nu$ is invariant under the action of $\Gamma_{\textrm{tor}}$ and we have that $p_2(\nu)\MLambdatilde \cap \MLambdatildeplus = p_2(\nu)\MLambdatildeplus$.
	So, if $x$ is an element of $\MLambdatildeplus$ and $g$ any element of $1 \times \Gamma_0$, then it is enough to show that $(g-1)x$ is an element of $p_2(\nu)\MLambdatilde$.
	Moreover, note that $\nu$ and $\mu^2$ differ by a unit in $\AF$.
	Therefore, we are reduced to showing that $(g-1)x$ is an element of $p_2(\mu)^2\MLambdatilde$.
	Now, using Lemma \ref{lem:mlambdatilde_triv_modmu} we may write $(g-1)x = p_2(\mu)y$, for some $y$ in $\MLambdatilde$.
	Let $\sigma$ be a generator of $1 \times \Gamma_{\textrm{tor}}$ and note that $\sigma(x) = x$.
	Then, we have that $\sigma(p_2(\mu)) \sigma(y) = p_2(\mu) y$, in particular, $(\sigma-1)y = -(2+p_2(\mu))y$.
	Again, using Lemma \ref{lem:mlambdatilde_triv_modmu} we may write $-p_2([p]_q)y = (\sigma-1)y = p_2(\mu)z$, for some $z$ in $\MLambdatilde$.
	So, we get that $-py = 0 \textrm{ mod }p_2(\mu) \MLambdatilde$.
	Note that $(p_2(\mu), p)$ is a regular sequence on $\MLambdatilde$ (see Lemma \ref{lem:mlambdatilde_p2mup_reg}).
	Therefore, we conclude that $y = 0 \textrm{ mod }p_2(\mu) \LambdatildeR$, i.e.\ $y$ is an element of $p_2(\mu)\LambdatildeR$ and $(g-1)x = p_2(\mu)y$ is an element of $p_2(\mu)^2\LambdatildeR$, as claimed.
	The second claim easily follows from Lemma \ref{lem:lambdatildeR_triv_modmu}.
\end{proof}

\begin{rem}\label{rem:gamma0_act_mlambda+}
	From Lemma \ref{lem:mlambdatilde+_triv_modnu}, note that the action of $1 \times \Gamma_0$ is trivial on $\MLambdatildeplus/p_2(\nu)$ and multiplication by $p_1(\mu)$ on $\MLambdatildeplus$ is equivariant for this action.
	Therefore, it follows that for any $g$ in $1 \times \Gamma_0$ and any $x$ in $\MLambdatildeplus/p_1(\mu)^n$, we have that $(g-1)x$ is an element of $p_2(\nu)\MLambdatildeplus/p_1(\mu)^n$.
	In particular, for $n=1$, using the isomorphism $\MLambdatildeplus/p_1(\mu) \isomorphic \MLambdaplus$ from \eqref{eq:mlambdatildeplus_mod_p1mu}, we get that for any $g$ in $\Gamma_0$ and any $x$ in $\MLambdaplus$, the element $(g-1)x$ is in $\nu\MLambdaplus$.
\end{rem}

Now, using the action of $1 \times \Gamma_0$ on $\MLambdatildeplus$, let us define a $\qconnection$ (see Definition \ref{defi:qconnection}).
Recall that in Section \ref{subsubsec:p=2_gammaF_action}, to define a $q\textrm{-de Rham}$ complex over $\LambdatildeR$, we fixed the following element in $\LambdatildeRplus$ as a parameter (see \eqref{eq:tautilde_defi}):
\begin{equation*}
	\tautilde = \tfrac{1}{p_2([p]_q)} \delta\big(\tfrac{p_2([p]_q)}{p_1([p]_q)}.
\end{equation*}
Moreover, if $\gamma_0$ is any element of $1 \times \Gamma_0$, then from Lemma \ref{lem:gamma0_act_tautilde} we have that $(\gamma_0-1)\tautilde = u \hspace{0.5mm} p_2(\nu)$, for some unit $u$ in $\LambdatildeRplus$ depending on $\gamma_0$.
So, let us fix the choice of a topological generator $\gamma_0$ of $1 \times \Gamma_0$ such that $\chi(\gamma_0) = 1+4a$, for a unit $a$ in $\ZZ_2$, and consider the following operator on $\MLambdatildeplus$:
\begin{equation}\label{eq:nablaq_tautilde_M}
	\begin{aligned}
		\nabla_{q, \tautilde} \colon \MLambdatildeplus &\longrightarrow \MLambdatildeplus\\
					x &\longmapsto \tfrac{(\gamma_0-1)x}{(\gamma_0-1)\tautilde}.
	\end{aligned}
\end{equation}
From the triviality of the action of $1 \times \Gamma_0$ on $\MLambdatildeplus/p_2(\nu)$ (see Lemma \ref{lem:mlambdatilde+_triv_modnu}) and from Lemma \ref{lem:gamma0_act_tautilde}, it follows that the operator $\nabla_{q, \tautilde}$ is well defined.
For each $n \geqslant 1$, using Remark \ref{rem:gamma0_act_mlambda+}, the operator in \eqref{eq:nablaq_tautilde_M} induces well-defined operators $\nabla_{q, \tautilde} \colon \MLambdatildeplus/p_1(\mu)^n \longrightarrow \MLambdatildeplus/p_1(\mu)^n$.
As the operator $\nabla_{q, \tautilde}$ is an endomorphism of $\MLambdatildeplus/p_1(\mu)^n$, we may define the following two term Koszul complex:
\begin{equation}\label{eq:tautilde_qderham_M}
	K_{\MLambdatildeplus/p_1(\mu)^n}(\nabla_{q, \tautilde}) \colon \big[\MLambdatildeplus/p_1(\mu)^n \xrightarrow{\hspace{1mm} \nabla_{q, \tautilde} \hspace{1mm}} \MLambdatildeplus/p_1(\mu)^n\big].
\end{equation}
In particular, for $n = 1$ we set $\tau \coloneq \nu/8$ in $\LambdaRplus$, and using Remark \ref{rem:gamma0_act_mlambda+} and the fact that $(\gamma_0-1)s = v\nu$, for some unit $v$ in $\LambdaFplus$ depending on $\gamma_0$ (see Lemma \ref{lem:gamma0_act_tau}), we have a well-defined operator
\begin{equation*}
	\begin{aligned}
		\nabla_{q, \tau} \colon \MLambdaplus &\longrightarrow \MLambdaplus\\
				x &\longmapsto \tfrac{(\gamma_0-1)x}{(\gamma_0-1)\tau}.
	\end{aligned}
\end{equation*}
Note that the operator above coincides with the operator defined in \eqref{eq:nablaq_tau_mlambda+}, and the complex from \eqref{eq:tautilde_qderham_M} for $n=1$ coincides with the complex from \eqref{eq:tau_qderham_mlambda+}.
Therefore, from Proposition \ref{prop:lambdar+_comp}, we have that the cohomology of the Koszul complex $K_{\MLambdaplus}(\nabla_{q, s})$ vanishes in degree 1, i.e.\ $H^1(K_{\MLambdaplus}(\nabla_{q, \tau})) = 0$.

\begin{rem}
	Considering $\tautilde$ as a parameter, similar to Remark \ref{rem:nablaq_tautilde_qconnection}, the operator $\nabla_{q, \tautilde}$ in \eqref{eq:nablaq_tautilde_M} can be considered as a $\qconnection$ in non-logarithmic coordinates, in the sense of Definition \ref{defi:qconnection} and Remark \ref{rem:nablaqi_nota}.
	Then, \eqref{eq:tau_qderham_mlambda+} is the $q\textrm{-de Rham}$ complex arising from such a $\qconnection$.
	Similarly, considering $\tau$ as a parameter, the operator $\nabla_{q, \tau}$ on $\MLambdaplus$ can also be considered as a $\qconnection$ in non-logarithmic coordinates, in the sense of Definition \ref{defi:qconnection} and Remark \ref{rem:nablaqi_nota}.
\end{rem}

\begin{proof}[Proof of Lemma \ref{lem:n1_gamma0_exact_2}]
	The proof follows by the same arguments as in the proof of Lemma \ref{lem:a1_gamma0_exact}.
	Indeed, by using the $(\varphi, \Gamma_R \times \Gamma_0)\equivariant$ isomorphism $\MLambdaplus/p_1(\mu)^n \isomorphic (\MLambda/p_1(\mu)^n)^{1 \times \Gamma_{\textrm{tor}}}$ (see the discussion before \eqref{eq:mlambdatilde+_gamma0_exact}) in the exact sequence of \eqref{eq:n1_pm_exact}, we obtain an exact sequence (similar to \eqref{eq:proof_fpx_exact}) on which we use the operator $\nabla_{q, \tautilde}$ in \eqref{eq:nablaq_tautilde_M} and the Koszul complex defined in \eqref{eq:tautilde_qderham_M} to obtain an exact sequence of Koszul complexes (similar to \eqref{eq:proof_koszul_exact}).
	Considering the associated long exact sequence, and noting that $H^1(K_{\MLambdaplus}(\nabla_{q, \tau})) = 0$ from Proposition \ref{prop:lambdar+_comp}, we obtain the following exact sequence (similar to \eqref{eq:proof_horizontal_exact}):
	\begin{equation*}
		0 \longrightarrow \MLambdaplus^{\nabla_{q, \tau}=0} \xrightarrow{\hspace{1mm} p_1(\mu)^n \hspace{1mm}} (\MLambdatildeplus/p_1(\mu)^{n+1})^{\nabla_{q, \tautilde}=0} \longrightarrow (\MLambdatildeplus/p_1(\mu)^{n})^{\nabla_{q, \tautilde}=0} \longrightarrow 0.
	\end{equation*}
	Since the action of $1 \times \Gamma_0$ is continuous on $\MLambdatildeplus$ for the $(p, p_1(\mu))\adic$ topology, therefore from Lemma \ref{lem:cont_coh_disc}, we get that $(\MLambdatildeplus/p_1(\mu)^{n+1})^{\nabla_{q, \tautilde}=0} = (\MLambdatildeplus/p_1(\mu)^{n+1})^{1 \times \Gamma_0}$, for each $n \in \NN$.
	Hence, from the preceding exact sequence we obtain that the sequence in \eqref{eq:mlambdatilde+_gamma0_exact} is exact, and therefore, the sequence in \eqref{eq:n1_gamma0_exact_2} is also exact.
\end{proof}

\subsubsection{Stratification on Wach modules}\label{subsubsec:wachmod_strat}

In this section, we shall construct the promised stratification on a Wach module over $\AR$ and prove Proposition \ref{prop:strat_from_wach_relative} stated below.
Let $N$ be a Wach module over $\AR$ and recall that $N(1) = \AR(1) \otimes_{p_2, \AR} N$ is equipped with a $(\varphi, \Gamma_R^2)\action$ (see the discussion before Section \ref{subsubsec:geo_gamma_action_n1}).
Moreover, from the discussion before Proposition \ref{prop:deltan_modmu_relative}, recall that we have a $\varphi\equivariant$ homomorphism $\Delta_N \colon N(1) \rightarrow N$ induced by tensoring $\Delta \colon \AR(1) \rightarrow \AR$ with $N$.
After reducing $\Delta_N$ modulo $p_1(\mu)^n$, we claim the following:
\begin{prop}\label{prop:deltan_modmun_relative}
	Let $n \in \NN_{\geqslant 1}$, then $\Delta_N$ modulo $p_1(\mu)^n$ restricts to a $(\varphi, \Gamma_R \times 1)\equivariant$ isomorphism of $\AR/(\mu)^n\modules$
	\begin{equation*}
		\Delta_N \colon (N(1)/p_1(\mu)^n)^{1 \times \Gamma_R} \isomorphic N/\mu^n N,
	\end{equation*}
	where the $\AR\module$ structure on the source is defined via the map $p_1 \colon \AR \rightarrow \AR(1)$.
\end{prop}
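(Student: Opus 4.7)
The plan is to follow exactly the template laid out for the constant-coefficient case in Proposition \ref{prop:delta_modmun}, replacing $\AR(1)$ by $N(1)$ and $\AR$ by $N$. The $\varphi$-equivariance and $\Gamma_R \times 1$-equivariance of $\Delta_N$ modulo $p_1(\mu)^n$ are automatic from the corresponding properties of $\Delta : \AR(1) \to \AR$ established in Lemma \ref{lem:delta_modmun_phigamma} (the argument is identical: if $a \in N(1)/p_1(\mu)^n$ is $(1\times\Gamma_R)$-invariant and $g_1, g_2 \in \Gamma_R$, then $\Delta_N((g_1,g_2)a) = \Delta_N((g_1,g_1)a) = g_1(\Delta_N(a))$). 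So the only real content is the bijectivity, which I would prove by induction on $n \geq 1$.

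First, I would combine the three descent lemmas from Subsection \ref{subsec:wach_and_strat} to assemble the required exact sequence
\begin{equation*}
0 \longrightarrow (N(1)/p_1(\mu))^{1 \times \Gamma_R} \xrightarrow{\hspace{1mm} p_1(\mu)^n \hspace{1mm}} (N(1)/p_1(\mu)^{n+1})^{1 \times \Gamma_R} \longrightarrow (N(1)/p_1(\mu)^{n})^{1 \times \Gamma_R} \longrightarrow 0.
\end{equation*}
For $p \geq 3$, this follows by stacking Lemma \ref{lem:n1_geometric_exact} (descent for $1\times\Gamma_R'$), Lemma \ref{lem:n1_fpx_exact} (descent for $1\times\FF_p^{\times}$), and Lemma \ref{lem:n1_gamma0_exact} (descent for $1\times\Gamma_0$), using the isomorphisms \eqref{eq:mlambdatilde_mod_p1mun} and Lemma \ref{lem:lambdatildeR_comp} to identify the intermediate invariants. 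For $p=2$, one replaces the middle two steps by Lemma \ref{lem:n1_pm_exact} and Lemma \ref{lem:n1_gamma0_exact_2}.

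With this in hand, I would form the commutative diagram
\begin{center}
\begin{tikzcd}[column sep=small]
0 \arrow[r] & (N(1)/p_1(\mu))^{1\times\Gamma_R} \arrow[r, "p_1(\mu)^n"] \arrow[d, "\Delta_N"] & (N(1)/p_1(\mu)^{n+1})^{1\times\Gamma_R} \arrow[r] \arrow[d, "\Delta_N"] & (N(1)/p_1(\mu)^n)^{1\times\Gamma_R} \arrow[r] \arrow[d, "\Delta_N"] & 0\\
0 \arrow[r] & N/\mu N \arrow[r, "\mu^n"] & N/\mu^{n+1} N \arrow[r] & N/\mu^n N \arrow[r] & 0,
\end{tikzcd}
\end{center}
whose bottom row is exact because $N$ is $\mu$-torsion free by Definition \ref{defi:wach_mods_relative}(1). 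The base case $n=1$ is precisely Proposition \ref{prop:deltan_modmu_relative}, which identifies $M = (N(1)/p_1(\mu))^{1\times\Gamma_R} \isomorphic N/\mu N$ via the $\varphi$-equivariant map $\Delta_N$. Applying the five lemma to the diagram then propagates the isomorphism from $n$ to $n+1$, completing the induction.

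The main (and essentially the only non-formal) obstacle is verifying that the top row of the diagram is exact. This is not an immediate consequence of the three separate descent lemmas because each lemma individually only handles invariants under one piece of $\Gamma_R$, and one must confirm that taking successive invariants along the filtration $\Gamma_R' \subset \ker(\Gamma_R \to \Gamma_{\textup{tor}}) \subset \Gamma_R$ is compatible with the $p_1(\mu)^n$-filtration on $N(1)$ (i.e.\ that the exact sequences from Lemmas \ref{lem:n1_fpx_exact}/\ref{lem:n1_pm_exact} and \ref{lem:n1_gamma0_exact}/\ref{lem:n1_gamma0_exact_2} are obtained by applying the respective invariants functors to the exact sequence of Lemma \ref{lem:n1_geometric_exact}, rather than to some other starting sequence). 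This boils down to checking that the identifications $\MLambdatilde/p_1(\mu)^n \isomorphic (N(1)/p_1(\mu)^n)^{1\times\Gamma_R'}$ of \eqref{eq:mlambdatilde_mod_p1mun} and the analogous identifications at the $\FF_p^{\times}$-level (for $p\geq 3$) and $\Gamma_{\textup{tor}}$-level (for $p=2$) are $p_1(\mu)$-adically compatible, which has already been done at each intermediate step in the corresponding lemmas. Once this is in place, the full exact sequence follows, and the proposition is proved.
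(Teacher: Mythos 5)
Your proposal is correct and matches the paper's own argument essentially step by step: the paper stacks the three descent lemmas (Lemmas \ref{lem:n1_geometric_exact}, \ref{lem:n1_fpx_exact}, \ref{lem:n1_gamma0_exact} for $p \geq 3$, and Lemmas \ref{lem:n1_geometric_exact}, \ref{lem:n1_pm_exact}, \ref{lem:n1_gamma0_exact_2} for $p = 2$) to obtain the exact sequence of $(1\times\Gamma_R)$-invariants, then runs the same induction on $n$ from the base case given by Proposition \ref{prop:deltan_modmu_relative}, and finishes with the same $\Gamma_R\times 1$-equivariance check via $\Delta_N((g_1,g_2)x)=g_1(\Delta_N(x))$. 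Your cautionary remark about compatibility of the successive invariants with the $p_1(\mu)$-adic filtration is legitimate, and the justification is precisely the identifications \eqref{eq:mlambdatilde_mod_p1mun}, Lemma \ref{lem:lambdatildeR_comp}, and Lemma \ref{lem:mlambdatildeplus_mod_p1mun} that the paper invokes in those lemmas.
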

\begin{proof}
	Let us first consider the following $(\varphi, \Gamma_R \times \Gamma_R)\equivariant$ short exact sequence of $\AR(1)\modules$:
	\begin{equation*}
		0 \longrightarrow N(1)/p_1(\mu) \xrightarrow{\hspace{1mm} p_1(\mu)^n \hspace{1mm}} N(1)/p_1(\mu)^{n+1} \longrightarrow N(1)/p_1(\mu)^n \longrightarrow 0.
	\end{equation*}
	Then, for $p \geqslant 3$, using Lemma \ref{lem:n1_geometric_exact}, Lemma \ref{lem:n1_fpx_exact} and Lemma \ref{lem:n1_gamma0_exact}, and for $p=2$, using Lemma \ref{lem:n1_geometric_exact}, Lemma \ref{lem:n1_pm_exact} and Lemma \ref{lem:n1_gamma0_exact_2}, it follows that the following $(\varphi, \Gamma_R \times 1)\equivariant$ sequence is exact:
	\begin{equation}\label{eq:n1_gammaR_exact}
		0 \longrightarrow (N(1)/p_1(\mu))^{1 \times \Gamma_R} \xrightarrow{\hspace{1mm} p_1(\mu)^n \hspace{1mm}} (N(1)/p_1(\mu)^{n+1})^{1 \times \Gamma_R} \longrightarrow (N(1)/p_1(\mu)^n)^{1 \times \Gamma_R} \longrightarrow 0.
	\end{equation}
	Now, consider the following $\varphi\equivariant$ commutative diagram with exact rows:
	\begin{center}
		\begin{tikzcd}
			0 \arrow[r] & (N(1)/p_1(\mu))^{1 \times \Gamma_R} \arrow[r, "p_1(\mu)^n"] \arrow[d, "\wr"', "\Delta_N"] & (N(1)/p_1(\mu)^{n+1})^{1 \times \Gamma_R} \arrow[r] \arrow[d, "\Delta_N"] & (N(1)/p_1(\mu)^n)^{1 \times \Gamma_R} \arrow[d, "\Delta_N"] \arrow[r] & 0\\
			0 \arrow[r] & N/\mu N \arrow[r, "\mu^n"] & N/\mu^{n+1} N \arrow[r] & N/\mu^n N \arrow[r] & 0,
		\end{tikzcd}
	\end{center}
	where the bottom row is exact and the top row is the exact sequence in \eqref{eq:n1_gammaR_exact}.
	For $n=1$, from Theorem \ref{thm:integral_comp_relative} (see Proposition \ref{prop:deltan_modmu_relative}), recall that we have the isomorphism $\Delta_N \colon (N(1)/p_1(\mu))^{1 \times \Gamma_R} \isomorphic N/\mu N$.
	Then, by using the diagram, an easy induction on $n \geqslant 1$, gives the following $\varphi\equivariant$ isomorphism:
	\begin{equation*}
		\Delta_N \colon (\AR(1)/p_1(\mu)^{n+1} \otimes_{p_2, \AR} N)^{1 \times \Gamma_R} \isomorphic N/\mu^{n+1} N.
	\end{equation*}
	Finally, we need to check the $(\Gamma_R \times 1)\textrm{-equivariance}$ of $\Delta_N$, the proof of which is similar to that of \cite[Lemma 3.19]{morrow-tsuji}.
	From Lemma \ref{lem:delta_modmun_phigamma} recall that for any $g$ in $\Gamma_R$ and $a$ in $\AR(1)/(p_1(\mu))^{n+1}$, we have that $\Delta((g, g)a) = g(a)$ which implies that $\Delta_N((g, g)(x)) = g(\Delta_N(x))$, for any $g$ in $\Gamma_R$ and $x$ in $N(1)/p_1(\mu)^{n+1}$.
	So, if $x$ is $(1 \times \Gamma_R)\textrm{-invariant}$, then for any $g_1$, $g_2$ in $\Gamma_R$, we have that $\Delta_N((g_1, g_2)x) = \Delta_N((g_1, g_1)x) = g_1(\Delta_N(x))$.
	This concludes our proof.
\end{proof}

An immediate consequence of Proposition \ref{prop:deltan_modmun_relative} is the following:
\begin{prop}\label{prop:diagonal_m1_relative}
	Let $N$ be a Wach module over $\AR$.
	Then, the natural $(\varphi, \Gamma_R)\equivariant$ homomorphism $\Delta_N \colon \AR(1) \otimes_{p_2, \AR} N \rightarrow N$ induced by $\Delta \colon \AR(1) \rightarrow \AR$, restricts to a $(\varphi, \Gamma_R \times 1)\equivariant$ isomorphism of $\AR\modules$
	\begin{equation*}
		\Delta_N \colon (\AR(1) \otimes_{p_2, \AR} N)^{1 \times \Gamma_R} \isomorphic N,
	\end{equation*}
	where the $\AR\module$ structure on the source is defined via the map $p_1 \colon \AR \rightarrow \AR(1)$.
\end{prop}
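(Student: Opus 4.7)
The plan is to deduce the claim from Proposition \ref{prop:deltan_modmun_relative} by passing to the limit in $n$. First I will check that both sides of the claimed isomorphism are $p_1(\mu)$-adically (resp.\ $\mu$-adically) complete and separated. Indeed, $N$ is a finitely generated $\AR$-module and $\AR$ is $\mu$-adically complete, so $N \isomorphic \lim_n N/\mu^n N$. On the other side, $\AR(1)$ is $(p, p_1(\mu))$-adically complete (Lemma \ref{lem:selfprod_prismenv}), hence $\padic$-adically and $p_1(\mu)$-adically complete. As $N$ is finitely generated over $\AR$, the module $N(1) = \AR(1) \otimes_{p_2, \AR} N$ is finitely generated over $\AR(1)$, so it too is $p_1(\mu)$-adically complete and separated, i.e.\ $N(1) \isomorphic \lim_n N(1)/p_1(\mu)^n$.

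Next, since taking $(1 \times \Gamma_R)$-invariants is a right adjoint functor (to the trivialisation functor), it commutes with the inverse limit. Therefore
\begin{equation*}
	(N(1))^{1 \times \Gamma_R} \isomorphic \big(\lim_n N(1)/p_1(\mu)^n\big)^{1 \times \Gamma_R} \isomorphic \lim_n (N(1)/p_1(\mu)^n)^{1 \times \Gamma_R}.
\end{equation*}
By Proposition \ref{prop:deltan_modmun_relative}, each term on the right is naturally $(\varphi, \Gamma_R \times 1)$-equivariantly isomorphic to $N/\mu^n N$ via $\Delta_N$, and these isomorphisms are compatible with the transition maps in $n$ (since $\Delta_N$ is induced by a fixed map $\Delta : \AR(1) \rightarrow \AR$). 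Passing to the inverse limit thus gives a $(\varphi, \Gamma_R \times 1)$-equivariant isomorphism $(N(1))^{1 \times \Gamma_R} \isomorphic \lim_n N/\mu^n N \isomorphic N$, which by construction is the restriction of $\Delta_N$.

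The $\AR$-module structure compatibility follows because $\Delta \circ p_1 = \id_{\AR}$, so the $\AR$-action on the source of $\Delta_N$ via $p_1 : \AR \rightarrow \AR(1)$ corresponds under $\Delta_N$ to the natural $\AR$-action on $N$; this is already visible at each finite level in Proposition \ref{prop:deltan_modmun_relative} and is preserved under the inverse limit. There is no genuine obstacle here since all the heavy lifting — the ``3-step'' descent argument for the action of $\Gamma_R$ and the base case $n=1$ coming from Theorem \ref{thm:integral_comp_relative} — has already been done in Proposition \ref{prop:deltan_modmun_relative}; the present statement is simply its completed version.
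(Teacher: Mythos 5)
Your proposal is correct and follows essentially the same route as the paper's own proof: both use completeness of $N$ and of $\AR(1) \otimes_{p_2, \AR} N$, the fact that taking $(1 \times \Gamma_R)$-invariants commutes with inverse limits, and then apply Proposition \ref{prop:deltan_modmun_relative} at each finite level before passing to the limit. No gaps.
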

\begin{proof}
	Note that by construction $\Delta_N$ is $(\varphi, \Gamma_R \times 1)\equivariant$.
	Moreover, $\AR$ is $\mu\adically$ complete and $N$ is a finite $\AR\module$, in particular, we have that $N = \lim_n N/\mu^n N$.
	Furthermore, we have that $p_2 \colon \AR \rightarrow \AR(1)$ is faithfully flat and $\AR(1)$ is $p_1(\mu)\adically$ complete, so we get that $\AR(1) \otimes_{p_2, \AR} N = \lim_n \big(\AR(1)/p_1(\mu)^n \otimes_{p_2, \AR} N\big)$.
	Now, recall that inverse limit commutes with taking $(1 \times \Gamma_R)\textrm{-invariants}$, so we get that
	\begin{align*}
		(\AR(1) \otimes_{p_2, \AR} N)^{1 \times \Gamma_R} &= \big(\lim_n \AR(1)/p_1(\mu)^n \otimes_{p_2, \AR} N\big)^{1 \times \Gamma_R}\\
		&= \lim_n \big(\AR(1)/p_1(\mu)^n \otimes_{p_2, \AR} N\big)^{1 \times \Gamma_R} \xrightarrow[\Delta]{\hspace{1mm} \sim \hspace{1mm}} \lim_n N/\mu^n N = N,
	\end{align*}
	where the isomorphism follows from Proposition \ref{prop:deltan_modmun_relative}.
	This concludes our proof.
\end{proof}

Using Proposition \ref{prop:diagonal_m1_relative} we may define a natural stratification on a Wach module over $\AR$ as follows:
\begin{prop}\label{prop:strat_from_wach_relative}
	Let $N$ be a Wach module over $\AR$ and let $\varepsilon \colon \AR(1) \otimes_{p_1, \AR} N \rightarrow \AR(1) \otimes_{p_2, \AR} N$ be the $\AR(1)\linear$ homomorphism induced by the inverse of the isomorphism in Proposition \ref{prop:diagonal_m1_relative}.
	Then,
	\begin{enumarabicup}
	\item The homomorphism $\varepsilon$ is a stratification on $N$ with respect to $\AR(\bullet)$.

	\item The action of $\Gamma_R$ on $\eval_{\AR}^{\Strat}(N, \varepsilon)$, whose underlying $\AR\module$ is $N$, coincides with the original action of $\Gamma_R$ on $N$, i.e.\ $\eval_{\AR}^{\Strat}(N, \varepsilon) \isomorphic N$ as Wach modules over $\AR$.
	\end{enumarabicup}
\end{prop}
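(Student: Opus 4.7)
The plan is to verify the four required facts in turn: that $\varepsilon$ is an $\AR(1)\linear$ isomorphism, that $\Delta^{*}(\varepsilon) = \Id_{N}$, that $\varepsilon$ satisfies the cocycle condition, and finally that the $\Gamma_{R}\action$ induced on the underlying module of $\eval_{\AR}^{\Strat}(N,\varepsilon)$ recovers the original action on $N$. The unit condition is immediate from the construction: $\Delta^{*}(\varepsilon)$ sends $1 \otimes x$ to $\Delta_{N}(\varepsilon(1 \otimes x)) = \Delta_{N}(\Delta_{N}^{-1}(x)) = x$, so $\Delta^{*}(\varepsilon) = \Id_{N}$.

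For the bijectivity of $\varepsilon$, I would argue modulo $p_{1}(\mu)^{n}$ and induct on $n$. For each $n \geq 1$, Proposition \ref{prop:deltan_modmun_relative} identifies $(\AR(1)/p_{1}(\mu)^{n} \otimes_{p_{2}, \AR} N)^{1 \times \Gamma_{R}}$ with $N/\mu^{n} N$ via $\Delta_{N}$, so the reduction of $\varepsilon$ modulo $p_{1}(\mu)^{n}$ is the $\AR(1)/p_{1}(\mu)^{n}\linear$ extension along $p_{1}$ of the inverse of this identification. For the base case $n = 1$, Remark \ref{rem:p1p2_modmu_comp_relative} (a direct consequence of Theorem \ref{thm:integral_comp_relative}) shows that this extension is an isomorphism. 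The inductive step compares $\varepsilon_{n+1}$ with $\varepsilon_{n}$ via the five lemma applied to the short exact sequences
\begin{equation*}
0 \longrightarrow \AR(1)/p_{1}(\mu) \otimes_{p_{i}, \AR} N \xrightarrow{p_{1}(\mu)^{n}} \AR(1)/p_{1}(\mu)^{n+1} \otimes_{p_{i}, \AR} N \longrightarrow \AR(1)/p_{1}(\mu)^{n} \otimes_{p_{i}, \AR} N \longrightarrow 0,
\end{equation*}
for $i = 1, 2$, whose exactness rests on the $\mu\torsionfree$ property of $N$ from Definition \ref{defi:wach_mods_relative} and the flatness of $p_{1}, p_{2}$ from Lemma \ref{lem:pi_rj_ff}. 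Since both source and target of $\varepsilon$ are finitely generated $\AR(1)\modules$ and hence $p_{1}(\mu)\adically$ complete, passage to the inverse limit concludes that $\varepsilon$ is itself an isomorphism.

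The cocycle condition $p_{23}^{*}(\varepsilon) \circ p_{12}^{*}(\varepsilon) = p_{13}^{*}(\varepsilon)$ on $\AR(2) \otimes_{r_{1}, \AR} N$ is the main obstacle. Both sides are $\AR(2)\linear$, so it is enough to compare their values on $1 \otimes x$ for $x \in N$; each yields an element of $\AR(2) \otimes_{r_{3}, \AR} N$ whose image under the iterated multiplication $\AR(2) \to \AR$ equals $x$. My strategy is to establish an $\AR(2)$-level analogue of Proposition \ref{prop:diagonal_m1_relative}: the iterated multiplication map should restrict to an isomorphism $(\AR(2) \otimes_{r_{3}, \AR} N)^{1 \times \Gamma_{R} \times \Gamma_{R}} \isomorphic N$. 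This should follow either by iterating the descent arguments of Subsections \ref{subsubsec:geo_gamma_action_n1}--\ref{subsubsec:p=2_gammaF_action_n1} applied to $\AR(2) \otimes_{r_{3}, \AR} N$, or by viewing $\AR(2)$ as a further prismatic self-product of $\AR(1)$ and applying Proposition \ref{prop:diagonal_m1_relative} twice. With this uniqueness at hand, one must check that both candidate elements lie in the $(1 \times \Gamma_{R} \times \Gamma_{R})\textrm{-invariants}$. For $p_{13}^{*}(\varepsilon)(1 \otimes x) = p_{13}(\Delta_{N}^{-1}(x))$, invariance under $1 \times \Gamma_{R} \times 1$ is automatic (since $p_{13}$ skips the middle factor of $\AR(2)$ and the module structure via $r_{3}$ uses only the third), while invariance under $1 \times 1 \times \Gamma_{R}$ follows from the $(1 \times \Gamma_{R})$-invariance of $\Delta_{N}^{-1}(x)$ in $\AR(1) \otimes_{p_{2}, \AR} N$. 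For the composition $p_{23}^{*}(\varepsilon) \circ p_{12}^{*}(\varepsilon)(1 \otimes x)$, the analogous verification requires unwinding the action of each coordinate of $\Gamma_{R}^{3}$ on the images of $p_{12}$ and $p_{23}$ inside $\AR(2)$ and recursively invoking the $(1 \times \Gamma_{R})$-invariance inherent in the definition of $\varepsilon$; the intricate bookkeeping in the case $p = 2$ (where $\Gamma_{R}$ is replaced by its plus/minus decomposition) is what I expect to be the most delicate part.

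Finally, for part (2) a direct computation suffices. For $g \in \Gamma_{R}$ and $x \in N$, the stratification-induced action evaluates $g$ on $x$ as $\Delta((g, 1)(\varepsilon(1 \otimes x))) = \Delta((g, 1)(\Delta_{N}^{-1}(x)))$. Since $\Delta_{N}^{-1}(x)$ is $(1 \times \Gamma_{R})$-invariant, one has $(g, 1) \cdot \Delta_{N}^{-1}(x) = (g, g) \cdot \Delta_{N}^{-1}(x)$, which by the $(\Gamma_{R} \times 1)\textrm{-equivariance}$ of $\Delta_{N}$ from Proposition \ref{prop:diagonal_m1_relative} equals $\Delta_{N}^{-1}(g(x))$. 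Applying $\Delta_{N}$ then recovers $g(x)$, so the new and old actions of $\Gamma_{R}$ on $N$ coincide.
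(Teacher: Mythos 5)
Your arguments for the unit condition, the bijectivity of $\varepsilon$, and part (2) are all sound. For bijectivity you take a different (and in fact more self-contained) route than the paper: the paper simply notes that the inverse of $\varepsilon$ is its base change along the involution swapping the two factors of $\AR(1)$, a general fact for stratifications once the cocycle and unit conditions are in place; your five-lemma induction modulo $p_1(\mu)^n$ bootstrapping from Remark \ref{rem:p1p2_modmu_comp_relative} is also correct and does not logically depend on the cocycle condition.

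The gap is in the cocycle condition. You correctly identify the two ingredients needed — an $\AR(2)$-level uniqueness statement and the invariance of both candidate elements — but the plan for supplying them is off. First, you only need \emph{injectivity} of $\Delta_N : (\AR(2) \otimes_{r_3, \AR} N)^{1 \times \Gamma_R \times \Gamma_R} \rightarrow N$, and that is established in the paper (Proposition \ref{prop:diagonal_m2_relative}) not by "iterating the descent arguments of Subsections \ref{subsubsec:geo_gamma_action_n1}--\ref{subsubsec:p=2_gammaF_action_n1}" — which would require re-establishing all the structural results about $\AR(1)/p_1(\mu)$ now for $\AR(2)/p_1(\mu)$ — but by bootstrapping from the $\AR(1)$-level isomorphism: base-changing the diagram of Remark \ref{rem:p1p2_modmu_comp_relative} along $p_{13} : \AR(1) \to \AR(2)$ gives the $n = 1$ case, and an elementary diagram chase with left exactness of invariants handles $n > 1$. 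Your alternative ("viewing $\AR(2)$ as a self-product of $\AR(1)$ and applying Proposition \ref{prop:diagonal_m1_relative} twice") is closer in spirit but is never worked out. Second, the invariance is handled much more cleanly by an isolated lemma (Lemma \ref{lem:strat_gamma_equiv_relative}) stating that $\varepsilon$ is $\Gamma_R^2$-equivariant; then each $p_{ij}^*(\varepsilon)$ is automatically $\Gamma_R^3$-equivariant, and since $1 \otimes x$ is $(1 \times \Gamma_R \times \Gamma_R)$-invariant in $\AR(2) \otimes_{r_1, \AR} N$, so are both images, with no element-by-element unwinding. In particular the prime $p = 2$ plays no role at this stage — the plus/minus decomposition is confined to the interior of the supporting injectivity lemma and never surfaces in the cocycle argument — so the concern about "intricate bookkeeping in the case $p=2$" is misplaced here.
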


Let us first note that the construction of the stratification $\varepsilon$ in Proposition \ref{prop:strat_from_wach_relative} is functorial in $N$.
In particular, we have a well-defined natural functor
\begin{equation}\label{eq:strat_from_wach_relative}
	\Strat_{\AR(\bullet)} \colon (\varphi, \Gamma_R)\Mod_{\AR}^{[p]_q} \longrightarrow \Strat^{\an, \varphi}(\AR(\bullet)).
\end{equation}
Moreover, from the statement of Proposition \ref{prop:strat_from_wach_relative} it is clear that the functor $\Strat_{\AR(\bullet)}$ in \eqref{eq:strat_from_wach_relative} is a quasi-inverse to the functor $\eval_{\AR}^{\Strat}$ in \eqref{eq:strat_eval_ar+}.
We note the following:
\begin{lem}\label{lem:strat_gamma_equiv_relative}
	The $\AR(1)\linear$ homomorphism $\varepsilon$ in Proposition \ref{prop:strat_from_wach_relative} is $\Gamma_R^2\equivariant$.
\end{lem}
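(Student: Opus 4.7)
The plan is to reduce the verification to a single identity on elements of the form $1 \otimes x$ with $x \in N$, and then deduce it from the equivariance of $\Delta_N$ on the invariants established in Proposition \ref{prop:diagonal_m1_relative}. Since $\varepsilon$ is $\AR(1)$-linear by construction and the $\Gamma_R^2$-actions on both $\AR(1) \otimes_{p_1, \AR} N$ and $\AR(1) \otimes_{p_2, \AR} N$ are semilinear over the $\Gamma_R^2$-action on $\AR(1)$, it suffices to verify equivariance on generators $1 \otimes x$. Write $\alpha : N \isomorphic (\AR(1) \otimes_{p_2, \AR} N)^{1 \times \Gamma_R}$ for the inverse of the isomorphism in Proposition \ref{prop:diagonal_m1_relative}, so that $\varepsilon(1 \otimes x) = \alpha(x)$. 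The claim then amounts to showing that for all $(g_1, g_2) \in \Gamma_R^2$ and $x \in N$,
\[
	\alpha(g_1(x)) = (g_1, g_2) \cdot \alpha(x) \quad \text{inside } \AR(1) \otimes_{p_2, \AR} N.
\]

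The key observation is that on the submodule of $(1 \times \Gamma_R)$-invariants, the $\Gamma_R^2$-action factors through the diagonal embedding $\Gamma_R \hookrightarrow \Gamma_R^2$, $g \mapsto (g, g)$. Indeed, for any $y \in (\AR(1) \otimes_{p_2, \AR} N)^{1 \times \Gamma_R}$ and any $h \in \Gamma_R$, we compute
\[
	(1, h)(g_1, g_1) y = (g_1, h g_1) y = (g_1, g_1)(1, g_1^{-1} h g_1) y = (g_1, g_1) y,
\]
so that $(g_1, g_1) y$ is again $(1 \times \Gamma_R)$-invariant. Applying this to $y = \alpha(x)$ gives
\[
	(g_1, g_2) \alpha(x) = (1, g_2 g_1^{-1})(g_1, g_1)\alpha(x) = (g_1, g_1)\alpha(x),
\]
so it is enough to prove $\alpha(g_1(x)) = (g_1, g_1)\alpha(x)$.

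Both sides lie in $(\AR(1) \otimes_{p_2, \AR} N)^{1 \times \Gamma_R}$ (for the right hand side, by the observation above; for the left hand side, by definition of $\alpha$), and $\Delta_N$ is injective on this submodule by Proposition \ref{prop:diagonal_m1_relative}. Thus it suffices to check the equality after applying $\Delta_N$. The left hand side maps to $\Delta_N(\alpha(g_1(x))) = g_1(x)$, while for the right hand side the diagonal equivariance from Lemma \ref{lem:delta_modmun_phigamma} yields $\Delta_N((g_1, g_1)\alpha(x)) = g_1(\Delta_N(\alpha(x))) = g_1(x)$. The two agree, completing the argument. No serious obstacle is expected here; the only substantive ingredients are the diagonal-equivariance formula $\Delta \circ (g, g) = g \circ \Delta$ on $(1 \times \Gamma_R)$-invariants (Lemma \ref{lem:delta_modmun_phigamma}) and the fact that $1 \times \Gamma_R$ is normalised by the diagonal in $\Gamma_R^2$.
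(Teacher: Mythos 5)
Your proof is correct and follows essentially the same route as the paper's: reduce by $\AR(1)$-linearity to elements $1\otimes x$, use that $1\times\Gamma_R$ is normal in $\Gamma_R^2$ so the action on $(1\times\Gamma_R)$-invariants factors through the diagonal, and then conclude via the injectivity of $\Delta_N$ on invariants together with the identity $\Delta_N((g,g)y)=g(\Delta_N(y))$. Your write-up is in fact somewhat more explicit than the paper's terse version, but the ingredients and structure are identical.
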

\begin{proof}
	It is enough to show that $\varepsilon(1 \otimes g_1(y)) = (g_1, g_2)(\varepsilon(1 \otimes y))$, for all $y$ in $N$ and $g_1$, $g_2$ in $\Gamma_R$.
	Similar to the last part of the proof of Proposition \ref{prop:deltan_modmun_relative}, we note that for any $x$ in $\AR(1) \otimes_{p_1, \AR} N$, we have that $\Delta_N((g_1, g_2)x) = g_1(\Delta_N(x))$.
	Then, by setting $y = \Delta_N(x)$, we get the claim.
\end{proof}

The goal of the rest of this section is to prove Proposition \ref{prop:strat_from_wach_relative}.
To that end, we need a result analogous to Proposition \ref{prop:diagonal_m1_relative} over $\AR(2)$.
Recall that we have $(\varphi, \Gamma_R^3)\equivariant$ maps $r_i \colon \AR \rightarrow \AR(2)$, for $i = 1, 2, 3$, where $\AR(2)$ is equipped with an action of $\Gamma_R^3$ as discussed before Construction \ref{const:strat_eval_ar+}, and $\AR$ is equipped with an action of $\Gamma_R^3$ via projection onto the $i^{\textrm{th}}\textrm{-coordinate}$.
Similarly, we have natural $(\varphi, \Gamma_R^3)\equivariant$ maps $p_{ij} \colon \AR(1) \rightarrow \AR(2)$ for $(i, j) \in \{(1, 2), (2, 3), (1, 3)\}$, and where $\AR(1)$ is equipped with an action of $\Gamma_R^3$ via projection onto the $(i, j)^{\textrm{th}}\textrm{-coordinate}$.

Now, let $N$ be a Wach module over $\AR$ as above and consider the $\AR(2)\module$ $\AR(2) \otimes_{r_3, \AR} N$ equipped with the tensor product Frobenius and the tensor product action of $\Gamma_R^3$, where $\Gamma_R^3$ acts on $N$ via projection onto the third coordinate.
Note that the multiplication map $\Delta \colon \AR(2) \rightarrow \AR$ is $(\varphi, \Gamma_R \times 1 \times 1)\equivariant$, where $\Gamma_R \times 1 \times 1$ acts on $\AR$ via projection onto the first coordinate.
The multiplication map $\Delta$ induces a $(\varphi, \Gamma_R \times 1 \times 1)\equivariant$ map $\Delta_N \colon \AR(2) \otimes_{r_3, \AR} N \rightarrow N$, where $\Gamma_R \times 1 \times 1$ acts on $N$ (in the target) via projection onto the first coordinate.
\begin{prop}\label{prop:diagonal_m2_relative}
	The homomorphism $\Delta_N \colon \AR(2) \otimes_{r_3, \AR} N \rightarrow N$ induced by $\Delta \colon \AR(2) \rightarrow \AR$ restricts to an injective map
	\begin{equation*}
		\Delta_N \colon (\AR(2) \otimes_{r_3, \AR} N)^{1 \times \Gamma_R \times \Gamma_R} \longrightarrow N.
	\end{equation*}
\end{prop}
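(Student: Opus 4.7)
The plan is to deduce Proposition \ref{prop:diagonal_m2_relative} from the one-dimensional statement Proposition \ref{prop:diagonal_m1_relative} by factoring the multiplication map $\Delta : \AR(2) \to \AR$ through an appropriate codegeneracy of the cosimplicial ring $\AR(\bullet)$.

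First, consider the codegeneracy $\mu_{12} : \AR(2) \to \AR(1)$ sending $a \otimes b \otimes c \mapsto ab \otimes c$. A direct calculation gives the identities $\mu_{12} \circ r_3 = p_2$ and $\Delta \circ \mu_{12} = \Delta$, yielding the factorization
$$\Delta_N = \Delta_N^{(1)} \circ (\mu_{12} \otimes 1_N) : \AR(2) \otimes_{r_3, \AR} N \longrightarrow \AR(1) \otimes_{p_2, \AR} N \longrightarrow N,$$
where $\Delta_N^{(1)}$ is the map of Proposition \ref{prop:diagonal_m1_relative}. A straightforward check shows that $\mu_{12}$ is equivariant for the group homomorphism $\phi : \Gamma_R^2 \to \Gamma_R^3$, $(g, h) \mapsto (g, g, h)$, and since $\phi^{-1}(1 \times \Gamma_R \times \Gamma_R) = 1 \times \Gamma_R$, the map $\mu_{12} \otimes 1_N$ carries $(\AR(2) \otimes_{r_3, \AR} N)^{1 \times \Gamma_R \times \Gamma_R}$ into $(\AR(1) \otimes_{p_2, \AR} N)^{1 \times \Gamma_R}$. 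By Proposition \ref{prop:diagonal_m1_relative}, the restriction of $\Delta_N^{(1)}$ to the latter is an isomorphism onto $N$. It therefore suffices to prove injectivity of $\mu_{12} \otimes 1_N$ on $(1 \times \Gamma_R \times \Gamma_R)$-invariants.

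For this last step, I would mirror the inductive approach of Proposition \ref{prop:deltan_modmun_relative}: reduce modulo $r_1(\mu)^n$, establish a two-dimensional analog of the exact sequence \eqref{eq:n1_gammaR_exact}
\begin{align*}
0 \longrightarrow (\AR(2)/r_1(\mu) \otimes_{r_3, \AR} N)^{1 \times \Gamma_R \times \Gamma_R} & \xrightarrow{r_1(\mu)^n} (\AR(2)/r_1(\mu)^{n+1} \otimes_{r_3, \AR} N)^{1 \times \Gamma_R \times \Gamma_R} \longrightarrow \\
& \longrightarrow (\AR(2)/r_1(\mu)^n \otimes_{r_3, \AR} N)^{1 \times \Gamma_R \times \Gamma_R} \longrightarrow 0,
\end{align*}
and then chase diagrams inductively, passing to the limit using $r_1(\mu)$-adic completeness. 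The derivation of this exact sequence would proceed via the three-step descent of Subsection \ref{subsec:wach_and_strat} (geometric, $\FF_p^{\times}$/torsion, and $\Gamma_0$), now performed independently for each of the two commuting copies of $\Gamma_R$ acting on the last two tensor factors of $\AR(2)$.

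The main obstacle will be the base case modulo $r_1(\mu)$: identifying $(\AR(2)/r_1(\mu) \otimes_{r_3, \AR} N)^{1 \times \Gamma_R \times \Gamma_R}$ and checking that $\mu_{12} \otimes 1_N$ is injective on it. This requires an explicit PD-polynomial description of $\AR(2)/r_1(\mu)$, analogous to Proposition \ref{prop:a1modp1mu_pdring}, together with a two-variable analog of Theorem \ref{thm:integral_comp_relative}. The key technical input will be a higher-dimensional Koszul vanishing statement over a bivariate version of $\LambdatildeR$, generalizing the 1-dimensional Koszul computations of Propositions \ref{prop:mlambda0_connection} and \ref{prop:mlambda+_connection}; once these are in place, the arguments of Subsection \ref{subsec:arithmetic_descent} should carry over with only notational changes.
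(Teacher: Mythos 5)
Your factorization $\Delta_N = \Delta_N^{(1)} \circ (\mu_{12}\otimes 1_N)$ and the accompanying equivariance computation are correct, and they do reduce the problem to showing $\mu_{12}\otimes 1_N$ is injective on $(1\times\Gamma_R\times\Gamma_R)$-invariants. But the path you then propose --- a PD description of $\AR(2)/r_1(\mu)$, a two-variable analog of Theorem \ref{thm:integral_comp_relative}, and a higher-dimensional Koszul vanishing to get the full analog of the exact sequence \eqref{eq:n1_gammaR_exact} over $\AR(2)$ --- is far heavier than necessary, and none of that machinery is present in the paper. Two observations you have missed make the heavy artillery avoidable.

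First, the required input over $\AR(2)/p_1(\mu)$ does not need to be rebuilt from scratch: the $(\varphi,\Gamma_R^2)$-equivariant isomorphism $\AR(1)/p_1(\mu)\otimes_{p_1,\AR}N\isomorphic \AR(1)/p_1(\mu)\otimes_{p_2,\AR}N$ from Theorem \ref{thm:integral_comp_relative} (via Remark \ref{rem:p1p2_modmu_comp_relative}) simply base changes along $p_{13}:\AR(1)\to\AR(2)$ to give a $(\varphi,\Gamma_R^3)$-equivariant isomorphism $\AR(2)/p_1(\mu)\otimes_{r_1,\AR}N\isomorphic \AR(2)/p_1(\mu)\otimes_{r_3,\AR}N$, and combined with Lemma \ref{lem:r1modmun_relative} (which identifies $(\AR(2)/p_1(\mu)\otimes_{r_1,\AR}N)^{1\times\Gamma_R\times\Gamma_R}$ with $N/\mu N$ via $r_1$) this already gives the base case $n=1$ as an isomorphism, with no new descent argument over $\AR(2)$. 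Second, the statement to be proved is only injectivity, not an isomorphism, and the inductive step therefore needs only the \emph{left} exactness of taking $(1\times\Gamma_R\times\Gamma_R)$-invariants: letting $Q$ be the cokernel of multiplication by $p_1(\mu)^n$ on invariants, the natural map $Q\hookrightarrow(N(2)/p_1(\mu)^n)^{1\times\Gamma_R\times\Gamma_R}\to N/\mu^n N$ is injective (the first arrow by the long exact sequence in group cohomology, the second by the induction hypothesis), and a snake-lemma chase then forces the middle vertical map to be injective. In particular no 2D Koszul vanishing is needed at all. Your concluding claim that the 2D descent would "carry over with only notational changes" is also overly optimistic: the arithmetic steps (especially for $p=2$) rely on delicate ring-theoretic identities whose bivariate analogs are not automatic. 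So while your starting factorization is sound, it leads you toward building substantial infrastructure the paper deliberately circumvents.
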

\begin{proof}
	Recall that inverse limit commutes with taking $(1 \times \Gamma_R \times \Gamma_R)\textrm{-invariants}$.
	Moreover, $\AR$ is $\mu\adically$ complete, $N$ is a finite $\AR\module$, $\AR(2)$ is $p_1(\mu)\adically$ complete and $r_i \colon \AR \rightarrow \AR(2)$ is faithfully flat, for $i = 1, 2, 3$ (see Lemma \ref{lem:pi_rj_ff}).
	Therefore, we have that
	\begin{align*}
		(\AR(2) \otimes_{r_3, \AR} N)^{1 \times \Gamma_R \times \Gamma_R} &= (\lim_n \AR(2)/(p_1(\mu))^n \otimes_{r_3, \AR} N)^{1 \times \Gamma_R \times \Gamma_R}\\
		&= \lim_n (\AR(2)/(p_1(\mu))^n \otimes_{r_3, \AR} N)^{1 \times \Gamma_R \times \Gamma_R}\\
		&\xrightarrow{\hspace{1mm} \Delta_N \hspace{1mm}} \lim_n N/\mu^n N = N.
	\end{align*}
	Since taking limits is left exact, to show the claim, it is enough to show that the following map is injective:
	\begin{equation*}
		(\AR(2)/(p_1(\mu))^n \otimes_{r_3, \AR} N)^{1 \times \Gamma_R \times \Gamma_R} \longrightarrow N/\mu^n N,
	\end{equation*}
	for each $n \in \NN_{\geqslant 1}$.

	We will show this by induction on $n$.
	For $n=1$, from Theorem \ref{thm:integral_comp_relative} (see Remark \ref{rem:p1p2_modmu_comp_relative}), recall that we have a $(\varphi, \Gamma_R^2)\equivariant$ isomorphism (see \eqref{eq:p1p2_modmu_comp_relative}),
	\begin{equation*}
		\AR(1)/(p_1(\mu)) \otimes_{p_1, \AR} N \isomorphic \AR(1)/(p_1(\mu)) \otimes_{p_2, \AR} N,
	\end{equation*}
	where $\Gamma_R^2$ acts on $N$ in the left hand term via projection onto the first coordinate and on $N$ in the right hand term via projection onto the second coordinate.
	Moreover, the composition $\AR \xrightarrow{\hspace{1mm} p_1 \hspace{1mm}} \AR(1) \xrightarrow{\hspace{1mm} p_{13} \hspace{1mm}} \AR(2)$ coincides with the composition $\AR  \xrightarrow{\hspace{1mm} r_1 \hspace{1mm}} \AR(2)$.
	Similarly, the composition $\AR \xrightarrow{\hspace{1mm} p_2 \hspace{1mm}} \AR(1) \xrightarrow{\hspace{1mm} p_{13} \hspace{1mm}} \AR(2)$ coincides with the composition $\AR  \xrightarrow{\hspace{1mm} r_3 \hspace{1mm}} \AR(2)$.
	So, by base changing the top horizontal isomorphism in \eqref{eq:p1p2_modmu_comp_relative} along $\AR(1) \xrightarrow{\hspace{1mm} p_{13} \hspace{1mm}} \AR(2)$, we obtain a $(\varphi, \Gamma_R^3)\equivariant$ isomorphism
	\begin{equation}\label{eq:r1r3_modmu_comp_relative}
		\AR(2)/p_1(\mu) \otimes_{r_1, \AR} N \isomorphic \AR(2)/p_1(\mu) \otimes_{r_3, \AR} N,
	\end{equation}
	where $\Gamma_R^3$ acts on $N$ in the source via projection onto the first coordinate and on $N$ in the target via projection onto the third coordinate.

	Now, consider the following diagram:
	\begin{center}
		\begin{tikzcd}
			(\AR(2)/p_1(\mu) \otimes_{r_1, \AR} N)^{1 \times \Gamma_R \times \Gamma_R} \arrow[r, "\sim"] & (\AR(2)/p_1(\mu) \otimes_{r_3, \AR} N)^{1 \times \Gamma_R \times \Gamma_R} \arrow[d, "\Delta_N"]\\
			N/\mu N \arrow[r, "\sim", "id"'] \arrow[u, "r_1"', "\wr"] & N/\mu N,
		\end{tikzcd}
	\end{center}
	where the top horizontal arrow is obtained as $(1 \times \Gamma_R \times \Gamma_R)\textrm{-invariant}$ of \eqref{eq:r1r3_modmu_comp_relative} and the left vertical arrow is the natural isomorphism from Lemma \ref{lem:r1modmun_relative} below.
	The commutativity of the diagram follows from \eqref{eq:p1p2_modmu_comp_relative} and the observation that the composition $\AR \xrightarrow{\hspace{1mm} r_1 \hspace{1mm}} \AR(2) \xrightarrow{\hspace{1mm} \Delta \hspace{1mm}} \AR$ is the identity.
	Therefore, we see that the right vertical arrow is bijective as well, i.e.\ we obtain a $(\varphi, \Gamma_R)\equivariant$ isomorphism
	\begin{equation*}
		(\AR(2)/p_1(\mu) \otimes_{r_3, \AR} N)^{1 \times \Gamma_R \times \Gamma_R} \isomorphic N/\mu N.
	\end{equation*}

	To prove our claim, let us now describe the induction step.
	From the discussion above, we see that the claim is true for $n=1$, so let $N(2) \coloneq \AR(2) \otimes_{r_3, \AR} N$ and assume that $\Delta_N \textmod p_1(\mu)^n$ is injective for some $n \in \NN_{\geqslant 1}$.
	Now, consider the following diagram with exact rows:
	\begin{center}
		\begin{tikzcd}
			0 \arrow[r] & (N(2)/p_1(\mu))^{1 \times \Gamma_R \times \Gamma_R} \arrow[r, "p_1(\mu)^n"] \arrow[d, "\wr"', "\Delta_N"] & (N(2)/p_1(\mu)^{n+1})^{1 \times \Gamma_R \times \Gamma_R} \arrow[r] \arrow[d, "\Delta_N"] & Q \arrow[r] \arrow[d, "\Delta_N"] & 0\\
			0 \arrow[r] & N/\mu N \arrow[r] & N/\mu^{n+1} N \arrow[r] & N/\mu^n N \arrow[r] & 0,
		\end{tikzcd}
	\end{center}
	where $Q$ is the cokernel of the top left horizontal arrow.
	It is easy to see that we have an injective map $Q \rightarrow (N(2)/p_1(\mu)^n)^{1 \times \Gamma_R \times \Gamma_R} \xrightarrow{\hspace{1mm} \Delta_N \hspace{1mm}} N/\mu^n N$, where the injectivity of the first map is obtained by considering the long exact sequence for the $(1 \times \Gamma_R \times \Gamma_R)\textrm{-cohomology}$ of the short exact sequence $0 \rightarrow N(2)/p_1(\mu) \xrightarrow{p_1(\mu)^n} N(2)/p_1(\mu)^{n+1} \rightarrow N(2)/p_1(\mu)^n$, and the second map is injective by the induction assumption.
	Therefore, it follows that the middle vertical arrow in the diagram above is injective as well.
	This proves the claim.
\end{proof}

The following observation was used above:
\begin{lem}\label{lem:r1modmun_relative}
	Extending scalars along $r_1 \colon \AR \rightarrow \AR(1)$, gives a $(\varphi, \Gamma_R^3)\equivariant$ map $r_1 \colon N \rightarrow \AR(2) \otimes_{r_1, \AR} N$.
	Then, reduction modulo $\mu$, restricts $r_1$ to an $R\linear$ and $\varphi\equivariant$ isomorphism $r_1 \colon N/\mu N \isomorphic (\AR(2)/p_1(\mu) \otimes_{r_1, \AR} N)^{1 \times \Gamma_R \times \Gamma_R}$.
\end{lem}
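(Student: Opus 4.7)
Plan: The map $r_1 : N \to \AR(2) \otimes_{r_1, \AR} N$, $x \mapsto 1 \otimes x$, is $(\varphi, \Gamma_R^3)\equivariant$ when the source carries the $\Gamma_R^3\action$ via projection to the first coordinate, since $r_1(\AR)$ is pointwise fixed by $1 \times \Gamma_R \times \Gamma_R$; reducing modulo $\mu$ yields the map in the statement. Its injectivity is immediate: the $(\varphi, \Gamma_R \times 1 \times 1)\equivariant$ multiplication $\Delta : \AR(2) \to \AR$ induces $\Delta_N$ with $\Delta_N \circ r_1 = \textup{id}_N$ on the nose, which restricts to a left inverse on the invariants.

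For surjectivity, I exploit the \v{C}ech-nerve identification $\AR(2) \isomorphic \AR(1) \widehat{\otimes}_{p_2, \AR, p_1} \AR(1)$ (with $p_{12}$, $p_{23}$ as the two inclusions and the central $\AR$ being the middle coordinate $r_2$). Under this, $\AR(2) \otimes_{r_1, \AR} N \isomorphic (\AR(1) \otimes_{p_1, \AR} N) \widehat{\otimes}_{p_2, \AR, p_1} \AR(1)$, and $(1, g_2, g_3) \in 1 \times \Gamma_R \times \Gamma_R$ acts as $(1, g_2)$ on the left $\AR(1)$ and $(g_2, g_3)$ on the right $\AR(1)$, trivially on $N$. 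Restricted to the subgroup $1 \times 1 \times \Gamma_R$, the action is trivial on the left factor and $1 \times \Gamma_R$ on the right.

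The inverse to $r_1$ mod $\mu$ is then obtained in two steps. First, assuming the key claim $(\AR(2)/p_1(\mu)^n \otimes_{r_1, \AR} N)^{1 \times 1 \times \Gamma_R} \isomorphic \AR(1)/p_1(\mu)^n \otimes_{p_1, \AR} N$ (compatibly for $n \geq 1$ and compatibly with the residual $1 \times \Gamma_R \times 1$-action, which restricts to $1 \times \Gamma_R$ on $\AR(1)$ via $p_{12}$), the second step uses Remark \ref{rem:p1p2_modmu_comp_relative} to switch the base change from $p_1$ to $p_2$ $(\varphi, 1 \times \Gamma_R)\equivariant$ly, after which Proposition \ref{prop:deltan_modmu_relative} identifies the invariants with $N/\mu N$. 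Composing gives the desired inverse.

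The main obstacle is the first descent step. Heuristically, since $1 \times 1 \times \Gamma_R$ only affects the right $\AR(1)$ factor, one expects the invariants to pull into the tensor product as $\AR(1)^{1 \times \Gamma_R} \isomorphic \AR$ (Proposition \ref{prop:delta_modmun}), but this commutation of invariants with the tensor product is non-trivial since $N$ is not a priori flat over $\AR$. To rigorously establish the needed exactness, one would reprise the three-step argument (geometric, torsion, and $1 + p\ZZ_p$ parts of $\Gamma_R$) of Subsections \ref{subsec:gamma_act_ar1} and \ref{subsec:wach_and_strat} directly on $\AR(2)/p_1(\mu)^n \otimes_{r_1, \AR} N$ with $1 \times 1 \times \Gamma_R$-action, establishing analogs of the exact sequences in Lemmas \ref{lem:n1_geometric_exact}--\ref{lem:n1_gamma0_exact_2} in this relative setting. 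These arguments are lengthy but structurally parallel to the ones already given in Section \ref{sec:prismatic_wach}, since $\AR(2)$ over $\AR(1)$ via $p_{23}$ is prismatically the same kind of object as $\AR(1)$ over $\AR$ via $p_2$.
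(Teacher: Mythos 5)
Your injectivity argument via $\Delta_N \circ r_1 = \mathrm{id}$ is correct and matches the paper's implicit reasoning. For surjectivity, however, you take a much more involved route than the paper and, more importantly, leave the central step unproven.

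The paper's proof is a one-liner built on a simplification you don't exploit: since $r_1 : \AR \to \AR(2)$ lands in the first coordinate, the group $1 \times \Gamma_R \times \Gamma_R$ fixes $r_1(\AR)$ pointwise and acts \emph{trivially} on the $N$-factor of $\AR(2)/p_1(\mu) \otimes_{r_1, \AR} N$. In particular $\AR(2)/p_1(\mu) \otimes_{r_1, \AR} N = \AR(2)/p_1(\mu) \otimes_{r_1, R} (N/\mu N)$ with the group acting only on the ring factor, and the computation reduces to identifying $(\AR(2)/p_1(\mu))^{1 \times \Gamma_R \times \Gamma_R} = R$ (Remark \ref{rem:a2modp1mun}) and then commuting invariants with $\otimes_R (N/\mu N)$. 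The commutation is the only nontrivial point; it follows from the explicit structure of $\AR(2)/p_1(\mu)$ as the $p$-adic completion of a PD-polynomial algebra over $R$ via $r_1$, whose de Rham complex over $R$ is a resolution with $R$-linear contracting homotopy (Poincar\'e lemma), so tensoring with $N/\mu N$ still computes $H^0 = N/\mu N$. No fresh three-step descent and no appeal to the Wach-module structure of $N$ is needed, precisely because the action on $N$ is trivial.

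Your proposal instead factors the invariants through the decomposition $\AR(2) \isomorphic \AR(1) \wotimes_{\AR} \AR(1)$ and a two-stage computation: first $1 \times 1 \times \Gamma_R$, then $1 \times \Gamma_R \times 1$. This is sound conceptually, and your identification of $\AR(2)$ as a pushout and of the induced $\Gamma_R^3$-action on the tensor factors is correct. But your ``key claim'' for the first stage — that $(\AR(2)/p_1(\mu)^n \otimes_{r_1, \AR} N)^{1 \times 1 \times \Gamma_R} \isomorphic \AR(1)/p_1(\mu)^n \otimes_{p_1, \AR} N$ for all $n$, compatibly with the residual action — is left as an assumption, to be ``reprised'' via the lengthy three-step argument of Subsections \ref{subsec:gamma_act_ar1} and \ref{subsec:wach_and_strat}. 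That machinery was designed for the genuinely hard case where $\Gamma_R$ acts nontrivially on the module factor (e.g.\ $\AR(1)/p_1(\mu) \otimes_{p_2, \AR} N$ with the action via the second coordinate). Here the action is trivial on $N$, so redeploying it is overkill, and without carrying it out your argument has a gap exactly at the step you identify as the obstacle. Furthermore, your second stage invokes Remark \ref{rem:p1p2_modmu_comp_relative} and Proposition \ref{prop:deltan_modmu_relative}, importing the full weight of Theorem \ref{thm:integral_comp_relative} for what the paper settles by a direct computation at the level of rings.

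In short: the injectivity is fine, but for surjectivity you should first observe that the entire group $1 \times \Gamma_R \times \Gamma_R$ kills $r_1(\AR)$ and hence acts trivially on the $N$-factor. Once that is in place, the invariants reduce to $(\AR(2)/p_1(\mu))^{1 \times \Gamma_R \times \Gamma_R} \otimes_{r_1, R} (N/\mu N) = N/\mu N$, and no new descent is required.
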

\begin{proof}
	Note that the map $r_1 \colon R \isomorphic (\AR(2)/p_1(\mu))^{1 \times \Gamma_R \times \Gamma_R}$ is a $\varphi\equivariant$ isomorphism by Remark \ref{rem:a2modp1mun}.
	Moreover, we have that
	\begin{equation*}
		(\AR(2)/p_1(\mu) \otimes_{r_1, \AR} N)^{1 \times \Gamma_R \times \Gamma_R} = (\AR(2)/p_1(\mu))^{1 \times \Gamma_R \times \Gamma_R} \otimes_{r_1, \AR} N.
	\end{equation*}
	Hence, we get the claimed isomorphism $r_1 \colon N/\mu N \isomorphic (\AR(2)/p_1(\mu) \otimes_{r_1, \AR} N)^{1 \times \Gamma_R \times \Gamma_R}$.
\end{proof}

We have all the necessary inputs for proving Proposition \ref{prop:strat_from_wach_relative} (similar to \cite[Proposition 3.18]{morrow-tsuji}).
\begin{proof}[Proof of Proposition \ref{prop:strat_from_wach_relative}]
	For the first claim, note that from the definition of $\varepsilon$ it is clear that its base change along $\Delta \colon \AR(1) \rightarrow \AR$ is the identity.
	Moreover, by using Lemma \ref{lem:strat_gamma_equiv_relative}, we see that for each pair $(i, j) \in \{(1, 2), (2, 3), (1, 3)\}$, the base change $p_{ij}^*(\varepsilon) \colon \AR(2) \otimes_{r_i, \AR} N \rightarrow \AR(2) \otimes_{r_j, \AR} N$ is $\Gamma_R^3\equivariant$.
	Therefore, restrictions of $p_{13}^*(\varepsilon)$ and $p_{23}^*(\varepsilon) \circ p_{12}^*(\varepsilon)$ to $N$ have images in $(\AR(2) \otimes_{r_3, \AR} N)^{1 \times \Gamma_R \times \Gamma_R}$, and their composition with the injective map $\Delta_N$ in Proposition \ref{prop:diagonal_m2_relative} is the identity.
	So, it follows that $p_{13}^*(\varepsilon) = p_{23}^*(\varepsilon) \circ p_{12}^*(\varepsilon)$, since both sides are $\AR(2)\linear$.
	We also get that $\varepsilon$ is an isomorphism because we can write the inverse as its base change along the involution $\AR(1) \isomorphic \AR(1)$ swapping the two factors.
	This proves the first claim.

	To show the second claim, let $g$ in $\Gamma_R$.
	From Lemma \ref{lem:strat_gamma_equiv_relative}, we have the following commutative diagram:
	\begin{center}
		\begin{tikzcd}[column sep=large]
			\AR(1) \otimes_{(\gamma, 1), \AR(1)} (\AR(1) \otimes_{p_1, \AR} N) \arrow[r, "\sim", "1 \otimes \varepsilon"'] \arrow[d, "1 \otimes (g{,} 1)", "\wr"'] & \AR(1) \otimes_{(\gamma, 1), \AR(1)} (\AR(1) \otimes_{p_2, \AR} N) \arrow[d, "1 \otimes (g{,} 1)", "\wr"']\\
			\AR(1) \otimes_{p_1, \AR} N \arrow[r, "\sim", "\varepsilon"'] & \AR(1) \otimes_{p_2, \AR} N.
		\end{tikzcd}
	\end{center}
	From the diagram it is clear that the image of any $x$ in $N$ under the left vertical arrow is $g(x)$ while its image under the right vertical arrow is $x$.
	Base changing the diagram along the $(\Gamma_R \times 1)\equivariant$ map $\Delta \colon \AR(1) \rightarrow \AR$, we obtain the following commutative diagram: \begin{center}
		\begin{tikzcd}[column sep=8em]
			\AR \otimes_{g, \AR} N \arrow[r, "\sim", "\Delta^*((g{,} 1)^*(1 \otimes \varepsilon))"'] \arrow[d, "1 \otimes g", "\wr"'] &  N \arrow[d, "id", "\wr"']\\
			N \arrow[r, "\sim", "id"'] & N,
		\end{tikzcd}
	\end{center}
	where the left vertical arrow is the action of $g$ on $N$.
	This proves the claim and concludes our proof.
\end{proof}

\appendix

\section{Some basic definitions and lemmas}\label{app_sec:basic_defi_lem}

In this section, we collect some standard definitions to enhance the readability of the text.
For more details, please refer to the cited references.
Let $p$ be a fixed prime number.

\subsection{Basic definitions}\label{app_subsec:basic_defi}

Let $R$ be a commutative ring.

\begin{defi}
	Let $a, b \in \ZZ \cup \{\pm\infty\}$ and let $D(R)$ denote the derived $\infty\category$ of $R\modules$.
	\begin{enumarabicup}
	\item An object $M$ in $D(R)$ has \textit{$p\complete$ Tor amplitude} in $[a, b]$ if $M \otimes_R^{\LL} R/pR$ in $D(R/pR)$ has Tor amplitude in $[a, b]$.
		
	\item An object $M$ in $D(R)$ is \textit{$p\completely$ (faithfully) flat} if $M \otimes_R^{\LL} R/pR$ in $D(R/pR)$ is concentrated in degree 0 and it is a (faithfully) flat $R/pR\module$.
	\end{enumarabicup}
\end{defi}

\begin{defi}[Perfectoid rings, {\cite[Definition 3.5]{bhatt-morrow-scholze-1}}]\label{defi:perfectoid_rings}
	A ring $R$ is called \textit{perfectoid} if it is $p\textrm{-adically}$ complete and there is some $\pi$ in $R$ such that $\pi^p = pu$, for some unit $u$ in $R^{\times}$, the ring $R/p$ is semiperfect, i.e.\ the absolute Frobenius map on $R/p$ is surjective, and the kernel of the map $\theta \colon \Ainf(R) \rightarrow R$ is principal.
\end{defi}

\begin{defi}
	Let $(X, \pazo)$ denote a ringed topos.
	An $\pazo\module$ $\paze$ is called a \textit{vector bundle} on $(X, \pazo)$ if there exists a cover $\{U_i\}$ of $X$ and finite projective $\pazo(U_i)\modules$ $\pazp_i$ such that $\paze|_{U_i} \isomorphic \pazp_i \otimes_{\pazo(U_i)} \pazo_{U_i}$, for each $i$.
	Denote by $\Vect(X, \pazo)$ the category of all vector bundles on $(X, \pazo)$.
\end{defi}

\begin{defi}[Koszul complex, {\cite[Definition 7.1]{bhatt-morrow-scholze-1}}]\label{defi:koszul_complex}
	Let $M$ be an abelian group and for $i = 1, \ldots, d$, let $f_i \colon M \rightarrow M$ denote $d$ commuting endomorphisms of $M$.
	The Koszul complex $K_M(f_1, \ldots, f_d)$ is defined to be the following complex:
	\begin{equation*}
		M \xrightarrow{(f_1, \ldots, f_d)} \textstyle \bigoplus_{1 \leqslant i \leqslant d} M \longrightarrow \bigoplus_{1 \leqslant i_1 < i_2 \leqslant d} M \longrightarrow \cdots \longrightarrow \bigoplus_{1 \leqslant i_1 < \cdots < i_k \leqslant d} M \longrightarrow \cdots,
	\end{equation*}
	where the differential from $M$ at the index $i_1 < \cdots < i_k$ to $M$ at the index $j_1 < \cdots < j_{k+1}$ is nonzero if and only if the set $\{i_1, \ldots, i_k\}$ is contained in the set $\{j_1, \ldots, j_{k+1}\}$, and in that case the differential is given as $(-1)^n f_{j_n}$, where $n$ is the unique integer in $\{1, \ldots, k+1\}$ such that $j_n$ does not belong to the set $\{i_1, \ldots, i_k\}$.
\end{defi}

\begin{lem}[{\cite[Lemma 7.3]{bhatt-morrow-scholze-1}}]\label{lem:cont_coh_disc}
	Let $\Gamma_{\textup{disc}} = \prod_{i=1}^d \ZZ$ be the free abelian group on generators $\gamma_1, \ldots, \gamma_d$, and $\Gamma = \prod_{i=1}^d \ZZ_p$ its $\padic$ completion.
	Let $N$ be a continuous $\Gamma\module$ that can be written as an inverse limit $N = \lim_{i \geqslant 1} N_i$ of continuous discrete $\Gamma\modules$ $N_i$ killed by $p^i$.
	Then, the natural map
	\begin{equation*}
		R\Gamma_{\cont}(\Gamma, N) \rightarrow R\Gamma(\Gamma_{\textup{disc}}, N),
	\end{equation*}
	is a quasi-isomorphism, and thus $R\Gamma_{\cont}(\Gamma, N)$ is computed by $K_N(\gamma-1, \ldots, \gamma_d-1)$.
	In particular, we have that $N^{\Gamma} = N^{\gamma_1=\cdots=\gamma_d=1}$.
\end{lem}

We note the following fact from the proof of \cite[Corollary 12.5]{bhatt-morrow-scholze-1}:
\begin{lem}\label{lem:koszul_complex_qiso}
	For $1 \leqslant i \leqslant d$, if $f_i$ are $d$ commuting endomorphisms of an abelian group $M$, and $h_i$ are $d$ automorphisms of $M$ commuting with each other and $f_i\textrm{'s}$, then we have a natural quasi-isomorphism of complexes
	\begin{equation*}
		K_M(f_1h_1, \ldots, f_dh_d) \isomorphic K_M(f_1, \ldots, f_d).
	\end{equation*}
\end{lem}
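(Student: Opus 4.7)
My plan is to exhibit an explicit chain map realising the quasi-isomorphism, and in fact to show it is an isomorphism of complexes, from which the quasi-isomorphism claim follows trivially. Recall that the Koszul complex $K_M(f_1, \ldots, f_d)$ has its degree $k$ term equal to $\bigoplus_{|I|=k} M_I$, indexed by subsets $I \subseteq \{1, \ldots, d\}$ of cardinality $k$, with differential whose $(I, I \cup \{i\})$-component (for $i \notin I$) equals $(-1)^n f_i$, where $n$ is the position of $i$ in the ordered enumeration of $I \cup \{i\}$.

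The first step is to define a map $\Phi \colon K_M(f_1h_1, \ldots, f_dh_d) \to K_M(f_1, \ldots, f_d)$ whose component $\Phi_I \colon M_I \to M_I$ at the vertex indexed by $I$ is the automorphism
\begin{equation*}
\Phi_I := \prod_{i \notin I} h_i,
\end{equation*}
which is well-defined since the $h_i$'s commute with each other. The second step is to verify that $\Phi$ is a chain map. For each pair $(I, I \cup \{i\})$ with $i \notin I$, the relevant square reads $\Phi_{I \cup \{i\}} \circ (f_i h_i) = (f_i) \circ \Phi_I$ up to a common sign, and both compositions equal $f_i \cdot \prod_{j \notin I} h_j$ after using that $h_i$ commutes with each $f_j$ and with each $h_j$. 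This is the only computational content of the proof, and it is purely formal.

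The final step is to conclude: since each $\Phi_I$ is an automorphism of $M$ (as a product of automorphisms), the chain map $\Phi$ is in fact an isomorphism of complexes, and hence a fortiori a quasi-isomorphism. Naturality in $M$, and in the data $(f_i, h_i)$, is immediate from the formula for $\Phi_I$.

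The main (and only) obstacle here is really a bookkeeping check that the signs of the Koszul differential are preserved by $\Phi$; once one observes that the sign $(-1)^n$ depends only on $I$ and $i$ and not on whether we are composing with $f_i$ or with $f_i h_i$, the verification is mechanical. An alternative approach, if one wanted to avoid any explicit sign manipulation, would be to proceed by induction on $d$, writing $K_M(f_1, \ldots, f_d)$ as the total complex of $[K_M(f_1, \ldots, f_{d-1}) \xrightarrow{f_d} K_M(f_1, \ldots, f_{d-1})]$ and using $h_d$ on the source complex to intertwine $f_d h_d$ with $f_d$; but the direct construction above seems cleaner.
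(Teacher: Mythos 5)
Your proof is correct, and it is essentially the standard argument: the paper itself gives no proof of this lemma but simply cites the proof of \cite[Corollary 12.5]{bhatt-morrow-scholze-1}, where the same diagonal chain map $\Phi_I = \prod_{i \notin I} h_i$ (an isomorphism in each degree, with the sign $(-1)^n$ unaffected since it depends only on $I$ and $i$) is the underlying mechanism. So you have supplied, correctly, exactly the verification the paper leaves implicit.
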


\subsection{Modules with \texorpdfstring{$\ZZ_p^{\times}\textrm{-action}$}{-}}\label{app_subsec:module_zpx_action}

In this section, we shall consider objects admitting a continuous action of $\Gamma_F \isomorphic \ZZ_p^{\times}$.
From \eqref{eq:gammaf_es}, recall that $\Gamma_F \isomorphic \ZZ_p^{\times}$, via the $\padic$ cyclotomic character, and it fits into the following exact sequence:
\begin{equation*}
	1 \longrightarrow \Gamma_0 \longrightarrow \Gamma_F \longrightarrow \Gamma_{\textrm{tor}} \longrightarrow 1,
\end{equation*}
where, for $p \geqslant 3$ we have $\Gamma_0 \isomorphic 1 + p\ZZ_p$, and for $p=2$ we have $\Gamma_0 \isomorphic 1 + 4\ZZ_2$.
Moreover, for $p \geqslant 3$, we have that $\omega \colon \Gamma_{\textrm{tor}} \isomorphic \FF_p^{\times}$, and the projection map $\Gamma_F \rightarrow \Gamma_{\textrm{tor}}$ admits a section $\Gamma_{\textrm{tor}} \isomorphic \FF_p^{\times} \rightarrow \ZZ_p^{\times} \lisomorphic \Gamma_F$, where the second map is given as $a \mapsto [a]$, the Teichm\"uller lift of $a$.
Finally, for $p = 2$ we have $\Gamma_{\textrm{tor}} \isomorphic \{\pm 1\}$ as groups.

\subsubsection{The action of \texorpdfstring{$\FF_p^{\times}$}{-}}

We will assume that $p \geqslant 3$ and recall the definition of $\FF_p^{\times}\textrm{-decomposition}$ from \cite[Section 3]{iwasawa} ($\Delta\textrm{-decomposition}$ in loc.\ cit.).
For $0 \leqslant i \leqslant p-2$, let $\epsilon_i = \tfrac{1}{p-1} \sum_{a \in \FF_p} \omega(a)^{-i} a$ denote an element in the group ring $\ZZ_p\llbracket\Gamma_F\rrbracket$.
Then, it is easy to check that we have $\epsilon_i^2 = \epsilon_i$, $\epsilon_i \epsilon_j = 0$ for $i \neq j$ and $\sum_{i=0}^{p-2} \epsilon_i = 1$.
Now, let $M$ be a linearly topologised compact $\ZZ_p\module$ admitting a continuous action of $\Gamma_F$.
Set $M_i \coloneq \epsilon_i(M)$, for $0 \leqslant i \leqslant p-2$.
Then, we have a canonical decomposition of $M$ as follows:
\begin{equation}\label{eq:fpx_decomp}
	M = \textstyle \bigoplus_{i=0}^{p-2} M_i.
\end{equation}
Here, each $M_i$ may also be characterised as the submodule of all $x$ in $M$ such that $ax = \omega(a)^ix$, for all $a$ in $\FF_p^{\times}$.
We will refer to the decomposition of $M$ in \eqref{eq:fpx_decomp} as the $\FF_p^{\times}\textrm{-decomposition}$ of $M$.
Moreover, since the action of $\FF_p^{\times}$ and $\Gamma_0$ commute with each other, therefore, we see that each $M_i$ admits a continuous action of $\Gamma_0$.

\subsubsection{The action of \texorpdfstring{$\{\pm 1\}$}{-}}

We will assume that $p = 2$ and recall the following construction:
let $M$ be a linearly topologised compact $\ZZ_p\module$ admitting a continuous action of $\Gamma_F$, and let $\sigma$ be a generator of $\Gamma_{\textrm{tor}} \isomorphic \{\pm 1\}$.
Then, we set $M_{+} \coloneq \{x \in M \textrm{ such that } \sigma(x) = x\}$ and $M_{-} \coloneq \{x \in M \textrm{ such that } \sigma(x) = -x\}$.
Using these notations, we have a canonical injective map of $\ZZ_p\modules$:
\begin{equation}\label{eq:plus_minus_decomp}
	M_{+} \oplus M_{-} \longrightarrow M,
\end{equation}
sending $(x, y) \mapsto x+y$.
Note that the map \eqref{eq:plus_minus_decomp} need not be surjective.
If $M = A$ is a $\ZZ_p\algebra$, then it is easy to verify that $A_{+}$ is a $\ZZ_p\algebra$ as well, $A_{-}$ is an $A_{+}\module$, and the map in \eqref{eq:plus_minus_decomp} is $A_{+}\linear$.

Let us consider a $\ZZ_p\algebra$ $A$, and let $q$ be a unit in $A$ such that $q-1$ is a nonzerodivisor on $A$.
Assume that $A$ is equipped with a $\ZZ_p\linear$ action of $\Gamma_{\textrm{tor}}$ such that $\sigma(q) = q^{-1}$, and the induced action of $\Gamma_{\textrm{tor}}$ on $A/(q-1)A$ is trivial.
Let $M$ be an $A\module$.
Assume that the element $q-1$ is $M\textrm{-regular}$, and the $A\module$ $M$ is equipped with an $A\textrm{-semilinear}$ action of $\Gamma_{\textrm{tor}}$ such that the induced action of $\Gamma_{\textrm{tor}}$ is trivial on $M/(q-1)M$.
Then, we note that the operator $\nabla_{\sigma} \coloneq \tfrac{\sigma-1}{q-1}$ is well defined on $M$, and we claim the following:
\begin{lem}\label{lem:sigma_action_decomp}
	The $A\module$ $M$ admits an $A^{\nabla_{\sigma}=0}\linear$ decomposition $M = M^{\nabla_{\sigma}=0} \oplus M^{\nabla_{\sigma}=1}$.
\end{lem}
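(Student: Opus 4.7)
The plan is to write down an explicit decomposition using the operator $\nabla_{\sigma}$ itself. For $m \in M$, set
\[
m_{0} := m - \nabla_{\sigma}(m), \qquad m_{1} := \nabla_{\sigma}(m),
\]
so that trivially $m = m_{0} + m_{1}$. The first step is to verify that $m_{1} \in M^{\nabla_{\sigma}=1}$, i.e.\ $\sigma(m_{1}) = q m_{1}$. Using that $\sigma^{2}=1$ and $\sigma(q-1) = q^{-1}-1 = -(q-1)/q$, one computes
\[
\sigma(\nabla_{\sigma} m) \;=\; \frac{\sigma^{2}(m) - \sigma(m)}{\sigma(q-1)} \;=\; \frac{m - \sigma(m)}{-(q-1)/q} \;=\; q\,\nabla_{\sigma}(m),
\]
as desired. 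Then $\nabla_{\sigma}(m_{0}) = \nabla_{\sigma}(m) - \nabla_{\sigma}(\nabla_{\sigma} m) = m_{1} - m_{1} = 0$ (using $\nabla_{\sigma}(m_{1}) = (\sigma-1)(m_{1})/(q-1) = (qm_{1}-m_{1})/(q-1) = m_{1}$), so $m_{0} \in M^{\nabla_{\sigma}=0}$.

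For uniqueness, suppose $m = m_{0}' + m_{1}'$ is another such decomposition. Applying $\nabla_{\sigma}$ gives $\nabla_{\sigma}(m) = 0 + m_{1}' = m_{1}'$, forcing $m_{1}' = \nabla_{\sigma}(m) = m_{1}$ and hence $m_{0}' = m_{0}$. This also shows the sum is direct: if $m_{0} + m_{1} = 0$ with $m_{0} \in M^{\nabla_{\sigma}=0}$ and $m_{1} \in M^{\nabla_{\sigma}=1}$, then $0 = \nabla_{\sigma}(m_{0} + m_{1}) = m_{1}$, so both are zero.

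For $\ZZ_{p}\llbracket q-1 \rrbracket^{\nabla_{\sigma}=0}$-linearity, observe that if $r \in \ZZ_{p}\llbracket q-1 \rrbracket^{\sigma=1}$ and $m \in M$, then $\sigma(rm) = r\sigma(m)$, so $\nabla_{\sigma}(rm) = r\nabla_{\sigma}(m)$. Consequently $r \cdot M^{\nabla_{\sigma}=0} \subset M^{\nabla_{\sigma}=0}$ and (using $\sigma(r)=r$ also in the eigenspace relation) $r \cdot M^{\nabla_{\sigma}=1} \subset M^{\nabla_{\sigma}=1}$, so the decomposition is $\ZZ_{p}\llbracket q-1 \rrbracket^{\nabla_{\sigma}=0}$-linear.

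The only conceptual point one must verify carefully is the well-definedness of $\nabla_{\sigma}$ itself on $M$: the hypothesis that $\sigma$ acts trivially on $M/(q-1)M$ only guarantees $\sigma(m)-m \in (q-1)M$, and one needs to divide uniquely by $q-1$. This implicit $(q-1)$-torsion-freeness is part of the setup asserted just before the lemma in the text, where $\nabla_{\sigma}$ is declared to be a well-defined operator; granted this, the computations above go through without obstacle. No step is genuinely hard, and the content of the lemma is really the observation that the idempotent decomposition $\tfrac{1}{2}(1 + \sigma) \oplus \tfrac{1}{2}(1-\sigma)$, which is unavailable at $p=2$, can be replaced by the pair of $\ZZ_{p}\llbracket q-1\rrbracket$-linear operators $1 - \nabla_{\sigma}$ and $\nabla_{\sigma}$ whose sum is the identity.
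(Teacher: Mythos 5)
Your proof is correct and is essentially the same argument as the paper's: both establish that $\nabla_\sigma$ is an idempotent ($\nabla_\sigma^2 = \nabla_\sigma$, which you derive via $\sigma(\nabla_\sigma m) = q\,\nabla_\sigma m$) and then read off the decomposition $M = \ker\nabla_\sigma \oplus \operatorname{im}\nabla_\sigma$. You merely spell out explicitly (the formulas $m_0 = m - \nabla_\sigma m$, $m_1 = \nabla_\sigma m$, uniqueness, linearity) what the paper leaves to the standard idempotent-splitting argument; your remark that the setup tacitly requires $(q-1)$-torsion-freeness for $\nabla_\sigma$ to be well defined is accurate and applies equally to the paper's formulation, and indeed holds in all applications in the text.
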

\begin{proof}
	Let us first note that $\nabla_{\sigma}^2 = \nabla_{\sigma}$.
	Indeed,
	\begin{align*}
		\nabla_{\sigma}^2 = \tfrac{\sigma-1}{q-1} \circ \tfrac{\sigma-1}{q-1} = \tfrac{1}{q-1}\big(\tfrac{\sigma(\sigma-1)}{\sigma(q)-1} - \tfrac{\sigma-1}{q-1}\big) &= \tfrac{1}{q-1}\big(\tfrac{1-\sigma}{q^{-1}-1} - \tfrac{\sigma-1}{q-1}\big)\\
			&= \tfrac{1}{q-1}\big(\tfrac{q(\sigma-1)}{q-1} - \tfrac{\sigma-1}{q-1}\big) = \tfrac{\sigma-1}{q-1} = \nabla_{\sigma}.
	\end{align*}
	As $\nabla_{\sigma}$ is an idempotent operator on $M$, it follows that $M$ admits a decomposition $M = \nabla_{\sigma}(M) \oplus (\nabla_{\sigma}-1)M = M^{\nabla_{\sigma}=1} \oplus M^{\nabla_{\sigma}=0}$.
	It is easy to see that the decomposition is $A^{\nabla_{\sigma}=0}\linear$.
\end{proof}

Let $M$ and $N$ be two $A\modules$ admitting actions of $\Gamma_{\textrm{tor}}$ such that the induced action of $\Gamma_{\textrm{tor}}$ is trivial modulo $(q-1)$ on both modules, and let $M \twoheadrightarrow N$ be an $A\linear$ map compatible with the action of $\Gamma_{\textrm{tor}}$.
\begin{lem}\label{lem:sigma_action_surjective}
	The $A^{\Gamma_{\textup{tor}}}\linear$ map $M^{\Gamma_{\textup{tor}}} \rightarrow N^{\Gamma_{\textup{tor}}}$ is surjective.
\end{lem}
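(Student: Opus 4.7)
The plan is to exploit the idempotent decomposition supplied by Lemma \ref{lem:sigma_action_decomp}. First I would note that the hypothesis that $\Gamma_{\textup{tor}}$ acts trivially modulo $(q-1)$ on both $M$ and $N$ ensures that the operator $\nabla_\sigma = (\sigma-1)/(q-1)$ is well-defined on each module; moreover, since the given map $f : M \twoheadrightarrow N$ is both $\ZZ_p\llbracket q-1\rrbracket$-linear and $\Gamma_{\textup{tor}}$-equivariant, it commutes with $\nabla_\sigma$.

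Applying Lemma \ref{lem:sigma_action_decomp} to both modules then yields decompositions
\begin{equation*}
M = M^{\nabla_\sigma = 0} \oplus M^{\nabla_\sigma = 1}, \qquad N = N^{\nabla_\sigma = 0} \oplus N^{\nabla_\sigma = 1},
\end{equation*}
and I would observe that $M^{\nabla_\sigma = 0}$ coincides with $M^{\Gamma_{\textup{tor}}}$, since $\nabla_\sigma(x) = 0$ is equivalent to $\sigma(x) = x$, and similarly for $N$. Because $f$ commutes with $\nabla_\sigma$, it respects both summands, restricting to $\ZZ_p\llbracket q-1\rrbracket^{\Gamma_{\textup{tor}}}$-linear maps $f_0 : M^{\Gamma_{\textup{tor}}} \to N^{\Gamma_{\textup{tor}}}$ and $f_1 : M^{\nabla_\sigma = 1} \to N^{\nabla_\sigma = 1}$.

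The only remaining step is to verify that $f_0$ is surjective: given any $y \in N^{\Gamma_{\textup{tor}}}$, lift it along the surjection $f$ to some $x \in M$, decompose $x = x_0 + x_1$ with $x_0 \in M^{\Gamma_{\textup{tor}}}$ and $x_1 \in M^{\nabla_\sigma = 1}$, and apply $f$ to obtain $y = f_0(x_0) + f_1(x_1)$. Uniqueness of the direct sum decomposition of $N$, combined with $y \in N^{\Gamma_{\textup{tor}}}$ and $f_1(x_1) \in N^{\nabla_\sigma = 1}$, forces $f_1(x_1) = 0$ and $f_0(x_0) = y$. Hence $x_0$ is the desired preimage in $M^{\Gamma_{\textup{tor}}}$.

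There is no substantive obstacle once Lemma \ref{lem:sigma_action_decomp} is available: the idempotent $\nabla_\sigma$ splits the problem into two independent direct summands, and surjectivity of $f$ descends to each summand automatically via the uniqueness of the decomposition. The only verification worth writing out carefully is the compatibility of the two decompositions with $f$, which is immediate from $f \circ \nabla_\sigma = \nabla_\sigma \circ f$.
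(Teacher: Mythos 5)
Your proof is correct and follows exactly the route the paper takes: the paper's own argument is the one-line observation that $M^{\Gamma_{\textup{tor}}} = M^{\nabla_\sigma=0}$ together with the idempotent decomposition of Lemma \ref{lem:sigma_action_decomp}. You have merely written out the (straightforward) verification that $f$ commutes with $\nabla_\sigma$ and hence respects the two summands, which is implicit in the paper.
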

\begin{proof}
	Note that we have $M^{\Gamma_{\textrm{tor}}} = M^{\nabla_{\sigma}=0}$ and similarly for $N$.
	We get the claim by using the decomposition in Lemma \ref{lem:sigma_action_decomp}.
\end{proof}

\subsection{Crystalline site and \texorpdfstring{$F\crystals$}{-}}\label{subsec:crys_site_crystal}

Let $A$ be a $p\adically$ complete PD-ring, $B$ a $p\textrm{-adically}$ completed smooth algebra over $A$ and $D \coloneq B[\prod_{i=1}^d t_1^{[k_1]} \cdots t_d^{[k_d]}]_p^{\wedge}$ the $\padic$ completion of a PD-polynomial algebra over $B$ in $d \geqslant 0$ variables.
For $m \geqslant 1$, we set $\Sigma_m = \Spec(A/p^m)$, $X_m = \Spec(B/p^m)$, $Y_m = \Spec(D/p^m)$ and note that $X_m \hookrightarrow Y_m$ is a PD-thickening morphism induced via the surjection $D \twoheadrightarrow B$ defined by sending $t_i^{[k_i]} \mapsto 0$.
Let $\CRYS(X_m/\Sigma_m)$ denote the big crystalline site of $X_m$ over $\Sigma_m$ with the PD-structure given by $p(A/p^m) + JA/p^m$, where $J$ denotes the PD-ideal of $A$, and let $\pazo_{X_m/\Sigma_m}$ denote the structure sheaf of rings.
Let $\CR(X_m/\Sigma_m)$ denote the category of quasi-coherent crystals of $\pazo_{X_m/\Sigma_m}\modules$ over $\CRYS(X_m/\Sigma_m)$.

The homomorphisms $\Sigma_m \rightarrow \Sigma_{m+1}$ and $X_m \rightarrow X_{m+1}$ induce a pullback functor $i^*_{m,m+1} \colon \CR(X_{m+1}/\Sigma_{m+1}) \rightarrow \CR(X_m/\Sigma_m)$.
One may define $\CRYS(X_1/\Sigma_m)$ and $\CR(X_1/\Sigma_m)$ similar to above, and note that the pullback functor induces an equivalence of categories $i_m^* \colon \CR(X_m/\Sigma_m) \isomorphic \CR(X_1/\Sigma_m)$ (see \cite[Chapitre IV, Th\'eor\`eme 1.4.1]{berthelot-cohomologie-cristalline}).
We define a quasi-coherent crystal $\paze$ on $X/\Sigma$ to be a system $(\paze_m)_{m\geqslant 1}$, where $\paze_m$ is an object of $\CR(X_m/\Sigma_m)_{\crys}$ for each $m \geqslant 1$, and we have isomorphisms $i_{m,m+1}^* \paze_{m+1} \isomorphic \paze_m$.
Write $\CR(X/\Sigma)$ and $\CR(X_1/\Sigma)$ for the category of quasi-coherent crystals on $X/\Sigma$ and $X_1/\Sigma$, respectively, and note that the obvious pullback functor $i^* \colon \CR(X/\Sigma) \rightarrow \CR(X_1/\Sigma)$ induces an equivalence of categories.
Let $\MIC_{\converge}(D)$ denote the category of $p\textrm{-adically}$ complete $D\modules$ equipped with an $A\linear$ $p\adically$ quasi-nilpotent flat connection.
Then, by \cite[Chapitre IV, Th\'eor\`eme 1.6.5]{berthelot-cohomologie-cristalline} and \cite[\href{https://stacks.math.columbia.edu/tag/07JH}{Tag 07JH}, \href{https://stacks.math.columbia.edu/tag/07L6}{Tag 07L6}]{stacks-project}, we have a natural equivalence of categories
\begin{equation}\label{eq:crys_crystal_connections}
	\CR(X/\Sigma) \isomorphic \MIC_{\converge}(D),
\end{equation}
obtained by sending $(\paze_m)_{m \geqslant 1}$ to the inverse limit of the evaluation of $\paze_m$ on the object $\Spec(X_m) \hookrightarrow \Spec(Y_m)$ of the site $\CRYS(X_m/\Sigma_m)$, equipped with the natural $A\linear$ $p\adically$ quasi-nilpotent flat connection.

Let us now describe $F\textrm{-crystals}$ on $X/\Sigma$.
Assume that $A$ is equipped with a lift of Frobenius modulo $p$.
Then, we note that the absolute Frobenius on $X_1$ and the natural Frobenius on $\Sigma$ induce Frobenius pullbacks $\varphi^* \colon \CR(X_1/\Sigma_m) \rightarrow \CR(X_1/\Sigma_m)$ and $\varphi^* \colon \CR(X_1/\Sigma) \rightarrow \CR(X_1/\Sigma)$.
A quasi-coherent $F\crystal$ $\paze$ on $X/\Sigma$ is an object of $\CR(X_1/\Sigma)$ equipped with an isomorphism $\varphi_{\paze} \colon (\varphi^* i^* \pazf)_{\QQ} \isomorphic (i^* \pazf)_{\QQ}$ in the isogeny category $\CR(X/\Sigma) \otimes \QQ$.
We shall denote the category of quasi-coherent $F\crystals$ on $X/\Sigma$ as $\CR^{\varphi}(X/\Sigma)$.
Let $\MIC^{\varphi}(D)$ denote the following category: an object $M$ is a $p\textrm{-adically}$ complete $D\module$, equipped with an $A\linear$ $p\adically$ quasi-nilpotent flat connection and a $D\linear$ isomorphism $\varphi_M \colon \varphi^*M[1/p] \isomorphic M[1/p]$ compatible with the connection; morphisms between two objects are $D\linear$ maps compatible with the respective Frobenii and connections.
Then, the equivalence of categories in \eqref{eq:crys_crystal_connections} refines to a natural equivalence of categories
\begin{equation}\label{eq:crys_fcrystal_connections}
	\CR^{\varphi}(X/\Sigma) \isomorphic \MIC^{\varphi}(D).
\end{equation}
In fact, similar to \cite[Lemma 2.24]{morrow-tsuji}, it may be shown that for a $p\textrm{-adically}$ complete $D\module$ equipped with an $A\linear$ flat connection and a Frobenius structure, the $p\adic$ quasi-nilpotence of the connection is automatic.

\section{\texorpdfstring{$\delta\textrm{-rings}$}{-} and divided power algebras}\label{app_sec:delta_rings_pd_alg}

The content of this section has been adapted from some notes of Takeshi Tsuji on prismatic envelopes; we are thankful to him for sharing his computations.

In this section, we will describe certain prismatic envelopes explicitly.
We begin this section by fixing some terminology.
Let $p$ be a fixed prime and $A$ a commutative ring.
A \textit{$\delta\ring$} is a pair $(A, \delta)$ where $A$ is a commutative ring and $\delta \colon A \rightarrow A$ is a map of sets with $\delta(0) = \delta(1) = 0$ and satisfying:
\begin{equation}\label{app_eq:delta_sum_prod}
	\begin{aligned}
		\delta(x+y) &= \delta(x) + \delta(y) + \frac{x^p + y^p - (x+y)^p}{p}, \\
		\delta(xy) &= x^p \delta(y) + y^p \delta(x) + p\delta(x)\delta(y).
	\end{aligned}
\end{equation}
Given a $\delta\ring$ $(A, \delta)$ define an endomorphism $\varphi \colon A \rightarrow A$ by the formula $\varphi(x) = x^p + p\delta(x)$, for any $x$ in $A$.
This determines a \textit{lifting of (the absolute) Frobenius} on $A/pA$.
Then the product formula in \eqref{app_eq:delta_sum_prod} may be rewritten as
\begin{equation}\label{app_eq:delta_prod_frob}
	\delta(xy) = \varphi(x) \delta(y) + \delta(x) y^p.
\end{equation}
If $A$ is $p\textrm{-torsion-free}$ then any lift $\varphi \colon A \rightarrow A$ of the absolute Frobenius on $A/pA$ determines a unique $\delta\textrm{-structure}$ on $A$ given as $\delta(x) = (\varphi(x)-x)/p$, for all $x$ in $A$.

A \textit{$\delta\textrm{-homomorphism}$} $f \colon A \rightarrow B$ between $\delta\textrm{-rings}$ is a homomorphism of the underlying rings compatible with the respective $\delta\textrm{-structures}$, i.e.\ $f \circ \delta(x) = \delta \circ f(x)$, for all $x$ in $A$.
For a $\delta\ring$ $A$, a \textit{$\delta\textrm{-algebra}$} over A is a $\delta\textrm{-ring}$ $B$ equipped with a $\delta\textrm{-homomorphism}$ $A \rightarrow B$.
A \textit{$\delta\textrm{-ideal}$} of a $\delta\ring$ $A$ is an ideal $I$ of the underlying ring, stable under $\delta$, i.e.\ $\delta(I) \subset I$.
If $I$ is a $\delta\textrm{-ideal}$ of a $\delta\textrm{-ring}$ $A$, then the quotient ring $A/I$ is equipped with a unique $\delta\textrm{-structure}$ over $A$.
Now, let $S$ be a subset of a $\delta\ring$ $A$.
Then, we see that the ideal of $A$ generated by $\{\delta^n(S)\}_{n \geqslant 1}$ is stable under $\delta$ and it is the smallest $\delta\textrm{-ideal}$ of $A$ containing $S$.
(Note that for any ideal $J$ of $A$, the map $J \rightarrow A/J$, given by $x \mapsto \delta(x) \textrm{ mod } J$, is a $\varphi\textrm{-semilinear}$ homomorphism of $A\modules$ by \eqref{app_eq:delta_sum_prod} and \eqref{app_eq:delta_prod_frob}.)
We write this ideal as $(S)_{\delta}$ and call it the \textit{$\delta\textrm{-ideal generated by } S$}.
If $I$ is an ideal of $A$, then we write $I_{\delta}$ for $(I)_{\delta}$.
If the ideal $I$ is generated by a subset $S$ of $I$, then we have $I_{\delta} = (S)_{\delta}$ because the latter contains $I$.
This implies that if $B$ is a $\delta\textrm{-algebra}$ over a $\delta\textrm{-ring}$ $A$ and $I$ a $\delta\textrm{-ideal}$ of $A$, then the ideal $IB$ of $B$ generated by $I$ is a $\delta\textrm{-ideal}$ of the $\delta\textrm{-ring}$ $B$.

A \textit{$\delta\textrm{-subring}$} of a $\delta\ring$ $A$ is a subring $A' \subset A$, stable under the $\delta\textrm{-structure}$ on $A$, i.e.\ $\delta(A') \subset A'$.
Similarly, a \textit{$\delta\subalgebra$} of a $\delta\textrm{-algebra}$ $B$ over a $\delta\ring$ $A$ is an $A\subalgebra$ $B' \subset B$, stable under the $\delta\textrm{-structure}$ on $B$, i.e.\ $\delta(B') \subset B'$.
Let $S$ be a subset of a $\delta\algebra$ $B$ over a $\delta\ring$ $A$, and note that $\delta(A[\delta^k(S), 1 \leqslant k \leqslant n]) \subset A[\delta^k(S), 1 \leqslant k \leqslant n+1]$, for each $n \in \NN_{\geqslant 1}$.
(Note that for any $A\subalgebra$ $B'$ of $B$, the subset $\{x \textrm{ in } B' \textrm{ such that } \delta(x) \textrm{ in } B\}$ of $B'$ is an $A\subalgebra$ by \eqref{app_eq:delta_sum_prod} and \eqref{app_eq:delta_prod_frob}).
This implies that the $A\subalgebra$ of $B$ generated by $\{\delta^n(S)\}_{n \geqslant 1}$ is stable under $\delta$ and it is the smallest $\delta\subalgebra$ of $B$ over $A$ containing $S$; we denote this $\delta\subalgebra$ by $A[S]_{\delta}$ and call it the \textit{$\delta\subalgebra$ of $B$ over $A$ generated by $S$}.

\begin{assum}\label{app_assum:base_a}
	Let $A$ be a $\delta\ring$ equipped with an element $q$ such that $\varphi(q) = q^p$.
	Set $\mu \coloneq q-1$ and assume that $\varphi^n(\mu)$ is a nonzerodivisor on $A$, for each $n \in \NN$.
	Moreover, assume that $A$ and $A/\mu A$ are $p\torsionfree$.
\end{assum}
In the rest of this section we will consider a $\delta\ring$ $A$ satisfying Assumption \ref{app_assum:base_a}.
Set $[p]_q = \tfrac{q^p-1}{q-1} = 1 + q + \cdots + q^{p-1}$.
Then, by Assumption \ref{app_assum:base_a}, we see that for each $n \in \NN$, the element $\varphi^n([p]_q)$ is a nonzerodivisor on $A$.
Before proceeding further, we note a simple lemma on regular sequences.

\begin{lem}\label{app_lem:reg_seq}
	Let $R$ be a ring.
	\begin{enumerate}
		\item[(1)] If a sequence $\{x, y\}$ is regular on $R$, then $x$ is regular on $R/yR$.
	
		\item[(1)] Let $x, y_1, \ldots, y_d$ be elements of $R$.
			If the sequence $\{x, y_1, \ldots, y_d\}$ is regular on $R$, then $x$ is regular on $R/\sum_{i=1}^d (y_i R)$.
	\end{enumerate}
\end{lem}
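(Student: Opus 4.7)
The plan is to handle (1) directly from the definition of a regular sequence, and then obtain (2) by induction on $d$ with (1) providing the base case.

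For (1), I would take $a \in R$ with $xa \in yR$ and write $xa = yb$ for some $b \in R$. Reducing modulo $xR$ yields $yb \equiv 0$ in $R/xR$; since $\{x,y\}$ is regular on $R$, the element $y$ is a nonzerodivisor on $R/xR$, so $b \in xR$. Writing $b = xc$ and cancelling $x$ (a nonzerodivisor on $R$ by regularity) gives $a = yc \in yR$, proving that $x$ is a nonzerodivisor on $R/yR$.

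For (2), I would induct on $d$, noting that any initial segment of a regular sequence is again regular, so the inductive hypothesis is available for $\{x, y_1, \ldots, y_{d-1}\}$. Given $a \in R$ with $xa = \sum_{i=1}^d y_i b_i$, the critical input will be that $y_d$ is a nonzerodivisor on $R/(x, y_1, \ldots, y_{d-1})R$, which is precisely the last condition in the hypothesis that $\{x, y_1, \ldots, y_d\}$ is regular on $R$. Reducing the equation modulo $(x, y_1, \ldots, y_{d-1})R$ forces $b_d \in (x, y_1, \ldots, y_{d-1})R$, so we can write $b_d = cx + \sum_{i<d} d_i y_i$ for some $c, d_i \in R$. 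Substituting back and rearranging yields $x(a - y_d c) \in (y_1, \ldots, y_{d-1})R$, and then the inductive hypothesis applied to the regular sequence $\{x, y_1, \ldots, y_{d-1}\}$ gives $a - y_d c \in (y_1, \ldots, y_{d-1})R$, hence $a \in (y_1, \ldots, y_d)R$.

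The point worth emphasising is that this argument never requires permuting the regular sequence $\{x, y_1, \ldots, y_d\}$, which would fail in general outside the local Noetherian setting; the last membership condition in the regularity hypothesis is exactly what is needed to solve for $b_d$ and then invoke the inductive hypothesis. Consequently there is no real obstacle here, and the lemma amounts to a careful bookkeeping of the given regularity conditions.
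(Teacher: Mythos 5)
Your proof is correct and takes essentially the same route as the paper: both establish (1) first and then prove (2) by induction on $d$, with the key input in the inductive step being that $y_d$ is a nonzerodivisor on $R/(x, y_1, \ldots, y_{d-1})R$. The only differences are presentational: the paper proves (1) by applying the snake lemma to multiplication by $y$ on $0 \to R \xrightarrow{x} R \to R/xR \to 0$, whereas you chase elements directly (which unrolls the same diagram); and for (2) the paper simply invokes (1) with $R$ replaced by $R' = R/(y_1, \ldots, y_{d-1})R$, whereas you re-derive the content of (1) explicitly inside the inductive step.
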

\begin{proof}
	The claim in (1) is obtained by applying the snake lemma to the map induced by multiplication by $y$ on the exact sequence $0 \rightarrow R \xrightarrow{\hspace{1mm} x \hspace{1mm}} R \rightarrow R/xR \rightarrow 0$.
	We will prove the claim in (2) by induction on $d$, where the case $d=1$ is clear from (1).
	So, let $d \geqslant 2$, and assume that $x$ is regular on $R' = R/\sum_{i=1}^{d-1} (y_i R)$.
	Additionally, we know that $y_d$ is regular on $R'/xR'$, in particular, $\{x, y_d\}$ is a regular sequence on $R'$.
	Therefore, the claim in (1) implies that $x$ is regular on $R'/y_dR' = R/\sum_{i=1}^d (y_i R)$.
\end{proof}

\begin{rem}\label{app_rem:base_a_torsionfree}
	By using Lemma \ref{app_lem:reg_seq} (1), we see that for the ring $A$ as in Assumption \ref{app_assum:base_a}, the quotient $A/pA$ is $\mu\torsionfree$.
	Moreover, since $\varphi^n(\mu) = \mu^{p^n} \textrm{ mod } pA$, for each $n \in \NN$, and $A$ is $p\torsionfree$, therefore, by using Lemma \ref{app_lem:reg_seq} (1), we get that $A/\varphi^n(\mu)A$ is $p\torsionfree$.
\end{rem}

Let $\Abar$ denote the $p\torsionfree$ algebra $A/\mu A$, and note that the image of $\varphi^n([p]_q)$ in $\Abar$ is $p$, for every $n \in \NN$.
Moreover, the lifting of Frobenius on $A$ induces a lifting of Frobenius on $\Abar$.
Furthermore, as $A/\mu A$ is $p\torsionfree$, we see that the image of $\delta([p]_q)$ in $A$ is $\delta(p)$ in $A/\mu A$, which may be computed as $(\varphi(p)-p^p)/p = 1 - p^{p-1}$.
Hence, it follows that $\delta([p]_q)$ is a unit modulo any power of the ideal $(p, \mu) \subset A$.

\begin{assum}\label{app_assum:alg_b}
	Let $B$ be a $\delta\algebra$ over $A$.
	Assume that $B$ and $B/\mu B$ are $p\torsionfree$ and $\varphi^n(\mu)$ is a nonzerodivisor on $B$, for each $n \in \NN$.
	Let $Y_0, Y_1, \ldots, Y_d$ be elements of $B$ such that the sequence $\{Y_1, \ldots, Y_d\}$ is regular on $B$ and the sequence $\{Y_0, Y_1, \ldots, Y_d\}$ is regular on $B/[p]_q B$ and $B/(p, \mu)B$.
\end{assum}

\begin{rem}\label{app_rem:alg_b_torsionfree}
	Similar to the case of $A$ as in Remark \ref{app_rem:base_a_torsionfree}, by using Lemma \ref{app_lem:reg_seq} (1), we see that $B/pB$ is $\mu\torsionfree$ and $B/\varphi^n(\mu)B$ is $p\torsionfree$, for each $n \in \NN$.
\end{rem}

Let $I$ denote the set of natural numbers $\{0, 1, \ldots, d\}$.
Let $C_0 \coloneq B[Y_{0, 0}, \ldots, Y_{d, 0}]$ denote a polynomial ring over $B$ in $d+1$ variables $Y_{0, 0}, \ldots Y_{d, 0}$, and let $D_0 \coloneq C_0/(\{[p]_qY_{i, 0} - Y_i\}_{i \in I})$.
Moreover, set $E_0$ to be the $B\subalgebra$ of $B[1/[p]_q]$ generated by $y_i \coloneq Y_i/[p]_q$, for $i \in I$.
Then the surjective homomorphism of $B\algebras$ $C_0 \rightarrow E_0$ via $Y_{i, 0} \mapsto y_i$ for $i \in I$, induces a surjective homomorphism of $B\algebras$
\begin{equation}\label{app_eq:d0_to_e0}
	D_0 \longrightarrow E_0.
\end{equation}
Set $\Bbar \coloneq B/\mu B$, $\Czerobar \coloneq C_0/\mu C_0$, $\Dzerobar \coloneq D_0/\mu D_0$ and $\Ezerobar \coloneq E_0/\mu E_0$, and write $\Yizerobar$ (resp.\ $\yibar$) for the image of $Y_{i, 0}$ (resp.\ $y_i$) in $\Czerobar$ (resp.\ $\Ezerobar$).

\begin{lem}\label{app_lem:e0_desc}
	With notations as above, we have the following:
	\begin{enumerate}
		\item[(1)] The homomorphism \eqref{app_eq:d0_to_e0} is an isomorphism.
	
		\item[(2)] The algebra $\Ezerobar$ is $p\torsionfree$.
	\end{enumerate}
\end{lem}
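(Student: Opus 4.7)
The plan for (1) is to show that $[p]_q$ is a non-zerodivisor on $D_0$ and then invert it. Note first that $\varphi(\mu) = [p]_q \mu$ is a non-zerodivisor on $B$ by Assumption \ref{app_assum:alg_b}, and $\mu$ itself is a non-zerodivisor on $B$, so $[p]_q$ is a non-zerodivisor on $B$ and hence on the polynomial ring $C_0$. Granting that $[p]_q$ is a non-zerodivisor on $D_0$, the natural map $D_0 \to D_0[1/[p]_q]$ is injective; in $C_0[1/[p]_q]$ the relations $[p]_q Y_{i,0} - Y_i = 0$ force $Y_{i,0} = Y_i/[p]_q$, so $D_0[1/[p]_q] = B[1/[p]_q]$. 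Inside this ring the image of $D_0$ is the $B$-subalgebra generated by the classes of $Y_{i,0}$, that is by the elements $y_i = Y_i/[p]_q$, which is precisely $E_0$. Hence the surjection \eqref{app_eq:d0_to_e0} is an isomorphism.

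The crux is therefore to prove that $[p]_q$ is a non-zerodivisor on $D_0$, which I propose to do by a Koszul-style lifting argument. Suppose that $[p]_q g \in J$ in $C_0$, and write $[p]_q g = \sum_i h_i ([p]_q Y_{i,0} - Y_i)$. Reducing modulo $[p]_q$ yields $\sum_i \bar h_i Y_i = 0$ in $C_0/[p]_q C_0 = (B/[p]_q B)[Y_{0,0}, \ldots, Y_{d,0}]$. Since $\{Y_0, \ldots, Y_d\}$ is a regular sequence on $B/[p]_q B$ by Assumption \ref{app_assum:alg_b}, it remains regular on the polynomial extension, so the first Koszul homology vanishes and the relation decomposes via Koszul syzygies as $\bar h_i = \sum_j Y_j \bar k_{ji}$ with $\bar k_{ij} + \bar k_{ji} = 0$. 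Lifting the $\bar k_{ji}$ to $k_{ji} \in C_0$ preserving the antisymmetry, I can write $h_i = \sum_j Y_j k_{ji} + [p]_q h_i'$ for some $h_i' \in C_0$. Substituting back into the original expression, the double sum $\sum_{i,j} Y_i Y_j k_{ji}$ vanishes by antisymmetry; a short further manipulation using $Y_j = [p]_q Y_{j,0} - f_j$, with $f_j = [p]_q Y_{j,0} - Y_j$, shows that $[p]_q g$ equals $[p]_q$ times an explicit element of $J$. Since $[p]_q$ is a non-zerodivisor on $C_0$, we may cancel it to conclude $g \in J$, as desired.

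For (2), reducing the isomorphism in (1) modulo $\mu$ gives $\overline{D}_0 \isomorphic \overline{E}_0$; moreover $[p]_q = 1 + q + \cdots + q^{p-1}$ reduces to $p$ modulo $\mu = q-1$, so $\overline{D}_0 = \overline{B}[Y_{0,0}, \ldots, Y_{d,0}]/(p Y_{i,0} - \overline{Y}_i, i \in I)$. One must thus show that $p$ is a non-zerodivisor on this ring, and the plan is to rerun the Koszul argument of the previous paragraph verbatim, with $[p]_q$ replaced by $p$ and $B$ by $\overline{B}$. The necessary inputs are that $\overline{B}$ is $p$-torsion free (so $p$ is a non-zerodivisor on $\overline{C}_0 = \overline{B}[Y_{0,0}, \ldots, Y_{d,0}]$) and that $\{\overline{Y}_0, \ldots, \overline{Y}_d\}$ is a regular sequence on $\overline{B}/p\overline{B} = B/(p, \mu) B$, both of which are guaranteed by Assumption \ref{app_assum:alg_b}. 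The main obstacle in both parts is the antisymmetric bookkeeping: the lifts $k_{ij}$ of the Koszul coefficients must be chosen antisymmetric in $C_0$ (not merely modulo $[p]_q$ or $p$), so that the diagonal cross-terms cancel identically; this is precisely what enables the cancellation of $[p]_q$ (respectively $p$) at the end and forces $g$ back into the ideal.
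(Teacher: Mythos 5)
Your proposal is correct. The global structure matches the paper's: both parts reduce to showing that $[p]_q$ (resp.\ $p$) is a nonzerodivisor on $D_0$ (resp.\ on $\Dzerobar$), using exactly the regularity hypotheses of Assumption \ref{app_assum:alg_b} that you invoke, and then identifying $D_0$ inside $D_0[1/[p]_q] = B[1/[p]_q]$ with $E_0$. Where you diverge is in how that nonzerodivisor statement is established. The paper isolates the relevant fact as Lemma \ref{app_lem:reg_seq}(2) --- if $\{x, y_1, \ldots, y_d\}$ is regular on $R$ then $x$ is regular on $R/\sum y_iR$ --- and proves it by a snake-lemma induction on $d$; it then applies this with $x = [p]_q$ and $y_i = [p]_q Y_{i,0} - Y_i$ on $C_0$ (and again modulo $(p,\mu)$ for part (2)). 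You instead re-derive this fact in the case at hand by hand: a relation $[p]_q g = \sum_i h_i f_i$ is reduced modulo $[p]_q$, the vanishing of the first Koszul homology of the regular sequence $\{Y_0,\ldots,Y_d\}$ on $(B/[p]_qB)[Y_{i,0}]$ lets you write the $\bar h_i$ via antisymmetric Koszul syzygies, and the antisymmetric lift kills the cross terms so that $[p]_q$ cancels. This is a correct, self-contained argument (the only care needed, which you flag, is choosing the lifts $k_{ij}$ genuinely antisymmetric with zero diagonal, which is automatic from the Koszul generators $e_i \wedge e_j$, $i<j$); it buys a more explicit, syzygy-theoretic proof at the cost of more bookkeeping, whereas the paper's snake-lemma lemma is shorter and is reused elsewhere in the appendix (e.g.\ in Lemma \ref{app_lem:dnbar}). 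Your verifications of the inputs --- that $[p]_q$ divides the nonzerodivisor $\varphi(\mu)$ and hence is regular on $B$, that regularity passes to polynomial extensions, and that $[p]_q \equiv p \bmod \mu$ so that part (2) is the same argument over $\Bbar$ --- all agree with what the paper uses.
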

\begin{proof}
	Note that we have $D_0[1/[p]_q] \isomorphic B[1/[p]_q][\{Y_{i,0}\}_{i \in I}]/(\{Y_{i,0}-y_i\}_{i \in I}) \isomorphic B[1/[p]_q]$.
	Therefore, to show (1), it suffices to show that $D_0$ is $[p]_q\torsionfree$.
	Now, note that by Assumption \ref{app_assum:alg_b}, the element $[p]_q$ is regular on $C_0$ and the sequence $\{[p]_qY_{0,0}-Y_0, \ldots, [p]_qY_{d,0}-Y_d\}$ is regular on $C_0/[p]_qC_0$.
	Hence, from Lemma \ref{app_lem:reg_seq} (2), it follows that $[p]_q$ is regular on $D_0$.
	For (2), note that by using claim (1), we are reduced to showing that $\Dzerobar$ is $p\torsionfree$.
	Moreover, as $\Czerobar$ is $p\torsionfree$, by using Lemma \ref{app_lem:reg_seq} (2), we see that it is enough to show that the sequence $\{[p]_qY_{0,0}-Y_0, \ldots, [p]_qY_{d,0}-Y_d\}$ is regular on $\Czerobar/p\Czerobar$.
	Now note that $\Czerobar/p\Czerobar = C_0/(p,\mu)C_0$, and since $[p]_q$ is in the ideal $(p, \mu) \subset A$, therefore, by Assumption \ref{app_assum:alg_b}, we get that the sequence $\{[p]_qY_{0,0}-Y_0, \ldots, [p]_qY_{d,0}-Y_d\}$ is regular on $\Czerobar/p\Czerobar$.
	This concludes our proof.
\end{proof}

The lifting of Frobenius on $B$ naturally extends to $B[1/p, \{1/\varphi^n([p]_q)\}_{n \in \NN}]$ and note that its $E_0\subalgebra$ $E_0[1/p, \{1/\varphi^n([p]_q)\}_{n \in \NN}]$ is stable under $\varphi$.
Let $E$ be the $\delta\subalgebra$ of $E_0[1/p, \{1/\varphi^n([p]_q)\}_{n \in \NN}]$ generated over $B$ by $y_i$, for all $i \in I$.
Note that we have $E_0 \subset E$.
Now, recall that $\varphi^n([p]_q) = p \textrm{ mod } \mu B$, for each $n \in \NN$, therefore, we get that $E_0[1/p, \{1/\varphi^n([p]_q)\}_{n \in \NN}]/\mu \isomorphic \Ezerobar[1/p]$.
Via the preceding isomorphism, the lifting of Frobenius on $E_0[1/p, \{1/\varphi^n([p]_q)\}_{n \in \NN}]$ induces a lifting of Frobenius on $\Ezerobar[1/p]$.
Define $\Ebar$ to be the $\delta\subalgebra$ of $\Ezerobar[1/p]$ generated over $\Bbar$ by $\yibar$, for all $i \in I$.
Note that we have $\Ezerobar \subset \Ebar$ by Lemma \ref{app_lem:e0_desc} (2) and the natural ring homomorphism $E_0[1/p, \{1/\varphi^n([p]_q)\}_{n \in \NN}] \rightarrow \Ezerobar[1/p]$ induces the following surjective ring homomorphism compatible with $\varphi$:
\begin{equation}\label{app_eq:e_to_ebar}
	E \longrightarrow \Ebar.
\end{equation}

We will study the $\delta\textrm{-rings}$ $E$ and $\Ebar$ by comparing them to the $\delta\textrm{-rings}$ obtained by universally adjoining $Y_i/[p]_q$, for all $i \in I$, to the $\delta\textrm{-rings}$ $B$ and $\Bbar$, respectively.
Let $C$ denote a polynomial ring over $B$ in variables $Y_{i,n}$, for $i \in I$ and $n \in \NN$.
Equip $C$ with a lifting of Frobenius $\varphi$ compatible with that on $A$ and defined on the variables as $\varphi(Y_{i,n}) = Y_{i,n}^p + pY_{i,n+1}$, for each $i \in I$ and $n \in \NN$.
Since, $C$ is $p\torsionfree$, therefore, for the $\delta\textrm{-structure}$ associated to $\varphi$, we have $\delta(Y_{i,n}) = Y_{i,n+1}$, for each $i \in I$ and $n \in \NN$.
Set $\Cbar \coloneq C/\mu C$ equipped with a lifting of Frobenius induced by that of $C$.
Denote by $\Yibar$ (resp.\ $\Yinbar$) the image of $Y_i$ (resp.\ $Y_{i,n}$) in $\Cbar$.
We define $D$ (resp.\ $\Dbar$) to be the quotient of $C$ (resp.\ $\Cbar$) by the $\delta\textrm{-ideal}$ generated by $[p]_q Y_{i,0} - Y_i$ (resp.\ $[p]_q \Yizerobar - \Yibar$), for all $i \in I$.
Then we have a $\varphi\textrm{-compatible}$ (in particular, $\delta\textrm{-compatible}$) surjective homomorphism of $B\algebras$ $C \rightarrow E$ (resp.\ surjective homomorphism of $\Bbar\algebras$ $\Cbar \rightarrow \Ebar$) defined by sending $Y_{i, n}$ to $\delta^n(y_i)$ (resp.\ by sending $\Yinbar$ to $\delta^n(\yibar)$), for each $i \in I$ and $n \in \NN$.
The preceding ring homomorphisms induce surjective horizontal arrows in the following diagram:
\begin{equation}\label{eq:d_to_e_bar}
	\begin{tikzcd}
		D = C/([p]_q Y_{i,0}-Y_i)_{\delta} \arrow[d] \arrow[r] & E \arrow[d, "\eqref{app_eq:e_to_ebar}"],\\
		\Dbar = \Cbar/(p\Yizerobar-\Yibar)_{\delta} \arrow[r] & \Ebar,
	\end{tikzcd}
\end{equation}
where the left vertical arrow is induced by the isomorphism $D/\mu D \isomorphic \Dbar$, and the diagram commutes by definition.
In particular, all arrows in the diagram \eqref{eq:d_to_e_bar} are surjective.

For each $i \in I$ and $n \in \NN$, let $C_n^{(i)}$ be the $B\subalgebra$ of $C$ generated by $Y_{i,m}$ for $0 \leqslant m \leqslant n$.
Set $C_{-1}^{(i)} \coloneq B$ and note that we have $\delta(C_{n-1}^{(i)}) \subset C_n^{(i)}$, for each $n \in \NN$.
\begin{lem}\label{app_lem:deltan_pqti}
	For each $i \in I$ and $n \in \NN_{\geqslant 1}$, the element $\delta^n([p]_q Y_{i,0}-Y_i)$ in $C_n^{(i)}$ may be written as
	\begin{equation*}
		\delta^n([p]_q Y_{i,0}-Y_i) = \varphi^n([p]_q)Y_{i,n} + c_0 Y_{i,n-1}^p + \textstyle \sum_{k=0}^{p-1} c_k Y_{i,n-1}^k,
	\end{equation*}
	where $c_0 = \varphi^{n-1}(\delta([p]_q)) \sum_{m=0}^{n-1} p^{m(p-1)}$ is in $A$ and $c_k$ is in $C_{n-2}^{(i)}$, for each $0 \leqslant k \leqslant p-1$.
\end{lem}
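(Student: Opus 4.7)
The plan is to proceed by induction on $n$, using the additive and multiplicative formulas \eqref{app_eq:delta_sum_prod} and \eqref{app_eq:delta_prod_frob} for $\delta$ and carefully tracking the highest-degree monomials in the variables $Y_{i,m}$. For the base case $n=1$, I would write $[p]_q Y_{i,0} - Y_i$ as a sum and apply the additive formula once, obtaining
\begin{equation*}
	\delta([p]_q Y_{i,0} - Y_i) = \delta([p]_q Y_{i,0}) + \delta(-Y_i) + \tfrac{([p]_q Y_{i,0})^p + (-Y_i)^p - ([p]_q Y_{i,0} - Y_i)^p}{p}.
\end{equation*}
The Leibniz rule \eqref{app_eq:delta_prod_frob} gives $\delta([p]_q Y_{i,0}) = \varphi([p]_q)Y_{i,1} + \delta([p]_q) Y_{i,0}^p$; the binomial expansion of the last term kills the pure $Y_{i,0}^p$ contribution and produces a polynomial in $Y_{i,0}$ of degree at most $p-1$ with coefficients in $B = C_{-1}^{(i)}$, while $\delta(-Y_i)\in B$. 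This matches the stated form with $c_0 = \delta([p]_q) = \varphi^0(\delta([p]_q))\sum_{m=0}^{0} p^{m(p-1)}$.

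For the inductive step, assuming the formula at level $n$ and applying $\delta$ to both sides, I would decompose
\begin{equation*}
	\delta^{n+1}([p]_q Y_{i,0} - Y_i) = \delta\bigl(\varphi^n([p]_q)Y_{i,n}\bigr) + \delta\bigl(c_0 Y_{i,n-1}^p\bigr) + \delta\bigl(\textstyle\sum_{k=0}^{p-1} c_k Y_{i,n-1}^k\bigr) + T,
\end{equation*}
where $T$ denotes the iterated additive correction, a polynomial of total degree $p$ in the three summands in which every pure $p$-th power cancels. The product formula gives the clean piece $\varphi^{n+1}([p]_q) Y_{i,n+1} + \varphi^n(\delta([p]_q)) Y_{i,n}^p$ from the first summand; the remaining $\delta$-contributions are computed using the key identity
\begin{equation*}
	\delta(Y_{i,n-1}^p) = \textstyle\sum_{k=1}^{p} \binom{p}{k}\, p^{k-1}\, Y_{i,n-1}^{p(p-k)}\, Y_{i,n}^{k},
\end{equation*}
which is the expansion of $\bigl(\varphi(Y_{i,n-1}) - Y_{i,n-1}^{p^2}\bigr)/p = \bigl((Y_{i,n-1}^p + pY_{i,n})^p - Y_{i,n-1}^{p^2}\bigr)/p$.

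The main obstacle, and the only real content of the argument, is pinning down the coefficient of $Y_{i,n}^p$ at level $n+1$. There are exactly two contributions: the term $\varphi^n(\delta([p]_q))$ from $\delta(\varphi^n([p]_q)Y_{i,n})$, and the top contribution from $\varphi(c_0)\,\delta(Y_{i,n-1}^p)$, namely $p^{p-1}\varphi(c_0) = \varphi^n(\delta([p]_q))\sum_{m=0}^{n-1} p^{(m+1)(p-1)} = \varphi^n(\delta([p]_q))\sum_{m=1}^{n} p^{m(p-1)}$. Their sum is exactly $\varphi^n(\delta([p]_q))\sum_{m=0}^{n} p^{m(p-1)}$, matching the required $c_0$ at level $n+1$. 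All remaining pieces of $\delta\bigl(c_0 Y_{i,n-1}^p\bigr)$, $\delta\bigl(\sum c_k Y_{i,n-1}^k\bigr)$ and $T$ involve $Y_{i,n}$ to powers at most $p-1$: for the last two this is immediate since the inputs have $Y_{i,n-1}$-degree at most $p-1$, and for $T$ it is built into the definition of the additive correction. Their coefficients visibly lie in $C_{n-1}^{(i)} = C_{(n+1)-2}^{(i)}$, since $\delta(c_0),\delta(c_k)\in C_{n-1}^{(i)}$, closing the induction.
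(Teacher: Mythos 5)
Your proposal is correct, and it follows the same induction strategy and the same bookkeeping as the paper: base case by the product rule applied to $[p]_q Y_{i,0}$ plus the binomial cross-correction, inductive step by isolating the top-degree $Y_{i,n}^p$ contribution and checking the coefficient telescopes to $\varphi^n(\delta([p]_q))\sum_{m=0}^n p^{m(p-1)}$. The paper differs only cosmetically: it groups the lower-order terms as a single element $b = c_0 Y_{i,n-1}^p + \sum_{k\leq p-1} c_k Y_{i,n-1}^k$ and works modulo $C_{n-1}^{(i)}$ throughout, whereas you split into three summands and track the cross-correction $T$ explicitly; and the paper first proves a general formula $\delta(x^n) = \sum_{j=1}^n \binom{n}{j}p^{j-1}x^{p(n-j)}\delta(x)^j$ as a separate lemma, while you derive the $n=p$ case you actually need directly from $\delta(x) = (\varphi(x)-x^p)/p$ and $\varphi(Y_{i,n-1}) = Y_{i,n-1}^p + pY_{i,n}$, which is a slightly more economical route since $C$ is $p$-torsion-free. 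Both arrive at exactly the two contributions $\varphi^n(\delta([p]_q))$ and $p^{p-1}\varphi(c_0)$ to the new top coefficient.
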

\begin{proof}
	We shall prove the claim by induction on $n$.
	For $n=1$, note that we have
	\begin{align*}
		\delta([p]_q Y_{i,0} - Y_i) &= \delta([p]_q Y_{i,0}) + \delta(-Y_i) - \textstyle\sum_{k=1}^{p-1} \tfrac{1}{p}\binom{p}{k} ([p]_q Y_{i,0})^k (-Y_i)^{p-k}\\
		&= \varphi([p]_q) Y_{i, 1} + \delta([p]_q)Y_{i,0}^p + \delta(-Y_i) - \textstyle\sum_{k=1}^{p-1} \tfrac{1}{p}\binom{p}{k} [p]_q^k (-Y_i)^{p-k} Y_{i,0}^k,
	\end{align*}
	which is of the required form because the elements $\delta(-Y_i)$, $[p]_q$ and $Y_i$ belong to $C_{-1}^{(i)} = B$.
	Now, let $n \geqslant 1$ and assume that the claim holds for $\delta^n([p]_q Y_{i,0}-Y_i)$, with $c_0$ in $A$ and $c_k$ in $C_{n-2}^{(i)}$.
	Set $b = c_0 Y_{i,n-1}^p + \textstyle \sum_{k=0}^{p-1} c_k Y_{i,n-1}^k$ in $C_{n-1}^{(i)}$ and note that we have
	\begin{align*}
		\delta^{n+1}([p]_q Y_{i,0} - Y_i) &= \delta(\varphi^n([p]_q) Y_{i,n} + b)\\
		&= \delta(\varphi^n([p]_q) Y_{i,n}) + \delta(b) - \textstyle\sum_{k=1}^{p-1} \tfrac{1}{p} \binom{p}{k} (\varphi^n([p]_q) Y_{i,n})^k b^{p-k}\\
		&= \varphi^{n+1}([p]_q) Y_{i,n+1} + \delta(\varphi^n([p]_q)) Y_{i,n}^P + \delta(b) - \textstyle\sum_{k=1}^{p-1} \tfrac{1}{p} \binom{p}{k} \varphi^n([p]_q)^k b^{p-k} Y_{i,n}^k.
	\end{align*}
	As the elements $\varphi^n([p]_q)$ and $b$ are in $C_{n-1}^{(i)}$, therefore, it is enough to show that $\delta(b)$ is of the form $p^{p-1}c_0 + \sum_{k=0}^{p-1} d_k Y_{i,n}^k$, for some $d_k$ in $C_{n-1}^{(i)}$.
	From \eqref{app_eq:delta_sum_prod}, note that for any $x$ and $y$ in $C_{n-1}^{(i)}$, we have that $\delta(x+y) = \delta(x) + \delta(y) \textrm{ mod } C_{n-1}^{(i)}$ and from \eqref{app_eq:delta_prod_frob}, for $x$ in $C_{n-2}^{(i)}$ and $y$ in $C_{n-1}^{(i)}$, we have that $\delta(xy) = \varphi(x)\delta(y) \textrm{ mod } C_{n-1}^{(i)}$.
	Therefore, we may write
	\begin{equation*}
		\delta(b) = \varphi(c_0) \delta(Y_{i,n-1}^p) + \textstyle \sum_{k=0}^{p-1} \varphi(c_k) \delta(Y_{i,n-1}^k) \textrm{ mod } C_{n-1}^{(i)}.
	\end{equation*}
	Now, note that $\delta(1)=0$ on $A$ because $A$ is $p\torsionfree$, and by Lemma \ref{lem:delta_xn}, for each $1 \leqslant k \leqslant p$, we have
	\begin{equation*}
		\delta(Y_{i,n-1}^k) = \textstyle \sum_{j=1}^k \binom{k}{j} p^{j-1} Y_{i,n-1}^{p(k-j)} Y_{i,n}^j.
	\end{equation*}
	Note that $Y_{i,n-1}$ is in $C_{n-1}^{(i)}$ and $Y_{i,n}^p$ appears only when $k=p$, in which case the coefficient of $Y_{i,n}^p$ is $p^{p-1}$.
	This allows us to conclude.
\end{proof}

The following fact was used above:
\begin{lem}\label{lem:delta_xn}
	Let $R$ be a $\delta\ring$ and $x$ any element of $R$.
	Then, for each $n \in \NN_{\geqslant 1}$, we have that
	\begin{equation*}
		\delta(x^n) = \textstyle \sum_{j=1}^n \binom{n}{j} p^{j-1} x^{p(n-j)} \delta(x)^j.
	\end{equation*}
\end{lem}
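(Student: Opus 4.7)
The plan is to prove Lemma \ref{lem:delta_xn} by induction on $n \in \NN_{\geq 1}$, using the product formula for the $\delta$-structure in the form \eqref{app_eq:delta_prod_frob}, namely $\delta(xy) = \varphi(x)\delta(y) + y^p \delta(x)$, together with the identity $\varphi(x) = x^p + p\delta(x)$.

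The base case $n=1$ reduces to the tautology $\delta(x) = \binom{1}{1} p^0 x^0 \delta(x)$. For the inductive step, assuming the claimed formula holds for some $n \geq 1$, I would compute $\delta(x^{n+1}) = \delta(x \cdot x^n)$ via \eqref{app_eq:delta_prod_frob} as
\begin{equation*}
\delta(x^{n+1}) = (x^p + p\delta(x))\,\delta(x^n) + x^{pn}\,\delta(x),
\end{equation*}
and then substitute the inductive hypothesis for $\delta(x^n)$. This produces three sums: the term $x^{pn}\delta(x)$, a sum coming from $x^p \cdot \delta(x^n)$ whose $j$-th term is $\binom{n}{j} p^{j-1} x^{p(n+1-j)} \delta(x)^j$, and a sum coming from $p\delta(x) \cdot \delta(x^n)$ whose $j$-th term is $\binom{n}{j} p^{j} x^{p(n-j)} \delta(x)^{j+1}$.

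Reindexing the last sum by $k = j+1$ converts it to $\binom{n}{k-1} p^{k-1} x^{p(n+1-k)} \delta(x)^{k}$ for $k$ ranging from $2$ to $n+1$. Then collecting coefficients of $p^{j-1} x^{p(n+1-j)} \delta(x)^j$ for each $1 \leq j \leq n+1$: for $j=1$ one gets $\binom{n}{1} + 1 = n+1 = \binom{n+1}{1}$ (the additional $+1$ coming from the explicit $x^{pn}\delta(x)$ term); for $2 \leq j \leq n$ one gets $\binom{n}{j} + \binom{n}{j-1} = \binom{n+1}{j}$ by Pascal's identity; and for $j = n+1$ one gets $\binom{n}{n} = 1 = \binom{n+1}{n+1}$. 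This yields the claimed formula at stage $n+1$.

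There is no real obstacle here; the only thing to be careful about is the bookkeeping with binomial coefficients and the reindexing, and to note that the argument is valid in an arbitrary $\delta$-ring (no $p$-torsion-freeness is required, since only the universally valid product formula \eqref{app_eq:delta_prod_frob} is used).
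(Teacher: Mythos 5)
Your proof is correct and follows essentially the same route as the paper's: induction on $n$, applying the product rule to $\delta(x\cdot x^n)$ (your use of $\delta(xy)=\varphi(x)\delta(y)+y^p\delta(x)$ expands to exactly the paper's three-term formula), reindexing the last sum, and invoking Pascal's identity. The bookkeeping of the binomial coefficients checks out, and your remark that no $p$-torsion-freeness is needed is accurate.
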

\begin{proof}
	We shall prove the claim by induction on $n$.
	The case $n=1$ is obvious, so assume that the claim holds for some $n \geqslant 1$.
	Then, we have that
	\begin{align*}
		\delta(x^{n+1}) &= \delta(x^n) x^p + x^{np} \delta(x) + p\delta(x)\delta(x^n)\\
		&= \textstyle \sum_{j=1}^n \binom{n}{j} p^{j-1} x^{p(n+1-j)} \delta(x)^j + x^{np} \delta(x) + \textstyle \sum_{j=1}^n \binom{n}{j} p^j x^{p(n-j)} \delta(x)^{j+1}.
	\end{align*}
	In the final term of the second line of the displayed equation, by replacing $j$ with $k-1$, for $2 \leqslant k \leqslant n+1$, and expanding, it easily follows that the expression thus obtained is the sum $\sum_{j=1}^{n+1} \binom{n+1}{j} p^{j-1} x^{p(n+1-j)} \delta(x)^j$.
\end{proof}

Let $\Cnbar$ be the $\Bbar\subalgebra$ of $\Cbar$ generated by $\Yimbar$, for all $i \in I$ and $0 \leqslant m \leqslant n$.
Set $\overline{C}_{-1} \coloneq \Bbar$ and note that we have $\delta(\overline{C}_{n-1}) \subset \Cnbar$, for each $n \in \NN$.
Define $\Dnbar$ to be the quotient of $\Cnbar$ by the ideal generated by $\delta^m(p\Yizerobar-\Yibar)$, for all $i \in I$ and $0 \leqslant m \leqslant n$.
Note that the $\Bbar\algebras$ $\Czerobar$ and $\Dzerobar$ coincide with those defined before Lemma \ref{app_lem:e0_desc}.
For each $i \in I$ and $n \in \NN_{\geqslant 1}$, let us set 
\begin{equation}\label{app_eq:xni}
	X_n^{(i)} \coloneq \delta^n(p\Yizerobar-\Yibar) - p\Yinbar \in \Cbar,
\end{equation}
and by Lemma \ref{app_lem:deltan_pqti} note that it is contained in $\overline{u}_n \overline{Y}_{i,n-1}^p + \sum_{k=0}^{p-1} \overline{Y}_{i,n-1}^k \overline{C}_{n-2} \subset \overline{C}_{n-1}$, where we have that $\overline{u}_n = \varphi^{n-1}(\delta(p)) \sum_{m=0}^{n-1} p^{m(p-1)}$ is in $\Abar$.
In particular, we see that for each $n \in \NN$, we have
\begin{equation}\label{app_eq:dn+1bar}
	\overline{D}_{n+1} = \Dnbar[\{\overline{Y}_{i,n+1}\}_{i \in I}]/(\{p\overline{Y}_{i,n+1} + X_{n+1}^{(i)}\}_{i \in I}).
\end{equation}
The inclusion maps $\Cnbar \hookrightarrow \overline{C}_m \hookrightarrow \Cbar$, for $0 \leqslant n < m$, induce maps $\Dnbar \rightarrow \overline{D}_m \rightarrow \Dbar$ and an isomorphism
\begin{equation}\label{app_eq:colim_dnbar}
	\colim_n \Dnbar \isomorphic \Dbar.
\end{equation}

\begin{lem}\label{app_lem:dnbar}
	With notations as above, we have the following:
	\begin{enumerate}
		\item[(1)] The homomorphism $\Dnbar \rightarrow \overline{D}_{n+1}$ induces an isomorphism $\Dnbar[1/p] \isomorphic \overline{D}_{n+1}[1/p]$, for each $n \in \NN$.
	
		\item[(2)] Let $S_n$ denote the image of $\Dnbar/p\Dnbar$ in $\overline{D}_{n+1}/p\overline{D}_{n+1}$, for each $n \in \NN$.
			Then, we have that $S_n = (\Dnbar/p\Dnbar)/(\{X_{n+1}^{(i)}\}_{i \in I})$ and $\overline{D}_{n+1}/p\overline{D}_{n+1} = S_n[\{\overline{Y}_{i,n+1}\}_{i \in I}]$, for each $n \in \NN$.
	
		\item[(3)] For each $n \in \NN$, the algebra $\Dnbar$ is $p\torsionfree$.
	\end{enumerate}
\end{lem}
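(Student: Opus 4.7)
The plan is to establish parts (2) and (1) by direct manipulation of the presentation \eqref{app_eq:dn+1bar}, which requires no torsion-freeness input, and then to prove part (3) by induction on $n$, using part (2) to describe the structure of $\Dnbar/p\Dnbar$.

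For part (2), reducing the presentation \eqref{app_eq:dn+1bar} modulo $p$ turns the relations $p\overline{Y}_{i,n+1} + X_{n+1}^{(i)} = 0$ into $X_{n+1}^{(i)} \equiv 0$. Since each $X_{n+1}^{(i)}$ lies in $\overline{C}_n$, hence in (the image of) $\Dnbar$, and does not involve the adjoined variables $\overline{Y}_{j,n+1}$, the quotient factors as
\begin{equation*}
\overline{D}_{n+1}/p\overline{D}_{n+1} = \bigl((\Dnbar/p\Dnbar)/(X_{n+1}^{(i)}, i \in I)\bigr)[\overline{Y}_{i,n+1}, i \in I] = S_n[\overline{Y}_{i,n+1}, i \in I],
\end{equation*}
simultaneously identifying $S_n$ with $(\Dnbar/p\Dnbar)/(X_{n+1}^{(i)}, i \in I)$. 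For part (1), inverting $p$ in the same presentation turns each relation into $\overline{Y}_{i,n+1} = -X_{n+1}^{(i)}/p$, uniquely expressing each adjoined variable in terms of elements of $\Dnbar[1/p]$; thus the natural map $\Dnbar[1/p] \to \overline{D}_{n+1}[1/p]$ is an isomorphism.

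For part (3) I will proceed by induction on $n$. The base case $n=0$ follows from the isomorphism $D_0 \isomorphic E_0$ in Lemma \ref{app_lem:e0_desc}(1) combined with Lemma \ref{app_lem:e0_desc}(2), giving that $\Dzerobar \isomorphic \Ezerobar$ is $p$-torsion free. For the inductive step, assuming $\Dnbar$ is $p$-torsion free, I set $R = \Dnbar[\overline{Y}_{i,n+1}, i \in I]$ (also $p$-torsion free) and write $\overline{D}_{n+1} = R/(g_i, i \in I)$ with $g_i = p\overline{Y}_{i,n+1} + X_{n+1}^{(i)}$. A direct induction on the length of the sequence gives the general principle that whenever $p$ is a nonzerodivisor on a ring $R$ and $\{\overline{g}_i\}$ is a regular sequence on $R/pR$, the quotient $R/(g_i)$ is $p$-torsion free; applied here, this reduces the inductive step to verifying that $\{X_{n+1}^{(i)} \bmod p\}_{i \in I}$ is a regular sequence on $R/pR = (\Dnbar/p)[\overline{Y}_{i,n+1}, i \in I]$, which, since the $X_{n+1}^{(i)}$ do not involve the adjoined variables $\overline{Y}_{j,n+1}$, is equivalent to regularity on $\Dnbar/p\Dnbar$ itself.

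The main obstacle will be verifying this last regularity. I will apply part (2) at level $n-1$ (or directly compute $\Dzerobar/p\Dzerobar$ when $n=0$) to express $\Dnbar/p\Dnbar = S_{n-1}[\overline{Y}_{i,n}, i \in I]$ as a polynomial ring in the variables $\overline{Y}_{i,n}$ over the base $S_{n-1}$. By Lemma \ref{app_lem:deltan_pqti}, each $X_{n+1}^{(i)} \bmod p$ is a polynomial of degree $p$ in the single variable $\overline{Y}_{i,n}$ with leading coefficient $\overline{u}_{n+1} = \varphi^n(\delta(p)) \sum_{m=0}^{n} p^{m(p-1)}$; since $\delta(p) = 1 - p^{p-1} \equiv 1 \bmod p$, this leading coefficient is a unit modulo $p$, making each $X_{n+1}^{(i)} \bmod p$ monic up to a unit in its own variable $\overline{Y}_{i,n}$. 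Since distinct indices $i$ involve distinct variables, successively quotienting turns the base into a free extension of rank $p$ at each step while preserving the polynomial structure in the remaining variables; this yields the required regular sequence and completes the induction.
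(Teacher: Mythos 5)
Your proof is correct and follows essentially the same route as the paper's: parts (1) and (2) by reading off the presentation \eqref{app_eq:dn+1bar}, part (3) by induction with base case via Lemma \ref{app_lem:e0_desc} and the inductive step reducing (via Lemma \ref{app_lem:reg_seq}(2)) to regularity of $\{X_{n+1}^{(i)} \bmod p\}$ on $\Dnbar/p\Dnbar$, which holds because the $X_{n+1}^{(i)}$ are monic-up-to-unit in the separate variables $\overline{Y}_{i,n}$ with $\overline{u}_{n+1} \equiv 1 \bmod p$. The only cosmetic difference is in handling $n=0$: the paper checks regularity of the combined sequence $\{[p]_q Y_{i,0}-Y_i, X_1^{(i)}\}$ on $\Czerobar/p\Czerobar$ (invoking the regularity of $\{Y_i\}$ on $B/(p,\mu)$ from Assumption \ref{app_assum:alg_b}), while you directly compute $\Dzerobar/p\Dzerobar$ as a polynomial ring over $B/(p,\mu,Y_i)$ — these are two phrasings of the same computation.
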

\begin{proof}
	The claim in (1) follows immediately from \eqref{app_eq:dn+1bar} and the fact that $X_{n+1}^{(i)}$ is in $\Cnbar$.
	For (2), note that by taking the reduction modulo $p$ of \eqref{app_eq:colim_dnbar}, we get that
	\begin{equation*}
		\overline{D}_{n+1}/p\overline{D}_{n+1} = (\Dnbar/p\Dnbar)/(\{X_{n+1}^{(i)}\}_{i \in I})[\{\overline{Y}_{i,n+1}\}_{i \in I}].
	\end{equation*}
	To prove (3), let us first recall that $\Dzerobar$ is $p\torsionfree$ by Lemma \ref{app_lem:e0_desc}.
	So, for each $n \in \NN$, by Lemma \ref{app_lem:reg_seq} (2) and \eqref{app_eq:dn+1bar}, it suffices to show that the sequence $\{X_{n+1}^{(i)}\}_{i \in I}$, where $X_{n+1}^{(i)}$ is in $\Cnbar$, is regular on $(\Dnbar/p\Dnbar)[\{\overline{Y}_{i,n+1}\}_{i \in I}]$, i.e.\ it is regular on $\Dnbar/p\Dnbar$.
	For $n \geqslant 1$, the claim follows because we have $\Dnbar/p\Dnbar = S_{n-1}[\{\Yinbar\}_{i \in I}]$ and $\overline{u}_{n+1}$ modulo $p$ is a unit in $\Abar/p\Abar$ (see the discussions after \eqref{app_eq:xni} and before Assumption \ref{app_assum:alg_b}).
	For $n = 0$, it suffices to show that the sequence $\{\{[p]_q Y_{i,0}-Y_i\}_{i \in I}, \{X_1^{(i)}\}_{i \in I}\}$ is regular on $\Czerobar/p\Czerobar$.
	As $[p]_q$ is contained in the ideal $(p, \mu) \subset B$, we are reduced to showing that the sequence $\{\{-Y_i\}_{i \in I}, \{X_1^{(i)}\}_{i \in I}\}$ is regular on $\Czerobar/p\Czerobar = B/(p, \mu)[\{\Yizerobar\}_{i \in I}]$.
	This is obvious because $\overline{u}_1$ modulo $p$ is a unit in $\Abar/p\Abar$ (see the discussions after \eqref{app_eq:xni} and before Assumption \ref{app_assum:alg_b}).
	Hence, the lemma is proved.
\end{proof}

Now, note that reducing the top arrow in \eqref{eq:d_to_e_bar} modulo $\mu$, we obtain the following commutative diagram with surjective arrows:
\begin{equation}\label{eq:d_to_e_bar_2}
	\begin{tikzcd}
		D/\mu D \arrow[d, "\wr"] \arrow[r] & E/\mu E \arrow[d, "\eqref{app_eq:e_to_ebar}"],\\
		\Dbar \arrow[r] & \Ebar.
	\end{tikzcd}
\end{equation}
\begin{prop}\label{app_prop:dbar_ebar_iso}
	All arrows in the diagram \eqref{eq:d_to_e_bar_2} are isomorphisms.
\end{prop}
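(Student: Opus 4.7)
The plan is to show that the bottom horizontal arrow $\Dbar \to \Ebar$ is an isomorphism, and then to deduce that the right vertical arrow is an isomorphism by a diagram chase. Since all four arrows in \eqref{eq:d_to_e_bar_2} are known to be surjective and the left vertical arrow is an isomorphism by construction, once the bottom arrow is shown to be bijective, the composition $D/\mu D \isomorphic \Dbar \isomorphic \Ebar$ coincides with $D/\mu D \twoheadrightarrow E/\mu E \twoheadrightarrow \Ebar$, which forces both of the remaining surjections to be injections as well.

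The strategy for proving $\Dbar \isomorphic \Ebar$ is to compare both rings via their $p\textrm{-localizations}$, after observing that each is $p\torsionfree$ and embeds into $\Ezerobar[1/p]$. First, I would verify that $\Dbar$ is $p\torsionfree$: by Lemma \ref{app_lem:dnbar} (3), each $\Dnbar$ is $p\torsionfree$, and $p\torsion\textrm{-freeness}$ is preserved under the filtered colimit \eqref{app_eq:colim_dnbar}. For $\Ebar$, the $p\torsion\textrm{-freeness}$ is immediate from its definition as a $\delta\subalgebra$ of $\Ezerobar[1/p]$ together with the $p\torsion\textrm{-freeness}$ of $\Ezerobar$ established in Lemma \ref{app_lem:e0_desc} (2).

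Next, I would identify $\Dbar[1/p]$ and $\Ebar[1/p]$ with the common ring $\Ezerobar[1/p]$. On the one hand, by passing to the colimit in Lemma \ref{app_lem:dnbar} (1), we obtain
\[
\Dbar[1/p] \;=\; (\colim_n \Dnbar)[1/p] \;=\; \colim_n \Dnbar[1/p] \;=\; \Dzerobar[1/p],
\]
and since $\Dzerobar \isomorphic \Ezerobar$ by Lemma \ref{app_lem:e0_desc} (1), this identifies $\Dbar[1/p]$ with $\Ezerobar[1/p]$. On the other hand, the inclusions $\Ezerobar \subset \Ebar \subset \Ezerobar[1/p]$ immediately give $\Ebar[1/p] = \Ezerobar[1/p]$. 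These identifications are compatible with the surjection $\Dbar \twoheadrightarrow \Ebar$, so that the induced map $\Dbar[1/p] \to \Ebar[1/p]$ is the identity on $\Ezerobar[1/p]$.

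Putting these facts together, the kernel of $\Dbar \twoheadrightarrow \Ebar$ consists of $p^{\infty}\torsion$ elements of $\Dbar$, which must be zero by the $p\torsion\textrm{-freeness}$ of $\Dbar$. Hence the bottom arrow of \eqref{eq:d_to_e_bar_2} is an isomorphism, and the diagram chase above finishes the proof. The main conceptual obstacle has already been resolved in the preceding lemmas; the only subtlety to watch for is that the filtered colimit $\Dbar = \colim_n \Dnbar$ behaves well both with inverting $p$ and with $p\torsion\textrm{-freeness}$, but this is standard for filtered colimits of modules.
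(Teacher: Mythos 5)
Your proposal is correct and follows essentially the same route as the paper: reduce to injectivity of $\Dbar \to \Ebar$, identify both sides after inverting $p$ with $\Ezerobar[1/p]$ using Lemma \ref{app_lem:dnbar}~(1) and Lemma \ref{app_lem:e0_desc}, and then conclude from the $p\torsion$-freeness of $\Dbar$ established via Lemma \ref{app_lem:dnbar}~(3) and the colimit \eqref{app_eq:colim_dnbar}. The only superfluous step is your verification that $\Ebar$ is $p\torsionfree$, which is not needed for the argument since it suffices that $\Dbar$ is.
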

\begin{proof}
	By definition, the left vertical arrow in \eqref{eq:d_to_e_bar_2} is an isomorphism and all arrows are surjective.
	So, it suffices to show that the lower horizontal arrow is injective.
	Now, we have the following commutative diagram:
	\begin{center}
		\begin{tikzcd}
			\Dzerobar[1/p] \arrow[r] \arrow[d, "\eqref{app_eq:d0_to_e0}", "\wr"'] & \Dbar[1/p] \arrow[d]\\
			\Ezerobar[1/p] \arrow[r] & \Ebar[1/p],
		\end{tikzcd}
	\end{center}
	where the top horizontal arrow is a bijection by \eqref{app_eq:colim_dnbar} and Lemma \ref{app_lem:dnbar} (1), the left vertical arrow is a bijection by Lemma \ref{app_lem:e0_desc}, and the bottom horizontal arrow is a bijection by the definition of $\Ebar$.
	Therefore, it follows that the right vertical arrow is a bijection as well, i.e.\ $\Dbar[1/p] \isomorphic \Ebar[1/p]$.
	Finally, from \eqref{app_eq:colim_dnbar} and Lemma \ref{app_lem:dnbar} (3), note that $\Dbar$ is $p\torsionfree$.
	Hence, we get that the map $\Dbar \rightarrow \Ebar$ in \eqref{eq:d_to_e_bar_2} is injective.
	This completes our proof.
\end{proof}


\setstretch{0.95}
\phantomsection
\printbibliography[heading=bibintoc, title={References}]

@article{abhinandan-relative-wach-i,
    AUTHOR = {Abhinandan},
     TITLE = {Crystalline representations and {W}ach modules in the relative
              case},
   JOURNAL = {Ann. Inst. Fourier (Grenoble)},
  FJOURNAL = {Universit\'e{} de Grenoble. Annales de l'Institut Fourier},
    VOLUME = {75},
      YEAR = {2025},
    NUMBER = {1},
     PAGES = {379--474},
      ISSN = {0373-0956,1777-5310},
   MRCLASS = {11F80 (11S25 14F30)},
  MRNUMBER = {4862349},
       DOI = {10.5802/aif.3670},
       URL = {https://doi.org/10.5802/aif.3670},
}

@article{abhinandan-syntomic,
    AUTHOR = {Abhinandan},
     TITLE = {Syntomic complex and {$p$}-adic nearby cycles},
   JOURNAL = {Algebra Number Theory},
  FJOURNAL = {Algebra \& Number Theory},
    VOLUME = {20},
      YEAR = {2026},
    NUMBER = {1},
     PAGES = {17--108},
      ISSN = {1937-0652,1944-7833},
   MRCLASS = {11S25 (14F20 14F30 14F40)},
  MRNUMBER = {5000344},
       DOI = {10.2140/ant.2026.20.17},
       URL = {https://doi.org/10.2140/ant.2026.20.17},
}

@article{abhinandan-imperfect-wach,
    AUTHOR = {Abhinandan},
     TITLE = {Crystalline representations and {W}ach modules in the
              imperfect residue field case},
   JOURNAL = {Doc. Math.},
  FJOURNAL = {Documenta Mathematica},
    VOLUME = {30},
      YEAR = {2025},
    NUMBER = {6},
     PAGES = {1461--1524},
      ISSN = {1431-0635,1431-0643},
   MRCLASS = {11S23 (14F20 14F30 14F40)},
  MRNUMBER = {4959787},
       DOI = {10.4171/dm/1015},
       URL = {https://doi.org/10.4171/dm/1015},
}

@article{abhinandan-relative-wach-ii,
       author = {{Abhinandan}},
        title = "{Crystalline representations and Wach modules in the relative case II}",
      journal = {arXiv e-prints},
     keywords = {Mathematics - Number Theory, Mathematics - Algebraic Geometry},
         year = 2023,
        month = sep,
          eid = {arXiv:2309.16446},
	 note = {To appear in Math.\ Ann.},
}

@article{abhinandan-crystalline-galcoh,
       author = {{Abhinandan}},
        title = "{Crystalline part of the Galois cohomology of crystalline representations}",
      journal = {arXiv e-prints},
     keywords = {Mathematics - Number Theory},
         year = 2024,
        month = may,
          eid = {arXiv:2405.17012},
	 note = {To appear in Bull.\ Soc.\ Math.\ France},
}

@article {andreatta-phigamma,
    AUTHOR = {Andreatta, Fabrizio},
     TITLE = {Generalized ring of norms and generalized
              {$(\phi,\Gamma)$}-modules},
   JOURNAL = {Ann. Sci. \'{E}cole Norm. Sup. (4)},
  FJOURNAL = {Annales Scientifiques de l'\'{E}cole Normale Sup\'{e}rieure. Quatri\`eme
              S\'{e}rie},
    VOLUME = {39},
      YEAR = {2006},
    NUMBER = {4},
     PAGES = {599--647},
      ISSN = {0012-9593},
   MRCLASS = {12J10 (13F30)},
  MRNUMBER = {2290139},
MRREVIEWER = {Alan Koch},
       DOI = {10.1016/j.ansens.2006.07.003},
       URL = {https://doi.org/10.1016/j.ansens.2006.07.003},
}

@article {anschutz-lebras,
    AUTHOR = {Ansch\"{u}tz, Johannes and Le Bras, Arthur-C\'{e}sar},
     TITLE = {Prismatic {D}ieudonn\'{e} theory},
   JOURNAL = {Forum Math. Pi},
  FJOURNAL = {Forum of Mathematics. Pi},
    VOLUME = {11},
      YEAR = {2023},
     PAGES = {Paper No. e2, 92},
      ISSN = {2050-5086},
   MRCLASS = {14F30 (14L05)},
  MRNUMBER = {4530092},
MRREVIEWER = {Rui\ Miguel\ Saramago},
       DOI = {10.1017/fmp.2022.22},
       URL = {https://doi.org/10.1017/fmp.2022.22},
}

@article {antieau-mathew-morrow-nikolaus,
    AUTHOR = {Antieau, Benjamin and Mathew, Akhil and Morrow, Matthew and
              Nikolaus, Thomas},
     TITLE = {On the {B}eilinson fiber square},
   JOURNAL = {Duke Math. J.},
  FJOURNAL = {Duke Mathematical Journal},
    VOLUME = {171},
      YEAR = {2022},
    NUMBER = {18},
     PAGES = {3707--3806},
      ISSN = {0012-7094,1547-7398},
   MRCLASS = {14F30 (14F40 19D55 19E15)},
  MRNUMBER = {4516307},
MRREVIEWER = {Guillermo\ Corti\~{n}as},
       DOI = {10.1215/00127094-2022-0037},
       URL = {https://doi.org/10.1215/00127094-2022-0037},
}

@article {berger-limites,
    AUTHOR = {Berger, Laurent},
     TITLE = {Limites de repr\'{e}sentations cristallines},
   JOURNAL = {Compos. Math.},
  FJOURNAL = {Compositio Mathematica},
    VOLUME = {140},
      YEAR = {2004},
    NUMBER = {6},
     PAGES = {1473--1498},
      ISSN = {0010-437X},
   MRCLASS = {11S25 (11F80 11R23 13K05 14F30)},
  MRNUMBER = {2098398},
       DOI = {10.1112/S0010437X04000879},
       URL = {https://doi.org/10.1112/S0010437X04000879},
}

@book {berthelot-cohomologie-cristalline,
    AUTHOR = {Berthelot, Pierre},
     TITLE = {Cohomologie cristalline des sch\'{e}mas de caract\'{e}ristique
              {$p>0$}.},
    SERIES = {},
 PUBLISHER = {Springer-Verlag, Berlin-New York,, },
      YEAR = {1974},
     PAGES = {604},
   MRCLASS = {14F30},
  MRNUMBER = {384804},
MRREVIEWER = {L.\ Illusie},
}

@article{bhatt-lurie-apc,
       author = {Bhatt, Bhargav and Lurie, Jacob},
        title = "{Absolute prismatic cohomology}",
      journal = {arXiv e-prints},
     keywords = {Mathematics - Number Theory, Mathematics - Algebraic Geometry},
         year = 2022,
        month = jan,
          eid = {arXiv:2201.06120},
}

@article {bhatt-mathew,
    AUTHOR = {Bhatt, Bhargav and Mathew, Akhil},
     TITLE = {Syntomic complexes and {$p$}-adic \'{e}tale {T}ate twists},
   JOURNAL = {Forum Math. Pi},
  FJOURNAL = {Forum of Mathematics. Pi},
    VOLUME = {11},
      YEAR = {2023},
     PAGES = {Paper No. e1, 26},
      ISSN = {2050-5086},
   MRCLASS = {14F30 (14F42)},
  MRNUMBER = {4530091},
       DOI = {10.1017/fmp.2022.21},
       URL = {https://doi.org/10.1017/fmp.2022.21},
}

@article {bhatt-scholze-prisms,
    AUTHOR = {Bhatt, Bhargav and Scholze, Peter},
     TITLE = {Prisms and prismatic cohomology},
   JOURNAL = {Ann. of Math. (2)},
  FJOURNAL = {Annals of Mathematics. Second Series},
    VOLUME = {196},
      YEAR = {2022},
    NUMBER = {3},
     PAGES = {1135--1275},
      ISSN = {0003-486X,1939-8980},
   MRCLASS = {14F30 (14F20 14F40)},
  MRNUMBER = {4502597},
       DOI = {10.4007/annals.2022.196.3.5},
       URL = {https://doi.org/10.4007/annals.2022.196.3.5},
}

@article{bhatt-scholze-crystals,
    AUTHOR = {Bhatt, Bhargav and Scholze, Peter},
     TITLE = {Prismatic {$F$}-crystals and crystalline {G}alois
              representations},
   JOURNAL = {Camb. J. Math.},
  FJOURNAL = {Cambridge Journal of Mathematics},
    VOLUME = {11},
      YEAR = {2023},
    NUMBER = {2},
     PAGES = {507--562},
      ISSN = {2168-0930,2168-0949},
   MRCLASS = {14 (11F80 12 13 16)},
  MRNUMBER = {4600546},
}

@article {bhatt-morrow-scholze-1,
    AUTHOR = {Bhatt, Bhargav and Morrow, Matthew and Scholze, Peter},
     TITLE = {Integral {$p$}-adic {H}odge theory},
   JOURNAL = {Publ. Math. Inst. Hautes \'{E}tudes Sci.},
  FJOURNAL = {Publications Math\'{e}matiques. Institut de Hautes \'{E}tudes
              Scientifiques},
    VOLUME = {128},
      YEAR = {2018},
     PAGES = {219--397},
      ISSN = {0073-8301},
   MRCLASS = {14F30},
  MRNUMBER = {3905467},
MRREVIEWER = {Daniel Robert Gulotta},
       DOI = {10.1007/s10240-019-00102-z},
       URL = {https://doi.org/10.1007/s10240-019-00102-z},
}

@article {bhatt-morrow-scholze-2,
    AUTHOR = {Bhatt, Bhargav and Morrow, Matthew and Scholze, Peter},
     TITLE = {Topological {H}ochschild homology and integral {$p$}-adic
              {H}odge theory},
   JOURNAL = {Publ. Math. Inst. Hautes \'{E}tudes Sci.},
  FJOURNAL = {Publications Math\'{e}matiques. Institut de Hautes \'{E}tudes
              Scientifiques},
    VOLUME = {129},
      YEAR = {2019},
     PAGES = {199--310},
      ISSN = {0073-8301,1618-1913},
   MRCLASS = {14F30 (13A35)},
  MRNUMBER = {3949030},
MRREVIEWER = {Lance\ Edward\ Miller},
       DOI = {10.1007/s10240-019-00106-9},
       URL = {https://doi.org/10.1007/s10240-019-00106-9},
}

@article{brinon-relatif,
    AUTHOR = {Brinon, Olivier},
     TITLE = {Repr\'{e}sentations {$p$}-adiques cristallines et de de {R}ham dans le cas relatif},
   JOURNAL = {M\'{e}m. Soc. Math. Fr. (N.S.)},
  FJOURNAL = {M\'{e}moires de la Soci\'{e}t\'{e} Math\'{e}matique de France. Nouvelle S\'{e}rie},
    NUMBER = {112},
      YEAR = {2008},
     PAGES = {vi+159},
      ISSN = {0249-633X},
      ISBN = {978-2-85629-250-1},
   MRCLASS = {14F30 (11S25 11S80)},
  MRNUMBER = {2484979},
MRREVIEWER = {Laurent N. Berger},
       DOI = {10.24033/msmf.424},
       URL = {https://doi.org/10.24033/msmf.424},
}

@article {colmez-hauteur,
    AUTHOR = {Colmez, Pierre},
     TITLE = {Repr\'{e}sentations cristallines et repr\'{e}sentations de hauteur
              finie},
   JOURNAL = {J. Reine Angew. Math.},
  FJOURNAL = {Journal f\"{u}r die Reine und Angewandte Mathematik. [Crelle's
              Journal]},
    VOLUME = {514},
      YEAR = {1999},
     PAGES = {119--143},
      ISSN = {0075-4102},
   MRCLASS = {11S20 (11S25 14F30)},
  MRNUMBER = {1711279},
MRREVIEWER = {Abdellah Mokrane},
       DOI = {10.1515/crll.1999.068},
       URL = {https://doi.org/10.1515/crll.1999.068},
}

@article{colmez-niziol,
    AUTHOR = {Colmez, Pierre and Nizio{\l} , Wies{\l}awa},
     TITLE = {Syntomic complexes and {$p$}-adic nearby cycles},
   JOURNAL = {Invent. Math.},
  FJOURNAL = {Inventiones Mathematicae},
    VOLUME = {208},
      YEAR = {2017},
    NUMBER = {1},
     PAGES = {1--108},
      ISSN = {0020-9910},
   MRCLASS = {14F30 (11S25 14F20 14F40 14G20 14G22)},
  MRNUMBER = {3621832},
MRREVIEWER = {Adolfo Quir\'{o}s},
       DOI = {10.1007/s00222-016-0683-3},
       URL = {https://doi.org/10.1007/s00222-016-0683-3},
}

@ARTICLE{du-liu,
       author = {{Du}, Heng and {Liu}, Tong},
        title = "{A prismatic approach to $(\varphi, \hat G)$-modules and $F$-crystals}",
      journal = {arXiv e-prints},
     keywords = {Mathematics - Number Theory},
         year = 2021,
        month = jul,
          eid = {arXiv:2107.12240},
}

@article{du-liu-moon-shimizu,
    AUTHOR = {Du, Heng and Liu, Tong and Moon, Yong Suk and Shimizu, Koji},
     TITLE = {Completed prismatic {$F$}-crystals and crystalline
              {$Z_p$}-local systems},
   JOURNAL = {Compos. Math.},
  FJOURNAL = {Compositio Mathematica},
    VOLUME = {160},
      YEAR = {2024},
    NUMBER = {5},
     PAGES = {1101--1166},
      ISSN = {0010-437X,1570-5846},
   MRCLASS = {14F30 (11F80 14G45)},
  MRNUMBER = {4733770},
       DOI = {10.1112/S0010437X24007097},
       URL = {https://doi.org/10.1112/S0010437X24007097},
}

@book {fontaine-pdivisibles,
    AUTHOR = {Fontaine, Jean-Marc},
     TITLE = {Groupes {$p$}-divisibles sur les corps locaux.},
    SERIES = {},
 PUBLISHER = {Soci\'{e}t\'{e} Math\'{e}matique de France, Paris,, },
      YEAR = {1977},
     PAGES = {i+262},
   MRCLASS = {14L05},
  MRNUMBER = {498610},
MRREVIEWER = {Loren\ D.\ Olson},
}

@incollection {fontaine-phigamma,
    AUTHOR = {Fontaine, Jean-Marc},
     TITLE = {Repr\'{e}sentations {$p$}-adiques des corps locaux. {I}},
 BOOKTITLE = {The {G}rothendieck {F}estschrift, {V}ol. {II}},
    SERIES = {Progr. Math.},
    VOLUME = {87},
     PAGES = {249--309},
 PUBLISHER = {Birkh\"{a}user Boston, Boston, MA},
      YEAR = {1990},
   MRCLASS = {11S23 (14F30 14L05)},
  MRNUMBER = {1106901},
MRREVIEWER = {Rutger Noot},
}

@article{fontaine-corps-periodes,
	AUTHOR = {Fontaine, Jean-Marc},
	TITLE = {Le corps des p\'{e}riodes {$p$}-adiques},
	NOTE = {With an appendix by Pierre Colmez, P\'{e}riodes $p$-adiques (Bures-sur-Yvette, 1988)},
	JOURNAL = {Ast\'{e}risque},
	FJOURNAL = {Ast\'{e}risque},
	NUMBER = {223},
	YEAR = {1994},
	PAGES = {59--111},
	ISSN = {0303-1179},
	MRCLASS = {11G25 (14F20 14F30 14F40)},
	MRNUMBER = {1293971},
	MRREVIEWER = {Adolfo Quir\'{o}s},
}

@article {gros-lestum-quiros-1,
    AUTHOR = {Gros, Michel and Le Stum, Bernard and Quir\'{o}s, Adolfo},
     TITLE = {Twisted differential operators of negative level and prismatic
              crystals},
   JOURNAL = {Tunis. J. Math.},
  FJOURNAL = {Tunisian Journal of Mathematics},
    VOLUME = {4},
      YEAR = {2022},
    NUMBER = {1},
     PAGES = {19--53},
      ISSN = {2576-7658,2576-7666},
   MRCLASS = {14F30 (05A30 13N10 14F40)},
  MRNUMBER = {4401787},
MRREVIEWER = {Alexander\ B.\ Levin},
       DOI = {10.2140/tunis.2022.4.19},
       URL = {https://doi.org/10.2140/tunis.2022.4.19},
}

@ARTICLE{gros-lestum-quiros-2,
       author = {{Gros}, Michel and {Le Stum}, Bernard and {Quir{\'o}s}, Adolfo},
        title = "{Absolute calculus and prismatic crystals on cyclotomic rings}",
      journal = {arXiv e-prints},
     keywords = {Mathematics - Algebraic Geometry, 14F30, 14F40},
         year = 2023,
        month = oct,
          eid = {arXiv:2310.13790},
}

@article {guo-reinecke,
    AUTHOR = {Guo, Haoyang and Reinecke, Emanuel},
     TITLE = {A prismatic approach to crystalline local systems},
   JOURNAL = {Invent. Math.},
  FJOURNAL = {Inventiones Mathematicae},
    VOLUME = {236},
      YEAR = {2024},
    NUMBER = {1},
     PAGES = {17--164},
      ISSN = {0020-9910,1432-1297},
   MRCLASS = {14F30},
  MRNUMBER = {4712864},
       DOI = {10.1007/s00222-024-01238-4},
       URL = {https://doi.org/10.1007/s00222-024-01238-4},
}

@article {iwasawa,
    AUTHOR = {Iwasawa, Kenkichi},
     TITLE = {On the theory of cyclotomic fields},
   JOURNAL = {Ann. of Math. (2)},
  FJOURNAL = {Annals of Mathematics. Second Series},
    VOLUME = {70},
      YEAR = {1959},
     PAGES = {530--561},
      ISSN = {0003-486X},
   MRCLASS = {10.66},
  MRNUMBER = {124320},
MRREVIEWER = {M.\ Krasner},
       DOI = {10.2307/1970328},
       URL = {https://doi.org/10.2307/1970328},
}

@ARTICLE{min-wang,
    AUTHOR = {Min, Yu and Wang, Yupeng},
     TITLE = {Relative {$(\varphi,\Gamma)$}-modules and prismatic
              {$F$}-crystals},
   JOURNAL = {Math. Z.},
  FJOURNAL = {Mathematische Zeitschrift},
    VOLUME = {310},
      YEAR = {2025},
    NUMBER = {1},
     PAGES = {Paper No. 16, 25},
      ISSN = {0025-5874,1432-1823},
   MRCLASS = {14G22 (14F30 14G45)},
  MRNUMBER = {4884577},
       DOI = {10.1007/s00209-025-03696-9},
       URL = {https://doi.org/10.1007/s00209-025-03696-9},
}

@ARTICLE{morrow-tsuji,
       author = {{Morrow}, Matthew and {Tsuji}, Takeshi},
        title = "{Generalised representations as q-connections in integral $p$-adic Hodge theory}",
      journal = {arXiv e-prints},
     keywords = {Mathematics - Number Theory, Mathematics - Algebraic Geometry},
         year = 2020,
        month = oct,
          eid = {arXiv:2010.04059},
}

@misc{stacks-project,
  author       = {The {Stacks project authors}},
  title        = {The Stacks project},
  howpublished = {\url{https://stacks.math.columbia.edu}},
  year         = {2023},
}

@article {tian,
    AUTHOR = {Tian, Yichao},
     TITLE = {Finiteness and duality for the cohomology of prismatic
              crystals},
   JOURNAL = {J. Reine Angew. Math.},
  FJOURNAL = {Journal f\"{u}r die Reine und Angewandte Mathematik. [Crelle's
              Journal]},
    VOLUME = {800},
      YEAR = {2023},
     PAGES = {217--257},
      ISSN = {0075-4102,1435-5345},
   MRCLASS = {14F30 (14G45)},
  MRNUMBER = {4609829},
       DOI = {10.1515/crelle-2023-0032},
       URL = {https://doi.org/10.1515/crelle-2023-0032},
}

@ARTICLE{tsuji-qhiggs-field,
       author = {{Tsuji}, Takeshi},
        title = "{Prismatic crystals and $q$-Higgs fields}",
      journal = {arXiv e-prints},
     keywords = {Mathematics - Algebraic Geometry},
         year = 2024,
        month = mar,
          eid = {arXiv:2403.11676},
}

@article {wach-free,
    AUTHOR = {Wach, Nathalie},
     TITLE = {Repr\'{e}sentations {$p$}-adiques potentiellement cristallines},
   JOURNAL = {Bull. Soc. Math. France},
  FJOURNAL = {Bulletin de la Soci\'{e}t\'{e} Math\'{e}matique de France},
    VOLUME = {124},
      YEAR = {1996},
    NUMBER = {3},
     PAGES = {375--400},
      ISSN = {0037-9484},
   MRCLASS = {11S23 (14F30 14L05)},
  MRNUMBER = {1415732},
MRREVIEWER = {Abdellah Mokrane},
       URL = {http://www.numdam.org/item?id=BSMF_1996__124_3_375_0},
}

@article {wach-torsion,
    AUTHOR = {Wach, Nathalie},
     TITLE = {Repr\'{e}sentations cristallines de torsion},
   JOURNAL = {Compositio Math.},
  FJOURNAL = {Compositio Mathematica},
    VOLUME = {108},
      YEAR = {1997},
    NUMBER = {2},
     PAGES = {185--240},
      ISSN = {0010-437X},
   MRCLASS = {11S23 (14F30 14L05)},
  MRNUMBER = {1468834},
MRREVIEWER = {Abdellah Mokrane},
       DOI = {10.1023/A:1000108818774},
       URL = {https://doi.org/10.1023/A:1000108818774},
}

@article {wu,
    AUTHOR = {Wu, Zhiyou},
     TITLE = {Galois representations, {$(\varphi,\Gamma)$}-modules and
              prismatic {F}-crystals},
   JOURNAL = {Doc. Math.},
  FJOURNAL = {Documenta Mathematica},
    VOLUME = {26},
      YEAR = {2021},
     PAGES = {1771--1798},
      ISSN = {1431-0635,1431-0643},
   MRCLASS = {11F80},
  MRNUMBER = {4353339},
MRREVIEWER = {Shiang\ Tang},
}

\Addresses

\end{document}